\definecolor{darkred}{rgb}{0.55, 0.0, 0.0}
\tikzset{commutative diagrams/.cd,every label/.append style = {font = \normalsize}}
\newtheorem*{conjecture*}{Conjecture}
\newtheorem*{theorem*}{Theorem}
\newtheorem{theorem}{Theorem}[section]
\newtheorem{lemma}[theorem]{Lemma}
\newtheorem{proposition}[theorem]{Proposition}
\newtheorem{corollary}[theorem]{Corollary}
\newtheorem{conjecture}[theorem]{Conjecture}
\theoremstyle{definition}
\newtheorem{definition}[theorem]{Definition}
\newtheorem{example}[theorem]{Example}
\newtheorem{observation}[theorem]{Observation}
\newtheorem{remark}[theorem]{Remark}
\newtheorem{notation}[theorem]{Notation}
\newcommand{\lr}[1]{\langle #1 \rangle}
\setlist[itemize]{leftmargin=*}
\setlist[enumerate]{leftmargin=*}
\DeclareMathOperator{\Conf}{Conf}
\DeclareMathOperator{\Gr}{Gr}
\DeclareMathOperator{\arr}{arr}
\DeclareMathOperator{\wt}{wt}
\DeclareMathOperator{\Mat}{Mat}
\DeclareMathOperator{\colsp}{colsp}
\DeclareMathOperator{\rowsp}{rowsp}
\DeclareMathOperator{\Amp}{Amp}
\newcommand{\Grk}{\Gr_{k,n}^{\geq 0}}
\newcommand{\Pds}{\mathcal{P}^{\mathrm{ds}}}
\newcommand{\Ads}{\mathcal{A}^{\mathrm{ds}}}
\newcommand{\Abds}{\widehat{\mathcal{A}^{\mathrm{ds}}_m}}
\newcommand{\const}{\diamond}
\newcommand{\OO}{\mathcal{O}}
\newcommand{\F}{\mathbb{F}}
\newcommand{\PPP}{\mathbb{P}}
\newcommand{\PP}{\mathbb{M}}
\newcommand{\Z}{\mathbb{Z}}
\newcommand{\C}{\mathbb{C}}
\newcommand{\R}{\mathbb{R}}
\newcommand{\CC}{\mathbb{C}}
\newcommand{\A}{\mathcal{A}}
\newcommand{\Bb}{B_{bd}}
\newcommand{\Bi}{B_{int}}
\def\mVRC_#1{\mathcal{C}^m_{#1}}
\def\kVRC_#1{\mathcal{C}^k_{#1}}
\def\VRC_#1^#2{\mathcal{C}^{#2}_{#1}}
\def\tmVRC_#1{\tilde{\mathcal{C}}^m_{#1}}
\def\zVRC{[\bv, \bR]^{\partial = \bz}}
\def\mzVRC_#1{\mathcal{C}^{\partial= \bz}_{#1}}
\def\bbu{\mathbf{u}}
\def\bv{\mathbf{v}}
\def\bR{\mathbf{R}}
\def\bz{\mathbf{z}}
\def\bw{\mathbf{w}}
\def\bD{\mathbf{D}}
\def\bE{\mathbf{E}}
\def\bd{\mathbf{d}}
\def\root{r}
\DeclareMathOperator{\rt}{rt}
\def\ext{{\operatorname{ext}}}
\def\shuf{*}
\def\gProm{\psi}
\def\aProm{\Psi}
\DeclareMathOperator{\spn}{span}
\DeclareMathOperator{\sign}{sign}
\DeclareMathOperator{\GC}{GC}
\DeclareMathOperator{\GL}{GL}
\def\m{\Omega}
\def\comp{\Upsilon}
\def\aa{\mathrm{A}}
\def\bb{\mathrm{B}}
\def\cc{\mathrm{C}}
\def\dd{\mathrm{D}}
\def\ee{\mathrm{E}}
\newcommand{\br}{\,|\,}
\def\mcy{\mathcal{Y}}
\def\mcv{\mathcal{V}}
\def\mcu{\mathcal{U}}
\def\mcb{\mathcal{B}}
\def\mint{\operatorname{IN}_m}
\def\pt{\operatorname{pt}}
\def\Xcal{\mathcal{X}}
\def\Acal{\mathcal{A}}
\def\Fcal{\mathcal{F}}
\newcommand{\txx}{\widetilde{\mathbf{x}}}
\def\pos{\mathcal{M}}
\def\bsig{\boldsymbol{\sigma}}
\def\tZ{\tilde{Z}}
\title{Plabic tangles and cluster promotion maps}
\author[C.~Even-Zohar, M.~Parisi, M.~Sherman-Bennett, R.~Tessler, L.~Williams]{Chaim Even-Zohar, Matteo Parisi, Melissa Sherman-Bennett, Ran Tessler and Lauren Williams}
\begin{document}

\begin{abstract}
Inspired by the BCFW recurrence for tilings of the amplituhedron, we introduce the general framework of \emph{plabic tangles} that utilizes plabic graphs to define rational maps between products of Grassmannians called \emph{promotions}. The central conjecture of the paper is that promotion maps are quasi-cluster homomorphisms, which we prove for several classes of promotions. In order to define promotion maps, we utilize $m$-vector-relation configurations ($m$-VRCs) on plabic graphs. We relate $m$-VRCs to the degree (a.k.a `intersection number') of the amplituhedron map on positroid varieties and characterize all plabic trees with intersection number one and their VRCs. Finally, we show that promotion maps admit an operad structure and, supported by the class of \emph{$4$-mass box} promotions, we point at new positivity properties for non-rational maps beyond cluster algebras. Promotion maps have important connections to the geometry and cluster structure of the amplituhedron and singularities of scattering amplitudes in planar $\mathcal{N}=4$ super Yang--Mills theory.
\end{abstract}
\maketitle

\setcounter{tocdepth}{1}
\tableofcontents

\section{Introduction}

The \emph{positive Grassmannian} $\Gr_{k,n}^{\scriptscriptstyle\geq 0}$ is the subset of the real Grassmannian in which all Pl\"ucker coordinates are nonnegative.  It has a decomposition into 
\emph{positroid cells} \cite{postnikov} indexed by \emph{plabic graphs} of type $(k,n)$, planar bicolored graphs which are embedded in a disk. 
The (tree) \emph{amplituhedron} $\mathcal{A}_{n,k,m}(Z)$ is the image of the
positive Grassmannian $\Gr_{k,n}^{\scriptscriptstyle\geq 0}$ under the
\emph{amplituhedron map} ${\tilde{Z}: \Gr_{k,n}^{\scriptscriptstyle\geq 0} \to \Gr_{k,k+m}}$.
It was introduced by
Arkani-Hamed and Trnka \cite{arkani-hamed_trnka} in order to give a
geometric interpretation of the \emph{BCFW recurrence} \cite{BCFW} for
\emph{scattering amplitudes} in $\mathcal{N}=4$ super Yang Mills theory (SYM).
In particular, each way of iterating the BCFW recurrence gives rise
to a collection of positroid cells of $\Gr_{k,n}^{\scriptscriptstyle\geq 0}$. Arkani-Hamed and Trnka conjectured that the amplituhedron map $\tZ$ is injective on these cells, that the $\tZ$-images of these cells
`tile' the $m=4$ amplituhedron $\A_{n,k,4}(Z)$, and that one can compute 
scattering amplitudes  by
summing the `volumes' of the tiles.

In our previous work \cite{even2023cluster},  which was joint with  Lakrec,
we proved the above \emph{BCFW tiling conjecture} of Arkani-Hamed and Trnka.  We also proved 
the  \emph{cluster adjacency conjecture}, which says that
facets of tiles are cut out by collections of \emph{compatible cluster variables}.  To prove these conjectures, we used a graphical recurrence \cite[(3.3)]{Bai:2014cna}  on plabic graphs that involves inserting two smaller plabic graphs into faces of a fixed ``core'' plabic graph, see \cref{fig:tangle}.  In particular, we showed that this graphical recurrence gives rise to a \emph{quasi-cluster homomorphism}
$$\CC(\widehat{\Gr}_{4,N_L}) \otimes \CC(\widehat{\Gr}_{4,N_R}) \to 
\CC(\widehat{\Gr}_{4,n}),$$ that is, a map that respects the \emph{cluster algebras structure} on the Grassmannian, and in particular takes cluster variables to cluster variables.\footnote{up to a Laurent monomial in frozen variables.}

The goal of this paper is to illustrate that the particular graphical recurrence and quasi-cluster homomorphism that we used to prove the BCFW tiling and cluster adjacency conjectures is part of a much vaster framework.
In particular, in \cref{sec:promotion}, we introduce the notion of  \emph{plabic tangle}, which is the data of a plabic graph ``core" $G$ drawn inside an \emph{outer disk}, together with $\ell$ \emph{inner disks}, each of which lies in a face of $G$.  The definition of plabic tangle is inspired by the notion of \emph{planar tangle} \cite{Jones}, and we show that similarly to the case of planar tangles, there is an \emph{operad structure} on plabic tangles, see \cref{sec:categorical_pov}.  The central construction of this paper
is the association of a map between Grassmannians to each (sufficiently nice)
plabic tangle.
We note that while plabic graphs have been previously used to define subsets of the Grassmannian 
\cite{postnikov}, 
or to define functions on Grassmannians or rings of invariants \cite{Kuperberg, FP}, this is to our knowledge the first time that plabic graphs have been used to define \emph{maps} between (products of) Grassmannians or between their coordinate rings.

In order to associate maps to plabic tangles, we start with the framework of \emph{vector relation configurations} (see \cref{sec:promotion-plabic-graphs}), inspired by \cite{AGPR}\footnote{our definitions and results are rather different than those of \cite{AGPR}; see \cref{rem:AGPRcontrast} for a comparison.}.
  An \emph{$m$-vector-relation configuration} ($m$-VRC) on a bipartite plabic graph $G$ (with $n$ boundary vertices, colored black) is an assignment of a vector $v_b \in \CC^m$ to each black vertex and a scalar $r_e \in \CC^*$ to each edge such that the \emph{boundary vectors} $v_1, \dots, v_n$ on vertices $1, \dots n$ span $\CC^m$, and for each white vertex $w$, we have the linear relation
  \[\sum_{e=\{b,w\}} r_e v_b =0.\]
  Suppose $G$ is \emph{($m$-generically) solvable}, that is, a generic configuration of $n$ vectors in $\CC^m$ on the outer boundary of $G$ can be extended to a unique VRC (modulo gauge) on $G$. If we have a plabic tangle with core $G$, where each inner disk $D$ is connected to some black vertices of $G$, and we choose a scaling for our internal vectors $v_b$, we can use the VRC to define a rational map from the Grassmannian $\Gr_{m,n}$ 
associated to the outer boundary of $G$, to the product of Grassmannians $\Gr_{m,D}$ 
associated to the inner disks of $G$.  The pullback of this map is a map on the level of coordinate rings; we refer to these two maps as \emph{geometric} and \emph{algebraic promotion}. 
We will be particularly interested in when geometric promotion is a dominant map\footnote{i.e. its image is Zariski-dense in the codomain.}, in which case we call the tangle \emph{dominant}; see \cref{prop:dominant_solvable} for a combinatorial characterization.

We believe that dominant solvable plabic tangles are the appropriate setting for generalizing our results on the BCFW story.  In particular, 
we believe that they give rise to quasi-cluster homomorphisms.
\begin{conjecture*} [\cref{conj:cluster1}] Let $(G, \bD)$ be a dominant solvable plabic tangle. Then there exists a normalization of the vectors in the $m$-VRCs on $G$ such that the following hold.
\begin{enumerate}
\item \label{item-1} Geometric promotion by $(G, \bD)$ 
\[\gProm: \Gr_{m, n} \;\dashrightarrow\;  \Gr_{m, D^{(1)}} \times \dots \times \Gr_{m, D^{(\ell)}}\]
sends totally positive elements to totally positive elements.
\item Algebraic promotion by $(G, \bD)$ 
\[\aProm= \gProm^*:\CC({\Gr}_{m, D^{(1)}}) \otimes \dots \otimes 
    \CC({\Gr}_{m, D^{(\ell)}})
    \;\to\; \CC({\Gr}_{m, n})\]
    is a quasi-cluster homomorphism, 
    after we freeze some variables on the right-hand side.  
\end{enumerate}
\end{conjecture*}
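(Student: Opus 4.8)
The plan would be to combine three ingredients: the operad structure on plabic tangles (\cref{sec:categorical_pov}), explicit linear algebra for solving VRCs, and a seed-matching argument for the cluster statement. The point is that everything in the conjecture is stable under tangle composition, so it suffices to treat a small generating family of ``atomic'' tangles. Concretely, geometric promotion is functorial: if $(G,\bD)$ is obtained by gluing a tangle into an inner disk of another, then for compatible normalizations $\gProm_{(G,\bD)}$ is the composite of the two promotion maps, acting in the corresponding Grassmannian factor. Since positivity of a rational map is preserved under composition, and since a composite of quasi-cluster homomorphisms is again a quasi-cluster homomorphism, it is enough to prove the two items for each atomic tangle. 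Natural candidates for the atomic pieces are the tangles whose cores are the plabic trees of intersection number one characterized earlier in the paper --- these are the irreducible solvable building blocks --- together with the BCFW core and the $4$-mass box treated here and in \cite{even2023cluster}.

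For the first item, fix a global gauge for the internal vectors, e.g.\ by choosing a perfect orientation of $G$ and normalizing each internal black vector to a prescribed affine combination of the source vectors taken along oriented paths. With such a choice the unique VRC extending a boundary configuration is solved vertex by vertex, and Cramer's rule expresses each internal vector $v_b$ as a ratio whose numerator and denominator are sums of products of Pl\"ucker coordinates of the source point on $\Gr_{m,n}$. The combinatorics of solvability should force these expressions to be subtraction-free on the totally positive Grassmannian $\Gr_{m,n}^{\geq 0}$, exactly as Postnikov's boundary measurements \cite{postnikov} are subtraction-free --- this is, morally, the inverse of the boundary-measurement story of \cite{AGPR}. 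Consequently every Pl\"ucker coordinate of the image in each $\Gr_{m,D^{(i)}}$ is a subtraction-free rational function of the source Pl\"ucker coordinates, hence positive on the totally positive part, and passing to closures yields the claim. I would establish subtraction-freeness by induction on the number of internal vertices, peeling off a boundary white vertex, or equivalently by reducing to the tree case via the intersection-number-one structure, where the solving is the explicit chain of ``insertions'' already analyzed for BCFW.

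For the second item, the approach is seed matching plus a local mutation check. On the target $\Gr_{m,D^{(1)}}\times\dots\times\Gr_{m,D^{(\ell)}}$ take a product seed $\Sigma_{\mathrm{tar}}$ with a plabic-graph seed in each factor; on the source take the seed $\Sigma_{\mathrm{src}}$ attached to the plabic graph $\widehat G$ obtained by inserting these per-disk graphs into the inner faces of $G$. One then checks directly that $\aProm$ sends each cluster variable of $\Sigma_{\mathrm{tar}}$ to a cluster variable of $\Sigma_{\mathrm{src}}$ times a Laurent monomial in the frozen variables of $\Sigma_{\mathrm{src}}$, sends frozens to frozen Laurent monomials, and that the exchange matrices agree away from the frozen rows after the prescribed freezing on the target. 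Granting this seed match, it suffices to verify compatibility with the mutations at the mutable vertices: a mutation inside an inner disk should lift to a local move of $\widehat G$ (a square move, i.e.\ quiver mutation) supported in that face, and its effect on the solved VRC is computed by the same local linear algebra of $G$, so the target exchange relation and its pullback can be matched.

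The step I expect to be the main obstacle is precisely this mutation-compatibility in full generality. For an atomic core the bookkeeping is manageable, but for an arbitrary dominant solvable core the quiver of $\widehat G$ entangles ``core'' arrows with the ``inner-disk'' arrows, and it is not obvious that mutating at an inner-disk vertex yields a quiver still compatible with promotion rather than one carrying extra arrows routed through the core; ruling this out seems to demand genuine control of how solvability interacts with local moves, beyond the combinatorial characterization of \cref{prop:dominant_solvable}. A second, possibly equally serious, obstacle is the normalization itself: the conjecture only asserts that \emph{some} gauge works, different atomic tangles may require genuinely different recipes, and one must also arrange that operad composition respects the chosen normalizations; constructing such a coherent family of gauges is itself part of the problem, and is plausibly why only special classes (the BCFW core, the $4$-mass box, intersection-number-one trees) are settled here.
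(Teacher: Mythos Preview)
The statement is a \emph{conjecture} in the paper, not a theorem; the paper offers no proof and explicitly leaves it open, establishing only the special cases listed in the introduction (star, spurion, chain-tree, and forest promotions). So there is no paper proof to compare against, and your proposal should be read as an attack on an open problem.

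Your central reduction---using the operad structure to pass to a finite list of ``atomic'' tangles---does not go through as stated. The paper never produces, nor claims, a generating set for the operad of dominant solvable tangles; there is no reason to expect that intersection-number-one trees, the BCFW core, and the $4$-mass box generate everything under composition. Indeed, the $4$-mass box should not be on your list at all: it has $4$-intersection number $2$, so by \cref{cor:int1} its core is \emph{not} $m$-generically solvable, it falls outside the hypotheses of the conjecture, and the paper shows explicitly in \cref{sec:4mass} that its promotion involves $\sqrt{\Delta}$ and is \emph{not} a quasi-cluster homomorphism. Removing it leaves you with no candidate generating set.

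Even restricted to the cases the paper does settle, the method is not the abstract one you sketch. The proofs in \cref{sec:proofscluster} are hands-on: for each promotion one writes down an explicit rectangles seed $\Sigma$ on the source, an explicit seed $\overline{\Sigma}$ on the target obtained from a rectangles seed by a specific mutation sequence and freezing, computes $\aProm$ on every cluster variable of $\Sigma$, and checks the exchange-ratio condition of \cref{def:quasi} variable by variable. There is no uniform subtraction-free formula for the internal vectors, no general seed-matching template, and no argument that mutation inside an inner disk lifts to a local move of $\widehat G$---the last being exactly the obstacle you yourself flag. Your proposal accurately identifies the shape such a proof would need to take, and the obstacles you name (coherent normalizations across the operad, control of core/inner-disk arrow entanglement) are precisely why the statement remains a conjecture.
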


\cref{conj:cluster1} generalizes the BCFW quasi-cluster homomorphism (reviewed in \cref{sec_bcfw}), and asserts a deep connection between $m$-VRCs on solvable plabic graphs and the cluster structure on the Grassmannian.  Moreover,
we prove it for several interesting infinite families of promotion maps.

\begin{theorem*}
\cref{conj:cluster1} is true in the cases of: \emph{star promotion} ($k=1$, any $m,n$); \emph{spurion promotion} ($k=2$, $m=4$, any $n$); \emph{chain-tree promotion} ($k=3$, $m=4$, any $n$); and \emph{forest promotion} ($k=2$, $m=3$, any $n$). For each promotion, the core is of type $(k,n)$, see \cref{sec:promotion}.
\end{theorem*}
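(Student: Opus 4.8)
The plan is to establish \cref{conj:cluster1} for each of the four families in turn. In every case the core $G$ is a plabic tree or forest, so the first step is to check it is dominant and solvable via \cref{prop:dominant_solvable}, and then to solve the $m$-VRC explicitly: on a solvable plabic tree the unique VRC (mod gauge) is obtained by propagating the boundary vectors $v_1,\dots,v_n$ inward, since at each white vertex one of the incident relations determines a single as-yet-unknown vector in terms of vectors already computed, starting from the boundary leaves. Carrying out this propagation produces closed-form rational expressions for all internal vectors, and with them the geometric promotion $\psi$ in Plücker coordinates; a genuine part of the work is to pin down the normalization of the internal vectors for which \cref{conj:cluster1} is claimed to hold. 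The remaining two steps are then to prove part (1), positivity, and part (2), the quasi-cluster homomorphism property after the appropriate freezing on the target.

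Part (1) will follow uniformly. For the correct normalization, each local relation at a white vertex reads as ``a positive combination of known vectors equals zero'', so solving it for one vector keeps the expressions subtraction-free; by induction along the tree, every internal vector has coordinates that are subtraction-free in the $v_i$, hence every Plücker coordinate of $\psi(V)$ on each inner-disk Grassmannian $\Gr_{m,D^{(j)}}$ is a positive Laurent polynomial in the Plücker coordinates of $V$. Concretely: for star promotion ($k=1$) this is Cramer's rule at the single white vertex; for forest promotion ($m=3$) it is the iterated three-term Plücker relation along the forest; for spurion and chain-tree promotion ($m=4$) it is the same tree propagation, and the relevant subtraction-free identities are of the same type as those already established in \cite{even2023cluster} for the BCFW promotion, to which the spurion and chain-tree cores are closely related. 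Positivity of $\psi$ on $\Gr_{m,n}^{\ge 0}$ is then immediate.

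Part (2) is the substantive content, and the strategy follows the usual route for quasi-cluster homomorphisms: produce an explicit initial seed of the target $\Gr_{m,D^{(1)}}\times\cdots\times\Gr_{m,D^{(\ell)}}$, check that after declaring frozen the finitely many target variables dictated by the inner disks, its image under $\Psi=\psi^*$ is --- up to Laurent monomials in the frozen variables of $\Gr_{m,n}$ --- a seed of $\Gr_{m,n}$ with an isomorphic exchange quiver, and then invoke rigidity of quasi-cluster homomorphisms to conclude that $\Psi$ carries every cluster variable to a cluster variable. For star promotion the target is a product of projective spaces with no mutable variables, so the statement collapses to the assertion that each target Plücker coordinate pulls back to a source Plücker coordinate times a frozen monomial; this also serves as a base case. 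For forest promotion we use the web/cluster description of $\mathbb{C}(\Gr_{3,n})$, and reduce a general forest to a single tree by the operad composition of \cref{sec:categorical_pov}, using that a composite of quasi-cluster homomorphisms is again one. For the $m=4$ families we adapt the methods of \cite{even2023cluster} (reviewed in \cref{sec_bcfw}): the seed comparison for the spurion and chain-tree cores is carried out by the same devices used there for the BCFW core --- explicit solved VRC, explicit target cluster, explicit quiver isomorphism --- and, again via the operad structure, the general member of each family reduces to its building block.

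The main obstacle is part (2) for the two $m=4$ families. Because $\Gr_{4,n}$ is not of finite cluster type once $n\ge 8$, ``cluster variables map to cluster variables'' cannot be verified by enumeration; it must be obtained structurally, from a single compatible seed together with mutation-equivariance. Producing that compatible seed --- exhibiting the quiver isomorphism, identifying the precise list of target variables to be frozen, and tracking the Laurent-monomial corrections by frozen variables on $\Gr_{m,n}$ --- is where essentially all the effort lies, and is the step most sensitive to the combinatorics of the spurion and chain-tree cores. By contrast, once the normalization is fixed, part (1) and the finite-type instances of part (2) (the cases $k=1$ and $m=3$) are comparatively routine.
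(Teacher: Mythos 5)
Your overall skeleton for part (2) — fix an explicit source seed, compute its image, exhibit a target seed of $\Gr_{m,n}$ with matching quiver and exchange ratios up to frozen Laurent monomials, then invoke \cref{prop:similar} to propagate along mutations — is exactly the paper's strategy (\cref{thm:upper}, \cref{thm:spurion-upper}, \cref{thm:chain-tree-prom}, \cref{thm:forest-prom}). But several of your concrete steps are wrong or would fail. First, your positivity argument for part (1) does not work: solving the relation $\sum_e r_e v_b=0$ at a white vertex for one vector necessarily introduces minus signs, and the resulting substitutions are \emph{not} subtraction-free — e.g.\ star promotion sends $3\mapsto 3-\tfrac{\lr{123}}{\lr{124}}4$, with a strictly negative coefficient on $\Gr^{>0}$. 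Positivity of the pulled-back Pl\"uckers is a nontrivial cancellation phenomenon (these are chain polynomials, whose positivity the paper obtains precisely from their being cluster variables, cf.\ \cref{cor:irreducible-quadratics}; compare the $4$-mass box, where the analogous positivity requires real work in \cref{th:4mb_pos}). The correct route to (1) is \emph{through} (2) plus Laurent positivity, not the other way around. Second, your claim that for star promotion ``the target is a product of projective spaces with no mutable variables'' is false: the tangle is unary with a single blob of size $n-1$, so the target is the full cluster algebra $\CC[\widehat{\Gr}_{m,N'}]$, and the seed comparison there is already a genuine (if routine) computation with the rectangles seed. Relatedly, the $k=1$ and $m=3$ cases are not finite type for large $n$, though the method never needs finite type.

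Third, the proposed operadic reduction ``of the general member of each family to its building block'' is not available for the two $m=4$ families: the paper states explicitly that spurion promotion is not a composition of star promotions and that chain-tree promotion is not a composition of the $k=1,2$ promotions. The general-$n$ statement is handled by a single parametrized seed computation, not by induction on $n$ through compositions. Finally, you underestimate the step of certifying that the constructed target seed $\overline{\Sigma}$ really is a freezing of a seed for $\Gr_{4,n}$: its mutable variables include non-Pl\"ucker functions (the chain polynomials $F_i$ and the quartics $X_i$), and showing these are cluster variables requires exhibiting explicit mutation sequences from the rectangles seed ($11$ mutations for the spurion, $50$ for the chain tree, in \cref{lem:spurion-promotion-mutations} and \cref{lem:chain-tree-promotion-mutations}). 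This verification is a substantial and unavoidable part of the argument that your outline omits entirely.
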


The second part of our paper shows that $m$-VRCs on solvable graphs $G$ are also closely related to inverting the amplituhedron map. Let $\Pi_G \subset \Gr_{k,n}$ be the \emph{positroid variety} of $G$, that is, the Zariski closure of the 
positroid cell $S_G$. We say that 
$G$ and 
$\Pi_G$ have \emph{$m$-intersection number $d$}
if for generic $Z\in \Mat_{n,k+m}$,
the (rational) amplituhedron map $\tilde{Z}:\Gr_{k,n} \dashedrightarrow \Gr_{k,k+m}$ is generically $d$-to-1 on $\Pi_G.$ We prove that $G$ is $m$-generically solvable if and only if $\Pi_G$ has $m$-intersection number 1 and $\dim \Pi_G = km$ (see \cref{cor:int1}). In this situation, the $m$-VRCs on $G$ can be used to generically invert the amplituhedron map on $\Pi_G$ (see \cref{rem:inverting-amplituhedron-map}). More generally, we show that the  intersection number can be computed in terms of $m$-VRCs (see \cref{cor:int-num=num-VRC}). 

In the third part of our paper, we turn our attention to \emph{plabic trees}. This special class already demonstrates the range of promotion maps in our conjecture and the power of the $m$-VRC formulation of intersection number. We characterize the plabic trees which have $m$-intersection number 1, or equivalently, the $m$-generically solvable plabic trees. 

We say that a bipartite plabic tree $G$ is \emph{$m$-balanced} if 
for each edge $e$ of $G$, if we write 
$G\setminus \{e\} = G_1 \sqcup G_2$ (giving ``half" the edge $e$ to each $G_i$), then for each $i=1,2$, we have $$m(k_{G_i}-1) < \dim \Pi_{G_i} \leq mk_{G_i},$$ where $k_{G_i}$ is the $k$-statistic of the tree $G_i$. 
We have the following result, proved using $m$-VRCs.
\begin{theorem*} [\cref{prop:tree1}]
Let $G$ be a bipartite plabic tree of type $(k,km+1)$.  Then
$G$ has $m$-intersection number 1 
if and only if $G$ is {$m$-balanced}. If $G$ is not $m$-balanced, then $G$ has $m$-intersection number $0$.
\end{theorem*}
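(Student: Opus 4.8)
The plan is to recast the statement as a count of $m$-VRCs and then to induct on the number of internal vertices of $G$. Since a reduced bipartite plabic tree with $N$ boundary vertices cuts the outer disk into exactly $N$ faces, we have $\dim\Pi_G=(km+1)-1=km=\dim\Gr_{k,k+m}$ (we may assume $G$ reduced), so the second hypothesis of \cref{cor:int1} is automatic and, by \cref{cor:int-num=num-VRC}, the $m$-intersection number of $G$ equals the number of $m$-VRCs on $G$ (modulo gauge) extending a generic configuration $\mathbf v=(v_1,\dots,v_{km+1})$ of vectors spanning $\C^m$ placed on the boundary, with the convention that this count is ``$\infty$'' precisely when the space of such VRCs fibers over the configuration space with positive-dimensional generic fiber, in which case $\tZ$ is not dominant on $\Pi_G$ and the intersection number is $0$. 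So it suffices to prove: for generic $\mathbf v$ there is a unique extending VRC if $G$ is $m$-balanced, and none or a positive-dimensional family if not.

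It is useful to observe first that, on a tree, the fiber count is always $0$, $1$, or $\infty$. One solves for a VRC by propagating inward: with $G$ reduced, boundary vertices are univalent and internal vertices have degree $\ge 3$, and since $G$ has no boundary white vertices, every leaf of the internal tree is a white vertex $u$ adjacent to exactly one internal black vertex $b^*$ and to $j\ge 2$ boundary leaves $c_1,\dots,c_j$. The relation at $u$ replaces the unknown $v_{b^*}\in\C^m$ by an unknown in $\spn(v_{c_1},\dots,v_{c_j})$ and fixes the scalars of the edges at $u$ up to gauge; iterating, the VRCs over a fixed $\mathbf v$ are the solutions of a linear system intersected with the open conditions $r_e\ne 0$, $v_b\ne 0$ and quotiented by a connected gauge group, hence form an irreducible set — empty, a single point, or positive-dimensional, never a finite set of size $\ge 2$. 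Thus in the non-$m$-balanced case it will be enough to produce emptiness or positive-dimensionality to conclude that the intersection number is exactly $0$.

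Now we induct on the number of internal vertices. The base case is when the internal tree is a single white vertex, forcing $k=1$ and $N=m+1$; then the $m+1$ generic boundary vectors form a circuit, giving a unique VRC, and the star is readily checked to be $m$-balanced. For the inductive step, pick an internal white leaf $u$ as above and split $G\setminus\{e^*\}=G_1\sqcup G_2$ along its internal edge $e^*=\{u,b^*\}$, with $G_1$ the star at $u$. The relation at $u$ forces $v_{b^*}$ into $\spn(v_{c_1},\dots,v_{c_j})$, a generic subspace of dimension $\min(j,m)$, and then pins down the scalars at $u$ up to gauge. If $j>m$ this imposes no condition on $v_{b^*}$ and leaves a $(j-m)$-dimensional family of admissible scalars at $u$, so the fiber is positive-dimensional and the intersection number is $0$; and indeed $\dim\Pi_{G_1}=j>m=m\,k_{G_1}$, so $G$ is not $m$-balanced. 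If $j\le m$ one shows the constrained VRC-count on $G_2$ — with $v_{b^*}$ confined to a generic $\min(j,m)$-plane — equals the unconstrained VRC-count of a smaller reduced bipartite plabic tree $T'$ obtained by absorbing the star $G_1$ (when $j=m$, by deleting $G_1$ and $e^*$; when $j<m$, by replacing the pendant edge at $b^*$ with $j$ fresh boundary leaves), that $T'$ lies in the range $m(k_{T'}-1)<\dim\Pi_{T'}\le m\,k_{T'}$, and that $G$ is $m$-balanced if and only if $T'$ is; the inductive hypothesis then finishes the step.

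The main obstacle is the bookkeeping of this reduction: realizing the constraint ``$v_{b^*}$ lies in a generic $\min(j,m)$-plane'' as a bona fide VRC problem on a genuine reduced bipartite plabic tree $T'$ with the correct $k$-statistic, so that generic fiber counts are preserved, and verifying that the $m$-balance inequalities for all edges of $G$ match those for all edges of $T'$. Both reduce to controlling how $\dim\Pi$ and the $k$-statistic behave under edge-splitting and star-absorption — using $\dim\Pi_{H_1}+\dim\Pi_{H_2}=\dim\Pi_H+1$ together with additivity of the $k$-statistic up to a $+1$ coming from the split edge — and to settling the half-edge conventions in the definition of $m$-balanced. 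One also needs the (expected but to-be-checked) fact that any failure of $m$-balance propagates to an over- or under-determined split $G=G_1\sqcup G_2$, so that by the observation above the intersection number is exactly $0$ rather than merely $\ne 1$.
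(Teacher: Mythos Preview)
Your inductive reduction has a genuine gap in the $j<m$ step. Encoding the constraint ``$v_{b^*}$ lies in a generic $j$-plane'' as a VRC problem on a bipartite plabic tree $T'$ forces you to attach a single white vertex to $b^*$ with $j$ boundary children --- but that is exactly the configuration you started with, so $T'$ is isomorphic to $G$ and the induction does not progress. (Attaching the $j$ leaves via separate bivalent white vertices imposes $v_{b^*}\propto v_{c_i'}$ for each $i$, which is the wrong constraint.) The $j>m$ step is also not right: since $\dim\Pi_G=km$, \cref{cor:int-num=num-VRC} guarantees the generic fiber is \emph{finite}, so ``positive-dimensional fiber'' never occurs; the local $(j-m)$ extra degrees of freedom at $u$ are absorbed by the remaining relations and the generic fiber is in fact empty. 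Finally, the ``$0$, $1$, or $\infty$'' claim rests on calling the VRC equations a linear system, but they are bilinear in the $r_e$ and the $v_b$, so irreducibility is not immediate from that remark.

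The paper does not induct on a smaller tree of the same type. For the $m$-balanced direction it carries out exactly the leaves-to-root propagation you have in mind, but packages it as a recursive assignment of subspaces $V_x$ to every vertex (\cref{def:subspaces-on-vertices}): sums at white vertices, intersections at black ones. The $m$-balanced inequalities are precisely what force each $V_x$ to have its expected dimension (\cref{lem:dim-from-graph-stat}), and in particular $\dim V_b^{\rt=b}=1$ for every internal black $b$ (\cref{lem:dim-root-subspace}), pinning down $v_b$ up to scale; existence is then supplied by an explicit Grassmann--Cayley formula (\cref{prop:amplitree-VRC-rep-up-to-sign}). For the non-$m$-balanced direction the paper uses a completely separate geometric criterion (\cref{prop:int1criterion}, \cref{cor:int0}): a violating edge produces a subgraph $G'$ with $\dim\Pi_{G'}<k'm$, forcing $\mint(G)=0$. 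Your propagation idea is salvageable if you abandon the reduction to $T'$ and instead prove directly, by structural induction on the rooted tree, that the $V_x$ have the expected dimensions --- which is the paper's \cref{lem:dim-from-graph-stat}.
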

We note that using this characterization, one can produce for each $m$ various infinite families of solvable plabic trees, and hence infinite families of promotion maps.

Finally, we explore to what extent our constructions, such as the operad framework and the notion of promotion, make sense when we work with a plabic graph $G$ of higher intersection number.  In \cref{sec:4mass} we study  an intersection number $2$ cell called the 
\emph{$4$-mass box}, which has previously arisen in the physics literature\footnote{It associated to a Feynman diagram with one cycle (`loop') and $4$ vertices - hence a `box'. At each vertex a `massive' momentum flows - hence `$4$-mass' \cite{THOOFT}. The \emph{leading singularity} associated to the diagram is encoded by the positroid cell labelled by the plabic graph $G$ \cite{Arkani-Hamed:2012zlh}.}.   
We obtain two ``promotion'' maps from this cell, which cannot be quasi-cluster homomorphisms, because they involve a square root.  On the other hand, these maps still possess an intriguing positivity property, namely 
(1) of \cref{conj:cluster1} above.  This could point to a new algebraic structure beyond the framework of cluster algebras.

\begin{theorem*}[\cref{th:4mb_pos}] 
The $4$-mass box promotion maps $\aProm_{+}, \aProm_{-}$ preserve positivity of cluster variables for $\Gr_{4,N'}$, i.e. if $x$ is a cluster variable for $\Gr_{4,N'}$, then $\aProm_{+}(x), \aProm_{-}(x)$ are positive on $\Gr^{>0}_{4,n}$.
\end{theorem*}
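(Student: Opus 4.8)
The plan is to make the two $4$-mass box promotion maps completely explicit and then to reduce the total positivity of their images to a finite, subtraction-free check that survives extraction of a square root. Recall from \cref{sec:4mass} that the $4$-mass box core $G$ has $4$-intersection number $2$, so by \cref{cor:int-num=num-VRC} a generic choice of boundary vectors $v_1,\dots,v_n$ admits exactly two $4$-VRCs on $G$. First I would solve the vector-relation equations step by step: all but one of the remaining degrees of freedom are fixed linearly, while the last is pinned down by a single quadratic $a t^2 + b t + c = 0$ in one internal scalar $t$, with coefficients $a,b,c$ explicit (Laurent) polynomials in the Plücker coordinates $\Delta_I(V)$ of the input $V$. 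Its two roots $t_\pm$ produce the two VRCs, hence the two geometric promotions $\gProm_\pm\colon \Gr_{4,n}\dashrightarrow \Gr_{4,N'}$ and the algebraic promotions $\aProm_\pm=\gProm_\pm^{*}$. (Part (2) of \cref{conj:cluster1} cannot hold here precisely because $\aProm_\pm$ genuinely involves $\sqrt{D}$ with $D = b^2-4ac$; the content of the theorem is part (1).)

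The first substantive step is to establish that $t_+, t_-$ are real on $\Gr^{>0}_{4,n}$ — needed even for the statement to make sense — by exhibiting the discriminant $D = b^2 - 4ac$ as a \emph{subtraction-free} expression in the $\Delta_I$, so that $D>0$ there. I expect this to follow after clearing denominators and applying a handful of three-term Plücker relations, in the same spirit as the positivity computations for boundary measurements reviewed in \cref{sec_bcfw}. It is also convenient to record that the elementary symmetric functions $t_+ + t_- = -b/a$ and $t_+ t_- = c/a$ of the roots are subtraction-free, since these govern the symmetric part of the pullback.

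The heart of the argument is that positivity passes through the square root. Since $\gProm_+$ and $\gProm_-$ arise from the two roots of a quadratic over the field $\CC(\Gr_{4,n})$, every Plücker coordinate $\Delta_I$ of $\Gr_{4,N'}$ pulls back to $\Delta_I\circ\gProm_\pm = A_I \pm B_I\sqrt{D}$ for some $A_I, B_I \in \CC(\Gr_{4,n})$ (the two pullbacks are Galois-conjugate in the quadratic extension $\CC(\Gr_{4,n})(\sqrt{D})$). Now if $D$, $A_I$, and $A_I^2 - B_I^2 D$ can all be written as subtraction-free expressions in the $\Delta_J(V)$, then for $V\in\Gr^{>0}_{4,n}$ we obtain $D>0$, $A_I>0$, and $A_I^2 > B_I^2 D \ge 0$, whence $\Delta_I(\gProm_\pm(V)) = A_I \pm B_I\sqrt{D} > 0$. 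One needs this only for the finitely many $\Delta_I$ appearing in a positivity test for $\Gr_{4,N'}$, so after this finite check $\gProm_\pm(V)\in\Gr^{>0}_{4,N'}$ for all $V\in\Gr^{>0}_{4,n}$; since every cluster variable $x$ of $\Gr_{4,N'}$ is a positive function on $\Gr^{>0}_{4,N'}$, we conclude $\aProm_\pm(x)(V) = x(\gProm_\pm(V)) > 0$, as claimed.

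The main obstacle is the subtraction-free rewriting of $A_I^2 - B_I^2 D$ (and, to a lesser degree, of $D$ and $A_I$): these are built from the VRC solution and will be sizable, so the difficulty is organizational — finding the grouping of monomials, or the sequence of three-term Plücker and cluster-mutation identities, that displays $A_I^2 - B_I^2 D$ as a sum of products of Plücker coordinates of $\Gr_{4,n}$. I would try to tame this by working in a chart or network parametrization of $\Gr_{4,n}$ in which $\gProm_\pm$ takes an especially clean rational form, and by exploiting the evident cyclic and reflection symmetries of the $4$-mass box graph to organize the $\Delta_I$ into symmetry orbits and thereby cut the number of independent identities down to a short list.
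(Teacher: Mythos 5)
Your overall architecture is the same as the paper's: make the two branches explicit as the roots of a quadratic over $\CC(\Gr_{4,n})$ (the paper's \cref{prop:4mass-promotion}), control both Galois conjugates $A_I\pm B_I\sqrt{\Delta}$ simultaneously via the sign of the "average" and of the conjugate product $A_I^2-B_I^2\Delta$, and finish by Laurent positivity, reducing everything to the finitely many variables of an initial seed (the paper uses the rectangles seed of \cref{notation:sigma-spurion} in \cref{lem:4bm_pos}). However, there are two concrete gaps.

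First, the discriminant $\Delta$ is \emph{not} subtraction-free in Pl\"ucker coordinates, and you should not expect to exhibit it as such "after a handful of three-term Pl\"ucker relations." Its positivity on $\Gr^{>0}_{4,n}$ is a genuinely nontrivial fact (it encodes the reality of the Schubert problem of four lines meeting four general lines); the paper imports it as \cref{prop:delta_pos} from \cite{Arkani-Hamed:2019rds} and explicitly remarks that $\Delta$ is only "nontrivially positive," in contrast with every other quantity in the proof, which \emph{is} handled by subtraction-free rewriting. Your plan is salvageable here only by citing this result rather than deriving it.

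Second, and more seriously, your uniform criterion "$A_I>0$ and $A_I^2-B_I^2\Delta>0$, both subtraction-free" cannot certify all the quantities that actually arise: for the coordinates $\lr{156X_s}$ and $\lr{789 \shuf 21 \shuf 56X_s}$ appearing in the pullback of the seed variable $\lr{2789}$, the conjugate product is \emph{negative} (the paper shows, e.g., $A\lr{1567}^2-B\lr{1567}\lr{1568}+C\lr{1568}^2=-\lr{5678}\lr{(156\shuf34)21(78\shuf156)}<0$), so one branch is positive and the other negative, and your inequality $A_I^2>B_I^2\Delta$ is simply false there. The paper's \cref{conj:4mb_positivity}(ii) handles this with a different argument: both desired sign assertions reduce to $\sqrt{\Delta}\,\lr{1568}>\pm y$, which can be squared because the left side is nonnegative, yielding a single subtraction-free identity; and then the sign flips cancel because in $\aProm_\pm(\lr{2789})$ the flipping quantities occur as a matched numerator/denominator pair. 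Relatedly, your reduction "$\gProm_\pm(V)\in\Gr^{>0}_{4,N'}$ implies $x(\gProm_\pm(V))>0$" needs care: $\aProm_\pm(x)$ is evaluated on the \emph{specific} matrix representative produced by the pinning, whose individual maximal minors can be negative on one branch even though the underlying point of $\Gr_{4,N'}$ is totally positive. You must either track the normalization denominators as the paper does, or argue separately that the representative differs from a totally positive matrix by positive column scalings — which, as the sign of $\lr{156X_-}$ shows, it does not.
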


On the physics side, we expect singularities of \emph{Yangian invariants} - building blocks of scattering amplitudes in $\mathcal{N}=4$ SYM - to be images of algebraic promotion maps, generalizing what we have seen with BCFW promotions. Our framework should thus shed light on the conjectural cluster structures and positivity pheonomena that arise in the study of scattering amplitudes, see \cref{rk:physics_1,rk:physics_scattering_2}. Promotions maps should also play an important role in describing the geometry and the cluster structure of amplituhedron tiles and, more generally, images of positroid cells under the amplituhedron map. This will be the subject of a future work.

\noindent{\bf Acknowledgements:~}
We would like to thank Nima Arkani-Hamed and Tsviqa Lakrec for multiple inspiring conversations.
MP is supported by the CMSA at Harvard University.
MSB is supported by the National Science Foundation under Award No.~DMS-2444020.
RT was supported by the ISF grant No.~1729/23.
LW was supported by the National Science Foundation under Award No.
DMS-2152991 until May 12, 2025, when the grant was terminated; she would also like to thank the Radcliffe Institute for Advanced Study, where some of this work was carried out. 
Any opinions, findings, and conclusions or recommendations expressed in this material are
those of the author(s) and do not necessarily reflect the views of the National Science
Foundation.

\section{Background}

\subsection{The Grassmannian and the configuration space}

We start by defining the Grass- mannian and the configuration space.
In this paper we will usually be working with these spaces defined over the 
complex numbers $\C$.

\begin{definition}\label{def:Grass}
The \emph{Grassmannian} $\Gr_{k,n}=\Gr_{k,n}(\F):=\{V \ \vert \ V \subseteq \F^n, \ \dim V = k\}$
is the space of all $k$-dimensional subspaces of
an $n$-dimensional vector space $\F^n$.
We let $\Gr^{\circ}_{k,n}$ denote the subset of the Grassmannian where no Pl\"ucker coordinates vanish, where Pl\"ucker coordinates
are defined below. 
\end{definition}

Let $[n]$ denote $\{1,\dots,n\}$, and $\binom{[n]}{k}$ denote the set of all $k$-element 
subsets of $[n]$.
We can use full rank $k \times n$ matrices $C$ to represent points of $\Gr_{k,n}$.
Recall that the \emph{Pl\"ucker coordinates} give an embedding of the 
Grassmannian into projective space.  More specifically, for $I=\{i_1 < \dots < i_k\} \in \binom{[n]}{k}$, we let $\lr{I}_V=\lr{i_1\,i_2\,\dots\,i_k}_V$ be the $k\times k$ minor of $C$ using the columns $I$. 
The $\lr{I}_V$ 
are called the {\itshape Pl\"{u}cker coordinates} of $V$, and are independent of the choice of matrix
representative $C$ (up to common rescaling). The \emph{Pl\"ucker embedding} 
$V \mapsto \{\lr{I}_V\}_{I\in \binom{[n]}{k}}$
embeds  $\Gr_{k,n}$ into
projective space. When it does not cause confusion, we will identify $C$ with its row-span and drop the subscript $V$ on Pl\"ucker coordinates. 
If $C$ has columns $v_1, \dots, v_n$, we may also identify $\lr{i_1\,i_2\,\dots\,i_k}_V$ with the element $v_{i_1} \wedge v_{i_2} \wedge \dots \wedge v_{i_k}$, hence the Pl\"ucker coordinates are \emph{alternating} in the indices, e.g. $\lr{i_1\,i_2\,\dots\,i_k}=- \lr{i_2\,i_1\,\dots\,i_k}$. 

The torus $(\CC^*)^n$ acts on $\Gr_{k,n}$ by scaling the $i$th column of 
a representative matrix $C$.  Because the element $(t,\dots,t)\in (\CC^*)^n$
scales all Pl\"ucker coordinates by $t^k$, the action of $(\CC^*)^n$ factors
through the torus $T:=(\CC^*)^n/\CC^* \cong (\CC^*)^{n-1}$.

\begin{definition}
The \emph{configuration space} $\Conf_{k,n}:=\GL_k \setminus \{(v_1,\dots,v_n) \ \vert \ v_i \in \PPP^{k-1}\}$ parameterizes the space of $n$ ordered points in $\PPP^{k-1}$, considered up to the action of $\GL_k$. 
We let $\Conf^{\circ}_{k,n}$ denote the subset of the configuration space where any $k$ of the points $v_1,\dots,v_n$ affinely
span a subspace of $\PPP^{k-1}$ of dimension $k-1$. 
\end{definition}

Note that there is an isomorphism $\Gr^{\circ}_{k,n}/T \cong \Conf^{\circ}_{k,n}$.

If $D$ is an index set with a total order, with $|D|=n$, we will also use
$\Gr_{k,D}$ and $\Conf_{k,D}$ to denote
the Grassmannian and configuration spaces $\Gr_{k,n}$ and $\Conf_{k,n}$,
where the columns of a representing matrix are indexed by $D$
(and similarly for $\Gr^{\circ}_{k,D}$ and $\Conf^{\circ}_{k,D}$).

\begin{definition}[Positive Grassmannian and positroids]\label{def:positroid}\cite{lusztig, postnikov}
We say that $V\in \Gr_{k,n}(\R)$ is \emph{totally nonnegative}
     if (up to a global change of sign)
       $\lr{I}_V \geq 0$ for all $I \in \binom{[n]}{k}$.
Similarly, $V$ is \emph{totally positive} if $\lr{I}_V >0$ for all $I
      \in \binom{[n]}{k}$.
We let $\Grk$ and $\Gr_{k,n}^{>0}$ denote the set of
totally nonnegative and totally positive elements of $\Gr_{k,n}(\R)$, respectively.  
$\Grk$ is called the \emph{totally nonnegative}  \emph{Grassmannian}, or
       sometimes just the \emph{positive Grassmannian}.

If we partition $\Grk$ into strata based on which Pl\"ucker coordinates are strictly
positive and which are $0$, we obtain a cell decomposition of $\Grk$
into \emph{positroid cells} $S_G$, which can be indexed by 
(equivalence classes of) plabic graphs $G$ \cite{postnikov}, 
see \cref{app:plabic} for background.
Each positroid cell $S_G$ gives rise to a matroid $\pos_G$, whose bases are precisely
the $k$-element subsets $I$ such that the Pl\"ucker coordinate
$\lr{I}$ does not vanish on $S_G$; this matroid 
is called a \emph{positroid}. The bases of $\pos_G$ are exactly the source sets of \emph{perfect orientations} of $G$, see \cref{def:orientation}.
\end{definition}

The Zariski closure of $S_G$ in $\Gr_{k,n}(\C)$ 
is called 
the \emph{positroid variety} $\Pi_G$. The positroid variety is cut out by setting to $0$ all Pl\"ucker coordinates $\lr{I}$ that vanish on $S_G$, or equivalently, all $\lr{I}$ such that $I \notin \pos_G$ \cite{KLS}. There is a map $\mathbb{B}_G$ called the \emph{boundary measurement map} which is an isomorphism from $(\CC^*)^{\dim \Pi_G}$ to a subset of $\Pi_G$; see \cref{def:bdry-meas-map}. We use $T_G$ to denote the image of $\mathbb{B}_G$, and call $T_G$ the \emph{boundary measurement torus}.

The following lemma will be useful to us. We use the notation $V^\perp$ for the orthogonal complement of a subspace $V$.

\begin{lemma}\label{lem:perp-takes-Pi-G-to-G-op} Let $G$ be a $(k,n)$-plabic graph and $G^{op}$ the plabic graph obtained by switching the colors of all vertices. Then 
\begin{align*} \Pi_G &\to \Pi_{G^{op}}\\
V &\mapsto V^{\perp}
\end{align*}
is an involutive isomorphism of varieties.
\end{lemma}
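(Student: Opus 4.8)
The plan is to prove the statement first at the level of positroid cells (or equivalently, positroids/matroids) and then extend to the Zariski closure. The key combinatorial input is that taking orthogonal complements corresponds, on the level of matroids, to taking the dual matroid, and that color-swapping a plabic graph $G$ implements matroid duality on the associated positroid: $\pos_{G^{op}} = (\pos_G)^*$. I would first recall or cite this fact (it follows from Postnikov's work — swapping colors of a plabic graph of type $(k,n)$ yields a plabic graph of type $(n-k,n)$ whose positroid is the dual of the original, since e.g. perfect orientations and their source sets behave correctly under color swap, and the complement of a basis of $\pos_G$ is a basis of $\pos_{G^{op}}$).

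Next I would use the classical linear-algebra fact that for $V \in \Gr_{k,n}(\C)$ with matroid $M_V$ (the matroid recording which Plücker coordinates are nonzero), the orthogonal complement $V^\perp \in \Gr_{n-k,n}(\C)$ has matroid $(M_V)^*$; more precisely, if $C$ is a $k\times n$ matrix representing $V$ and $C^\perp$ is an $(n-k)\times n$ matrix whose rows span $V^\perp$, then up to sign and a global scalar, $\lr{I}_{V^\perp} = \pm \lr{[n]\setminus I}_V$ for all $I \in \binom{[n]}{n-k}$. Combining this with the positroid-duality statement above: if $V \in S_G$, then $\lr{I}_V > 0$ exactly for $I \in \pos_G$, hence $\lr{J}_{V^\perp} \neq 0$ exactly for $J$ with $[n]\setminus J \in \pos_G$, i.e.\ exactly for $J \in (\pos_G)^* = \pos_{G^{op}}$. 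A sign/positivity bookkeeping argument (the standard one: with the alternating-sign convention on the complementary minors, the cyclic Grassmann necklace or the Plücker signs work out so that total nonnegativity is preserved — this is the point where one must be a little careful) then shows $V^\perp \in S_{G^{op}}$, and $V^\perp$ lies in the same positroid cell. Thus $V \mapsto V^\perp$ restricts to a bijection $S_G \to S_{G^{op}}$.

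To conclude, note that $V \mapsto V^\perp$ is an algebraic morphism $\Gr_{k,n}(\C) \to \Gr_{n-k,n}(\C)$ (it is given in Plücker coordinates by the signed-complementation formula above, which is a linear isomorphism of the ambient projective spaces), and it is an involution in the sense that composing $G \to G^{op} \to (G^{op})^{op} = G$ recovers the identity, since $(V^\perp)^\perp = V$. Since $\Pi_G$ is by definition the Zariski closure of $S_G$ and the perp map is a closed morphism (being an isomorphism of the ambient Grassmannians restricted to subvarieties cut out by the vanishing of the appropriate Plücker coordinates), it sends $\overline{S_G} = \Pi_G$ onto $\overline{S_{G^{op}}} = \Pi_{G^{op}}$; equivalently, one checks directly that the ideal of $\Pi_{G^{op}}$ (generated by the $\lr{J}$ with $J \notin \pos_{G^{op}}$, by \cite{KLS}) pulls back under the perp map to the ideal of $\Pi_G$. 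This gives the desired involutive isomorphism of varieties.

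The main obstacle I anticipate is not the algebraic-geometry formalities but the \emph{sign bookkeeping} in the positivity claim: verifying that, with the chosen conventions for representing $V^\perp$ and for the alternating Plücker coordinates, the totally nonnegative locus maps to the totally nonnegative locus rather than to some sign-twisted version of it. The cleanest route is probably to invoke the well-known fact (Postnikov) that the map $V \mapsto V^\perp$ restricts to an isomorphism $\Gr_{k,n}^{\geq 0} \to \Gr_{n-k,n}^{\geq 0}$ respecting the cell decompositions, together with the identification of the cell labels under color swap, and then only the closure step remains — which is immediate since perp is an isomorphism of ambient Grassmannians carrying one positroid ideal to the other.
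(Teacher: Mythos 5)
Your proposal reaches the right conclusion, and your ``equivalently, one checks directly that the ideal of $\Pi_{G^{op}}$ \dots pulls back under the perp map to the ideal of $\Pi_G$'' aside is in fact exactly the paper's proof: the paper combines the signed complementation identity $\lr{I}_V = \pm \lr{I^c}_{V^\perp}$ with the facts that $\pos_{G^{op}} = (\pos_G)^*$ and that $\Pi_G$ is cut out set-theoretically by the vanishing of the $\lr{I}$ with $I \notin \pos_G$ (by \cite{KLS}), so that $V \mapsto V^\perp$ visibly matches the two vanishing loci and is regular with regular inverse. That three-line argument is all that is needed, because the lemma is a statement about complex positroid varieties, for which only the vanishing pattern of Pl\"ucker coordinates matters.

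Your primary route, through the positive cells $S_G$ followed by Zariski closure, is an unnecessary detour and, as literally written, contains a small error: the plain map $V \mapsto V^\perp$ does \emph{not} carry $\Gr_{k,n}^{\geq 0}$ into $\Gr_{n-k,n}^{\geq 0}$, nor $S_G$ bijectively onto $S_{G^{op}}$. Already for $\Gr_{1,2}$, the perp of $\mathrm{span}(a,b)$ with $a,b>0$ is $\mathrm{span}(-b,a)$, which is not totally nonnegative. The correct positivity-preserving map is the twisted one $V \mapsto (VN)^\perp = V^\perp N$, where $N$ is the diagonal matrix with entries $1,-1,\dots,(-1)^{n-1}$; this is precisely \cref{lem:bdry-meas-G-vs-Gop} in the paper. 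So the ``well-known fact (Postnikov)'' you propose to invoke is about the alternation-twisted perp, not the plain perp. This does not sink your argument --- resigning columns by $N$ changes Pl\"ucker coordinates only by signs and hence fixes every positroid variety $\Pi_H$ as a set, so the closure of the twisted image equals the closure of the untwisted image --- but you would need to say this explicitly, and it is exactly the kind of sign bookkeeping that the paper's direct ideal-theoretic argument renders moot.
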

\begin{proof}
For $I \in \binom{[n]}{k}$, and $I^c = [n] \setminus I$ its complement, we have
\[\lr{I}_V = \pm \lr{I^c}_{V^\perp}\]
where the sign is determined by $I$ (see the discussion around Lemma 1.11 in \cite{Karp}). Thus, $\lr{I}_V =0$ if and only if $\lr{I^c}_{V^\perp} =0$. The matroids $\pos_G$ and $\pos_{G^{op}}$ are duals of each other, meaning that $I \in \pos_G$ if and only if $I^c \in \pos_{G^{op}}$. This shows that the map is well-defined. It is easy to see that it is regular, with regular inverse.
\end{proof}

\subsection{Cluster algebras}

\label{sec:cluster}

Cluster algebras were introduced by Fomin and Zelevinsky in \cite{FZ1}; see \cite{FWZ} for 
an introduction.
We give a quick definition of cluster algebras from quivers here.

A \emph{quiver} is a finite directed graph. A \emph{loop} of a quiver is an edge whose
source and target coincide.

\begin{definition}
[\emph{Quiver Mutation}]
Let $Q$ be a quiver without loops or oriented $2$-cycles.
Let $k$ be a vertex of $Q$.
Following \cite{FZ1}, we define the
\emph{mutated quiver} $\mu_k(Q)$ as follows:
it has the same set of vertices as $Q$,
and its set of arrows is obtained by the following procedure:
\begin{enumerate}
\item for each subquiver $i \to k \to j$, add a new arrow $i \to j$;
\item reverse all arrows with source or target $k$;
\item remove the arrows in a maximal set of pairwise
disjoint oriented $2$-cycles.
\end{enumerate}
\end{definition}

It is not hard to check that mutation is an involution, that is,
$\mu_k^2(Q) = Q$ for each vertex $k$.

\begin{definition}
[\emph{Seeds}]
\label{def:seed0}
Choose $s\geq r$ positive integers.
Let $\Fcal$ be an \emph{ambient field}
of rational functions
in $r$ independent
variables
over
$\CC(x_{r+1},\dots,x_s)$.
A \emph{seed} in~$\Fcal$ is
a pair $(\txx, Q)$, where
\begin{itemize}
\item
$\txx = (x_1, \dots, x_s)$ forms a free generating
set for
$\Fcal$,
and
\item
$Q$ is a quiver on vertices
$1, 2, \dots,r, r+1, \dots, s$,
whose vertices $1,2, \dots, r$ are called
\emph{mutable}, and whose vertices $r+1,\dots, s$ are called \emph{frozen}.
\end{itemize}
We call~$\txx$ the
\emph{cluster} of a seed $(\txx, Q)$, and its elements are called \emph{cluster variables}.
The variables $c=\{x_{r+1},\dots,x_s\}$ are called
	\emph{frozen} (or \emph{coefficient variables}).
We let $\PP$ denote the group of Laurent monomials in the frozen variables, which we call the \emph{frozen group}.
\end{definition}

\begin{definition}
[\emph{Seed mutations}]
\label{def:seed-mutation0}
Let $(\txx, Q)$ be a seed in $\Fcal$,
and let $k$ be a mutable vertex of $Q$.
The \emph{seed mutation} $\mu_k$ in direction~$k$ transforms
$(\txx, Q)$ into the seed
$\mu_k(\txx,  Q)=(\txx', \mu_k(Q))$, where the cluster
$\txx'=(x'_1,\dots,x'_s)$ is defined as follows:
$x_j'=x_j$ for $j\neq k$,
and $x'_k \in \Fcal$ is determined
by the \emph{exchange relation}
\begin{equation}
\label{exchange relation0}
x'_k\ x_k =
 \ \prod_{\substack{i \to k}} x_i
+ \ \prod_{\substack{k \to i}} x_i \, .
\end{equation}
\end{definition}

Note that arrows between two frozen vertices of a quiver do not
affect seed mutation; therefore we often omit
arrows between two frozen vertices. 

\begin{definition}
[\emph{Cluster algebra}]
\label{def:cluster-algebra0}
Given an initial labeled seed $(\txx, Q)$, we let $\Xcal$ denote the union of all cluster variables obtained by performing all possible mutation sequences from the initial seed.
Let $\CC[c^{\pm 1}]$ be the \emph{ground ring} consisting
of Laurent polynomials in the frozen variables. The
\emph{cluster algebra} $\Acal= \Acal(\txx,  Q)$ associated with a
given pattern is the $\CC[c^{\pm 1}]$-subalgebra of the ambient field $\Fcal$
generated by all cluster variables, 
with coefficients which are Laurent polynomials
in the frozen variables: $\Acal = \CC[c^{\pm 1}] [\Xcal]$.
We say that $\Acal$ has \emph{rank $r$} because each cluster contains
$r$ cluster variables. Cluster (or frozen) variables that belong to a common cluster are said to be \emph{compatible}.
\end{definition}

\begin{remark}\label{rmk:dif-ground-ring}
Another common convention is to choose the ring 
	$\CC[c]$ of polynomials in the frozen variables as the ground ring,
and define the cluster algebra 
as $\overline{\Acal} := \CC[c] [\Xcal]$.
\end{remark}

The Grassmannian $\Gr_{m,n}(\C)$ has a natural cluster algebra structure, generated by the Pl\"ucker coordinates. A~standard seed for this cluster structure is the \emph{rectangles seed} $\Sigma_{m,n}$, with $m(n-m)+1$ variables, $n$~of which are frozen, as demonstrated in \cref{rectangles} below. The cluster algebra $\overline{\A}(\Sigma_{m,n})$ is the homogeneous coordinate ring $\C[\Gr_{m,n}]$ of the complex Grassmannian \cite{scott, FW6}. We refer to any seed for this cluster algebra as a \emph{seed for} $\Gr_{m,n}$.

\subsection{Quasi-cluster homomorphisms of cluster algebras}

In this section we define quasi- cluster homomorphisms of cluster
algebras, following \cite{Fraser} and \cite{Fraser2}.

\begin{definition}[Exchange ratios] 
\label{def:exchage-ratios}
Given a seed $\Sigma=((x_1,\dots,x_s), Q)$ for a cluster algebra of rank $r\leq s$, and a mutable variable $x_i$ (so that $1 \leq i \leq r$), the \emph{exchange ratio} of $x_i$ (with respect to $\Sigma$) is
$$ \hat{y}_{\Sigma}(x_i) \;=\; 
\frac{\prod_{j: i\to j} x_j^{\# \arr(i\to j)}}{\prod_{j: j\to i} x_j^{\# \arr(j\to i)}} $$
where $\arr(i\to j)$ denotes the number of arrows from $i$ to $j$ in the quiver $Q$.
\end{definition}

Given a cluster algebra $\A$, we let $\PP$ denote its frozen group, that is the group of Laurent monomials
in the frozen variables.  For elements $x,y\in \A$, we say that
\emph{$x$ is proportional to $y$}, writing $x \propto y$, if $x=My$ for some Laurent monomial
$M\in \PP$. We then refer to $M$ as a \emph{frozen factor}.

\begin{definition}[Quasi-cluster homomorphism]\label{def:quasi}\cite[Definition 3.1 and Proposition 3.2]{Fraser}
Let $\A$ and $\overline{\A}$ be two cluster algebras of the same rank $r$,
and with respective frozen groups $\PP$ and $\overline{\PP}$.
        Then an algebra homomorphism $f:\A \to \overline{\A}$ that satisfies
        $f(\PP)\subseteq \overline{\PP}$ is called a \emph{quasi-cluster homomorphism}  
        from $\A$ to $\overline{\A}$
        if there are seeds
$\Sigma=((x_1,\dots,x_s),Q)$ and $\overline{\Sigma}=((\bar{x}_{\bar{1}},\dots,\bar{x}_{\bar{s}}), \bar{Q})$
        for $\A$ and $\overline{\A}$, such that
        \begin{enumerate}
                \item $f(x_i) \propto \bar{x}_{\bar{i}}$ for $1\leq i \leq r$
                \item $f(\hat{y}_{\Sigma}(x_i)) = \hat{y}_{\overline{\Sigma}}(\bar{x}_{\bar{i}})$ for $1\leq i \leq r$.
        \end{enumerate}
        If conditions (1) and (2) hold,
        we write $f(\Sigma) \propto \overline{\Sigma}$.
        \end{definition}
Note that conditions (1) and (2) imply that the map $i \mapsto \bar{i}$ of mutable nodes in $Q$ and $\bar{Q}$ extends to an isomorphism of the corresponding induced subquivers. If we want to show $f(\Sigma) \propto \overline{\Sigma}$, we often start by finding an isomorphism between the mutable nodes of $Q$ and $\bar{Q}$ and then check that (1) and (2) are satisfied.

\begin{proposition}
\cite[Proposition 3.2]{Fraser} \label{prop:similar}
If
$\Sigma \propto \overline{\Sigma}$, then
$\mu_k(\Sigma)$ is similar to $\mu_k(\overline{\Sigma})$. 
\end{proposition}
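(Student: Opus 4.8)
The plan is to unwind the definitions of exchange ratio and quasi-cluster homomorphism and show that the defining conditions (1) and (2) of \cref{def:quasi} are preserved under a single mutation $\mu_k$ at a mutable vertex. Suppose $f(\Sigma) \propto \overline{\Sigma}$, witnessed by the bijection $i \mapsto \bar i$ of mutable vertices; we must produce witnessing data for $\mu_k(\Sigma) \propto \mu_k(\overline{\Sigma})$. The natural candidate bijection is again $i \mapsto \bar i$ (mutation does not change the vertex set), so the work is in checking conditions (1) and (2) for the mutated seeds.

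First I would verify that mutation of the quiver is compatible with the bijection on mutable vertices. Since (1) and (2) for $\Sigma \propto \overline{\Sigma}$ imply, as the excerpt notes, that $i \mapsto \bar i$ extends to an isomorphism of the induced subquivers on mutable vertices, and since quiver mutation $\mu_k$ at a mutable vertex is defined purely in terms of arrows incident to mutable vertices together with arrows to frozen vertices (which govern the exchange ratios, not the mutable quiver structure directly), the mutated quivers $\mu_k(Q)$ and $\mu_k(\bar Q)$ again have isomorphic induced subquivers on mutable vertices via $i \mapsto \bar i$. One must be slightly careful that the frozen parts track correctly: this is where the hypothesis $f(\PP) \subseteq \overline{\PP}$ and condition (2) (equality, not just proportionality, of exchange ratios) do the bookkeeping. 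Concretely, condition (2) says $f(\hat y_\Sigma(x_i)) = \hat y_{\overline\Sigma}(\bar x_{\bar i})$, and the exchange ratio records precisely the multiset of arrows at $i$ weighted by the cluster variables at their other endpoints — so applying $f$ and using (1) transports this data correctly across the mutation.

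Next I would check condition (1) for the mutated seeds: $f(x'_k) \propto \bar x'_{\bar k}$, where $x'_k, \bar x'_{\bar k}$ are the new cluster variables produced by the exchange relations \eqref{exchange relation0} in $\Sigma, \overline{\Sigma}$ respectively (for $i \neq k$ the cluster variables are unchanged and (1) is inherited). Here I apply $f$ to the exchange relation $x'_k x_k = \prod_{i \to k} x_i + \prod_{k \to i} x_i$. Using that $f$ is an algebra homomorphism, condition (1) for the unmutated seed ($f(x_i) \propto \bar x_{\bar i}$), and condition (2) to match the two monomials on the right up to a common frozen factor, one factors out a frozen monomial and concludes $f(x'_k) \propto \bar x'_{\bar k}$. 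Finally, condition (2) for the mutated seeds follows because the Fomin–Zelevinsky mutation rule expresses the new exchange ratios $\hat y_{\mu_k(\Sigma)}$ as explicit subtraction-free rational functions of the old exchange ratios $\hat y_\Sigma$ (the $Y$-seed mutation formula), $f$ is a ring homomorphism commuting with these rational operations, and (2) holds before mutation.

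The main obstacle I expect is the careful handling of frozen factors: condition (1) is only proportionality $f(x_i) = M_i \bar x_{\bar i}$ with $M_i \in \overline{\PP}$, and one must check that when these are substituted into the exchange relation the two resulting monomials on the right-hand side differ only by a uniform frozen monomial — this is exactly the content of condition (2), but confirming it requires matching the frozen contributions coming from arrows between mutable and frozen vertices on both sides, and ensuring the leftover factor is absorbed consistently into $M'_k$. This is essentially the computation carried out in the proof of \cite[Proposition 3.2]{Fraser}, which asserts precisely this statement, so the proof here can either reproduce that bookkeeping or simply cite it.
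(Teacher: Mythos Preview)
The paper does not prove this proposition at all; it simply records the statement with a citation to \cite[Proposition 3.2]{Fraser} and moves on. Your sketch is a correct outline of the argument one finds in Fraser's paper (propagating conditions (1) and (2) through a single mutation via the exchange relation and the $Y$-seed mutation formula), and you yourself note at the end that one may simply cite Fraser---which is exactly what the paper does.
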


\subsection{Background on the Grassmann-Cayley algebra}\label{subsec:GC-alg} 
The \emph{Grassmann-Cayley algebra}, the exterior algebra endowed with an additional \emph{shuffle product}, 
is the natural setting for many of our computations. We recall the definitions and basic facts here. Let 
$$ S_n^r:= \{w \in S_n: w(1)< \cdots < w(r), w(r+1)< \cdots < w(n)\}.$$ 
For vectors $v_1, \dots, v_m \in \CC^m$, we use the notation $\lr{v_1 \cdots v_m}:= \det[v_1 \dots v_m]$.

\begin{definition}
Let $A = a_1 \wedge \dots \wedge a_p \in \bigwedge^p \CC^m$ and $B= b_1 \wedge \dots \wedge b_q \in \bigwedge^q \CC^m$ and set $s:=p+q-m$. The \emph{shuffle} $A \shuf B$ of $A$ and $B$ is $0$ if $s<0$. If $s \ge 0$, then we define 
\begin{align*}A \shuf B :=&
\sum_{w \in S_p^{m-q}} \sign(w)~ \lr{a_{w(1)} \cdots a_{w(m-q)} b_1 \cdots b_q}~ a_{w(m-q+1)} \wedge \dots \wedge a_{w(p)}\\
=&\sum_{w \in S_q^{s}}\sign(w)~ \lr{a_1 \cdots a_p b_{w(s+1)} \cdots b_{w(q)}}~ b_{w(1)} \wedge \dots \wedge b_{w(s)}.
\end{align*}

In the degenerate case where $s\ge 0$ and $p=0$, so $A$ is the scalar $a$, we have $q=m$ and define $A \shuf B := a \lr{b_1 \cdots b_q}$; analogously, if $s \ge 0$ and $q=0$ so $B$ is the scalar $b$, $A \shuf B := b \lr{a_1 \cdots a_p}$.
Note that if $s=0$, then 
$A\shuf B$ is the determinant $\lr{a_1 \cdots a_p b_1 \cdots b_q}$.
\end{definition}

We extend the operation $\shuf$ to arbitrary elements of the exterior algebra $\bigwedge \CC^m$  by linearity. In compound expressions, we adopt the order of operations precedence rule that the $\wedge$ product is performed before the $\shuf$ product.

\begin{lemma}
The operation $\shuf$ has the following properties.
\begin{itemize}
\item If $A \in \bigwedge^p \CC^m$ and $B \in \bigwedge^q \CC^m$, then $A \shuf B = (-1)^{(m-p)(m-q)} B \shuf A$.
\item The operation $\shuf$ is associative: $(A \shuf B) \shuf C = A \shuf (B \shuf C)$.
\end{itemize}
\end{lemma}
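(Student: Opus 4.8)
The plan is to reduce both identities to the case of decomposable elements (extensors) and then argue combinatorially. Since $\shuf$ was defined on all of $\bigwedge\CC^m$ by bilinear extension, both assertions are multilinear in the vector factors that appear and additive in $A,B,C$ separately; so it suffices to prove them when $A=a_1\wedge\cdots\wedge a_p$, $B=b_1\wedge\cdots\wedge b_q$, and (for associativity) $C=c_1\wedge\cdots\wedge c_t$ are extensors. The degenerate possibilities — some step equal to $0$, or one of $p+q-m$, $q+t-m$, $p+q+t-2m$ negative — follow immediately from the conventions in the definition, so I will assume all relevant steps are positive.

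For graded anticommutativity I would play the two displayed formulas for $\shuf$ against each other. Put $s:=p+q-m$. The second formula writes $A\shuf B$ as a sum over the $\binom{q}{s}$ choices of which $s$ of the $b_j$'s stay outside the bracket, while the first formula, applied to $B\shuf A$, writes it as a sum over the $\binom{q}{m-p}=\binom{q}{s}$ choices of which $m-p$ of the $b_j$'s enter the bracket. I would pair the term of the former indexed by an $s$-subset $S$ with the term of the latter indexed by the complementary $(m-p)$-subset $S^c$: the exterior factors $\bigwedge_{j\in S}b_j$ agree, the two brackets differ only by sliding the block of $p$ vectors $a_i$ past the block of $m-p$ vectors $b_j$ (contributing $(-1)^{p(m-p)}$), and the two shuffle signs $\sign(w)$ differ by the block-transposition sign $(-1)^{s(m-p)}$. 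Multiplying, $A\shuf B=(-1)^{(s+p)(m-p)}B\shuf A$, and since $s+p\equiv m+q\pmod 2$ one has $(s+p)(m-p)\equiv(m-p)(m-q)\pmod 2$, which is the stated sign.

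For associativity my preferred route is the classical duality between meet and join. Fix a volume form $0\neq\omega\in\bigwedge^m\CC^m$; the perfect pairings $\bigwedge^p\CC^m\times\bigwedge^{m-p}\CC^m\to\bigwedge^m\CC^m\cong\CC$ furnish isomorphisms $D_p\colon\bigwedge^p\CC^m\xrightarrow{\ \sim\ }\bigwedge^{m-p}(\CC^m)^*$, and I would check on coordinate extensors that $D_{p+q-m}(A\shuf B)=D_p(A)\wedge D_q(B)$ — the statement that ``meet $=$ intersection of subspaces'' is Poincar\'e dual to ``join $=$ exterior product''. Granting this, associativity of $\shuf$ is transported directly from associativity of $\wedge$ on $\bigwedge(\CC^m)^*$. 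An alternative that stays inside $\bigwedge\CC^m$ is to expand both $(A\shuf B)\shuf C$ and $A\shuf(B\shuf C)$ as double sums over ordered splittings and to exhibit a sign-preserving bijection of the two index sets matching the triple brackets $\lr{a\cdots b\cdots c\cdots}$; this is the argument in the classical Grassmann--Cayley literature and could be cited rather than reproduced.

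The reduction to extensors, the degenerate cases, and (once the complementation bijection is set up) anticommutativity are routine. The main obstacle is the sign bookkeeping for associativity: in the duality approach one must pin down the sign in $D_{p+q-m}(A\shuf B)=\pm D_p(A)\wedge D_q(B)$ so that it is uniform in the steps $p,q$ (otherwise one transports $\wedge$ only up to a sign cocycle $\varepsilon(p,q)$ and must separately verify $\varepsilon(p+q-m,t)\varepsilon(p,q)=\varepsilon(p,q+t-m)\varepsilon(q,t)$); in the direct approach one must carefully check that the bijection between ordered splittings preserves signs. Either way this is the one computation I would expect to require genuine care.
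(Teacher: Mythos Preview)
The paper does not actually prove this lemma; it is stated without proof in the background section on the Grassmann--Cayley algebra, as a standard fact about the meet operation. So there is no paper proof to compare against.

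Your plan is a legitimate route to an actual proof. The anticommutativity argument is essentially complete: the pairing of the two displayed formulas via complementary subsets, together with the sign count $(-1)^{p(m-p)}\cdot(-1)^{s(m-p)}=(-1)^{(s+p)(m-p)}$ and the parity reduction $s+p\equiv m-q\pmod 2$, gives exactly the claimed sign. For associativity, your duality approach (transporting $\shuf$ to $\wedge$ on $\bigwedge(\CC^m)^*$ via a Hodge-type isomorphism) is the standard one in the Grassmann--Cayley literature (e.g.\ Doubilet--Rota--Stein or Barnabei--Brini--Rota), and you correctly identify the one genuine issue, namely that the sign in $D_{p+q-m}(A\shuf B)=\pm D_p(A)\wedge D_q(B)$ must be shown to be independent of $p,q$ (or at least to satisfy the cocycle condition you wrote down). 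Since the paper itself simply asserts the result, either carrying out that sign check on coordinate extensors or citing one of the classical references would be entirely adequate here.
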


\begin{definition}
The \emph{Grassmann-Cayley algebra} $\GC(m)$ is the exterior algebra $\bigwedge \CC^m$ endowed with the additional \emph{shuffle product} $\shuf: \bigwedge \CC^m \times \bigwedge \CC^m \to \bigwedge \CC^m$, which is associative.
\end{definition}

We summarize some additional basic properties of $\wedge$ and $\shuf$, and their relation to intersections and sums of subspaces. For $A = a_1 \wedge \dots \wedge a_p \in \bigwedge \CC^m$, write $\overline{A}$ for $\spn({a_1, \dots, a_p})$. We say $A$ \emph{represents}~$\overline{A}$. 
 
\begin{lemma}\label{lem:basic-prop-of-shuf}
    Let $A= a_1 \wedge \dots \wedge a_p$ and $B=b_1 \wedge \dots \wedge b_q$ be decomposable elements of $\GC(m)$.
    \begin{itemize}
        \item We have $A \wedge B \neq 0$ if and only if $\overline{A} \cap \overline{B} = \{0\}$, and then $A \wedge B$ represents $\overline{A} + \overline{B} = \overline{A} \oplus \overline{B}.$
\item If $\overline{A} + \overline{B} \neq \CC^m$, then $A \shuf B =0$.
\item If $\overline{A} + \overline{B} = \CC^m$ and $\overline{A} \cap \overline{B} = \{0\}$, then $A \shuf B =\lr{a_1, \dots, a_p, b_1, \dots, b_q}$ is a scalar.
\item If $\overline{A} + \overline{B} = \CC^m$ and $\overline{A} \cap \overline{B} \neq \{0\}$, then $A \shuf B$ is again decomposable and represents $\overline{A} \cap \overline{B}$.
    \end{itemize}
\end{lemma}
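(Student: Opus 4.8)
The first bullet is elementary multilinear algebra. Writing $A = a_1 \wedge \dots \wedge a_p$ and $B = b_1 \wedge \dots \wedge b_q$ with $A,B \neq 0$ (so $\dim \overline A = p$ and $\dim \overline B = q$), the wedge $A \wedge B = a_1 \wedge \dots \wedge a_p \wedge b_1 \wedge \dots \wedge b_q$ is nonzero precisely when the $p+q$ vectors $a_i, b_j$ are linearly independent, i.e.\ $\dim(\overline A + \overline B) = p+q$, i.e.\ $\overline A \cap \overline B = \{0\}$; in that case $A \wedge B$ manifestly represents $\overline A + \overline B = \overline A \oplus \overline B$. The degenerate cases in which $A$ or $B$ is a nonzero scalar are immediate. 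For the three shuffle statements, the plan is to read the answer off the explicit formula defining $\shuf$, using repeatedly that a determinant $\lr{x_1 \cdots x_m}$ vanishes as soon as $x_1, \dots, x_m$ fail to span $\CC^m$, and --- for the last and hardest bullet --- to first use bilinearity of $\shuf$ to replace $A$ and $B$ by wedge monomials in a basis of $\CC^m$ adapted to the subspaces $\overline A \cap \overline B \subseteq \overline A$ and $\overline A \cap \overline B \subseteq \overline B$.

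For the second bullet, if $s := p+q-m < 0$ then $A \shuf B = 0$ by definition; otherwise $p, q \le m$, and every coefficient $\lr{a_{w(1)} \cdots a_{w(m-q)} b_1 \cdots b_q}$ occurring in the defining formula is the determinant of $m$ vectors all lying in $\overline A + \overline B$, a proper subspace of $\CC^m$, hence vanishes, so $A \shuf B = 0$. For the third bullet, $\overline A + \overline B = \CC^m$ together with $\overline A \cap \overline B = \{0\}$ forces $p+q=m$, so $s=0$, and then, as already noted just after the definition of $\shuf$, $A \shuf B = \lr{a_1 \cdots a_p b_1 \cdots b_q}$, a scalar.

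For the fourth bullet, put $s := \dim(\overline A \cap \overline B) = p+q-m > 0$. I would fix a basis $c_1, \dots, c_s$ of $\overline A \cap \overline B$ and extend it to bases $c_1, \dots, c_s, a'_1, \dots, a'_{p-s}$ of $\overline A$ and $c_1, \dots, c_s, b'_1, \dots, b'_{q-s}$ of $\overline B$; since $s+(p-s)+(q-s)=m$ and $\overline A + \overline B = \CC^m$, the concatenated list is a basis of $\CC^m$. As $A$ and $c_1 \wedge \dots \wedge c_s \wedge a'_1 \wedge \dots \wedge a'_{p-s}$ are nonzero decomposables representing $\overline A$ they are proportional, and similarly for $B$; so by bilinearity of $\shuf$ it suffices to shuffle these two monomials. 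Expanding via the defining formula, I expect exactly one term to survive: a coefficient in the expansion is a determinant of $m$ vectors, $p$ of which span $\overline A$, and since the $b'_j$ span a complement of $\overline A$ in $\CC^m$, such a determinant is nonzero only when the other $m-p$ chosen vectors are exactly $b'_1, \dots, b'_{q-s}$ --- which, because $c_1, \dots, c_s$ come first in our ordering, pins the relevant shuffle to the identity. The surviving term is a nonzero scalar times $c_1 \wedge \dots \wedge c_s$, which is decomposable and represents $\overline A \cap \overline B$, as desired.

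The only genuine obstacle is the bookkeeping in the fourth bullet: correctly identifying the surviving shuffle permutation(s) and checking that the corresponding determinant is nonzero. Everything else follows directly from the definitions, and the scalar/degenerate cases are covered by the special clauses in the definition of $\shuf$.
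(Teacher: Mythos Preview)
Your argument is correct in all four bullets; the adapted-basis computation in the fourth bullet goes through exactly as you sketch (using the second expansion of $\shuf$, the only surviving shuffle is the identity). Note, however, that the paper does not actually prove this lemma: it is stated as a summary of standard properties of the Grassmann--Cayley algebra without any argument, so there is no ``paper's own proof'' to compare against. Your write-up supplies the details the paper omits.
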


Using these properties, it is straightforward to see that, under some constraints on dimension, wedge and shuffle products represent sums and intersections, respectively, of subspaces.

\begin{lemma}\label{lem:shuf-wedge-vs-cap-sum}
Suppose $A_1, \dots, A_p \in \bigwedge \CC^m$ are decomposable. 
\begin{itemize}
\item
${A_1 \wedge \dots \wedge A_p}$ represents $\overline{A_1} + \dots + \overline{A_p}$ if
\[\dim (\overline{A_1} + \dots + \overline{A_p}) = \sum_i \dim \overline{A_i}\]
and otherwise ${A_1 \wedge \dots \wedge A_p}=0$.
\item
${A_1 \shuf \dots \shuf A_p}$ represents $\bigcap_i \overline{A_i}$ if
\[\dim\left(\bigcap_{i} \overline{A_i}\right)= \left(\sum_i \dim \overline{A_i}\right) - (p-1)m \ge 1\]
and otherwise ${A_1 \shuf \dots \shuf A_p}$ is a scalar.
\end{itemize}
\end{lemma}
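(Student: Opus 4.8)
The plan is to prove both bullets by induction on $p$, in each case splitting off the last factor and feeding the outcome into \cref{lem:basic-prop-of-shuf}; the only extra ingredient is elementary dimension counting for subspaces of $\CC^m$. Using associativity I write the $p$-fold product as $B \wedge A_p$, resp.\ $B \shuf A_p$, where $B := A_1 \wedge \dots \wedge A_{p-1}$, resp.\ $B := A_1 \shuf \dots \shuf A_{p-1}$. Since ``represents'' is only defined up to a nonzero scalar, no signs need to be tracked. The two linear-algebra facts I will lean on are subadditivity, $\dim(\overline{A_1}+\dots+\overline{A_j}) \le \sum_{i \le j}\dim\overline{A_i}$, and its intersection analogue, $\dim(\overline{A_1}\cap\dots\cap\overline{A_j}) \ge \sum_{i\le j}\dim\overline{A_i} - (j-1)m$, the latter obtained by iterating $\dim(X\cap Y) \ge \dim X + \dim Y - m$.

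For the wedge statement the case $p=1$ is trivial. If $\dim\overline{A_1+\dots+A_p}=\sum_i\dim\overline{A_i}$, then subadditivity forces $\dim\overline{A_1+\dots+A_{p-1}}=\sum_{i<p}\dim\overline{A_i}$ and $\overline{B}\cap\overline{A_p}=\{0\}$, so by induction $B$ is nonzero, decomposable, and represents $\overline{A_1}+\dots+\overline{A_{p-1}}$; the first bullet of \cref{lem:basic-prop-of-shuf} then yields that $B\wedge A_p\neq 0$ represents $\overline{B}\oplus\overline{A_p}=\overline{A_1}+\dots+\overline{A_p}$. If the equality fails, take the least $j$ with $\dim\overline{A_1+\dots+A_j}<\sum_{i\le j}\dim\overline{A_i}$; then by the previous paragraph $A_1\wedge\dots\wedge A_{j-1}$ represents $\overline{A_1}+\dots+\overline{A_{j-1}}$, which meets $\overline{A_j}$ nontrivially, so $A_1\wedge\dots\wedge A_j=0$ by \cref{lem:basic-prop-of-shuf}, and hence the full product vanishes.

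For the shuffle statement I prove, by induction with the trivial base $p=1$, the dichotomy: if $\dim\bigcap_i\overline{A_i}=\big(\sum_i\dim\overline{A_i}\big)-(p-1)m\ge 1$, then $A_1\shuf\dots\shuf A_p$ is nonzero, decomposable, and represents $\bigcap_i\overline{A_i}$; otherwise it is a scalar, possibly $0$. Set $X:=\bigcap_{i<p}\overline{A_i}$. Assuming the displayed equality for the full collection, applying the two inequalities above first to $X$ and then to $X\cap\overline{A_p}$ squeezes a string of equalities out of which I read off: the same equality holds for $A_1,\dots,A_{p-1}$ (so by induction $B$ is nonzero, decomposable, representing $X$), $X+\overline{A_p}=\CC^m$, and $X\cap\overline{A_p}\neq\{0\}$; the last bullet of \cref{lem:basic-prop-of-shuf} then gives that $B\shuf A_p$ represents $X\cap\overline{A_p}=\bigcap_i\overline{A_i}$. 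Conversely, if the equality fails: when it already fails for $A_1,\dots,A_{p-1}$, induction makes $B$ a scalar and shuffling it with $A_p$ stays a scalar; when it holds for $A_1,\dots,A_{p-1}$, then $B$ is decomposable of positive degree representing $X$, and \cref{lem:basic-prop-of-shuf} applied to $B$ and $A_p$ leaves only the options $X+\overline{A_p}\neq\CC^m$ or $X\cap\overline{A_p}=\{0\}$ — the remaining option would force the displayed equality for $A_1,\dots,A_p$ — and in both of these $B\shuf A_p$ is a scalar. (One could instead dualize the shuffle statement to the wedge one via orthogonal complements, but that needs the $\shuf$-versus-$\wedge$ duality set up first.)

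I expect the only real subtlety to be the ``otherwise'' directions: one must pin down the first stage at which the iterated product degenerates and check that \cref{lem:basic-prop-of-shuf} genuinely forces a scalar, resp.\ $0$, there — rather than an intersection of unexpectedly large dimension, which is precisely what the uniform bound $\dim\bigcap_{i\le j}\overline{A_i}\ge\sum_{i\le j}\dim\overline{A_i}-(j-1)m$ rules out. Everything else is a direct transcription of \cref{lem:basic-prop-of-shuf} through the induction.
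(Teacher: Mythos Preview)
Your proposal is correct and is precisely the argument the paper has in mind: the paper does not spell out a proof but states that the lemma is ``straightforward'' from \cref{lem:basic-prop-of-shuf}, which is exactly what your induction-on-$p$ implements. Your careful handling of the ``otherwise'' cases (locating the first stage at which the dimension equality fails and invoking the appropriate bullet of \cref{lem:basic-prop-of-shuf}) fills in the details the paper leaves to the reader.
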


The following relation, which is easy to verify, will also be useful to us.

\begin{lemma}\label{lem:GC-easy-reln} 
Let $A_1 \in \bigwedge^{d_1} \CC^m, \dots, A_p \in \bigwedge^{d_p} \CC^m$ be decomposable, and suppose $d_1 + \dots + d_p = m+1$. Then 
\[\sum_{i=1}^p (-1)^{d_i(d_i+d_{i+1}+\dots+d_p)} ~ (A_1 \wedge \dots \wedge \widehat{A_i} \wedge \dots \wedge A_p) \shuf A_i = 0 \]
\end{lemma}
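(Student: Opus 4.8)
The plan is to reduce the identity to the elementary fact that $m+1$ vectors in $\CC^m$ satisfy a Cramer-type linear dependence, and then to reconcile the two sides by a careful bookkeeping of signs.

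First I would record the base case, namely the identity when $p = m+1$ and all $d_i = 1$: for arbitrary vectors $b_1, \dots, b_{m+1} \in \CC^m$,
\[\sum_{l=1}^{m+1} (-1)^l\, \lr{b_1 \cdots \widehat{b_l} \cdots b_{m+1}}\, b_l \;=\; 0.\]
To prove this, fix $j \in [m]$ and form the $(m+1) \times (m+1)$ matrix whose first $m$ rows are the rows of the matrix with columns $b_1, \dots, b_{m+1}$ and whose last row is a second copy of the $j$-th row. This matrix has two equal rows, so its determinant vanishes, and Laplace expansion along the last row gives $\sum_l (-1)^l (b_l)_j\, \lr{b_1 \cdots \widehat{b_l} \cdots b_{m+1}} = 0$. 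Since this holds for every coordinate $j$, the displayed vector identity follows.

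Next I would reduce the general statement to the base case. As each $A_j$ is decomposable, write $A_j = a_{j,1} \wedge \dots \wedge a_{j,d_j}$ and let $(b_1, \dots, b_{m+1})$ be the concatenation $(a_{1,1}, \dots, a_{1,d_1}, a_{2,1}, \dots, a_{p,d_p})$. Set $\sigma_i := d_1 + \dots + d_{i-1}$ and $R_i := d_{i+1} + \dots + d_p$, so the vectors of $A_i$ occupy positions $\sigma_i+1, \dots, \sigma_i+d_i$ and $\sigma_i + R_i = m+1-d_i$. Writing $B_i$ for the ordered list of vectors in $A_1 \wedge \dots \wedge \widehat{A_i} \wedge \dots \wedge A_p$ (a list of length $m+1-d_i$) and applying the second formula in the definition of $\shuf$ with $s = 1$, the $i$-th summand expands as
\[(A_1 \wedge \dots \wedge \widehat{A_i} \wedge \dots \wedge A_p) \shuf A_i \;=\; \sum_{t=1}^{d_i} (-1)^{t-1}\, \lr{(B_i)\; a_{i,1} \cdots \widehat{a_{i,t}} \cdots a_{i,d_i}}\, a_{i,t}.\]
Re-sorting each bracket $\lr{(B_i)\; a_{i,1} \cdots \widehat{a_{i,t}} \cdots a_{i,d_i}}$ into the natural order $\lr{b_1 \cdots \widehat{b_{\sigma_i+t}} \cdots b_{m+1}}$ moves a block of $d_i-1$ vectors past $R_i$ others, contributing a sign $(-1)^{(d_i-1)R_i}$. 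Gathering the powers of $-1$ and simplifying using $\sigma_i + R_i = m+1-d_i$, one obtains
\[(-1)^{d_i(d_i + d_{i+1} + \dots + d_p)}\, (A_1 \wedge \dots \wedge \widehat{A_i} \wedge \dots \wedge A_p) \shuf A_i \;=\; (-1)^m \sum_{l=\sigma_i+1}^{\sigma_i+d_i} (-1)^l\, \lr{b_1 \cdots \widehat{b_l} \cdots b_{m+1}}\, b_l.\]
Summing over $i = 1, \dots, p$, the index blocks $\{\sigma_i+1, \dots, \sigma_i+d_i\}$ partition $[m+1]$, so the left-hand side of the lemma equals $(-1)^m$ times the base-case sum, hence $0$. (If some $A_j = 0$ the identity is trivially $0=0$; the argument above is in any case uniform, since the base case was established for arbitrary $b_l$.)

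The only genuine work is the sign computation in the last display; the geometric content — that $m+1$ vectors in $\CC^m$ are linearly dependent — is elementary, so I expect sign bookkeeping to be the main obstacle. An alternative is an induction on $p$: the case $p=2$ is precisely the (anti)symmetry relating $A \shuf B$ and $B \shuf A$ recorded above, and the inductive step absorbs $A_{p-1} \wedge A_p$ into a single decomposable factor of degree $d_{p-1}+d_p$; but this route still requires the $p=3$ instance to be verified directly, so I would favor the concatenation reduction.
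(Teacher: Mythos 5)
Your proof is correct. The paper offers no argument for this lemma---it is introduced with ``which is easy to verify''---so there is nothing to compare against; your reduction to the Cramer-type linear dependence among $m+1$ vectors of $\CC^m$ is the natural way to supply the missing verification. I checked the two places where the argument could go wrong: the expansion of $(A_1\wedge\cdots\widehat{A_i}\cdots\wedge A_p)\shuf A_i$ agrees with the second formula in the paper's definition of $\shuf$ with $s=1$, where $w\in S_{d_i}^{1}$ with $w(1)=t$ indeed has $\sign(w)=(-1)^{t-1}$; and the total sign exponent of the $(i,t)$ term is $d_i(d_i+R_i)+(t-1)+R_i(d_i-1)\equiv d_i+R_i+t+1\equiv m+\sigma_i+t\pmod 2$ using $d_i+R_i=m+1-\sigma_i$, which is exactly your $(-1)^{m+l}$ with $l=\sigma_i+t$. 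Since the blocks $\{\sigma_i+1,\dots,\sigma_i+d_i\}$ partition $[m+1]$, the sum collapses to $(-1)^m$ times the base-case relation and vanishes, as you say.
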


We will often use the Grassmann-Cayley algebra $\GC(m)$ to define regular functions on the Grassmannian $\Gr_{m,n}$, or functions from one Grassmannian to another. If $V \in \Gr_{m,n}$ is represented by the matrix $[v_1 \cdots v_n]$, we often write $v_i$ as just $i$. We also often omit the symbol $\wedge$, writing $i_1 \dots i_j$ for $v_{i_1}\wedge \cdots \wedge v_{i_j}$. 

\begin{notation}\label{not:brackets} If $F \in \GC(m)$ is a scalar, we sometimes write $\lr{F}$ for $F$, to emphasize that it is a scalar. As is consistent with our previous convention with Pl\"ucker coordinates, if $F= v_{1} \wedge \cdots \wedge v_m \in \bigwedge^m \CC^m$, we write $\lr{F}:=\det[v_1 \dots v_n]$. 
\end{notation}

\begin{definition}\label{def:chain}  Consider $\GC(m)$ and $V \in \Gr_{m,n}$ represented by $[v_1 \cdots v_n]$, and let $A = \{a_1,\dots,a_i\}$, $B = \{b_1,\dots,b_j\}$ and $C = \{c_1,\dots,c_k\}$ be ordered sets of indices in $[n]$ such that $i+j \ge m$, $i+k \geq m$, $j+k \geq m$, and $i+j+k=2m$.  Then
\[A \shuf B \shuf C \;=\; ({a_1}  \cdots  {a_i}) \shuf ({b_1}  \cdots  {b_j}) \shuf ({c_1}  \cdots  {c_k}) \;=\;(v_{a_1} \wedge \cdots \wedge v_{a_i}) \shuf (v_{b_1} \wedge \cdots \wedge v_{b_j}) \shuf (v_{c_1} \wedge \cdots \wedge v_{c_k})  \]
is a scalar, and we use the notation $\lr{A \shuf B \shuf C}:=A \shuf B \shuf C$.
We call $\lr{A \shuf B \shuf C}$ a \emph{chain polynomial}. 
\end{definition}

\begin{remark}
For $m=4$, \cite[Definition 2.5]{even2023cluster} defined a ``chain polynomial" using the notation $\lr{abc \br de \br fgh}$. We have $\lr{abc \shuf de \shuf fgh} = - \lr{abc \br de \br fgh}.$ 
\end{remark}

\begin{example}\label{ex:chain}
On the Grassmannian $\Gr_{m,n}$ the chain polynomials of \cref{def:chain} are quadratic polynomials in the Pl\"ucker coordinates.
For example, let $m=3$, and $i=j=k=2$.  
Let $A=\{a_1,a_2\}$, $B=\{b_1,b_2\}$, and $C=\{c_1,c_2\}$
be ordered sets of indices in $[n]$.
Then 
\begin{align*}
\lr{A \shuf B \shuf C} &= (A\shuf B) \shuf C = A \shuf (B \shuf C)\\ 
&= \lr{{a_1} {b_1} {b_2}} \lr{{a_2} {c_1} {c_2}} - \lr{{a_2} {b_1} {b_2}} \lr{{a_1} {c_1} {c_2}} \\
&=\lr{{a_1} {a_2} {b_2}}\lr{{b_1} {c_1} {c_2}} - 
\lr{{a_1} {a_2} {b_1}}\lr{{b_2} {c_1} {c_2}}.
\end{align*}
\end{example}

\begin{example}
\label{ex:intersections}
Let $A=\{a_1,a_2,a_3\}, B=\{b_1,b_2,b_3\}, C=\{c_1,c_2,c_3\}$ be ordered sets of indices in~$[n]$.
For $V \in \Gr_{4,n}$ represented by $[v_1 \cdots v_n]$, consider the element 
\[a_3  (B \shuf C)\;:=\;v_{a_3} \wedge ((v_{b_1} \wedge v_{b_2} \wedge v_{b_3}) \shuf (v_{c_1} \wedge v_{c_2} \wedge v_{c_3}) ) \;\in\; \GC(m).\]
\begin{itemize}
\item
By definition, we have
\begin{align*}a_3 (B \shuf C) &= a_3(\lr{b_1b_2b_3c_3} c_1c_2 - \lr{b_1b_2b_3c_2} c_1c_3 + \lr{b_1b_2b_3c_1} c_2c_3)\\
&= \lr{b_1b_2b_3c_3} a_3c_1c_2 - \lr{b_1b_2b_3c_2} a_3c_1c_3 + \lr{b_1b_2b_3c_1} a_3c_2c_3\\
&= \lr{b_1 c_1 c_2 c_3} a_3 b_2 b_3 -\lr{b_2 c_1 c_2 c_3} a_3 b_1 b_3 +\lr{b_3 c_1 c_2 c_3} a_3 b_1 b_2.\end{align*}
Provided that all vectors involved are sufficiently generic, $a_3 (B \shuf C)$ represents the 3-plane spanned by $v_{a_3}$ and the 2-plane
$\spn(v_{b_1}, v_{b_2}, v_{b_3}) \cap \spn(v_{c_1}, v_{c_2}, v_{c_3})$.
\item
We also have 
\begin{align*}
(a_1 a_2) \shuf (a_3 
(B \shuf C)) \;&=\;   \lr{a_1 a_3\shuf B \shuf C} a_2 -  \lr{a_2 a_3\shuf B \shuf C }a_1 \\
\;&=\; \lr{a_1 a_2\shuf B \shuf C}a_3 - \lr{A \shuf B\shuf c_2 c_3}c_1  +   \lr{ A \shuf B \shuf c_1 c_3}c_2 -   \lr{A\shuf B \shuf c_1 c_2}c_3
\end{align*}
Generically, this element represents the intersection of the 2-plane $\spn(v_{a_1}, v_{a_2})$ with the 3-plane represented by~$a_3 (B \shuf C)$.

\item
Similarly,
\begin{align*}
A\shuf B \shuf C \;&=\; \lr{a_1 a_2 \shuf B \shuf C}a_3-  \lr{a_1 a_3 \shuf B \shuf C}a_2 +   \lr{a_2 a_3 \shuf B \shuf C}a_1 \\
\;&=\; \lr{A \shuf B \shuf c_1c_2}c_3- \lr{A \shuf B \shuf c_1c_3}c_2 + \lr{A \shuf B \shuf c_2c_3}c_1
\end{align*}
represents the line obtained by intersecting three 3-planes: $\spn(v_{a_1}, v_{a_2}, v_{a_3})$, $\spn(v_{b_1}, v_{b_2}, v_{b_3})$, and $\spn(v_{c_1}, v_{c_2}, v_{c_3})$, provided the vectors involved are sufficiently generic.
\end{itemize}
\end{example}

\subsection{The amplituhedron map}

Motivated by the desire to give a geometric understanding of the \emph{BCFW recurrence} for computing scattering amplitudes in $\mathcal{N}=4$ super Yang Mills theory, and building on the Grassmannian formulation of \cite{abcgpt},
Arkani-Hamed and Trnka
\cite{arkani-hamed_trnka}
introduced
the \emph{(tree) amplituhedron}, which they defined as 
the image of the positive Grassmannian under a positive linear map.  While the amplituhedron is not a central focus of this paper, the desire to understand it was a hidden motivation of this work. The amplituhedron map, defined below, and its degree on positroid varieties, is very closely related to our constructions.

Let $\Mat_{n,p}^{\circ}$ denote the set of full-rank $n\times p$ matrices, and $\Mat_{n,p}^{>0} \subset \Mat_{n,p}^{\circ}$ be the subset of matrices whose maximal minors are positive.

\begin{definition}[The amplituhedron map]\label{defn_amplituhedron}
Let $Z\in \Mat_{n,k+m}^{\circ}$, where $k+m \leq n$. 
The \emph{amplituhedron map}
$\tilde{Z}:\Gr_{k,n} \dashedrightarrow \Gr_{k,k+m}$
        is the rational map defined by
        $\tilde{Z}(C):=CZ$,
    where
 $C$ is a $k \times n$ matrix representing an element of
        $\Gr_{k,n}$,  and  $CZ$ is a $k \times (k+m)$ matrix representing an
         element of $\Gr_{k,k+m}$.
         The \emph{exceptional locus} of $\tZ$, denoted by $E_Z$, is the subset of $\Gr_{k,n}$ where $\tZ$ is not defined.
\end{definition}
         
If $Z \in \Mat_{n,k+m}^{>0}$, $\tilde{Z}$ is well-defined when
restricted to the positive Grassmannian 
$\Gr_{k,n}^{\geq 0}$, and the \emph{tree amplituhedron} $\mathcal{A}_{n,k,m}(Z) \subset \Gr_{k,k+m}$ is the image
$\tilde{Z}(\Gr_{k,n}^{\ge 0})$.

It will be convenient for us to compose the amplituhedron map with an embedding into $\Gr_{m,n}$, which also depends on $Z$.

\begin{definition}\label{def:twistormap}
Let $Z \in \Mat_{n, k+m}^\circ$. The \emph{twistor map} is the map 
\begin{align*}
\upsilon_Z:\Gr_{k, k+m} &\hookrightarrow \Gr_{m,n}\\
Y &\mapsto Y^\perp Z^T
\end{align*}
where $Y^\perp \subset \CC^{k+m}$ denotes the subspace orthogonal to $Y$.
\end{definition}
 The twistor map $\upsilon_Z$ is injective, and if we set $W= \colsp Z$, the image of $\upsilon_Z$ is $\Gr_m(W):= \{X \in \Gr_{m,n}: X \subset W\}$, which is closed in $\Gr_{m,n}$. In terms of functions, the pullback $\upsilon_Z^*\lr{I}= \lr{I}_{\upsilon_Z (Y)}$ of a Pl\"ucker coordinate is the \emph{twistor coordinate}
 \[ \det\begin{bmatrix}-Y-\\
 -Z_{I}-
 \end{bmatrix}\]
  where $Z_I$ is the submatrix of $Z$ using rows $I$.

Let $\comp_Z:= \upsilon_Z \circ \tZ$.  We use the notation $\mcy_G:=\overline{\comp_Z(\Pi_G \setminus E_Z)}$ for the (Zariski) closure of the image of $\Pi_G$ under $\comp_Z$. We define  
\begin{equation}\label{eq:Upsilon}
\comp_{Z,G}:= \comp_Z |_{\Pi_G} : \Pi_G \dashrightarrow \mcy_G
\end{equation}
to be the restriction of $\comp_{Z}$ to $\Pi_G$.

\begin{definition} \label{def:intnumber} 
Let $\Pi_G \subset \Gr_{k,n}$.
The \emph{$m$-intersection number} of $\Pi_G$, denoted $\mint(G)$, is defined as 
\[\mint(G):= \begin{cases} 0 & \text{ if for generic }Z \in \Mat_{n, k+m}^\circ,~ \dim \mcy_G < \dim \Pi_G\\
d & \text{ if for generic }Z \in \Mat_{n, k+m}^\circ, ~\comp_{Z,G} \text{ is generically $d$-to-1.}
\end{cases}\]

If $\Pi_G \subset \Gr_{k,n}$ has dimension $km$ and has $m$-intersection number 1, then $\mcy_G=\Gr_m(W)$ and we call $\Pi_G$ an \emph{algebraic pre-tile}, or just a \emph{pre-tile}.
\end{definition}

\begin{remark}\label{rem:tile-vs-pretile}
The amplituhedron literature often considers a semi-algebraic version of the objects above. For a positroid cell $S_G = \Pi_G \cap \Grk$, one can consider $Z_G:= \overline{\tZ(S_G)}$, the closure of the image of the positroid cell. We call $Z_G$ a \emph{tile} if $\tZ$ is injective on $S_G$ and $\dim \Pi_G = km$. Note that $\Pi_G$ being a pre-tile \emph{does not} imply that $Z_G$ is a tile, and $Z_G$ being a tile \emph{does not} imply that $\Pi_G$ is a pre-tile. We record the following observations regarding tiles and pre-tiles:
\begin{itemize}
\item For all known tiles $Z_G$, $\Pi_G$ is a pre-tile.
\item For $m=1$: there are pre-tiles $\Pi_G$ such that $Z_G$ is not a tile. This behavior is expected to persist for odd $m$. 
\item For $m=2$: $\Pi_G$ is a pre-tile if and only if $Z_G$ is a tile.
\end{itemize}
 
\end{remark}
The following result of Lam gives a $Z$-independent characterization of $m$-intersection number and shows that it is always finite.

\begin{proposition}[{\cite[Proposition 4.8]{LamAmpCells}}]\label{prop:cohom-characterization-int-num} Suppose $\Pi_G \subset \Gr_{k,n}$ has dimension $km$. Then $\mint(G)$ is the coefficient of the Schur polynomial $s_{(n-k-m)^k}$ in the cohomology class $[\Pi_G] \in H^*(\Gr_{k,n})$.

Equivalently, fix $X \in \Gr_{n-m, n}$ and let $\Gr_k(X):= \{V \in \Gr_{k,n}: V \subset X\}.$ Then in $H^*(\Gr_{k,n})$, 
\[[\Gr_k(X)] \cap [\Pi_G] = \mint(G) \cdot [\pt].\]
\end{proposition}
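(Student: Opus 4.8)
The statement to prove is \cref{prop:cohom-characterization-int-num}, which we attribute to Lam. Since this is cited from \cite[Proposition 4.8]{LamAmpCells}, the plan is not to reprove it from scratch but to explain the geometric mechanism, and the "proof" can be a short deduction from standard intersection theory together with the definition of $m$-intersection number. Here is how I would organize it.

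First I would unwind the definition of $\mint(G)$ and relate the map $\comp_{Z,G}$ to a Schubert-type intersection. Recall $\comp_Z = \upsilon_Z \circ \tZ$, and $\upsilon_Z$ is injective with image $\Gr_m(W)$ where $W = \colsp Z$. So the fiber of $\comp_{Z,G}$ over a generic point $X \in \Gr_m(W) \cap \mcy_G$ has the same cardinality as the fiber of $\tZ$ over the corresponding point of $\Gr_{k,k+m}$. The key observation is that for $V \in \Gr_{k,n}$, we have $\tZ(V) = VZ$, and a point $Y \in \Gr_{k,k+m}$ is hit by $V$ precisely when $VZ = Y$, i.e. when $V$ is contained in the preimage under the linear map $Z^T:\CC^{k+m}\to\CC^n$ (equivalently, $V \subseteq (Y^\perp Z^T)^\perp$ — an $(n-m)$-dimensional subspace of $\CC^n$). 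Writing $X := Y^\perp Z^T \in \Gr_{m,n}$ and $\widetilde{X} := X^\perp \in \Gr_{n-m,n}$, the set of $V \in \Pi_G$ with $\comp_{Z,G}(V)$ equal to the point corresponding to $X$ is exactly $\{V \in \Pi_G : V \subseteq \widetilde X\} = \Pi_G \cap \Gr_k(\widetilde X)$.

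Next I would count this intersection cohomologically. For generic $Z$, the subspace $\widetilde X = \widetilde X(Z,Y)$ ranges over a dense subset of $\Gr_{n-m,n}$ as $Y$ ranges over $\Gr_{k,k+m}$; so the number of points in a generic fiber of $\comp_{Z,G}$ equals the number of points in $\Pi_G \cap \Gr_k(X')$ for generic $X' \in \Gr_{n-m,n}$. By Kleiman transversality (the generic translate theorem), since $\GL_n$ acts transitively on $\Gr_{n-m,n}$ and $\Gr_k(X')$ is a $\GL_n$-translate of a fixed sub-Grassmannian, a generic $\Gr_k(X')$ meets $\Pi_G$ transversally in a finite reduced set of points, of cardinality $\deg\big([\Gr_k(X')]\cdot[\Pi_G]\big)$ in $H^*(\Gr_{k,n})$. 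Now $\Gr_k(X') \cong \Gr_{k,n-m}$ is a sub-Grassmannian of complementary dimension ($\dim \Gr_k(X') = k(n-m-k)$, $\dim \Pi_G = km$, and these sum to $k(n-k) = \dim \Gr_{k,n}$), and its class is the Schubert class $\sigma_{(n-m-k)^k}$. Pairing against $[\Pi_G]$ expanded in the Schubert basis, $[\Pi_G] = \sum_\lambda c_\lambda \sigma_\lambda$, and using Poincaré duality of the Schubert basis ($\sigma_\lambda \cdot \sigma_\mu = \delta_{\lambda, \mu^\vee}[\pt]$ for complementary $\lambda,\mu$), one gets that $[\Gr_k(X')]\cdot[\Pi_G] = c_{(n-k-m)^k}\,[\pt]$, where $c_{(n-k-m)^k}$ is precisely the coefficient of $s_{(n-k-m)^k}$ when $[\Pi_G]$ is written in the Schur/Schubert basis. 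Both displayed equalities in the proposition follow: the first is the Schur-coefficient statement, and the second is just the restatement $[\Gr_k(X)]\cap[\Pi_G] = \mint(G)\cdot[\pt]$ with $X$ the generic $(n-m)$-plane.

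The main obstacle I expect is the transversality/genericity bookkeeping: one must be careful that (a) the condition $\dim \mcy_G < \dim \Pi_G$ in the $\mint(G)=0$ case corresponds exactly to the cohomological coefficient vanishing — this is the statement that the generic fiber of a dominant map is finite iff domain and (closure of) image have the same dimension, combined with the fact that a nonzero effective class always has nonnegative Schur coefficients so vanishing of $c_{(n-k-m)^k}$ forces the intersection to be empty rather than positive-dimensional; and (b) that the identification of the generic fiber of $\comp_{Z,G}$ with $\Pi_G \cap \Gr_k(X')$ really does hold for \emph{generic} $Z$ simultaneously with the transversality — i.e. one needs the locus of bad $(Z,Y)$ (where the fiber jumps, is non-reduced, or meets $E_Z$) to be a proper subvariety. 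Both points are standard but deserve a careful sentence each; everything else is formal Schubert calculus. Since the result is quoted from \cite{LamAmpCells}, in the paper itself it suffices to give the reference and the one-line fiber identification above, deferring the transversality details to \emph{loc. cit.}
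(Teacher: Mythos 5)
The paper does not prove this proposition; it is quoted from Lam, and the paper's own use of it runs in the opposite direction from your sketch: in \cref{prop:int-num-in-terms-of-Gr_mn} the authors \emph{start} from the cohomological identity $[\Gr_k(X)]\cap[\Pi_G]=\mint(G)\cdot[\pt]$ and apply Kleiman's theorem to deduce the point count $|\Pi_G\cap\Gr_k(\bz^\perp)|=\mint(G)$ for generic $\bz$. Your reconstruction (fiber identification $\comp_{Z,G}^{-1}(X)=\Pi_G\cap\Gr_k(X^\perp)$ away from $E_Z$, then Kleiman plus Poincar\'e duality in the Schubert basis) is the standard and essentially correct route to the cited result itself, and your fiber computation $V\subseteq (Y^\perp Z^T)^\perp$ is right.

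There is, however, one concretely false step. You assert that for \emph{fixed} generic $Z$, the subspace $\widetilde X=(Y^\perp Z^T)^\perp$ ranges over a dense subset of $\Gr_{n-m,n}$ as $Y$ varies. It does not: $Y^\perp Z^T$ always lies in $\Gr_m(W)$ with $W=\colsp Z$, so $\widetilde X$ always contains the fixed $(n-k-m)$-plane $W^\perp$ and sweeps out only a $km$-dimensional sub-Grassmannian of the $m(n-m)$-dimensional $\Gr_{n-m,n}$ (proper whenever $n>k+m$). Consequently you cannot conclude directly that a generic fiber of $\comp_{Z,G}$ has the same cardinality as $\Pi_G\cap\Gr_k(X')$ for generic $X'\in\Gr_{n-m,n}$; nor can you apply Kleiman within this subfamily, since the stabilizer of $W^\perp$ does not act transitively on $\Gr_{k,n}$. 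The repair is to use that the definition of $\mint(G)$ already quantifies over generic $Z$: the map $(Z,Y)\mapsto\widetilde X(Z,Y)$ \emph{is} dominant onto $\Gr_{n-m,n}$ when $Z$ is allowed to vary, so the generic fiber count over pairs $(Z,Y)$ agrees with the count over generic $X'$, and a standard argument (the good locus is open dense in the product, hence has dense open slices for generic $Z$) converts ``generic $(Z,Y)$'' into ``generic $Z$, then generic $Y$''. A small additional point: with the usual codimension-indexed convention, $[\Gr_k(X')]$ is $\sigma_{(m^k)}$, the Poincar\'e dual of $\sigma_{(n-k-m)^k}$, not $\sigma_{(n-m-k)^k}$ itself; your duality pairing and the final conclusion are unaffected, but the labeling should be fixed.
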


\begin{remark}\label{rk:physics_1}
Scattering amplitudes in planar $\mathcal{N}=4$ super Yang-Mills (SYM) have fundamental building blocks called \emph{Yangian invariants}, which are functions on $\Gr_{4,n}$ of external momenta of scattering particles and encode the \emph{leading singularities} of the theory. Each Yangian invariant $\mathcal{Y}_G$ corresponds to a $4k$-dimensional positroid cell $S_{G} \subseteq \Gr^{\geq 0}_{k,n}$ \cite{Arkani-Hamed:2012zlh}.
If $\mbox{IN}_4(G)=1$, $\mathcal{Y}_G$ is a rational function whose poles correspond to facets of $\tilde{Z}(S_{G})$. If $\mbox{IN}_4(G)>1$, $\mathcal{Y}_G$ can be expressed as a sum of $d$ functions which have \emph{algebraic} singularities.
\end{remark}

\section{Vector-relation configurations}
\label{sec:promotion-plabic-graphs}
 In this section we introduce \emph{vector relation configurations}, essentially inspired by \cite{AGPR}.
We note that related frameworks had previously appeared in \cite{Arkani-Hamed:2012zlh}, \cite{postnikovicm} and \cite{lam2015totally}.

Throughout this section we will work with bipartite
plabic graphs as in \cref{not:bipartite}.  For background, see \cref{app:plabic}, more specifically \cref{def:plabic} and \cref{rem:bipartite}. 
\begin{notation}\label{not:bipartite}
Let $G=((B,W), E)$
be a bipartite plabic graph, where 
$B$ and $W$ denote the sets of black and white
vertices and $E$ denotes the set of edges.  We require
that all boundary vertices are black:
we write  
$B=\Bi \sqcup \Bb$, where $\Bi$ is the set of
black interior vertices, and $\Bb=\{1,2,\dots,n\}$
is the set of boundary vertices, labeled in increasing order clockwise. We use the notation $G^{op}$ for the graph obtained from $G$ by changing the color of every vertex (including the boundary vertices). 
We will assume that all our plabic graphs are leafless, as in \cref{def:plabic}.  Unless otherwise mentioned, $G$ is assumed to be reduced. 
\end{notation}

\begin{definition}\label{def:VRC}
Let $G$ be a bipartite plabic graph as in \cref{not:bipartite}.
    Let $m \geq 1$ be a positive integer. An \emph{$m$-vector-relation configuration} ($m$-VRC) on $G$ is an assignment of vectors $v_b \in \CC^m$ 
    to each black vertex of $G$ and edge weights $r_e \in \CC^*$ to each edge of $G$, such that 
    \begin{enumerate}
    \item the \emph{boundary vectors} $v_1, \dots, v_n$ on vertices $1, \dots n$ span $\CC^m$;
    \item
    for each white vertex $w$, the following relation holds
    \[\sum_{\{w,b\}=e \in E} r_e v_b =0.\]
   \end{enumerate} 
    We write $(\bv, \bR):=(\{v_b\}_{b \in B}, \{r_e\}_{e \in E})$ for an $m$-VRC. The \emph{boundary} of $(\bv, \bR)$ is the $m \times n$ matrix of boundary vectors
    \[\partial(\bv, \bR):= \begin{bmatrix}
        v_1 \cdots v_n
    \end{bmatrix}.\]

    An $m$-VRC is \emph{non-degenerate} if all vectors on internal vertices are nonzero, and otherwise is \emph{degenerate}.
\end{definition}

\begin{remark}
We note that our conventions are somewhat different from those of \cite{AGPR}.  We swap the roles of black and white relative to \cite{AGPR}; and in contrast to \cite{AGPR}, we allow our vectors $v_b$ to be zero, and we require our edge weights to be nonzero. 
\end{remark}

We note that $GL_m(\CC)$ acts on $(\bv, \bR)$ by left multiplication on all vectors. We often consider $(\bv, \bR)$ up to this action, in which case the boundary is a well-defined element of $\Gr_{m,n}$. We also have the following action of $(\CC^*)^{|W|+|\Bi|}$ on $m$-VRCs, via gauge transformations.

\begin{definition}
Let $(\bv, \bR)$ be an $m$-VRC on a plabic graph $G$. For an internal vertex $x$ of $G$ and $t \in \CC^*$, \emph{gauge transformation by $t$ at $x$} changes $r_e \mapsto t r_e$ for all edges $e$ adjacent to $x$ and, if $x=b$ is black, additionally changes $v_b \mapsto (1/t) v_b$. We write 
    $\mVRC_G$ for $m$-VRCs on $G$ modulo gauge transformations and $GL_m$-action. If $(\bv, \bR)$ is an $m$-VRC on $G$, we write $[\bv, \bR]$ for the corresponding element of $\mVRC_G$ and $\partial[\bv, \bR] \in \Gr_{m,n}$ for its boundary, which is well-defined. We write $\mzVRC_G$ for the subset of $\mVRC_G$ with boundary $\bz \in \Gr_{m,n}$.
\end{definition}
Note that gauge transformations commute with $GL_m$-action. It will sometimes also be useful to allow gauge transformations at the boundary vertices of $G$. With this enlarged gauge group, $\partial[\bv, \bR]$ is an element of $\Conf_{m,n}$.

\begin{remark}\label{rem:AGPRcontrast} 
One of our main results about vector relation configurations is
\cref{cor:int1}, which gives a characterization of when a collection of generic boundary vectors can be uniquely extended to a vector relation configuration.  This should be compared to \cite[Theorem 1.1]{AGPR}.
Let $G$ be a reduced plabic graph of type $(k,n)$.  Then we have the following results:
\begin{itemize}
\item (\cref{cor:int1}): A configuration of $n$ generic boundary vectors in $\C^m$ can be extended to a vector relation configuration on $G$ that is unique up to gauge, if and only if the positroid variety  $\Pi_G$ has \emph{$m$-intersection number} $1$ and $\Pi_G$ has dimension $km$. 

\item (\cite[Theorem 1.1]{AGPR}) A configuration of $n$ boundary vectors in $\C^k$ that represent a point of $\Pi_G$ can generically be extended to a vector relation configuration on $G$ that is unique up to gauge. 
\end{itemize}  \end{remark}

The next lemma gives a formula for the vectors in a VRC in terms of certain boundary vectors. 
Recall from \cref{def:orientation} that a \emph{reverse perfect orientation} of $G$ is an orientation of the edges such that every internal white vertex has a unique outgoing edge, and every internal black vertex has a unique incoming edge. Every reduced plabic graph has an acyclic reverse perfect orientation (cf. \cref{acycliclemma}).

\begin{lemma}\label{lem:v_b-from-paths}
    Suppose $G$ is a (leafless)  plabic graph with an acyclic reverse perfect orientation~$\OO$.   
    Let $I$ be the source set of $\OO$. Let $(\bv, \bR)=(\{v_b\}, \{r_e\})$ be an $m$-VRC on $G$.

    Then for any black vertex $b$, 
    \begin{equation}\label{eq:v_b-from-paths}
    v_b = \sum_{i \in I} \left( \sum_{\substack{ P: i \to b \\ \text{ path in } \OO}} \frac{\prod_e r_e}{\prod_{e'} (-r_{e'})}\right) v_i  \end{equation}
    where the product in the numerator is over the edges $e \in P$ which are oriented black-to-white in $\OO$ and the product in the denominator is over edges $e'\in P$ oriented white-to-black.

\end{lemma}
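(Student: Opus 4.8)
The plan is to prove \eqref{eq:v_b-from-paths} by induction along the acyclic orientation $\OO$. First I would record the partial order $\preceq$ on black vertices, with $b'\preceq b$ whenever some directed path in $\OO$ runs from $b'$ to $b$; acyclicity of $\OO$ makes this a genuine partial order, and I would induct along any linear extension of it. For bookkeeping I would write $\wt(P):=\frac{\prod_e r_e}{\prod_{e'}(-r_{e'})}$ for the scalar attached in \eqref{eq:v_b-from-paths} to a directed path $P$ (numerator over the black-to-white edges of $P$, denominator over the white-to-black edges); this weight is visibly multiplicative under concatenation of paths.

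The base case is when $b$ is a source of $\OO$: then $b\in I$ and no edge of $\OO$ points into $b$, so the only directed path ending at $b$ is the length-zero path, with $\wt=1$, and the right-hand side of \eqref{eq:v_b-from-paths} collapses to $v_b$.

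For the inductive step I would take a black vertex $b$ that is not a source. By definition of a reverse perfect orientation, $b$ has a unique incoming edge $e_0=\{w_0,b\}$, oriented $w_0\to b$; since $G$ is bipartite with all boundary vertices black, $w_0$ is an internal white vertex and hence has a unique outgoing edge in $\OO$, which must be $e_0$. Consequently every other edge at $w_0$ has the form $e=\{w_0,b'\}$ and is oriented $b'\to w_0$. Using the relation of $(\bv,\bR)$ at $w_0$, namely $\sum_{\{w_0,b''\}=e\in E} r_e v_{b''}=0$, and solving for $v_b$ gives
\[
v_b=\sum_{\substack{e=\{w_0,b'\}\in E\\ e\ne e_0}}\frac{r_e}{-r_{e_0}}\,v_{b'},
\]
and each $b'$ here satisfies $b'\prec b$ through the path $b'\to w_0\to b$, so the inductive hypothesis applies to $v_{b'}$. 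After substituting it, it remains to verify, for each fixed $i\in I$, the path identity
\[
\sum_{P\colon i\to b}\wt(P)=\sum_{\substack{e=\{w_0,b'\}\in E\\ e\ne e_0}}\frac{r_e}{-r_{e_0}}\sum_{P'\colon i\to b'}\wt(P').
\]
The key point is a bijective factorization of directed paths: any directed path $P$ from $i$ to $b$ must end with $b$'s unique incoming edge $e_0$, hence with a segment $b'\to w_0\to b$ for a unique incoming edge $e=\{w_0,b'\}\ne e_0$ of $w_0$, and deleting that segment leaves a directed path $P'\colon i\to b'$; conversely any such pair $(e,P')$ extends to a directed path to $b$, the extension being legitimate because acyclicity forbids $P'$ from revisiting $w_0$ (it would then have to continue along $w_0$'s only outgoing edge back to $b$, contradicting $b'\prec b$) or $b$. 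Since $b'\to w_0$ is black-to-white and $w_0\to b$ is white-to-black, $\wt(P)=\wt(P')\cdot\frac{r_e}{-r_{e_0}}$, which establishes the identity and closes the induction.

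I expect the only substantive content to be this path factorization; the white-vertex relation manipulation and the weight accounting are routine. The point that needs genuine care is the acyclicity argument forcing $P'$ to avoid $w_0$ and $b$, since it is exactly what makes the factorization a bijection and keeps the induction well-founded; under the standing assumptions that $G$ is reduced and leafless this is clean, so no multi-edge subtleties at $w_0$ arise.
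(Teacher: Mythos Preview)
Your proof is correct and follows essentially the same approach as the paper: both induct along the acyclic orientation, use the unique incoming edge at a non-source black vertex to invoke the white-vertex relation, and then bijectively factor paths to $b$ through the penultimate black vertex. The only cosmetic difference is the induction variable---you use a linear extension of the reachability partial order, while the paper uses the length of the longest path from $I$ to $b$---and your acyclicity remark ensuring the path factorization is a bijection is simply the observation that in an acyclic directed graph every directed walk is automatically a simple path.
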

\begin{proof}
We induct on $q_b= \max_{i \in I} (\text{length of longest path }i \to b \text{ in }\OO)$. Since $\OO$ is acyclic and $G$ is leafless, every internal vertex can be reached from a source: every internal vertex has degree at least two, so has some incoming edge. If we walk backwards along the incoming edges, we must eventually reach a source, as $\OO$ has no oriented cycles. So $q_b$ is well-defined and nonnegative for all black vertices $b$.

If $q_b=0$, then $b=i$ for some $i \in I$.
In that case, there are no paths $j \to b$ unless $j=b$, so all terms on the right of \eqref{eq:v_b-from-paths} are zero except the one indexed by $b$. There is a single path $b \to b$ which consists of no edges. So the right-hand side of \eqref{eq:v_b-from-paths} is $v_i$.

Now, assume $q_b>0$ and that \eqref{eq:v_b-from-paths} holds for $b'$ with $q_{b'}<q_b$. This means $b$ is either internal or a sink of $\OO$. There is a unique edge which is directed towards $b$, say it is $f=(w,b)$. Say $b_1, \dots, b_p$ are the other neighbors of $w$; the edges $e_j=(b_j, w)$ are all directed towards $w$. We have $v_b = -r_{f}^{-1}\sum_{j} r_{e_j} v_{b_j}$. Any path from $i \in I$ to $b$ must go through some $b_j$ and any path $i \to b_j$ can be extended to a path $i \to b$, so we have $q_{b_j} < q_{b}$. Using the inductive hypothesis, we have 
\[ v_b =  \sum_{j} \frac{r_{e_j}}{-r_f} \sum_{i \in I} v_i \sum_{\substack{ P: i \to b_j \\ \text{ path in } \OO}} \frac{\prod_e r_e}{\prod_{e'} (-r_{e'})}= -\sum_{i \in I} v_i  \sum_{j} \sum_{\substack{ P: i \to b_j \\ \text{ path in } \OO}} \frac{r_{e_j}}{-r_f} \frac{\prod_e r_e}{\prod_{e'} (-r_{e'})}= \sum_{i \in I} v_i  \sum_{\substack{ P: i \to b \\ \text{ path in } \OO}}  \frac{\prod_e r_e}{\prod_{e'} (-r_{e'})} \]
where the two products are over edges $e$ and $e'$ in $P$ oriented black-to-white and white-to-black respectively. This shows \eqref{eq:v_b-from-paths} holds for $v_b$.
\end{proof}

\begin{corollary}\label{cor:VRC-det-by-bdry-and-R}
Suppose $G$ is a (leafless) plabic graph which admits an acyclic reverse perfect orientation. Then for any $m$-VRC $(\bv, \bR)$ on $G$, $\bv$ is determined by the boundary $\partial(\bv, \bR)$ and the coefficients $\bR$.
\end{corollary}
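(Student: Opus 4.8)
The plan is to derive this directly from \cref{lem:v_b-from-paths}. First I would invoke the hypothesis to fix an acyclic reverse perfect orientation $\OO$ of $G$, with source set $I \subseteq [n]$; such an orientation exists by assumption (and for reduced $G$ by \cref{acycliclemma}). The point is that \cref{lem:v_b-from-paths} already does all the work: for every black vertex $b$ it expresses $v_b$ as
\[ v_b = \sum_{i\in I} c_{b,i}(\bR)\, v_i, \]
where each coefficient $c_{b,i}(\bR)$ is the explicit sum over directed paths $i \to b$ in $\OO$ of the ratio $\prod_{e} r_e / \prod_{e'}(-r_{e'})$ appearing in \eqref{eq:v_b-from-paths}; in particular $c_{b,i}(\bR)$ depends only on $\bR$ and on the combinatorics of $(G,\OO)$, not on the vectors $\bv$.

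Next I would observe that the vectors $\{v_i : i \in I\}$ occurring on the right-hand side are (a subset of) the columns of the boundary matrix $\partial(\bv,\bR) = [v_1\cdots v_n]$, hence are determined by $\partial(\bv,\bR)$. Combining this with the displayed formula, every $v_b$ is a fixed (rational in $\bR$) linear combination of columns of $\partial(\bv,\bR)$, and therefore is determined by the pair $(\partial(\bv,\bR),\bR)$. Since an $m$-VRC assigns vectors only to black vertices, this is exactly the statement that $\bv$ is determined by $\partial(\bv,\bR)$ and $\bR$. Equivalently, if $(\bv,\bR)$ and $(\bv',\bR)$ are two $m$-VRCs on $G$ with the same boundary, then $v_b = v'_b$ for every black vertex $b$.

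I do not anticipate any real obstacle: the corollary is a formal consequence of \cref{lem:v_b-from-paths}. The only minor point worth recording is that the conclusion is independent of the chosen orientation $\OO$ — any acyclic reverse perfect orientation produces a valid (if differently presented) formula for the same collection of vectors $\bv$, since the left-hand sides $v_b$ of \eqref{eq:v_b-from-paths} do not depend on $\OO$.
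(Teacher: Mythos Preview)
Your proposal is correct and matches the paper's approach exactly: the corollary is stated immediately after \cref{lem:v_b-from-paths} with no explicit proof, because (as you argue) it is an immediate consequence of the formula \eqref{eq:v_b-from-paths} expressing each $v_b$ as a linear combination of boundary vectors $v_i$, $i\in I$, with coefficients depending only on $\bR$.
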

 
Moves on plabic graphs induce maps on $\mVRC_G$. See \cite[Section 2]{AGPR} for proofs. 

\begin{lemma}\label{lem:moves}
    Suppose $G'$ is obtained from $G$ by a black (resp. white) expand move. Then the map shown in \cref{fig:expand-contract}, top (resp. bottom), induces a bijection $\mVRC_G \to \mVRC_{G'}$. 

    Suppose $G, G'$ are related by a square move as in \cref{fig:sq-move}. The map shown in \cref{fig:sq-move} induces a bijection between $\{[\bv, \bR] \in \mVRC_G: ce-af \neq 0\}$ and $\{[\bv, \bR] \in \mVRC_{G'}: c'e'-a'f' \neq 0\}$.
\end{lemma}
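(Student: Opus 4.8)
The plan is to exploit the \emph{locality} of all three moves. Each of the black expand, white expand, and square moves alters $G$ only inside a small disk $D_0$, and it leaves untouched the rest of $G$, the boundary vectors $v_1,\dots,v_n$, and every edge weight supported outside $D_0$. So I expect the induced map to be the identity away from $D_0$, and the whole statement reduces to a finite local computation: matching the VRC data on the two versions of $D_0$, subject only to the white-vertex relations at those vertices of $D_0$ that are glued to the rest of $G$, and equivariantly for the gauge action. Condition (1) of \cref{def:VRC} concerns only the boundary vectors, which are unchanged, so it transfers automatically and may be ignored. This is exactly the content of \cite[Section 2]{AGPR}, and the only differences with our conventions are cosmetic --- we have swapped the two colors, we insist on $r_e\in\CC^*$ instead of permitting $r_e=0$, and we allow $v_b=0$ --- so one option is simply to cite \emph{loc.\ cit.} after recording this dictionary. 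Below I sketch the direct argument.

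For the black expand move (top of \cref{fig:expand-contract}), a black vertex $b$ with white neighbors $w_1,\dots,w_d$ is split into black vertices $b',b''$ joined through a new bivalent white vertex $w_0$, with $b'$ keeping the edges to $w_1,\dots,w_j$ and $b''$ the edges to $w_{j+1},\dots,w_d$. The relation at $w_0$ reads $r_{\{b',w_0\}}v_{b'}+r_{\{b'',w_0\}}v_{b''}=0$, which forces $v_{b'}\propto v_{b''}$; gauge transformations at $w_0$ and at $b''$ normalize this to $r_{\{b',w_0\}}=r_{\{b'',w_0\}}=1$ and $v_{b'}=v_{b''}$. Taking $v_b$ to be this common value and transporting the weights $r_{\{b,w_i\}}$ yields an inverse, and the relation at each $w_i$ is literally identical on the two sides. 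No potentially-vanishing quantity is inverted in either direction --- in particular the construction makes sense when $v_b=0$ --- so the induced map $\mathcal{C}^m_G\to\mathcal{C}^m_{G'}$ is a genuine bijection. The white expand move (bottom of \cref{fig:expand-contract}) is the color-reversed statement: the new bivalent vertex is black, its vector is determined by the surrounding white-vertex relations, and its weights are chosen so that the two new relations recover the original one; the same bookkeeping applies.

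The substantive case is the square move. Here \cref{fig:sq-move} swaps the colors on a $4$-cycle $X_1-X_2-X_3-X_4$ (say $X_1,X_3$ black, $X_2,X_4$ white), and I would use the edge-weight labels $a,\dots,f$ from the figure. On $G$, the two white-vertex relations at $X_2$ and $X_4$ form a $2\times 2$ linear system expressing $(v_{X_1},v_{X_3})$ in terms of the vectors on the external black neighbors of $X_2$ and $X_4$; solving it --- and hence transporting the VRC across the move --- amounts to inverting a $2\times 2$ matrix whose determinant equals, up to a unit in $\CC^*$, the quantity $ce-af$. I would then write down the transformed weights $a',\dots,f'$ (the formulas displayed in \cref{fig:sq-move}), check that the resulting assignment satisfies \emph{every} white-vertex relation of $G'$ (not just the two on the $4$-cycle), and verify that the inverse transformation has the identical shape with $c'e'-a'f'$ in the role of the nonvanishing determinant. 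That the explicit formulas make $ce-af\neq 0$ on the left equivalent to $c'e'-a'f'\neq 0$ on the right, together with the check of all $G'$-relations, is the computational core of the lemma and the step I expect to be the main obstacle. The tidiest way to contain it is to invoke locality once more and reduce to the single fixed graph consisting of the $4$-cycle with four boundary legs, for which both $\mathcal{C}^m$'s can be written out explicitly and the two transformations composed by hand --- or, failing that, to import the identity verbatim from \cite[Section 2]{AGPR} through the conventions dictionary above.
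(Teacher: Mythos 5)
Your proposal matches the paper's treatment: the paper gives no proof of this lemma, simply citing \cite[Section 2]{AGPR} after recording exactly the conventions dictionary you describe (colors swapped, $r_e\in\CC^*$ required, $v_b=0$ permitted), and your local-verification sketch is precisely the computation that citation stands in for. One small slip worth fixing: in the black expand move, normalizing both new edge weights to $1$ forces $v_{b'}=-v_{b''}$ rather than $v_{b'}=v_{b''}$ (to make the two vectors equal you need opposite signs on the two weights, e.g.\ $1$ and $-1$), but this sign bookkeeping does not affect the bijection.
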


\begin{figure}[h]
    \centering
    \includegraphics[width=0.7\linewidth]{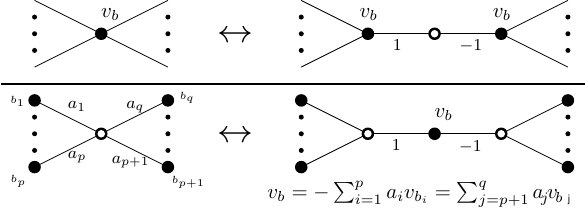}
    \caption{Top: reading left to right, a black-expand move and its effect on $m$-VRCs. Reading right to left, a black-contract move and its effect on $m$-VRCs. Bottom: white-expand and white-contract moves and their effect on $m$-VRCs.}
    \label{fig:expand-contract}
\end{figure}

\begin{figure}[h]
    \centering
    \includegraphics[width=0.7\linewidth]{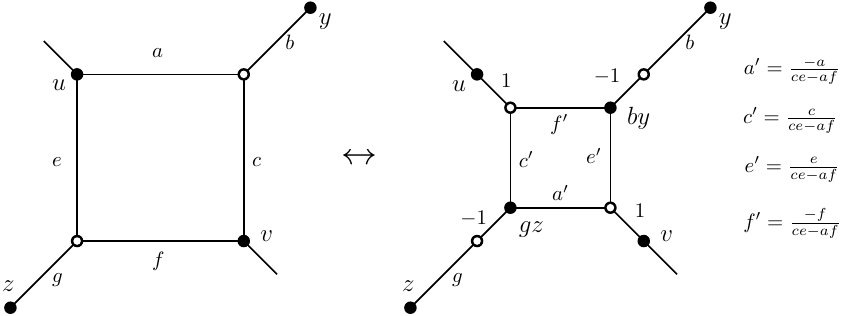}
    \caption{A square move and its effect on $m$-VRCs when $ce-af \neq 0$. Note that if $ce-af =0$, there is no relation among $u, y, z$ (resp. $v, y, z$) unless $u=0$ (resp. $v=0$).}
    \label{fig:sq-move}
\end{figure}

\section{Plabic tangles and promotion maps} \label{sec:promotion}

In this section we use $m$-VRCs on plabic graphs to give rational maps between Grassmannians. To record the image of the map, we need to first enhance plabic graphs slightly to \emph{plabic tangles}.  We note that 
the definition of plabic tangle is inspired by the notion of \emph{planar tangle} \cite{Jones}, and we will later show that similarly to the case of planar tangles, there is an \emph{operad structure} on plabic tangles, see \cref{sec:categorical_pov}.  

\subsection{Tangles and maps on configuration spaces}

\begin{definition}\label{def:tangle}
Let $\ell$ be a positive integer.
A \emph{plabic tangle} $(G, \bD)$ is the data of a plabic graph 
$G=((B,W),E)$, drawn inside a disk called an \emph{outer disk} (with boundary vertices $\Bb=\{1,\dots,n\}$), together with $\ell$ \emph{inner disks},
each of which lies in a 
face of $G$. 
Each inner disk has boundary vertices $D^{(i)}$ (for $i \in [\ell]$), and each vertex $u \in D^{(i)}$ is connected to one element $b_u$ of $B$ via a segment such that the resulting graph is planar.
Each inner and outer disk contains one \emph{$\star$-marked interval}, and the boundary vertices of each disk are labeled clockwise in increasing order starting just after the $\star$.
We use the notation $\bD:=\{D^{(i)}\}_{i \in [\ell]}$ for the boundary vertices of the inner disks. We often refer to $G$ as the \emph{core} 
and the 
inner disks as \emph{blobs}, see \cref{fig:tangle}.
\end{definition}

\begin{figure}[h]
    \centering
    \includegraphics[width=0.7\linewidth]{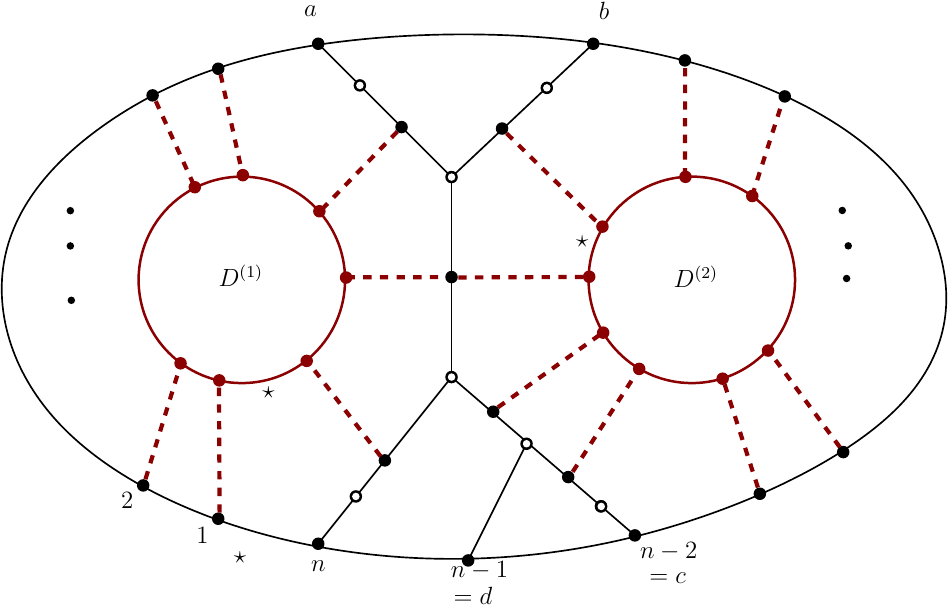}
    \caption{A plabic tangle with two blobs; the core is shown in black, while the blobs (inner disks) with their attaching segments are shown in red.}
    \label{fig:tangle}
\end{figure}

We first use VRCs on the core $G$ to define maps on configuration spaces. We will restrict our attention to those $G$ for which the boundary generically determines the VRC. Here and throughout, we use the Zariski topology on $\Gr_{m,n}$ and $\Conf^{\circ}_{m,n}$.

\begin{definition}
 A plabic graph $G$ is \emph{($m$-generically) solvable} if for all $\bz$ in an open dense subset of $\Conf^{\circ}_{m,n}$, or in an open dense subset of $\Gr^{\circ}_{m,n}$, 
 there is a unique $m$-VRC $[\bv, \bR] \in \mVRC_G$ with boundary $\bz$. We denote this $m$-VRC by $\zVRC.$ We say a plabic tangle is \emph{($m$-generically) solvable} if its core $G$ is and each blob $D \in \bD$ has size at least $m$.
\end{definition}

\begin{remark}
\cref{prop:solvable-only-one-m-possible} shows that if $G$ is reduced of type $(k,n)$, there is only one $m$ for which $G$ can be $m$-generically solvable. Namely, $m=(1/k)\dim \Pi_G = (\#(\text{faces of }G)-1)/k$. For this reason, we will drop ``$m$-generically" from now on. Being reduced and solvable is a property of the move-equivalence class of $G$, as is implied by \cref{cor:int1}. If $G$ and $G'$ are solvable and related by an expand-contract or square move, then the unique VRCs on $G, G'$ with boundary $\bz$ are related by the maps in \cref{lem:moves}, as is implied by \cref{cor:vrc-move-invariant}.
\end{remark}

We now use $\zVRC$ to define a rational map on configuration spaces.

\begin{definition}\label{def0:promotion}
Let $(G,\bD)$
be a solvable 
plabic tangle. We define a map
\begin{equation}\label{eq:promotion_tangle_conf}
       \gProm: \Conf^{\circ}_{m, n} \dashrightarrow \prod_{D \in \bD} \Conf_{m, D}\end{equation}
by first mapping generic $\bz\in \Conf^{\circ}_{m,n}$ to $\zVRC \in \mVRC_G$, the unique $m$-VRC with boundary $\bz$; since $\zVRC$ is a gauge-equivalence class of VRCs,
this associates a line $\CC v_b$ to each $b\in B$. Recall that for $D \in \bD$, each vertex $u$ 
in $D$ is adjacent to a unique element $b_u$ of $B$. We associate line $\CC v_{b_u}$ to $u$; by reading the vertices of $D$ in clockwise order starting just after the $\star$, we obtain the \emph{blob's boundary lines}, a tuple of $|D|$ lines which we identify with an element of $\Conf_{m,D}$. 
\end{definition}

It will frequently be useful for us to make the assumption that the blob's boundary lines usually give an element of $\Conf^{\circ}_{m,D}$. This is formalized in the following definition.

\begin{definition}\label{def:rank_reg_and_dom}
Let $(G,\bD)$ be a plabic tangle. An $m$-VRC $[\bv,\bR]\in\mVRC_G$ is \emph{rank-$m$ regular} for $(G,\bD)$ if for each blob $D \in \bD$,
the boundary lines $v_{b_u}$ of the blob
span $\CC^m$. If $(G, \bD)$ is solvable, we say that it is \emph{(rank-$m$) regular} if for generic $\bz \in \Conf^{\circ}_{m,n}$, $\zVRC$ is rank-$m$ regular. 
More generally, $(G, \bD)$ is \emph{rank-$m$ regular} if in the set
\[\{(\bz,[\bv,\bR]) \ \colon \ \bz\in\Conf^\circ_{m,n},~\partial[\bv,\bR]= \bz, ~[\bv, \bR]~\text{is nondegenerate}\},\]
the subset of points where $[\bv, \bR]$ is also rank-$m$ regular is dense.
\end{definition}
 \subsection{Pinnings, brushings, and maps on Grassmannians}

We would like to upgrade the map on configuration spaces
given in \cref{def0:promotion} to a rational 
map on Grassmannians instead. In order to do so, we need to choose a specific representative for $\zVRC$, in such a way that the coefficients $r_e$ are rational functions of $\bz$. We formalize this with the notion of a \emph{pinning}.

 \begin{definition}\label{def:pinning} Let $G$ be a solvable plabic graph.
 A collection of rational functions $\{r_e\}_{e \in E(G)} \subset \CC(\Gr_{m,n})$ is called a \emph{pinning} of $G$ if for generic $\bz$, $\zVRC$ has a representative of the form $(\{v_b\}, \{r_e(\bz)\})$. Abusing terminology, we may call the representative $(\{v_b\}, \{r_e(\bz)\})$ a pinning, and may write $(\{v_b(\bz)\}, \{r_e(\bz)\})$, since the vectors $v_b$ are determined by $\bz$ and $r_e(\bz)$ via \eqref{eq:v_b-from-paths}.
 \end{definition}

 We will show in \cref{prop:int-num-1-ratl-coeffs} that many pinnings exist; one may take $r_e(\bz)=f_e(\bz)$ as provided there, and then act by gauge by any rational function of $\bz$ at any vertex.

\begin{definition}\label{def:promotion} 
A \emph{pinned plabic tangle} $(G,\bD)$ is a solvable, rank-$m$ regular plabic tangle 
 together with a pinning $(\{v_b(\bz)\}, \{r_e(\bz)\})$  of $G$. Using this pinning, we define the following maps:  
\begin{itemize}
\item
A rational \emph{geometric promotion} map
    \begin{equation}\label{eq:promotion_tangle_geo}\gProm: \Gr_{m, n} \dashrightarrow \prod_{D \in \bD} \Gr_{m, D} \end{equation}
    is defined as follows. Fix generic $\bz \in \Gr_{m,n}$ and choose a blob $D \in \bD$. 
  Since each vertex $u$ 
in $D$ is adjacent to a unique element $b_u$ of $B$, we associate vector $v_{b_u}(\bz)$ to $u$, which we call the \emph{boundary vector} of $u$. By reading the vertices of $D$ in clockwise order starting just after the~$\star$, we obtain the \emph{blob's  boundary vectors}, which we assemble into a full-rank $m \times |D|$ matrix $M$, and identify with an element of the Grassmannian $\Gr_{m,D}$.  

\item
    \emph{(Algebraic) promotion} by the pinned plabic tangle is the pullback $\aProm:=\gProm^*$ of geometric promotion $\gProm$; that is, it is the map
    \begin{equation}\label{eq:promotion_tangle_alg} \aProm:\CC(\widehat{\Gr}_{m, D^{(1)}}) \otimes \dots \otimes 
    \CC(\widehat{\Gr}_{m, D^{(\ell)}})
    \to \CC(\widehat{\Gr}_{m, n}),\end{equation}
    which acts on Pl\"ucker coordinates by substituting 
    the $u$th column vector with the associated vector $v_{b_u}(\bz)$
    as above. 
\end{itemize}
\end{definition}

We note that by \cref{lem:v_b-from-paths}, the vectors $v_b(\bz)$ can be expressed as linear combinations of the columns of $\bz$ with coefficients that are rational functions of $\bz$, so $\aProm$ is indeed well-defined. 

One particularly nice way to obtain a pinned tangle is to \emph{brush} the tangle using collections of vertex-disjoint paths. 
\begin{definition}\label{def:brushed}
A \emph{brushed plabic tangle} is a plabic tangle $(G,\bD)$, together with a choice for each $D \in \bD$ of:
\begin{itemize}
\item a reverse acyclic perfect orientation $\OO^D$ of $G$ (see \cref{def:orientation})
\item a collection of oriented vertex-disjoint paths $\{P_u\}_{u \in D}$ in $\OO^D$ such that $P_u$ goes from some boundary vertex 
$i_u \in \Bb$
to the vertex $b_u$ adjacent to $u\in D$.
\end{itemize}
We write $\mcb:=\{\OO^D, \{P_u\}_{u \in D}\}_{D \in \bD}$ for the data of the brushing. We will also need to choose a sign $\sigma_{b_u} \in \{\pm 1\}$ for each $b_u \in \Bi$ adjacent to a boundary vertex of a blob. We denote the brushing together with this choice of signs by $\mcb^{\bsig}.$
\end{definition}
See \cref{fig:brushing} for an example of a brushing.

\begin{figure}
\includegraphics[width=\textwidth]{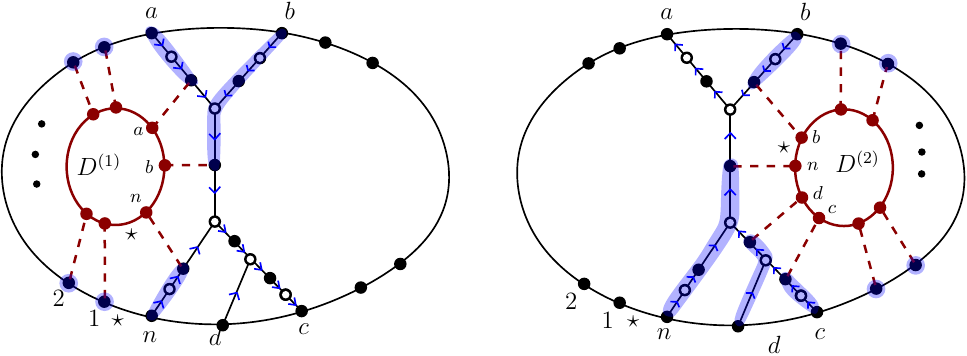}
\caption{A brushing for the tangle in \cref{fig:tangle}. On the left, the reverse acyclic perfect orientation and paths for $D^{(1)}$, and on the right, those for $D^{(2)}$. The paths are highlighted in blue. Note that some paths in the brushing are trivial and consist of a single boundary vertex.}
\label{fig:brushing}
\end{figure}

A brushed plabic tangle has a distinguished pinning, which can be used to define promotion.
\begin{definition}
\label{def:brushed-pinning}
Suppose $(G, \bD)$ is solvable plabic tangle and let $\mcb^{\bsig}$ be a brushing of $(G, \bD)$. 
Let $(\{v'_b(\bz)\}, \{r'_e(\bz)\})$ be an arbitrary pinning. For $D \in \bD$ and $u \in D$, set
\[\wt'(P_u):=  \frac{\displaystyle \prod_{e \in P_u} r'_e(\bz)}{ \displaystyle \prod_{e' \in P_u} (-r'_{e'}(\bz))}\]
where the product in the numerator is over edges $e \in P_u$ which are oriented black-to-white in~$\OO^D$, and the product in the denominator is over edges $e' \in P_u$ which are oriented white-to-black in~$\OO^D$. The \emph{$\mcb^{\bsig}$-pinning} is the pinning $(\{v_b(\bz)\}, \{r_e(\bz)\})$ obtained by applying the gauge transformations 
\[v'_{b_u}(\bz) \mapsto v_{b_u}(\bz):= \sigma_{b_u} \frac{v'_{b_u}(\bz)}{\wt'(P_u)} \]
at all boundary vectors of all blobs. By \cref{lem:b-pinning-well-defined} below, the $\mcb^{\bsig}$-pinning does not depend on the choice of pinning $(\{v'_b(\bz)\}, \{r'_e(\bz)\})$.

If $(G, \bD, \mcb^{\bsig})$ is a brushed tangle which is solvable and rank-$m$ regular, then \emph{algebraic (resp. geometric) promotion by $(G, \bD, \mcb^{\bsig})$} is algebraic (resp. geometric) promotion by $(G, \bD)$ using the $\mcb^{\bsig}$-pinning. That is, algebraic promotion is the map which acts on Pl\"ucker coordinates of $\Gr_{m, D}$ by substituting the $u$th column vector with $v_{b_u}(\bz)$.

\end{definition}

See \cref{sec_bcfw} for the promotion maps corresponding to the brushed tangle in \cref{fig:brushing}, with all signs chosen to be $1$.

\begin{lemma}\label{lem:b-pinning-well-defined} Let $\mcb$ be a brushing of a solvable plabic tangle $(G, \bD)$, and let $(\{v'_b(\bz)\}, \{r'_e(\bz)\})$, $(\{v''_b(\bz)\}, \{r''_e(\bz)\})$ be two pinnings of $G$. For any $D \in \bD$ and boundary vertex $u \in D$, we have 
\[\frac{v'_{b_u}(\bz)}{\wt'(P_u)} =\frac{v''_{b_u}(\bz)}{\wt''(P_u)}\]
where $\wt'(P_u), \wt''(P_u)$ are as in \cref{def:brushed-pinning}.
\end{lemma}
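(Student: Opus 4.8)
The plan is to show that the two pinnings differ by a gauge transformation, and then track how that gauge transformation affects the weight $\wt(P_u)$ versus the vector $v_{b_u}$.

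\textbf{Step 1: Both pinnings are related by a rational gauge transformation.}
Since $G$ is solvable, for generic $\bz$ there is a unique $[\bv,\bR]\in\mVRC_G$ with boundary $\bz$. The two pinnings $(\{v'_b(\bz)\},\{r'_e(\bz)\})$ and $(\{v''_b(\bz)\},\{r''_e(\bz)\})$ are both representatives of this same class $\zVRC$, with the same boundary $\bz$ (so the $GL_m$ ambiguity is already rigidified). Hence they differ only by a gauge transformation: for each internal vertex $x$ of $G$ there is $t_x=t_x(\bz)\in\CC^*$, a rational function of $\bz$, such that $r''_e(\bz) = \bigl(\prod_{x\in e,\,x\text{ internal}} t_x(\bz)\bigr) r'_e(\bz)$ for each edge $e$, and $v''_b(\bz) = \bigl(\prod\text{internal }x=b} t_x(\bz)\bigr)^{-1} v'_b(\bz) = t_b(\bz)^{-1} v'_b(\bz)$ for each internal black vertex $b$ (with $t_x\equiv 1$ at boundary vertices, where no gauge is applied). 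This is the standard fact that gauge transformations act transitively on representatives of a fixed element of $\mVRC_G$ with fixed boundary; one can also see it directly from \cref{cor:VRC-det-by-bdry-and-R}, which shows the vectors are determined by $\bz$ and the coefficients, so the difference between the two coefficient systems is exactly a gauge.

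\textbf{Step 2: Compute the effect on $\wt(P_u)$.}
Fix $D\in\bD$ and $u\in D$, and let $P_u$ be the path from $i_u\in\Bb$ to $b_u$ in $\OO^D$. In the ratio defining $\wt(P_u)$, each edge $e\in P_u$ contributes $r_e(\bz)$ (up to sign) to numerator or denominator. Rewriting $\wt''(P_u)$ in terms of $r'$ via Step 1, each internal vertex $x$ on the path $P_u$ lies on exactly two edges of $P_u$ (its incoming and outgoing edge along the path), one oriented black-to-white and one white-to-black in $\OO^D$ — this is the key combinatorial point, that $P_u$ is a path so internal vertices are traversed exactly once. Therefore the factor $t_x(\bz)$ appears once in the numerator product and once in the denominator product, and cancels. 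The endpoint $i_u$ is a boundary vertex so contributes no gauge factor, while the endpoint $b_u$ is internal, lies on exactly one edge of $P_u$, and contributes a single factor $t_{b_u}(\bz)^{\pm 1}$. Concretely, $\wt''(P_u) = t_{b_u}(\bz)^{\epsilon}\,\wt'(P_u)$ where $\epsilon = +1$ if the last edge of $P_u$ is oriented black-to-white (i.e. into $b_u$ along the path from the white side — wait, I should be careful: $b_u$ is black, the edge into it in $\OO^D$ is white-to-black since $\OO^D$ is a reverse perfect orientation, so the last edge is in the denominator, giving $\epsilon=-1$). In all cases $\wt''(P_u)=t_{b_u}(\bz)^{-1}\wt'(P_u)$, matching the exponent with which $t_{b_u}$ scales $v_{b_u}$.

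\textbf{Step 3: Conclude.}
From Step 1, $v''_{b_u}(\bz) = t_{b_u}(\bz)^{-1} v'_{b_u}(\bz)$, and from Step 2, $\wt''(P_u) = t_{b_u}(\bz)^{-1}\wt'(P_u)$. Dividing,
\[\frac{v''_{b_u}(\bz)}{\wt''(P_u)} = \frac{t_{b_u}(\bz)^{-1} v'_{b_u}(\bz)}{t_{b_u}(\bz)^{-1}\wt'(P_u)} = \frac{v'_{b_u}(\bz)}{\wt'(P_u)},\]
as claimed. (If $b_u$ happens to be a boundary vertex, then $t_{b_u}\equiv 1$ and there is nothing to prove; and implicitly this uses that $b_u$ being a path endpoint of the vertex-disjoint family means the relevant combinatorics still go through.)

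\textbf{Main obstacle.} The routine but delicate part is the sign/exponent bookkeeping in Step 2: making sure that for an internal vertex in the \emph{interior} of $P_u$ the two incident path-edges really do split one-and-one between the black-to-white and white-to-black classes (which follows from $\OO^D$ being a perfect orientation: at an internal black vertex exactly one incident edge is incoming, at an internal white vertex exactly one is outgoing — so along any path the color-orientation pattern alternates appropriately), and that the single endpoint factor at $b_u$ carries the same exponent $-1$ as the gauge scaling of $v_{b_u}$. Once that local picture is nailed down, the cancellation is immediate and the lemma follows; the fact that we are free to choose any pinning to compute with is what makes the $\mcb^{\bsig}$-pinning well-defined.
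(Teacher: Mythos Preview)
Your proof is correct and follows essentially the same approach as the paper's: both observe that the two pinnings differ by a gauge transformation (since $G$ is solvable), then check that the gauge factors cancel in the ratio $v_{b_u}/\wt(P_u)$. The paper's version is slightly more streamlined --- it simply notes that $\wt'(P_u)$ is invariant under gauge at any $x\neq b_u$ (your Step~2 cancellation argument, stated without the edge-by-edge bookkeeping) and then reduces to a single gauge transformation at $b_u$ --- but the content is the same.
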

\begin{proof}
As $G$ is solvable, the two pinnings are related to each other by gauge transformations. Fix $u \in D$ for some $D \in \bD$. Note that $\wt'(P_u)$ is invariant under gauge transformations at $x \neq b_u$. So we may assume that the pinnings differ only by a gauge transformation at $b_u$. Say $v'_{b_u}(\bz) = t(\bz) \cdot v''_{b_u}(\bz)$, and for any edge $e$ adjacent to $b_u$, $r'_e(\bz) = r''_e(\bz)/t(\bz)$, where $t(\bz)$ is some rational function of $\bz$. Then we see that $\wt'(P_u)=t(\bz) \wt''(P_u)$, and so the ratio $v'_{b_u}(\bz)/\wt'(P_u)$ is equal to $v''_{b_u}(\bz)/\wt''(P_u)$.
\end{proof}

\begin{remark} Brushed tangles whose cores are related by expand-contract or certain square moves give rise to the same promotion maps. See \cref{rem:brushed,fig:brushed-square-moves} for more details.
\end{remark}

Not every plabic tangle admits a brushing. In fact, as we will show, having a brushing is related to the promotion map being dominant.

Given a plabic tangle $(G, \bD)$, let $\pi^D: \prod_{D' \in \bD} \Conf_{m,D'} \to \Conf_{m, D} $ denote the projection map and let $\gProm$ be the map from \eqref{eq:promotion_tangle_conf} which assigns to each blob its collection of boundary lines.

\begin{definition}\label{def:dominant}
A solvable plabic tangle $(G, \bD)$ is \emph{dominant} if it is rank-$m$ regular and, 
for each blob $D \in \bD$, the image of the map $\pi^D \circ \gProm$
contains a Zariski dense subset of $\Conf^{\circ}_{m, D}$.
\end{definition}

\begin{theorem}\label{prop:dominant_solvable}
A solvable plabic tangle $(G,\bD)$ is dominant if and only if it admits a brushing. 
\end{theorem}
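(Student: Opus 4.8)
The proof splits into two implications. The plan is to first show that a brushing gives dominance, and then argue the converse by extracting a brushing from the dominance hypothesis.

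\textbf{Brushing $\Rightarrow$ dominant.} Suppose $(G,\bD)$ admits a brushing $\mcb$ with data $\OO^D, \{P_u\}_{u\in D}$ for each blob. Fix a blob $D$ and work with the orientation $\OO^D$, whose source set is some $k$-subset $I \subseteq \Bb$. The plan is to show that $\pi^D \circ \gProm$ is dominant onto $\Conf^\circ_{m,D}$ by exhibiting a right inverse on a dense set. Given generic boundary lines $\bz$ on $\Bb$, \cref{lem:v_b-from-paths} expresses $v_{b_u}$ as $\sum_{i\in I}\bigl(\sum_{P:i\to b_u}\wt(P)\bigr)v_i$. Because the paths $P_u$ are vertex-disjoint and the orientation is acyclic, I can \emph{tune} the edge weights $\bR$ so that the path $P_u$ is the unique $\OO^D$-path contributing to $v_{b_u}$ from its source $i_u$, and so that $\wt'(P_u)$ is whatever we wish; more precisely, after the $\mcb^\bsig$-pinning gauge transformation, $v_{b_u}(\bz)$ picks up exactly the contribution $\sigma_{b_u} v_{i_u} + (\text{terms from other sources and longer paths})$. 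The key point is that as $\bz$ ranges over a dense subset of $\Conf^\circ_{m,n}$, the tuple $(v_{b_u}(\bz))_{u\in D}$ ranges over a dense subset of $\Conf_{m,D}$: the ``leading term'' map $\bz \mapsto (v_{i_u})_{u\in D}$ is already dominant (one can choose the $v_i$ for $i\in I$ freely, and the $i_u$ are among them, though possibly with repeats — here vertex-disjointness of the $P_u$ forces the $i_u$ to be distinct, so this map is a coordinate projection followed by sign flips, hence surjective onto a dense set), and dominance is preserved under the triangular correction. In particular rank-$m$ regularity holds generically and the image meets $\Conf^\circ_{m,D}$ densely, so the tangle is dominant.

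\textbf{Dominant $\Rightarrow$ brushing.} This is the harder direction and I expect it to be the main obstacle. Suppose $(G,\bD)$ is dominant. Fix a blob $D$. We must produce a reverse acyclic perfect orientation $\OO^D$ and vertex-disjoint paths $P_u$ from boundary vertices to the $b_u$. The plan is: pick a generic acyclic reverse perfect orientation $\OO$ (which exists by \cref{acycliclemma}), with source set $I$. By \cref{lem:v_b-from-paths}, each $v_{b_u}$ is a linear combination of $\{v_i : i\in I\}$ with coefficient matrix $N = (N_{u,i})$ whose entries are sums of path weights. Dominance of $\pi^D\circ\gProm$ means that as $\bz$ varies, the lines $(v_{b_u})_{u\in D}$ fill up a dense subset of $\Conf_{m,D}$; since the $v_i$ are themselves generic and we are free to rescale edge weights, this forces the $|D|\times k$ matrix $N$ to have rank $\geq m$ generically — and more: the rows of $N$ indexed by $D$ must be ``generically independent enough'' to realize all of $\Conf_{m,D}$. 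The combinatorial heart is to translate this rank/genericity condition into the existence of a system of vertex-disjoint paths from $\Bb$ to $\{b_u\}_{u\in D}$ inside \emph{some} acyclic reverse perfect orientation. I would do this via a Lindström–Gessel–Viennot type argument: the relevant $m\times m$ minors of the boundary-vector matrix of the blob are, by the path formula, signed sums over families of paths from subsets of $\Bb$ to $D$, and these families can be taken non-intersecting up to the usual sign-cancellation of intersecting families; non-vanishing of some such minor as a function of $\bz$ (guaranteed by dominance) yields a non-intersecting path family, which is precisely the brushing data. One must also check acyclicity can be arranged and that the orientation restricts correctly to a reverse perfect orientation — this is where one may need to pass between move-equivalent plabic graphs, invoking \cref{lem:moves} and the move-invariance of solvability.

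\textbf{Main obstacle.} The delicate step is the ``dominance $\Rightarrow$ existence of a non-intersecting path family'' implication: dominance is an \emph{open} (Zariski-density) condition on the image, and one must convert it into a \emph{combinatorial} existence statement robust enough to yield vertex-disjoint paths simultaneously for all $u\in D$ in a single orientation. The subtlety is that a priori different entries of $N$ might only be simultaneously controllable in different orientations; reconciling this requires either a clever choice of a single generic $\OO$ for which all the needed path-families coexist, or an argument that move-equivalence lets us normalize $G$ so that such an $\OO$ is visible. I expect the cleanest route is to phrase everything in terms of the matroid $\pos_G$ and perfect orientations, using that the bases of $\pos_G$ are exactly source sets of perfect orientations, and to run the LGV cancellation inside the boundary measurement map $\mathbb{B}_G$.
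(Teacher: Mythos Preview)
Your plan has the right overall shape, but both directions contain genuine gaps.

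\textbf{Forward direction.} You write that you can ``tune the edge weights $\bR$'' and that ``dominance is preserved under the triangular correction.'' The first is not available: since $G$ is solvable, the class $[\bv,\bR]$ is \emph{uniquely determined} by $\bz$, so $\bR$ is a function of $\bz$ and cannot be tuned independently. The second claim is not a fact in general: a map of the form $\bz \mapsto z_{i_u} + (\text{correction depending on all of }\bz)$ need not be dominant merely because the leading projection is. The paper makes this rigorous by a degeneration argument: one chooses a gauge-fix so that the brushing paths lie in $E_1$ and are the \emph{unique} $E_{\neq 1}$-avoiding paths to their endpoints (this is a separate combinatorial lemma), and then lets $\bz$ approach a boundary point of $\Gr_{m,n}$ under which the corresponding element of $T_{G^{op}}$ degenerates so that the weights on $E_{\neq 1}$ go to $0$ or $\infty$. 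In that limit every path weight except $\wt(P_u)$ vanishes, so $v_{b_u} \to z_{i_u}$ can be made arbitrarily close to any prescribed target. The link between VRCs with boundary $\bz$ and points of $T_{G^{op}}$ containing $\bz$ is what makes this limit computable.

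\textbf{Backward direction.} Your LGV sketch produces only $m$ vertex-disjoint paths (from nonvanishing of an $m\times m$ minor of the blob matrix), not the $|D|$ paths a brushing requires. If instead you try $|D|\times|D|$ minors of the coefficient matrix $N$, dominance into $\Conf_{m,D}$ with $|D|>m$ does not force any such minor to be nonzero: the blob matrix $W=VN^T$ has rank at most $m$ regardless, so $\operatorname{rank}(N)\le l<|D|$ does not by itself obstruct dominance. The paper's argument is different and crucially uses solvability. If no $|D|$-family exists in a fixed acyclic $\OO$, max-flow/min-cut gives a vertex cut $X$ of size $l<|D|$. One then passes to the downstream subgraph $G'$ and observes that \emph{solvability of $G$} forces the restriction of the VRC to $G'$ to be uniquely determined by the vectors at $X$; thus the blob vectors are values of a rational map $F:\Gr_{m,l}\dashrightarrow \Gr_{m,D}$ (equivalently on configuration spaces). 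Since $l<|D|$, the source has strictly smaller dimension than the target, so $F$ cannot be dominant. The step ``blob vectors depend only on $\bv_X$'' is exactly where solvability enters, and it is what your LGV plan is missing.
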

The proof of this proposition is delayed until \cref{ssec:proof-dominant-solvable}.

Note that we can compose tangles by inserting the outer disk of one into an inner disk of another, provided that the number of boundary vertices and the positions of the $\star$ lines up. In \cref{sec:categorical_pov} we will show that with some care we can also compose brushed tangles. For now, we mention an application which allows us to produce many solvable graphs.

\begin{corollary}\label{cor:gluing} Let $(G, \bD)$ be a dominant plabic tangle, or equivalently a generically $m$-solvable plabic tangle which admits a brushing. For each $D \in \bD$, choose a generically $m$-solvable plabic graph $G^D$ with $|D|$ boundary vertices. Let $\tilde{G}$ denote the bipartite plabic graph obtained by identifying the boundary disk of each $G^D$ with the corresponding inner disk $D$, then ignoring $D$ and contracting the attaching segments. Then $\tilde{G}$ is also generically $m$-solvable.
\end{corollary}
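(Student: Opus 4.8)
The plan is to prove directly that for generic $\bz \in \Conf^{\circ}_{m,n}$ (equivalently $\bz \in \Gr^{\circ}_{m,n}$) there is a unique $m$-VRC on $\tilde G$ with boundary $\bz$. The starting observation is that the defining relations of an $m$-VRC on $\tilde G$ \emph{decouple}: after contracting the attaching segments each boundary vertex $u$ of $G^D$ is identified with the black vertex $b_u$ of $G$, no edge is shared between $G$ and any $G^D$, and every white vertex of $\tilde G$ comes either from $G$ or from a single $G^D$. Hence the relation at a white vertex of $G$ involves only $\{v_b : b \in B(G)\}$, and the relation at a white vertex of $G^D$ involves only $\Bi(G^D)$-vectors together with $\{v_{b_u}: u \in D\}$. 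So an assignment of vectors and nonzero edge weights on $\tilde G$ is an $m$-VRC with boundary $\bz$ if and only if (i) its restriction to $G$ is an $m$-VRC on $G$ with boundary $\bz$, and (ii) its restriction to each $G^D$ is an $m$-VRC on $G^D$ with boundary vectors $(v_{b_u})_{u\in D}$ --- with the caveat that (ii) additionally requires $(v_{b_u})_{u\in D}$ to span $\CC^m$, which I will show holds automatically for generic $\bz$.

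For \textbf{existence}, I would take generic $\bz$, use solvability of $G$ to get $\zVRC \in \mVRC_G$ and fix a representative $(\bv,\bR)$, use rank-$m$ regularity of the dominant tangle $(G,\bD)$ to know the blob boundary lines $(\CC v_{b_u})_{u\in D}$ span $\CC^m$, and use dominance of $\pi^D\circ\gProm$ onto $\Conf_{m,D}$ to know that the induced configuration $(\CC v_{b_u})_{u\in D}$ lands in the dense open locus where $G^D$ is solvable. The image of $\pi^D\circ\gProm$ is constructible (Chevalley) and dense, hence contains a dense open $O_D$; intersecting $O_D$ with $G^D$'s solvable locus and pulling back along $\pi^D\circ\gProm$ (a dominant rational map, so preimages of dense opens are dense) and then intersecting over the finitely many $D$ and with the domain of $\zVRC$ gives a dense open $V\subseteq\Conf^\circ_{m,n}$. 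For $\bz\in V$, solvability of $G^D$ yields an $m$-VRC on $G^D$ with the prescribed boundary configuration; since the $v_{b_u}$ span $\CC^m$ I may use the $GL_m$- and boundary-gauge freedom of $G^D$ to pick the representative whose boundary vectors are exactly $(v_{b_u})_{u\in D}$, and glue it to $(\bv,\bR)$ along the $b_u$. By the decoupling description, the result is an $m$-VRC on $\tilde G$ with boundary $\bz$.

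For \textbf{uniqueness}, start from any $m$-VRC $(\bv,\bR)$ on $\tilde G$ with boundary $\bz\in V$. Its restriction to $G$ is an $m$-VRC on $G$ with boundary $\bz$, hence equals $\zVRC$ up to gauge by solvability of $G$; in particular the lines $(\CC v_{b_u})_{u\in D}$ agree with those of $\zVRC$, so they span $\CC^m$ and determine a configuration in the solvable locus of $G^D$. Thus the restriction to each $G^D$ is a genuine $m$-VRC on $G^D$, unique up to gauge and $GL_m$ by solvability of $G^D$. Normalizing the $G$-part to a fixed representative of $\zVRC$ by gauge transformations of $\tilde G$ supported on internal vertices of $G$ (which include the $b_u$) pins the $v_{b_u}$ to a fixed spanning tuple, and any $GL_m$ relating two $m$-VRCs on a fixed $G^D$ with identical spanning boundary vectors must fix $\ge m$ generic lines and those very vectors, hence is the identity. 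So the residual freedom in each $G^D$-part is only gauge at its internal vertices --- gauge on $\tilde G$ --- which gives uniqueness up to gauge, i.e. $\tilde G$ is $m$-solvable. (If contracting an attaching segment leaves $\tilde G$ non-reduced, one first cleans it up by contraction moves; by \cref{cor:int1} and the surrounding discussion solvability is a property of the move-equivalence class.)

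The \textbf{main obstacle} is the bookkeeping of gauge freedoms: solvability of each piece is stated modulo that piece's own gauge group \emph{and} a $GL_m$, while solvability of $\tilde G$ allows only one global $GL_m$ together with $\tilde G$'s gauge group. The crux is showing that the individual $GL_m$-ambiguities on the $G^D$'s collapse once the shared vectors $v_{b_u}$ are fixed, which is exactly the point where rank-$m$ regularity (forcing $(\CC v_{b_u})_{u\in D}$ to span $\CC^m$) enters. The remaining genericity arguments --- that each induced blob configuration is generic enough to apply solvability of $G^D$ --- are routine once phrased via preimages of dense opens under dominant rational maps, and I would relegate them to a line.
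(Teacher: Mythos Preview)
Your proposal is correct and follows essentially the same approach as the paper's proof: both identify a dense open locus of $\bz$ on which the unique VRC on $G$ induces blob configurations lying in the solvable locus of each $G^D$, and then combine the uniqueness on $G$ and on each $G^D$ via the decoupling of relations. The paper's version is terser because it works throughout at the level of $\Conf^{\circ}$ (lines and relations modulo gauge), which makes your ``main obstacle'' disappear: once the boundary \emph{lines} of the blobs are uniquely determined there is no separate $GL_m$-ambiguity to reconcile, and the rank-$m$ regularity (built into dominance) ensures these lines indeed give an element of $\Conf^{\circ}_{m,D}$.
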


\begin{proof}
Let $\psi$ be the map from \eqref{eq:promotion_tangle_conf} and for $D \in \bD$ let $\pi^D$ denote projection onto $\Conf_{m, D}$. Let $\mcu_G \subset \Conf^{\circ}_{m, n}$ be an open dense subset such that for $\bz \in \mcu_G$, there is a unique $[\bv, \bR] \in \mVRC_G$ with boundary $\bz$. Define $\mcu_D \subset \Conf^{\circ}_{m, D}$ similarly, using $G^D$ rather than $G$.

Since $(G, \bD)$ is dominant, we have $\pi^D \circ \gProm(\mcu_G)$ is dense in $\Conf^{\circ}_{m, D}$ and so has nonempty intersection with $\mcu_D$, which is open and dense. Thus the preimage of $\mcu_D$ under $\pi^D \circ \gProm$ is a nonempty open set $\mcu'_D$ in $\Conf^{\circ}_{m, n}$. 

Let $\mcu:= \mcu_G \cap \bigcap_{D \in \bD} \mcu'_D$. For $\bz \in \mcu$, $\bz$ uniquely determines the lines and relations for each vertex and edge of $G$. In particular, the boundary lines $\CC v_{b_u}$ of each blob $D$ are uniquely determined. By construction, the boundary lines give an element of $\mcu_D$ and so they in turn uniquely determine the lines and relations for each vertex and edge of $G^D$. This shows that for $\bz \in \mcu$, there is a unique $[\bv, \bR] \in \mVRC_{\tilde{G}}$ with boundary $\bz$, as desired.
\end{proof}

\begin{remark} \cref{cor:gluing} applied repeatedly to the BCFW core in \cref{fig:brushing}, together with its reflection and its cyclic shifts, shows that all $G$ arising in the BCFW recurrence are generically $4$-solvable. That is, $\Pi_G$ is an algebraic pre-tile for all $G$ arising in the BCFW recurrence and $\tZ$ is generically injective on $\Pi_G$. As mentioned in \cref{rem:tile-vs-pretile}, this does not immediately imply that $Z_G$ is a tile, which was proved using different methods in \cite{even2023cluster}.
\end{remark}

\subsection{Promotions from brushed tangles are conjecturally quasi-cluster}

We make the following conjectures regarding promotions from brushed solvable plabic tangles. In what follows, let $\Sigma_{m, n}$ denote a seed in the standard cluster structure on $\CC[\widehat{\Gr}_{m,n}]$.

\begin{conjecture}\label{conj:cluster1} Let $(G, \bD)$ be a dominant plabic tangle. Then there exists a brushing $\mcb$ and a choice of signs $\bsig$ such that the following hold.
\begin{enumerate}
\item Geometric promotion by $(G, \bD, \mcb^{\bsig})$
sends totally positive elements to totally positive elements.
\item Algebraic promotion by $(G, \bD, \mcb^{\bsig})$ 
\[\aProm:\CC(\widehat{\Gr}_{m, D^{(1)}}) \otimes \dots \otimes 
    \CC(\widehat{\Gr}_{m, D^{(\ell)}})
    \to \CC(\widehat{\Gr}_{m, n})\]
     restricts to a quasi-cluster homomorphism from 
    \[\mathcal{A}(\Sigma_{m, D^{(1)}} \sqcup \cdots \sqcup \Sigma_{m, D^{(\ell)}}) \to \mathcal{A}(\overline{\Sigma})\]
    where $\overline{\Sigma}$ is obtained from a seed $\Sigma_{m,n}$ for $\widehat{\Gr}_{m,n}$ by freezing some variables.   
\end{enumerate}
\end{conjecture}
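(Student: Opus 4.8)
The plan is to reduce \cref{conj:cluster1} for a general dominant tangle to a finite list of ``atomic'' brushed tangles using the operad structure developed in \cref{sec:categorical_pov}, and then to settle the atomic cases by explicit computation in the Grassmann--Cayley algebra $\GC(m)$. The reduction rests on two facts. First, by \cref{cor:gluing} and the operad structure, if a brushed tangle $(G,\bD,\mcb^{\bsig})$ is obtained by gluing brushed tangles into its blobs, then geometric promotion for the composite is the composite of the individual geometric promotions, and dually for algebraic promotion. Second, a composition of quasi-cluster homomorphisms is again a quasi-cluster homomorphism \cite{Fraser}, and a composition of maps preserving total positivity preserves total positivity; hence it suffices to prove \cref{conj:cluster1} for a collection of tangles generating all dominant tangles under gluing. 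The BCFW core, together with its reflections and cyclic shifts, is one such family, already handled in \cite{even2023cluster}; the other families one needs are the star ($k=1$), spurion ($k=2,m=4$), chain-tree ($k=3,m=4$), and forest ($k=2,m=3$) tangles. The delicate point in the gluing step is that each interface Pl\"ucker coordinate of $\Gr_{m,D^{(i)}}$ must pull back to a Laurent monomial in the frozen variables of the upstream tangle, so that $\aProm(\PP)\subseteq\overline{\PP}$ and the composite stays quasi-cluster; this is precisely where the hypothesis ``after freezing some variables on the right-hand side'' is used, and checking it is part of proving that brushings compose compatibly.

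\textbf{The atomic cases.} For each atomic family I would fix a convenient brushing $\mcb^{\bsig}$ and take the domain seed to be a disjoint union of rectangles seeds $\Sigma_{m,D^{(1)}}\sqcup\cdots\sqcup\Sigma_{m,D^{(\ell)}}$. By \cref{lem:v_b-from-paths} and the definition of the $\mcb^{\bsig}$-pinning, each blob boundary vector $v_{b_u}(\bz)$ is an explicit, appropriately normalized combination of the columns of $\bz$ whose coefficients are ratios of monomials in the $\bR$-weights; using \cref{prop:int-num-1-ratl-coeffs} these become rational functions of $\bz$, and in practice $v_{b_u}(\bz)$ can be written directly in $\GC(m)$ as a nested wedge/shuffle read off from the structure of $G$. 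Then $\aProm(\lr{I})$, for $I$ a tuple of blob vertices, is a bracket polynomial in $\bz$ that one expands with \cref{lem:basic-prop-of-shuf}, \cref{lem:shuf-wedge-vs-cap-sum} and \cref{lem:GC-easy-reln}; the heart of the computation is to show that each such polynomial equals a Laurent monomial in frozen Pl\"ucker coordinates times a \emph{cluster variable} for $\Gr_{m,n}$ (a Pl\"ucker coordinate or a chain polynomial). This yields a target seed $\overline{\Sigma}$ for $\widehat{\Gr}_{m,n}$, obtained from $\Sigma_{m,n}$ by freezing some variables, along with a bijection of mutable vertices; conditions (1) and (2) of \cref{def:quasi} then reduce to a finite comparison of exchange ratios read off the two quivers, and \cref{prop:similar} propagates $\aProm(\Sigma)\propto\overline{\Sigma}$ to all mutations, establishing part (2). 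For part (1), in the chosen brushing every path-weight ratio $\wt'(P)/\wt'(P_u)$ is, by acyclicity of $\OO^{D}$ and positivity of boundary measurements, a subtraction-free rational function of the Pl\"ucker coordinates of $\bz$, so every entry and then every maximal minor of the blob matrix is positive on $\Gr_{m,n}^{>0}$; alternatively, once (2) holds, positivity of the images of cluster variables together with dominance of $\gProm$ forces (1).

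\textbf{Main obstacle.} The genuinely hard part is not any individual computation but the lack of a classification of atomic dominant solvable tangles, and, for a general dominant tangle, the absence of any a priori reason that the Grassmann--Cayley expansion of $\aProm(\lr{I})$ should collapse onto the cluster algebra at all: in principle it could be an irreducible polynomial that is not a cluster variable, and one has no handle on which right-hand-side variables must be frozen. Thus finding the correct normalization (brushing together with signs) \emph{uniformly}, and proving that the resulting bracket polynomials always lie in $\A(\overline{\Sigma})$, is what keeps the conjecture open; at present it can only be carried out family by family, and even the reduction step depends on the careful operad-compatibility of brushings worked out in \cref{sec:categorical_pov}.
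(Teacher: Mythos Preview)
The statement you are attempting to prove is a \emph{conjecture}: the paper does not prove it in general and explicitly leaves it open. What the paper establishes (in \cref{sec:proofscluster}) is a handful of special cases --- star, spurion, chain-tree, and forest promotions --- each by direct computation: write down the promotion images of the variables in a rectangles seed for $\Gr_{m,N'}$, exhibit a target seed $\overline{\Sigma}$ for $\Gr_{m,n}$ via an explicit mutation sequence, and verify conditions (1) and (2) of \cref{def:quasi} by hand. There is no operadic reduction in those proofs; each family is treated on its own as evidence for the conjecture.

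Your reduction step contains a genuine gap, which you yourself name in the final paragraph but then treat as if it were merely a technical loose end. The assertion that BCFW together with the star, spurion, chain-tree, and forest tangles generate \emph{all} dominant solvable tangles under operad composition is not established anywhere in the paper and is almost certainly false. Already for trees, \cref{prop:tree1} and \cref{tab:amplitrees} show there are $1274$ move-equivalence classes of $(3,4)$-amplitrees and $63410$ of $(4,4)$-amplitrees; the chain-tree tangle accounts for exactly one of the former, and nothing in the operad structure lets you manufacture the others from your listed atoms. Beyond trees there is no classification of solvable plabic graphs at all. So while promotions do compose (\cref{obs:being_operad_rep}) and quasi-cluster homomorphisms do compose, this buys you nothing here: you cannot reduce to a finite list of atoms because none is known, and the enumeration data strongly suggest no short list exists. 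Your proposal is therefore a strategy sketch with an acknowledged unfilled hole, not a proof --- which is consistent with the statement's status as an open conjecture.
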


\begin{remark}\label{conj:cluster-tree} Let $(G, \bD)$ be a dominant plabic tangle where $G$ is a plabic tree (cf. \cref{sec:trees}). Then we expect that \cref{conj:cluster1} holds for \emph{all} brushings $\mcb$; that is, for all brushings, there is a choice of signs so that (1) and (2) of \cref{conj:cluster1} hold.
\end{remark}

\section{Examples of promotions from tangles}\label{sec:promotion-examples}

In this section, we illustrate how to 
obtain promotion maps from various solvable plabic tangles, for cores with $k=1, 2, 3$.
We will show in \cref{sec:proofscluster} that all of these maps
are quasi-cluster homomorphisms.
For these tangles, \cref{def:vec-in-VRC-from-GC,def:coeffs-in-VRC-from-GC} gives formulas for the vectors and coefficients in $\zVRC$; we use these without justification. We also choose the signs $\bsig$ to be 1 on all vertices.

\subsection{Star promotion ($m \geq 3$, $k=1$)}
\label{sec:upper}

In this section we give a first example of how to use a vector relation configuration to get a map on Grassmannians which is a quasi-cluster homomorphism.  In particular, we introduce the notion of \emph{(unary) star promotion}, which uses the framework of \cref{def:tangle} and \cref{def:promotion}, to define 
a homomorphism from 
$\C(\widehat{\Gr}_{m,n-1})$
to $\C(\widehat{\Gr}_{m,n})$. We use a plabic tangle with a single inner disk (hence the word ``unary''),
whose core graph is a tree on $m+1$ leaves with only white vertices, see \cref{fig:upperMis3}.  This core is move-equivalent to 
a single white vertex with $m+1$ legs emanating from it, that is, a ``star graph.''

\begin{figure}[h]
\begin{tikzpicture}[scale=1,  
every node/.style={line width=2pt, circle, minimum size=2mm, draw, inner sep=0pt},
boundary/.style={line width=2pt, circle, minimum size=1mm, draw, inner sep=0pt, fill=black},
core/.style={line width=1.5pt},
blob/.style={line width=1.5pt,dash pattern=on 2pt off 1pt},
]
\node[boundary, label=above right:1] (b1) at (60:2) {};
\node[boundary, label=above right:2] (b2) at (20:2) {};
\node[boundary, label=below right:3] (b3) at (-20:2) {};
\node[boundary, label=below right:4] (b4) at (-60:2) {};
\node[boundary, label=below left:5] (b5) at (-100:2) {};
\node[boundary] (b11) at (-140:2) {};
\node[boundary] (b12) at (180:2) {};
\node[boundary] (b13) at (140:2) {};
\node[boundary, label=above left:$n$] (bn) at (100:2) {};
\node[draw=none,label=above right:$\star$] (star) at (80:2) {};
\node (blob) at (180:0.5) {};
\node[fill=white] (i1) at (1,1.2) {};
\node[fill=black] (i2) at (1,0.8) {};
\node[fill=white] (i3) at (1,0.4) {};
\node[fill=black] (i4) at (1,0) {};
\node[fill=white] (i5) at (1,-0.4) {};
\node[fill=black] (i6) at (1,-0.8) {};
\node[fill=white] (i7) at (1,-1.2) {};
\draw[core] (b1)--(i1)--(i2)--(i3)--(i4)--(i5)--(i6)--(i7)--(b4) (i3)--(b2) (i5)--(b3);
\draw[darkred,blob] (i2)--(blob) (i4)--(blob) (i6)--(blob) (b5)--(blob) (b11)--(blob) (b12)--(blob) (b13)--(blob) (bn)--(blob);
\draw[darkred,fill=white] (blob) circle[radius=0.9];
\draw[black] (0,0) circle[radius=2];
\node[draw=none] (star2) at ($(blob) + (60:1)$) {$\star$};
\node[draw=none] at ($(blob) + (40:0.7)$) {$1$};
\node[draw=none] at ($(blob) + (0:0.7)$) {$3$};
\node[draw=none] at ($(blob) + (-40:0.7)$) {$4$};
\end{tikzpicture}
\;\;\;\;\;\;\;\;\;\;
\begin{tikzpicture}[scale=1,  
every node/.style={line width=2pt, circle, minimum size=2mm, draw, inner sep=0pt},
boundary/.style={line width=2pt, circle, minimum size=1mm, draw, inner sep=0pt, fill=black},
core/.style={line width=1.5pt},
blob/.style={line width=1.5pt,dash pattern=on 2pt off 1pt},
]
\node[boundary, label=above:1] (b1) at (90:2) {};
\node[boundary, label=above right:2] (b2) at (45:2) {};
\node[boundary, label=above right:3] (b3) at (15:2) {};
\node[boundary, label=below right:4] (b4) at (-15:2) {};
\node[boundary, label=below right:5] (b5) at (-45:2) {};
\node[boundary, label=below:6] (b6) at (-90:2) {};
\node[boundary, label=below left:7] (b7) at (-120:2) {};
\node[boundary] (b11) at (-150:2) {};
\node[boundary] (b12) at (180:2) {};
\node[boundary] (b13) at (150:2) {};
\node[boundary, label=above left:$n$] (bn) at (120:2) {};
\node[draw=none,label=$\star$] (star) at (105:2) {};
\node (blob) at (180:0.7) {};
\node[fill=white] (i1) at (0.25,1.75) {};
\node[fill=black] (i2) at (0.5,1.5) {};
\node[fill=white] (i3) at (0.8,1.2) {};
\node[fill=black] (i4) at (1,0.8) {};
\node[fill=white] (i5) at (1,0.4) {};
\node[fill=black] (i6) at (1,0) {};
\node[fill=white] (i7) at (1,-0.4) {};
\node[fill=black] (i8) at (1,-0.8) {};
\node[fill=white] (i9) at (0.8,-1.2) {};
\node[fill=black] (i10) at (0.5,-1.5) {};
\node[fill=white] (i11) at (0.25,-1.75) {};
\draw[core] (b1)--(i1)--(i2)--(i3)--(i4)--(i5)--(i6)--(i7)--(i8)--(i9)--(i10)--(i11)--(b6) (i3)--(b2) (i5)--(b3) (i7)--(b4) (i9)--(b5);
\draw[darkred,blob] (i2)--(blob) (i4)--(blob) (i6)--(blob) (i8)--(blob) (i10)--(blob) (b7)--(blob) (b11)--(blob) (b12)--(blob) (b13)--(blob) (bn)--(blob);
\draw[darkred,fill=white] (blob) circle[radius=1];
\draw[black] (0,0) circle[radius=2];
\node[draw=none] (star2) at ($(blob) + (75:1.1)$) {$\star$};
\node[draw=none] at ($(blob) + (60:0.8)$) {$1$};
\node[draw=none] at ($(blob) + (30:0.8)$) {$3$};
\node[draw=none] at ($(blob) + (0:0.8)$) {$4$};
\node[draw=none] at ($(blob) + (-30:0.8)$) {$5$};
\node[draw=none] at ($(blob) + (-60:0.8)$) {$6$};
\end{tikzpicture}
\caption{Brushed tangles for unary star promotion for $m=3$ and $m=5$
\label{fig:upperMis3}. The labels of the inner disk vertices indicate the brushing: the paths go from the boundary vertex $i \in \Bb$ to the (black vertex attached to the) inner disk vertex labeled $i$. These paths extend to a reverse perfect orientation.
}
\end{figure}
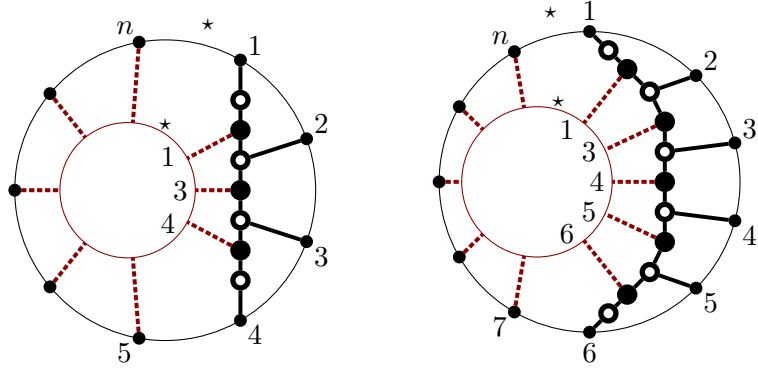

We will prove in \cref{sec:proofscluster} that unary star promotion is a
quasi-cluster homomorphism. Therefore if we apply unary star promotion to
a cluster variable (resp. subset of a cluster),
we obtain a cluster variable (resp. subset of a cluster)  of
$\Gr_{m,n}$, up to a Laurent monomial in
a certain set of distinguished Pl\"ucker coordinates. 

In accordance with the brushed plabic tangles in \cref{fig:upperMis3}, we let $N'=\{1,3,\dots,n\}$. We first apply \cref{def:promotion}
to the diagram at the left of \cref{fig:upperMis3} (with all signs equal to~$1$), and obtain the $m=3$
\emph{unary star promotion map} described in \cref{pro-upper:0}.

\begin{definition}[Unary star promotion for $m=3$]
\label{pro-upper:0}
\emph{Unary star promotion} (for $m=3$) is the homomorphism
$$\Psi_{3} : \C(\widehat{\Gr}_{3,N'}) \;\to\; \C(\widehat{\Gr}_{3,n})$$
induced
by the following substitution on vectors $1,3,4,\dots,n$ in $\CC^3$:
\begin{align*}
   3 &\;\mapsto\; 
\frac{12 \shuf 34}{\lr{124}} 
\;=\; 3-\frac{\lr{123}}{\lr{124}} \, 4 \\
   j &\; \mapsto\; j \hspace{6cm} \text{ for } j =1,4,5,\dots,n.
\end{align*}
\end{definition}

\begin{remark}
We can give an equivalent definition of $\Psi_3$ using Pl\"ucker coordinates. 
Recall the chain polynomial notation from \cref{def:chain}.  We have that 
\begin{align*}
\Psi_3\left(\lr{3ab}\right) &\;=\; \lr{3ab} - \frac{\lr{123}}{\lr{124}} \lr{4ab} \;=\; \frac{\lr{1\,2 \shuf 3\,4 \shuf a\,b}}{\lr{124}}\\
\Psi_3\left(\lr{abc}\right) &\;=\; \lr{a,b,c} \hspace{7cm} \text{ if } 3\notin \{abc\}.
\end{align*}
\end{remark}

We now give the generalization of \cref{pro-upper:0} for general $m \geq 3.$  This comes from star plabic tangles as in 
\cref{fig:upperMis3}, where the additional example $m=5$ is shown on the right.

\begin{definition}[Unary star promotion]
\label{pro-upper}
Fix a positive integer $m \ge 3$.
\emph{Unary star promotion} is the homomorphism
$$\Psi_{m} : \C(\widehat{\Gr}_{m,N'}) \;\to\; \C(\widehat{\Gr}_{m,n})$$
induced by the following substitution on vectors $1,3,4,\dots,n$ in $\CC^m$:
\begin{align*}
j &\;\mapsto\; \frac{(1 \wedge 2 \wedge \dots \wedge j{-}1) \shuf (j \wedge j{+}1 \wedge \dots \wedge m \wedge m{+}1))}{\lr{ 1,2,\dots,j{-}1,j{+}1,\dots,m,m{+}1}} 
&&\text{ for }3 \leq j \leq m \\
j &\; \mapsto\; j &&\text{ for }j =1 \text{ and } m+1 \leq j \leq n
\end{align*}
\end{definition}

\subsection{BCFW Promotion $(m=4, k=1)$}\label{sec_bcfw}
This section discusses the promotion map coming from the brushed tangle in \cref{fig:brushing}. This promotion could be called \emph{binary star promotion}.

\begin{definition}(BCFW Promotion)
\label{def:bcfw_promotion}
Let $N_L=\{1,2,\ldots,a,b,n\}$ and $N_R=\{b,b+1,\ldots,c,d,n\}$, 
 where $a,b$ and $c,d,n$ are consecutive. 
 Let $F_e$ denote the Pl\"ucker coordinate obtained by formally erasing the label $e$ from $\lr{abcdn}$, e.g.
$F_b=\lr{acdn}$ and $F_n=\lr{abcd}$. 
\emph{BCFW promotion} is the homomorphism
$$\Psi_{BCFW}: \CC(\Gr_{4,N_L}) \times \CC(\Gr_{4,N_R}) \;\rightarrow\; \CC(\Gr_{4,n})    
$$
induced by the substitutions:
\begin{align*}
 \mbox{on } \Gr_{4,N_L}:& \;\;\;\;\;\; b \mapsto \frac{ab \shuf cdn}{F_b}\\
 \mbox{on } \Gr_{4,N_R}:& \;\;\;\;\;\; d \mapsto \frac{cd \shuf abn}{F_d}
\;\;\;\;\;\;
n \mapsto \frac{cdn \shuf ab}{F_n} \;\;\;\;\;\; 
\end{align*}

and $j \mapsto j$ otherwise. 
\end{definition}

The authors together with T. Lakrec proved that $\Psi_{BCFW}$ is a quasi-cluster homomorphism in 
\cite[Theorem 4.7]{even2023cluster}.

\subsection{Spurion promotion $(m=4, k=2)$}
\label{sec:spurion}

The next example applies \cref{def:promotion} to obtain the promotion map $\Psi_{sp}$ by the plabic tangle $G_{sp}$ in \cref{fig:upper_spurion}. In this case, the core is the \emph{spurion}\footnote{See \cite{companion} for etymology and the appearance of this graph in tilings of the amplituhedron.}, with $k=2$, and it is attached with $5$ legs to one blob. The resulting promotion map is not obtainable by composing two star promotions.

\begin{figure}[htbp]
\centering
\begin{tikzpicture}[scale=1,  
every node/.style={line width=2pt, circle, minimum size=2mm, draw, inner sep=0pt},
boundary/.style={line width=2pt, circle, minimum size=1mm, draw, inner sep=0pt, fill=black},
core/.style={line width=1.5pt},
blob/.style={line width=1.5pt,dash pattern=on 2pt off 1pt},
]
\node[boundary, label=above:1] (b1) at (90:2) {};
\node[boundary, label=above right:2] (b2) at (60:2) {};
\node[boundary, label=above right:3] (b3) at (30:2) {};
\node[boundary, label=right:4] (b4) at (0:2) {};
\node[boundary, label=below right:5] (b5) at (-30:2) {};
\node[boundary, label=below right:6] (b6) at (-60:2) {};
\node[boundary, label=below:7] (b7) at (-90:2) {};
\node[boundary, label=below left:8] (b8) at (-120:2) {};
\node[boundary, label=below left:9] (b9) at (-150:2) {};
\node[boundary, label=left:10] (b10) at (-180:2) {};
\node[boundary] (b11) at (165:2) {};
\node[boundary] (b12) at (150:2) {};
\node[boundary] (b13) at (135:2) {};
\node[boundary, label=above left:$n$] (bn) at (120:2) {};
\node[draw=none,label=above left:$\star$] (star) at (100:2) {};
\node (blob) at (150:1) {};
\node[fill=black] (i1) at (80:1.6) {};
\node[fill=white] (i2) at (70:1.2) {};
\node[fill=black] (i3) at (60:0.8) {};
\node[fill=white] (i4) at (30:0.5) {};
\node[fill=black] (i5) at (-30:0.5) {};
\node[fill=white] (i6) at (-90:0.5) {};
\node[fill=black] (i7) at (-120:0.8) {};
\node[fill=white] (i8) at (-130:1.2) {};
\node[fill=black] (i9) at (-140:1.6) {};
\node[fill=white] (i0) at (-30:1.25) {};
\draw[core] (b1)--(i1)--(i2)--(i3)--(i4)--(i5)--(i6)--(i7)--(i8)--(i9)--(b9);
\draw[core] (b2)--(i2) (b3)--(i4) (b4)--(i0) (b5)--(i0) (b6)--(i0) (b7)--(i6) (b8)--(i8) (i0)--(i5);
\draw[darkred,blob] (i1)--(blob) (i3)--(blob) (i5)--(blob) (i7)--(blob) (i9)--(blob) (b10)--(blob) (b11)--(blob) (b12)--(blob) (b13)--(blob) (bn)--(blob);
\draw[darkred,fill=white] (blob) circle[radius=0.7];
\draw[black] (0,0) circle[radius=2];
\node[draw=none] (star2) at ($(blob) + (70:0.8)$) {$\star$};
\node[draw=none] at ($(blob) + (50:0.5)$) {$1$};
\node[draw=none] at ($(blob) + (10:0.5)$) {$2$};
\node[draw=none] at ($(blob) + (-30:0.5)$) {$7$};
\node[draw=none] at ($(blob) + (-70:0.5)$) {$8$};
\node[draw=none] at ($(blob) + (-110:0.5)$) {$9$};
\end{tikzpicture}
\caption{A plabic tangle $(G_{sp},\{1,2,7,8,9,\dots,n\})$ for unary spurion promotion. The labels of the inner disk indicate the brushing: the paths go from the respective boundary vertices $1,2,7,8,9$ to the (black vertex attached to the) blob vertex with the same label. These paths can be completed to a reverse perfect orientation.
\\
}
\label{fig:upper_spurion}
\end{figure}
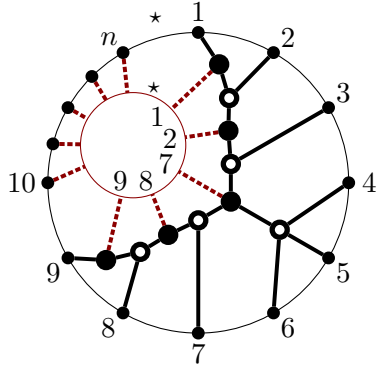

\begin{definition}(Unary spurion promotion)
\label{def:upper_spurion_promotion}
Let $N'=\{1,2, 7,8,\ldots,n\}$. 
 Let $F_i$ denote the expression obtained by formally erasing the label $i$ from the expression $\lr{123 \shuf 456 \shuf 789}$, e.g.
$F_2=\lr{13 \shuf 456 \shuf 789}$.
\emph{Unary spurion promotion} is the homomorphism
$$ \Psi_{sp}: \CC(\Gr_{4,N'}) \;\rightarrow\; \CC(\Gr_{4,n})    
$$
induced by the substitutions: 
$$
2 \mapsto \frac{12 \shuf 3 (456 \shuf 789)}{F_2} \;\;\;\;\;\;
7 \mapsto \frac{123 \shuf 456 \shuf 789}{F_7} \;\;\;\;\;\;
8 \mapsto \frac{(123 \shuf 456)7 \shuf 89}{F_8}
$$
and $j \mapsto j$ otherwise. 
\end{definition}

\begin{remark} \label{rk:intersections}
By the identities in \cref{ex:intersections}, the substitutions in unary spurion promotion can be written as follows:
\begin{align*} 
2 \;\;\mapsto\;\; & 2 - \frac{F_1}{F_2}1 \;\;=\;\; \frac{F_3 \cdot 3 - F_7 \cdot 7 + F_8 \cdot 8 - F_9 \cdot 9}{F_2} \\[5pt]
7 \;\;\mapsto\;\; & 7-\frac{F_8}{F_7}8+\frac{F_9}{F_7}9 \;\;=\;\; \frac{F_1 \cdot 1-F_2 \cdot 2+F_3 \cdot 3}{F_7} \\[5pt]
8 \;\;\mapsto\;\; & 8 - \frac{F_9}{F_8}9 \;\;=\;\; \frac{F_7 \cdot 7-F_1 \cdot 1+F_2 \cdot 2-F_3 \cdot 3}{F_9}
\end{align*}
We note that all the chain polynomials appearing in this formula have sets of size $3 = m-1$. Using \cref{cor:irreducible-quadratics} with appropriate cyclic shifts, these polynomials are cluster variables and so are irreducible in $\C[\widehat{\Gr}_{4,n}]$ by \cite[Theorem 1.3]{GLS}.
\end{remark}

\subsection{Chain-tree promotion ($m=4, k=3$)}\label{sec:chain}
An interesting class of examples are promotions induced by plabic tangles whose cores are \emph{chain trees}. These trees let us generalize the promotions by the $k=1$ star and the $k=2$ spurion to higher values of~$k$.

\begin{definition}
\label{def-chain-tree}
(Chain tree) A $(t_1,t_2,\dots,t_{2k-1})$-\emph{chain tree} is a plabic graph of type $(k,\sum_i t_i)$, which is a tree of the following form:
\begin{center}
\begin{tikzpicture}[scale=1,  
every node/.style={line width=2pt, circle, minimum size=2mm, draw, inner sep=0pt},
boundary/.style={line width=2pt, circle, minimum size=1mm, draw, inner sep=0pt, fill=black},
none/.style={inner sep=1pt, minimum size=0pt},
core/.style={line width=1.5pt},
blob/.style={line width=1.5pt,dash pattern=on 2pt off 1pt},
]
\foreach \x in {0,1,4,7,8,11,14,15,18,21,22,25,33,34,37,40,41,44} 
\node[boundary] (b\x) at (0.3*\x,0) {};
\foreach \x in {2.5,9.5,16.5,23.5,35.5,42.5} 
\node[draw=none] at (0.3*\x,0) {\color{gray}$\dots$};
\foreach \x in {2,16,42}
\node[fill=white] (w\x) at (0.3*\x,-1) {};
\foreach \x in {9,23,35}
\node[fill=black] (b\x) at (0.3*\x,-1.5) {};
\foreach \x in {9,23,35}
\node[fill=white] (w\x) at (0.3*\x,-0.75) {};
\node[draw=none] at (0.3*29,-0.5) {\color{black}$\dots$};
\draw[core] (w2) -- (b9) -- (w16) -- (b23) -- (0.3*26.5,-1.25)
(w2) -- (b0)
(w2) -- (b1)
(w2) -- (b4)
(w9) -- (b9) 
(w9) -- (b7) 
(w9) -- (b8) 
(w9) -- (b11) 
(w16) -- (b14)
(w16) -- (b15)
(w16) -- (b18)
(w23) -- (b21) 
(w23) -- (b22) 
(w23) -- (b25) 
(w23) -- (b23)
(w35) -- (b33) 
(w35) -- (b34) 
(w35) -- (b37) 
(w35) -- (b35)
(w42) -- (b40)
(w42) -- (b41)
(w42) -- (b44)
(0.3*31.5,-1.25) -- (b35) -- (w42);
\draw [decorate,decoration={brace,amplitude=6pt},thick] (0.3*-1,0.3) -- (0.3*5,0.3) node[above=9pt,midway,draw=none]{$t_1$};
\draw [decorate,decoration={brace,amplitude=6pt},thick] (0.3*6,0.3) -- (0.3*12,0.3) node[above=9pt,midway,draw=none]{$t_2$};
\draw [decorate,decoration={brace,amplitude=6pt},thick] (0.3*13,0.3) -- (0.3*19,0.3) node[above=9pt,midway,draw=none]{$t_3$};
\draw [decorate,decoration={brace,amplitude=6pt},thick] (0.3*20,0.3) -- (0.3*26,0.3) node[above=9pt,midway,draw=none]{$t_4$};
\draw [decorate,decoration={brace,amplitude=6pt},thick] (0.3*32,0.3) -- (0.3*38,0.3) node[above=3pt,midway,draw=none]{$t_{2k-2}$};
\draw [decorate,decoration={brace,amplitude=6pt},thick] (0.3*39,0.3) -- (0.3*45,0.3) node[above=3pt,midway,draw=none]{$t_{2k-1}$};
\end{tikzpicture}
\end{center}
\end{definition}

For example, the $(3,3,3)$-chain tree for $k=2$ is the spurion discussed above. Other chain-trees of interest are the $(3,3,1,3,3)$-chain tree for $k=3$, the $(3,3,1,3,1,3,3)$-chain tree for $k=4$, and in general the $(3,3,1,3,1,\dots,3,1,3,3)$-chain tree for every~$k$. This sequence of plabic graphs are $(k,4)$-amplitrees, cf \cref{def:mbalanced}, and can be used to define new promotion maps for~$m=4$. 

The following example uses the $(3,3,1,3,3)$-chain tree and one blob to define the $k=3$ chain-tree promotion map $\Psi_{ch}$, which is not obtainable by composing the previous promotions of $k=1$ and~$2$.

\begin{figure}[h]
\begin{tikzpicture}[scale=1,  
every node/.style={line width=2pt, circle, minimum size=2mm, draw, inner sep=0pt},
boundary/.style={line width=2pt, circle, minimum size=1mm, draw, inner sep=0pt, fill=black},
core/.style={line width=1.5pt},
blob/.style={line width=1.5pt,dash pattern=on 2pt off 1pt},
]
\node[boundary, label=above:1] (b1) at (90:3) {};
\node[boundary, label=above right:2] (b2) at (70:3) {};
\node[boundary, label=above right:3] (b3) at (50:3) {};
\node[boundary, label=right:4] (b4) at (30:3) {};
\node[boundary, label=right:5] (b5) at (10:3) {};
\node[boundary, label=below right:6] (b6) at (-10:3) {};
\node[boundary, label=below right:7] (b7) at (-30:3) {};
\node[boundary, label=below right:8] (b8) at (-50:3) {};
\node[boundary, label=below:9] (b9) at (-70:3) {};
\node[boundary, label=below:$\aa$] (bA) at (-90:3) {};
\node[boundary, label=below left:$\bb$] (bB) at (-110:3) {};
\node[boundary, label=below left:$\cc$] (bC) at (-130:3) {};
\node[boundary, label=below left:$\dd$] (bD) at (-150:3) {};
\node[boundary, label=left:$\ee$] (b10) at (-170:3) {};
\node[boundary] (b11) at (170:3) {};
\node[boundary] (b12) at (150:3) {};
\node[boundary] (b13) at (130:3) {};
\node[boundary, label=above left:$n$] (bn) at (110:3) {};
\node[draw=none,label=above:$\star$] (star) at (100:3) {};
\node (blob) at (150:1) {};
\node[fill=black] (j1) at (85:2.6) {};
\node[fill=white] (i2) at (70:2.1) {};
\node[fill=black] (j2) at (50:1.8) {};
\node[fill=white] (i3) at (30:1.6) {};
\node[fill=black] (j3) at (0:1.3) {};
\node[fill=white] (i7) at (-30:1.3) {};
\node[fill=black] (jB) at (-60:1.3) {};
\node[fill=white] (iB) at (-90:1.6) {};
\node[fill=black] (jC) at (-110:1.8) {};
\node[fill=white] (iC) at (-130:2.1) {};
\node[fill=black] (jD) at (-145:2.6) {};
\node[fill=white] (i456) at (10:2.2) {};
\node[fill=white] (i89A) at (-70:2.2) {};
\draw[core] (b1)--(j1)--(i2)--(j2)--(i3)--(j3)--(i7)--(jB)--(iB)--(jC)--(iC)--(jD)--(bD);
\draw[core] (i2)--(b2) (i3)--(b3) (iB)--(bB) (iC)--(bC) (i7)--(b7) (b4) -- (i456) -- (b5) (b6) -- (i456) -- (j3) (b8) -- (i89A) -- (b9) (bA) -- (i89A) -- (jB);
\draw[darkred,blob] (j1)--(blob) (j2)--(blob) (j3)--(blob) (jB)--(blob) (jC)--(blob) (jD)--(blob) (b10)--(blob) (b11)--(blob) (b12)--(blob) (b13)--(blob) (bn)--(blob);
\draw[darkred,fill=white] (blob) circle[radius=1];
\draw[black] (0,0) circle[radius=3];
\node[draw=none] at ($(blob) + (60:0.75)$) {$1$};
\node[draw=none] at ($(blob) + (25:0.75)$) {$2$};
\node[draw=none] at ($(blob) + (-10:0.75)$) {$3$};
\node[draw=none] at ($(blob) + (-50:0.75)$) {$\bb$};
\node[draw=none] at ($(blob) + (-85:0.75)$) {$\cc$};
\node[draw=none] at ($(blob) + (-120:0.75)$) {$\dd$};
\node[draw=none] (star2) at ($(blob) + (80:1.125)$) {$\star$};
\end{tikzpicture}
\caption{The plabic tangle for the $k=3$ chain-tree promotion. As in \cref{fig:upper_spurion}, the labels $1,2,3,\bb,\cc,\dd$ in the blob indicate the brushing.
\label{fig:chain}}
\end{figure}
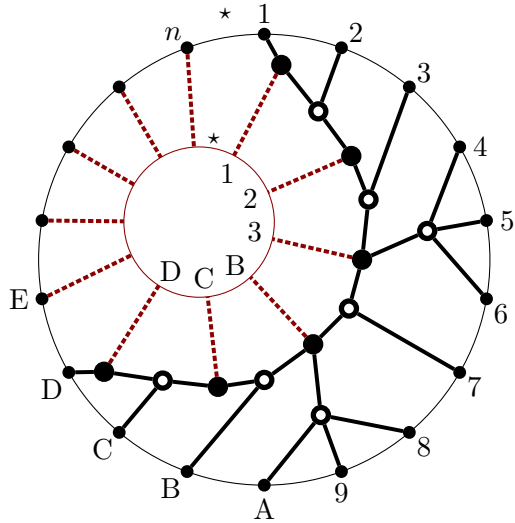
\begin{definition}(Chain-tree Promotion)
\label{def:chain_promotion}
Let $N'=\{1,2,3,\bb,\cc,\dd,\ldots,n\}$ where we extend the numerals by $\aa,\bb,\cc,\dd$ to abbreviate $10,11,12,13$. Let $F_i$ denote the function obtained by ``formally erasing the label $i$" from the expression $\lr{(123 \shuf 456) 7 (89\aa \shuf \bb\cc\dd)}$, e.g. 
$$ F_2=\lr{(13 \shuf 456) 7 (89\aa \shuf \bb\cc\dd)}, \;\; F_7=\lr{123 \shuf 456 \shuf 89\aa \shuf \bb\cc\dd}, \;\; F_8=\lr{(123 \shuf 456) 7(9\aa \shuf \bb\cc\dd)}.$$ 
\emph{Chain-tree promotion} is the homomorphism
$$ \Psi_{ch}: \CC(\Gr_{4,N'}) \;\rightarrow\; \CC(\Gr_{4,n})    
$$
induced by the substitutions:
\begin{align*}
& 2 \;\mapsto\; \frac{12 \shuf (3(456 \shuf (7(89\aa\shuf \bb\cc\dd)  )))}{F_2} &&
3 \;\mapsto\; \frac{123 \shuf 456 \shuf (7(89\aa \shuf \bb\cc\dd))}{F_3}
\\&
B \;\mapsto\; \frac{\bb\cc\dd \shuf 89\aa \shuf (7(123 \shuf 456))}{F_B}
&& C \;\mapsto\; \frac{\cc\dd \shuf (\bb(89\aa \shuf (7(456 \shuf 123))}{F_C} 
\end{align*}
and $j \mapsto j$ otherwise. 
\end{definition}

\subsection{Forest promotion ($m=3, k=2$)}\label{sec:forest}
An interesting class of examples are promotions where the core has several connected components. 
We illustrate this by an example where the core of the plabic tangle is a forest with two components, attached to the same blob.
 
\begin{definition}(Forest Promotion)
\label{def:forest_promotion}
Choose $4$ consecutive numbers, $5 \leq a <b <c <d \leq n$, and let $N'=\{1,2,4,5,\ldots,a-1,a,b,d,d+1,\ldots,n\}$. \emph{Forest promotion} is the homomorphism
$$ \Psi_{fo}: \CC(\Gr_{3,N'}) \;\rightarrow\; \CC(\Gr_{3,n})    
$$
induced by the substitutions: 
$$
2 \mapsto \frac{12 \shuf 34}{\lr{134}} \;\;\;\;\;\;
b \mapsto \frac{ab \shuf cd}{\lr{acd}}
$$
and $j \mapsto j$ otherwise. 
\end{definition}

\begin{figure}[h]
\includegraphics[width=0.7\textwidth]{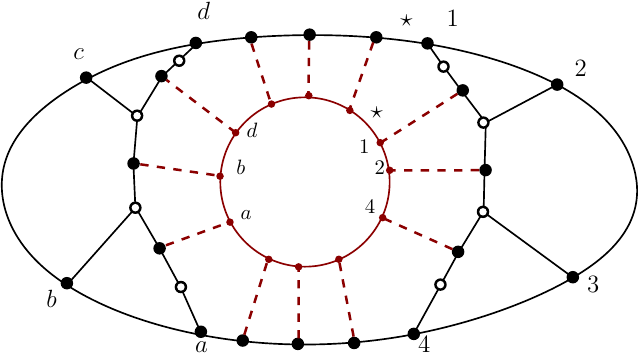}
\caption{The plabic tangle for forest promotion in \cref{def:forest_promotion}. As in Figures \ref{fig:upperMis3},
\ref{fig:upper_spurion} and \ref{fig:chain}, the labels in the blob indicate the brushing.}
\end{figure}

\section{Vector-relation configurations and intersection number}

In this section, we relate $m$-VRCs on $G$ and the $m$-intersection number of the positroid variety $\Pi_G \subset \Gr_{k,n}$ when $\dim \Pi_G = km$.  
In \cref{cor:int-num=num-VRC}, we show
the number of $m$-VRCs with fixed generic boundary vectors is equal to the intersection number of $\Pi_G$. In particular, $G$ is $m$-generically solvable if and only if 
$\Pi_G$ has $m$-intersection number $1$ and $\Pi_G$ has dimension $km$ (\cref{cor:int1}).
Throughout this section, we fix $k,m,n$ such that $k+m\leq n$. The words ``open", ``closed", and ``dense" all refer to the Zariski topology.

\subsection{Rephrasing intersection number}

In this section we will translate the notion of 
inter- section number (see 
\cref{def:intnumber}) in terms of 
generic $Z \in \Mat_{n, k+m}^\circ$, to a condition about 
generic $z\in \Gr_{m,n}$, see
\cref{prop:int-num-in-terms-of-Gr_mn}. In \cref{prop:int1criterion} we will also give a combinatorial criterion on the plabic graph $G$
for
verifying that a positroid variety $\Pi_G$ has
intersection number $0$.

Recall the definition of $G^{op}$ from \cref{not:bipartite}. Recall from \cref{lem:perp-takes-Pi-G-to-G-op} that $V \mapsto V^\perp$ is an involutive isomorphism from $\Pi_H$ to $\Pi_{H^{op}}$.

\begin{lemma}\label{lem:too-low-dim-maps-to-pos-codim}
    Fix $k,m$. Suppose $H$ is a plabic graph of type $(k,n)$ and set
    \[
    Q_H:=\{\bz \in \Gr_{m,n}:\bz \subset V \text{ for some } V \in \Pi_{H^{op}} \}=\{\bz \in \Gr_{m,n}:\bz^\perp \supset C \text{ for some }C \in \Pi_H\}.\]
    If $\dim \Pi_H < km$, then $Q_H$ is an irreducible subvariety of $\Gr_{m,n}$ of positive codimension.
\end{lemma}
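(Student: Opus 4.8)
The plan is to realize $Q_H$ as the image of an incidence variety under a projection, prove that this incidence variety is irreducible, and then bound its dimension. Set $\Pi := \Pi_{H^{op}} \subseteq \Gr_{n-k,n}$, so that every $V \in \Pi$ has $\dim V = n-k$, and $\dim \Pi = \dim \Pi_H$ by \cref{lem:perp-takes-Pi-G-to-G-op}. Consider the closed subvariety
\[\mathcal{I} := \{(\bz, V) \in \Gr_{m,n} \times \Pi : \bz \subseteq V\},\]
with projections $p_1 : \mathcal{I} \to \Gr_{m,n}$ and $p_2 : \mathcal{I} \to \Pi$. By the definition of $Q_H$ we have $Q_H = p_1(\mathcal{I})$, and since $\Pi$ is projective the morphism $p_1$ is proper, so $Q_H$ is a closed subvariety of $\Gr_{m,n}$.

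The next step is to show that $\mathcal{I}$ is irreducible. The positroid variety $\Pi$ is irreducible \cite{KLS} (it is the closure of the positroid cell $S_{H^{op}}$, which contains the dense torus $T_{H^{op}}\cong(\CC^*)^{\dim \Pi}$). The projection $p_2$ realizes $\mathcal{I}$ as the relative Grassmannian of $m$-planes in the tautological rank-$(n-k)$ bundle over $\Pi$; since vector bundles are Zariski-locally trivial, $\mathcal{I}$ is locally of the form $U \times \Gr_{m,n-k}$ for $U$ in an open cover of $\Pi$, so $\mathcal{I}$ is irreducible (as $\Pi$ and $\Gr_{m,n-k}$ are), of dimension
\[\dim \mathcal{I} = \dim \Pi + \dim \Gr_{m,n-k} = \dim \Pi_H + m(n-k-m).\]
In particular $Q_H = p_1(\mathcal{I})$ is irreducible.

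It remains to compare dimensions. Since the image of a morphism has dimension at most that of its source,
\[\dim Q_H \;\le\; \dim \mathcal{I} \;=\; \dim \Pi_H + m(n-k-m),\]
and since $\dim \Gr_{m,n} = m(n-m)$, the hypothesis $\dim \Pi_H < km$ gives
\[\dim Q_H \;<\; km + m(n-k-m) \;=\; m(n-m) \;=\; \dim \Gr_{m,n},\]
so $Q_H$ has positive codimension, as desired.

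The only point that requires any care is the irreducibility of $\mathcal{I}$, and even that is essentially automatic once one identifies $p_2 : \mathcal{I} \to \Pi$ with the Grassmannian bundle of a vector bundle over the irreducible base $\Pi$; the rest is a routine dimension count. I therefore do not anticipate a genuine obstacle.
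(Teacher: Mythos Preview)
Your proof is correct and follows essentially the same approach as the paper: both introduce the incidence variety $\{(\bz,V)\in \Gr_{m,n}\times\Pi_{H^{op}}:\bz\subset V\}$, use properness of the projection to conclude $Q_H$ is closed, argue irreducibility from the fibration over the irreducible base $\Pi_{H^{op}}$ with fibers $\Gr_{m,n-k}$, and finish with the identical dimension count. Your justification of irreducibility via the Grassmannian-bundle structure is slightly more explicit than the paper's, but the arguments are otherwise the same.
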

\begin{proof}
    Consider the incidence variety
    \[\{(\bz, V) \in  \Gr_{m,n} \times \Pi_{H^{op}}: \bz \subset V\} \subset \Gr_{m,n} \times \Gr_{n-k, n}.\]
    This is a projective variety (cut out by the equations for $\Pi_{H^{op}}$ and the incidence-Pl\"ucker relations) and is irreducible as $\Pi_{H^{op}}$ is irreducible. The projection onto the first coordinate, which is a proper map, gives $Q_H$. This implies $Q_H$ is closed and irreducible.

    Now, we turn to dimension. There are $\dim \Pi_{H^{op}}= \dim \Pi_{H}$ degrees of freedom to choose $V\in \Pi_{H^{op}} \subset \Gr_{n-k,n}$. Once $V$ is chosen, $\bz$ can be any element of $\Gr_m(V) \cong \Gr_{m, n-k}$, which has dimension $m(n-k-m)$. So the incidence variety has dimension
    \[\dim \Pi_{H} + m(n-k-m) < km +m(n-k-m) = m(n-m) = \dim \Gr_{m,n}.\]
    Since $Q_H$ is a projection of the incidence variety, its dimension is at most the dimension of the incidence variety, hence it has positive codimension in $\Gr_{m,n}$.

\end{proof}

\begin{remark}\label{rem:more-maps-to-pos-codim}
 \cref{lem:too-low-dim-maps-to-pos-codim} holds by the same proof if we replace $\Pi_{H^{op}}$ with an irreducible subvariety $X \subset Gr_{n-k, n}$ satisfying $\dim X < km$ (and replace $\Pi_H$ with the image of $X$ under $V \mapsto V^{\perp}$). 
\end{remark}

In the next proposition, we use $T_G$ to denote the image of the boundary measurement map (see \cref{def:pathmatrix}), which is an algebraic torus by \cite[Theorem 7.1]{MullerSpeyerTwist}. 
The torus $T_G$ is a Zariski-open, dense subset of $\Pi_G$.

\begin{proposition}\label{prop:int-num-in-terms-of-Gr_mn}
Suppose that $G$ is a plabic graph of type $(k,n)$, and $\dim \Pi_G = km$. The following are equivalent:
\begin{enumerate}
\item $\Pi_G$ has $m$-intersection number $d$;
\item for an open dense set of $\bz \in \Gr_{m,n}$, the set $\{C \in \Pi_G: C \subset \bz^\perp\}$ has cardinality $d$;
\item for an open dense set of $\bz \in \Gr_{m,n}$, the set $\{C \in T_G: C \subset \bz^\perp\}$ has cardinality $d$.
\end{enumerate}
\end{proposition}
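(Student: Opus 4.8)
The plan is to reinterpret the set appearing in (2) as a generic sub-Grassmannian section of $\Pi_G$ and then count such sections in two ways. Taking orthogonal complements with respect to a fixed nondegenerate form is an isomorphism $\Gr_{m,n}\xrightarrow{\ \sim\ }\Gr_{n-m,n}$ carrying a dense open to a dense open, and
\[\{C \in \Pi_G : C \subset \bz^\perp\} \;=\; \Gr_k(X) \cap \Pi_G \qquad\text{with } X := \bz^\perp \in \Gr_{n-m,n},\]
so, for a fixed $d$, statement (2) is equivalent to: for generic $X \in \Gr_{n-m,n}$, the intersection $\Gr_k(X)\cap\Pi_G$ consists of exactly $d$ points.

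To prove (1) $\iff$ (2) I would combine Kleiman transversality with \cref{prop:cohom-characterization-int-num}. The subvarieties $\Gr_k(X)$, $X\in\Gr_{n-m,n}$, form a single $\GL_n$-orbit inside the homogeneous space $\Gr_{k,n}$, so (we are in characteristic $0$) for generic $X$ the intersection $\Gr_k(X)\cap\Pi_G$ is transverse at every smooth point of $\Pi_G$; moreover $\dim\Gr_k(X)+\dim\Pi_G^{\mathrm{sing}} \le k(n-m-k)+(km-1) < k(n-k)=\dim\Gr_{k,n}$, so generically $\Gr_k(X)$ avoids $\Pi_G^{\mathrm{sing}}$ entirely. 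Since $\dim\Gr_k(X)+\dim\Pi_G = k(n-m-k)+km = \dim\Gr_{k,n}$, the generic intersection is a reduced $0$-dimensional scheme whose length equals the product $[\Gr_k(X)]\cap[\Pi_G]$ in $H^*(\Gr_{k,n})$, which by \cref{prop:cohom-characterization-int-num} (using $\dim\Pi_G=km$) equals $\mint(G)\cdot[\pt]$. Hence $|\Gr_k(X)\cap\Pi_G| = \mint(G)$ for generic $X$, so (2) holds for $d$ exactly when $d=\mint(G)$, i.e.\ exactly when (1) holds.

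For (2) $\iff$ (3) I would strip off the ``boundary'' $\Pi_G\setminus T_G$ by a dimension count. As recalled in the text, $T_G$ is Zariski-open dense in the irreducible variety $\Pi_G$ of dimension $km$, so $\Pi_G\setminus T_G$ is closed of dimension $\le km-1$. The incidence variety $\{(\bz,C)\in\Gr_{m,n}\times(\Pi_G\setminus T_G): C\subset\bz^\perp\}$ fibers over $\Pi_G\setminus T_G$ with fiber $\Gr_m(C^\perp)\cong\Gr_{m,n-k}$ of dimension $m(n-k-m)$, hence has dimension at most $(km-1)+m(n-k-m) = m(n-m)-1 < \dim\Gr_{m,n}$. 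Its image under the projection to $\Gr_{m,n}$ is a constructible set of dimension $<\dim\Gr_{m,n}$, hence lies in a proper closed subset; for $\bz$ in the complementary dense open, $\{C\in\Pi_G\setminus T_G: C\subset\bz^\perp\}=\emptyset$, so $\{C\in\Pi_G: C\subset\bz^\perp\}=\{C\in T_G: C\subset\bz^\perp\}$. Thus (2) and (3) hold for the same value of $d$, completing the chain (1) $\iff$ (2) $\iff$ (3).

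The step I expect to carry the real content is the passage, inside (1) $\iff$ (2), from the cohomological intersection number to an honest reduced point count — i.e.\ the correct application of Kleiman's theorem, and in particular checking that the finitely many intersection points lie in the smooth locus of $\Pi_G$. A more self-contained route to (1) $\iff$ (2) that avoids \cref{prop:cohom-characterization-int-num} is also available: a direct computation with \cref{defn_amplituhedron,def:twistormap} shows that for $\bz\subset\colsp Z$ and $C\notin E_Z$ one has $\comp_Z(C)=\bz$ if and only if $C\subset\bz^\perp$, so $\comp_{Z,G}^{-1}(\bz) = (\Gr_k(\bz^\perp)\cap\Pi_G)\setminus E_Z$, after which one runs an incidence-variety argument over $\Gr_{k+m,n}$ to compare generic $\bz\in\Gr_m(\colsp Z)$ with generic $\bz\in\Gr_{m,n}$; the price there is bookkeeping the exceptional locus $E_Z$ and the three nested layers of genericity, which the Kleiman route sidesteps. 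Everything else is routine dimension counting.
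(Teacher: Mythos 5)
Your proposal is correct and follows essentially the same route as the paper: (1)\,$\iff$\,(2) via the cohomological characterization of $\mint(G)$ (\cref{prop:cohom-characterization-int-num}) together with Kleiman transversality for the $\GL_n$-translates of $\Gr_k(\bz^\perp)$, and (2)\,$\iff$\,(3) via the incidence-variety dimension count showing that $\Pi_G\setminus T_G$, having dimension $<km$, meets $\Gr_k(\bz^\perp)$ only for $\bz$ in a positive-codimension locus (this is exactly \cref{lem:too-low-dim-maps-to-pos-codim} and \cref{rem:more-maps-to-pos-codim}). Your explicit check that the generic section avoids $\Pi_G^{\mathrm{sing}}$ and is reduced is a welcome bit of extra care that the paper leaves implicit in its appeal to Kleiman.
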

\begin{proof}

For $z\in\Gr_{m,n}$, let $P_z=\Gr_{k}(z^\perp)$ be the collection of $k$-planes $V \in \Gr_{k,n}$ which are contained in $z^\perp$. This is a Schubert variety of dimension $ k(n-m-k)$ which is isomorphic to $\Gr_{k, n-m}$. Notice that $P_z \cap \Pi_G$ and $P_z \cap T_G$ are the sets mentioned in (2) and (3), respectively.

$(1 \Leftrightarrow 2)$:
By \cref{prop:cohom-characterization-int-num}, in the cohomology ring $H^*(\Gr_{k,n})$, we have $[\Pi_G]\cap[P_z]= \mint(G) \cdot [\pt].$ This means that $\Pi_G \cap P_z$ will consist of exactly $\mint(G)$ points, provided that the intersection is transverse. We now show that this intersection is transverse for a generic set of $z$.  Since $GL_n$ acts transitively on $\Gr_{m,n}$,
and $(P_z) \cdot g$ is equal to $P_{zg}$, by Kleiman's theorem there's an open dense subset of $g\in GL_n$ such that 
$\Pi_G$ and $P_{zg}$ intersect transversely.  Therefore for generic $z\in \Gr_{m,n}$,
 $\mint(G) =|P_z \cap \Pi_G|,$ and hence $\mint(G)=|\{C \in \Pi_G: C \subset \bz^\perp\}|$.

$(2 \Leftrightarrow 3)$: Consider $ \Pi_G \setminus T_G$, which is a closed subvariety of $\Pi_G$ of dimension less than $km$. \cref{lem:too-low-dim-maps-to-pos-codim,rem:more-maps-to-pos-codim} imply that $\{\bz\in \Gr_{m,n}:\bz^\perp \supset C \text{ for some }C \in \Pi_G \setminus T_G\}$ is contained in a positive codimension subvariety $\mcv$ of $\Gr_{m,n}$. For $\bz$ in the open dense set $\Gr_{m,n} \setminus \mcv$, any $C \in \Pi_G$ contained in $\bz^\perp$ lies in $T_G$, and so 
\[|P_{\bz} \cap \Pi_G|=|P_{\bz} \cap T_G|. \]
Therefore for $\bz$ in an open dense set of $\Gr_{m,n}$, the two  sets $\{C \in \Pi_G: C \subset \bz^\perp\}$ and $\{C \in T_G: C \subset \bz^\perp\}$ have the same cardinality.
\end{proof}

\cref{prop:int1criterion} below gives a useful criterion for
verifying that a positroid variety $\Pi_G$ has
intersection number $0$.  We will use this in the next section in the proof of \cref{prop:tree1}.

\begin{lemma}\label{lem:geom-criterion-int0} Fix $\Pi_G$ of dimension $km.$ Suppose there is a variety $X \subset \Gr_{k', n}$ of dimension less than $k'm$ and for every point $V$ in an open dense subset $\mcu$ of $\Pi_G$, there exists $V' \in X$ such that $V' \subset V$. Then $\mint(G)=0$.
\end{lemma}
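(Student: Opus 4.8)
The plan is to argue by contradiction, turning the hypothesis into a dimension estimate by means of \cref{lem:too-low-dim-maps-to-pos-codim} and \cref{rem:more-maps-to-pos-codim}. Suppose toward a contradiction that $\mint(G)\geq 1$. By \cref{prop:int-num-in-terms-of-Gr_mn}, for $\bz$ in an open dense subset of $\Gr_{m,n}$ the set $\{C\in\Pi_G: C\subset\bz^\perp\}$ is nonempty, so the set
\[Q_G:=\{\bz\in\Gr_{m,n}: \bz^\perp\supset C\ \text{for some}\ C\in\Pi_G\}\]
contains an open dense subset. As observed in the proof of \cref{lem:too-low-dim-maps-to-pos-codim}, $Q_G$ is closed, being the image under the first projection of the projective incidence variety $\{(\bz,C)\in\Gr_{m,n}\times\Pi_G: C\subset\bz^\perp\}$; hence $Q_G=\Gr_{m,n}$. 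I will contradict this by producing a proper closed subvariety of $\Gr_{m,n}$ that contains $Q_G$.

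Write $\Pi_G=\mcu\sqcup(\Pi_G\setminus\mcu)$, so that $Q_G=A_1\cup A_2$, where $A_1:=\{\bz:\bz^\perp\supset C\ \text{for some}\ C\in\mcu\}$ and $A_2:=\{\bz:\bz^\perp\supset C\ \text{for some}\ C\in\Pi_G\setminus\mcu\}$. For $A_2$: since $\mcu$ is open dense in the irreducible $km$-dimensional variety $\Pi_G$, the set $\Pi_G\setminus\mcu$ is closed of dimension $<km$, and applying \cref{lem:too-low-dim-maps-to-pos-codim} and \cref{rem:more-maps-to-pos-codim} to the $\perp$-images of its irreducible components shows $A_2$ is closed of positive codimension in $\Gr_{m,n}$. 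For $A_1$: by hypothesis each $C\in\mcu$ contains some $V'\in X$, so if $C\subset\bz^\perp$ then $V'\subset C\subset\bz^\perp$, i.e.\ $\bz\subset(V')^\perp$; hence
\[A_1\subseteq R:=\{\bz\in\Gr_{m,n}:\bz\subset(V')^\perp\ \text{for some}\ V'\in X\}.\]
Since $X\subset\Gr_{k',n}$ has dimension $<k'm$ and $V'\mapsto(V')^\perp$ carries it isomorphically onto a subvariety $X^\perp\subset\Gr_{n-k',n}$ of the same dimension, \cref{rem:more-maps-to-pos-codim} applied (with the role of $k$ played by $k'$) to the irreducible components of $X^\perp$ shows that $R$ is closed of positive codimension; the relevant inequality, $\dim X^\perp+m(n-k'-m)<m(n-m)$, is exactly $\dim X^\perp<k'm$. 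Therefore $Q_G=A_1\cup A_2$ is contained in a proper closed subvariety of $\Gr_{m,n}$, contradicting $Q_G=\Gr_{m,n}$; hence $\mint(G)=0$.

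The only point requiring care — and the main (if small) obstacle — is that the hypothesis furnishes the containment $V'\subset V$ only over the open dense set $\mcu$, not over all of $\Pi_G$; this is precisely why $Q_G$ must be split as $A_1\cup A_2$ and the contribution $A_2$ of $\Pi_G\setminus\mcu$ bounded separately using that $\dim(\Pi_G\setminus\mcu)<km$. The remaining steps are routine: one checks that ``$C\subset\bz^\perp$'' and ``$\bz\subset U$'' cut out genuine projective incidence varieties (so that the projections have closed image), that $\dim X^\perp=\dim X$ because $\perp$ is an isomorphism of Grassmannians, and that the dimension count in \cref{rem:more-maps-to-pos-codim} specializes to the inequalities above.
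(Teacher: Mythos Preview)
Your proof is correct and follows essentially the same approach as the paper's: contradiction via \cref{lem:too-low-dim-maps-to-pos-codim}/\cref{rem:more-maps-to-pos-codim}, applied both to $X$ and to the complement $\Pi_G\setminus\mcu$. The only organizational difference is that the paper folds your $A_2$-step into the phrase ``repeating the proof of \cref{prop:int-num-in-terms-of-Gr_mn} using $\mcu$ rather than $T_G$'' to conclude directly that for generic $\bz$ the fiber already meets $\mcu$, whereas you make the split $Q_G=A_1\cup A_2$ explicit.
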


\begin{proof}
Assume for the sake of contradiction that $\Pi_G$
has intersection number $d>0$.  Repeating the proof of \cref{prop:int-num-in-terms-of-Gr_mn} using $\mcu$ rather than $T_G$, we obtain that there is an open dense set 
$S\subset \Gr_{m,n}$ such that for 
$\bz\in S$, 
\[\{V \in \mcu : V \subset \bz^\perp\}\]
consists of $d$ points and in particular is nonempty. By construction of $\mcu$, we also have that all $V \in \mcu$ contain an element of $X$. This means that 
\[Y=\{\bz \in \Gr_{m,n}: \text{for some }V' \in X,~ V' \subset \bz^{\perp}\}\]
contains $S$ and so in particular is full-dimensional. But this contradicts \cref{lem:too-low-dim-maps-to-pos-codim}, which states that $Y$ is of positive codimension.
\end{proof}

In the next proposition, we let $S(\OO)$ denote the sources of a perfect orientation.

\begin{proposition}\label{prop:int1criterion} 

Fix a positive integer $m$, and let 
$G$ be a plabic graph of type $(k,n)$ such that 
$\dim \Pi_G=km$. Suppose there is an acyclic perfect orientation $\OO$ of $G$ and a subgraph $G'$ of $G$ obtained by deleting some vertices and edges $\tilde{E}$ of $G$ such that:
\begin{enumerate}
\item \label{item1} $\OO$ restricts to a perfect orientation $\OO'$ of $G'$;
\item \label{item2} the sources and sinks of $\OO'$ are a subset of $\Bb$, and $S(\OO') = S(\OO) \cap \{a, a+1, \dots, b\}$ for some $a<b$; 
\item \label{item3} all vertices of $G'$ incident to edges of $\tilde{E}$ are black;
\item \label{item4} $G'$ has type $(k',n')$ with $1\leq k'<k$, and $\dim \Pi_{G'}<k'm$.
\end{enumerate}
Then $\Pi_G$ has $m$-intersection number 0.
\end{proposition}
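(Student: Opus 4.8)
The plan is to deduce this from \cref{lem:geom-criterion-int0}. Let $B' \subseteq \Bb$ denote the boundary vertices of $G'$, i.e. the sources and sinks of $\OO'$ (a subset of $\Bb$ by (2)), and identify $\Gr_{k',n'}$ with the Grassmannian $\Gr_{k',B'}$ whose matrix columns are indexed by $B'$, so that $\Pi_{G'} \subset \Gr_{k',B'}$. Let $X \subseteq \Gr_{k',n}$ be the image of $\Pi_{G'}$ under the closed embedding $\Gr_{k',B'} \hookrightarrow \Gr_{k',n}$ that inserts zero columns in the positions $[n]\setminus B'$; then $\dim X = \dim \Pi_{G'} < k'm$ by (4). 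By \cref{lem:geom-criterion-int0} it therefore suffices to show that a generic $V \in \Pi_G$ contains a $k'$-plane $V'$ with $V' \in X$.

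Take $V = \rowsp C \in T_G$ for generic edge weights on $G$, where $C$ is the path matrix of $(G,\OO)$ with rows indexed by $S := S(\OO)$ (see \cref{def:pathmatrix}); recall that $T_G$ is dense in $\Pi_G$, that $C_{ij} = \pm\delta_{ij}$ for $j \in S$, and that $C_{ij} = \sum_{P: i \to j} \pm\wt(P)$ for $j \notin S$, the sum over directed paths of $\OO$. Set $S' := S(\OO') = S \cap \{a,\dots,b\} \subseteq S$, a set of size $k'$, and let $V' \subseteq V$ be the span of the rows of $C$ indexed by $S'$. Everything rests on the following \textbf{Key Claim}: every directed path of $\OO$ starting at a vertex of $S'$ stays entirely inside $G'$ and ends at a sink of $\OO'$. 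Granting it: for $j \notin B'$ and $i \in S'$ we get $C_{ij} = 0$ (it is $\pm\delta_{ij} = 0$ when $j \in S\setminus S'$, since $S' \subseteq B'$, and there is simply no path $i \to j$ when $j \notin S$), so $C[S',\cdot]$ — the submatrix on rows $S'$ — is supported in the columns $B'$; and for $i \in S'$ and $j$ a sink of $\OO'$, the $\OO$-paths $i \to j$ coincide with the $\OO'$-paths $i \to j$ and carry equal weights, so $C[S',B']$ equals — up to the sign discrepancy between the two orientation conventions, which does not change the point of the torus-invariant variety $\Pi_{G'}$ it represents — the path matrix of $(G',\OO')$ with the restricted weights. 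Hence $C[S',\cdot]$ is the zero-padding of a point of $T_{G'} \subseteq \Pi_{G'}$, i.e. $V' \in X$; letting the weights range over a generic subset of $(\CC^*)^{E(G)}$ gives this for all $V$ in the dense subset $T_G$ of $\Pi_G$, and \cref{lem:geom-criterion-int0} then yields $\mint(G) = 0$.

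It remains to prove the Key Claim, by induction along the path. Its initial edge, at a source $i \in S'$, lies in $E(G')$ since $i \in S(\OO')$ must have an $\OO'$-outgoing edge. If the path has so far stayed inside $G'$ and reaches an internal white vertex $w$, then by (3) no edge of $G$ at $w$ was deleted, so the continuing edge (which exists since $G$ is leafless) again lies in $E(G')$. If instead the path reaches an internal black vertex $b$ of $G'$, it leaves along the unique $\OO$-outgoing edge of $b$; this edge cannot lie in $\tilde{E}$, for otherwise $b$ — which is internal in $G'$ since the boundary of $G'$ is contained in $\Bb$ while $b \notin \Bb$ — would have no outgoing edge in $\OO' = \OO|_{G'}$, contradicting (1). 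So the path never leaves $G'$; since $\OO$ is acyclic it must terminate, necessarily at a boundary vertex of $G'$ with no outgoing edge, i.e. at a sink of $\OO'$.

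The \textbf{main obstacle} is the Key Claim. It is exactly here that the black/white asymmetry of hypothesis (3) is indispensable — a deleted edge incident to a white vertex of $G'$ could well be the outgoing edge a path needs to continue — and one must handle carefully the base case (paths starting at a source) and termination at the boundary. The remaining ingredients, namely the identification of a row-restriction of a path matrix with the path matrix of a subgraph (so that it represents a point of $\Pi_{G'}$, e.g. by \cite{postnikov}) and the equality $\dim X = \dim \Pi_{G'}$, are routine given \cref{lem:geom-criterion-int0} and the standard properties of the boundary measurement map.
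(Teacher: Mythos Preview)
Your proof is correct and follows the same route as the paper: both invoke \cref{lem:geom-criterion-int0} after showing that the rows of the path matrix $A(G,\OO)$ indexed by $S'=S(\OO')$ give, after zero-padding, a point of $\Pi_{G'}$; your Key Claim is exactly the paper's observation that every edge of $\tilde E$ meeting $G'$ is oriented \emph{into} $G'$, so no directed path from $S'$ can escape.

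The one spot where you are less precise than the paper is the sign step. You assert that the sign discrepancy between $C[S',B']$ and the path matrix of $(G',\OO')$ ``does not change the point of the torus-invariant variety $\Pi_{G'}$'', but torus-invariance only handles a \emph{column-by-column} sign change; an arbitrary sign pattern on the entries could move the row-span out of $\Pi_{G'}$. This is precisely where the paper uses the interval part of hypothesis~(2): because $S'=S\cap\{a,\dots,b\}$ indexes a \emph{consecutive} block of rows of $A(G,\OO)$, the extra sources $S\setminus S'$ all lie outside $[a,b]$, so the parity of sources strictly between a given $i\in S'$ and a column $t$ depends only on $t$, and hence the sign discrepancy $\sigma_{i,t}$ depends only on $t$. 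You should make this explicit, since as written you use (2) only for the weaker conclusion $S'\subseteq B'$.
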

\begin{proof}
We would like to use \cref{lem:geom-criterion-int0} on $\Pi_G$. Note that $\Pi_{G'}$ can be embedded into $\Gr_{k', n}$ by adding zero columns in positions $\Bb \setminus S$. We let $X$ denote the image of $\Pi_{G'}$ under this embedding. \cref{item4} implies that $\dim X <k'm$.

We now show that every element of $T_G$ contains an element of $X$. Let $\Bb'$ denote the sources and sinks of $\OO'$, which are the boundary vertices of $G'$.

\cref{item1,item3} imply that if $e \in \tilde{E}$ is incident to $v \in G'$, then $e$ is oriented towards $G'$. This means that if $s$ is a source of $\OO'$, then there are no paths in $\OO$ from $s$ to $\Bb \setminus \Bb'$. So in the path matrix $A(G, \OO)$, the row indexed by $s$ has zeros in columns indexed by $\Bb \setminus \Bb'$. Moreover, this together with \cref{item2} implies the entry in row $s$ and column $t \in \Bb'$ is the same as the entry of $A(G', \OO)$ in row $s$ and column $t$, up to a sign $\sigma_{s,t}$. Because $S(\OO')$ indexes a consecutive subset of the rows of $A(G, \OO)$, $\sigma_{s,t}$ depends only on the column $t$.

This means that $A(G, \OO)$ has a submatrix which, up to resigning columns, has the same span as $A(G', \OO)$. As $\Pi_{G'}$, and thus $X$, is preserved by resigning columns, this implies each element $V \in T_G$ contains some element $V' \in X$.

We have verified the assumptions of \cref{lem:geom-criterion-int0}, and conclude that $\mint(G)=0$.
\end{proof}

Based on computations for $m \leq 8$, we conjecture that the sufficient condition for $\mint(G)=0$ in \cref{prop:int1criterion} is also necessary.

\begin{conjecture}\label{conj:IN0}
Suppose $\Pi_G \subset \Gr_{k,n}$ has dimension $km$. The following are equivalent:
\begin{enumerate}
\item $\Pi_G$ has $m$-intersection number 0;
\item there is a plabic graph $G'$ of type $(k', n')$ with $1 \leq k' <k$ and $\dim \Pi_{G'} < k'm$ such that every $V \in T_G$ contains some $V' \in \Pi_{G'}$;
\item there is a graph move-equivalent to $G$ with a subgraph $G'$ and perfect orientation $\OO$ satisfying the assumptions of \cref{prop:int1criterion}.
\end{enumerate}

\end{conjecture}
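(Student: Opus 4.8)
Three of the four implications follow from results already established. For $(3)\Rightarrow(1)$, note that $\mint(G)$ depends only on the cohomology class $[\Pi_G]$ (by \cref{prop:cohom-characterization-int-num}), hence only on the positroid of $G$ and so only on its move-equivalence class; thus we may replace $G$ by a move-equivalent graph $G''$ carrying a subgraph $G'$ and orientation $\OO$ as in \cref{prop:int1criterion}, and that proposition gives $\mint(G)=\mint(G'')=0$. For $(3)\Rightarrow(2)$, the proof of \cref{prop:int1criterion} produces, for $G''$, an embedding $\Pi_{G'}\hookrightarrow\Gr_{k',n}$ by padding with zero columns, with image $X$ of dimension $\dim\Pi_{G'}<k'm$, and shows that every $V\in T_{G''}$ contains some element of $X$ up to resigning columns; since $\Pi_{G'}$ is cut out by vanishing of Pl\"ucker coordinates it is setwise invariant under column resigning, so these elements lie in the embedded $\Pi_{G'}$. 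The set of $V\in\Gr_{k,n}$ containing some $V'\in\Pi_{G'}$ is closed, being the image of a projective incidence variety under a proper projection as in \cref{lem:too-low-dim-maps-to-pos-codim}; it contains the dense subset $T_{G''}\subseteq\Pi_G$, hence all of $\Pi_G\supseteq T_G$, which is $(2)$. Finally, $(2)\Rightarrow(1)$ is immediate from \cref{lem:geom-criterion-int0} applied with $X=\Pi_{G'}$ and the dense open set $\mcu=T_G\subseteq\Pi_G$.

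The real content is the implication $(1)\Rightarrow(3)$, with the weaker $(1)\Rightarrow(2)$ as a natural intermediate target: manufacturing a combinatorial certificate for the vanishing $\mint(G)=0$. The plan is to induct on $k$. The base case $k=1$ is vacuous: a $(1,n)$-positroid variety of dimension $m$ is a coordinate subspace $\PPP^m\subseteq\PPP^{n-1}$, which meets a generic $\PPP^{n-m-1}$ transversely in one point, so $\mint(G)=1\neq0$ and $(3)$ — which would require $1\le k'<1$ — is impossible. For the inductive step I would pass to the VRC picture: by \cref{prop:int-num-in-terms-of-Gr_mn} and \cref{cor:int-num=num-VRC}, $\mint(G)=0$ says that a generic spanning configuration of $n$ vectors in $\CC^m$ on the boundary of $G$ admits no extension to an $m$-VRC on $G$. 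Fixing an acyclic perfect orientation $\OO$ with source set $S$ and solving for the internal vectors through the path formula \eqref{eq:v_b-from-paths}, the obstruction to a generic solution is a graph-forced linear dependence among the boundary vectors ``seen'' through $\OO$ at some set of white relations. One then wants to show that such a dependence is always witnessed by a subgraph $G'$ with $S(\OO')=S\cap\{a,\dots,b\}$, monochromatic interface, and $\dim\Pi_{G'}<k'm$, after normalizing $\OO$ by square and expand-contract moves — for instance to a bridge decomposition, peeled one boundary bridge at a time while tracking how $S$ and $\mint$ evolve. A parallel cohomological route would use the known expansion of $[\Pi_G]$ in terms of the bounded affine permutation $f_G$ (affine Stanley symmetric functions) and argue directly that vanishing of the ``maximal rectangle'' coefficient forces $f_G$ to factor over a cyclic interval in exactly the way reflected by conditions \cref{item1,item2,item3,item4} of \cref{prop:int1criterion}.

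The main obstacle is precisely this last step, and it has two entangled parts. First, \emph{localization}: a priori, the failure to extend a generic VRC could be a genuinely global phenomenon invisible in any single planar subgraph on a cyclic boundary interval, so one must argue it can always be confined to such a subgraph with a monochromatic interface (conditions \cref{item1,item2,item3} of \cref{prop:int1criterion}). Second, \emph{move-equivalence}: since $G'$ need only exist for \emph{some} graph move-equivalent to $G$, one is in effect searching over the whole move-equivalence class, so the argument must be either move-invariant or accompanied by a canonical choice of representative. A sensible intermediate checkpoint is to settle the conjecture for plabic trees, where \cref{prop:tree1} gives a complete explicit description of $m$-solvability and $\mint$: there, the edge responsible for the failure of $m$-balancedness is the natural candidate for the separating edge set $\tilde E$ of \cref{prop:int1criterion}, and one of its two sides should be the required $G'$.
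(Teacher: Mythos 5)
This statement is a \emph{conjecture} in the paper, not a theorem: the authors offer no proof of it, only computational evidence for $m\le 8$ and the remark after \cref{prop:tree1} that it holds for plabic trees. So there is no ``paper's proof'' to match yours against; what can be assessed is whether your partial progress is correct and whether your plan for the remaining direction is sound.

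Your three implications $(3)\Rightarrow(1)$, $(3)\Rightarrow(2)$, and $(2)\Rightarrow(1)$ are correct, and they are essentially a repackaging of what the paper already establishes: $(3)\Rightarrow(1)$ is \cref{prop:int1criterion} together with move-invariance of $\Pi_G$; $(2)\Rightarrow(1)$ is \cref{lem:geom-criterion-int0} with $\mcu=T_G$; and your $(3)\Rightarrow(2)$ correctly extracts from the proof of \cref{prop:int1criterion} the containment statement for all of $T_{G''}$ and then passes to $T_G$ by the closedness of the incidence condition (the resigning issue is already handled in the paper's proof, as you note). Your base case for $k=1$ is also right. The genuine content of the conjecture is $(1)\Rightarrow(2)$ (and then $(2)\Rightarrow(3)$ or directly $(1)\Rightarrow(3)$ to close the cycle), and you do not prove it --- you give a plan and name the two obstacles (localization to a cyclic-interval subgraph with monochromatic interface, and the search over the move-equivalence class). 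Those are indeed the obstacles, and neither your proposal nor the paper overcomes them; so the proposal should be read as an honest partial result, not a proof. One point in your favor: the ``intermediate checkpoint'' you propose --- trees, with the edge witnessing failure of the $m$-balanced condition supplying the separating edge set $\tilde E$ and one of its sides supplying $G'$ --- is exactly what the paper carries out in \cref{prop:fail}, \cref{cor:int0}, and \cref{prop:tree1}, so that step of your program is already done and confirms your instinct.
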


\subsection{Relating VRCs to the intersection number}

In this section we start by recalling results of \cite{AGPR} which relate the boundary measurement torus $T_G$ to non-degenerate VRCs. We then show 
in \cref{prop:VRC-vs-Cz=0} that the set of $m$-VRCs on $G$ with boundary 
$\bz$ is in bijection with the set $\{C \in T_G: C \subset \bz^\perp\}$, and 
we show in \cref{cor:int-num=num-VRC} that 
the number of $m$-VRC's with boundary $\bz$ is equal to the 
 intersection number of $\Pi_G$.

Recall from \cref{not:bipartite} that $G^{op}$ is the graph obtained from $G$ by switching the colors of every vertex. Recall from \cref{def:VRC} that $\partial[\bv, \bR]$ denotes the boundary of the VRC $[\bv, \bR]$. 
Also recall the boundary measurement map 
$\mathbb{B}_G$ from \cref{def:pathmatrix}.  

\begin{theorem}[{\cite[Proposition 7.3, Theorem 7.8]{AGPR}}]\label{thm:bdry-meas-vs-bdry-restriction}
    Let $G$ be a reduced $(k,n)$-plabic graph. Choose any Kasteleyn signs $(\sigma_e)_{e \in E(G)}$ on $G$ (see \cite[Section 7.1]{AGPR}). Then we have isomorphisms
    \begin{center}
    \begin{tikzcd}[row sep=tiny]\{[\bv, \bR] \in \VRC_G^{n-k} \text{ non-degenerate}\} \arrow[r,"\sim"', "\partial"] & T_{G^{op}}  & \arrow[l,"\sim","\mathbb{B}_{G^{op}}"'](\CC^*)^{E(G)}/\text{gauge}\\
   {[\{v_b\}, \{r_e\}]}=[\bv, \bR] \arrow[r,maps to] &\partial[\bv, \bR] &\arrow[l, maps to] (\sigma_e r_e)_{e \in E(G)}.
    \end{tikzcd}
    \end{center}
\end{theorem}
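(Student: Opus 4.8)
The plan is to derive this from \cite[Proposition 7.3 and Theorem 7.8]{AGPR}, of which the statement is essentially a transcription, via the colour-swap dictionary $G \leftrightarrow G^{op}$. First I would record the dictionary precisely. By \cref{lem:perp-takes-Pi-G-to-G-op} the swap sends the type $(k,n)$ graph $G$ to the type $(n-k,n)$ graph $G^{op}$, and since relative to \cite{AGPR} we have moved vectors from white vertices to black vertices and relations from black to white, an $(n-k)$-VRC on $G$ \emph{in our sense} is literally the same datum as a VRC on $G^{op}$ \emph{in the sense of \cite{AGPR}}: the vectors sit on the black vertices $B$ of $G$, which are the white vertices of $G^{op}$; the relations are imposed at the interior white vertices of $G$, which are the black vertices of $G^{op}$; and the ambient dimension $n-k$ is the $k$-statistic of $G^{op}$. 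Under this identification $\partial$ becomes exactly AGPR's boundary map, and the right-hand term of the triangle is the common domain of both constructions, since $E(G^{op})=E(G)$, the interior vertices of $G^{op}$ are those of $G$, and hence the gauge group is unchanged. The two convention differences --- that we require $r_e\in\CC^*$ and that we allow zero vectors --- should be immaterial here: on the non-degenerate locus appearing in the statement all interior vectors are nonzero, and since $G$ is leafless the boundary columns are also nonzero on the torus $T_{G^{op}}$, so we lie exactly inside AGPR's regime, where all edge weights are in $\CC^*$.

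Granting the dictionary, \cite[Prop.~7.3]{AGPR} gives that $\partial$ is a bijection from non-degenerate VRCs (modulo gauge and $\GL_{n-k}$) onto $T_{G^{op}}$, and \cite[Thm.~7.8]{AGPR} identifies it with the boundary-measurement parametrization $\mathbb{B}_{G^{op}}$. The one point that is not purely formal is the commutativity of the triangle, i.e.\ that $\partial[\bv,\bR]=\mathbb{B}_{G^{op}}((\sigma_e r_e)_e)$ for every VRC $[\bv,\bR]=[\{v_b\},\{r_e\}]$, and I would verify this directly as follows. Fix an acyclic reverse perfect orientation $\OO$ of $G$, which exists by \cref{acycliclemma}; this is precisely a perfect orientation of $G^{op}$, with source set $I$ of size $n-k$. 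By \cref{lem:v_b-from-paths}, after gauging and acting by $\GL_{n-k}$ so that the vectors $v_i$ for $i\in I$ form the standard basis of $\CC^{n-k}$, the $j$-th boundary column of $[\bv,\bR]$ equals $\sum_{i\in I}\left(\sum_{P\colon i\to j} w_P\right)v_i$, where $w_P=\prod_{e}r_e / \prod_{e'}(-r_{e'})$, the products running over the black-to-white and white-to-black edges of $P$ respectively. Postnikov's path matrix computing $\mathbb{B}_{G^{op}}$ relative to $\OO$ has exactly this shape; matching the two term by term, the factor $-1$ on each white-to-black step in $w_P$ is precisely the sign discrepancy that the Kasteleyn signs $\sigma_e$ are designed to absorb, so the two matrices represent the same point of $\Gr_{n-k,n}$.

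Finally, $\mathbb{B}_{G^{op}}$ is an isomorphism from $(\CC^*)^{E(G)}/\mathrm{gauge}$ onto $T_{G^{op}}$ by \cite[Theorem 7.1]{MullerSpeyerTwist}, so the previous paragraph gives that $\partial$ is surjective onto $T_{G^{op}}$ and that the lower route of the diagram is $\mathbb{B}_{G^{op}}$. Injectivity of $\partial$ then follows: by \cref{cor:VRC-det-by-bdry-and-R} a VRC is determined by its boundary together with its edge weights, and reducedness of $G$ --- together with non-degeneracy, which forces $r_e\in\CC^*$ --- guarantees that the edge weights themselves are recovered from the VRC up to gauge; equivalently, injectivity of $\partial$ is inherited from that of $\mathbb{B}_{G^{op}}$. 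I expect the genuine obstacle to be the sign bookkeeping in the second paragraph --- aligning our path-weight sign convention with the Kasteleyn-signed boundary-measurement matrix of $G^{op}$, and checking that the source indexing $S(\OO)=I$ matches on both sides --- which is exactly the computation carried out in \cite[Section~7]{AGPR}, to which I would refer for the details.
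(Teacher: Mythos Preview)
The paper does not supply a proof of this theorem at all: it is stated with the attribution \cite[Proposition~7.3, Theorem~7.8]{AGPR} and then used. So there is no ``paper's own proof'' to compare against; your proposal is in effect a justification of why the cited results of AGPR yield the statement as formulated here.

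As such a justification, your approach is the right one and essentially the only one: you correctly identify the colour-swap dictionary (an $(n-k)$-VRC on $G$ in the paper's conventions \emph{is} a VRC on $G^{op}$ in AGPR's conventions, with the same gauge group), and you correctly invoke AGPR's Proposition~7.3 and Theorem~7.8 together with \cite[Theorem~7.1]{MullerSpeyerTwist} for the two legs of the triangle. Your direct check of commutativity via \cref{lem:v_b-from-paths} is a nice touch that makes the translation concrete. The one place where your sketch is loose is the sentence ``the factor $-1$ on each white-to-black step in $w_P$ is precisely the sign discrepancy that the Kasteleyn signs $\sigma_e$ are designed to absorb'': Kasteleyn signs are characterised by a face condition, not by a per-edge or per-orientation rule, and the honest verification that $\partial[\bv,\bR]=\mathbb{B}_{G^{op}}((\sigma_e r_e))$ requires the matching/flow sign analysis carried out in \cite[Section~7]{AGPR}. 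You already flag this and defer to AGPR for it, which is appropriate --- but you should not present the one-line heuristic as if it were the actual mechanism.
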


\cref{thm:bdry-meas-vs-bdry-restriction} implies that for any choice of $\bR$ (modulo gauge), there is a unique non-degenerate $[\bv, \bR] \in \VRC_G^{n-k}$ and its boundary is in $T_{G^{op}}$.

In the next result, we use the notation $\mzVRC_G := \{[\bv, \bR] \in \mVRC_G: \partial[\bv, \bR] = \bz \}$, and the notation $V^\perp$ for the orthogonal complement of the vector space $V$.

\begin{theorem}\label{prop:VRC-vs-Cz=0}
    Let $G$ be a reduced plabic graph of type $(k,n)$.
    
    Fix $m < n-k$ and $\bz \in \Gr_{m,n}^{\circ}$. 
    Then there are well-defined
    maps $\alpha$ and $\perp$ which give bijections
    in the following diagram: 
    \begin{center}
    \begin{tikzcd}[row sep=tiny]
    \mzVRC_G\arrow[r,"\alpha", "\sim"'] \arrow[bend left=20]{rr}{\m}[swap]{\sim} & \{ V \in T_{G^{op}}: \bz \subset V\}\arrow[r, "\perp","\sim"'] & \{C \in T_G: C \subset \bz^\perp\}\\
    {[\bv, \bR]} \arrow[r, mapsto]& {\partial[\bv', \bR]} = V \arrow[r, maps to] & V^{\perp}
    \end{tikzcd}
    \end{center}
    In particular, $\alpha$ sends $[\bv, \bR]$ to the boundary of the unique non-degenerate $[\bv' ,\bR'] \in \VRC_G^{n-k}$ with $\bR'=\bR$.
\end{theorem}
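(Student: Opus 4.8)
The plan is to build the map $\alpha$ explicitly and then show it is a bijection by exhibiting an inverse, with the composite $\m = \perp \circ \alpha$ falling out automatically once $\alpha$ is understood. First I would set up $\alpha$ on the level of representatives: given $[\bv,\bR]\in\mzVRC_G$, the data $\bR$ (modulo gauge) is, by \cref{thm:bdry-meas-vs-bdry-restriction} applied to $G$, exactly the data of a unique non-degenerate $(n-k)$-VRC $[\bv',\bR]\in\VRC_G^{n-k}$ with the same edge-weights (after multiplying by the Kasteleyn signs $\sigma_e$). Its boundary $\partial[\bv',\bR] = V$ lies in $T_{G^{op}}$ by that theorem. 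Define $\alpha([\bv,\bR]) := V$. The first thing to check is that this is well-defined: gauge transformations and $GL_m$-action on $[\bv,\bR]$ do not change $\bR$ modulo gauge, and gauge on $\bR$ is precisely what $\VRC_G^{n-k}/\text{gauge}$ quotients by, so $V$ is unambiguous.

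The key geometric claim — the heart of the argument — is that $\bz \subset V$ for the $V$ produced above, and conversely that every $V \in T_{G^{op}}$ containing $\bz$ arises this way from a unique $[\bv,\bR]\in\mzVRC_G$. For the forward direction I would argue as follows. Both $(\bv,\bR)$ and $(\bv',\bR)$ satisfy the white-vertex relations $\sum_{e=\{b,w\}} r_e v_b = 0$ (resp.\ with $v_b'$), with the \emph{same} coefficients $r_e$ (the Kasteleyn signs can be absorbed by a global rescaling at black vertices, which is a gauge transformation and does not affect the relations). Stack the $n$ boundary vectors of the $m$-VRC into the $m\times n$ matrix $\bz$ and the $n$ boundary vectors of the $(n-k)$-VRC into the $(n-k)\times n$ matrix $V$. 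The white-vertex relations say that each row of $\bz$, and each row of $V$, pushed through the ``relation matrix'' of $G$, has prescribed kernel behavior; more precisely, by \cref{lem:v_b-from-paths} both $\bv$ and $\bv'$ are determined by their boundary values via the \emph{same} path sums in the \emph{same} reverse perfect orientation (the path weights depend only on $\bR$). The cleanest way to phrase the incidence $\bz\subset V$: the rows of $\bz$ span an $m$-dimensional space contained in the row span of $V$ iff $\bz V^T$... no — rather, $\bz\subset V$ (as subspaces of $\CC^n$) means every row of $\bz$ is a combination of rows of $V$. I would instead verify the equivalent statement $\bz\cdot C^T = 0$ where $C = V^\perp\in T_G$ spans the relation space: the point is that the existence of a common VRC structure forces the boundary of the $m$-VRC and the $C$-matrix (the ``relations among the boundary vectors'' read off from the graph) to annihilate each other, because $C$ records exactly the linear dependencies among $v_1,\dots,v_n$ imposed by propagating the white-vertex relations through $G$, and these dependencies hold verbatim for any VRC with weights $\bR$, in particular for the $(n-k)$-VRC. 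This is where I expect to spend the most effort and where the argument is most likely to need the detailed path-matrix bookkeeping of \cite{AGPR}; I would lean on \cref{thm:bdry-meas-vs-bdry-restriction} and \cref{lem:v_b-from-paths} to keep it short.

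For the inverse: given $V\in T_{G^{op}}$ with $\bz\subset V$, by \cref{thm:bdry-meas-vs-bdry-restriction} there is a unique $[\bv',\bR]\in\VRC_G^{n-k}$ non-degenerate with $\partial[\bv',\bR]=V$, which recovers $\bR$ (modulo gauge). Now define an $m$-VRC by taking the same $\bR$ and setting the boundary vectors to be the rows of $\bz$; the internal vectors $v_b$ are then forced by \eqref{eq:v_b-from-paths} (using that $G$ is reduced, hence admits an acyclic reverse perfect orientation, so \cref{cor:VRC-det-by-bdry-and-R} applies). One must check the white-vertex relations actually hold for this assignment — this is the dual of the incidence check above, and follows from $\bz\subset V$ by the same linear algebra. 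The boundary vectors $\bz$ span $\CC^m$ since $\bz\in\Gr_{m,n}^\circ$, so condition (1) of \cref{def:VRC} holds. This produces $[\bv,\bR]\in\mzVRC_G$, and the two constructions are mutually inverse by the uniqueness clauses in \cref{thm:bdry-meas-vs-bdry-restriction} and \cref{cor:VRC-det-by-bdry-and-R}. Finally, $\perp$ is a bijection $\{V\in T_{G^{op}}: \bz\subset V\} \to \{C\in T_G : C\subset \bz^\perp\}$: it is the restriction of the involution $V\mapsto V^\perp$ of \cref{lem:perp-takes-Pi-G-to-G-op} (which interchanges $\Pi_{G^{op}}$ and $\Pi_G$, hence $T_{G^{op}}$ and $T_G$ on the open dense torus), and $\bz\subset V \iff V^\perp \subset \bz^\perp$ is elementary. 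Composing gives $\m$, and the last sentence of the statement is just the definition of $\alpha$ unwound.
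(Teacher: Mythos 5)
Your overall architecture is the same as the paper's: define $\alpha$ by passing from $\bR$ to the unique non-degenerate $(n-k)$-VRC with the same coefficients via \cref{thm:bdry-meas-vs-bdry-restriction}, prove the incidence $\bz\subset V$ using \cref{lem:v_b-from-paths} applied simultaneously to both VRCs, build the inverse by going back down from $V$ to an $m$-VRC, and treat $\perp$ separately. Your packaging of the incidence step --- that the path sums of \cref{lem:v_b-from-paths} determine a $k$-dimensional space of linear dependencies among the boundary vectors depending only on $\bR$, which forces $\bz\subset C^\perp=V$ --- is an equivalent (and arguably more conceptual) reformulation of the paper's argument, which instead realizes a representative of $\bz$ as the top $m$ rows of a representative of $V$ and compares the two instances of \eqref{eq:v_b-from-paths} coordinatewise. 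For the inverse, the paper simply projects the $(n-k)$-VRC onto its first $m$ coordinates, which makes the white-vertex relations hold automatically; your version (impose $\bR$, set the boundary to $\bz$, define internal vectors by the path formula, then verify the relations) works but needs the extra observation that the relations, rewritten in terms of the source vectors, have identically vanishing coefficients because they vanish for the $(n-k)$-VRC whose source vectors are linearly independent. You gesture at this ("the same linear algebra") but do not carry it out.

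The one genuine gap is your treatment of $\perp$. You claim that because $V\mapsto V^\perp$ is an isomorphism $\Pi_{G^{op}}\to\Pi_G$ (\cref{lem:perp-takes-Pi-G-to-G-op}), it "hence" interchanges $T_{G^{op}}$ and $T_G$. That does not follow: $T_G$ is a \emph{specific} open dense subset of $\Pi_G$ (the image of the boundary measurement map), and an isomorphism of the ambient positroid varieties carries one open dense set to another open dense set, not necessarily onto $T_G$. The statement $\perp(T_{G^{op}})=T_G$ is true but requires an argument at the level of edge weights: \cref{lem:bdry-meas-G-vs-Gop} gives $\mathbb{B}_{G^{op}}(\mathbf{x})=(\mathbb{B}_G(\mathbf{x})N)^\perp$ with $N$ the alternating sign matrix, so $V\mapsto (VN)^\perp$ is a bijection $T_{G^{op}}\to T_G$, and one then checks that $V\mapsto VN$ preserves $T_{G^{op}}$ (flip the signs of the weights on edges at even-indexed boundary vertices). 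Without this, the codomain of $\perp$ in the theorem is not justified. The remaining quibbles (the parenthetical about absorbing Kasteleyn signs by gauge, "rows" versus "columns" of $\bz$) are harmless.
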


\begin{proof}
We focus first on the map
\[\alpha: \mzVRC_G \to \{ V \in T_{G^{op}}: \bz \subset V\}. \]
Let $[\bv, \bR] \in \mzVRC_G$. By \cref{thm:bdry-meas-vs-bdry-restriction}, there is a unique nondegenerate $[\bv', \bR'] \in \VRC_G^{n-k}$ with $\bR'=\bR$. The map $\alpha$ sends $[\bv, \bR] \mapsto V=\partial[\bv', \bR]$. \cref{thm:bdry-meas-vs-bdry-restriction} shows $V \in T_{G^{op}}$, so to show $\alpha$ is well-defined, we need to show $\bz \subset V$.

Fix a representative $A$ of $\bz$ and fix a representative $(\bv, \bR)$ of $[\bv, \bR]$ with boundary $A$. Fix a representative $(\bv', \bR)$ of $[\bv', \bR]$ with exactly the same relations as $(\bv, \bR)$. Choose $I \in \binom{[n]}{n-k}$ such that $G$ has an acyclic reverse perfect orientation with sources $I$. By assumption, the $m \times (n-k)$ matrix $[z_{i_1} \dots z_{i_{n-k}}]$ is full rank, where $I=\{i_1, \dots, i_{n-k}\}$. So we may extend $[z_{i_1} \dots z_{i_{n-k}}]$ to a full-rank $(n-k) \times (n-k)$ matrix $M'$, by adding some rows to the bottom. Since $V \in T_{G^{op}}$, $\lr{I}_V \neq 0$; in other words, the vectors $v'_{i_1}, \dots, v'_{i_{n-k}}$ are a basis of $\CC^{n-k}$. Using the $GL_m$ action if necessary, we may assume that $M=[v'_{i_1} \dots v'_{i_{n-k}}]$.

Because $G$ is reduced and has an acyclic reverse perfect orientation $\OO$ with source set $I$,  
it satisfies the assumptions of \cref{lem:v_b-from-paths}. Applying \eqref{eq:v_b-from-paths} to both $(\bv', \bR)$ and $(\bv, \bR)$, we obtain that, for $j \notin I$, 
\begin{equation} \label{eq:path}
z_j = \sum_{i \in I} \left( \sum_{\substack{ P: i \to j \\ \text{ path in } \OO}} \frac{\prod_e r_e}{\prod_{e'} (-r_{e'})}\right) z_i \qquad \text{and} \qquad v_j' =\sum_{i \in I} \left( \sum_{\substack{ P: i \to j \\ \text{ path in } \OO}} \frac{\prod_e r_e}{\prod_{e'} (-r_{e'})}\right) v'_i.
\end{equation}
Projecting $v'_i$ onto the first $m$ coordinates gives $z_i$ for $i \in I$ by construction. So this formula shows that projecting $v_j'$ onto the first $m$ coordinates gives $z_j$. That is, (a representative for) $\bz$ is the first $m$ rows of (a representative for) $V$, and $\bz \subset V$ as desired.

The inverse map $\beta$ of $\alpha$ is defined as follows. For $V \in T_{G^{op}}$ with $\bz \subset V$, let $(\bv', \bR')$ be a representative of the unique $ [\bv', \bR']\in \VRC_G^{\partial =V}$. Use the $GL_m$-action to make the top $m$ rows of $\partial(\bv', \bR')$ into a representative of $\bz$; this is possible because by assumption, $\bz \subset V$. Then we define $(\bv, \bR)$ by setting $\bR:=\bR'$, and setting $v_b$ to be the projection of $v_b'$ to the first $m$ coordinates. We set $\beta(V) = [\bv, \bR] \in \mVRC_G$.

The argument that $\alpha$ is well-defined also shows that $\beta \circ \alpha$ is the identity. We note that $V$ and $\alpha \circ \beta (V)$ are boundaries of $(n-k)$-VRCs with identical relations $\bR'$. \cref{thm:bdry-meas-vs-bdry-restriction} implies that $\bR'$ determines the boundary, so $V=\alpha \circ \beta (V)$.

We now turn to the map
\begin{equation}\label{eq:orthogonal}
\{ V \in T_{G^{op}}: \bz \subset V\} \xrightarrow{\sim} \{C \in T_G: C \subset \bz^\perp\}
\end{equation}
which sends $V$ to its orthogonal complement $V^\perp=:C$. Clearly $\bz \subset V$ if and only if $C \subset \bz^\perp$, so as long as orthogonal complement takes $T_{G^{op}}$ to $T_G$ and vice-versa, the map is well-defined and bijective.

Let $N$ be the $n \times n$ diagonal matrix with diagonal $1,-1, \dots, (-1)^{n-1}$. The map $V \mapsto VN$ is an involution on $T_{G^{op}}$: given a VRC on $G$ with boundary $V$, one can produce a VRC with boundary $VN$ by changing the sign of $r_e$ for $e$ adjacent to boundary vertices $2, 4, 6,$ etc. 
\cref{lem:bdry-meas-G-vs-Gop} shows that the map $V \mapsto (VN)^{\perp}$ is a bijection between $T_{G^{op}}$ and $T_G$. 
 Composing these maps shows that $V \mapsto VN \mapsto V^\perp$ is a bijection from $T_{G^{op}}$ to $T_G$.
\end{proof}

\begin{remark}\label{rem:VRC-vz-CZ=0-weaker-assumption}
   The proof of \cref{prop:VRC-vs-Cz=0} uses only that for some $I \in \binom{[n]}{n-k}$ such that $G$ has an acyclic reverse perfect orientation with sources $I$, the submatrix of $\bz$ in columns $I$ is full rank. Thus, \cref{prop:VRC-vs-Cz=0} holds for $\bz$ in a slightly larger open set than $\Gr_{m,n}^\circ$.
\end{remark}

 \begin{corollary}\label{cor:int-num=num-VRC} Suppose $\Pi_G \subset \Gr_{k,n}$ has dimension $km$. Then for generic $\bz \in \Gr_{m,n}$,
 \[|\mzVRC_G| = \mint(G),\]
 that is, the number of $m$-VRC's with boundary $\bz$ is equal to the 
 intersection number of $\Pi_G$. 
 
 In particular, for generic $\bz$, $|\mzVRC_G|$ is finite, does not depend on $\bz$, and depends only on the move-equivalence class of $G$.
 \end{corollary}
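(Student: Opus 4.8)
The plan is to obtain the corollary by simply chaining together the two results already established in this subsection and the previous one, namely \cref{prop:VRC-vs-Cz=0} and \cref{prop:int-num-in-terms-of-Gr_mn}. By the conventions of \cref{not:bipartite}, $G$ is reduced, and the hypothesis $\dim \Pi_G = km \le k(n-k)$ forces $m \le n-k$; the degenerate case $m = n-k$ can be dismissed separately, so we may assume $m < n-k$ and are thus in the setting of \cref{prop:VRC-vs-Cz=0}. Let $U_1 \subseteq \Gr_{m,n}$ be the open dense subset on which \cref{prop:VRC-vs-Cz=0}, together with \cref{rem:VRC-vz-CZ=0-weaker-assumption}, furnishes a bijection $\mzVRC_G \xrightarrow{\sim} \{C \in T_G : C \subset \bz^\perp\}$, and let $U_2 \subseteq \Gr_{m,n}$ be the open dense subset on which the equivalence of $(1)$ and $(3)$ in \cref{prop:int-num-in-terms-of-Gr_mn} yields $|\{C \in T_G : C \subset \bz^\perp\}| = \mint(G)$ (applied with $d = \mint(G)$, which is a well-defined value by \cref{def:intnumber}). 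For $\bz$ in the open dense set $U_1 \cap U_2$ we then have $|\mzVRC_G| = |\{C \in T_G : C \subset \bz^\perp\}| = \mint(G)$, which is the first claim.

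The remaining assertions follow formally. Finiteness of $|\mzVRC_G|$ for generic $\bz$ is immediate once we know $\mint(G) < \infty$, and this is exactly the content of \cref{prop:cohom-characterization-int-num}, which identifies $\mint(G)$ as the coefficient of the Schur polynomial $s_{(n-k-m)^k}$ in the cohomology class $[\Pi_G] \in H^*(\Gr_{k,n})$. Independence of $\bz$ is clear since the common value $\mint(G)$ does not refer to $\bz$ at all. Finally, $\mint(G)$ is defined purely in terms of $\Pi_G$ (\cref{def:intnumber}), and move-equivalent reduced plabic graphs determine the same positroid variety, so $|\mzVRC_G|$ depends only on the move-equivalence class of $G$.

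I do not expect a genuine obstacle: all the substantive work is already carried out in \cref{prop:VRC-vs-Cz=0} and \cref{prop:int-num-in-terms-of-Gr_mn}. The only care required is bookkeeping — checking that the two genericity loci $U_1$ and $U_2$ are both open dense (so their intersection is nonempty and dense), and confirming that the standing hypotheses ($G$ reduced, $\dim \Pi_G = km$, and $m < n-k$) are indeed in force. If one wanted to drop the reducedness assumption, one could first pass to a move-equivalent reduced graph and invoke \cref{lem:moves} (and the move-invariance of solvability/VRC counts it feeds into) to see $|\mzVRC_G|$ is unaffected, but under the conventions of \cref{not:bipartite} this refinement is unnecessary.
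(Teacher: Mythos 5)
Your proposal is correct and follows essentially the same route as the paper: intersect the genericity locus from \cref{prop:int-num-in-terms-of-Gr_mn} with the locus where \cref{prop:VRC-vs-Cz=0} applies, and chain the two bijections/equalities. The only cosmetic differences are that you flag the edge case $m=n-k$ (which the paper's proof also silently excludes by invoking the hypotheses of \cref{prop:VRC-vs-Cz=0}) and that you cite \cref{prop:cohom-characterization-int-num} explicitly for finiteness where the paper calls it clear.
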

 \begin{proof}
     Let $\mcu \subset \Gr_{m,n}$ be the (open dense) subset guaranteed by \cref{prop:int-num-in-terms-of-Gr_mn} where $\{C \in T_G: C \subset \bz^\perp\}$ has size $\mint(G)$. 
     
For $\bz \in \Gr_{m,n}^\circ \cap \mcu$, the assumptions of \cref{prop:VRC-vs-Cz=0} are satisfied. So \cref{prop:VRC-vs-Cz=0} and the choice of $\mcu$ imply that 
     \[|\mzVRC_G| = |\{C \in T_G: C \subset \bz^\perp\}| = \mint(G).\]

     For the second sentence, $|\mzVRC_G|$ for generic $\bz$ depends only on the move-equivalence class of $G$ because $\Pi_G$ depends only on the move-equivalence class of $G$, and $\mint(G)$ is defined in terms of $\Pi_G$. The other statements are clear.
 \end{proof}

Recall that $G$ is $m$-generically solvable if $|\mzVRC_G|=1$ for generic $\bz \in \Gr_{m,n}$. The next proposition shows that a graph $G$ is $m$-generically solvable for at most one $m$.

\begin{proposition}\label{prop:solvable-only-one-m-possible} Suppose $G$ is reduced and of type $(k,n)$. If for generic $\bz \in \Gr_{m,n}$, $\mzVRC_G$ is nonempty and finite, then $$m=(\#(\text{faces of }G) -1)/(|W|- |\Bi|) = (1/k) \dim \Pi_G.$$

In particular, if $G$ is $m$-generically solvable, then $m= \dim \Pi_G /k$.
\end{proposition}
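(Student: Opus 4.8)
The plan is to compute $\dim \mVRC_G$ directly from the combinatorics of $G$ and compare it with $\dim \Gr_{m,n}$. The hypothesis that $\mzVRC_G$ is nonempty and finite for generic $\bz$ says precisely that the boundary map $\partial\colon \mVRC_G \to \Gr_{m,n}$ is dominant with generic fibres of dimension $0$; once one knows $\mVRC_G$ is irreducible (or merely equidimensional), this forces $\dim \mVRC_G = \dim \Gr_{m,n} = m(n-m)$. So the whole statement reduces to an honest dimension count for $\mVRC_G$, plus two standard identities at the end.

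To run the count, I would first introduce the variety $\mcv_G$ of raw configurations $(\{v_b\}_{b\in B},\{r_e\}_{e\in E})$ with $r_e\in\CC^*$ satisfying only the white-vertex relations (dropping the spanning condition). Projecting to the torus $(\CC^*)^E$ of edge weights, the fibre over a fixed $\bR$ is the solution space of the $m|W|$ scalar linear equations $\sum_{e=\{w,b\}} r_e v_b=0$ in the $m(|\Bi|+n)$ coordinates of $(v_b)$. Fixing an acyclic reverse perfect orientation $\OO$ of $G$ with source set $I\subseteq\Bb$ of size $n-k$ (one exists since $G$ is reduced), the assignment sending a white vertex $w$ to the head $b_w$ of its unique outgoing edge is a bijection from $W$ onto $\Bi$ together with the boundary sinks of $\OO$; processing the white vertices in a topological order of $\OO$, the relation at $w$ expresses $v_{b_w}$ as a linear combination of the $v_b$ with $b\to w$, and back-substitution rewrites each $v_{b_w}$ in terms of the free coordinates $\{v_i\}_{i\in I}$. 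Hence this linear system has full rank $m|W|$ for \emph{every} $\bR$, each fibre has dimension $m|I| = m(n-k)$, and $\mcv_G$ is irreducible of dimension $|E|+m(n-k)$. Now $\mVRC_G$ is the quotient of the open subset $\mcv_G^{\circ}\subseteq\mcv_G$ where the boundary vectors span $\CC^m$ — nonempty, hence dense, by hypothesis — by the gauge torus $(\CC^*)^{|W|+|\Bi|}$ and by $\GL_m$, both of which act freely on $\mcv_G^{\circ}$ (a gauge transformation fixing a point is trivial because every internal vertex is joined to the boundary and all $r_e\neq0$; $\GL_m$ acts freely because the boundary vectors span). Therefore $\mVRC_G$ is irreducible with
\[
\dim\mVRC_G=|E|+m(n-k)-(|W|+|\Bi|)-m^2=(|E|-|W|-|\Bi|)+m(n-k-m).
\]

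Equating this with $m(n-m)$ and simplifying gives $|E|-|W|-|\Bi|=km$. A standard Euler-characteristic computation for $G$ together with the $n$ boundary arcs, drawn on the sphere, yields $|E|-|W|-|\Bi|=\#(\text{faces of }G)-1$, which equals $\dim\Pi_G$ by Postnikov's face count; and $|W|-|\Bi|=k$ for any bipartite plabic graph of type $(k,n)$ with black boundary, as one sees by counting the black-to-white edges of a perfect orientation from the two color classes. Combining these, $m=(\#(\text{faces of }G)-1)/(|W|-|\Bi|)=\dim\Pi_G/k$, and the final assertion is the special case where the generic fibre of $\partial$ is a single point.

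The only real work is the full-rank claim in the middle paragraph — that the fibre dimension of $\mcv_G\to(\CC^*)^E$ is constant, i.e. that $\mcv_G$ is a vector bundle over the edge-weight torus — together with the verification that the gauge group and $\GL_m$ act freely on $\mcv_G^{\circ}$; these are what turn the count from heuristic into rigorous. One also has to dispose of the degenerate case $\mcv_G^{\circ}=\emptyset$, which the hypothesis excludes directly, and may assume $G$ is connected with every component meeting the boundary, which is harmless.
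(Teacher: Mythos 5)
Your proof is correct, but it takes a genuinely different route from the paper's. The paper deduces the proposition from \cref{prop:VRC-vs-Cz=0}, which identifies $\mzVRC_G$ with $\{C\in T_G: C\subset \bz^\perp\}$, and then runs dimension counts on the incidence variety $\{(C,\bz)\in\Pi_G\times\Gr_{m,n}: C\subset\bz^\perp\}$: if $\dim\Pi_G<km$ the generic fiber is empty by \cref{lem:too-low-dim-maps-to-pos-codim}, and if $\dim\Pi_G>km$ it is positive-dimensional. You instead count dimensions directly on the space of raw configurations: triangular back-substitution along an acyclic reverse perfect orientation (the same combinatorial input as \cref{lem:v_b-from-paths}) shows the white-vertex relations have constant rank $m|W|$ over the edge-weight torus, so the total space is an irreducible vector bundle of dimension $|E|+m(n-k)$, and freeness of the gauge and $\GL_m$ actions together with the fiber-dimension theorem force $|E|-|W|-|\Bi|=km$; Euler's formula and a double count of black-to-white edges in a perfect orientation then yield the stated identities (your count gives $|W|-|\Bi|=k$ in general, which the paper records only for trees in \cref{lem:facts}). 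Your argument is more self-contained --- it bypasses \cref{prop:VRC-vs-Cz=0} and hence the inputs from \cite{AGPR} and \cite{MullerSpeyerTwist} entirely, and as a byproduct shows that the space of $m$-VRCs on any reduced bipartite $G$ is irreducible of dimension $\dim\Pi_G+m(n-k-m)$ --- whereas the paper's route reuses the correspondence with $T_G$ that the whole section is built around and additionally delivers the sharper dichotomy (generically empty fibers when $\dim\Pi_G<km$, generically infinite when $\dim\Pi_G>km$). The one point a careful write-up should handle explicitly is the quotient: rather than assigning a dimension to $\mVRC_G$ itself, apply the fiber-dimension theorem to the boundary map on the irreducible open locus of raw configurations, whose generic fiber is a disjoint union of $|\mzVRC_G|$ free orbits of $(\CC^*)^{|W|+|\Bi|}\times\GL_m$, each of dimension $|W|+|\Bi|+m^2$; this produces the same equation without invoking a quotient variety.
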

\begin{proof}
The first statement implies the second, since $G$ is $m$-generically solvable if $|\mzVRC_G|=1$ for generic $\bz \in \Gr_{m,n}$.

We prove the contrapositive of the first statement: if $\dim \Pi_G \neq km$, then for generic $\bz$, $\mzVRC_G$ is either empty or infinite.

If $\dim \Pi_G <km$, then \cref{lem:too-low-dim-maps-to-pos-codim} implies that for generic $\bz \in \Gr_{m,n}$,
\[\{C \in T_G: C \subset \bz^\perp\} \subset \{C \in \Pi_G: C \subset \bz^\perp\} = \emptyset.\]
By \cref{prop:VRC-vs-Cz=0}, the leftmost set is also equal to $\mzVRC_G$.

Now suppose $\dim \Pi_G >km$. We may assume that for generic $\bz \in \Gr_{m,n}$,
\[\{C \in \Pi_G: C \subset \bz^\perp\} \neq \emptyset,\]
since otherwise we are done by the reasoning of the previous paragraph. This means that the projection map $\pi$ from the incidence variety
\[X= \{(C, \bz) \in \Pi_G \times \Gr_{m,n}: C \subset \bz^\perp \}\]
to $\Gr_{m,n}$ is dominant. For generic $\bz \in \Gr_{m,n}$, the dimension of the fiber $\pi^{-1}(\bz)$ is $\dim X-\dim \Gr_{m,n}$. Using the computation of $\dim X$ from \cref{lem:too-low-dim-maps-to-pos-codim} and the assumption that $\dim \Pi_G >km$, we conclude that generic fibers $\pi^{-1}(\bz)$ have positive dimension. Note that $T_G$ is obtained by removing some divisors from $\Pi_G$ and so is open and dense. Similarly, $\tilde{T}_G:=T_G \times \Gr_{m,n}$ is open and dense in $X$. By a standard transversality argument, this implies that for generic $\bz \in \Gr_{m,n}$, the intersection $\pi^{-1}(\bz) \cap \tilde{T}_G$ is of positive dimension.
 Now, the projection map from $\pi^{-1}(\bz) \cap \tilde{T}_G$ to $\Pi_G$ is a bijection, with image exactly 
\[\{C \in T_G: C \subset \bz^\perp\}\]
which for generic $\bz$ is in bijection with $\mzVRC_G$ by \cref{prop:VRC-vs-Cz=0}. So for generic $\bz$, there are infinitely many $m$-VRCs with boundary $\bz$.
\end{proof}

\cref{cor:int-num=num-VRC,prop:solvable-only-one-m-possible} together imply the following corollary.

\begin{corollary}\label{cor:int1}
A plabic graph $G$ of type $(k,n)$ is $m$-generically solvable if and only if 
$\Pi_G$ has dimension $km$ and $m$-intersection number $1$.
\end{corollary}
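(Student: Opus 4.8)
The plan is to deduce \cref{cor:int1} directly from \cref{cor:int-num=num-VRC} and \cref{prop:solvable-only-one-m-possible}: together these two results have already done all the substantive work, so what remains is a short bookkeeping argument. Recall that $G$ being $m$-generically solvable means precisely that $|\mzVRC_G| = 1$ for all $\bz$ in an open dense subset of $\Gr^{\circ}_{m,n}$ (equivalently of $\Conf^{\circ}_{m,n}$), and that \cref{cor:int-num=num-VRC} asserts that whenever $\dim \Pi_G = km$ this cardinality equals $\mint(G)$ for generic $\bz$.

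First I would prove the ($\Leftarrow$) direction. Assuming $\dim \Pi_G = km$ and $\mint(G) = 1$, \cref{cor:int-num=num-VRC} applies and gives $|\mzVRC_G| = \mint(G) = 1$ for all $\bz$ in some open dense subset of $\Gr_{m,n}$; intersecting with $\Gr^{\circ}_{m,n}$ (itself open and dense) still leaves an open dense set, so $G$ is $m$-generically solvable.

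Next I would prove the ($\Rightarrow$) direction. Assuming $G$ is $m$-generically solvable, we have $|\mzVRC_G| = 1$, and in particular $\mzVRC_G$ is nonempty and finite, for generic $\bz$. Then \cref{prop:solvable-only-one-m-possible} forces $m = \dim \Pi_G / k$, i.e.\ $\dim \Pi_G = km$. With this dimension hypothesis now available, \cref{cor:int-num=num-VRC} applies a second time and yields $\mint(G) = |\mzVRC_G| = 1$. This establishes both required conclusions.

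The proof itself is thus only a few lines; the real content lives in the two cited results, and the only point that needs a little care — the closest thing to an obstacle — is reconciling the two formulations of genericity in the definition of solvability (``open dense in $\Conf^{\circ}_{m,n}$'' versus ``open dense in $\Gr^{\circ}_{m,n}$'') with the genericity in $\Gr_{m,n}$ appearing in \cref{cor:int-num=num-VRC,prop:solvable-only-one-m-possible}. Since all the quantities involved ($|\mzVRC_G|$ equalling $1$, being nonempty, being finite) are invariant under the torus $T = (\CC^*)^n/\CC^*$ acting on $\Gr_{m,n}$, and $\Gr^{\circ}_{m,n}/T \cong \Conf^{\circ}_{m,n}$, these notions of ``generic'' agree for the statements at hand, so no further input is needed.
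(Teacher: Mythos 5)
Your argument is correct and is exactly the paper's intended derivation: the paper proves \cref{cor:int1} by the one-line observation that \cref{cor:int-num=num-VRC} and \cref{prop:solvable-only-one-m-possible} together imply it, which is precisely the two-direction bookkeeping you carry out. Your extra remark reconciling the $\Conf^{\circ}_{m,n}$ and $\Gr^{\circ}_{m,n}$ formulations of genericity via the torus action is a harmless (and reasonable) elaboration of a point the paper leaves implicit.
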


\cref{cor:int-num=num-VRC} shows that if $\Pi_G \subset \Grk$ has dimension $km$, then $m$-VRCs with generic boundary are quite well-behaved. One may also obtain the next two corollaries, which are along similar lines. They show that in this scenario, square moves generically induce a bijection on VRCs and most VRCs are nondegenerate. 

\begin{corollary}\label{cor:vrc-move-invariant} Suppose $G$ is reduced and $\Pi_G \subset \Gr_{k,n}$ has dimension $km$. If $G$ and $G'$ are related by a square move, then for generic $\bz \in \Gr_{m,n}$, the map in \cref{fig:sq-move} gives a bijection between $\mzVRC_G$ and $\mzVRC_{G'}$. 
\end{corollary}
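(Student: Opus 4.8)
The plan is to deduce this from the counting statement \cref{cor:int-num=num-VRC}, the local move \cref{lem:moves}, and a dimension count. First, since $G$ and $G'$ are move-equivalent they determine the same positroid, so $\Pi_{G'} = \Pi_G$; in particular $\dim \Pi_{G'} = km$ and $\mint(G') = \mint(G) =: d$. By \cref{cor:int-num=num-VRC} there is an open dense $\mcu \subseteq \Gr_{m,n}$ — which we may take small enough that \cref{prop:VRC-vs-Cz=0} also applies — such that $|\mzVRC_G| = |\mzVRC_{G'}| = d$ for all $\bz \in \mcu$. The square move is supported on an internal face, so the VRC-map of \cref{fig:sq-move} preserves the boundary $\partial$; hence \cref{lem:moves} restricts, for each $\bz$, to an injection
\[\phi_\bz \ :\ \mzVRC_G \cap \{ce - af \neq 0\} \ \hookrightarrow\ \mzVRC_{G'}.\]
It therefore suffices to prove that, for generic $\bz$, one has $\mzVRC_G \subseteq \{ce - af \neq 0\}$: granting this, $\phi_\bz$ is an injection between finite sets of the same cardinality $d$, hence a bijection onto $\mzVRC_{G'}$, and this bijection is exactly the map of \cref{fig:sq-move} (as a byproduct one also gets $\mzVRC_{G'} \subseteq \{c'e'-a'f' \neq 0\}$ generically).

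To prove $\mzVRC_G \subseteq \{ce - af \neq 0\}$ generically, I would show that the ``bad'' locus does not dominate $\Gr_{m,n}$. The condition $ce - af = 0$ is invariant under gauge transformations at the four vertices of the square face (the ratio $ce/af$ is unchanged), so via \cref{thm:bdry-meas-vs-bdry-restriction} it cuts out a closed subvariety $\mcv \subseteq T_G$; moreover $\mcv$ is \emph{proper}, being defined by a binomial in the boundary measurements around the square face that is not identically zero on $T_G$ (the weight of the square face is a non-constant function there), so $\dim \mcv \leq km - 1$. Now consider the incidence variety
\[\mathcal{I} \ :=\ \{(\bz, C) \in \Gr_{m,n} \times T_G \ :\ C \subseteq \bz^\perp,\ C \in \mcv\}.\]
Projecting to $C \in \mcv$ has fibers $\Gr_m(C^\perp) \cong \Gr_{m, n-k}$ of dimension $m(n-k-m)$, so
\[\dim \mathcal{I} \ \leq\ (km - 1) + m(n-k-m) \ =\ m(n-m) - 1 \ <\ \dim \Gr_{m,n},\]
whence the image of $\mathcal{I}$ under projection to $\Gr_{m,n}$ lies in a proper closed subvariety $\mcu' \subsetneq \Gr_{m,n}$.

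Finally, for $\bz \in \mcu \setminus \mcu'$, \cref{prop:VRC-vs-Cz=0} identifies $\mzVRC_G$ with $\{C \in T_G : C \subseteq \bz^\perp\}$, and under this identification the element attached to $C$ carries the same edge weights (modulo gauge) as the unique non-degenerate $(n-k)$-VRC on $G$ whose boundary measurement data is $C$; thus $ce - af$ vanishes on it if and only if $C \in \mcv$. Since no $C \in T_G$ with $C \subseteq \bz^\perp$ lies in $\mcv$ for such $\bz$, we conclude $\mzVRC_G \subseteq \{ce - af \neq 0\}$, finishing the argument. (Equivalently, the same count shows that for generic $\bz$ no VRC in $\mzVRC_G$ is degenerate, since, as noted in the caption of \cref{fig:sq-move}, the equality $ce - af = 0$ forces a vector of the configuration to vanish.)

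I expect the only non-formal step to be the middle one: verifying that the bad locus $\mcv$ is a proper subvariety of $T_G$, equivalently that $ce - af$ does not vanish identically along the relevant family of VRCs. Once that is secured, the counting input \cref{cor:int-num=num-VRC}, the pigeonhole upgrade of an injection to a bijection, and the bookkeeping through \cref{prop:VRC-vs-Cz=0} and \cref{thm:bdry-meas-vs-bdry-restriction} are all routine.
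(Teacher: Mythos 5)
Your argument is correct and is precisely the fleshed-out version of what the paper intends: the paper gives no explicit proof of this corollary, saying only that it follows "along similar lines" as \cref{cor:int-num=num-VRC}, i.e.\ via the identification of $\mzVRC_G$ with $\{C \in T_G : C \subset \bz^\perp\}$ from \cref{prop:VRC-vs-Cz=0} together with the codimension argument of \cref{lem:too-low-dim-maps-to-pos-codim}, which is exactly your route. The only cosmetic imprecision is that the gauge-invariant locus $\{ce-af=0\}$ naturally lives in $(\CC^*)^{E(G)}/\text{gauge} \cong T_{G^{op}}$ via \cref{thm:bdry-meas-vs-bdry-restriction} and is then transported to $T_G$ by the orthogonal-complement map; this does not affect the dimension count or the conclusion.
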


 \begin{corollary}\label{cor:VRC-generically-nondegen} Suppose $G$ is a reduced $(k,n)$-plabic graph and $\Pi_G$ has dimension $km$. Then for generic $\bz \in \Gr_{m,n}$, the set $\mzVRC_G$ does not contain any degenerate VRCs.
 \end{corollary}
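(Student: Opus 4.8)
The plan is to combine the bijection of \cref{prop:VRC-vs-Cz=0} with a dimension count: I will show that the locus of degenerate $m$-VRCs on $G$ has boundary contained in a proper closed subvariety of $\Gr_{m,n}$, so that for $\bz$ outside this subvariety every $m$-VRC with boundary $\bz$ is non-degenerate. Throughout I may assume $m<n-k$, as in the rest of this section; the case $m=n-k$, where $\dim\Pi_G=km$ forces $\Pi_G=\Gr_{k,n}$, is easily treated directly.

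Fix an internal black vertex $b\in\Bi$. I would consider the incidence set $\mathcal{K}_b$ consisting of all pairs $(V,\bz)$ with $V\in T_{G^{op}}$, $\bz\in\Gr^{\circ}_{m,n}$, $\bz\subset V$, and such that the $m$-VRC over $\bz$ corresponding to $V$ under the bijection $\alpha$ of \cref{prop:VRC-vs-Cz=0} has $v_b=0$; this is a constructible subset of $T_{G^{op}}\times\Gr_{m,n}$. I would then bound the fibres of the projection $\mathcal{K}_b\to T_{G^{op}}$. By \cref{thm:bdry-meas-vs-bdry-restriction}, for each $V\in T_{G^{op}}$ the unique $(n-k)$-VRC on $G$ with boundary $V$ is non-degenerate; fixing a representative matrix $N$ for it, let $w_b\in\CC^{n-k}\setminus\{0\}$ denote its $b$-th column. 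Using $N$ to identify the set of $m$-planes $\bz\subset V$ with $\Gr_{m,n-k}$, the recipe of \cref{prop:VRC-vs-Cz=0} for recovering the $m$-VRC from $(V,\bz)$ shows, after a short computation, that $v_b=0$ precisely when the $m$-plane $\widehat{\bz}\subseteq\CC^{n-k}$ corresponding to $\bz$ lies in $w_b^{\perp}$. Since $w_b\neq 0$, the hyperplane $w_b^{\perp}$ has dimension $n-k-1$, so this fibre is an open subset of $\Gr_m(w_b^{\perp})\cong\Gr_{m,n-k-1}$, of dimension $m(n-k-1-m)$ --- which is exactly $m$ less than $\dim\Gr_m(V)=m(n-k-m)$.

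Since $\dim T_{G^{op}}=\dim\Pi_{G^{op}}=\dim\Pi_G=km$ by \cref{lem:perp-takes-Pi-G-to-G-op}, it follows that
\[\dim\mathcal{K}_b\ \leq\ km+m(n-k-1-m)\ =\ m(n-m)-m\ <\ \dim\Gr_{m,n}.\]
Hence the image of $\mathcal{K}_b$ under the projection $(V,\bz)\mapsto\bz$ is a constructible subset of $\Gr_{m,n}$ of dimension smaller than $\dim\Gr_{m,n}$, so it is not Zariski dense. Since $\Bi$ is finite, the union over $b\in\Bi$ of these images is contained in a proper closed subvariety $\mathcal{Z}\subsetneq\Gr_{m,n}$. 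For $\bz$ in the open dense set $\Gr^{\circ}_{m,n}\setminus\mathcal{Z}$, the bijection of \cref{prop:VRC-vs-Cz=0} together with the definition of $\mathcal{K}_b$ shows that no $[\bv,\bR]\in\mzVRC_G$ can have $v_b=0$ for any $b\in\Bi$; that is, $\mzVRC_G$ contains no degenerate VRCs, as desired.

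The step I expect to be the crux is identifying, via \cref{prop:VRC-vs-Cz=0}, the degeneracy condition $v_b=0$ on the $m$-VRC with the linear incidence condition $\widehat{\bz}\subseteq w_b^{\perp}$ for the associated non-degenerate $(n-k)$-VRC: one has to verify that this is well-posed independently of the normalizations used in the proof of that proposition, and that the non-vanishing $w_b\neq 0$ guaranteed by \cref{thm:bdry-meas-vs-bdry-restriction} really does cut the fibre dimension down by $m$ rather than less. Once this dictionary is in place, the dimension count and the passage back to a generic $\bz$ are routine; one could equally run the argument on the $T_G$-side of \cref{prop:VRC-vs-Cz=0}, replacing $V$ by $\bz^{\perp}$ throughout, with no essential change.
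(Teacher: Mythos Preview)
The paper does not actually prove this corollary; it only remarks that it follows ``along similar lines'' to \cref{cor:int-num=num-VRC}. Your argument supplies exactly those details and is squarely in the spirit indicated: you invoke the bijection of \cref{prop:VRC-vs-Cz=0} and then run a dimension count on an incidence variety over $T_{G^{op}}$, mirroring how \cref{cor:int-num=num-VRC} paired that bijection with \cref{prop:int-num-in-terms-of-Gr_mn} and the codimension estimate of \cref{lem:too-low-dim-maps-to-pos-codim}.

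Your flagged crux is correct and well-posed. Unwinding the inverse map $\beta$ in the proof of \cref{prop:VRC-vs-Cz=0}: if $M$ is an $(n-k)\times n$ matrix representing $V$ and $P$ is an $m\times(n-k)$ matrix with row span $\widehat{\bz}$ (so that $PM$ represents $\bz$), then the $m$-VRC vector at $b$ is $Pw_b$, and hence $v_b=0$ if and only if $\widehat{\bz}\subseteq w_b^{\perp}$. Replacing $M$ by $hM$ sends $w_b\mapsto hw_b$ and $P\mapsto Ph^{-1}$, so the condition is invariant, and the residual $GL_m$-ambiguity in $P$ only rescales $v_b$. Since \cref{thm:bdry-meas-vs-bdry-restriction} guarantees $w_b\neq 0$, the hyperplane $w_b^{\perp}$ has dimension exactly $n-k-1$, and the fibre is genuinely $\Gr_{m,n-k-1}$, giving the drop of $m$ in dimension that you need. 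The constructibility of $\mathcal{K}_b$ is also unproblematic: locally on $T_{G^{op}}$ one may choose $M$ (and hence $w_b$) to vary regularly, so $\mathcal{K}_b$ is cut out of the locally closed incidence variety by polynomial equations.
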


Recall from \cref{def:pinning} the notion of a \emph{pinning} of $G$. We conclude this section by showing pinnings exist for solvable $G$.

 \begin{proposition}\label{prop:int-num-1-ratl-coeffs}
     Suppose $G$ is generically $m$-solvable, or equivalently suppose $\Pi_G \subset \Gr_{k,n}$ has dimension $km$ and $\mint(G)=1$. Denote by $\zVRC$ the unique $m$-VRC with boundary $\bz \in \Gr_{m,n}$, where $\bz$ is generic. Then there are rational functions $\{f_e\}_{e \in E(G)} \subset \CC(\widehat{\Gr}_{m,n})$ such that for generic $\bz$, $\zVRC$ is represented by $(\{v_b(\bz)\}, \{f_e(\bz)\})$, where the vectors $v_b(\bz)$ are defined using \eqref{eq:v_b-from-paths}. In other words, a pinning of $G$ exists.
 \end{proposition}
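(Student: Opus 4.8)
The plan is to run the argument of \cref{lem:v_b-from-paths} ``in reverse'', solving edge-by-edge for the coefficients $r_e$ as rational functions of $\bz$, and then promote these local solutions to genuine rational functions on $\widehat{\Gr}_{m,n}$. Fix an acyclic reverse perfect orientation $\OO$ of $G$ with source set $I$; such an orientation exists since $G$ is reduced. Since $G$ is generically solvable, for generic $\bz$ there is a unique $[\bv,\bR]\in\mVRC_G$ with $\partial[\bv,\bR]=\bz$, and by \cref{cor:VRC-det-by-bdry-and-R} the vectors $\bv$ are determined by $\bz$ and $\bR$. I will first normalize the gauge: using gauge transformations at the internal vertices, one can fix, say, $r_e = 1$ for one distinguished edge $e$ at each internal vertex (choosing a spanning forest of internal-edge constraints so that exactly $|W|+|\Bi|$ scalars are pinned, matching the gauge group dimension). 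After this normalization the remaining $r_e$ become honest functions of $\bz$ on the dense open set where the VRC exists and is unique.

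The key step is to show these functions are \emph{rational}. Process the black vertices $b$ in increasing order of $q_b$ (the quantity from \cref{lem:v_b-from-paths}). For $q_b=0$ we have $b=i\in I$ and $v_i=z_i$, which is linear in $\bz$. Inductively, suppose $b$ is internal or a sink; let $f=(w,b)$ be the unique edge oriented into $b$, and let $b_1,\dots,b_p$ be the other neighbours of $w$, with edges $e_j=(b_j,w)$ oriented into $w$. The white-vertex relation at $w$ reads $r_f v_b + \sum_j r_{e_j} v_{b_j}=0$. Expand $v_b$ via \eqref{eq:v_b-from-paths}: the term in $v_b$ indexed by the ``new'' source contributions and the terms $v_{b_j}$ (already known rational vector-valued functions of $\bz$ and of the $r$'s processed so far) must satisfy this linear relation. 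Since the boundary vectors $z_i$, $i\in I$, are generically linearly independent (on $\Gr_{m,n}^\circ$, or the slightly larger open set of \cref{rem:VRC-vz-CZ=0-weaker-assumption}), matching coefficients of the $z_i$ in the relation at $w$ gives a linear system for the unknown ratios $r_{e_j}/r_f$; solvability (indeed unique solvability, by genericity and \cref{cor:int1}) lets us solve for them as rational functions of $\bz$ and the previously-determined coefficients. By induction every $r_e$ is a rational function $f_e\in\CC(\widehat{\Gr}_{m,n})$, and then the vectors $v_b(\bz)$ defined by \eqref{eq:v_b-from-paths} give a representative $(\{v_b(\bz)\},\{f_e(\bz)\})$ of $\zVRC$ for generic $\bz$ — i.e.\ a pinning.

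The main obstacle I anticipate is bookkeeping the gauge normalization so that the $f_e$ are genuinely well-defined rational functions rather than only locally-defined algebraic functions: one must check that the linear systems solved at each white vertex have solutions that are \emph{single-valued} rational (not multivalued algebraic) in $\bz$. This is where solvability is essential — because $\mzVRC_G$ is a single point for generic $\bz$ (\cref{cor:int1}), the incidence correspondence $\{(\bz,[\bv,\bR]):\partial[\bv,\bR]=\bz\}$ projects birationally to $\Gr_{m,n}$, so the coefficients $r_e$, being coordinates on this correspondence (after the gauge-fixing above), pull back to rational functions on $\Gr_{m,n}$ via the inverse of a birational map; the inductive solving procedure above simply exhibits these rational functions explicitly. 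A secondary point to handle carefully is that the linear system at each white vertex is genuinely nondegenerate for generic $\bz$ — this again follows from uniqueness of the VRC, since a degeneracy would either force nonuniqueness or nonexistence, contradicting solvability.
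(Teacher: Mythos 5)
There is a genuine gap in your main mechanism, the edge-by-edge linear solve. Two problems. First, the source set $I$ of an acyclic reverse perfect orientation has $|I|=n-k$, and these $n-k$ vectors $z_i$ live in $\CC^m$; since $n\geq k+m$ (typically $n>k+m$), they are \emph{not} linearly independent, so ``matching coefficients of the $z_i$'' does not produce a well-posed linear system. Second, and more fundamentally, the relation at an internal white vertex $w$ does not constrain the ratios $r_{e_j}/r_f$ at all: it \emph{defines} $v_b$ in terms of the $v_{b_j}$ (this is exactly how \eqref{eq:v_b-from-paths} is derived in the proof of \cref{lem:v_b-from-paths}, so substituting that formula back into the relation at $w$ yields a tautology). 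The only genuine constraints are the $k$ equations at the sinks, $z_j=\sum_{i\in I}c_{ij}(\bR)\,z_i$ for $j\notin I$, and the coefficients $c_{ij}(\bR)$ are path-generating functions that entangle \emph{all} the edge weights at once and are of high degree in them. There is no sequence of local linear solves that untangles this, so the induction does not go through.

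Your ``obstacle'' paragraph, on the other hand, contains the actual proof, and it is the paper's: one forms the incidence correspondence, observes that by solvability its projection to $\Gr_{m,n}$ is generically one-to-one and dominant, hence birational, and pulls the edge coordinates back along the rational inverse. But as written this is a sketch, not a proof: you must exhibit the correspondence as an irreducible variety on which the (gauge-fixed) coefficients $r_e$ are honest rational functions. The paper does this by identifying $\mzVRC_G$ with $\{V\in T_{G^{op}}:\bz\subset V\}$ (\cref{prop:VRC-vs-Cz=0}), taking the incidence variety $\{(\bz,V)\in\Gr_{m,n}\times\Pi_{G^{op}}:\bz\subset V\}$ (irreducible, projecting birationally to $\Gr_{m,n}$ by \cref{prop:int-num-in-terms-of-Gr_mn}), and then invoking the Muller--Speyer twist to get a \emph{rational} map $T_{G^{op}}\to(\CC^*)^{E(G)}$ recovering a distinguished representative of $\bR$ from $V$. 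That last ingredient (or an equivalent direct argument that the correspondence $\{(\bz,\bR):\bz\subset\mathbb{B}_{G^{op}}(\bR)\}$ is irreducible of dimension $m(n-m)$ with generically injective, hence birational, projection) is the nontrivial input your sketch omits. If you promote that paragraph to the main argument and supply this input, you recover the paper's proof; the inductive scheme should be discarded.
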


 \begin{proof}
Recall that by \cref{prop:VRC-vs-Cz=0} and \cref{prop:int-num-in-terms-of-Gr_mn}, for $\bz$ in some open dense set $\mcu_1 \subseteq \Gr_{m,n}$,
\[|\mzVRC_G| = |\{V \in T_{G^{op}} : \bz \subset V\}| = |\{C \in T_G : C \subset \bz^\perp\}|=|\{C \in \Pi_G : C \subset \bz^\perp\}|=1. \]
 
Consider the incidence variety
 \[X=\{(\bz, V) \in \Gr_{m,n}\times\Pi_{G^{op}}: \bz \subset V\} \subset \Gr_{m,n} \times \Gr_{n-k, n}.\]
The projection $X \to \Gr_{m,n}$ onto the first factor is a regular map. By the previous paragraph, this projection is dominant and points in $\mcu_1$ have a unique preimage. This implies the projection map is birational, so there is a rational map $\hat{F}: \Gr_{m,n} \to X$ sending $\bz \mapsto (\bz, F(\bz))$ which is the inverse of the projection map on an open dense set $\mcu \subseteq \Gr_{m,n}$. We may assume $\mcu \subset \mcu_1$. The Pl\"ucker coordinates of $F(\bz)$ are rational functions in the Pl\"ucker coordinates of $\bz$. By the choice of $\mcu$, $F(\bz) \in T_{G^{op}}$.

For $\bz \in \mcu$, let $[\bv, \bR]$ be the unique element of $\mzVRC_G$. By \cref{prop:VRC-vs-Cz=0}, $F(\bz) = \alpha[\bv,\bR]$. That is, $F(\bz)$ is the boundary of the $(n-k)$-VRC $[\bv', \bR]$. \cite[Proposition 5.9 and Theorem 7.1]{MullerSpeyerTwist} together with \cref{thm:bdry-meas-vs-bdry-restriction} gives an explicit formula for a rational map $T_{G^{op}} \to (\CC^*)^{E(G)}$ sending $V=\partial[\bv'', \bR'']$ to $\widehat{\bR''}$, where $\widehat{\bR''}$ is a particular distinguished representative of the gauge-equivalence class of $\bR''$.

So the map $\Gr_{m,n} \to (\CC^*)^{E(G)}$ sending
\[\bz \mapsto F(\bz)=\partial[\bv', \bR] \mapsto \widehat{\bR}\]
is a composition of rational maps and thus is rational. This shows the desired functions $f_e$ exist, that is, that there is an $m$-VRC with boundary $\bz$ with $r_e = f_e(\bz)$. \cref{lem:v_b-from-paths} implies that for this $m$-VRC, the vectors are given by \eqref{eq:v_b-from-paths}.

\end{proof}

\begin{remark}\label{rem:inverting-amplituhedron-map}
    While \cref{prop:int-num-1-ratl-coeffs} is not constructive, in practice one may be able to compute the rational functions $f_e$ using e.g. the Grassmann-Cayley algebra, as we do in \cref{def:coeffs-in-VRC-from-GC} when $G$ is a tree.
    Once one has $\{f_e\}_{e \in E(G)}$, one can invert $\comp_{Z,G}$ as follows. First, choose Kasteleyn signs on $G$ (cf. \cite[Section 7.1]{AGPR}), and then change the sign on the boundary edges $2, 4, 6, $ etc. Denote these modified Kasteleyn signs by $\{\sigma_e\}_{e \in E(G)}$. Then for generic $\bz \in \Gr_m(W)$, the preimage $C=\comp_{Z,G}^{-1}(\bz)$ equals $\mathbb{B}_G(\{\sigma_e f_e(z)\})$, where $\mathbb{B}_G$ is the boundary measurement map. That is, $C$ is the rowspan of the path matrix $A(G, \OO)$ using edge weights $\{\sigma_e f_e(\bz)\}$ (see \cref{def:pathmatrix}), and its Pl\"ucker coordinates are given by \cref{prop:bdry-meas-pluckers}.
\end{remark}

\begin{remark} The results in this subsection, connecting intersection number and $m$-VRCs, were in part inspired by various examples in the physics literature of inverting the amplituhedron map by `solving $C \cdot \bz =0$'  \cite{Mago:2020kmp,He:2020uhb}.
For more on the relationship to the `$C \cdot \bz=0$' equations, see \cite[Remark 7.13]{even2023cluster}.
\end{remark}

\subsection{Proof that dominant and brushable are equivalent}\label{ssec:proof-dominant-solvable}
 In this section, we use the results relating VRCs and elements of $T_G$ to prove \cref{prop:dominant_solvable}, which states that a solvable tangle is dominant if and only if it admits a brushing. 
 
 We will need the following combinatorial lemmas. Recall from \cref{def:gauge-fix} the definition of a gauge-fix of a plabic graph, and the characterization of gauge-fixes from \cref{prop:gauge-fix-characterization}.

\begin{lemma}\label{lem:gauge_fix_NI_paths}
Let $G$ be a reduced plabic graph 
and $\{P_j\}_{j \in J}$ a collection of vertex-disjoint paths where $P_j$ starts at boundary vertex $j$ and ends at some vertex $v_j$. Then there is a gauge-fix $E(G)= E_1 \sqcup E_{\neq 1}$ such that
\begin{itemize}
\item the edges of the paths $\cup_j P_j$ are contained in $E_1$;
\item the only path from a boundary vertex to $v_j$ which avoids $E_{\neq 1}$ is $P_j$.
\end{itemize}
\end{lemma}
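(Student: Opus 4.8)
The plan is to reduce the lemma to a purely combinatorial extension problem and solve it greedily. First I would invoke \cref{prop:gauge-fix-characterization}, which I will use in the form: a subset $E_1\subseteq E(G)$ is a gauge-fix exactly when the spanning subgraph $(V(G),E_1)$ is a forest each of whose connected components contains exactly one boundary vertex (so in particular every connected component of $G$ carries a boundary vertex). Thus the lemma amounts to extending $\bigcup_{j\in J}P_j$ to such a forest in which, moreover, $v_j$ still lies in the component of $j$ and $P_j$ is the unique path joining them.

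Next I would record the elementary structural facts. The union $F:=\bigcup_{j\in J}P_j$ is itself a forest whose connected components are the vertex-disjoint paths $P_j$, and each such component contains exactly one boundary vertex, namely $j$ (in the situations where the lemma is applied each $P_j$ is either trivial or ends at an interior vertex, and no internal vertex of a path is a boundary vertex of $G$). I would then enlarge $F$ to the spanning subforest $F_0$ of $G$ with vertex set $V(G)$ and edge set $E(F)$, i.e. adjoin every vertex lying on no $P_j$ as an isolated component. Now $F_0$ spans $V(G)$, is a forest, and has at most one boundary vertex per component.

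The heart of the argument is a greedy loop starting from $H:=F_0$: while some component $C$ of $H$ contains no boundary vertex, the vertex set of $C$ is a proper subset of the vertex set of the connected component of $G$ containing it (that $G$-component has a boundary vertex, which is not in $C$), so there is an edge $e\in E(G)$ joining $C$ to a different component $C'$ of $H$; add $e$ to $H$. Adjoining $e$ keeps $H$ a forest (it joins two distinct components) and preserves the invariant ``at most one boundary vertex per component'' (we only ever absorb a boundary-free component into another). Since each step lowers the number of components by one, the loop terminates, and at that point every component has exactly one boundary vertex, so $E_1:=E(H)$ is a gauge-fix; by construction $E(F)\subseteq E_1$, so all edges of $\bigcup_j P_j$ lie in $E_1$. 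Finally I would verify the second bullet: if $Q$ is a path from a boundary vertex to $v_j$ using only edges of $E_1$, its two endpoints lie in one component of $(V(G),E_1)$, a component that also contains $j$ because $P_j\subseteq E_1$; as that component has a unique boundary vertex, $Q$ must start at $j$, and since $(V(G),E_1)$ is a forest the $j$--$v_j$ path in it is unique, whence $Q=P_j$.

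I expect the only step needing genuine care to be the greedy loop: one must merge a boundary-free component solely with some other component (so two boundary-bearing components are never fused, which is what keeps ``one boundary vertex per component'' attainable), and one must know that a boundary-free component always has an edge of $G$ leaving it --- which is precisely where the fact that every connected component of a reduced plabic graph contains a boundary vertex, built into the gauge-fix characterization, is used. Everything else is routine bookkeeping about forests.
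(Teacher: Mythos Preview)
Your proof is correct and reaches the same conclusion, but the construction differs from the paper's. The paper adds auxiliary edges $e_i=(i,i{+}1)$ between consecutive boundary vertices to make the graph connected, takes a spanning tree of the augmented graph containing both $\bigcup_j P_j$ and all the $e_i$, and then deletes the $e_i$; the resulting spanning forest automatically has exactly one boundary vertex per component because deleting the $n-1$ boundary-to-boundary edges from a tree creates exactly $n$ components. Your approach instead starts from the paths (plus isolated vertices) and grows the forest greedily, always absorbing a boundary-free component into a neighbor. The paper's trick is slicker and avoids the explicit invariant-maintenance argument, while your greedy construction is more self-contained and makes transparent exactly where the hypothesis ``every component of $G$ meets the boundary'' is used. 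Both proofs implicitly assume no $P_j$ joins two distinct boundary vertices (otherwise no such gauge-fix can exist); you flag this explicitly, whereas the paper's proof silently relies on it when asserting that $\bigcup_j P_j \cup \bigcup_i e_i$ extends to a spanning tree.
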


\begin{proof}
Let $G'$ be the graph $G$ together with edges $e_i:=(i,i+1)$ for $i=1, \dots, n-1$. Note that $G'$ is connected, since all connected components of $G$ contain a boundary vertex. There is a spanning tree $T$ of $G'$ which contains $\cup_{j \in J} P_j$ and $\cup_{i \in [n-1]} e_i$. Removing the edges $e_i$ from $T$, we obtain a spanning forest $T'$ of $G$ which contains $\cup_{j \in J} P_j$. Each connected component $C$ of $T'$ contains exactly one boundary vertex: if $C$ contained both $i$ and $j$, then $T$ would contain a cycle; if $C$ contains no boundary vertex, then this implies $T$ has more than one connected component. By \cref{prop:gauge-fix-characterization}, if we set $E_1$ to be the edges of $T'$ and $E_{\neq 1}:= E(G) \setminus E_1$, then $E_1 \sqcup E_{\neq 1}$ is a gauge-fix.

By construction $\cup_j P_j$ is contained in $E_1$. This implies $j$ and $v_j$ are in the same connected component of $T'$. Since $T'$ is a forest, any path from $j$ to $v_j$ which is not $P_j$ passes through an edge in $E_{\neq 1}$. Consider another boundary vertex $i$. This lies in another connected component of $T'$, so every path from $i$ to $v_j$ passes through an edge in $E_{\neq 1}$. This shows the constructed gauge-fix has the desired properties.
\end{proof}

\begin{proof}[Proof of \cref{prop:dominant_solvable}] 

\noindent$(\Rightarrow):$ We show that if $(G, \bD)$ admits a brushing, then it is dominant. Fix a brushing $\mcb$, and a blob $D \in \bD$. Let $\OO:=\OO^D$ be the corresponding acyclic reverse perfect orientation and $\{P_i\}_{i \in I}$ the collection of paths. We index $P_i$ by its starting point $i \in \Bb$ rather than its ending point $u_i \in D$ for convenience. Since 
 every boundary vertex of $D$ is an endpoint of a unique $P_i$, there is a natural bijection from $I$ to $D$. To ease notation, we will use $I$ as an index set throughout rather than $D$.

Choose a gauge-fix $E(G)= E_1 \cup E_{\neq 1}$ having the properties of \cref{lem:gauge_fix_NI_paths}. For a VRC $[\bv, \bR] \in \mVRC_G$, we will choose the unique representative $(\bv, \bR)$ where $r_e =1$ for $e \in E_1$ (uniqueness is because $E_1 \cup E_{\neq 1}$ is a gauge-fix).
To ease notation, we denote the vector on $b_{u_i}$ by $w_i$ and denote by $\mathbf{w}$ the element of $\Gr_{m,I}$ with columns $w_i$. 
 This element depends on $(\bv, \bR)$, though we omit this dependence from the notation. A priori, $\bw$ may not be full-rank; however, its maximal minors are rational functions in the Pl\"ucker coordinates of $\bz$, and the proof below shows that they are not identically zero, so for generic $\bz$, $\bw$ is indeed an element of $\Gr_{m, I}$.

It follows from \cref{prop:VRC-vs-Cz=0} if the set
\[\{\mathbf{w}: \exists V\in T_{G^{op}}~\text{with}~\partial[\bv, \bR]\subset V\} \]
is dense in $\Gr_{m,I}$, then $(G,\bD)$ is dominant\footnote{The definition of dominance involves density in $\Conf^{\circ}_{m,I}$, but this is implied by density in $\Gr_{m,I}$.}. 

Fix $\mathbf{p}=(p_i)_{i\in I}\in\Gr_{m,I}$. It is enough to find $V\in T_{G^{op}}$ and $[\bv, \bR] \in \mVRC_G$ such that $\partial[\bv, \bR]\subset V$ and $\mathbf{w}$ is arbitrarily close to $\mathbf{p}.$ 

We will show that we can find $V\in T_{G^{op}}$ and $\bz\subset V$ such that $z_{i}$ is arbitrarily close to $p_i$ for all $i \in I$. We then use \cref{prop:VRC-vs-Cz=0} and the gauge-fix to obtain a VRC $(\bv, \bR)$ with boundary $\bz$, and show that in this scenario, $z_{i}$ is arbitrarily close to $w_i$.

Let $G'$ be the plabic graph obtained from $G^{op}$ by erasing the edges of $E_{\neq 1}$. Write $E_{\neq 1}^+$ for the edges of $E_{\neq 1}$ oriented black-to-white, and $E_{\neq 1}^-$ for those oriented white-to-black. Note that $T_{G'}$ is in the boundary of $T_{G^{op}}$ and consists of a single point $V'$. Indeed, points in $T_{G^{op}}$ have a unique representative $A$ with the identity matrix in columns indexed by the source set $J$ of $\OO$, and are the boundary of a unique $(n-k)$-VRC on $G$ with $r_e=1$ for $e \in E_1$. Then $T_{G'}$ is obtained by letting $r_e\to 0$ for $e\in E_{\neq 1}^+$ and $r_e\to\infty$ for $e\in E_{\neq 1}^-$. Every path between boundary vertices in $\OO$ involves an edge of $E_{\neq 1}$, so the boundary of the resulting VRC is the matrix $M$ with the identity in columns $J$ and zero columns elsewhere. The point $V'$ is represented by $M$ and has a single nonzero Pl\"ucker coordinate, $\lr{J}$.

Let $\mathbf{p}' \in \Gr_{m,n}$ be obtained from $\mathbf{p}$ by adding zero columns in indices $[n] \setminus I$. Since $I \subset J$, $V'$ also has a representative matrix $M'=gM$ whose top $m$ rows are (a representative for) $\mathbf{p}'$.

Define $B=gA$. As $r_e\to 0$ for $e\in E_{\neq 1}^+$ and $r_e\to\infty$ for $e\in E_{\neq 1}^-$, $A \to M$ and so $B \to M'$. If we take the top $m$ rows of $B$, we obtain an element $\bz \in \Gr_{m,n}$ which is contained in $B$ and which approaches $\mathbf{p}'$. In particular, $z_i \to p_i$, as desired. 

Let $(\bv, \bR)$ be the $m$-VRC with boundary $\bz$ and coefficients $r_e$ equal to those of the VRC with boundary $B$ (in particular, with $r_e=1$ for $e \in E_1$). This VRC exists by \cref{prop:VRC-vs-Cz=0}. If we express $w_i$ using \eqref{eq:v_b-from-paths}, we have
\[w_u = z_{i} + \sum_{\substack{{P: j \to u}\\{P \neq P_i}}} \wt(P) z_{j}\]
where the sum is over paths $P \neq P_i$ in $\OO^D$ from a boundary vertex to $u$. By \cref{lem:gauge_fix_NI_paths}, for all terms in the right sum, $\wt(P) \neq 1$. In particular, $\wt(P)$ has in the numerator $r_e$ for some $e \in E_{\neq 1}^+$, or has in the denominator $r_e$ for some $e \in E_{\neq 1}^+$. So as $r_e$ become arbitrarily small (or arbitrarily large, as appropriate), $w_i$ approaches $z_{i}$, which in turn approaches $p_i$, as desired.

\noindent$(\Leftarrow):$ We now show that if $(G, \bD)$ is solvable and does not admit a brushing, then it is not dominant. 

Suppose for $D \in \bD$, there is no acyclic reverse perfect orientation with vertex-disjoint paths from boundary vertices to $D$. Let $\OO$ be any acyclic reverse perfect orientation. A max-flow/min-cut argument shows that there is a number $l<|D|$ such that
\begin{itemize}
\item there is a vertex-disjoint path collection from $\Bb$ to $D$ consisting of  $l$ paths. Say the set of sources in these paths is $I \subset \Bb$.
\item there is a set of $l$ vertices $X$ such that all vertex-disjoint path collections from $\Bb$ to $D$ pass through $X$. We may assume $X=\{b_1, \dots, b_l\}$ consists of black vertices.
\end{itemize}

Let $(G',D)$ be the subtangle obtained by restricting to vertices and edges that can be reached from $X$ via a directed path in $\OO$; note this includes all boundary vectors $b_u$ of the blob $D$. Let $\Bb'$ denote the boundary vertices of $G'$. The orientation $\OO$ restricts to an acyclic reverse perfect orientation of $G'$, with source set exactly $X.$

The restriction of a VRC of $G$ to $G'$ is a VRC for $G'$, by definition. 
Moreover, the restriction of the VRC $(\bv, \bR)$ to $G'$ depends only on the vectors $\{v_b\}_{b \in \Bb'}$, again by definition of the VRC. 
Alternately, using \cref{lem:v_b-from-paths}, the restriction to $G'$ depends only on $\bR|_{G'}$ and $\{v_b\}_{b \in X}$.

Additionally, for $\bz$ coming from a dense open set, the vectors $(v_b(\bz))_{b \in \Bb'}$ in the VRC $\zVRC$ are the boundary of a unique VRC on $G'$. Indeed, otherwise we could change the VRC on $G'$ to another without changing $(v_b(\bz))_{b \in \Bb'}$, and obtain another VRC on $G$ with boundary $\bz$, contradicting solvability of $G$. So the tangle $(G', D)$ gives a map $f: \Conf^\circ_{m,\Bb(G')} \dashrightarrow \Conf_{m,D}$.

Fix now a pinning of $(G, \bD)$. There is some gauge fix such that vertex-disjoint paths from $I$ to $X$ are in $E_1$; we may assume that the coefficients on these edges are 1. Note that by the argument of the `if' direction above, the set of obtainable $\bv_X:=(v_b(\bz))_{b \in X}$ is dense in $\Gr_{m,l}.$ Thus, we can write the blob's boundary vectors $\{v_{b_u}(\bz)\}_{u \in D}$ as functions on a dense subset of $\Gr_{m,l}.$ 
 Write $\mathbf{w}(\bz)=(w_u)_{u \in D}$ for the blob vectors as functions of $\bz.$ Then $\mathbf{w}(\bz)$ is a rational function on $\bz,$ by \cref{prop:int-num-1-ratl-coeffs}. On the other hand, by the above discussion, we can also write $\mathbf{w}$ as a function of $\bv_X,$ that is, we can write
\[F(\bv_X(\bz))=\mathbf{w}(\bz),\]
where $F$ is defined on a dense subset of $\Gr_{m,l}$, and also $\bv_X(\bz)$ is a rational function of $\bz.$ Note that $F$ is a lift of $f$ to the Grassmannian.

It suffices to show that $F$ is a rational map from $\Gr_{m,l}$ to $\Gr_{m, D}$, and hence $f$ is a rational map from $\Conf^\circ_{m,l}$ to $\Conf_{m,D}$. Since $l<|D|$, $f$ cannot be dominant. This implies that the blob boundary lines $\{\CC\mathbf{w}(\bz): \bz \in \Gr_{m,n}\}$ are not dense in $\Conf^\circ_{m,D}.$

We now show that $F$ is rational. Let $\mcu\subseteq\Gr_{m,n}$ be an open dense set on which there is a unique and non-degenerate VRC with boundary $\bz$, which exists by \cref{prop:int-num-1-ratl-coeffs} and \cref{cor:VRC-generically-nondegen}. Let $S$ be 
 the (Zariski dense) image of the map $\mcu \to \Gr_{m,l}$ which sends $\bz \mapsto \bv_X(\bz)$. Note that when $\bz\in \mcu,$ $\mathbf{w}(\bz)$ is well-defined, hence for $\mathbf{x}\in S$, $F(\mathbf{x})$ is well-defined. For $\mathbf{x}\in S$, write $\mathbf{x}'\in\Gr_{m,n}$ as the vector space obtained from $\mathbf{x}$ by adding zero columns in columns $N \setminus I$. 
As in the `if' direction, we can take the limit $\bz\to\mathbf{x}'$ by letting $r_e\to 0$ ($r_e\to\infty$) for black-to-white (white-to-black) edges of $G\setminus G'$ with $r_e\nequiv 1$. Then
\[\lim_{\mcu \ni\bz\to\mathbf{x}'}\bv_X (\bz)=\mathbf{x}.\] Also we have that
$\lim_{\bz\to\mathbf{x}'}\mathbf{w}(\bz)=F(\mathbf{x})$. Indeed, $\mathbf{x}\in S,$ hence $F(\mathbf{x})$ is well-defined and equals $\mathbf{w}(\bz)$ for every $\bz\in \mcu$ for which $\bv_X(\bz)=\mathbf{x}.$
Thus, the rational function $\mathbf{w}(-)$ extends to $\mathbf{x}'$ and we can write
\[F(\mathbf{x})=\mathbf{w}(\mathbf{x}').\]
Since $\mathbf{w}$ is rational, this shows that $F$ is a rational mapping from $S,$ hence from $\Gr_{m,l}$, to $\Gr_{m,D}.$
\end{proof}

\section{Characterizing intersection number one trees}\label{sec:trees}

In this section we focus on plabic trees.  In particular,
we will show in \cref{prop:tree1} that if $G$ is a plabic tree, it is solvable (equivalently, $\dim \Pi_G = km$ and $\mint(G)=1$) if and only if it is \emph{$m$-balanced}, see \cref{def:mbalanced}.
We also provide an explicit $m$-VRC on amplitrees with fixed boundary vectors via the Grassmann-Cayley algebra (\cref{prop:amplitree-VRC-rep-up-to-sign}).

We will assume throughout this section that our plabic trees are 
bipartite, i.e. $G=((B,W), E)$, with $B=B_{int} \sqcup B_{bd}$
as in \cref{not:bipartite}.

\subsection{Plabic trees and the {$m$}-balanced property}

In this section we introduce the notion of \emph{$m$-balanced} plabic trees. We will show in 
\cref{cor:int0} that if a plabic tree $G$ of type $(k,km+1)$
fails to be $m$-balanced, then 
$\mint(G)=0$.

For convenience, we record in \cref{lem:facts} some well-known (and easy to prove) facts about plabic trees that we will use later.
\begin{lemma}\label{lem:facts}
Let $G$ be a plabic tree of type $(k,n)$.
   \begin{enumerate}
       \item Then $G$ is reduced,  and $\dim \Pi_G = n-1$.
       \item We have 
       $$k=1+\sum_{b\in \Bi} (\deg b-2).$$
       \item We can apply moves to $G$ to make $G$ bipartite with trivalent internal black vertices.
       \item We have that 
       $k=|W|-|\Bi|$. 
       \item If $G$ is bipartite with black vertices trivalent,  then $k=|\Bi|+1$.
   \end{enumerate} 
\end{lemma}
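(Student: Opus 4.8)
The plan is to establish the facts in the order (3), (4), (2), (5), (1), since the later facts build on the earlier ones. For (3): when $G$ is a plabic tree, the only applicable plabic moves are insertion/removal of bivalent vertices and the unicolored contract/expand moves of \cref{fig:expand-contract} --- a square move would require a $4$-cycle, which a tree does not contain --- and all of these preserve the property of being a tree of type $(k,n)$. Inserting a bivalent vertex of the appropriate color on every unicolored edge makes $G$ bipartite; then, whenever an internal black vertex has degree $d\ge 4$, a black-expand move peels off a trivalent black vertex joined by a unicolored edge to a black vertex of degree $d-1$, and re-bipartiting this new edge and iterating makes every internal black vertex trivalent.

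For (4), fix a perfect orientation $\OO$ of $G$ (one exists since $G$ is reduced) and count the edges oriented black-to-white in two ways. Counting from the black endpoints: each internal black vertex contributes its unique outgoing edge, and each boundary vertex which is a source of $\OO$ contributes its (unique, as boundary vertices have degree one) edge, giving $|\Bi|+k$ edges. Counting from the white endpoints: every white vertex is internal, with a unique incoming edge, which is black-to-white since $G$ is bipartite, giving $|W|$ edges. Hence $k=|W|-|\Bi|$, which is (4). For (2), since $G$ is a tree we have $\#E(G)=\#V(G)-1=n+|\Bi|+|W|-1$, while counting edges by their black endpoints gives $\#E(G)=n+\sum_{b\in\Bi}\deg b$; equating and applying (4),
\[ \sum_{b\in\Bi}(\deg b - 2) \;=\; |W|-1-|\Bi| \;=\; (|W|-|\Bi|)-1 \;=\; k-1, \]
which is (2). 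When every internal black vertex is trivalent each summand equals $1$, so $k=1+|\Bi|$, which is (5).

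For (1): reducedness of a leafless plabic tree is classical --- its trips (zig-zag paths) have no closed components and no essential self-intersections, so no move can decrease its number of faces, which is the defining property of being reduced. For the dimension, adjoin the $n$ boundary arcs of the disk to $G$ to form a connected planar graph $\Gamma$ with $\#V(\Gamma)=\#V(G)$ and $\#E(\Gamma)=\#E(G)+n=\#V(G)-1+n$; Euler's formula gives $\Gamma$ exactly $n+1$ faces, one of which is the exterior of the disk, so $G$ has exactly $n$ faces, and since $\dim\Pi_G=\#(\text{faces of }G)-1$ for a reduced plabic graph, $\dim\Pi_G=n-1$. The only inputs here beyond routine bookkeeping are the classical reducedness of plabic trees and this face--dimension formula; what I expect to take the most care --- rather than anything conceptually hard --- is keeping the conventions straight (the direction convention for perfect orientations, the fact that boundary vertices are black and of degree one, and which regions count as ``faces''), so that the two double-counts and the Euler computation line up correctly.
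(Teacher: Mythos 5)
The paper states this lemma without proof, explicitly labelling the items ``well-known (and easy to prove) facts,'' so there is no argument of the authors' to compare against; your write-up supplies a complete and correct one. The two double-counts are sound and use the conventions correctly: every edge of a bipartite tree with black boundary has exactly one black endpoint, boundary vertices have degree one, and in a perfect orientation each internal black vertex has a unique outgoing edge while each (necessarily internal) white vertex has a unique incoming edge, giving $|W|=|\Bi|+k$ and then $\sum_{b\in\Bi}(\deg b-2)=|W|-|\Bi|-1=k-1$. The Euler computation for the face count is also right, so $\dim\Pi_G=\#F(G)-1=n-1$ follows from \cref{thm:positroidcell}. Two small remarks. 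First, for reducedness you do not need the trip criterion: your own observation in part (3) --- that (M1) requires a $4$-cycle and that (M2), (M3) preserve treeness --- shows every graph in the move-equivalence class of $G$ is a tree and hence has no bubble, which is exactly the paper's \cref{def:move} of reduced. Second, invoking reducedness in part (4) before proving part (1) is a harmless forward reference (and in fact unnecessary: the hypothesis that $G$ has a type $(k,n)$ already presupposes a perfect orientation by \cref{def:orientation}), but you may as well prove (1) first or cite the type hypothesis directly to keep the logical order clean.
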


\begin{definition}\label{def:mbalanced}
Let $G$ be a bipartite plabic tree of type $(k,km+1)$.
We say that $G$ is \emph{$m$-balanced} if 
for each edge $e$ of $G$, if we write 
$G\setminus \{e\} = G_1 \sqcup G_2$ (giving ``half" the edge $e$ to each $G_i$), then for each $i=1,2$, we have
$$m(k_i-1) < \dim \Pi_{G_i} \leq mk_i,$$ where $k_i$ is the $k$-statistic of the tree $G_i$. 
A \emph{$(k,m)$-amplitree} is a bipartite plabic tree $G$ 
of type $(k,km+1)$ which is $m$-balanced.
\end{definition}

\begin{remark}
We will show in \cref{prop:amplitree-unique-VRC} that `amplitrees' have intersection number one and, by \cref{rk:physics_1}, they correspond to a new infinite class of rational Yangian invariants for scattering amplitudes in $\mathcal{N}=4$ SYM (hence the use of `ampli' in `amplitree').
\end{remark}

We start by proving a characterization
of the $m$-balanced property plus some helpful lemmas.

\begin{lemma}\label{prop:dimsig}
    Let $G$ be a bipartite plabic tree of type $(k,km+1)$. Then the $m$-balanced property says that 
for each $i=1,2$, we have
\begin{equation}\label{eq:mbalanced}
    1 \leq |\Bb \cap G_i| 
     - m \sum_{b\in G_i \cap \Bi} (\deg b-2) \leq m.
     \end{equation}
\end{lemma}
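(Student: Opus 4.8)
The statement to prove is a direct translation of the definition of $m$-balanced (\cref{def:mbalanced}) into a concrete inequality about boundary vertices and internal vertex degrees, using the combinatorial facts about plabic trees collected in \cref{lem:facts}. The plan is to unwind both sides of the inequalities $m(k_i - 1) < \dim \Pi_{G_i} \leq m k_i$ using the known formulas for $\dim \Pi_{G_i}$ and $k_i$, and check that the two inequalities become exactly \eqref{eq:mbalanced}.

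First I would fix an edge $e$ of $G$ and write $G \setminus \{e\} = G_1 \sqcup G_2$, giving half the edge $e$ to each $G_i$ (so each $G_i$ is a plabic tree, possibly with a "half-edge" leaf hanging off; more precisely each $G_i$ is itself a plabic tree of some type $(k_i, n_i)$ where the dangling half-edge behaves as a boundary leaf). By \cref{lem:facts}(1), $\dim \Pi_{G_i} = n_i - 1$, where $n_i = |\Bb \cap G_i|$ counts the boundary vertices of $G$ lying in $G_i$ — here I need to be careful about whether the new half-edge endpoint counts as a boundary vertex, and I would adopt the convention (consistent with "giving half the edge to each $G_i$") that it does, so that actually $n_i = |\Bb \cap G_i| + 1$; I would double-check this normalization against \cref{lem:facts} so the final constants match. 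By \cref{lem:facts}(2) applied to $G_i$, $k_i = 1 + \sum_{b \in G_i \cap \Bi}(\deg_{G_i} b - 2)$, and since the half-edge adds $1$ to the degree of exactly one internal black vertex of $G_i$ (the endpoint of $e$ in $G_i$, if it is internal), I would relate $\sum_{b \in G_i \cap \Bi}(\deg_{G_i} b - 2)$ to $\sum_{b \in G_i \cap \Bi}(\deg_G b - 2)$, tracking that $\pm 1$ correction.

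Next I would substitute these expressions into $m(k_i - 1) < \dim \Pi_{G_i} \leq m k_i$. Writing $s_i := \sum_{b \in G_i \cap \Bi}(\deg_G b - 2)$, the upper bound $\dim \Pi_{G_i} \leq m k_i$ becomes (after the degree corrections and the $-1$ from $\dim \Pi_{G_i} = n_i - 1$) an inequality of the form $|\Bb \cap G_i| - m s_i \leq m$, and the lower bound $m(k_i - 1) < \dim \Pi_{G_i}$ becomes $m(k_i - 1) < |\Bb \cap G_i| - m s_i + (\text{const})$, i.e. $1 \leq |\Bb \cap G_i| - m s_i$ once one uses integrality to turn the strict inequality into a weak one. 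I would present this as a short direct computation, being explicit about the constant bookkeeping in one of the two $G_i$ (the one containing the endpoint of $e$) versus trivial bookkeeping in the other. One should also invoke the global constraint $n = km+1$ to see that the inequality for $G_1$ and the inequality for $G_2$ are in fact equivalent to each other (they are "complementary"), which is a useful consistency check but not strictly needed for the statement.

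The main obstacle — really the only subtlety — is the careful accounting of the half-edge: how the dangling half of $e$ contributes to the count of boundary vertices of $G_i$ and to the degree of the internal vertex it attaches to, and making sure the off-by-one corrections from $\dim \Pi_{G_i} = n_i - 1$ and from $k_i = 1 + \sum(\deg - 2)$ combine to give exactly the constants $1$ and $m$ in \eqref{eq:mbalanced}. Everything else is elementary substitution using \cref{lem:facts}.
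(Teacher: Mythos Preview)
Your overall plan---substitute the formulas from \cref{lem:facts}(1),(2) into the defining inequalities---is exactly what the paper does, and the paper's proof is only three lines. However, your half-edge bookkeeping is tangled, and untangling it is really the whole content of the lemma.

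The convention ``give half of $e$ to each $G_i$'' means that the endpoint $v_i$ of $e$ inside $G_i$ keeps its half of the edge, whose \emph{new free tip} becomes an extra boundary vertex of $G_i$. Hence the internal vertices of $G_i$ are exactly $G_i\cap\Bi$ with their degrees \emph{unchanged} from $G$, and $G_i$ has $|\Bb\cap G_i|+1$ boundary vertices. Thus \cref{lem:facts}(1),(2) give directly
\[
\dim\Pi_{G_i}=|\Bb\cap G_i|,\qquad k_i-1=\sum_{b\in G_i\cap\Bi}(\deg b-2),
\]
with no $\pm1$ corrections at all. Plugging these into $m(k_i-1)<\dim\Pi_{G_i}\le mk_i$ and using integrality for the strict inequality yields \eqref{eq:mbalanced} immediately.

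Two specific points where your plan goes off track: (i) the half-edge does \emph{not} add $1$ to the degree of any internal black vertex---the edge was already there, you just cut it in the middle; (ii) the situation is completely symmetric in $G_1$ and $G_2$, since each receives one half-edge and hence one new boundary vertex, so there is no ``one with the endpoint versus trivial bookkeeping in the other''. Once you see this, the corrections you were planning to track all vanish and the computation collapses to the paper's three-line argument.
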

\begin{proof}
    Let us rewrite \eqref{eq:mbalanced} as 
    \begin{equation*} 
     1+ m \sum_{b\in G_i \cap \Bi} (\deg b-2) \leq 
     |\Bb \cap G_i| \leq 
     m+ m \sum_{b\in G_i \cap \Bi} (\deg b-2).
    \end{equation*}
    Using \cref{lem:facts} (1) and (2), we then get 
        \begin{equation*} 
     1+ m (k_i-1) \leq 
     \dim  \Pi_{G_i} \leq 
     m k_i.
    \end{equation*}
    The result follows.
\end{proof}

The following observations are easy to verify.
\begin{lemma}
Use the notation of \cref{def:mbalanced}.
\begin{itemize}
\item The $m$-balanced property is preserved under bipartite expand and contract moves. 
\item If \eqref{eq:mbalanced} holds for $G_1$,
then it also holds for $G_2.$
\end{itemize}
\end{lemma}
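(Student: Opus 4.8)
The plan is to reduce everything to a single additive quantity. For a subtree $H$ of a bipartite plabic tree that arises by cutting edges, set $q(H) := |\Bb \cap H| - m\sum_{b \in H \cap \Bi}(\deg b - 2)$; by \cref{prop:dimsig}, $m$-balancedness of $G$ is precisely the statement that $1 \leq q(G_1) \leq m$ and $1 \leq q(G_2) \leq m$ for every edge $e$, where $G \setminus \{e\} = G_1 \sqcup G_2$. The structural fact I would use is that $q$ is additive: cutting an edge $e$ of $G$ partitions the vertex set of $G$, hence partitions $\Bb$ and $\Bi$, while the degrees of the internal black vertices are unchanged and the two half-edges only affect the face counts of the pieces. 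Hence $q(G_1) + q(G_2) = |\Bb| - m\sum_{b \in \Bi}(\deg b - 2) = (km+1) - m(k-1) = m+1$, using that $G$ has type $(k,km+1)$ and \cref{lem:facts}(2). The second bullet is then immediate: if $1 \leq q(G_1) \leq m$, then $q(G_2) = (m+1) - q(G_1)$ also lies in $[1,m]$.

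For the first bullet, since expand and contract moves are mutually inverse, it is enough to prove that an expand move applied to an $m$-balanced tree $G$ produces an $m$-balanced tree $G'$, and that a contract applied to an $m$-balanced $G'$ produces an $m$-balanced $G$; the latter will fall out of the edge-correspondence below. I would handle the black-expand move first. It splits an internal black vertex $b$ with white neighbors $w_1, \dots, w_d$ into $b_L$ (adjacent to $w_1, \dots, w_j$ and a new degree-$2$ white vertex $w'$) and $b_R$ (adjacent to $w'$ and $w_{j+1}, \dots, w_d$); by \cref{lem:facts}(4), $G'$ again has type $(k,km+1)$, so the additivity identity of the first paragraph holds for $G'$ as well. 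The key observation is that the gadget $\{b_L, w', b_R\}$ is invisible to $q$: the vertex $w'$ is white, no new boundary vertex is created, and $(\deg_{G'} b_L - 2) + (\deg_{G'} b_R - 2) = (j-1) + (d-j-1) = d-2 = \deg_G b - 2$. Therefore each edge of $G$ corresponds to an edge of $G'$ (either itself, or $\{b_L \text{ or } b_R, w_i\}$ in place of $\{b, w_i\}$) whose removal yields the same unordered pair of $q$-values; so \eqref{eq:mbalanced} for $G$ gives \eqref{eq:mbalanced} for all of these edges of $G'$, and this is exactly what is needed for the contract direction too.

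It remains — and this is the only real computation — to verify \eqref{eq:mbalanced} for the two new edges $\{b_L, w'\}$ and $\{b_R, w'\}$ of $G'$. Both separate $G'$ into two pieces whose $q$-values I would compute to be $\sum_{i \leq j} a_i - m(j-1)$ and $\sum_{i > j} a_i - m(d-j-1)$, where $H_i$ is the component of $G \setminus \{b, w_i\}$ containing $w_i$ and $a_i := q(H_i)$. Since $G$ is $m$-balanced, applying \eqref{eq:mbalanced} to the edges $\{b, w_i\}$ gives $1 \leq a_i \leq m$ for each $i$; and running the additivity computation for $G = \{b\} \cup H_1 \cup \dots \cup H_d$ (where $b$ contributes $\deg_G b - 2 = d-2$) gives $q(G) = \sum_{i=1}^d a_i - m(d-2)$, which combined with $q(G) = |\Bb| - m\sum_{b\in\Bi}(\deg b - 2) = m+1$ yields $\sum_{i=1}^d a_i = m(d-1)+1$. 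The bound $\sum_{i>j} a_i - m(d-j-1) \leq m$ then follows from $\sum_{i>j} a_i \leq (d-j)m$, and the bound $\geq 1$ follows by writing $\sum_{i>j} a_i = (m(d-1)+1) - \sum_{i \leq j} a_i$ and using $\sum_{i \leq j} a_i \leq jm$; the partner piece lies in $[1,m]$ by the second bullet applied to $G'$. The white-expand move is the same argument, now with a new degree-$2$ black vertex, which contributes $0$ to every $q$-value, so the subtracted multiples of $m$ disappear and the estimates become even easier. I expect the only obstacle to be bookkeeping — keeping careful track of how $q$ changes under a cut (which boundary vertices and which black degrees land in each piece, and that the half-edges are harmless) — rather than anything conceptually difficult.
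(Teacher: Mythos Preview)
Your argument is correct. The paper does not give a proof of this lemma at all --- it simply states ``The following observations are easy to verify'' --- so there is nothing to compare against; your write-up supplies exactly the verification the paper omits, via the natural additive invariant $q(H)=|\Bb\cap H|-m\sum_{b\in H\cap\Bi}(\deg b-2)$ and the identity $q(G_1)+q(G_2)=m+1$.
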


The next proposition shows the first connection between the $m$-balanced property and intersection number.

\begin{proposition}\label{prop:fail}
Let $G$ be a bipartite plabic tree of type 
$(k,km+1)$.  Suppose that the $m$-balanced property fails.
Then we can find a subtree $G'$ of $G$ and 
a perfect orientation $\OO$ of $G$ 
which satisfy 
the conditions of \cref{prop:int1criterion}.
\end{proposition}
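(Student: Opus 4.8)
The plan is to unwind the failure of the $m$-balanced property into the hypotheses of \cref{prop:int1criterion}. Since $G$ is a bipartite plabic tree of type $(k, km+1)$, deleting an edge $e$ splits $G$ into two subtrees $G_1 \sqcup G_2$, and by \cref{prop:dimsig} the $m$-balanced condition fails at $e$ exactly when, for one of the two pieces — say $G'=G_1$ after possibly relabeling — we have
\[
|\Bb \cap G'| - m\!\!\sum_{b \in G' \cap \Bi}(\deg b - 2) \;\leq\; 0,
\]
equivalently $\dim \Pi_{G'} \leq m(k'-1) < mk'$ where $k'$ is the $k$-statistic of $G'$, by \cref{lem:facts}(1)--(2). (By the second bullet of the easy lemma after \cref{prop:dimsig}, if \eqref{eq:mbalanced} holds for one piece it holds for the other, so a genuine failure yields a piece violating the \emph{lower} bound; the piece violating the lower bound is the one we take as $G'$.) Note $k' \geq 1$ since $G'$ is a nonempty tree, and $k' < k$ because the other piece $G_2$ contributes at least one boundary vertex, hence $k_2 \geq 1$, and $k = k_1 + k_2$ — here I would use \cref{lem:facts}(2) additivity of the $k$-statistic over the two components, being a bit careful about how the half-edge $e$ is apportioned (the internal endpoint of $e$, if any, changes degree by one in passing to $G'$). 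This already gives condition \eqref{item4} of \cref{prop:int1criterion}: $G'$ has type $(k',n')$ with $1 \leq k' < k$ and $\dim \Pi_{G'} < k'm$.

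The remaining work is to produce the acyclic perfect orientation $\OO$ of $G$ together with the deleted edge set $\tilde E$ — here $\tilde E = \{e\}$, the single edge we cut — so that conditions \eqref{item1}, \eqref{item2}, \eqref{item3} hold. The key structural observation is that $e$ has (at most) one endpoint inside $G'$; call it $v$ if it exists. For \eqref{item3} we need every vertex of $G'$ incident to an edge of $\tilde E$ to be black, i.e. $v$ must be black. This is where I would invoke \cref{lem:facts}(3): after bipartite expand/contract moves we may assume $G$ is bipartite with all internal black vertices trivalent, and then by a local move near $e$ we can arrange that the endpoint of $e$ lying in $G'$ is black (if $e$ is a boundary edge, its interior endpoint is already black by \cref{not:bipartite}; if $e$ is an internal edge, it connects a black and a white vertex, so exactly one endpoint is black, and we choose the splitting so that the black one goes to $G'$). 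For \eqref{item1} and \eqref{item2}: since $G$ is a plabic tree it is reduced (\cref{lem:facts}(1)), so it admits an acyclic perfect orientation $\OO$; the point is to choose $\OO$ so that it restricts to a perfect orientation $\OO'$ of $G'$ with sources and sinks all on the boundary, occupying a \emph{consecutive} block $S(\OO') = S(\OO) \cap \{a, a{+}1, \dots, b\}$. Here I would use that $\Bb \cap G'$ is a consecutive (cyclic) interval of $\{1,\dots,km+1\}$ — this is exactly the tree/planarity structure: deleting one edge of a plabic tree drawn in the disk separates the boundary vertices into two arcs — so the boundary vertices of $G'$ form an interval, and any perfect orientation of $G'$ has its sources among them. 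We build $\OO$ by first choosing an acyclic perfect orientation $\OO'$ of $G'$ (possible since subtrees of reduced plabic graphs are reduced), orienting the cut edge $e$ \emph{into} $G'$ (towards $v$), and then extending to the rest of $G$; acyclicity is automatic because a tree has no cycles. Orienting $e$ into $G'$ makes $e$ point towards its black endpoint $v$, which is consistent with $v$ being black in a perfect orientation only if $v$ then has its unique outgoing-or-all-but-one structure respected — I need to double check the convention, but since $G'$ already carries a perfect orientation and $e$ is an extra edge at $v$ pointed inward, $v$ must be a sink-type black vertex in $G'$; adjusting the choice of $\OO'$ at $v$ handles this.

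So the ordering of steps is: (i) apply \cref{prop:dimsig} to rewrite the $m$-balanced failure as an inequality, and via the easy lemma identify the subtree $G'$ (one component of $G \setminus \{e\}$ for the bad edge $e$) that violates the lower bound; (ii) normalize $G$ using \cref{lem:facts}(3) to be bipartite with trivalent internal black vertices, and arrange via a local move that the $G'$-endpoint of $e$ is black, giving \eqref{item3}; (iii) use that $\Bb \cap G'$ is a consecutive cyclic arc (from planarity of the tree) to get \eqref{item2}, choosing an acyclic perfect orientation $\OO'$ of the reduced subtree $G'$ and extending to an acyclic perfect orientation $\OO$ of $G$ with $\tilde E = \{e\}$ oriented into $G'$, giving \eqref{item1}; (iv) note $\dim \Pi_{G'} \leq m(k'-1) < mk'$ and $1 \leq k' < k$ from \cref{lem:facts}(1)--(2), giving \eqref{item4}. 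The main obstacle I anticipate is step (iii): verifying that one can simultaneously make $\OO$ acyclic, perfect on all of $G$, restricting to a perfect orientation on $G'$ with a consecutive boundary source set, \emph{and} have the single cut edge oriented correctly relative to the black/white colors — the degrees of freedom in choosing perfect orientations of trees are somewhat rigid, so I would want to argue this by an explicit induction on the tree structure of $G'$ (or cite the standard fact that every reduced plabic graph, in particular every plabic tree, has an acyclic perfect orientation with any prescribed admissible source set, e.g. \cref{acycliclemma} / \cite{postnikov}) rather than by ad hoc manipulation. A secondary subtlety is the bookkeeping on the $k$-statistic when the edge $e$ is internal versus a boundary edge, since the half-edge convention in \cref{def:mbalanced} shifts $\deg v$ by one; I would handle the two cases separately but briefly.
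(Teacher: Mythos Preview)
Your overall plan matches the paper's, but there is a genuine gap in the case where the endpoint $v_1$ of the bad edge $e$ lying in $G_1$ is \emph{white}. You propose to ``arrange that the endpoint of $e$ lying in $G'$ is black'' either by a local move or by ``choosing the splitting so that the black one goes to $G'$.'' Neither works. The side $G'=G_1$ is forced: it is the component violating the lower bound in \eqref{eq:mbalanced}, and you cannot swap it for $G_2$. Nor can a bipartite expand/contract move flip the color of the endpoint on the $G_1$ side without altering which edge you are cutting. (Your parenthetical about boundary edges is also off: a boundary edge joins a black boundary vertex to a \emph{white} internal vertex, and as you can check the single-vertex side always satisfies \eqref{eq:mbalanced}, so the bad side necessarily contains the white endpoint.) Moreover, with $v_1$ white and $e$ oriented into $G_1$, the edge $e$ is the unique incoming edge at $v_1$; deleting only $e$ leaves $v_1$ with no incoming edge, so $\OO$ does \emph{not} restrict to a perfect orientation of $G_1\setminus\{e\}$, and condition~(1) fails. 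The paper's fix is to enlarge the deleted set: when $v_1$ is white, remove $v_1$ together with \emph{all} its incident edges $e,e_1,\dots,e_\ell$. Then the vertices of $G'$ touching $\tilde E$ are the (black) neighbors of $v_1$, giving condition~(3), and now $\OO$ genuinely restricts to a perfect orientation of $G'$, giving condition~(1).

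Two smaller points. First, the additivity you invoke is $k_1+k_2=k+1$, not $k$ (with the half-edge convention degrees are preserved, so summing \cref{lem:facts}(2) over the two pieces double-counts the ``$+1$''); consequently $k'<k$ requires the extra observation that $G_2$ contains at least one source of $\OO$, which follows by tracing the path through $e$ back into $G_2$. Second, the paper builds $\OO$ on $G$ first (choosing any perfect orientation and, if necessary, reversing a boundary-to-boundary path to make $e$ point into $G_1$) and then restricts; your reverse direction (build $\OO'$ on $G'$ and extend) is not wrong in principle, but you would still have to confront the white-vertex obstruction above before it could succeed.
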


\begin{proof} 
By \cref{lem:facts} we can assume without loss of generality
that the internal black vertices of $G$ are trivalent.
Since the $m$-balanced property fails, we know that 
there is an edge $e$ such that $G\setminus \{e\} = G_1 \sqcup G_2$
such that without loss of generality,
$\dim \Pi_{G_1} \leq m(k_1-1)$ (and correspondingly
$\dim \Pi_{G_2} > mk_2$). Let $v_i$ denote the vertex of $e$ in $G_i$.

Choose a perfect orientation $\OO$ of $G$.
We first claim that without loss of generality, we can 
assume that $e$ is directed from $G_2$ to $G_1$.  To see this,
note that each edge $e$ is part of (at least one) directed
path from a boundary vertex of $G$ to another boundary vertex.
(This is because no internal vertex of $G$ is a source or 
sink, so we will never get ``stuck'' when we try to produce such
a path.)  Thus, if $e$ is not directed from $G_2$ to $G_1$
in $\OO$, we can choose a directed path $p$ passing through $e$
which goes from one boundary vertex to another, and reverse all 
arrows along $p$, obtaining a new perfect orientation in which 
$e$ is oriented from $G_2$ to $G_1$.  

Note that $G_1$ must contain at least two boundary vertices from $\Bb$.  Otherwise $\dim \Pi_{G_1}=1$, $k_1=1$,
and it's not true that $\dim \Pi_{G_1} \leq m(k_1-1).$
Let $S$ be the set of sources of $\OO$ which lie in 
$\Bb \cap G_1$.  Let $k'=|S|$.  Note
that $k_1=1+|S| = 1+k'$ (since $G_1$ also has a source at $v_2$).

Suppose vertex $v_1$ is black.
It is trivalent, so it must have another edge $e'$ directed
towards it, and thus there must be a path from some 
vertex of $\Bb \cap G_1$ to $v_1$.
Thus $1\leq |S| <k$. 
Let $G'$ be the graph $G_1$ with the edge $e$ removed.
Clearly $\OO$ restricts to a perfect orientation of $G'$ with sources $S$, so 
$G'$ has type $(k', n')$ for some $n'$. 
Also  $$\dim \Pi_{G'} < \dim \Pi_{G_1} \leq m(k_1-1) = mk'.$$
Thus 
the conditions of \cref{prop:int1criterion} are satisfied.

Now suppose vertex $v_1$ is white.
First we claim that 
there's at least one source in $G_1 \cap \Bb$.
If not, $G_1$ has $k_1=1$, and so it's not true 
that $\dim \Pi_{G_1} \leq m(k_1-1).$
Let $e_1,\dots, e_{\ell}$ be the edges besides $e$ which are incident to $v_1$. 

Let $G'$ be $G_1$ with edges $e, e_1,\dots, e_{\ell}$ and the vertex $v_1$ removed. 
Clearly $\OO$ restricts to a perfect orientation of $G'$
(this would not have been true if we only removed edge $e$) with sources $S$, so $G'$ has type $(k',n')$ for some $n'$. 
We have  $\dim \Pi_{G'} < \dim \Pi_{G_1} \leq m(k_1-1) = mk'.$
Thus we found a perfect orientation $\OO$ and a  subtree $G'$
such that
the conditions of \cref{prop:int1criterion} are satisfied.
 \end{proof}

By \cref{prop:fail} and \cref{prop:int1criterion}, we obtain the following result.
\begin{corollary}\label{cor:int0}
Let $G$ be a bipartite plabic tree of type 
$(k,km+1)$, such that the $m$-balanced property fails.
Then $\Pi_G$ has $m$-intersection number 0.
\end{corollary}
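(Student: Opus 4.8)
The plan is to obtain \cref{cor:int0} as an immediate consequence of the two preceding results, \cref{prop:fail} and \cref{prop:int1criterion}. The combinatorial work is already done by \cref{prop:fail}: from the failure of the $m$-balanced property it produces an edge $e$ with $G\setminus\{e\}=G_1\sqcup G_2$, a perfect orientation $\OO$ of $G$, and a subtree $G'\subseteq G_1$ such that $\OO$ restricts to a perfect orientation of $G'$ with a consecutive set of sources, all vertices of $G'$ meeting a deleted edge are black, and $G'$ has type $(k',n')$ with $1\le k'<k$ and $\dim\Pi_{G'}<k'm$. These are precisely conditions (1)--(4) of \cref{prop:int1criterion}.

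Before invoking \cref{prop:int1criterion} one must check its standing assumption $\dim\Pi_G=km$, but this is automatic here: by part (1) of \cref{lem:facts} a plabic tree of type $(k,n)$ satisfies $\dim\Pi_G=n-1$, so $\dim\Pi_G=km$ when $G$ has type $(k,km+1)$. Feeding the $G'$ and $\OO$ produced by \cref{prop:fail} into \cref{prop:int1criterion} then gives $\mint(G)=0$, which is the assertion.

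There is no real obstacle left to clear: all the substance lives in the cited statements. In \cref{prop:fail} this is the case split according to whether the endpoint of $e$ in $G_1$ is black or white, which dictates which edges one deletes so that $\OO$ still restricts to a perfect orientation of $G'$ and the dimension bound $\dim\Pi_{G'}<k'm$ holds; and behind \cref{prop:int1criterion} it is the geometric input of \cref{lem:geom-criterion-int0} and \cref{lem:too-low-dim-maps-to-pos-codim}, which together show that if every $V\in T_G$ contains some element of a subvariety $X\subset\Gr_{k',n}$ with $\dim X<k'm$, then $\comp_{Z,G}$ cannot be dominant, so $\mint(G)=0$. The corollary is simply the composition of \cref{prop:fail} and \cref{prop:int1criterion}.
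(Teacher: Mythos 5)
Your proposal is correct and is exactly the paper's argument: the corollary is stated in the paper as an immediate consequence of \cref{prop:fail} and \cref{prop:int1criterion}, which is precisely the composition you describe. Your added check that $\dim\Pi_G=km$ (via \cref{lem:facts}(1) and $n=km+1$) is a worthwhile detail that the paper leaves implicit.
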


\subsection{Amplitrees have unique $m$-VRCs and intersection number $1$} 

In this subsection, we show in \cref{prop:amplitree-unique-VRC} that amplitrees are solvable. 
We will also show in \cref{prop:tree1} that a bipartite plabic tree 
is solvable if and only if it is an 
amplitree.

To show solvability, we need to construct VRCs on plabic trees. To help us do this, we will decorate the vertices with subspaces of various dimensions.

\begin{definition}\label{def:subspaces-on-vertices}
    Let $G$ be a bipartite plabic tree with $\Bb=[n]$. Let $\root$ be an internal vertex, and fix $\bz \in \Mat_{m,n}$ with columns $z_1, \dots, z_n$. Orient the edges of $G$ towards $\root$. Going from leaves to root, we associate to each vertex $x$ of $G$ a subspace $V_x^{\rt=\root}=V_x$ of $\CC^m$ as follows:
    \begin{itemize}
        \item If $x=i$ is a boundary vertex, $V_i=\spn({z_i})$.
        \item If $x$ is an internal white vertex with children $b_1, \dots, b_p$, then $V_{x}=V_{b_1} + \cdots + V_{b_p}$.
        \item If $x$ is an internal black vertex with children $w_1, \dots, w_p$, then $V_x= V_{w_1} \cap \cdots \cap V_{w_p}$.
    \end{itemize}
\end{definition}
We often omit the superscript $\rt=\root$ if the root is clear from context. The subspace $V_x^{\rt=\root}$ clearly depends on $\bz$, though we omit this dependence from the notation. Note that if we replace $\bz$ with $g \bz$, for $g \in GL_m$, the subspace $V_x$ becomes $g V_x:= \{g v: v \in V_x\}.$ The following lemma shows the relationship between these vector spaces and VRCs.

\begin{lemma}\label{lem:vrc-vec-in-subspaces}
    Suppose $(\bv, \bR)$ is an $m$-VRC on $G$ with boundary $\bz \in \Mat_{m,n}$. Let $\root$ be an internal vertex of $G$. Then for every black vertex $b$ of $G$, the vector $v_b$ is in the subspace $V_b^{\rt=\root}$.
\end{lemma}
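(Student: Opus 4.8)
Here is how I would prove \cref{lem:vrc-vec-in-subspaces}.

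The plan is to run an induction from the leaves of $G$ toward the chosen root $\root$. First I would set up the combinatorial bookkeeping. Orient every edge of $G$ toward $\root$, as in \cref{def:subspaces-on-vertices}; for a vertex $x\neq\root$ call the neighbour of $x$ on the unique path from $x$ to $\root$ its \emph{parent} and the other neighbours its \emph{children}, while every neighbour of $\root$ counts as a child. For a vertex $x$, let $T_x$ be the subtree consisting of $x$ together with all vertices whose path to $\root$ passes through $x$, and set $h(x)$ to be the maximal distance inside $T_x$ from $x$ to a leaf of $G$. Since $G$ is a finite tree, $h(x)$ is a well-defined nonnegative integer with $h(x)=0$ exactly when $x$ is a leaf, and whenever $c$ is a child of $x$ we have $h(c)<h(x)$. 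I would prove the claim by induction on $h(b)$.

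For the base case $h(b)=0$, the vertex $b$ is a leaf; since every interior vertex of a plabic tree has degree at least two, a leaf must be a boundary vertex, so $b=i$ for some $i\in\Bb$. Then $v_b=z_i$ because $\partial(\bv,\bR)=[v_1\cdots v_n]=\bz$, while $V_b^{\rt=\root}=\spn(z_i)$ by the first clause of \cref{def:subspaces-on-vertices}, so $v_b\in V_b^{\rt=\root}$. For the inductive step, assume $h(b)>0$, so $b$ is interior. As $G$ is bipartite, the children $w_1,\dots,w_p$ of $b$ are all white and $p\geq 1$, and $V_b^{\rt=\root}=\bigcap_{j=1}^p V_{w_j}^{\rt=\root}$. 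Fix $j$: since all boundary vertices of $G$ are black, $w_j$ is interior and so has at least one child; let $b^{(j)}_1,\dots,b^{(j)}_{q_j}$ be its children (all black), so $V_{w_j}^{\rt=\root}=\sum_i V_{b^{(j)}_i}^{\rt=\root}$. The neighbours of $w_j$ are precisely its parent $b$ together with $b^{(j)}_1,\dots,b^{(j)}_{q_j}$, so the white-vertex relation at $w_j$ reads $r_{\{w_j,b\}}v_b+\sum_i r_{\{w_j,b^{(j)}_i\}}v_{b^{(j)}_i}=0$; since $r_{\{w_j,b\}}\in\CC^*$, this writes $v_b$ as a linear combination of the $v_{b^{(j)}_i}$. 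Each $b^{(j)}_i$ is a child of the child $w_j$ of $b$, hence $h(b^{(j)}_i)<h(b)$, so by induction $v_{b^{(j)}_i}\in V_{b^{(j)}_i}^{\rt=\root}$, and therefore $v_b\in\sum_i V_{b^{(j)}_i}^{\rt=\root}=V_{w_j}^{\rt=\root}$. Since $j$ was arbitrary, $v_b\in\bigcap_{j=1}^p V_{w_j}^{\rt=\root}=V_b^{\rt=\root}$, completing the induction.

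I do not expect a genuine obstacle: the argument is entirely formal once the tree is oriented and the height function $h$ is in place. The only steps one must take care not to skip are the two structural facts — that leaves of a leafless plabic tree are boundary (hence black) vertices, and that white vertices, being interior, always have a child — both of which are immediate from the conventions of \cref{not:bipartite}. Everything else is a single application of the defining white-vertex relation together with the recursion in \cref{def:subspaces-on-vertices}.
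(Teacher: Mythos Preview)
Your proof is correct and follows essentially the same approach as the paper's: both argue by induction from leaves toward the root, using the white-vertex relation at each child $w_j$ of $b$ to express $v_b$ as a linear combination of the grandchildren's vectors, which by induction lie in the appropriate subspaces. Your version is slightly more explicit about the induction parameter and the structural facts needed (leaves are boundary vertices, white vertices have children), but the argument is the same.
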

\begin{proof}
    We prove this going from leaves to root. It is true for the leaves by construction. Choose a non-leaf black vertex $b$, and suppose the lemma is true for all grandchildren of $b$. The children of $b$ are white vertices $w_1, \dots, w_p$. Say the children of $w_i$ are black vertices $b_{1}, \dots, b_{j}$. By assumption $v_{b_{\ell}} \in V_{b_{\ell}}$. The definition of $m$-VRCs and the presence of the white vertex $w_i$ implies that vectors $v_b, v_{b_1}, \dots, v_{b_j}$ satisfy a relation with all coefficients nonzero. In particular, $v_b$ is in the span of $v_{b_1}, \dots, v_{b_j}$, which is contained in $V_{b_1}+ \dots+ V_{b_j}=V_{w_i}$. So $v_b$ is in $V_{w_1} \cap \dots \cap V_{w_p}= V_b$.
\end{proof}

We will now restrict our attention to those $\bz$ where the dimension of the subspaces $V_x$ are predictable. This will turn out to be a dense open subset of $\Gr_{m,n}$.

The next definition will use the following notion. If $G$ is a plabic graph, we denote by $G^{\ext}$ the plabic graph obtained by adding a bivalent white vertex and bivalent black vertex next to each boundary vertex as in \cref{fig:extension}.
\begin{figure}
    \centering
    \includegraphics[width=0.3\linewidth]{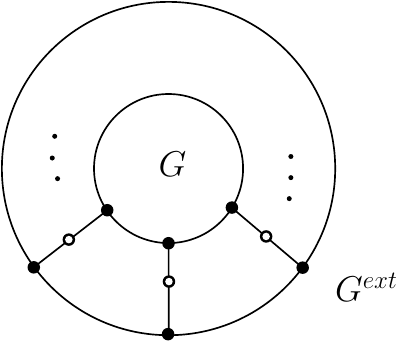}
    \caption{Adding bivalent white and black vertices to $G$ to obtain $G^{\ext}$}
    \label{fig:extension}
\end{figure}

\begin{definition}\label{def:generic-for-tree}
    Let $G, \root, \bz$ and the subspaces $V_x^{\rt=\root}=V_x$ be as in \cref{def:subspaces-on-vertices}. We call $\bz$ \emph{generic for $(G, \root)$} if for all vertices $x$ of $G$,
    \begin{itemize}
    \item if $x$ is a boundary vertex $i$, then $\dim V_x= \dim \spn(z_i) = 1$
        \item if $x$ is white with children $b_1, \dots, b_p$, then $\dim V_x = \min(\dim V_{b_1} + \dots + \dim V_{b_p}, m)$.
        \item if $x$ is black with children $w_1, \dots, w_p$, then $\dim V_x = \max(\dim V_{w_1} + \dots + \dim V_{w_p} -(p-1)m, 0)$.
    \end{itemize}
    We call the right hand sides of the above equations the \emph{expected dimension of $V_x$}, and denote it by $d_{x}$. 
    Genericity is preserved under left-multiplication by an element of $GL_m$, so if $\bz$ is generic for $(G, \root)$, we also say $\rowsp \bz \in \Gr_{m,n}$ is generic for $(G,\root)$.

    We say $\bz \in \Mat_{m,n}$ and $\rowsp 
    \bz \in \Gr_{m,n}$ are \emph{generic for $G$} if they are generic for $(G^\ext, \root)$ for all choices of internal vertex $\root$ of $G^\ext$.
\end{definition}

We note that if $\bz$ is generic for $G$, it is also generic for $(G, \root)$ for all choices of internal vertex $\root$; the extra vertices in $G^\ext$ do not change the dimension conditions when $\root$ is a vertex of $G$. They impose additional dimension conditions when the root is chosen to be one of the added vertices, which will be useful in \cref{lem:bdry-edge-labels}.

The use of the term ``generic" above is justified by the following proposition and corollary.

\begin{proposition}\label{prop:generic-for-tree-root-open-dense}
     Let $G$ be a bipartite plabic tree with $\Bb=[n]$ and let $\root$ be an internal vertex. Then there is an open dense
     subset $\mcu_{G, \root}$ of $\Gr_{m,n}$ such that $\bz \in \mcu_{G, \root}$ is generic for $(G, \root)$.
\end{proposition}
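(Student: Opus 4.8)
The plan is to prove genericity is an open dense (equivalently, nonempty Zariski-open) condition by exhibiting a single matrix $\bz$ that works, then invoking irreducibility of the Grassmannian. Since each defining condition in \cref{def:generic-for-tree} is of the form ``a certain collection of vectors/subspaces is in general position,'' the locus where it holds is the complement of a vanishing locus of certain minors (built from the Plücker coordinates of $\bz$); so $\mcu_{G,\root}$ is automatically open, and the work is to show it is nonempty.

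First I would set up the recursion carefully. Orient all edges of $G$ toward $\root$ and process vertices from the leaves inward. For a vertex $x$, I claim that $V_x^{\rt=\root}$, viewed as a subvariety-valued function of $\bz$, is ``generically'' of the expected dimension $d_x$, and moreover that for generic $\bz$ the subspaces assigned to the children of any vertex are themselves in general position (sum is direct when the dimension count allows, intersection has the expected dimension otherwise). This is the standard fact that a generic sum of subspaces of prescribed dimensions has dimension $\min(\sum \dim, m)$ and a generic intersection has dimension $\max(\sum \dim - (p-1)m, 0)$; the content here is just that the ``generic'' conditions imposed at all vertices can be satisfied \emph{simultaneously} by a single $\bz$, and that each is a Zariski-open condition on $\bz$.

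The cleanest way to handle simultaneity is to build $\bz$ by induction on the tree, or alternatively to observe that each condition fails only on a proper closed subset and a finite intersection of nonempty Zariski-opens in the irreducible variety $\Gr_{m,n}$ is nonempty. Concretely: for each internal vertex $x$, define $U_x$ to be the set of $\bz$ such that $\dim V_y = d_y$ for all descendants $y$ of $x$ (including $x$). I would show by induction from the leaves that each $U_x$ is open and dense. The base case (boundary vertices $i$) requires only $z_i \neq 0$, which is open dense. For the inductive step at a white vertex $x$ with children $b_1,\dots,b_p$: assuming $\bz$ lies in $\bigcap_j U_{b_j}$, so each $V_{b_j}$ has dimension $d_{b_j}$, I need the sum $V_{b_1}+\dots+V_{b_p}$ to have dimension $\min(\sum_j d_{b_j}, m)$. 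Because the $V_{b_j}$ depend on disjoint sets of boundary columns (as $G$ is a tree, the subtrees below distinct $b_j$ share no leaves), their spans vary essentially independently as $\bz$ ranges over the stratum $\bigcap_j U_{b_j}$; a dimension count plus the irreducibility of that stratum gives that the sum has expected dimension on an open dense subset. The black-vertex case is dual, using that a generic intersection of subspaces of fixed dimensions has the expected dimension — this follows from the sum statement applied to orthogonal complements, or directly. Thus $\mcu_{G,\root} := U_{\root}$ (intersected over all internal vertices, which for a tree is just $U_\root$ once $\root$ is taken to be ``highest,'' but to be safe I would intersect the finitely many $U_x$) is open and dense.

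The main obstacle will be the black-vertex (intersection) step: unlike sums, transversality of intersections is not automatic from independence of the involved subspaces unless the dimension bound $\sum_j d_{w_j} - (p-1)m \ge 0$ is the relevant regime, and one must be careful that the \emph{expected} dimension is genuinely achieved rather than merely bounded above. I would address this by passing to orthogonal complements: $V_x = \bigcap_j V_{w_j}$ has $\dim V_x = m - \dim\big(\sum_j V_{w_j}^\perp\big)$, and $\dim \sum_j V_{w_j}^\perp = \min(\sum_j (m - d_{w_j}), m)$ generically by the sum case already established, giving $\dim V_x = \max(\sum_j d_{w_j} - (p-1)m, 0)$ as desired. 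The only remaining subtlety is ensuring the complements $V_{w_j}^\perp$ are themselves in sufficiently general position; but this again reduces to the independence of the boundary-column sets feeding into distinct $w_j$, which holds since $G$ is a tree. With that, the induction closes and the proposition follows.
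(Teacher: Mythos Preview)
Your proposal is correct and follows essentially the same approach as the paper: an induction from leaves to root, with the key observation that distinct children's subspaces depend on disjoint sets of boundary columns, so one can act by independent elements of $GL_m$ on those column blocks to place the subspaces in general position and thereby witness non-emptiness of each open condition. The paper packages the argument slightly differently---it uses the Grassmann--Cayley algebra to write down explicit polynomial expressions $F_x(\bz)$ (wedges for white vertices, shuffles for black) whose non-vanishing cuts out the open sets $\mcu_x$, and then proves non-vanishing by exactly the disjoint-column $GL_m$-action you describe---and it treats the black (intersection) case directly by the same trick rather than via orthogonal complements, but your dualization is a perfectly good alternative and the underlying mechanism is identical.
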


\begin{proof}
We will show, going from leaves to root, that for every vertex $x$ there is a Zariski-open dense subset $\mcu_{x}\subseteq\Gr_{m,n}$ such that $V_y$ has the expected dimension $ d_{y}$ for all $y$ which are descendants of $x$ (including $x$ itself). Then $\mcu_{G, \root}$ will be equal to $\mcu_{\root}$.

Consider $z_1,\ldots,z_n$ as elements of $\bigwedge \CC^m$. We will also represent the vector spaces $V_x$ by elements of $\bigwedge \CC^m$ which we will denote by $F_x(\bz)$. 

As long as $x$ and all of its descendants have the expected dimension, we can use \cref{lem:shuf-wedge-vs-cap-sum} to express $F_x$ algebraically using $\wedge$ and $\shuf$ as follows. We use the notation of \cref{def:generic-for-tree}, and also write $e_1,\ldots,e_m$ for the standard basis of $\mathbb{C}^m.$
\begin{itemize}
\item If $x=i$, $F_x=z_i.$
\item If $x$ is black and $ d_{x}=0$ then $F_x=1,$ which represents the trivial vector space. Otherwise
$F_x= F_{w_1} \shuf \dots \shuf F_{w_p}.$
\item If $x$ is white and $ d_{x}=d_{b_1} + \dots + d_{b_p}$, then $F_x=F_{b_1}\wedge F_{b_2}\wedge\cdots\wedge F_{b_p}.$ Otherwise, $F_x=e_1\wedge\ldots\wedge e_m,$ which represents the vector space $\mathbb{C}^m$. 
\end{itemize}
We will use induction on the number of descendants of $x$. If $x$ has no descendants, then $x=i,$ $ d_{x}=1$ and $V_x = \spn(z_i)$ has expected dimension on the Zariski-open dense subset $\mcu_x$ where $z_i \neq 0$.

Assume $x$ is white. The treatment in black vertices is almost the same, and we omit it.
We assume that the claim holds for $x$'s descendants $b_1,\ldots,b_p$ in the rooted tree, meaning that for each $i$ there is a Zariski open set $\mcu_{b_i}\in\Gr_{m,n}$ in which $\dim V_{y} =d_{y}$ for all descendants of $b_i$.

Assume first $d_x=d_{b_1}+\dots+  d_{b_p}.$
Write \[\mcu_x := \left\{\bz\in\Gr_{m,n}|\bigwedge_{i=1}^pF_{b_i}(\bz)\neq 0\right\}\cap\bigcap_{i=1}^p\mcu_{b_i}.\]
On this set, $\bigwedge_{i=1}^pF_{b_i}(\bz)=F_x$ represents $V_x$ by \cref{lem:shuf-wedge-vs-cap-sum} and $\dim V_x = d_x$. Also, for all descendants $y$ of $x$, $\dim V_y = d_y$.

We now show $\bigwedge_{i=1}^pF_{b_i}(\bz)$ is not identically $0$ as a function from $\Gr_{m,n}$ to $\GC(m)$, hence the complement of its zero locus is Zariski dense and $\mcu_x$ is open and dense. Pick vector spaces $W_1, \dots W_p$ so that $\dim W_i =d_i$ and
\[\dim (W_1+\dots +W_p)=  d_{x}.\]
We can find $\bz\in \Gr_{m,n}$ for which $V_{b_i}=W_i$ for $i=1,\ldots,p.$ First, let $L_i$ denote the leaves which are descendants of $b_i$, and let $\bz_{L_i}$ denote the submatrix using just columns $L_i$. If we act by $g \in GL_m$ just on $\bz_{L_i}$ and leave the remaining columns untouched to obtain $\bz'$, then $F_{b_i}(\bz')$ represents $g V_{b_i}(\bz)$ and $F_{b_j}(\bz')$ represents $V_{b_j}(\bz)$ for $i \neq j$.

Now, pick an arbitrary $\bz\in\bigcap_{i=1}^p\mcu_{b_i}.$ For $i=1,\ldots,p$, pick $g_i\in GL_m$ with $g_iV_{b_i}(\bz)=W_i.$ Then for each $i$, we replace $\bz_{L_i}$ with $g_i \bz_{L_i}$. Call the resulting element $\bz'$.  We have $V_{b_i}(\bz')=W_i$ and $W_i$ is represented by $F_{b_i} (\bz')$. For this $\bz'$, $\bigwedge_{i=1}^pF_{b_i}(\bz')$ is nonzero. If $\bz'$ is of full rank, we are done. Otherwise, note that for an open dense subset of $\mathbf{h}=(h_1,\ldots,h_p)\in GL_m^p$ it holds that $\dim(h_1W_1+\dots+h_p W_p)=d_x$ and $\mathbf{h}\cdot\bz'$, defined by replacing $\bz'_{L_i}$ for $i=1,\ldots,p$ in $h_i\bz'_{L_i}$, is of full rank. Hence the same argument shows that $\bigwedge_{i=1}^pF_{b_i}(\mathbf{h}\cdot\bz')\neq0.
$

The case $d_x =m <d_{b_1}+\dots+  d_{b_p}$ is proven similarly. In this case we put 
\[\mcu_x = \{\bz\in\Gr_{m,n}|F_{b_1} \shuf (F_{b_2} \wedge \dots \wedge F_{b_p}) \neq 0 \}\cap\bigcap_{i=1}^p\mcu_{b_i}.\]
For $\bz \in \mcu_x$, \cref{lem:basic-prop-of-shuf} implies that $V_x=V_{b_1} + \dots + V_{b_p} = \CC^m$ and so $V_x$ has expected dimension. A similar argument as above shows that $F_{b_1} \shuf (F_{b_2} \wedge \dots \wedge F_{b_p})$ is not identically zero on $\Gr_{m,n}$, so $\mcu_x$ is indeed open and dense.
\end{proof}

\begin{corollary}\label{cor:generic-for-tree-open-dense}
    Let $G$ be a bipartite plabic tree with boundary $[n]$ and black leaves. Then there is an open dense subset $\mcu_{G}$ of $\Gr_{m,n}$ such that $\bz \in \mcu_{{G}}$ is generic for $G$.
\end{corollary}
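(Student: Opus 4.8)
The plan is to reduce \cref{cor:generic-for-tree-open-dense} to \cref{prop:generic-for-tree-root-open-dense} by a finite intersection argument. Recall from \cref{def:generic-for-tree} that $\bz$ is \emph{generic for $G$} precisely when it is generic for $(G^{\ext},\root)$ for \emph{every} choice of internal vertex $\root$ of $G^{\ext}$. So it suffices to produce, for each such $\root$, an open dense subset of $\Gr_{m,n}$ whose points are generic for $(G^{\ext},\root)$, and then to intersect all of these.

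The first step is to verify that $G^{\ext}$ is itself a bipartite plabic tree with $\Bb=[n]$, so that \cref{prop:generic-for-tree-root-open-dense} applies to it. This is routine bookkeeping: inserting a bivalent white vertex and a bivalent black vertex next to each boundary vertex (as in \cref{fig:extension}) amounts to subdividing edges, which creates no cycles and introduces no new leaves, and it is compatible with the bipartite coloring (the boundary vertices are black, so the subdivided path near a boundary vertex reads black--white--black--white); moreover the boundary vertex set is unchanged. Hence $G^{\ext}$ is a bipartite plabic tree on the same boundary $[n]$, and for each internal vertex $\root$ of $G^{\ext}$, \cref{prop:generic-for-tree-root-open-dense} yields an open dense subset $\mcu_{G^{\ext},\root}\subseteq\Gr_{m,n}$ consisting of exactly those $\bz$ that are generic for $(G^{\ext},\root)$.

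Finally, since $G^{\ext}$ is a finite graph it has only finitely many internal vertices, so
\[
\mcu_G \;:=\; \bigcap_{\root\ \text{internal vertex of}\ G^{\ext}} \mcu_{G^{\ext},\root}
\]
is a finite intersection of open dense subsets of the irreducible variety $\Gr_{m,n}$, hence open and dense. By the definition of genericity for $G$ recalled above, every $\bz\in\mcu_G$ is generic for $G$, which is precisely the claim. I do not anticipate any genuine obstacle here; the only point requiring (mild) care is confirming that $G^{\ext}$ satisfies the hypotheses of \cref{prop:generic-for-tree-root-open-dense} — in particular that it remains a tree and remains bipartite after the vertex insertions — after which the finiteness of the index set does the rest.
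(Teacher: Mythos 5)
Your proposal is correct and is essentially identical to the paper's proof: both take the finite intersection $\bigcap_{\root} \mcu_{G^{\ext},\root}$ over all internal vertices $\root$ of $G^{\ext}$, invoke \cref{prop:generic-for-tree-root-open-dense} for each factor, and conclude by the definition of genericity for $G$ together with the fact that a finite intersection of open dense subsets of $\Gr_{m,n}$ is open and dense. Your extra check that $G^{\ext}$ remains a bipartite plabic tree is a reasonable (if implicit in the paper) piece of bookkeeping and does not change the argument.
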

\begin{proof}
Consider the intersection
\[\bigcap_{\root \text{ internal vertex of }G^\ext} \mcu_{G^\ext, \root}\]
where $\mcu_{G^\ext, \root}$ is the open dense set consisting of $\bz$ which are generic for $(G^\ext, \root)$, guaranteed to exist by \cref{prop:generic-for-tree-root-open-dense}. This intersection consists of $\bz$ that are generic for $G$, by definition, and is a finite intersection of open dense subsets, so is open and dense.
\end{proof}

We now turn to $(k,m)$-amplitrees, and show that in this case, the dimensions of the subspaces $V_x^{\rt=\root}$ for generic $\bz$ can be read off of the tree. We will use this to produce VRCs for amplitrees using the Grassmann-Cayley algebra.

\begin{lemma}\label{lem:dim-from-graph-stat}
    Let $G$ be a $(k,m)$-amplitree, $\root$ an internal vertex, and let $\bz \in \Gr_{m,n}$ be generic for $(G, \root)$. For a vertex $x\neq \root$, let $e$ be the edge from $x$ to its parent, and let $G'$ be the connected component of $G \setminus e$ containing $x$. Then
    \begin{equation}\label{eq:dim-from-graph-stat}
    \dim V_x=|\Bb \cap G'| - m \sum_{b \in \Bi \cap G'} (\deg b-2).\end{equation}
    Further, $1 \leq \dim V_x \leq m$ and
    \begin{itemize}
      \item if $x$ is white with children $b_1, \dots, b_p$, then $\dim V_x = \dim V_{b_1} + \dots + \dim V_{b_p}$.
        \item if $x$ is black with children $w_1, \dots, w_p$, then $\dim V_x = \dim V_{w_1} + \dots + \dim V_{w_p} -(p-1)m$.
    \end{itemize}
\end{lemma}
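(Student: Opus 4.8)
The strategy is to prove the displayed formula \eqref{eq:dim-from-graph-stat} by induction on vertices of $G$, going from leaves to root (exactly the direction in which $V_x$ is defined), and to extract the remaining two bullet points and the bound $1 \le \dim V_x \le m$ as corollaries of the formula together with the $m$-balanced property. The key point to set up first is a bookkeeping identity: for an internal vertex $x$ with children $y_1, \dots, y_p$ (where $y_i$ lies in component $G'_i$ of $G \setminus e$ with $e$ the edge from $x$ to its parent, and $G'$ is the component containing $x$), we have the partition $\Bb \cap G' = \bigsqcup_i (\Bb \cap G'_i)$ and $\Bi \cap G' = \{x\} \sqcup \bigsqcup_i (\Bi \cap G'_i)$ (the last union holding when $x$ is internal). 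So if we abbreviate $s(G'):= |\Bb \cap G'| - m\sum_{b \in \Bi \cap G'}(\deg b - 2)$, then
\[ s(G') = \sum_{i=1}^p s(G'_i) - m(\deg x - 2)\cdot[x\text{ internal}].\]
When $x$ is white, $\deg x = p+1$, so $\deg x - 2 = p-1$ and $s(G') = \sum_i s(G'_i) - m(p-1)$; when $x$ is black, the same holds. These match the ``expected dimension'' recursions in \cref{def:generic-for-tree} provided the $\min$/$\max$ never truncate — which is precisely what the $m$-balanced hypothesis buys us.

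\textbf{The induction.} In the base case $x = i$ is a boundary vertex: $G'$ is a single vertex, $s(G') = 1$, and indeed $\dim V_i = \dim \spn(z_i) = 1$ since $\bz$ is generic. For the inductive step, suppose $x$ is internal with children $y_1, \dots, y_p$ and that \eqref{eq:dim-from-graph-stat} holds for each $y_j$, so $\dim V_{y_j} = s(G'_j)$. First I claim each $1 \le s(G'_j) \le m$: this is exactly \cref{prop:dimsig} (rewriting the $m$-balanced inequality \eqref{eq:mbalanced}) applied to the edge from $x$ to $y_j$ — here we use that $G$ is an amplitree, i.e.\ $m$-balanced. Now split into cases. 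If $x$ is white with children $b_1, \dots, b_p$, then by definition of genericity $\dim V_x = \min(\sum_j \dim V_{b_j},\, m)$; but $\sum_j \dim V_{b_j} = \sum_j s(G'_j) = s(G') + m(p-1)$, and I must check $s(G') + m(p-1) \le m$ fails to truncate — equivalently $s(G') \le m$, which holds because $G'$ corresponds to (one side of) the edge from $x$ to its parent and the $m$-balanced inequality \eqref{eq:mbalanced} applies to that edge too (if $x = \root$ there is no parent edge, but then $\root$ is not among the vertices ``$x \ne \root$'' covered by the statement, so we may assume $x \ne \root$). Wait — I need $s(G') \le m$, and \cref{prop:dimsig} gives exactly $1 \le s(G') \le m$ for the component $G'$ on one side of the edge $e$ from $x$ to its parent. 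Hence $\dim V_x = \sum_j s(G'_j) = \sum_j \dim V_{b_j} = s(G')$, giving both the formula and the white bullet point. If $x$ is black with children $w_1, \dots, w_p$, then $\dim V_x = \max(\sum_j \dim V_{w_j} - (p-1)m,\, 0) = \max(s(G'), 0) = s(G')$ since $s(G') \ge 1$; this gives the formula and the black bullet point. In both cases $1 \le \dim V_x \le m$ by \cref{prop:dimsig} again.

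\textbf{Main obstacle.} I expect the only real subtlety is making sure that genericity of $\bz$ is actually \emph{used correctly}: the recursion in \cref{def:generic-for-tree} defines $\dim V_x$ via the $\min$/$\max$ formulas only on the open dense locus $\mcu_{G,\root}$ from \cref{prop:generic-for-tree-root-open-dense}, and one must confirm that the inductive hypothesis ``$\dim V_{y_j}$ equals its expected dimension'' is available — this is immediate since $\bz$ generic for $(G,\root)$ means \emph{every} vertex has expected dimension simultaneously. The other place to be careful is the edge case $p=1$ (a bivalent internal vertex), but then $\deg x - 2 = 0$ and all formulas degenerate harmlessly to $\dim V_x = \dim V_{y_1} = s(G'_1) = s(G')$. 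There is also the degenerate possibility that $x$'s only relevant data is as a leaf's parent; this is subsumed in the base of the induction. So the proof is essentially a careful unwinding of definitions plus one invocation of \cref{prop:dimsig} per case to rule out truncation — no genuinely hard step, but the indexing between $G'$, $G'_j$, and the $\deg$-sums must be tracked precisely.
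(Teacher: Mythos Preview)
Your approach is exactly the paper's: induct from leaves to root, invoke the $m$-balanced inequality (via \cref{prop:dimsig}) at each step to rule out the $\min$/$\max$ truncation, and read off the bulleted equalities as by-products. However, your bookkeeping identity is wrong, and this corrupts the white case.

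The quantity $s(G') = |\Bb \cap G'| - m\sum_{b \in \Bi \cap G'}(\deg b - 2)$ sums only over \emph{black} internal vertices. Hence when $x$ is white, $x \notin \Bi$, so $\Bi \cap G' = \bigsqcup_j (\Bi \cap G'_j)$ and therefore $s(G') = \sum_j s(G'_j)$, with \emph{no} subtraction of $m(p-1)$. Your indicator should be $[x \in \Bi]$, not $[x\text{ internal}]$. With the erroneous identity you wrote, the white case becomes internally inconsistent: you claim $\sum_j s(G'_j) = s(G') + m(p-1)$, then assert that ``$s(G') + m(p-1) \le m$'' is equivalent to ``$s(G') \le m$'' (it is not, unless $p=1$), and finally conclude $\dim V_x = \sum_j s(G'_j) = s(G')$, which contradicts your own intermediate formula.

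Once you correct the white-vertex bookkeeping to $s(G') = \sum_j s(G'_j)$, the white case becomes: $\sum_j \dim V_{b_j} = s(G') \le m$ by $m$-balancedness, so the $\min$ does not truncate and $\dim V_x = s(G')$. Your black case is already correct. With this single fix, your argument matches the paper's proof.
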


\begin{proof}
We proceed from leaves to root. Equation \eqref{eq:dim-from-graph-stat} is true for boundary vertices $x=i$, since in that case $\dim V_i =1$ and $G'$ consists of just the boundary vertex $i$.

Now, assume \eqref{eq:dim-from-graph-stat} holds for the children $u_1, \dots, u_p$ of $x$. Say that, if you remove the edge from $u_i$ to $x$ from $G$, the connected component $G'_i$ containing $u_i$ has $L_i$ leaves and 
\[\sum_{b \in \Bi \cap G'_i} (\deg b-2 )=:D_i.\]
We have 
\[\dim V_{u_1} + \dots + \dim V_{u_p}= L_1 + \dots + L_p -m (D_1 + \dots + D_p).\]

Suppose $x$ is a white vertex. Then $V_x = V_{u_1} + \dots + V_{u_p}$. Since $\bz$ is generic for $(G, \root)$, we have 
\[\dim V_x = \min(\dim V_{u_1} + \dots + \dim V_{u_p}, m) =\min(L_1 + \dots + L_p -m (D_1 + \dots + D_p), m) .\]
    Now, since $x$ is white, we have 
    \[L_1 + \dots + L_p -m (D_1 + \dots + D_p)=|\Bb \cap G'| - m \sum_{b \in \Bi \cap G'}(\deg b-2) \leq m \]
    where the final inequality is because $G$ is $m$-balanced.
This implies 
\[\dim V_x =L_1 + \dots + L_p -m (D_1 + \dots + D_p) = |\Bb \cap G'| - m \sum_{b \in \Bi \cap G'}(\deg b-2) \]
as desired.

If instead $x$ is black, the argument is very similar. We have $V_x = V_{u_1} \cap \dots \cap V_{u_p}$ and because of genericity, 
\[\dim V_x = \max (0, \dim V_{u_1} + \dots + \dim V_{u_p} - (p-1)m).\]
We have 
\begin{align*}\dim V_{u_1} + \dots + \dim V_{u_p} - (p-1)m &= L_1 + \dots + L_p -m (D_1 + \dots + D_p) - m(\deg x - 2) \\
&= |\Bb \cap G'| - m \sum_{b \in \Bi \cap G'}(\deg b-2) \ge 1 \end{align*}
where the last inequality is because $G$ is $m$-balanced. So again, \eqref{eq:dim-from-graph-stat} holds.

The argument above also shows the bound $1 \leq \dim V_x \leq m$ and the equalities
$ \dim V_x = \sum_{i=1}^p\dim V_{u_i} $ if $x$ is white and 
$ \dim V_x = \sum_{i=1}^p\dim V_{u_i}-(p-1)m$
if $x$ is black.
\end{proof}

\cref{lem:dim-from-graph-stat} gives the following corollary.

\begin{corollary}\label{cor:direct-sum-white-children}
   Let $G$ be a $(k,m)$-amplitree, $\root$ an internal vertex, and let $\bz \in \Gr_{m,n}$ be generic for $(G, \root)$. If $x \neq r$ is white and has children $b_1, \dots, b_p$, then $V_{b_1} + \dots + V_{b_p} = V_{b_1} \oplus \dots \oplus V_{b_p}$.
\end{corollary}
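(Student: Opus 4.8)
This corollary is an immediate consequence of the dimension bookkeeping already established in \cref{lem:dim-from-graph-stat}, so the proof will be short and the ``hard part'' is essentially nil. The plan is to unwind the definitions and then apply one standard linear-algebra fact.

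First I would recall from \cref{def:subspaces-on-vertices} that, since $x$ is a white vertex with children $b_1, \dots, b_p$, by construction $V_x = V_{b_1} + \dots + V_{b_p}$ (all subspaces being taken with respect to the fixed root $\root$ and the fixed generic $\bz$). Next, since $x \neq \root$ and $x$ is white, the hypotheses of \cref{lem:dim-from-graph-stat} are met — in particular $\bz$ is generic for $(G,\root)$ as assumed — so the ``Further'' clause of that lemma gives $\dim V_x = \dim V_{b_1} + \dots + \dim V_{b_p}$. Combining these two facts yields
\[
\dim(V_{b_1} + \dots + V_{b_p}) = \dim V_{b_1} + \dots + \dim V_{b_p}.
\]

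Finally I would invoke the elementary fact that, for subspaces $U_1, \dots, U_p$ of a finite-dimensional vector space, the sum $U_1 + \dots + U_p$ is direct if and only if $\dim(U_1 + \dots + U_p) = \sum_{i=1}^p \dim U_i$ (one direction is the inclusion–exclusion inequality $\dim \sum_i U_i \le \sum_i \dim U_i$, and equality forces each intersection $U_i \cap \sum_{j\neq i} U_j$ to be trivial, which is the definition of a direct sum). Applying this with $U_i = V_{b_i}$ gives $V_{b_1} + \dots + V_{b_p} = V_{b_1} \oplus \dots \oplus V_{b_p}$, as desired. The only point requiring any care is checking that the hypotheses of \cref{lem:dim-from-graph-stat} apply verbatim, which they do since $G$ is a $(k,m)$-amplitree, $x \neq \root$, and $\bz$ is generic for $(G,\root)$.
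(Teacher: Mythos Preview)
Your proposal is correct and is exactly the argument the paper intends: the corollary is stated immediately after \cref{lem:dim-from-graph-stat} with no separate proof, and your derivation of the dimension equality $\dim(V_{b_1}+\dots+V_{b_p})=\sum_i \dim V_{b_i}$ from that lemma, followed by the standard linear-algebra characterization of directness, is the intended reasoning.
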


The next lemma shows that for amplitrees, there is only one choice (up to gauge) for the vectors in a VRC.

\begin{lemma} \label{lem:dim-root-subspace}
    Let $G$ be a $(k,m)$-amplitree, let $\root$ be an internal black vertex, and let $\bz \in \Gr_{m,n}$ be generic for $(G, \root)$. Then the subspace $V_\root$ has dimension $1$.
\end{lemma}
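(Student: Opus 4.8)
The plan is to apply the dimension formula \eqref{eq:dim-from-graph-stat} of \cref{lem:dim-from-graph-stat}, but since that lemma is stated for vertices $x \neq \root$, I would first reduce to that case by a suitable choice of auxiliary root. Concretely, $\root$ is an internal black vertex, hence trivalent after bipartite moves; let $w_1, \dots, w_p$ be its white neighbors. Since $G$ is a tree, removing $\root$ splits $G$ into connected components $G_1, \dots, G_p$, where $G_i$ contains $w_i$. For each $i$, pick any internal vertex $\root_i$ of $G_i$ (or of $G_i^{\ext}$) that is \emph{not} $w_i$ and lies ``above'' $w_i$ when $G_i$ is rooted appropriately — more carefully, re-root the whole tree $G$ at some vertex inside one of the $G_i$, say at a leaf-adjacent vertex of $G^{\ext}$, so that $\root$ is no longer the root and each $w_i$ becomes a child of $\root$ in all but one branch. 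The point is that genericity of $\bz$ for $G$ (\cref{cor:generic-for-tree-open-dense}) means $\bz$ is generic for $(G^{\ext}, \root')$ for \emph{every} internal vertex $\root'$, so I am free to compute $V_\root$ with respect to any convenient root.

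With such a root chosen, $\root$ becomes an internal black vertex with children among $\{w_1, \dots, w_p\}$ — say all but one of them, the remaining one being its parent — and I can invoke the second bullet of \cref{lem:dim-from-graph-stat}: $\dim V_\root = \sum \dim V_{w_i} - (p-1)m$ where the sum is over the children. Even more directly, I would use the explicit formula \eqref{eq:dim-from-graph-stat}: with $e$ the edge from $\root$ to its parent and $G'$ the component of $G \setminus e$ containing $\root$, we get
\[
\dim V_\root = |\Bb \cap G'| - m \sum_{b \in \Bi \cap G'}(\deg b - 2).
\]
Now I apply the defining $m$-balanced inequality \eqref{eq:mbalanced} to the edge $e$: writing $G \setminus \{e\} = G' \sqcup G''$, the inequality for $G'$ gives $1 \le \dim V_\root \le m$. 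To pin it down to exactly $1$, I would instead invoke the inequality for the complementary piece $G''$ (which contains $\root$'s parent). Since $G$ has type $(k, km+1)$ and $|\Bb| = km+1$, and using \cref{lem:facts}(2) relating $k$ to the degree sum, the quantities for $G'$ and $G''$ must sum to $km+1 - m(k-1) = m+1$ (the ``$+1$'' coming from the single edge $e$ shared, giving half to each — I would track this bookkeeping carefully using \cref{prop:dimsig}). Combined with $1 \le (\text{value for } G'') \le m$, this forces the value for $G'$, namely $\dim V_\root$, to equal $1$.

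The main obstacle I anticipate is the careful bookkeeping in the last step: making precise that the two ``half-edge'' statistics for $G'$ and $G''$ sum to $m+1$ rather than some other constant, and ensuring the black-vertex degree contributions are attributed correctly when $e$ is incident to $\root$ (a black vertex) versus a white vertex on the other side. This is essentially the content of \cref{prop:dimsig} and \cref{lem:facts}(1)--(2), so I expect it to go through, but it requires being scrupulous about whether $\root$ itself contributes a $(\deg \root - 2)$ term to the $G'$ side and about the edge-splitting convention. An alternative, cleaner route avoiding this arithmetic: directly combine the two inequalities \eqref{eq:mbalanced} for $e$ — the one for $G_1 = G'$ gives $\dim V_\root \le m$, and I claim that because $\root$ is \emph{black} and internal, a strict-inequality or parity consideration (via the $-(p-1)m$ correction in \cref{lem:dim-from-graph-stat} and the constraint $\dim V_{w_i} \le m$) squeezes $\dim V_\root$ down; I would flesh out whichever of these two arguments is shorter once the degree bookkeeping is laid out explicitly.
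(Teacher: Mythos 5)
There is a genuine gap, and it comes in two places. First, the subspace $V_\root$ in the lemma is $V_\root^{\rt=\root}$, i.e.\ the intersection $\bigcap_{i=1}^{p} V_{w_i}$ over \emph{all} $p=\deg\root$ neighbors of $\root$, since when $\root$ is the root every neighbor is a child. Your re-rooting step changes this object: with a different root $\root'$, the vertex $\root$ acquires a parent, and $V_\root^{\rt=\root'}$ is the intersection over only the $p-1$ children, omitting the branch through the parent. That is a different (generally larger) subspace, so formula \eqref{eq:dim-from-graph-stat} applied to $\root$ as a non-root vertex computes the wrong thing — it gives $|\Bb\cap G'| - m\sum_{b\in\Bi\cap G'}(\deg b-2)$ for the component $G'$ containing $\root$, which the $m$-balanced condition only bounds between $1$ and $m$. (Your correction term $-(p-1)m$ for the re-rooted vertex is also off by one: a vertex with $p-1$ children gets $-(p-2)m$.) Second, the concluding arithmetic does not close: the two half-edge statistics for $G'$ and $G''$ do indeed sum to $m+1$, but combining $a+b=m+1$ with $1\le b\le m$ only yields $1\le a\le m$, not $a=1$. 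So neither route you sketch pins the dimension to $1$.

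The fix is to \emph{not} re-root: keep $\root$ as the root, so that all $p$ neighbors $u_1,\dots,u_p$ are children, and use the genericity condition at the black vertex $\root$ directly, namely $\dim V_\root=\max\bigl(0,\ \sum_i\dim V_{u_i}-(p-1)m\bigr)$. Each $u_i\ne\root$, so \cref{lem:dim-from-graph-stat} gives $\dim V_{u_i}=L_i-mD_i$ in your notation, and summing over all children picks up every boundary vertex and every internal black vertex other than $\root$ exactly once. Writing $-(p-1)m=-m(\deg\root-2)-m$, the total becomes $|\Bb|-m\sum_{b\in\Bi}(\deg b-2)-m=(km+1)-m(k-1)-m=1$ by \cref{lem:facts}(2). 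This is exactly the paper's proof; the $m$-balanced inequalities enter only through \cref{lem:dim-from-graph-stat} for the children, not through a squeeze at the edge $e$.
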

\begin{proof}
    Similar to the proof of \cref{lem:dim-from-graph-stat}, say the children of $\root$ are $u_1, \dots, u_p$ and let $L_i, D_i$ be as in that proof. We have that 
    \[\dim V_\root = \max(0, \dim V_{u_1} + \dots + \dim V_{u_p} - (p-1)m).\]
    Using \cref{lem:dim-from-graph-stat}, 
 \begin{align*}\dim V_{u_1} + \dots + \dim V_{u_p} - (r-1)m &= L_1 + \dots + L_p -m (D_1 + \dots + D_p) - m(\deg x - 2 +1) \\
&= |\Bb| - m \sum_{b \in \Bi}(\deg b-2) -m\\
&= km+1 - m(k-1) -m = 1 \end{align*}
where the second-to-last equality uses \cref{lem:facts} (2) and the fact that a $(k,m)$-amplitree by definition has $km+1$ boundary vertices.
\end{proof}

With this information about the subspaces $V_x$, we can give an explicit $m$-VRC on amplitree $G$ with boundary $\bz$. The first step is to define elements of the Grassmann-Cayley algebra which will represent the subspaces $V_x$.

\begin{definition}\label{def:vec-in-VRC-from-GC}
    Let $G$ be a $(k,m)$-amplitree. Choose $\root \in \Bi \cup W$. Orient all edges towards $\root$. Moving from leaves to $\root$, define a function $F^{\rt=\root}_x: \Mat_{m,n} \to \GC(m)$ for each vertex by the following formulas:
    \begin{itemize}
        \item if $x$ is a boundary vertex $i$, $F^{\rt=\root}_x (\bz):=z_i$;
        \item if $x$ is white with children $b_1, \dots, b_p$ reading clockwise from the parent of $x$, $F^{\rt=\root}_x(\bz):= \bigwedge_{i=1}^{p} F_{b_i}(\bz) $;
        \item if $x$ is black with children $w_1, \dots, w_p$ reading clockwise from the parent of $x$, $F^{\rt=\root}_x(\bz):= F_{w_1}(\bz) \shuf \dots \shuf F_{w_p}(\bz)$.
    \end{itemize}
\end{definition}

As usual, we drop ``$\rt = \root$" if the choice of root is clear. We note changing the ordering of the children of $x$ will change the sign of $F_x(\bz)$ but has no other effect.

\begin{lemma}\label{lem:GC-elts-rep-subspaces}
Let $G$ be a $(k,m)$-amplitree and $\root \in W \cup \Bi$ be an internal vertex. If $\bz \in \Mat_{m,n}$ is generic for $(G, \root)$, then $F_x(\bz)$ represents the subspace $V_x$. 

In particular, if $\root\in \Bi$, then $F_\root(\bz)$ is a nonzero vector which can be expressed as a linear combination of $\bz$'s columns, whose coefficients are polynomials in the Pl\"ucker coordinates of $\bz$.
\end{lemma}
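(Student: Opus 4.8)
The plan is to prove the assertion ``$F_x(\bz)$ represents $V_x$'' by induction on the vertices $x$ of $G$, processing them from the leaves toward $\root$ (with all edges oriented toward $\root$ as in \cref{def:vec-in-VRC-from-GC}), and then to read off the remaining statements. For the base case $x = i \in \Bb$: since $\bz$ is generic for $(G,\root)$ we have $z_i \neq 0$, so $F_i(\bz) = z_i$ is a nonzero decomposable element representing $V_i = \spn(z_i)$.

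For the inductive step, fix an internal vertex $x \neq \root$ and assume $F_y(\bz)$ represents $V_y$ for every child $y$ of $x$; in particular each such $F_y(\bz)$ is a nonzero decomposable element of $\GC(m)$ with $\overline{F_y(\bz)} = V_y$. If $x$ is white with children $b_1,\dots,b_p$, then by \cref{lem:dim-from-graph-stat} (equivalently \cref{cor:direct-sum-white-children}) the sum $V_{b_1}+\dots+V_{b_p}$ is direct, i.e.\ $\dim(V_{b_1}+\dots+V_{b_p}) = \sum_i \dim V_{b_i} = \sum_i \dim \overline{F_{b_i}(\bz)}$, so the first bullet of \cref{lem:shuf-wedge-vs-cap-sum} shows $F_x(\bz) = F_{b_1}(\bz)\wedge\dots\wedge F_{b_p}(\bz)$ is nonzero and represents $V_{b_1}+\dots+V_{b_p} = V_x$. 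If $x$ is black with children $w_1,\dots,w_p$, then by \cref{lem:dim-from-graph-stat} we have $\dim(V_{w_1}\cap\dots\cap V_{w_p}) = \big(\sum_i \dim V_{w_i}\big) - (p-1)m \geq 1$, so the second bullet of \cref{lem:shuf-wedge-vs-cap-sum} shows $F_x(\bz) = F_{w_1}(\bz)\shuf\dots\shuf F_{w_p}(\bz)$ is a nonzero decomposable element representing $V_{w_1}\cap\dots\cap V_{w_p} = V_x$. This proves the claim for all $x \neq \root$. When $\root \in \Bi$, the same argument at $\root$, using \cref{lem:dim-root-subspace} to see that $\dim(V_{w_1}\cap\dots\cap V_{w_p}) = \dim V_\root = 1$ for the children $w_1,\dots,w_p$ of $\root$, shows that $F_\root(\bz)$ represents the line $V_\root$ and hence is a nonzero vector. (When $\root \in W$ the vertex $\root$ itself is an exception: the dimensions of its children's subspaces sum to $m+1$, so $F_\root(\bz) = 0$; this is harmless and is why the ``in particular'' is stated only for $\root \in \Bi$.)

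It remains to check that the coefficients in the ``in particular'' are polynomials in the Pl\"ucker coordinates of $\bz$. I would strengthen the induction above, showing simultaneously that each $F_x(\bz)$ is a linear combination, with coefficients in $\CC[\lr{J}_\bz : J \in \binom{[n]}{m}]$, of the wedges $z_{j_1}\wedge\dots\wedge z_{j_{d_x}}$ over all $j_1 < \dots < j_{d_x}$ among the boundary leaves below $x$. This holds trivially at boundary vertices; it is preserved at a white vertex because $\wedge$ is multilinear, so the new coefficients are products of old ones; and it is preserved at a black vertex because, expanding a shuffle $z_J \shuf z_K$ by the definition of $\shuf$, every coefficient produced is, up to sign, a determinant $\lr{z_{\ell_1}\cdots z_{\ell_m}}$ of $m$ columns of $\bz$ --- i.e.\ a maximal minor, hence a Pl\"ucker coordinate of $\bz$ --- and the term that survives is again a wedge of columns of $\bz$. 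Specializing to $x = \root \in \Bi$, where $d_\root = 1$, gives $F_\root(\bz) = \sum_i c_i\, z_i$ with each $c_i$ a polynomial in the Pl\"ucker coordinates of $\bz$.

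The only nontrivial point --- and hence the main obstacle --- is the dimension bookkeeping in the inductive step: one must be sure that for $\bz$ generic for $(G,\root)$ the recursively built subspaces $V_x$ genuinely attain the combinatorially predicted dimensions, so that \cref{lem:shuf-wedge-vs-cap-sum} applies at every vertex and $F_x(\bz)$ never collapses to a scalar at an intermediate black vertex or to $0$ at an intermediate white vertex. All of this is exactly what the amplitree hypothesis buys us, via \cref{lem:dim-from-graph-stat}, \cref{cor:direct-sum-white-children} and \cref{lem:dim-root-subspace}; once those dimension statements are in hand the rest is a routine induction.
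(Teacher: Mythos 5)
Your proof is correct and follows essentially the same route as the paper's: a leaves-to-root induction in which \cref{lem:dim-from-graph-stat} supplies the dimension hypotheses needed to apply \cref{lem:shuf-wedge-vs-cap-sum} at each vertex, with \cref{lem:dim-root-subspace} handling the root. Your strengthened induction for the polynomial-coefficient claim just makes explicit what the paper compresses into ``expanding out the shuffles and wedges,'' and your remark about the white-root edge case is a reasonable (and careful) reading of the statement.
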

\begin{proof}
For the first statement, we move from leaves to root. For $x$ a vertex with children $u_1, \dots, u_p$, assume $F_{u_i}$ represents $V_{u_i}$. \cref{lem:dim-from-graph-stat} shows that the hypotheses of \cref{lem:shuf-wedge-vs-cap-sum} hold for the $F_{u_i}$. Applying \cref{lem:shuf-wedge-vs-cap-sum} shows that $F_x$ represents $V_x.$

The second statement follows from \cref{lem:dim-root-subspace} and expanding out the shuffles and wedges.
\end{proof}

The vectors $F^{\rt=b}_b(\bz)$ will be the vectors of the VRC. To define the coefficients, we need the following lemma.

\begin{lemma}\label{lem:bdry-edge-labels}
Let $\bz \in \Mat_{m,n}$ be generic for $(k,m)$-amplitree $G$ and let $i \in \Bb$. Suppose the adjacent white vertex is $w$, and $w$ has neighbors $i, b_1, \dots, b_p$. Choose $w$ as the root. Then $\bigwedge_{j=1}^p F_{b_j}(\bz) \in \bigwedge^m \CC^m \setminus \{0\}$ and $\lr{\bigwedge_{j=1}^p F_{b_j}(\bz)} \in \CC^*$.
\end{lemma}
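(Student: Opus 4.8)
The plan is to show that $A := \bigwedge_{j=1}^p F_{b_j}(\bz)$ represents all of $\CC^m$. Since $\bigwedge^m \CC^m$ is one-dimensional, this yields both assertions at once: $A$ is then a nonzero element of $\bigwedge^m\CC^m$, and by \cref{not:brackets} the bracket $\lr{A}$ is the corresponding determinant scalar, which is nonzero, i.e. lies in $\CC^*$.

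First I would record that, with $w$ taken as the root, the neighbors $i, b_1, \dots, b_p$ of $w$ are exactly its children, so each $b_j$ is a non-root vertex and $F_{b_j}(\bz)$ is defined by \cref{def:vec-in-VRC-from-GC}. Since $\bz$ is generic for $G$ it is generic for $(G,w)$, and $w$ is an internal white vertex, so by \cref{lem:GC-elts-rep-subspaces} each $F_{b_j}(\bz)$ represents the subspace $V_{b_j} := V_{b_j}^{\rt=w}$. By \cref{lem:shuf-wedge-vs-cap-sum}, $A$ represents $V_{b_1} + \dots + V_{b_p}$ when this sum is direct and is $0$ otherwise, and a direct sum of dimension $m$ is all of $\CC^m$. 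So it suffices to prove $V_{b_1}\oplus\dots\oplus V_{b_p} = \CC^m$, which I will do by checking $\sum_{j=1}^p \dim V_{b_j} = m$ and $\dim(V_{b_1}+\dots+V_{b_p}) = m$ separately.

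For the first identity, apply \cref{lem:dim-from-graph-stat} to $G$ rooted at $w$: for each $j$, $\dim V_{b_j}$ equals $|\Bb \cap G_j'| - m\sum_{b\in\Bi\cap G_j'}(\deg b - 2)$, where $G_j'$ is the component of $G\setminus\{b_j w\}$ containing $b_j$. The vertex sets $\{i\}, G_1', \dots, G_p'$ partition $V(G)\setminus\{w\}$; since $i \in \Bb$ and $w \notin \Bb\cup\Bi$, summing over $j$ and using $|\Bb| = km+1$ together with $\sum_{b\in\Bi}(\deg b-2) = k-1$ from \cref{lem:facts} gives $\sum_j \dim V_{b_j} = km - m(k-1) = m$. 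For the second identity I pass to $G^\ext$. By hypothesis $\bz$ is generic for $(G^\ext,\root)$ for every internal vertex $\root$ of $G^\ext$; choose $\root_0$ to be a suitable added internal vertex of $G^\ext$ near $i$, so that in $G^\ext$ rooted at $\root_0$ the vertex $w$ has children exactly $b_1,\dots,b_p$ (its former child $i$ now lies toward the root). Attaching bivalent vertices at boundary vertices leaves the subspaces on all other vertices unchanged, so $V_{b_j}^{\rt=\root_0}$ in $G^\ext$ equals $V_{b_j}$ above; hence the genericity condition at the white vertex $w$ in $G^\ext$ reads $\dim(V_{b_1}+\dots+V_{b_p}) = \min\bigl(\textstyle\sum_j \dim V_{b_j},\, m\bigr) = \min(m,m) = m$. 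Combining the two identities shows the sum is direct and equals $\CC^m$, and the lemma follows as explained.

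The main obstacle is the direct-sum (equivalently full-rank) statement. The genericity conditions for $G$ rooted at $w$ only yield $\dim(V_i + V_{b_1}+\dots+V_{b_p}) = m$, which does not pin down $\dim(V_{b_1}+\dots+V_{b_p})$; the fix is precisely the passage to $G^\ext$ and rerooting on the $i$-side of $w$, which supplies the extra dimension condition that the added bivalent vertices of $G^\ext$ are designed to provide (as flagged after \cref{def:generic-for-tree}). Everything else — the degree-sum identity of \cref{lem:facts}, the partition of $V(G)\setminus\{w\}$, and the invariance of the $V_{b_j}$ under the extension — is routine bookkeeping.
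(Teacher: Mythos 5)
Your proof is correct and follows essentially the same route as the paper's: the key step in both is passing to $G^{\ext}$ and rerooting at an added vertex on the $i$-side of $w$, so that the genericity condition at $w$ (equivalently, \eqref{eq:dim-from-graph-stat} applied to $w$) forces $\bigwedge_{j} F_{b_j}(\bz)$ to represent an $m$-dimensional space. The paper just applies \eqref{eq:dim-from-graph-stat} to $F_w^{\rt=b}=\bigwedge_j F_{b_j}$ in one step, whereas you split it into the two dimension counts; this is only a bookkeeping difference.
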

\begin{proof}
Consider $G^{\ext}$ and choose as the root the black vertex $b$ placed between $w$ and $i$. Then $F^{\rt = b}_{b_j}(\bz) =F^{\rt = w}_{b_j}(\bz) = F_{b_j}(\bz).$ We also have that $F_w^{\rt=b}(\bz)$ is equal to $\bigwedge_{j=1}^p F_{b_j}(\bz)$. Because $\bz$ is generic, the dimension of the subspace represented by $F_w^{\rt=b}(\bz)$ is given by \eqref{eq:dim-from-graph-stat}, which is equal to $m$ in this case. This implies that $F_w^{\rt=b}(\bz) \in \bigwedge^m \CC^m$ and is nonzero. By definition (see \cref{not:brackets}), $\lr{\bigwedge_{j=1}^p F_{b_j}(\bz)}$ is a scalar, and is nonzero because the wedge is nonzero.
\end{proof}

\begin{definition}\label{def:coeffs-in-VRC-from-GC}
Let $G$ be a $(k,m)$-amplitree and let $\bz \in \Mat_{m,n}$. For an edge $e$ of $G$, define $f_e(\bz):= 1$ if $e$ is not adjacent to a boundary vertex. If $e$ is adjacent to boundary vertex $i$ and white vertex $w$, use $w$ as the root and define 
\[f_e(\bz):= \langle\bigwedge_{j=1}^p F_{b_j}(\bz) \rangle\]
where $b_1, \dots, b_p$ are the other neighbors of $w$, reading clockwise from $i$. If $w$ has degree at least three, then 
\[f_e(\bz)= F_{b_1} \shuf \left(\bigwedge_{j=2}^p F_{b_j}(\bz) \right).\]
\end{definition}

\begin{remark} Let $G$ be a $(k,m)$-amplitree. Suppose $i \in \Bb$ is adjacent to a white vertex $w$, and $\deg(w) \ge 3$.
Let $T_i$ be $G$ with the edge $e=(i,w)$ removed. Then $T_i$ is an $s\ell_m$-\emph{tensor diagram} \cite{CKM, FLL} and the associated tensor invariant is, up to sign, equal to $f_e(\bz)$. If $\deg(w)=2$, then $f_e(\bz)$ factors into a product of tensor invariants, whose corresponding tensor diagrams are smaller subtrees of $G$.
\end{remark}

The next proposition shows that the functions $F^{\rt=b}_b(\bz)$ and $f_e(\bz)$ give us (up to sign) the vectors and relations of a VRC with boundary $\bz$.

\begin{proposition} \label{prop:amplitree-VRC-rep-up-to-sign}
Suppose $\bz = [z_1 \dots z_n] \in \Mat_{m,n}$ is generic for $(k,m)$-amplitree $G$. For $i \in \Bb$, set $v_i := z_i$. For $b \in \Bi$, set $v_b(\bz) := F^{\rt=b}_b(\bz)$. For $e$ an edge of $G$, set $r_e:= f_e(\bz)$. Then there is a choice of signs $\{s_e\} \subset \{\pm 1\}^{E(G)}$ so that $(\{v_b\}, \{s_e r_e\})$ is a non-degenerate $m$-VRC with boundary $\bz$.
\end{proposition}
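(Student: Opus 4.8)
The plan is to build the VRC starting from the functions $F_b^{\rt=b}(\bz)$ and $f_e(\bz)$ that have already been defined, and then to check the defining relations of an $m$-VRC at each white vertex, inserting signs as needed. First I would verify condition (1) of \cref{def:VRC}, namely that the boundary vectors $v_1=z_1,\dots,v_n=z_n$ span $\CC^m$: this is immediate since $\bz$ is generic for $G$, so in particular $\rowsp\bz\in\Gr_{m,n}$ and the $z_i$ span. I would also note that all the $v_b(\bz)=F_b^{\rt=b}(\bz)$ are nonzero by \cref{lem:GC-elts-rep-subspaces} (since $b\in\Bi$, so $\dim V_b^{\rt=b}=1$ by \cref{lem:dim-root-subspace}, and $F_b^{\rt=b}(\bz)$ represents this line), which gives non-degeneracy; and all the $r_e=f_e(\bz)$ are in $\CC^*$ by \cref{lem:bdry-edge-labels} when $e$ is a boundary edge and by definition ($f_e=1$) otherwise.

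The heart of the argument is the white-vertex relation. Fix an internal white vertex $w$ with neighbors $b_0,b_1,\dots,b_p$ (all black, some possibly boundary vertices). I want to show $\sum_{j} r_{e_j} v_{b_j} = 0$ after a suitable choice of signs, where $e_j=\{w,b_j\}$. The key point is that rooting the tree at $w$, \cref{lem:GC-elts-rep-subspaces} and \cref{def:vec-in-VRC-from-GC} say that $F_{b_j}^{\rt=w}(\bz)$ represents $V_{b_j}^{\rt=w}$, and by \cref{lem:dim-from-graph-stat} each of these has dimension equal to the graph statistic $d_{b_j}$, with $F_w^{\rt=w}(\bz)=\bigwedge_j F_{b_j}^{\rt=w}(\bz)$; moreover $\sum_j d_{b_j}=m+1$ when $w$ has degree $\ge 3$ (and $=m$-related cases degenerate) because $G$ is $m$-balanced and $w$ is white. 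In the degree-$\ge 3$ case, this is exactly the setup of \cref{lem:GC-easy-reln}: with $A_j:=F_{b_j}^{\rt=w}(\bz)\in\bigwedge^{d_j}\CC^m$ and $\sum d_j=m+1$, we get
\[\sum_j (-1)^{\epsilon_j}\,\bigl(\textstyle\bigwedge_{i\neq j} A_i\bigr)\shuf A_j = 0,\]
and by \cref{def:coeffs-in-VRC-from-GC} the coefficient $\bigl(\bigwedge_{i\neq j}A_i\bigr)\shuf A_j$ is (up to the orientation sign) $f_{e_j}(\bz)$ — i.e.\ $f_{e_j}$ is obtained by deleting $b_j$ from the wedge — while $A_j$ is $v_{b_j}$ up to passing from the root-at-$w$ normalization to the root-at-$b_j$ normalization. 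One then needs to relate $F_{b_j}^{\rt=w}(\bz)$ and $F_{b_j}^{\rt=b_j}(\bz)$: these differ only in the direction of the single edge $e_j$, and since $b_j$ is black with the edge to $w$ reversed, $F_{b_j}^{\rt=w}$ versus $F_{b_j}^{\rt=b_j}$ agree as elements of $\GC(m)$ up to sign and a possible rescaling that is absorbed into the $f_e$; I would check (this is really a bookkeeping lemma about how $F$ transforms under re-rooting across one edge) that the resulting discrepancy is a sign $s_{e_j}$ and that it is consistent — the same $s_e$ works for the relation at $w$ as for the relation at the other endpoint of $e$, because each internal edge lies between exactly two vertices and each boundary edge appears in only one relation. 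The degenerate cases ($w$ bivalent, or some $V_{b_j}$ filling $\CC^m$ forcing a shuffle to be scalar) are handled by the same identities read through \cref{lem:basic-prop-of-shuf}.

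The main obstacle I expect is precisely the sign bookkeeping: organizing a global choice $\{s_e\}\in\{\pm1\}^{E(G)}$ so that the signed relation $\sum_j s_{e_j} r_{e_j} v_{b_j}=0$ holds \emph{simultaneously} at every white vertex. Because $G$ is a tree, this is tractable — one can fix the sign on each edge by working outward from a chosen root, and the acyclicity guarantees no consistency obstruction arises (there are no cycles along which sign conditions could conflict). Concretely I would induct on the tree: pick a leaf-most internal white vertex, use \cref{lem:GC-easy-reln} to pin down the signs on its edges relative to one reference edge, then propagate. The only subtlety is matching the cyclic ordering conventions in \cref{def:vec-in-VRC-from-GC} (children read clockwise from the parent) against the unordered sum in the VRC relation, which contributes the explicit signs $(-1)^{d_i(d_i+\dots+d_p)}$ from \cref{lem:GC-easy-reln}; these are deterministic functions of the graph statistics and the planar embedding, so they can be folded into the $s_e$. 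Once the relations hold, non-degeneracy was already established above, and \cref{lem:GC-elts-rep-subspaces} confirms the boundary is $\bz$, completing the proof.
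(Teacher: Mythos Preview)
Your overall strategy matches the paper's: at each white vertex $w$, set $A_j := F_{b_j}^{\rt=w}(\bz)$, check $\sum_j d_j = m+1$, and apply \cref{lem:GC-easy-reln}. But your identification of the terms in that relation is where the argument goes wrong. You write that $(\bigwedge_{i\neq j}A_i)\shuf A_j$ is the scalar coefficient $f_{e_j}$ and that $A_j$ is the vector $v_{b_j}$ up to a re-rooting sign. Neither is correct: the expression $(\bigwedge_{i\neq j}A_i)\shuf A_j$ lies in $\bigwedge^1\CC^m$, not in $\CC$, while $A_j=F_{b_j}^{\rt=w}$ lies in $\bigwedge^{d_j}\CC^m$ with $d_j$ typically larger than $1$, so it cannot be the vector $v_{b_j}$. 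In particular, $F_{b_j}^{\rt=w}$ and $F_{b_j}^{\rt=b_j}$ have different degrees and are not related by a sign.

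The correct identification, which is the heart of the paper's proof, is that the \emph{whole} term $(\bigwedge_{i\neq j}A_i)\shuf A_j$ equals $\pm r_{e_j} v_{b_j}$. If $b_j\in\Bb$, then $d_j=1$ and $A_j=z_{b_j}$, so $\bigwedge_{i\neq j}A_i\in\bigwedge^m\CC^m$ and the shuffle is $\lr{\bigwedge_{i\neq j}A_i}\,z_{b_j}=\pm f_{e_j}(\bz)\,z_{b_j}$ by \cref{def:coeffs-in-VRC-from-GC}. If $b_j\in\Bi$, then $f_{e_j}=1$ and the point is that $\bigwedge_{i\neq j}A_i=\pm F_w^{\rt=b_j}$, while $A_j=F_{b_j}^{\rt=w}$ is already the shuffle of $F_{w'}^{\rt=b_j}$ over the \emph{other} white neighbors $w'\neq w$ of $b_j$; shuffling in the remaining factor $F_w^{\rt=b_j}$ gives, up to sign, the shuffle over \emph{all} white neighbors of $b_j$, which is $F_{b_j}^{\rt=b_j}=v_{b_j}$. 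Finally, your concern about global sign consistency is unnecessary: since $G$ is bipartite, each edge has a unique white endpoint, so the sign $s_e$ is determined by a single relation and no compatibility across white vertices ever arises.
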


\begin{proof}
Fix a white vertex $w$ with neighbors $b_1, \dots b_p$ and set $e_i = (w, b_i)$. We check that
\[\sum_{i=1}^p \pm f_{e_i}(\bz) v_{b_i}(\bz) =0.\]
For the remainder of the proof, we suppress the dependence on $\bz$. 

Notice that if $b_i \in \Bi$, then
\[v_{b_i}=F^{\rt=b_i}_{b_i} = (-1)^{c_i} F^{\rt = w}_{b_i} \shuf (F^{\rt=w}_{b_1} \wedge \cdots \wedge \widehat{F^{\rt=w}_{b_i}} \wedge \cdots \wedge F^{\rt=w}_{b_p})  \]
for some integer $c_i$. Indeed, $F^{\rt = w}_{b_i}$ is the shuffle of all $F^{\rt=b_i}_{w'}$ where $w'$ is a neighbor of $b_i$ and is not equal to $w.$ The other term $\bigwedge_{j \neq i} F^{\rt=w}_{b_j}$ is equal to $\pm F^{\rt=b_i}_{w}$. So the expression above is, up to sign, the shuffle of all $F^{\rt=b_i}_{w'}$ where $w'$ is a neighbor of $b_i$. By definition this is equal to $\pm F^{\rt=b_i}_{b_i}.$

Let $d_i$ be the integer such that $F^{\rt=w}_{b_i} \in \bigwedge ^{d_i} \CC^m$. We have
\[d_1 + \dots + d_p= |\Bb| - m \sum_{b \in \Bi} (\deg b-2) = m+1\]
where the first equality is from \eqref{eq:dim-from-graph-stat}, and the second is from \cref{lem:facts} (2). So we may apply \cref{lem:GC-easy-reln} to $F^{\rt=w}_{b_1}, \dots, F^{\rt=w}_{b_p}$ and obtain
\[\sum_{i=1}^p (-1)^{d_i(d_i+ \dots +d_p)} (F^{\rt=w}_{b_1} \wedge \cdots \widehat{F^{\rt=w}_{b_i}} \wedge \cdots \wedge F^{\rt=w}_{b_p}) \shuf F^{\rt=w}_{b_i} =0. \]

The $i$th term in this relation is exactly $(-1)^{d_i(d_i+ \dots +d_p) +c_i} v_{b_i}$ if $b_i \in \Bi$. If instead $b_i \in \Bb$, then $F^{\rt=w}_{b_i} = z_{b_i}$ and
\[(F^{\rt=w}_{b_1} \wedge \cdots \widehat{F^{\rt=w}_{b_i}} \wedge \cdots \wedge F^{\rt=w}_{b_p}) \shuf F^{\rt=w}_{b_i} = (F^{\rt=w}_{b_1} \wedge \cdots \widehat{F^{\rt=w}_{b_i}} \wedge \cdots \wedge F^{\rt=w}_{b_p})^*~z_{b_i} = (-1)^{c_i} f_{e_i} z_{b_i} \]
for some integer $c_i$. In either case, we set $s_{e_i}:=(-1)^{{d_i(d_i+ \dots +d_p) +c_i}}$.

In summary, we have 
\[0= \sum_{i=1}^p (-1)^{d_i(d_i+ \dots +d_p)} (F^{\rt=w}_{b_1} \wedge \cdots \widehat{F^{\rt=w}_{b_i}} \wedge \cdots \wedge F^{\rt=w}_{b_p}) \shuf F^{\rt=w}_{b_i} =\sum_{i=1}^p s_{e_i}f_{e_i}(\bz) v_{b_i}(\bz) =0 \]
as desired.
\end{proof}

The next theorem shows that the $m$-VRC of \cref{prop:amplitree-VRC-rep-up-to-sign} represents the unique element of $\mzVRC_G$.

\begin{theorem}\label{prop:amplitree-unique-VRC} 
    Let $G$ be a $(k,m)$-amplitree and let $\bz \in \Gr_{m,n}$ be generic for $G$. Then there is a unique element of $\mVRC_G$ with boundary $\bz$.
\end{theorem}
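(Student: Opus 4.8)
The statement to prove is \cref{prop:amplitree-unique-VRC}: for a $(k,m)$-amplitree $G$ and generic $\bz\in\Gr_{m,n}$, there is a unique element of $\mVRC_G$ with boundary $\bz$. Existence is already in hand: \cref{prop:amplitree-VRC-rep-up-to-sign} produces a non-degenerate $m$-VRC with boundary $\bz$ when $\bz$ is generic for $G$, and by \cref{cor:generic-for-tree-open-dense} the set of such $\bz$ is open dense in $\Gr_{m,n}$. So the work is entirely in uniqueness. I would prove uniqueness by an induction ``from leaves to root'' that mirrors the structure already set up via the subspaces $V_x^{\rt=\root}$.

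\textbf{Main argument for uniqueness.} Fix $\bz$ generic for $G$ and suppose $(\bv,\bR)$ is any $m$-VRC with boundary $\bz$ (we work with a representative, and will see the answer is unique up to gauge). Pick an internal black vertex as root $\root$ (such a vertex exists once we have made $G$ bipartite with trivalent black vertices via \cref{lem:facts}(3), which is harmless since moves induce bijections on $\mVRC_G$ and preserve the amplitree property, and the dimension statements are move-invariant). Orient all edges towards $\root$. By \cref{lem:vrc-vec-in-subspaces}, for every black vertex $b$ the vector $v_b$ lies in $V_b^{\rt=\root}$. I claim that in fact, going from leaves to root, $\CC v_b = V_b^{\rt=\root}$ for every black vertex $b$ with $\deg b \geq 3$ whose subspace has dimension $1$, and more precisely that $v_b$ is determined up to scalar by $\bz$. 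Here is the inductive step: if $b$ is black with children (white vertices) $w_1,\dots,w_p$, and each $w_j$ has black children $b_{j,1},\dots,b_{j,q_j}$, then the white-vertex relation at $w_j$ forces $v_b$ to lie in $\spn(v_{b_{j,1}},\dots,v_{b_{j,q_j}})$, which by the inductive hypothesis (each $v_{b_{j,\ell}}$ spans $V_{b_{j,\ell}}$, and these are in direct sum inside $V_{w_j}$ by \cref{cor:direct-sum-white-children}) equals $V_{w_j}$. Hence $v_b\in\bigcap_j V_{w_j}=V_b$, which by \cref{lem:dim-from-graph-stat} has dimension exactly $1$ (and, crucially, by \cref{lem:dim-root-subspace}, $\dim V_\root=1$ as well). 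So $v_b$ is determined up to a scalar. The scalars are then pinned down against the boundary: using the formula \eqref{eq:v_b-from-paths} of \cref{lem:v_b-from-paths} (equivalently \cref{cor:VRC-det-by-bdry-and-R}), once the lines $\CC v_b$ are fixed and one tracks the white-vertex relations outward, the edge weights $r_e$ are determined up to the gauge action, and then all vectors are determined. Passing to $\mVRC_G$ (modulo gauge and $GL_m$) collapses exactly this ambiguity, giving uniqueness of the class $[\bv,\bR]$. An alternative, cleaner route is to invoke \cref{cor:int-num=num-VRC}: $|\mzVRC_G|=\mint(G)$ for generic $\bz$, so it suffices to show $\mint(G)=1$; combined with \cref{cor:int0} (which rules out $\mint(G)=0$ for $m$-balanced trees, since the $m$-balanced property holds) and the existence half above (which shows $\mint(G)\geq 1$), this forces $\mint(G)=1$. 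I would present this second route as the main line and use the subspace induction only as needed to justify that the VRC produced is the unique one, i.e. that $\mzVRC_G$ is nonempty \emph{and} has exactly one element.

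\textbf{Assembling the two halves.} Concretely: by \cref{cor:int-num=num-VRC}, for generic $\bz$, $|\mzVRC_G|=\mint(G)$. By \cref{prop:amplitree-VRC-rep-up-to-sign} together with \cref{cor:generic-for-tree-open-dense}, for $\bz$ in an open dense set there \emph{is} an $m$-VRC with boundary $\bz$, so $\mint(G)\geq 1$. It remains to show $\mint(G)\leq 1$, i.e. $\mint(G)=1$. One way: since $G$ is $m$-balanced, \cref{cor:int0} does not apply (it only asserts $\mint(G)=0$ when $m$-balanced \emph{fails}), so this does not directly bound the intersection number — instead I use the subspace-induction argument of the previous paragraph to show that any VRC with boundary $\bz$ has its lines $\CC v_b$ completely determined by $\bz$ (via the $1$-dimensional subspaces $V_b^{\rt=b}$), and hence, after quotienting by gauge, represents a single element of $\mzVRC_G$; therefore $|\mzVRC_G|\leq 1$, and combined with $|\mzVRC_G|=\mint(G)\geq 1$ we get $|\mzVRC_G|=1$, which is exactly the claim. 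Then by \cref{cor:int1}, $G$ is $m$-generically solvable and $\mint(G)=1$.

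\textbf{Where the difficulty lies.} The genuinely delicate point is the inductive claim that the lines $\CC v_b$ are \emph{forced} by $\bz$ — i.e. that the white-vertex relations, which only say ``$v_b$ lies in such-and-such span with \emph{nonzero} coefficients,'' actually cut down to the one-dimensional $V_b$. This needs two inputs working together: (i) the directness of the sums $V_{w_j}=\bigoplus_\ell V_{b_{j,\ell}}$ for generic $\bz$ (\cref{cor:direct-sum-white-children}), so that membership in $\spn(v_{b_{j,\ell}})$ is the same as membership in $V_{w_j}$; and (ii) the dimension count $\dim V_b=1$ from \cref{lem:dim-from-graph-stat} and, at the root, \cref{lem:dim-root-subspace}, which is where the $m$-balanced hypothesis is essential. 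I also need to check the base case carefully: at a white vertex adjacent to a boundary vertex $i$, the generic fullness statement of \cref{lem:bdry-edge-labels} ensures the relation there is non-degenerate and the corresponding edge weight is a nonzero scalar, so nothing degenerates. The bookkeeping of gauge equivalence — verifying that two VRCs with the same boundary and the same collection of lines $\CC v_b$ differ only by gauge — is routine given \cref{thm:bdry-meas-vs-bdry-restriction} and \cref{cor:VRC-det-by-bdry-and-R}, but should be spelled out. I would not expect to need any new estimates beyond what \cref{lem:dim-from-graph-stat,lem:dim-root-subspace,cor:direct-sum-white-children} already provide.
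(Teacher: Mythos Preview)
Your overall strategy matches the paper's: existence via \cref{prop:amplitree-VRC-rep-up-to-sign}, then for uniqueness show that the line $\CC v_b$ is forced by $\bz$ at every black vertex, and finally pin down the edge weights up to gauge. But your inductive step with a \emph{fixed} root $\root$ has a genuine gap. You assert that $V_b^{\rt=\root}$ has dimension $1$ ``by \cref{lem:dim-from-graph-stat}''; that lemma only gives the formula \eqref{eq:dim-from-graph-stat}, which is \emph{not} $1$ for non-root black vertices. Concretely, in the $(3,3,1,3,3)$ chain tree ($k=3$, $m=4$), rooting at the left internal black vertex and taking $b$ to be the right one, \eqref{eq:dim-from-graph-stat} gives $\dim V_b^{\rt=\root}=6-4\cdot 1=2$. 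Hence your inductive hypothesis ``each $v_{b_{j,\ell}}$ spans $V_{b_{j,\ell}}$'' is false and the induction does not close. The paper's argument is precisely what you allude to later with the phrase ``$1$-dimensional subspaces $V_b^{\rt=b}$'': for \emph{each} black $b$ separately, apply \cref{lem:vrc-vec-in-subspaces} with root $b$ to get $v_b\in V_b^{\rt=b}$, and then invoke \cref{lem:dim-root-subspace} (not \cref{lem:dim-from-graph-stat}) to obtain $\dim V_b^{\rt=b}=1$. No leaves-to-root induction is needed for this step.

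Two further points where your sketch diverges from what actually works. First, once all $v_b'=v_b$, the paper pins down the coefficients directly: at a white vertex $w$ with neighbors $b_1,\dots,b_p$ one gets $\sum_{i\ge 2}\bigl(r_{e_i}/r_{e_1}-r'_{e_i}/r'_{e_1}\bigr)v_{b_i}=0$, and \cref{cor:direct-sum-white-children} applied with root $b_1$ shows $v_{b_2},\dots,v_{b_p}$ are linearly independent, forcing the ratios to agree; your appeal to \cref{thm:bdry-meas-vs-bdry-restriction} and \cref{cor:VRC-det-by-bdry-and-R} is not the right tool (those concern $(n{-}k)$-VRCs and determination from $\bR$, not uniqueness of $\bR$ given the lines). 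Second, you do not rule out \emph{degenerate} VRCs, and your argument implicitly assumes $v_b\neq 0$. The paper handles this separately: if $v_{b_1}=0$, the same linear-independence argument at the adjacent white vertex forces all neighboring black vectors to vanish, which propagates outward until a boundary vector is zero, contradicting genericity of $\bz$. Finally, your ``alternative route'' through \cref{cor:int-num=num-VRC} and \cref{cor:int0} is circular as you half-recognize: \cref{cor:int0} only says $m$-balanced failing implies $\mint(G)=0$, not the contrapositive direction you need, so you are back to proving uniqueness directly.
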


\begin{proof}
We fix a representative $[z_1 \dots z_n]$ of $\bz$ which we denote also by $\bz$. By \cref{prop:amplitree-VRC-rep-up-to-sign}, there exists a non-degenerate $m$-VRC $(\bv, \bR)$ with boundary $\bz$. We first show that all other non-degenerate $m$-VRCs with boundary $\bz$ are related to $(\bv, \bR)$ by gauge transformations. We then show that there are no degenerate $m$-VRCs with boundary $\bz$. 

Suppose $(\bv', \bR')$ is a non-degenerate $m$-VRC with boundary $\bz$. By \cref{lem:vrc-vec-in-subspaces}, $v_b'$ and $v_b$ lie in the subspace $V_{b}^{\rt = b}$, which is a line by \cref{lem:dim-root-subspace}. So $v_b' = c_b v_b$ for some nonzero constant $c_b$. Using gauge, we may assume $c_b=1$ for all $b$. Choose a white vertex $w \in W$. Say its neighbors are $b_1, \dots, b_p$, and let $e_i$ denote the edge $(w, b_i)$. We have
\[v_{b_1} = -\frac{1}{r_{e_1}} \left(\sum_{i=2}^p r_{e_i} v_{b_i}\right) =  v_{b_1}'=-\frac{1}{r'_{e_1}} \left(\sum_{i=2}^p r'_{e_i} v'_{b_i}\right) = -\frac{1}{r'_{e_1}} \left(\sum_{i=2}^p r'_{e_i}  v_{b_i}\right) \]
and so 
\[ 0 = \sum_{i=2}^p \left(\frac{r_{e_i}}{r_{e_1}} - \frac{r'_{e_i}}{r'_{e_1}} \right) v_{b_i}.\]
Notice that, by \cref{lem:vrc-vec-in-subspaces}, $v_{b_i} \in V_{b_i}^{\rt = w}$, and by definition, $V_{b_i}^{\rt = w} = V_{b_i}^{\rt = b_1}$. By \cref{cor:direct-sum-white-children}, 
\[V_{b_2}^{\rt = b_1} + \dots +V_{b_p}^{\rt = b_1} = V_{b_2}^{\rt = b_1} \oplus \dots  \oplus V_{b_p}^{\rt = b_1}\]
and so in particular, $v_{b_2}, \dots, v_{b_p}$ are linearly independent. This implies that for $i=2, \dots, p$
\[\frac{r_{e_i}}{r_{e_1}} = \frac{r'_{e_i}}{r'_{e_1}}\]
which implies that the coefficients around $w$ are related by a gauge transformation.

Now suppose $(\bv', \bR')$ is a degenerate $m$-VRC with boundary $\bz$; we will arrive at a contradiction. There is a vector $v_{b_1}$ which is equal to zero. Say it is adjacent to white vertex $w$, whose other neighbors are $b_2, \dots, b_p$. As above, the vector $v_{b_i} \in V_{b_i}^{\rt = b_1}$ and 
\[V_{b_2}^{\rt = b_1} + \dots +V_{b_p}^{\rt = b_1} = V_{b_2}^{\rt = b_1} \oplus \dots  \oplus V_{b_p}^{\rt = b_1}\]
so if they are nonzero, they are linearly independent. Since a linear combination of the $v_{b_i}$ with positive coefficients is equal to $v_{b_1}=0$, the only possibility is that $v_{b_i}=0$ for $i=2, \dots, p$ as well. Continuing this argument, we eventually obtain that some boundary vector is $0$, a contradiction of $\bz$ being generic.
  
\end{proof}

\begin{theorem}\label{prop:tree1}
Let $G$ be a bipartite plabic tree of type $(k,km+1)$.  Then
$\mint(G)=1$
if and only if $G$ is {$m$-balanced}. If $G$ is not $m$-balanced, then $\mint(G)=0$.
\end{theorem}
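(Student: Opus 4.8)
The plan is to prove \cref{prop:tree1} by combining the two halves already set up in the excerpt. The statement has two directions, and the ``not $m$-balanced'' case is the easy one: if $G$ fails to be $m$-balanced, then \cref{prop:fail} produces a subtree $G'$ and a perfect orientation $\OO$ of $G$ satisfying the hypotheses of \cref{prop:int1criterion}, which immediately gives $\mint(G)=0$. (Here we use \cref{lem:facts}(1) to know $\dim \Pi_G = km$, so that \cref{prop:int1criterion} applies.) This simultaneously proves the final sentence of the theorem and one direction of the ``if and only if,'' since $\mint(G)=0 \neq 1$.

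For the converse, suppose $G$ is $m$-balanced, i.e.\ $G$ is a $(k,m)$-amplitree. First I would invoke \cref{cor:generic-for-tree-open-dense} to fix an open dense subset $\mcu_G \subseteq \Gr_{m,n}$ (with $n = km+1$) of $\bz$ that are generic for $G$. Then \cref{prop:amplitree-unique-VRC} says that for every such $\bz$, there is a \emph{unique} element of $\mVRC_G$ with boundary $\bz$; in other words $|\mzVRC_G| = 1$ for generic $\bz$, so $G$ is $m$-generically solvable. Finally, \cref{cor:int-num=num-VRC} (using again $\dim \Pi_G = km$ from \cref{lem:facts}(1)) translates this into $\mint(G) = |\mzVRC_G| = 1$. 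Alternatively one could cite \cref{cor:int1} directly once solvability is established. This closes the ``if'' direction.

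To assemble the biconditional cleanly: the ``only if'' direction is the contrapositive of \cref{cor:int0} (if $G$ is not $m$-balanced then $\mint(G) = 0$, hence $\mint(G) \neq 1$), and the ``if'' direction is the amplitree argument above. I expect essentially no obstacle here, since all the substantive work — the combinatorial criterion \cref{prop:int1criterion}, the genericity analysis of the subspaces $V_x^{\rt=\root}$, the explicit Grassmann--Cayley VRC of \cref{prop:amplitree-VRC-rep-up-to-sign}, and the uniqueness argument of \cref{prop:amplitree-unique-VRC} — has already been carried out in the preceding subsections. The proof of \cref{prop:tree1} is therefore just a matter of quoting \cref{prop:amplitree-unique-VRC}, \cref{cor:int-num=num-VRC}, \cref{lem:facts}(1), and \cref{cor:int0} in the right order. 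The only point requiring a word of care is confirming that ``generic for $G$'' in the sense of \cref{def:generic-for-tree} is indeed a dense enough condition to compute the intersection number via \cref{cor:int-num=num-VRC}; this is handled because \cref{cor:generic-for-tree-open-dense} gives an open dense set, and a finite intersection of open dense sets is open dense, so we may intersect $\mcu_G$ with the open dense set from \cref{cor:int-num=num-VRC} and still conclude.
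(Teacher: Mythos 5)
Your proposal is correct and matches the paper's own proof: the reverse direction is exactly \cref{cor:int0} (which packages \cref{prop:fail} and \cref{prop:int1criterion}), and the forward direction is \cref{prop:amplitree-unique-VRC} combined with \cref{cor:int-num=num-VRC}, just as in the paper. Your extra remark about intersecting the open dense sets of genericity is a reasonable point of care that the paper leaves implicit.
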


\begin{proof}[Proof of \cref{prop:tree1}]
Suppose that $G$ is a bipartite plabic tree of type 
$(k,km+1)$ but it does not satisfy the $m$-balanced
property.  Then by \cref{cor:int0}, 
 $\Pi_G$ has $m$-intersection number $0$.

Conversely, suppose that $G$ is a bipartite plabic tree of 
type $(k,km+1)$ which satisfies the $m$-balanced property.
Then by \cref{prop:amplitree-unique-VRC}, 
for generic $\bz \in \Gr_{m,n}$, there is a unique element of $\mVRC_G$ with boundary $\bz$.
But now by \cref{cor:int-num=num-VRC},
$\Pi_G$ has intersection number $1$.
\end{proof}

\cref{prop:tree1} shows that in the case of trees, \cref{conj:IN0} holds: that is, the intersection number is $0$ if and only if we can find a 
subgraph as in \cref{prop:int1criterion}.

\subsection{Enumeration of amplitrees}

Recall from \cref{def:mbalanced}
that a {$(k,m)$-amplitree} is a bipartite plabic tree $G$ 
of type $(k,km+1)$ which is $m$-balanced.
In this section we give some enumerative results about move-equivalence classes of amplitrees.  Since an amplitree is in particular a tree, the only moves that we can apply to an amplitree are the expand and contract moves, as in \cref{fig:expand-contract}.

\cref{tab:amplitrees} gives computational data about the enumeration of amplitrees of type $(k,m)$.

\begin{table}[h] 
\centering 
\begin{tabular}{|c||>{\centering\arraybackslash}p{1.5cm}|>{\centering\arraybackslash}p{1.5cm}|>{\centering\arraybackslash}p{1.5cm}|>{\centering\arraybackslash}p{1.5cm}|>{\centering\arraybackslash}p{1.5cm}|} 
\hline 
\diagbox{$m$}{$k$} & $1$ & $2$ & $3$ & $4$ & $5$  \\
\hline\hline 
$1$ & $1$ & $1$ & $1 $ & $1 $ &  $1 $  \\
\hline
 $2$ & $1$ & $ 5$ & $ 35$ & $285 $ & $2530$  \\
 \hline 
 $3$ & $1$ & $14$ & $ 280$ & $ 6565$ &  $160916$ \\
\hline 
 $4$ & $1$ & $30$ & $1274$ & $63410 $ & $  $ \\
\hline 
$5$ & $1$ & $55$ & $4228$ & $380310$ & $ $  \\
\hline
$6$ & $1$ & $91$ & $11438$ & $ $ & $ $  \\
\hline
\end{tabular}
\vspace{1em}
\caption{The number of (move-equivalence classes of) amplitrees of type $(k,m)$.
}
\label{tab:amplitrees}
\end{table}

We also have the following results.
\begin{theorem}
When $k=2$, the number $c_{k,m}$ of move-equivalence classes of amplitrees of type $(k,m)$ is 
$$c_{2,m}=1^2+2^2+ \dots + m^2=\frac{m(m+1)(2m+1)}{6}.$$  This is the sequence of \emph{square pyramidal numbers}, and it appears as sequence A000330 in \cite{OEIS}.  We can write the generating function as 
\begin{equation}
\label{eq:pyramidal}
\sum_{m\geq 0} c_{2,m}x^m = \frac{1+x}{(1-x)^4}.
\end{equation}
\end{theorem}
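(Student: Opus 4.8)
The plan is to count $(2,m)$-amplitrees up to move-equivalence by reducing to a canonical representative and then translating the $m$-balanced condition into an explicit combinatorial count. First I would use \cref{lem:facts}(3) to put each amplitree in a canonical form: bipartite with all internal black vertices trivalent. By \cref{lem:facts}(5), when $k=2$ such a tree has exactly $|\Bi| = k-1 = 1$ internal black vertex, call it $b$. So the canonical form is: a single trivalent black vertex $b$, whose three edges go to three white vertices $w_1, w_2, w_3$, and each $w_i$ is a white vertex all of whose other neighbors are boundary vertices (here I use that the tree is bipartite and that after contracting we may assume no white vertex is adjacent to two internal black vertices since there is only one). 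Writing $a_i = \deg(w_i) - 1 \geq 1$ for the number of boundary leaves hanging off $w_i$, the tree is determined up to move-equivalence by the \emph{multiset} $\{a_1, a_2, a_3\}$, and the total number of boundary vertices is $a_1 + a_2 + a_3 = 2m + 1$.

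Next I would extract the $m$-balanced inequalities. By \cref{prop:dimsig}, $m$-balancedness says that for each edge $e$, writing $G \setminus \{e\} = G_1 \sqcup G_2$, we need $1 \leq |\Bb \cap G_i| - m\sum_{b \in G_i \cap \Bi}(\deg b - 2) \leq m$ for $i=1,2$, and by the second bullet of the lemma after \cref{def:mbalanced} it suffices to check one side of each edge. The edges come in two types. For an edge $e$ inside the "leaf bundle" at $w_i$ (i.e. $e = (w_i, \ell)$ for a boundary leaf $\ell$), the component $G_1 = \{\ell\}$ has statistic $1$, which is automatically in $[1,m]$; so these impose nothing. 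The substantive constraints come from the three edges $e_i = (b, w_i)$. Cutting $e_i$, the component containing $w_i$ has $a_i$ boundary vertices and no internal black vertices, so its statistic is $a_i$, giving $1 \leq a_i \leq m$; the other component contains $b$ (trivalent, so contributing $m(\deg b - 2) = m$) and the $a_j + a_k$ leaves from the other two bundles, giving statistic $(2m+1) - a_i - m = m + 1 - a_i$, and $1 \leq m+1-a_i \leq m$ is equivalent to $1 \leq a_i \leq m$ again. So \textbf{$m$-balancedness is exactly the condition that $a_1, a_2, a_3 \in \{1, \dots, m\}$.}

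Therefore $c_{2,m}$ equals the number of multisets $\{a_1, a_2, a_3\}$ with each $a_i \in \{1,\dots,m\}$ and $a_1 + a_2 + a_3 = 2m+1$. This is a finite check: by the substitution $a_i \mapsto m+1-a_i$ the condition becomes $b_1+b_2+b_3 = m+2$ with $b_i \in \{1,\dots,m\}$, i.e. multisets of three parts from $[m]$ summing to $m+2$; one then verifies by a direct generating-function or stars-and-bars computation (subtracting the violated constraints) that this count equals $1^2 + 2^2 + \dots + m^2 = m(m+1)(2m+1)/6$. The generating function identity $\sum_{m\geq 0} c_{2,m} x^m = (1+x)/(1-x)^4$ then follows since $\sum_{m \geq 0} \binom{m+1}{2}^{\!?}$—more precisely, $(1+x)/(1-x)^4 = \sum_{m\geq 0}\left[\binom{m+3}{3} + \binom{m+2}{3}\right]x^m$ and one checks $\binom{m+3}{3}+\binom{m+2}{3} = m(m+1)(2m+1)/6$ by a routine binomial identity.

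The main obstacle is the reduction step: making precise that every move-equivalence class of $(2,m)$-amplitree has a \emph{unique} canonical representative of the stated "double-star-joined-at-a-black-vertex" shape, so that the count of multisets $\{a_1,a_2,a_3\}$ is neither an over- nor under-count. This requires knowing that (i) expand/contract moves on a tree with a single internal black vertex can always be used to make that vertex trivalent and all white vertices adjacent to only it and to leaves, and (ii) two such canonical trees are move-equivalent iff they have the same multiset $\{a_1,a_2,a_3\}$. Part (i) is essentially \cref{lem:facts}(3) together with the observation that any white vertex of degree $\geq 2$ not yet in canonical position can be split; part (ii) follows because an expand/contract move changes a canonical tree only by merging or splitting white-vertex leaf bundles, which does not change the multiset after re-contracting. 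Once this bijection with multisets is nailed down, the rest is the elementary enumeration above.
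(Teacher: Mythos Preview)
Your reduction to canonical form is correct: a $(2,m)$-amplitree is move-equivalent to a tree with one trivalent internal black vertex, three white vertices each attached to a block of consecutive boundary leaves, and $m$-balancedness becomes exactly $a_i\in\{1,\dots,m\}$ for the three block sizes. The gap is in the next step. A plabic tree lives in a disk with boundary vertices labeled $1,\dots,2m+1$ clockwise, and the only moves available on a tree are expand/contract, which do not permute these labels. Hence the move-equivalence class is determined not by the \emph{multiset} $\{a_1,a_2,a_3\}$ but by the partition of the cyclically ordered set $[2m+1]$ into three consecutive arcs of those sizes. Already at $m=2$ this bites: the only multiset from $\{1,2\}$ summing to $5$ is $\{1,2,2\}$, so your count gives $1$, not $c_{2,2}=5$. (Your asserted identity ``multiset count $=m(m+1)(2m+1)/6$'' is therefore also false; it was stated without proof precisely because it does not hold.)

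The paper handles the labeling by singling out the block containing boundary vertex $1$ and writing its size as $d_0+d_1+1$ (with $d_0$ leaves counterclockwise and $d_1$ clockwise of $1$), letting $d_2,d_3$ be the next two block sizes clockwise; it then proves the linear recurrence $f_m=5f_{m-1}-10f_{m-2}+10f_{m-3}-5f_{m-4}+f_{m-5}$ by inclusion--exclusion on the five ways $(i,j)\in\{(0,2),(0,3),(1,2),(1,3),(2,3)\}$ to increment a pair $d_i,d_j$, and reads off the closed form from initial values. Your route can be repaired by counting cycle-partitions instead of multisets: the total number of ways to cut the cycle at $3$ of the $2m+1$ gaps is $\binom{2m+1}{3}$, at most one arc can have length $>m$ (two such arcs would exceed $2m+1$), and a direct count gives $(2m+1)\binom{m}{2}$ partitions with a long arc, so
\[
c_{2,m}=\binom{2m+1}{3}-(2m+1)\binom{m}{2}=\frac{m(m+1)(2m+1)}{6}.
\]
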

\begin{proof}

Each move-equivalence class of a $(2,m)$-amplitree can be represented by a tree with $2m+1$ boundary vertices (leaves), with a single trivalent internal black vertex connected to three internal white vertices, where each white vertex is connected to $\ell$ boundary vertices for some $1 \leq \ell \leq m$.

Let $d_0+d_1+1$ be the number of boundary vertices adjacent to the white vertex which is closest to the boundary vertex $1$; let $d_0$ and $d_1$ denote the number of boundary vertices which are counterclockwise and clockwise of the $1$.  Let $d_2$ and $d_3$ denote the number of boundary vertices which are incident to the other two white vertices.
Thus we must have that $1 \leq d_0+d_1 \leq m$, $1 \leq d_2 \leq m$, and $1\leq d_3 \leq m$, and $1+d_0+d_1+d_2+d_3=2m+1$.

If we let $f_m:=c_{2,m}$ be the number of move-equivalence classes of $(2,m)$-amplitrees, then
we claim that 
\begin{equation}
\label{eq:recurrence}
f_m=5f_{m-1} - 10f_{m-2}+10f_{m-3}-5f_{m-4}+f_{m-5}.
\end{equation}

To prove the claim, note that to get such an amplitree on $2m+1$ leaves from an amplitree on $2m-1$ leaves, we can increase $d_i$ and $d_j$ by 1 where $(i,j) \in \{(0,2), (0, 3), (1,2), (1,3), (2,3)\}$. 
However, any two of these operations commute, so e.g. 
increasing $d_0$ and $d_2$ then increasing $d_1$ and $d_3$ results in the same tree as 
increasing $d_1$ and $d_3$, and then increasing $d_0$ and $d_2$.
Thus by inclusion-exclusion, \eqref{eq:recurrence} holds.

Since \eqref{eq:recurrence} is a  linear recurrence, it follows that $f_m$ is a polynomial in $m$ of degree at most $4$.  Computation of the initial terms results in the formula \eqref{eq:pyramidal}.
\end{proof}

The same style of argument can be used to prove the following result.
\begin{theorem}
When $k=3$, the number $c_{k,m}$ of move-equivalence classes of amplitrees of type $(k,m)$ has generating function given by
$$\sum_{m\geq 0} c_{3,m}x^m = \frac{1+28x+56x^2+14x^3}{(1-x)^7}.$$
\end{theorem}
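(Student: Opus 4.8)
The plan is to mimic the proof of the $k=2$ case: reduce each move-equivalence class of $(3,m)$-amplitrees to a unique combinatorial model, translate the $m$-balanced property into linear constraints on that model, and then pin down the generating function from finitely many values of $c_{3,m}$. By \cref{lem:facts} (parts (3) and (5)), every $(3,m)$-amplitree is move-equivalent to a bipartite tree with exactly $k-1=2$ trivalent internal black vertices, and applying black- and white-contract moves as far as possible yields a \emph{maximally contracted} representative. For $k=3$ this representative has one of two shapes: \emph{(A)} a single internal black vertex of degree $4$ joined to four internal white vertices, each carrying at least one boundary leaf; or \emph{(B)} two trivalent internal black vertices $B,B'$ joined through a single internal white vertex $W_0$ that carries at least one boundary leaf, with $B$ (resp.\ $B'$) joined to two further internal white vertices each carrying at least one leaf. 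Shape (A) occurs exactly when the white vertex separating the two black vertices has no leaves, so that it could be contracted. The first task is to check that maximal contraction is well defined and that shapes (A) and (B) label disjoint families of move-equivalence classes; granting this, $c_{3,m}=a_m+b_m$, where $a_m$ (resp.\ $b_m$) counts the boundary-labeled trees of shape (A) (resp.\ (B)).

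Next I would encode the two shapes explicitly. A shape-(A) tree is determined by the cyclic tuple of its four leaf-block sizes, and a shape-(B) tree by the four ``pendant'' leaf-block sizes together with the distribution of $W_0$'s leaves into the two gaps flanking $W_0$ in the planar embedding; in both cases one also records the location of boundary vertex $1$, splitting its block into clockwise and counterclockwise parts exactly as in the $k=2$ argument. By \cref{prop:dimsig}, the $m$-balanced property becomes: every pendant leaf-block has size in $[1,m]$; for shape (B) the two edges joining $W_0$ to a black vertex force $a_1+a_2\in[m+1,2m]$ and $a_3+a_4\in[m+1,2m]$ for the corresponding pairs of pendant blocks; and all block sizes sum to $3m+1$. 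Thus $a_m$ and $b_m$ are each numbers of lattice points in a polytope linearly dilated by $m$, of dimension $3$ and $6$ respectively.

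As in the $k=2$ case, I would then obtain linear recurrences by an inclusion-exclusion over ``growth operations'': each such operation simultaneously increments a prescribed triple of the block parameters by $1$, adding the three leaves needed to pass from a level-$(m-1)$ model to a level-$m$ model, and distinct growth operations commute. Inclusion-exclusion then yields recurrences for $a_m$ and $b_m$ whose characteristic polynomials are powers of $(1-t)$, so $c_{3,m}$ agrees with a polynomial in $m$; since the underlying polytopes have dimension at most $6$, this polynomial has degree at most $6$. Finally I would evaluate $c_{3,m}$ for enough small $m$ — the initial entries of the $k=3$ column of \cref{tab:amplitrees} — and check that they coincide with the Taylor coefficients of $\dfrac{1+28x+56x^2+14x^3}{(1-x)^7}$, which forces the two rational functions to be equal.

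The hard part is the combinatorial bookkeeping, concentrated in two places. First, the reduction step: one must show the maximally contracted representative is unique, so that shapes (A) and (B) really do partition the move-equivalence classes and each model is counted exactly once; this amounts to a confluence statement for contract moves on plabic trees. Second, the enumeration of shape-(B) trees: one must account correctly for the two-sided split of $W_0$'s leaves and for the dihedral symmetry of the underlying abstract tree, and identify precisely which pairs of pendant blocks are subject to the $[m+1,2m]$ constraints — it is easy to over- or under-count here. Choosing the collection of growth operations so that the inclusion-exclusion produces exactly the denominator $(1-x)^7$ is a further, milder, technical point; once the polytopes and operations are pinned down, the rest is routine computation already anticipated by \cref{tab:amplitrees}.
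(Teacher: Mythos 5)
Your proposal is correct and follows essentially the same route as the paper: the paper proves the $k=2$ case via exactly this template (canonical contracted representative, linear constraints from \cref{prop:dimsig}, commuting growth operations plus inclusion-exclusion to get a pure power of $(1-x)$ in the denominator, then matching initial values from \cref{tab:amplitrees}), and for $k=3$ it simply asserts that "the same style of argument" applies, which is what you have fleshed out with the correct two-shape decomposition and balance constraints. The only quibbles are cosmetic: the shape-(A) parameter space has dimension $4$ rather than $3$ once the block containing vertex $1$ is split, and pinning down a degree-$6$ polynomial requires seven values rather than the six listed in the table — neither affects the argument.
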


Similarly, for each fixed $k$ we expect the number of $(k,m)$-amplitrees to satisfy a generating function of the form $p(x) / (1-x)^{3k-2}$ where $p(x)$ is a polynomial of degree at most $3k-2$. This implies that $c_{k,m}= \Theta(m^{3k-3})$ as $m$ tends to infinity, where the implied constants depend on~$k$. If $m \geq 2$ is fixed and $k$ tends to infinity, then the number of $(k,m)$-amplitrees is exponential in~$k$. Indeed, an exponential lower bound can be shown by considering different plane embeddings of trees isomorphic to chain trees, cf \cref{def-chain-tree}, while an exponential upper bound holds for all trees in the plane in general. It remains an intriguing problem to determine how the base of the exponent depends on $m$, or any further information about the joint asymptotic behavior of  $c_{k,m}$ when both $k$ and $m$ are large.  One could also hope to compute an explicit two-variable generating function keeping track of both $k$ and $m$.

\section{Proofs of the quasi-cluster homomorphism property}\label{sec:proofscluster}

In this section, we provide the proofs that the promotions from \cref{sec:promotion-examples} are quasi-cluster homomorphisms.

\subsection{Star promotion}

Recall \cref{pro-upper} of unary star promotion.

\begin{theorem}\label{thm:upper}
Unary star promotion $\aProm_{m}$ is a quasi-homomorphism of cluster algebras from $\mathcal{A}(\Sigma)$ to $\mathcal{A}(\overline{\Sigma})$, where $\Sigma$ is the rectangles seed for ${\Gr}_{4,N'}$ and $\overline{\Sigma}$ is the rectangles seed for ${\Gr}_{4,n}$ with the cluster variables in the leftmost column frozen, as shown in \cref{rectangles}.
\end{theorem}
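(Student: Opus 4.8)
The strategy is to exhibit explicit seeds $\Sigma$ for $\widehat{\Gr}_{m,N'}$ and $\overline{\Sigma}$ for $\widehat{\Gr}_{m,n}$ (with the leftmost column of frozens further frozen, as in \cref{rectangles}) and to check directly the two conditions in \cref{def:quasi}: that $\aProm_m$ sends each cluster variable $x_i$ of $\Sigma$ to a frozen-proportional cluster variable $\bar{x}_{\bar{i}}$ of $\overline{\Sigma}$, and that it matches exchange ratios $\hat{y}_\Sigma(x_i) = \hat{y}_{\overline\Sigma}(\bar x_{\bar i})$ after pullback. Since $\aProm_m$ is the identity on all Pl\"ucker coordinates not involving the index $3,4,\dots,m$ (it fixes the vectors $1$ and $m+1,\dots,n$ and only modifies $2,\dots$ wait --- more precisely it modifies the vectors labeled $3,\dots,m$ via the substitution $j\mapsto \frac{(1\wedge\cdots\wedge (j-1))\shuf(j\wedge\cdots\wedge(m+1))}{\lr{1,\dots,\widehat{j},\dots,m+1}}$), the bulk of the rectangles seed is preserved on the nose, and only the Pl\"ucker coordinates meeting $\{3,\dots,m\}$ need to be analyzed. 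The first main step is to record, using \cref{ex:intersections} and the Grassmann–Cayley identities of \cref{lem:basic-prop-of-shuf,lem:shuf-wedge-vs-cap-sum,lem:GC-easy-reln}, a clean formula for $\aProm_m(\lr{I})$ whenever $I$ meets $\{3,\dots,m\}$; the expectation (generalizing the $m=3$ computation $\Psi_3(\lr{3ab}) = \lr{1\,2\shuf 3\,4\shuf a\,b}/\lr{124}$) is that such a Pl\"ucker coordinate maps to a chain polynomial divided by a monomial in frozen Pl\"ucker coordinates of the form $\lr{1,2,\dots,\widehat{j},\dots,m+1}$.

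The second step is to identify which chain polynomials appear and argue they are cluster variables of $\widehat{\Gr}_{m,n}$, using \cref{cor:irreducible-quadratics} (or the analogous irreducibility input from \cite{GLS}) to see they are irreducible, and using a known seed of $\Gr_{m,n}$ containing them --- this is where I would lean on \cref{rectangles} and standard facts about the rectangles seed and its mutations, so that the image seed $\overline\Sigma$ can be taken to be an honest (frozen-enlarged) mutation of the rectangles seed. The third step is the bijection $i\mapsto\bar i$ on mutable vertices: for the part of the quiver untouched by the substitution this is the identity, and for the $O(m)$ exchange relations near the modified Pl\"uckers one checks that the quiver arrows (hence exchange ratios) are matched. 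Condition (2), the equality of exchange ratios, then follows because the exchange relations among the relevant Pl\"ucker/chain coordinates on $\widehat{\Gr}_{m,N'}$ pull back to the exchange relations on $\widehat{\Gr}_{m,n}$ up to frozen factors --- concretely one verifies a handful of three-term Pl\"ucker-type relations are respected by the substitution, with the freezing of the leftmost column absorbing the discrepancy in frozen monomials (this is exactly why freezing is needed: the coordinate $\lr{12\cdots m}$, $\lr{1,3,4,\dots,m+1}$ etc.\ become frozen factors in the images).

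The main obstacle I anticipate is bookkeeping the frozen factors consistently across all exchange relations simultaneously: each individual substitution $j\mapsto\text{(chain)}/\text{(frozen monomial)}$ is transparent, but showing that a \emph{single} pair of seeds $(\Sigma,\overline\Sigma)$ works for \emph{all} mutable directions at once --- i.e.\ that the chosen denominators are mutually compatible and the induced quiver isomorphism is global --- requires care, and is the place where the generic-$m$ argument is genuinely more delicate than the $m=3$ or $m=4$ (BCFW) cases already in the literature. A secondary point to handle is verifying that $\overline\Sigma$, obtained from the rectangles seed by freezing the leftmost column, is still a valid seed for the (frozen-enlarged) cluster algebra and that the rank matches that of $\widehat{\Gr}_{m,N'}$; this is a dimension count, $m(n-m)+1$ versus $m((n-1)-m)+1$ plus the $m-1$ newly frozen variables, which should balance, but needs to be stated precisely. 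Once the seed correspondence and the two conditions are in place, \cref{prop:similar} propagates the quasi-cluster property through all mutations and the theorem follows.
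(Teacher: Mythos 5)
Your high-level skeleton---exhibit explicit seeds and verify conditions (1) and (2) of \cref{def:quasi} directly---is exactly the paper's strategy, and the observation that most of the rectangles seed is fixed on the nose is correct. However, your central expectation about what happens to the remaining variables is off in a way that would send you down a much harder path. You anticipate that Pl\"ucker coordinates meeting $\{3,\dots,m\}$ map to \emph{chain polynomials} over frozen monomials, and that you would then need to show those chain polynomials are cluster variables (via irreducibility and mutation sequences from the rectangles seed) and construct a global quiver isomorphism. For the rectangles seed this does not happen: every mutable variable of $\Sigma$ has the form $\lr{1,3,4,\dots,m{+}1{-}r,\,c{-}r{+}1,\dots,c}$, i.e.\ it contains the index $1$ together with the full consecutive prefix $3,4,\dots,m{+}1{-}r$. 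Writing the substitution as $j \mapsto \frac{\lr{1,\dots,\widehat{j{-}1},\dots,m{+}1}}{\lr{1,\dots,\widehat{j},\dots,m{+}1}}(j{-}1) + \const(j{-}2)+\dots+\const(1)$ (\cref{rem:helpful}), all lower-order terms are killed by repeated columns and the denominators telescope, so $\Psi_m(x_{rc}) = F_r\cdot \bar{x}_{rc}$ where $F_r$ is a ratio of two frozen Pl\"uckers and $\bar{x}_{rc}$ is again a \emph{Pl\"ucker coordinate}---precisely the $(r,c)$ entry of the rectangles seed for $\Gr_{m,n}$. Consequently $\overline{\Sigma}$ is literally the rectangles seed with one column frozen (no mutation sequence is needed, unlike the spurion and chain-tree cases), the bijection $i\mapsto\bar i$ is the identity on the $(m{-}1)\times(n{-}m{-}1)$ grid of mutables, and the frozen factors $F_r$ depend only on the row, which makes the exchange-ratio check a short direct computation. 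The ``main obstacle'' you flag---globally compatible denominators across all exchange relations---evaporates with this seed choice.

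One further caution: you propose to invoke \cref{cor:irreducible-quadratics} to see that the relevant chain polynomials are cluster variables, but in the paper that corollary is \emph{deduced from} \cref{thm:upper} (its proof applies the reduced star promotion to Pl\"ucker coordinates and uses the theorem), so using it as an ingredient here would be circular. In fact no statement about chain polynomials being cluster variables is needed for the proof; they only arise afterwards as images of Pl\"ucker coordinates outside the initial seed.
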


\begin{notation}
\label{rectangles}
The initial seed $\Sigma$ is the rectangles seed
for $\Gr_{m, \{1,3,\dots,n\}}$ (see \cref{sec:cluster}), shown below for $m=5$:
\vspace{0.5cm}
\begin{center}
\begin{tikzpicture}[scale=1.2, every node/.style={minimum size=0.5cm}, every path/.style={->, thick}, node distance=0.5cm]
\def\cols{7}
\def\rows{5}
\def\ax{1.7}
\node (n00) at (\ax*0,-1) {\framebox{$\langle13456\rangle$}};
\node (n11) at (\ax*1,-1) {$\langle13457\rangle$};
\node (n12) at (\ax*2,-1) {$\langle13458\rangle$};
\node (n13) at (\ax*3,-1) {$\langle13459\rangle$};
\node (n14) at (\ax*4,-1) {$\cdots$};
\node[anchor=west] (n15) at (\ax*4.6,-1) {\framebox{$\langle1345n\rangle$}};
\node (n21) at (\ax*1,-2) {$\langle13467\rangle$};
\node (n22) at (\ax*2,-2) {$\langle13478\rangle$};
\node (n23) at (\ax*3,-2) {$\langle13489\rangle$};
\node (n24) at (\ax*4,-2) {$\cdots$};
\node[anchor=west] (n25) at (\ax*4.6,-2) {\framebox{$\langle134(n{-}1)n\rangle$}};
\node (n31) at (\ax*1,-3) {$\langle13567\rangle$};
\node (n32) at (\ax*2,-3) {$\langle13678\rangle$};
\node (n33) at (\ax*3,-3) {$\langle13789\rangle$};
\node (n34) at (\ax*4,-3) {$\cdots$};
\node[anchor=west] (n35) at (\ax*4.6,-3) {\framebox{$\langle13(n{-}2)(n{-}1)n\rangle$}};
\node (n41) at (\ax*1,-4) {$\langle14567\rangle$};
\node (n42) at (\ax*2,-4) {$\langle15678\rangle$};
\node (n43) at (\ax*3,-4) {$\langle16789\rangle$};
\node (n44) at (\ax*4,-4) {$\cdots$};
\node[anchor=west] (n45) at (\ax*4.6,-4) {\framebox{$\langle1(n{-}3)(n{-}2)(n{-}1)n\rangle$}};
\node (n51) at (\ax*1,-5) {\framebox{$\langle34567\rangle$}};
\node (n52) at (\ax*2,-5) {\framebox{$\langle45678\rangle$}};
\node (n53) at (\ax*3,-5) {\framebox{$\langle56789\rangle$}};
\node (n54) at (\ax*4,-5) {$\cdots$};
\node[anchor=west] (n55) at (\ax*4.6,-5) {\framebox{$\langle(n{-}4)(n{-}3)(n{-}2)(n{-}1)n\rangle$}};
\node[gray,circle,draw] (sigma) at (\ax*6.5,-1.5) {$\Sigma$};
\foreach \i in {1,2,3,4} {
\foreach \j [evaluate=\j as \k using int(\j+1)] in {1,...,4} { 
\draw[->] (n\i\j) -- (n\i\k); }}
\foreach \i [evaluate=\i as \k using int(\i+1)] in {1,...,4} {
\foreach \j in {1,...,3} {
\draw[->] (n\i\j) -- (n\k\j);}}
\foreach \i [evaluate=\i as \k using int(\i+1)] in {1,...,4} {
\foreach \j [evaluate=\j as \l using int(\j+1)] in {1,...,4} {
\draw[->] (n\k\l) -- (n\i\j);}}
\draw[<-] (n11) -- (n00);
\end{tikzpicture}
\end{center}
\vspace{0.25cm}
The seed $\overline{\Sigma}$ is the following rectangles seed for $\Gr_{m,n}$ with the cluster variables in the leftmost column frozen, also shown for $m=5$:
\vspace{0.5cm}
\begin{center}
\begin{tikzpicture}[scale=1.2, every node/.style={minimum size=0.5cm}, every path/.style={->, thick}, node distance=0.5cm]
\def\cols{7}
\def\rows{5}
\def\ax{1.7}
\node (n00) at (\ax*0,0) {\framebox{$\langle12345\rangle$}};
\node (n10) at (\ax*0,-1) {\dbox{$\langle12346\rangle$}};
\node (n11) at (\ax*1,-1) {$\langle12347\rangle$};
\node (n12) at (\ax*2,-1) {$\langle12348\rangle$};
\node (n13) at (\ax*3,-1) {$\langle12349\rangle$};
\node (n14) at (\ax*4,-1) {$\cdots$};
\node[anchor=west] (n15) at (\ax*4.6,-1) {\framebox{$\langle1234n\rangle$}};
\node (n20) at (\ax*0,-2) {\dbox{$\langle12356\rangle$}};
\node (n21) at (\ax*1,-2) {$\langle12367\rangle$};
\node (n22) at (\ax*2,-2) {$\langle12378\rangle$};
\node (n23) at (\ax*3,-2) {$\langle12389\rangle$};
\node (n24) at (\ax*4,-2) {$\cdots$};
\node[anchor=west] (n25) at (\ax*4.6,-2) {\framebox{$\langle123(n{-}1)n\rangle$}};
\node (n30) at (\ax*0,-3) {\dbox{$\langle12456\rangle$}};
\node (n31) at (\ax*1,-3) {$\langle12567\rangle$};
\node (n32) at (\ax*2,-3) {$\langle12678\rangle$};
\node (n33) at (\ax*3,-3) {$\langle12789\rangle$};
\node (n34) at (\ax*4,-3) {$\cdots$};
\node[anchor=west] (n35) at (\ax*4.6,-3) {\framebox{$\langle12(n{-}2)(n{-}1)n\rangle$}};
\node (n40) at (\ax*0,-4) {\dbox{$\langle13456\rangle$}};
\node (n41) at (\ax*1,-4) {$\langle14567\rangle$};
\node (n42) at (\ax*2,-4) {$\langle15678\rangle$};
\node (n43) at (\ax*3,-4) {$\langle16789\rangle$};
\node (n44) at (\ax*4,-4) {$\cdots$};
\node[anchor=west] (n45) at (\ax*4.6,-4) {\framebox{$\langle1(n{-}3)(n{-}2)(n{-}1)n\rangle$}};
\node (n50) at (\ax*0,-5) {\framebox{$\langle23456\rangle$}};
\node (n51) at (\ax*1,-5) {\framebox{$\langle34567\rangle$}};
\node (n52) at (\ax*2,-5) {\framebox{$\langle45678\rangle$}};
\node (n53) at (\ax*3,-5) {\framebox{$\langle56789\rangle$}};
\node (n54) at (\ax*4,-5) {$\cdots$};
\node[anchor=west] (n55) at (\ax*4.6,-5) {\framebox{$\langle(n{-}4)(n{-}3)(n{-}2)(n{-}1)n\rangle$}};
\node[gray,circle,draw] (sigma) at (\ax*6.5,-1.5) {$\overline{\Sigma}$};
\foreach \i in {1,2,3,4} {
\foreach \j [evaluate=\j as \k using int(\j+1)] in {0,...,4} { 
\draw[->] (n\i\j) -- (n\i\k); }}
\foreach \i [evaluate=\i as \k using int(\i+1)] in {1,...,4} {
\foreach \j in {0,...,3} {
\draw[->] (n\i\j) -- (n\k\j);}}
\foreach \i [evaluate=\i as \k using int(\i+1)] in {1,...,4} {
\foreach \j [evaluate=\j as \l using int(\j+1)] in {0,...,4} {
\draw[->] (n\k\l) -- (n\i\j);}}
\draw[<-] (n10) -- (n00);
\end{tikzpicture}
\end{center}
\vspace{0.25cm}
In both seeds, we label the rows $1$ to $m$ from top to bottom, and label the columns $m+1,\dots,n$ from left to right, such that each column is labeled by the largest index that appears in it. 
We let $x_{rc}$, respectively $\overline{x}_{rc}$ denote
the mutable cluster variable of $\Sigma$, respectively $\overline{\Sigma}$ in row $r$ and column $c$.
\end{notation}

We note the following corollary of \cref{thm:upper}.

\begin{corollary}
\label{cor:irreducible-quadratics}
Let $A, B, C$ be disjoint ordered sets in $[n]$, 
i.e. $a<b<c$ for each $a\in A$, $b\in B$, and $c\in C.$  
Suppose moreover that $|C| = m-1$, and that 
$|A|+|B|+|C| = 2m$.  Then the chain polynomial
$\lr{A \shuf B \shuf C}$ is a cluster variable for $\Gr_{m,n}$. 
\end{corollary}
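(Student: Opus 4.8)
The plan is to realize $\lr{A \shuf B \shuf C}$, up to a monomial in frozen Pl\"ucker coordinates, as the image of a single Pl\"ucker coordinate under a unary star promotion map, and then apply \cref{thm:upper}. Throughout I assume $2\le|A|\le m-1$, so that the chain polynomial is nondegenerate and $m\ge 3$; in the remaining cases $|A|\in\{1,m\}$ a Pl\"ucker relation collapses $\lr{A\shuf B\shuf C}$ to a product of two Pl\"ucker coordinates, which should be dealt with separately.

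First I would reduce to $n=2m$ with $A,B,C$ consecutive. Let $U:=[n]\setminus(A\cup B\cup C)$; since $a<b<c$ for all $a\in A$, $b\in B$, $c\in C$, no Pl\"ucker coordinate appearing in $\lr{A\shuf B\shuf C}$ involves a label of $U$. For each $p\in U$, the star promotion map $\Psi_m^{(p)}\colon\C(\widehat{\Gr}_{m,[n]\setminus\{p\}})\to\C(\widehat{\Gr}_{m,n})$ obtained from \cref{pro-upper} by a cyclic rotation substitutes only the vector sitting at $p$, and hence carries the chain polynomial of a given triple of index sets to the chain polynomial of the same index sets on $\widehat{\Gr}_{m,n}$. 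Since $\Psi_m^{(p)}$ is a quasi-cluster homomorphism (\cref{thm:upper} composed with a cyclic cluster automorphism), it takes cluster variables to frozen multiples of cluster variables. So, inserting the labels of $U$ one at a time and relabeling $A\cup B\cup C$ order-isomorphically onto $[2m]$, the general case follows from the case $n=2m$, $A=\{1,\dots,a\}$, $B=\{a+1,\dots,m+1\}$, $C=\{m+2,\dots,2m\}$ — provided one can strip off the frozen factor picked up at each step, which is the task of the final step.

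For $n=2m$, put $j:=a+1\in\{3,\dots,m\}$ and let $\Psi_m$ be unary star promotion with $N'=\{1,3,4,\dots,2m\}$. Unwinding \cref{def:chain} and the definition of the shuffle, $\lr{A\shuf B\shuf C}$ equals $\bigl((1\wedge\cdots\wedge(j{-}1))\shuf(j\wedge\cdots\wedge(m{+}1))\bigr)\wedge(m{+}2)\wedge\cdots\wedge(2m)$, which is exactly $\lr{\{1,\dots,m{+}1\}\setminus\{j\}}$ times the effect of substituting the vector $j$ by $\bigl((1\wedge\cdots\wedge(j{-}1))\shuf(j\wedge\cdots\wedge(m{+}1))\bigr)/\lr{\{1,\dots,m{+}1\}\setminus\{j\}}$ into the Pl\"ucker coordinate $\lr{j,\,m{+}2,\dots,2m}$; that is,
\[\lr{A\shuf B\shuf C}\;=\;\lr{\{1,\dots,m{+}1\}\setminus\{j\}}\cdot\Psi_m\!\bigl(\lr{j,\,m{+}2,\dots,2m}\bigr).\]
Since $\lr{j,\,m{+}2,\dots,2m}$ is a Pl\"ucker coordinate, hence a cluster variable of $\Gr_{m,N'}$, \cref{thm:upper} gives $\Psi_m\bigl(\lr{j,\,m{+}2,\dots,2m}\bigr)=M\cdot Y$ with $Y$ a cluster variable of $\Gr_{m,2m}$ and $M$ a Laurent monomial in the frozen variables of $\overline{\Sigma}$; and $\lr{\{1,\dots,m{+}1\}\setminus\{j\}}$ is itself one of these frozens, since it lies in the leftmost (frozen) column of the rectangles seed, see \cref{rectangles}. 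Hence $\lr{A\shuf B\shuf C}=(DM)\,Y$ for a Laurent monomial $DM$ in Pl\"ucker coordinates.

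The hard part will be to show the frozen factor $DM$ is a unit, so that $\lr{A\shuf B\shuf C}$ is on the nose a cluster variable and not merely proportional to one. Because $\lr{A\shuf B\shuf C}$ is a polynomial while a cluster variable is an irreducible polynomial different from the finitely many Pl\"ucker coordinates that can occur in $DM$, the monomial $DM$ can have no negative exponents; and because the hypersurface $\{\lr{A\shuf B\shuf C}=0\}$ in $\Gr_{m,2m}$ has no Pl\"ucker hyperplane $\{\lr{I}=0\}$ as a component — equivalently, the $(\ge 2)$-term Pl\"ucker expansion of $\lr{A\shuf B\shuf C}$ read off from \cref{def:chain} is divisible by no Pl\"ucker coordinate — the monomial $DM$ is a scalar. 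The same divisibility observation, applied at each stage, strips the frozen factors introduced during the reduction to $n=2m$, completing the proof. I expect this divisibility/irreducibility bookkeeping to be the main obstacle; the choice of which Pl\"ucker coordinate to promote, and the cyclic reduction, are routine. (As a byproduct one recovers, via \cite{GLS}, the irreducibility of these quadratics.)
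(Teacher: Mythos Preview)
Your core computation for the ``packed'' case $A=\{1,\dots,j{-}1\}$, $B=\{j,\dots,m{+}1\}$, $C=\{m{+}2,\dots,2m\}$ is exactly what the paper does (with $C=\{n{-}m{+}2,\dots,n\}$ and general $n$): both identify $\lr{A\shuf B\shuf C}$ with $\overline{\Psi}_m(\lr{j,C})$ and invoke \cref{thm:upper}.

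The genuine gap is your reduction step. Unary star promotion does \emph{not} ``substitute only the vector sitting at $p$''. In \cref{pro-upper} the removed index is $2$ but the substitutions occur at the $m-2$ indices $3,\dots,m$; after a cyclic shift removing $p$, the modified indices are $p{+}1,\dots,p{+}m{-}2$. Hence $\Psi_m^{(p)}$ fixes a Pl\"ucker $\lr{I}$ only when $I$ avoids this whole window, and there is no reason for $A\cup B\cup C$ to avoid it for an arbitrary $p\in U$. So the assertion that $\Psi_m^{(p)}$ carries $\lr{A\shuf B\shuf C}$ on $\Gr_{m,[n]\setminus\{p\}}$ to the same chain polynomial on $\Gr_{m,n}$ is false in general, and the reduction to $n=2m$ collapses. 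The paper sidesteps this entirely: instead of inserting the missing labels via promotions, it appeals to the ``spreading out indices'' lemma \cite[Lemma 4.20]{even2023cluster}, which directly says that a cluster variable on $\Gr_{m,N}$ remains a cluster variable when the index set $N$ is embedded order-preservingly into a larger $[n]$. That single lemma replaces your whole first paragraph.

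Your frozen-factor discussion is more explicit than the paper's, but the crucial claim---that $\lr{A\shuf B\shuf C}$ is not divisible by any Pl\"ucker coordinate---is asserted rather than proved; having a $\ge 2$-term Pl\"ucker expansion does not by itself rule out a common Pl\"ucker factor in the coordinate ring. You would need an actual argument here (e.g.\ exhibit a point with $\lr{I}=0$ but $\lr{A\shuf B\shuf C}\ne 0$ for each candidate $I$), and this same bookkeeping would have to be redone at every stage of your reduction, which is another reason the spreading-out approach is cleaner.
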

\begin{proof}[Proof of \cref{cor:irreducible-quadratics}]
Let $\overline{\Psi}_m$ be the \emph{reduced unary star promotion map}, defined the same way as 
$\Psi_m$ in \cref{pro-upper} but without a denominator. First let $C=\{n-m+2,n-m+3,\dots,n\}.$  Then it follows from \cref{rem:helpful} below that for $3 \leq j \leq m$, 
\begin{align*}
\overline{\Psi}_m(\lr{j,C}) & \;=\; \lr{1,\dots,\widehat{j{-}1}, \dots,m{+}1}\lr{j{-}1,C} 
- \lr{1,\dots, \widehat{j{-}2},\dots, m{+}1}\lr{j{-}2,C} \\ & \quad\quad\quad\quad + \lr{1,\dots, \widehat{j{-}3},\dots, m{+}1}\lr{j{-}3,C} - \;\dots\; \pm \lr{2,\dots,m{+}1} \lr{1,C} \\
&\;=\; \lr{ (1,2,\dots,j{-}1) \shuf (j,j{+}1,\dots,m{+}1) \shuf C}
\end{align*}
Now we know that ``spreading out indices'' \cite[Lemma 4.20]{even2023cluster} preserves the property of being a cluster variable, so the corollary follows.  
\end{proof}

\begin{remark}
Also applying a cyclic shift \cite[Corollary 4.18]{even2023cluster} preserves the property of being a cluster variable, up to a sign. Hence, for any cyclically ordered disjoint sets $A,B,C$ with $|C|=m-1$ and $|A|+|B|=m+1$, 
we have that $(-1)^{r(m-r)}\lr{A \shuf B \shuf C}$ is a cluster variable, where $r$ is the number of indices in $A,B,C$ which appear to the left of 1.
\end{remark}

\begin{remark}
\label{rem:helpful}
In what follows, we use the fact that 
for $3 \leq j \leq m$, the substitution in \cref{pro-upper} of 
unary star promotion
can also be written via Pl\"ucker relations in the form 
\begin{equation*}
j \;\mapsto\;
\frac{\lr{1,\dots,\widehat{j{-}1}, \dots,m{+}1}}{\lr{1,\dots,\widehat{j}, \dots,m{+}1}} (j-1) + \const (j-2) + \const(j-3) + \dots + \const (1),
\end{equation*}
where $\const$ represents similar rational expressions in Pl\"ucker coordinates.
\end{remark}

\begin{proof}[Proof of \cref{thm:upper}]
For concreteness we explain the proof in detail for $m=5$, then indicate how the formulas generalize for arbitrary $m$.

We first compute the images of the cluster variables in the initial seed under $\Psi_5$. It is not hard to see that $\Psi_5$ fixes the cluster variables in rows $4$ and $5$ of the seed $\Sigma$ in \cref{rectangles}, as well as the cluster variable $\lr{13456}$ in the top-left corner of~$\Sigma$. In order to apply $\Psi_5$ to the remaining cluster variables, we use \cref{rem:helpful}, which in this case 
says that 
\begin{align*}
3 & \;\mapsto \; \frac{\lr{13456}}{\lr{12456}} (2) + \const (1)\\
4 & \;\mapsto \; \frac{\lr{12456}}{\lr{12356}} (3) + \const (2) + \const (1)\\
5 & \;\mapsto \; \frac{\lr{12356}}{\lr{12346}} (4) + \const (3) + \const (2) + \const (1)
\end{align*}
Applying the above formulas to the cluster variables in rows $3$, $2$, and $1$, respectively,
we get
\begin{align*}
&\Psi_5(\lr{1,3,i,i+1,i+2}) \;=\; \frac{ \lr{13456}}{\lr{12456}} \cdot \lr{1,2,i,i+1,i+2} \;=\; F_3 \cdot \lr{1,2,i,i+1,i+2} \\
&\Psi_5(\lr{1,3,4,i,i+1}) \;=\; \frac{ \lr{13456}}{\lr{12356}} \cdot \lr{1,2,3,i,i+1} \;=\; F_2 \cdot \lr{1,2,3,i,i+1} \\
&\Psi_5(\lr{1,3,4,5,i}) \;=\; 
\frac{ \lr{13456}}{\lr{12346}} \cdot \lr{1,2,3,4,i} \;=\; F_1 \cdot \lr{1,2,3,4,i} 
\end{align*}
where 
$$ F_1 \;=\; \frac{ \lr{13456}}{\lr{12346}}
\;\;\;\;\;\;\;\;
F_2 \;=\; \frac{ \lr{13456}}{\lr{12356}}
\;\;\;\;\;\;\;\;
F_3 \;=\; \frac{ \lr{13456}}{\lr{12456}}
\;\;\;\;\;\;\;\;
F_4 \;=\; 1 $$
are the ``frozen factors'' corresponding to the four rows. To summarize, for $x_{rc}$ and $\bar{x}_{rc}$, respectively the cluster variable of $\Sigma$ and $\overline{\Sigma}$ as in \cref{rectangles}, our calculations show that
$\Psi_5(x_{rc}) = F_r \cdot \bar{x}_{rc}$ and hence $\Psi_5(x_{rc}) \propto \bar{x}_{rc}$.

Now we check exchange ratios. The exchange ratio for the mutable cluster variable $x_{1i} = \lr{1345i}$ in row $1$ of $\Sigma$ is
$$
\hat{y}_{\Sigma}(x_{1i}) \;=\; \frac{\lr{1345(i{+}1)}\, \lr{134(i{-}1)i}}{\lr{1345(i{-}1)} \,\lr{134i(i{+}1)}} 
$$
and its image under $\Psi_5$ is
$$
\Psi_5\left(\hat{y}_{\Sigma}(x_{1i})\right)  \;=\; \frac{\lr{1234(i{+}1)}\, \lr{123(i{-}1)i}}{\lr{1234(i{-}1)}\,\lr{123i(i{+}1)}}
$$
But now observe that $\Psi( \hat{y}_{\Sigma}(x_{1i}) )$ is exactly the exchange ratio 
$\hat{y}_{\overline{\Sigma}}(\bar{x}_{1i})$. Similarly, the exchange ratio for the mutable cluster variable 
$x_{2i}=\lr{134(i{-}1)i}$ in row $2$ of $\Sigma$ is
$$ \hat{y}_{\Sigma}(x_{2i}) \;=\;\frac{ \lr{1345(i{-}1)} \,\lr{134i(i{+}1)} \,\lr{13(i{-}2)(i{-}1)i}}{\lr{1345i} \,\lr{134(i{-}2)(i{-}1)}\, \lr{13(i{-}1)i(i{+}1)}} $$
and its image under $\Psi_5$ is
$$ \Psi_5\left(\hat{y}_{\Sigma}(x_{2i})\right) \;=\; \frac{ \lr{1234(i{-}1)} \,\lr{123i(i{+}1)} \,\lr{12(i{-}2)(i{-}1)i}}{\lr{1234i} \,\lr{123(i{-}2)(i{-}1)}\, \lr{12(i{-}1)i(i{+}1)}}
$$
which equals the exchange ratio $\hat{y}_{\overline{\Sigma}}(\bar{x}_{2i})$ just as before. The computations of the exchange ratios and their images for the mutable cluster variables in row $3$ and row $4$ are similar.  
We have now verified that properties (1) and (2) in \cref{def:quasi} are satisfied by our map $\Psi_5$.

The proof in the general case is a straightforward extension of the $m=5$ case.  We 
again use for $\Sigma$ the rectangles seed, where the rows are labeled from $1$ to $m$ from top to bottom, and the columns are labeled from $m+1$ to $n$ from left to right, and the cluster variable $x_{rc}$ in row $r$ and column $c$ is given by 
$$ x_{rc} \;=\;\begin{cases}
\lr{1,3,4,\dots,m{+}1{-}r,\;c{-}r{+}1,\dots,c{-}1,c} & 
\text{ for }1 \leq r \leq m-1\\
\lr{c{-}m{+}1,c{-}m{+}2,\dots, c{-}1,c} & \text{ for } r=m
\end{cases} $$
We again find that $\Psi_m$ fixes the cluster variables in rows $m-1$ and $m$,
and that 
$$ \Psi_m(\lr{1,3,4,\dots,m{+}1{-}r,\;c{-}r{+}1,\dots,c{-}1,c} \;\propto\; \lr{1,2,3,4,\dots,m{-}r,\;c{-}r{+}1,\dots,c{-}1,c} $$
It is straightforward to check that the exchange ratios satisfy 
property (2) of \cref{def:quasi}.
\end{proof}

\begin{remark}
Our unary star promotion map is essentially
the same as the $a=n-k$ ``splicing" map from \cite{splicing}, which was developed independently. The splicing map is also shown there to be a quasi-cluster homomorphism. 
\end{remark}

\subsection{Spurion promotion}\label{sec:spurion_proof}
In this section we show that  the map $\aProm_{sp}$ from \cref{def:upper_spurion_promotion} is a quasi-cluster homomorphism. Recall that $N'=\{1,2,7,8, \dots, n\}$.

\begin{theorem}
\label{thm:spurion-upper}
Unary spurion promotion $\aProm_{sp}$  
is a quasi-cluster homomorphism
from 
$\mathcal{A}(\Sigma)$ to 
$\mathcal{A}(\overline{\Sigma})$, where $\Sigma$ is a seed for ${\Gr}_{4,N'}$ and $\overline{\Sigma}$ is the seed for ${\Gr}_{4,n}$ with some variables frozen, shown in \cref{notation:sigma-bar}.
\end{theorem}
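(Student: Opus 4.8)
\textbf{Proof plan for \cref{thm:spurion-upper}.}

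The plan is to follow the same template used for unary star promotion in \cref{thm:upper}: explicitly write down a seed $\Sigma$ for $\Gr_{4,N'}$ and a seed $\overline{\Sigma}$ for $\Gr_{4,n}$ (obtained from a standard Grassmannian seed by freezing a suitable set of cluster variables), exhibit a bijection between their mutable vertices, and then verify conditions (1) and (2) of \cref{def:quasi} directly, namely that $\aProm_{sp}(x_i) \propto \bar{x}_{\bar{i}}$ (proportionality up to a frozen factor) and that $\aProm_{sp}(\hat{y}_\Sigma(x_i)) = \hat{y}_{\overline{\Sigma}}(\bar{x}_{\bar{i}})$. The key new feature compared to the star case is that the spurion core has $k=2$, so the substitutions (see \cref{def:upper_spurion_promotion} and \cref{rk:intersections}) replace three boundary vectors $2,7,8$ rather than a single column, and the relevant cluster variables in $\overline{\Sigma}$ will be chain polynomials $\lr{A\shuf B\shuf C}$ with $|C|=m-1=3$, which are cluster variables for $\Gr_{4,n}$ by \cref{cor:irreducible-quadratics} together with appropriate cyclic shifts.

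Concretely, the first step is to choose $\overline{\Sigma}$: I expect the right choice is a seed for $\Gr_{4,n}$ adapted to the indices $\{1,2,3,4,5,6,7,8,9,\dots\}$ so that the images of the Plücker coordinates of $\Gr_{4,N'}$ under $\aProm_{sp}$ become, up to frozen monomials in the $F_i$'s, cluster variables of this seed; mutable variables involving the labels $2$, $7$, or $8$ get sent to chain polynomials like $\lr{13\shuf 456\shuf 789}$, $\lr{123\shuf 456\shuf 789}$, etc. (and variants obtained by replacing one of the size-three sets by a size-two set), while variables not involving those labels are fixed. The second step is to compute $\aProm_{sp}$ on each mutable cluster variable of $\Sigma$ using the identities in \cref{ex:intersections} and \cref{rk:intersections} to rewrite the substitution in terms of Plücker coordinates; this is the analogue of \cref{rem:helpful} and will show $\aProm_{sp}(x_i) = M_i \cdot \bar{x}_{\bar i}$ for an explicit frozen monomial $M_i$ in the $F_j$'s. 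The third step is to check that each $M_i$ is indeed a Laurent monomial in the frozen variables of $\overline{\Sigma}$ (so that $\propto$ holds), which forces the choice of which variables to freeze in $\overline{\Sigma}$. The fourth step is the exchange-ratio check: for each mutable $x_i$, compute $\hat{y}_\Sigma(x_i)$, apply $\aProm_{sp}$, and match it against $\hat{y}_{\overline{\Sigma}}(\bar{x}_{\bar i})$; the frozen factors $M_i$ cancel in ratios by construction, so this reduces to a finite check organized by the position of the vertex in the quiver, exactly as in the $m=5$ computation in the proof of \cref{thm:upper}.

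The main obstacle I anticipate is bookkeeping rather than conceptual: identifying the correct seed $\overline{\Sigma}$ and the precise set of frozen variables so that all the frozen factors $M_i$ land in $\overline{\PP}$, and then verifying that the combinatorial map $i \mapsto \bar{i}$ on mutable vertices really does extend to a quiver isomorphism of the induced mutable subquivers (as forced by \cref{def:quasi}(1)--(2)). Because three columns are being substituted simultaneously and the chain polynomials carry signs, keeping track of signs and of which Plücker relations (via \cref{ex:intersections}) are being invoked at each vertex will require care; I would handle this by first doing the computation explicitly for a small value of $n$ to fix all conventions and signs, then observing — as in the star case — that the general formulas are a routine extension, with the quiver near the boundary labels $2,7,8$ being the only place where anything nontrivial happens, and everything far away being fixed by $\aProm_{sp}$.
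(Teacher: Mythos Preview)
Your approach is essentially the same as the paper's: choose a rectangles seed $\Sigma$ for $\Gr_{4,N'}$, compute $\aProm_{sp}$ on its cluster variables using the substitution identities from \cref{rk:intersections}, match against a target seed $\overline{\Sigma}$, and verify conditions (1) and (2) of \cref{def:quasi}. The paper carries this out as three lemmas (proportionality of mutable variables, matching of exchange ratios, and the fact that $\overline{\Sigma}$ is a freezing of a genuine $\Gr_{4,n}$ seed).

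One point you underestimate: unlike the star case, where $\overline{\Sigma}$ is literally the rectangles seed with one column frozen, here $\overline{\Sigma}$ must contain the nine chain polynomials $F_1,\dots,F_9$ as frozen variables, together with $\lr{1237}$, $\lr{3789}$, $\lr{1236}$, and the usual Pl\"uckers. Knowing (via \cref{cor:irreducible-quadratics}) that each $F_i$ is individually a cluster variable does not tell you they all live in a common seed with the right quiver. The paper handles this with an explicit sequence of eleven mutations starting from the rectangles seed for $\Gr_{4,n}$, each time verifying the exchange relation produces the desired $F_i$ or Pl\"ucker. This is indeed ``bookkeeping,'' but it is a separate and necessary step that has no analogue in the star proof, and you should flag it as such rather than fold it into ``identifying the correct seed.''

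A smaller correction to your expectations: most mutable variables of $\Sigma$ are actually \emph{fixed} by $\aProm_{sp}$ (because the substitutions $2\mapsto 2-\tfrac{F_1}{F_2}1$, $8\mapsto 8-\tfrac{F_9}{F_8}9$, etc.\ kill the extra terms against repeated indices), and only $\lr{127i}$, $\lr{1278}$, $\lr{2789}$ pick up nontrivial frozen factors. So the images are Pl\"uckers like $\lr{123i}$, $\lr{1237}$, $\lr{3789}$ rather than chain polynomials; the chain polynomials appear only as the frozen $F_i$'s in $\overline{\Sigma}$.
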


\begin{notation}
\label{notation:sigma-bar}
In the proof of \cref{thm:spurion-upper} we derive the following seed $\overline{\Sigma}$, a freezing of a seed for $\Gr_{4,n}$. The boxed variables are frozen; those in dashed boxes are mutable in the ${\Gr}_{4,n}$ seed. As in \cref{def:upper_spurion_promotion}, the chain polynomial $F_i$ is obtained by formally deleting the index $i$ from the expression $\lr{123 \shuf 456 \shuf 789}$. For indices after $9$ we use $\aa,\bb,\cc,\dots$ as shorthand for $10,11,12,\dots$ respectively.
\nopagebreak
\begin{center}
\begin{tikzpicture}[scale=1.2, every node/.style={minimum size=0.5cm}, every path/.style={->, thick}, node distance=0.5cm]
\def\cols{7}
\def\rows{4}
\def\ax{1.4}
\node (n01) at (\ax*5,0) {\dbox{$\langle1237\rangle$}};
\node (n11) at (\ax*5,-1) {$\langle1239\rangle$};
\node (n12) at (\ax*6,-1) {$\langle123\aa\rangle$};
\node (n13) at (\ax*7,-1) {$\langle123\bb\rangle$};
\node (n14) at (\ax*8,-1) {$\cdots$};
\node[anchor=west] (n15) at (\ax*8.6,-1) {\framebox{$\langle123n\rangle$}};
\node (n21) at (\ax*5,-2) {$\langle1289\rangle$};
\node (n22) at (\ax*6,-2) {$\langle129\aa\rangle$};
\node (n23) at (\ax*7,-2) {$\langle12\aa\bb\rangle$};
\node (n24) at (\ax*8,-2) {$\cdots$};
\node[anchor=west] (n25) at (\ax*8.6,-2) {\framebox{$\langle12(n{-}1)n\rangle$}};
\node (n31) at (\ax*5,-3) {$\langle1789\rangle$};
\node (n32) at (\ax*6,-3) {$\langle189\aa\rangle$};
\node (n33) at (\ax*7,-3) {$\langle19\aa\bb\rangle$};
\node (n34) at (\ax*8,-3) {$\cdots$};
\node[anchor=west] (n35) at (\ax*8.6,-3) {\framebox{$\langle1(n{-}2)(n{-}1)n\rangle$}};
\node (n41) at (\ax*5,-4) {\dbox{$\langle3789\rangle$}};
\node (n42) at (\ax*6,-4) {\framebox{$\langle789\aa\rangle$}};
\node (n43) at (\ax*7,-4) {\framebox{$\langle89\aa\bb\rangle$}};
\node (n44) at (\ax*8,-4) {$\cdots$};
\node[anchor=west] (n45) at (\ax*8.6,-4) {\framebox{$\langle(n{-}3)(n{-}2)(n{-}1)n\rangle$}};
\node (q8) at (\ax*4,-0.5) {\dbox{$F_8$}};
\node (q7) at (\ax*4,-1.5) {\dbox{$F_7$}};
\node (q3) at (\ax*4,-2.5) {\dbox{$F_3$}};
\node (q2) at (\ax*4,-3.5) {\dbox{$F_2$}};
\node (q1) at (\ax*3,-4) {\dbox{$F_1$}};
\node (q6) at (\ax*3,-3) {\dbox{$F_6$}};
\node (q5) at (\ax*3,-2) {\dbox{$F_5$}};
\node (q4) at (\ax*3,-1) {\dbox{$F_4$}};
\node (q9) at (\ax*3,0) {\dbox{$F_9$}};
\node (f1) at (\ax*2,-3) {\framebox{$\lr{1234}$}};
\node (f2) at (\ax*2,-4) {\framebox{$\lr{2345}$}};
\node (f3) at (\ax*3.9,-4.5) {\framebox{$\lr{3456}$}};
\node (f4) at (\ax*3.9,0.5) {\framebox{$\lr{4567}$}};
\node (f5) at (\ax*2,-0) {\framebox{$\lr{5678}$}};
\node (f6) at (\ax*2,-1) {\framebox{$\lr{6789}$}};
\node (mu) at (\ax*2,-2) {\dbox{$\lr{1236}$}};
\node[gray,circle,draw] (sigma) at (\ax*2.5,1) {$\overline{\Sigma}$};
\draw[->] (q8) -- (q7); 
\draw[->] (q7) -- (q3); 
\draw[->] (q3) -- (q2); 
\draw[->] (q2) -- (q1); 
\draw[->] (q1) -- (q6); 
\draw[->] (q6) -- (q5); 
\draw[->] (q5) -- (q4); 
\draw[->] (q4) -- (q9); 
\draw[->] (q9) -- (q8); 
\draw[->] (n11) -- (q8); 
\draw[->] (q7) -- (n11); 
\draw[->] (n21) -- (q7); 
\draw[->] (q3) -- (n21); 
\draw[->] (n31) -- (q3); 
\draw[->] (q2) -- (n31); 
\draw[->] (n41) -- (q2); 
\draw[->] (f3) -- (n41); 
\draw[->] (q1) -- (f3); 
\draw[->] (f2) -- (q1); 
\draw[->] (q6) -- (f2); 
\draw[->] (f1) -- (q6); 
\draw[->] (q5) -- (f1); 
\draw[->] (mu) -- (q5); 
\draw[->] (f6) -- (mu); 
\draw[->] (f1) -- (mu); 
\draw[->] (q4) -- (f6); 
\draw[->] (f5) -- (q4); 
\draw[->] (q9) -- (f5); 
\draw[->] (f4) -- (q9); 
\draw[->] (n01) -- (f4); 
\draw[->] (q8) -- (n01); 
\draw[->] (n01) -- (n11); 
\foreach \i in {1,2,3} {
\foreach \j [evaluate=\j as \k using int(\j+1)] in {1,...,4} { 
\draw[->] (n\i\j) -- (n\i\k); }}
\foreach \i [evaluate=\i as \k using int(\i+1)] in {1,...,3} {
\foreach \j in {1,...,3} {
\draw[->] (n\i\j) -- (n\k\j);}}
\foreach \i [evaluate=\i as \k using int(\i+1)] in {1,...,3} {
\foreach \j [evaluate=\j as \l using int(\j+1)] in {1,...,4} {
\draw[->] (n\k\l) -- (n\i\j);}}
\end{tikzpicture}
\end{center}
\end{notation}

\begin{notation}
\label{notation:sigma-spurion}
[Seed $\Sigma$ for $\Gr_{4,N'}$]
Let $\Sigma$ be the following rectangles seed for ${\Gr}_{4,N'}$. This is similar to the rectangles seeds appearing in proof for the star promotion.
\begin{center}
\begin{tikzpicture}[scale=1.2, every node/.style={minimum size=0.5cm}, every path/.style={->, thick}, node distance=0.5cm]
\def\ax{1.6}
\node (n00) at (\ax*1,0) {\framebox{$\langle1278\rangle$}};
\node (n11) at (\ax*1,-1) {$\langle1279\rangle$};
\node (n12) at (\ax*2,-1) {$\langle127\aa\rangle$};
\node (n13) at (\ax*3,-1) {$\langle127\bb\rangle$};
\node (n14) at (\ax*4,-1) {$\langle127\cc\rangle$};
\node (n15) at (\ax*5,-1) {$\cdots$};
\node[anchor=west] (n16) at (\ax*5.6,-1) {\framebox{$\langle127n\rangle$}};
\node (n21) at (\ax*1,-2) {$\langle1289\rangle$};
\node (n22) at (\ax*2,-2) {$\langle129\aa\rangle$};
\node (n23) at (\ax*3,-2) {$\langle12\aa\bb\rangle$};
\node (n24) at (\ax*4,-2) {$\langle12\bb\cc\rangle$};
\node (n25) at (\ax*5,-2) {$\cdots$};
\node[anchor=west] (n26) at (\ax*5.6,-2) {\framebox{$\langle12(n{-}1)n\rangle$}};
\node (n31) at (\ax*1,-3) {$\langle1789\rangle$};
\node (n32) at (\ax*2,-3) {$\langle189\aa\rangle$};
\node (n33) at (\ax*3,-3) {$\langle19\aa\bb\rangle$};
\node (n34) at (\ax*4,-3) {$\langle1\aa\bb\cc\rangle$};
\node (n35) at (\ax*5,-3) {$\cdots$};
\node[anchor=west] (n36) at (\ax*5.6,-3) {\framebox{$\langle1(n{-}2)(n{-}1)n\rangle$}};
\node (n41) at (\ax*1,-4) {\framebox{$\langle2789\rangle$}};
\node (n42) at (\ax*2,-4) {\framebox{$\langle789\aa\rangle$}};
\node (n43) at (\ax*3,-4) {\framebox{$\langle89\aa\bb\rangle$}};
\node (n44) at (\ax*4,-4) {\framebox{$\langle9\aa\bb\cc\rangle$}};
\node (n45) at (\ax*5,-4) {$\cdots$};
\node[anchor=west] (n46) at (\ax*5.6,-4) {\framebox{$\langle(n{-}3)(n{-}2)(n{-}1)n\rangle$}};
\node[gray,circle,draw] (sigma) at (0,-1.5) {$\Sigma$};
\foreach \i in {1,2,3} {
\foreach \j [evaluate=\j as \k using int(\j+1)] in {1,...,5} { 
\draw[->] (n\i\j) -- (n\i\k); }}
\foreach \i [evaluate=\i as \k using int(\i+1)] in {1,...,3} {
\foreach \j in {1,...,4} {
\draw[->] (n\i\j) -- (n\k\j);}}
\foreach \i [evaluate=\i as \k using int(\i+1)] in {1,...,3} {
\foreach \j [evaluate=\j as \l using int(\j+1)] in {1,...,5} {
\draw[->] (n\k\l) -- (n\i\j);}}
\draw[<-] (n11) -- (n00);
\end{tikzpicture}
\end{center}
\end{notation}

Note that the mutable variables of both $\overline{\Sigma}$ and $\Sigma$ are in a $3 \times (n-9)$ grid. This gives a natural bijection between the mutable variables of the two seeds, which we use in the next proposition.

\begin{lemma}
\label{lem:spurion-promoted-vars}
The mutable variables in $\overline{\Sigma}$ are proportional to the corresponding mutable variables in $\Psi_{sp}(\Sigma)$.
\end{lemma}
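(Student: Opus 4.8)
The plan is to compute $\Psi_{sp}$ directly on each of the $3(n-9)$ mutable Pl\"ucker coordinates of $\Sigma$ (\cref{notation:sigma-spurion}) and to match each result, up to a Laurent monomial in frozen variables, with the mutable Pl\"ucker coordinate of $\overline{\Sigma}$ (\cref{notation:sigma-bar}) in the same position of the $3\times(n-9)$ grid. Writing $c$ for the column label, which runs over $\{9,10,\dots,n-1\}$, the mutable variables of $\Sigma$ are $x_{1c}=\lr{1\,2\,7\,c}$, $x_{2c}=\lr{1\,2\,(c{-}1)\,c}$ and $x_{3c}=\lr{1\,(c{-}2)\,(c{-}1)\,c}$ in $\Gr_{4,N'}$, while the corresponding mutable variables of $\overline{\Sigma}$ are $\overline{x}_{1c}=\lr{1\,2\,3\,c}$, $\overline{x}_{2c}=\lr{1\,2\,(c{-}1)\,c}$ and $\overline{x}_{3c}=\lr{1\,(c{-}2)\,(c{-}1)\,c}$ in $\Gr_{4,n}$. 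Throughout I will use the linearized forms of the substitutions from \cref{rk:intersections}, namely $2\mapsto 2-\tfrac{F_1}{F_2}\,1$, $\ 7\mapsto 7-\tfrac{F_8}{F_7}\,8+\tfrac{F_9}{F_7}\,9$, $\ 8\mapsto 8-\tfrac{F_9}{F_8}\,9$.

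Rows $2$ and $3$ reduce to vanishing checks. Substituting the column $2$ only adds a multiple of the column $1$, which already occurs in $x_{2c}$ and $x_{3c}$, so it changes nothing; and the columns $7$ or $8$ occur in these two families only in $x_{29}=\lr{1289}$, $x_{39}=\lr{1789}$ and $x_{3,10}=\lr{1\,8\,9\,10}$, where every extra summand produced by $7\mapsto\cdots$ or $8\mapsto\cdots$ involves an index (either $9$ or one already present) that makes the relevant determinant vanish. Hence $\Psi_{sp}(x_{2c})=\overline{x}_{2c}$ and $\Psi_{sp}(x_{3c})=\overline{x}_{3c}$, with trivial frozen factor.

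The content is entirely in row $1$. Substituting the column $2$ again changes nothing, so $\Psi_{sp}(\lr{1\,2\,7\,c})=\det[\,v_1,\ v_2,\ \widetilde{v}_7,\ v_c\,]$ with $\widetilde{v}_7=v_7-\tfrac{F_8}{F_7}v_8+\tfrac{F_9}{F_7}v_9$, i.e.\ $F_7\widetilde{v}_7=F_7v_7-F_8v_8+F_9v_9$. The key ingredient is the Grassmann--Cayley identity
\[
F_7v_7-F_8v_8+F_9v_9 \;=\; 123 \shuf 456 \shuf 789 \;=\; F_1v_1-F_2v_2+F_3v_3 ,
\]
which is precisely the pair of expansions of $123 \shuf 456 \shuf 789$ in \cref{ex:intersections} (equivalently, \cref{rk:intersections}, or a direct application of \cref{lem:GC-easy-reln}). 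Substituting this and using $\det[v_1,v_2,v_1,v_c]=\det[v_1,v_2,v_2,v_c]=0$ yields
\[
\Psi_{sp}(\lr{1\,2\,7\,c}) \;=\; \frac{1}{F_7}\det[\,v_1,\ v_2,\ F_1v_1-F_2v_2+F_3v_3,\ v_c\,] \;=\; \frac{F_3}{F_7}\,\lr{1\,2\,3\,c} \;=\; \frac{F_3}{F_7}\,\overline{x}_{1c} .
\]
Since $F_3$ and $F_7$ are frozen variables of $\overline{\Sigma}$ (see \cref{notation:sigma-bar}), $F_3/F_7\in\overline{\PP}$, so $\Psi_{sp}(x_{1c})\propto\overline{x}_{1c}$. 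Combining the three families of columns gives the lemma.

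I expect essentially all the difficulty to be bookkeeping: pinning down the exact range of the column index $c$ and the positions of the frozen Pl\"uckers in both seeds, and disposing of the small columns $c=9,10$ (where the ``window'' indices collide with $7,8,9$) by the vanishing argument above. There is no algebra beyond the single identity $F_7v_7-F_8v_8+F_9v_9=F_1v_1-F_2v_2+F_3v_3$; this is why the lemma is naturally established together with the explicit construction of the ambient seed $\overline{\Sigma}$ in the proof of \cref{thm:spurion-upper}.
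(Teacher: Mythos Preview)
Your proposal is correct and follows essentially the same approach as the paper: direct computation of $\Psi_{sp}$ on each mutable Pl\"ucker coordinate via the linearized substitutions of \cref{rk:intersections}, with the row-$1$ case reduced to the single identity $F_7v_7-F_8v_8+F_9v_9=F_1v_1-F_2v_2+F_3v_3$. The only difference is that the paper also records $\Psi_{sp}(\lr{1278})=\tfrac{F_3}{F_8}\lr{1237}$ and $\Psi_{sp}(\lr{2789})=\tfrac{F_3}{F_2}\lr{3789}$ for the frozen corner variables, which are not needed for this lemma but are used in the subsequent exchange-ratio verification.
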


\begin{proof}
We apply the map $\Psi_{sp}$ to the variables in $\Sigma$, and substitute the vectors $2$, $7$, and $8$ using the formulas in \cref{rk:intersections}. 

The variables $\lr{19\aa\bb}$, $\lr{9\aa\bb\cc}$ and everything to their right in the third and fourth rows of both seeds are clearly fixed by $\Psi_{sp}$. The variable $\lr{89\aa\bb}$ is also fixed, since $8 \mapsto 8 - \tfrac{F_9}{F_8} 9$ but $\lr{99\aa\bb}=0$, and similarly $\lr{189\aa}$ is fixed as well. Also $\lr{129\aa}$ is fixed since $2 \mapsto 2 - \tfrac{F_1}{F_2}1$ and $\lr{119\aa}=0$, and same for all variables to its right of the form $\lr{12ij}$ in the second row of both seeds. The variable $\lr{1289}$ is also fixed by combining the two previous arguments for $12$ and $89$. Similarly, the variables $\lr{789\aa}$ and $\lr{1789}$ are fixed by combining the substitution rules $7 \mapsto 7-\tfrac{F_8}{F_7} 8+\tfrac{F_9}{F_7} 9 $ and~$8 \mapsto 8 - \tfrac{F_9}{F_8} 9$.

The remaining variables are $\lr{1278}$, $\lr{2789}$, and all variables of the form $\lr{127i}$ for $i \geq 9$ in the first row of~$\Sigma$. We compute the images of these variables under $\Psi_{sp}$ using \cref{rk:intersections}:
\begin{align*}
\Psi_{sp}\big(\lr{127i}\big) \;&=\; \left\langle1\left(2 - \tfrac{F_1}{F_2}1\right)\left(\tfrac{F_1}{F_7}1 - \tfrac{F_2}{F_7}2 + \tfrac{F_3}{F_7}3\right) i \right\rangle \;=\; \frac{F_3}{F_7} \lr{123i} \\
\Psi_{sp}\big(\lr{1278}\big) \;&=\; \left\langle1\left(2 - \tfrac{F_1}{F_2}1\right)\left(\tfrac{F_1}{F_7}1 - \tfrac{F_2}{F_7}2 + \tfrac{F_3}{F_7}3\right) \left(\tfrac{F_7}{F_8}7 - \tfrac{F_3}{F_8}3 + \tfrac{F_2}{F_8}2 - \tfrac{F_1}{F_8}1\right)\right\rangle \;=\; \frac{F_3}{F_8} \lr{1237} \\
\Psi_{sp}\big(\lr{2789}\big) \;&=\; \left\langle\left(\tfrac{F_3}{F_2}3 - \tfrac{F_7}{F_2}7 + \tfrac{F_8}{F_2}8 - \tfrac{F_9}{F_2}9 \right)\left(7-\tfrac{F_8}{F_7} 8+\tfrac{F_9}{F_7} 9\right)\left(8 - \tfrac{F_9}{F_8} 9\right)9\right\rangle \;=\; \frac{F_3}{F_2} \lr{3789}
\end{align*}
\end{proof}

\begin{lemma}
\label{lem:spurion-promoted-ratios}
Applying $\Psi_{sp}$ to the exchange ratio of a mutable variable in $\Sigma$ gives the exchange ratio of the corresponding mutable variable in $\overline{\Sigma}$.
\end{lemma}

\begin{proof}
If $n=9$ then there are no mutable variables in~$\Sigma$, so suppose $n \geq 10$. By the proof of the previous lemma, the images of mutable variables of the form $\lr{127i}$ in~$\Sigma$ are $\Psi_{sp}(\lr{127i}) = \tfrac{F_3}{F_7}\lr{123i} \propto \lr{123i}$ in~$\overline{\Sigma}$, since $F_3$ and $F_7$ are frozen. The other mutable variables in~$\Sigma$ map under~$\Psi_{sp}$ to identical mutable variables in~$\overline{\Sigma}$ without frozen factors. 

We verify that the exchange ratios of mutable variables in~$\Sigma$ map to the exchange ratio of their proportional mutable images. For the three variables in the leftmost column, applying \cref{def:exchage-ratios} and using the computations from the proof of the previous lemma, we obtain:
\begin{align*}
\Psi_{sp}\left(\hat{y}_{\Sigma}\big(\lr{1279}\big)\right) \;&=\; \Psi_{sp} \left(\frac{\lr{127\aa}\lr{1289}}{\lr{1278}\lr{129\aa}}\right) \;=\; \frac{\tfrac{F_3}{F_7}\lr{123\aa}\lr{1289}}{\tfrac{F_3}{F_8}\lr{1237}\lr{129A}} \;=\; \hat{y}_{\overline{\Sigma}}\big(\lr{1239}\big) \\
\Psi_{sp}\left(\hat{y}_{\Sigma}\big(\lr{1289}\big)\right) \;&=\; \Psi_{sp} \left(\frac{\lr{129\aa}\lr{1789}}{\lr{1279}\lr{189\aa}}\right) \;=\; \frac{\lr{129\aa}\lr{1789}}{\tfrac{F_3}{F_7}\lr{1239}\lr{189A}} \;=\; \hat{y}_{\overline{\Sigma}}\big(\lr{1289}\big) \\
\Psi_{sp}\left(\hat{y}_{\Sigma}\big(\lr{1789}\big)\right) \;&=\; \Psi_{sp} \left(\frac{\lr{189\aa}\lr{2789}}{\lr{1289}\lr{789\aa}}\right) \;=\; \frac{\lr{189\aa}\tfrac{F_3}{F_2}\lr{3789}}{\lr{1289}\lr{789A}} \;=\; \hat{y}_{\overline{\Sigma}}\big(\lr{1789}\big)
\end{align*}
For the remaining mutable variables of $\Sigma$ in the first row, $\lr{127i}$ for $9 < i < n$, we similarly verify:
$$ \Psi_{sp}\left(\hat{y}_{\Sigma}\big(\lr{127i}\big)\right) \;=\; \Psi_{sp}\left(\frac{\lr{127(i{+}1)}\lr{12(i{-}1)i}}{\lr{127(i{-}1)}\lr{12i(i{+}1)}}\right) \;=\; \frac{\tfrac{F_3}{F_7}\lr{123(i{+}1)}\lr{12(i{-}1)i}}{\tfrac{F_3}{F_7}\lr{123(i{-}1)}\lr{12i(i{+}1)}} \;=\; \hat{y}_{\overline{\Sigma}}\big(\lr{123i}\big) $$
Also for the remaining mutables in the second row of both quivers, $\lr{12(i{-}1)i}$ for $9 < i < n$:
\begin{align*} \Psi_{sp}\left(\hat{y}_{\Sigma}\big(\lr{12(i{-}1)i}\big)\right) \;&=\; \Psi_{sp}\left(\frac{\lr{12i(i{+}1)}\,\lr{1(i{-}2)(i{-}1)i}\,\lr{127(i{-}1)}}{\lr{127i}\,\lr{12(i{-}2)(i{-}1)}\,\lr{1(i{-}1)i(i{+}1)}}\right) \\
\;&=\; \frac{\lr{12i(i{+}1)}\,\lr{1(i{-}2)(i{-}1)i}\,\tfrac{F_3}{F_7}\lr{123(i{-}1)}}{\tfrac{F_3}{F_7}\lr{123i}\,\lr{12(i{-}2)(i{-}1)}\,\lr{1(i{-}1)i(i{+}1)}} \;=\; \hat{y}_{\overline{\Sigma}}\big(\lr{12(i{-}1)i}\big)
\end{align*}
Finally, the verification for mutable variables of the form $\lr{1(i{-}2)(i{-}1)i}$ for $9 < i < n$ appearing in the third row of both quivers is omitted, because these variables and their six neighbors are all fixed by~$\Psi_{sp}$, as shown in the proof of \cref{lem:spurion-promoted-vars}.
\end{proof}

\begin{lemma}
\label{lem:spurion-promotion-mutations} Let $\overline{\overline{\Sigma}}$ be the seed obtained from $\overline{\Sigma}$ by unfreezing the twelve dashed variables in \cref{notation:sigma-bar}. Then $\overline{\overline{\Sigma}}$ is a seed for ${\Gr}_{4,n}$.
\end{lemma}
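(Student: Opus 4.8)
The plan is to exhibit an explicit finite sequence of seed mutations starting from the rectangles seed $\Sigma_{4,n}$ for $\widehat{\Gr}_{4,n}$ (\cref{sec:cluster}) and ending at $\overline{\overline{\Sigma}}$; since mutation-equivalent labeled seeds generate the same cluster algebra, this proves the claim. A first consistency check is the count of mutable variables: the $3\times(n-9)$ grid of Plücker coordinates in \cref{notation:sigma-bar} together with the twelve unfrozen variables gives $3(n-9)+12=3n-15$ mutable variables, matching the rank of the $\Gr_{4,n}$ cluster algebra, and the $n$ boxed variables are Plücker coordinates and will serve as frozens. The structural point that makes the argument feasible is that $\overline{\overline{\Sigma}}$ agrees with a rectangles-type seed for $\Gr_{4,n}$ everywhere outside a bounded ``corner'' carrying indices in $\{1,\dots,9\}$: the Plückers $\lr{123i}$, $\lr{12(i-1)i}$, $\lr{1(i-2)(i-1)i}$ with $i\ge 10$, together with the staircase of frozens, are already in standard grid form. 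Hence the mutation sequence can be localized near this corner and has length independent of $n$.

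Carrying this out I would proceed in two stages. \textbf{Stage one:} reshape the top-left corner of $\Sigma_{4,n}$ using only short (three-term) Plücker relations, so that every new cluster variable is again a Plücker coordinate; this is where $\lr{1237}$, $\lr{3789}$, $\lr{1236}$ and the various intermediate Plückers appear, and the identifications are routine. \textbf{Stage two:} perform the remaining mutations that produce the nine chain polynomials $F_1,\dots,F_9$ (obtained from $\lr{123\shuf 456\shuf 789}$ by deleting one index). Here the relevant exchange relations are no longer short Plücker relations, and one identifies the output of each using the Grassmann–Cayley identities of \cref{lem:GC-easy-reln} and \cref{ex:intersections} (equivalently, the chain-polynomial exchange relations worked out in \cite{even2023cluster}). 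At each mutation one also applies quiver mutation and checks agreement with \cref{notation:sigma-bar}; in particular the cyclic block on $F_1,\dots,F_9$ together with $\lr{1234},\dots,\lr{6789}$ and $\lr{1236}$ emerges exactly because these are the cluster variables of the bounded sub-structure supported on columns $\{1,\dots,9\}$.

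I expect the main obstacle to be combinatorial bookkeeping rather than anything conceptual: there are on the order of a dozen corner mutations, and for each one must verify both the exchange relation (which cluster variable results, via $\GC(4)$-computation) and the local quiver change, and then confirm that the accumulated quiver is precisely that of \cref{notation:sigma-bar} with the twelve dashed vertices mutable. A secondary point requiring care is the small cases, above all $n=9$ (where the $3\times(n-9)$ grid is empty, so $\overline{\overline{\Sigma}}$ should literally be a seed for $\Gr_{4,9}$) and $n=10,11$ (one or two columns); one checks that the corner sequence still applies, or verifies these finitely many cases by hand. If one prefers to avoid an explicit global sequence, an alternative is to establish the $n=9$ case directly and then induct on $n$, adjoining one grid column at a time via a short mutation sequence localized at the new column; this confines the only genuinely non-Plücker computation to the single bounded case.
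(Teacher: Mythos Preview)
Your proposal is correct and follows essentially the same approach as the paper: both exhibit an explicit mutation sequence from the rectangles seed for $\Gr_{4,n}$ to $\overline{\overline{\Sigma}}$, localized to a bounded corner involving only indices $\{1,\dots,9\}$ and hence of length independent of $n$. The paper carries this out with an explicit list of eleven mutations (each with its exchange relation identified as a chain polynomial $F_i$ or a Pl\"ucker); note that in practice the chain polynomials appear early rather than after a purely-Pl\"ucker stage, so your two-stage separation may not survive literally, but the substance is the same.
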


\begin{proof}
We present a sequence of mutations that goes from a rectangles seed for $\Gr_{4,n}$ to $\overline{\overline{\Sigma}}$. We start from the following rectangles seed, similar the ones used above:
\nopagebreak
\begin{center}
\begin{tikzpicture}[scale=1.2, every node/.style={minimum size=0.5cm}, every path/.style={->, thick}, node distance=0.5cm]
\def\cols{7}
\def\rows{4}
\def\ax{1.44}
\node (n00) at (\ax*1,0) {\framebox{$\langle1234\rangle$}};
\node (n11) at (\ax*1,-1) {$\langle1235\rangle$};
\node (n12) at (\ax*2,-1) {$\langle1236\rangle$};
\node (n13) at (\ax*3,-1) {$\langle1237\rangle$};
\node (n14) at (\ax*4,-1) {$\langle1238\rangle$};
\node (n15) at (\ax*5,-1) {$\langle1239\rangle$};
\node (n16) at (\ax*6,-1) {$\cdots$};
\node[anchor=west] (n17) at (\ax*6.6,-1) {\framebox{$\langle123n\rangle$}};
\node (n21) at (\ax*1,-2) {$\langle1245\rangle$};
\node (n22) at (\ax*2,-2) {$\langle1256\rangle$};
\node (n23) at (\ax*3,-2) {$\langle1267\rangle$};
\node (n24) at (\ax*4,-2) {$\langle1278\rangle$};
\node (n25) at (\ax*5,-2) {$\langle1289\rangle$};
\node (n26) at (\ax*6,-2) {$\cdots$};
\node[anchor=west] (n27) at (\ax*6.6,-2) {\framebox{$\langle12(n{-}1)n\rangle$}};
\node (n31) at (\ax*1,-3) {$\langle1345\rangle$};
\node (n32) at (\ax*2,-3) {$\langle1456\rangle$};
\node (n33) at (\ax*3,-3) {$\langle1567\rangle$};
\node (n34) at (\ax*4,-3) {$\langle1678\rangle$};
\node (n35) at (\ax*5,-3) {$\langle1789\rangle$};
\node (n36) at (\ax*6,-3) {$\cdots$};
\node[anchor=west] (n37) at (\ax*6.6,-3) {\framebox{$\langle1(n{-}2)(n{-}1)n\rangle$}};
\node (n41) at (\ax*1,-4) {\framebox{$\langle2345\rangle$}};
\node (n42) at (\ax*2,-4) {\framebox{$\langle3456\rangle$}};
\node (n43) at (\ax*3,-4) {\framebox{$\langle4567\rangle$}};
\node (n44) at (\ax*4,-4) {\framebox{$\langle5678\rangle$}};
\node (n45) at (\ax*5,-4) {\framebox{$\langle6789\rangle$}};
\node (n46) at (\ax*6,-4) {$\cdots$};
\node[anchor=west] (n47) at (\ax*6.6,-4) {\framebox{$\langle(n{-}3)(n{-}2)(n{-}1)n\rangle$}};
\node[gray,circle,draw] (sigma) at (0,-1.5) {$\Tilde\Sigma$};
\foreach \i in {1,2,3} {
\foreach \j [evaluate=\j as \k using int(\j+1)] in {1,...,6} { 
\draw[->] (n\i\j) -- (n\i\k); }}
\foreach \i [evaluate=\i as \k using int(\i+1)] in {1,...,3} {
\foreach \j in {1,...,5} {
\draw[->] (n\i\j) -- (n\k\j);}}
\foreach \i [evaluate=\i as \k using int(\i+1)] in {1,...,3} {
\foreach \j [evaluate=\j as \l using int(\j+1)] in {1,...,6} {
\draw[->] (n\k\l) -- (n\i\j);}}
\draw[<-] (n11) -- (n00);
\end{tikzpicture}
\end{center}
We then apply the following sequence of $11$ mutations. For each mutation in this list, we give the variable before mutation, the expression for the variable after mutation using the exchange relation in the seed, and a polynomial-in-Pl\"uckers expression for the variable after mutation.   
\begin{enumerate}[topsep=1em, itemsep=1em, leftmargin=2em, labelsep=1em, label=(\alph*)]
\item 
$\displaystyle\lr{1678} \;\mapsto\;  \frac{\lr{1278}\lr{1567}\lr{6789}+\lr{1267}\lr{1789}\lr{5678}}{\lr{1678}} \;=\; \lr{12 \shuf 567 \shuf 789} $
\item 
$\displaystyle \lr{1567} \;\mapsto\; \frac{\lr{1456}\lr{12 \shuf 567 \shuf 789}+\lr{1256}\lr{1789}\lr{4567}}{\lr{1567}} \;=\; \lr{12 \shuf 456 \shuf 789} \;=\; F_3$
\item 
$\displaystyle \lr{1267} \;\mapsto\; \frac{\lr{1237}\lr{1256}\lr{6789}+\lr{1236}\lr{12 \shuf 567 \shuf 789}}{\lr{1267}} \;=\; \lr{123 \shuf 56 \shuf 789} \;=\; F_4$
\item 
$\displaystyle \lr{1256} \;\mapsto\;   \frac{\lr{1245}\lr{123 \shuf 56 \shuf 789}+\lr{1235}\lr{12 \shuf 456 \shuf 789}}{\lr{1256}} \;=\; \lr{123 \shuf 45 \shuf 789} \;=\; F_6 $
\item 
$\displaystyle \lr{1245} \;\mapsto\;  \frac{\lr{123 \shuf 45 \shuf 789}\lr{1456}+\lr{1345}\lr{12 \shuf 456 \shuf 789}}{\lr{1245}} \;=\; \lr{13 \shuf 456 \shuf 789} \;=\; F_2$
\item 
$\displaystyle \lr{1456} \;\mapsto\;  \frac{\lr{13 \shuf 456 \shuf 789}+\lr{1789}\lr{3456}}{\lr{1456}} \;=\; \lr{3789}$
\item 
$\displaystyle \lr{1345} \;\mapsto\;  \frac{\lr{123 \shuf 45 \shuf 789}\lr{3456}+\lr{13 \shuf 456 \shuf 789}\lr{2345}}{\lr{1345}} \;=\; \lr{23 \shuf 456 \shuf 789} \;=\; F_1$
\item 
$\displaystyle \lr{12 \shuf 567 \shuf 789} \;\mapsto\; \frac{\lr{1237}F_3\lr{5678}+F_4\lr{1278}\lr{4567}}{\lr{12 \shuf 567 \shuf 789}} \;=\; \lr{123 \shuf 456 \shuf 78} \;=\; F_9$
\item 
$\displaystyle \lr{1278} \;\mapsto\;  \;=\; \frac{\lr{1238}\lr{12 \shuf 456 \shuf 789}+\lr{1289}\lr{123 \shuf 456 \shuf 78}}{\lr{1278}} \;=\; \lr{123 \shuf 456 \shuf 89} \;=\; F_7$
\item 
$\displaystyle \lr{1238} \;\mapsto\;  \frac{\lr{1237}\lr{123 \shuf 456 \shuf 89}+\lr{1239}\lr{123 \shuf 456 \shuf 78}}{\lr{1238}} \;=\; \lr{123 \shuf 456 \shuf 79} \;=\; F_8$
\item 
$\displaystyle \lr{1235} \;\mapsto\;  \frac{\lr{123 \shuf 56 \shuf 789}\lr{1234}+\lr{1236}\lr{123 \shuf 45 \shuf 789}}{\lr{1235}} \;=\; \lr{123 \shuf 46 \shuf 789} \;=\; F_5$
\end{enumerate}
After applying these mutations, we end up with $\overline{\overline{\Sigma}}$.
\end{proof}

\begin{proof}[Proof of \cref{thm:spurion-upper}]

\cref{lem:spurion-promotion-mutations} verifies that $\overline{\Sigma}$ is a freezing of a seed for $\Gr_{4,n}$.
We verify that $\Psi_{sp} : \mathcal{A}(\Sigma) \to \mathcal{A}(\overline{\Sigma})$ is a quasi-cluster homomorphism as in \cref{def:quasi}. Our calculations in \cref{lem:spurion-promoted-vars} show that the image $\Psi_{sp}(x)$ of every mutable cluster variable $x$ in $\Sigma$ is proportional to a mutable variable $\bar{x}$ in $\overline{\Sigma}$, up to a Laurent monomial in the frozen variables $\{F_2,F_3,F_7,F_8\}$. This verifies condition (1): $\Psi_{sp}(x) \propto \bar{x}$. Our calculations in \cref{lem:spurion-promoted-ratios} verify condition (2), that $\Psi_{sp}$ maps the exchange ratio of $x$ to the exchange ratio of~$\bar{x}$, as required.
\end{proof}

\subsection{Chain-tree promotion} We show that $\Psi_{ch}: \CC(\Gr_{4,N'}) \;\rightarrow\; \CC(\Gr_{4,n})$ from \cref{def:chain_promotion} is a quasi-cluster homomorphism. Here $N'=\{1,2,3,\bb,\cc,\dd,\ldots,n\}$.

\begin{theorem}\label{thm:chain-tree-prom}
Chain-tree promotion $\Psi_{ch}$ is a quasi-cluster homomorphism
of cluster algebras from 
$\mathcal{A}(\Sigma)$ to 
$\mathcal{A}(\overline{\Sigma})$, where $\Sigma$ is a seed for ${\Gr}_{4,N'}$ and $\overline{\Sigma}$ is the seed for ${\Gr}_{4,n}$ with some variables frozen, shown in \cref{not:chain-sigma-bar}.
\end{theorem}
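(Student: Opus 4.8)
The plan is to run the same argument that established \cref{thm:upper} and \cref{thm:spurion-upper}. First I would write down an explicit seed $\Sigma$ for $\Gr_{4,N'}$ of rectangles type, adapted to the index set $N'=\{1,2,3,\bb,\cc,\dd,\ldots,n\}$ (so its mutable variables sit in a $3\times(n-13)$ grid, just as the spurion seed in \cref{notation:sigma-spurion} had a $3\times(n-9)$ grid). Then I would produce a candidate seed $\overline{\Sigma}$ for $\Gr_{4,n}$ with some Plücker coordinates and some chain polynomials frozen, with a natural bijection to the mutable variables of $\Sigma$. The map $\Psi_{ch}$ itself is already pinned down by the brushed tangle in \cref{fig:chain} and the substitutions in \cref{def:chain_promotion}, so the work is to (i) verify $\Psi_{ch}(x)\propto \bar x$ for each mutable $x$, (ii) verify $\Psi_{ch}(\hat y_\Sigma(x))=\hat y_{\overline\Sigma}(\bar x)$, and (iii) certify that $\overline{\Sigma}$ is genuinely a freezing of a seed for $\Gr_{4,n}$.

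For (i) and (ii) I would first rewrite the four substitutions for $2,3,\bb,\cc$, as in \cref{rk:intersections}, using the Grassmann--Cayley identities of \cref{ex:intersections} and the general relation \cref{lem:GC-easy-reln}, so that each substituted vector becomes a linear combination of the original vectors whose coefficients are ratios of the chain polynomials $F_i$ obtained by deleting an index from $\lr{(123 \shuf 456)7(89\aa \shuf \bb\cc\dd)}$. As in the spurion case, the cluster variables of $\Sigma$ not touching $2,3,\bb,\cc$ (or touching them only in positions forcing the substituted terms to vanish) are fixed outright, and the rest acquire a single frozen factor $F_i/F_j$; the exchange-ratio check is then a finite computation along the $3$-row grid, identical in spirit to \cref{lem:spurion-promoted-ratios}. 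To know that the $F_i$ really are cluster (or frozen) variables of $\widehat{\Gr}_{4,n}$, I would invoke \cref{cor:irreducible-quadratics} together with its cyclic-shift refinement and the ``spreading out indices'' and ``cyclic shift'' operations from \cite{even2023cluster} (the relevant brackets all have a size-$3=m-1$ block, so these results apply after the appropriate reductions).

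The main obstacle I expect is (iii): finding the exact frozen seed $\overline{\Sigma}$ and an explicit mutation sequence from a rectangles seed for $\Gr_{4,n}$ that reaches the unfreezing of $\overline{\Sigma}$. Since the $k=3$ chain tree is morally two spurion-type ends joined by a central chain, $\overline{\Sigma}$ should be an enlargement of the spurion seed of \cref{notation:sigma-bar} carrying two copies of the ``frozen-$F_i$ gadget'', and the mutation path of \cref{lem:spurion-promotion-mutations} will have to be run once near each end; the care needed is to choose the orderings so that the two blocks of mutations are supported on disjoint regions of the quiver and do not interfere. The other delicate point is the bookkeeping of signs and frozen factors through the doubly-iterated substitutions, e.g.\ $\cc\mapsto \lr{\cc\dd \shuf (\bb(89\aa \shuf (7(456 \shuf 123))))}/F_C$, where associativity of $\shuf$ and \cref{lem:GC-easy-reln} do the algebra but organizing it into a checkable computation is the real content. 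Once $\overline{\Sigma}$ and its mutation sequence are in place, conditions (1) and (2) of \cref{def:quasi} follow exactly as in the proof of \cref{thm:spurion-upper}.
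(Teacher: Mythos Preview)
Your three-step plan (proportionality of mutables, matching of exchange ratios, explicit mutation sequence to reach the unfreezing of~$\overline{\Sigma}$) is exactly the paper's approach; the proof is assembled from \cref{lem:chain-promoted-vars}, \cref{lem:chain-tree-promoted-ratios}, and \cref{lem:chain-tree-promotion-mutations}.

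Two of your expectations about the details are off and would mislead you if you took them at face value. First, it is \emph{not} true that every promoted mutable is a Pl\"ucker coordinate times a frozen factor $F_i/F_j$. In the leftmost mutable column of $\Sigma$ the images are genuinely new quartic-in-Pl\"uckers variables
\[
X_1=\lr{123\bb}F_\bb-\lr{1237}F_7,\quad
X_2=\lr{127\bb}F_7-\lr{123\bb}F_3,\quad
X_3=\lr{137\bb}F_3-\lr{127\bb}F_2,\quad
X_4=\lr{237\bb}F_2-\lr{137\bb}F_1,
\]
divided by frozens (see \cref{not:chain-sigma-bar} and \cref{lem:chain-promoted-vars}). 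So $\overline{\Sigma}$ contains these $X_i$ as cluster variables, not just Pl\"uckers and chain polynomials. Second, the $F_i$ here are \emph{cubic} in Pl\"ucker coordinates (they come from deleting an index from $\lr{(123\shuf456)7(89\aa\shuf\bb\cc\dd)}$), so \cref{cor:irreducible-quadratics} does not apply to them; the paper establishes that the $F_i$ and the $X_i$ are cluster variables only via the explicit $50$-step mutation sequence in \cref{lem:chain-tree-promotion-mutations}. That sequence does not factor into two disjoint spurion-type blocks as you anticipate; the two ends of the chain interact through the central vertex $7$, and the resulting $\overline{\Sigma}$ (and the path to it) is substantially more intricate than a doubled spurion seed.
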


\begin{notation}
\label{not:chain-sigma-bar}
Let $\overline{\Sigma}$ be the following seed, which will be shown in the proof to be the image under $\Psi_{ch}$ of a seed for $\Gr_{4,N'}$ on the one hand, and obtained from a seed for $\Gr_{4,n}$ by freezing on the other hand. As before, the variables in dashed boxes are mutable in the $\Gr_{4,n}$ seed and frozen in $\overline{\Sigma}$. Arrows between such frozen variables are omitted, so many of them are isolated, because the full $\Gr_{4,n}$ quiver is complicated and nonplanar.
\begin{center}
\begin{tikzpicture}[scale=1.2, every node/.style={minimum size=0.5cm}, every path/.style={->, thick}, node distance=0.5cm]
\def\cols{7}
\def\rows{4}
\def\ax{1.6}
\node (n01) at (\ax*5,0) {\dbox{$\langle1237\rangle$}};
\node (n11) at (\ax*5,-1) {$X_1$};
\node (n12) at (\ax*6,-1) {$\langle123\dd\rangle$};
\node (n13) at (\ax*7,-1) {$\langle123\ee\rangle$};
\node (n14) at (\ax*8,-1) {$\cdots$};
\node[anchor=west] (n15) at (\ax*8.6,-1) {\framebox{$\langle123n\rangle$}};
\node (n21) at (\ax*5,-2) {$X_2$};
\node (n22) at (\ax*6,-2) {$\langle12\cc\dd\rangle$};
\node (n23) at (\ax*7,-2) {$\langle12\dd\ee\rangle$};
\node (n24) at (\ax*8,-2) {$\cdots$};
\node[anchor=west] (n25) at (\ax*8.6,-2) {\framebox{$\langle12(n{-}1)n\rangle$}};
\node (n31) at (\ax*5,-3) {$X_3$};
\node (n32) at (\ax*6,-3) {$\langle1\bb\cc\dd\rangle$};
\node (n33) at (\ax*7,-3) {$\langle1\cc\dd\ee\rangle$};
\node (n34) at (\ax*8,-3) {$\cdots$};
\node[anchor=west] (n35) at (\ax*8.6,-3) {\framebox{$\langle1(n{-}2)(n{-}1)n\rangle$}};
\node (n41) at (\ax*5,-4) {\dbox{$X_4$}};
\node (n42) at (\ax*6,-4) {\framebox{$\langle\aa\bb\cc\dd\rangle$}};
\node (n43) at (\ax*7,-4) {\framebox{$\langle\bb\cc\dd\ee\rangle$}};
\node (n44) at (\ax*8,-4) {$\cdots$};
\node[anchor=west] (n45) at (\ax*8.6,-4) {\framebox{$\langle(n{-}3)(n{-}2)(n{-}1)n\rangle$}};
\node (qc) at (\ax*6,0) {\dbox{$F_\cc$}};
\node (qb) at (\ax*4,-0.5) {\dbox{$F_\bb$}};
\node (q7) at (\ax*4,-1.5) {\dbox{$F_7$}};
\node (q3) at (\ax*4,-2.5) {\dbox{$F_3$}};
\node (q2) at (\ax*4,-3.5) {\dbox{$F_2$}};
\node[gray,circle,draw] (sigma) at (\ax*3,-1) {$\overline{\Sigma}$};
\draw[->] (n11) -- (qb); 
\draw[->] (q7) -- (n11); 
\draw[->] (n12) -- (qc); 
\draw[->] (qc) -- (n11); 
\draw[->] (n21) -- (q7); 
\draw[->] (q3) -- (n21); 
\draw[->] (n31) -- (q3); 
\draw[->] (q2) -- (n31); 
\draw[->] (n01) -- (n11); 
\foreach \i in {1,2,3} {
\foreach \j [evaluate=\j as \k using int(\j+1)] in {1,...,4} { 
\draw[->] (n\i\j) -- (n\i\k); }}
\foreach \i [evaluate=\i as \k using int(\i+1)] in {1,...,3} {
\foreach \j in {1,...,3} {
\draw[->] (n\i\j) -- (n\k\j);}}
\foreach \i [evaluate=\i as \k using int(\i+1)] in {1,...,3} {
\foreach \j [evaluate=\j as \l using int(\j+1)] in {1,...,4} {
\draw[->] (n\k\l) -- (n\i\j);}}
\end{tikzpicture}
\\ ~ \\
\begin{tikzpicture}[scale=1.2, every node/.style={minimum size=0.5cm}, every path/.style={->, thick}, node distance=0.5cm]
\def\ax{1.6}
\node at (\ax*3,-5) {\dbox{$F_1$}};
\node at (\ax*4,-5) {\dbox{$F_4$}};
\node at (\ax*5,-5) {\dbox{$F_5$}};
\node at (\ax*6,-5) {\dbox{$F_6$}};
\node at (\ax*7,-5) {\dbox{$F_8$}};
\node at (\ax*8,-5) {\dbox{$F_9$}};
\node at (\ax*9,-5) {\dbox{$F_\aa$}};
\node at (\ax*10,-5) {\dbox{$F_\dd$}};
\node at (\ax*3,-6) {\dbox{$\lr{1236}$}};
\node at (\ax*4,-6) {\dbox{$\lr{7\bb\cc\dd}$}};
\node at (\ax*5,-6) {\dbox{$\lr{8\bb\cc\dd}$}};
\node at (\ax*6.5,-6) {\dbox{$\lr{123 \shuf 456 \shuf 78}$}};
\node at (\ax*8.25,-6) {\dbox{$\lr{123 \shuf 456 \shuf 7\bb}$}};
\node at (\ax*10,-6) {\dbox{$\lr{123 \shuf 56 \shuf 789}$}};
\node at (\ax*3.0,-7) {\fbox{$\lr{1234}$}};
\node at (\ax*3.9,-7) {\fbox{$\lr{2345}$}};
\node at (\ax*4.8,-7) {\fbox{$\lr{3456}$}};
\node at (\ax*5.7,-7) {\fbox{$\lr{4567}$}};
\node at (\ax*6.6,-7) {\fbox{$\lr{5678}$}};
\node at (\ax*7.5,-7) {\fbox{$\lr{6789}$}};
\node at (\ax*8.4,-7) {\fbox{$\lr{789\aa}$}};
\node at (\ax*9.3,-7) {\fbox{$\lr{89\aa\bb}$}};
\node at (\ax*10.2,-7) {\fbox{$\lr{9\aa\bb\cc}$}};
\end{tikzpicture}
\end{center}
Recall from \cref{def:chain_promotion} that the $13$ variables $F_i$ are obtained by erasing the label $i$ from the expression $\lr{(123 \shuf 456) 7 (89\aa \shuf \bb\cc\dd)}$. Hence $F_i$ are cubic polynomials in Pl\"ucker coordinates. In addition $\overline{\Sigma}$ contains the following four quartic variables:
\begin{align*}
X_1 &\;=\; \lr{123B} F_B-\lr{1237}F_7 &&
X_2 \;=\; \lr{127B} F_7-\lr{123B}F_3 \\
X_3 &\;=\; \lr{137B} F_3-\lr{127B}F_2 &&
X_4 \;=\; \lr{237B} F_2-\lr{137B}F_1 \\
\end{align*}
\end{notation}

\begin{notation}
\label{not:chain-sigma}
Let $\Sigma$ be the following rectangles seed for ${\Gr}_{4,N'}$.
\begin{center}
\begin{tikzpicture}[scale=1.2, every node/.style={minimum size=0.5cm}, every path/.style={->, thick}, node distance=0.5cm]
\def\ax{1.6}
\node (n00) at (\ax*1,0) {\framebox{$\langle123\bb\rangle$}};
\node (n11) at (\ax*1,-1) {$\langle123\cc\rangle$};
\node (n12) at (\ax*2,-1) {$\langle123\dd\rangle$};
\node (n13) at (\ax*3,-1) {$\langle123\ee\rangle$};
\node (n14) at (\ax*4,-1) {$\cdots$};
\node[anchor=west] (n15) at (\ax*4.6,-1) {\framebox{$\langle123n\rangle$}};
\node (n21) at (\ax*1,-2) {$\langle12\bb\cc\rangle$};
\node (n22) at (\ax*2,-2) {$\langle12\cc\dd\rangle$};
\node (n23) at (\ax*3,-2) {$\langle12\dd\ee\rangle$};
\node (n24) at (\ax*4,-2) {$\cdots$};
\node[anchor=west] (n25) at (\ax*4.6,-2) {\framebox{$\langle12(n{-}1)n\rangle$}};
\node (n31) at (\ax*1,-3) {$\langle13\bb\cc\rangle$};
\node (n32) at (\ax*2,-3) {$\langle1\bb\cc\dd\rangle$};
\node (n33) at (\ax*3,-3) {$\langle1\cc\dd\ee\rangle$};
\node (n34) at (\ax*4,-3) {$\cdots$};
\node[anchor=west] (n35) at (\ax*4.6,-3) {\framebox{$\langle1(n{-}2)(n{-}1)n\rangle$}};
\node (n41) at (\ax*1,-4) {\framebox{$\langle23\bb\cc\rangle$}};
\node (n42) at (\ax*2,-4) {\framebox{$\langle3\bb\cc\dd\rangle$}};
\node (n43) at (\ax*3,-4) {\framebox{$\langle\bb\cc\dd\ee\rangle$}};
\node (n44) at (\ax*4,-4) {$\cdots$};
\node[anchor=west] (n45) at (\ax*4.6,-4) {\framebox{$\langle(n{-}3)(n{-}2)(n{-}1)n\rangle$}};
\node[gray,circle,draw] (sigma) at (0,-1.5) {$\Sigma$};
\foreach \i in {1,2,3} {
\foreach \j [evaluate=\j as \k using int(\j+1)] in {1,...,4} { 
\draw[->] (n\i\j) -- (n\i\k); }}
\foreach \i [evaluate=\i as \k using int(\i+1)] in {1,...,3} {
\foreach \j in {1,...,3} {
\draw[->] (n\i\j) -- (n\k\j);}}
\foreach \i [evaluate=\i as \k using int(\i+1)] in {1,...,3} {
\foreach \j [evaluate=\j as \l using int(\j+1)] in {1,...,4} {
\draw[->] (n\k\l) -- (n\i\j);}}
\draw[<-] (n11) -- (n00);
\end{tikzpicture}
\end{center}
\end{notation}

Note that the mutable variables of $\Sigma$ and $\overline{\Sigma}$ are arranged in grids of the same size, and thus there is a natural bijection between them. We use this bijection throughout.

\begin{lemma}
\label{lem:chain-promoted-vars}
The mutable variables in $\overline{\Sigma}$ are proportional to the corresponding mutable variables in $\Psi_{ch}(\Sigma)$.
\end{lemma}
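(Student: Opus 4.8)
The plan is to follow the proof of \cref{lem:spurion-promoted-vars} in the spurion case. Write $N'=\{1,2,3,\bb,\cc,\dd,\dots,n\}$; recall from \cref{def:chain_promotion} that $\Psi_{ch}$ substitutes only the four columns labeled $2,3,\bb,\cc$ and fixes every other column, and that by the discussion after \cref{def:promotion} it is a ring homomorphism $\CC(\widehat\Gr_{4,N'})\to\CC(\widehat\Gr_{4,n})$. The first step is to re-expand the four defining substitutions of \cref{def:chain_promotion} into the ``intersection form'' analogous to \cref{rk:intersections}: applying the Grassmann--Cayley identities of \cref{ex:intersections} together with \cref{lem:GC-easy-reln} to the relevant decomposable elements, each nested shuffle--wedge expression such as $12\shuf(3(456\shuf(7(89\aa\shuf\bb\cc\dd))))$ is rewritten as $\tfrac{1}{F_{i_u}}$ times an \emph{explicit} signed linear combination of the columns of $\bz$, where $F_j$ is the cubic obtained by erasing the label $j$ from $\lr{(123\shuf 456)7(89\aa\shuf\bb\cc\dd)}$; by construction of the ``erase the label'' denominators the coefficient of $z_{j}$ in the substitution of $j$ is normalized to $\pm 1$. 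Once these formulas are in hand, everything downstream is bounded-size linear algebra over $\GC(4)$.

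The second step is to record the grid bijection between the mutable variables of the rectangles seed $\Sigma$ of \cref{not:chain-sigma} for $\Gr_{4,N'}$ and those of $\overline{\Sigma}$, and to dispose of the ``easy'' images. Exactly as in the spurion argument, a Pl\"ucker coordinate whose support meets $\{2,3,\bb,\cc\}$ only in positions where the substitutions force a repeated column is sent to a frozen monomial times itself; checking these collapses (for instance $\lr{12\bb\cc}\mapsto(\textrm{frozen})\cdot\lr{12\bb\cc}$, because substituting $\bb$ and $\cc$ introduces columns already present, and $\lr{123i}\mapsto(\textrm{frozen})\cdot\lr{123i}$ for $i\ge\dd$ by the same mechanism) handles all mutable variables of $\Sigma$ in rows $2$ and $3$ and all first-row variables except $\lr{123\cc}$. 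The only images still needing genuine computation are those of the leftmost column, $\lr{123\cc},\lr{12\bb\cc},\lr{13\bb\cc}$ (and, although not required by the lemma, the frozen $\lr{23\bb\cc}$).

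The third step is to compute those remaining images directly in $\GC(4)$ and match them to the quartics $X_1,X_2,X_3,X_4$ of \cref{not:chain-sigma-bar}: the target is that $\Psi_{ch}(\lr{123\cc})$ is a Laurent monomial in the frozen variables (in fact in $F_2,F_3,F_7,F_\bb,F_\cc$) times $X_1$, and likewise $\lr{12\bb\cc},\lr{13\bb\cc},\lr{23\bb\cc}$ map up to such monomials to $X_2,X_3,X_4$. I expect this last step to be the main obstacle: it requires unfolding each nested shuffle expression (and the products arising from the third-row variables) into a degree-$4$ Pl\"ucker polynomial, using associativity of $\shuf$ and \cref{lem:GC-easy-reln} repeatedly, and then recognizing the output as the stated $X_i$, all while keeping consistent track of the signs produced by the antisymmetry of $\lr{\cdot}$, by reordering children of white vertices, and by \cref{lem:GC-easy-reln}, and of the bookkeeping of the thirteen cubics $F_j$. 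Everything else is the routine collapse computations of the second step.
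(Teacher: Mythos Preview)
Your overall plan matches the paper's proof: rewrite the four substitutions in ``intersection form,'' check that most mutable Pl\"ucker coordinates are fixed, and compute the leftmost column directly to obtain $X_1,X_2,X_3$ up to frozen factors.

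However, Step~2 contains an explicit error that contradicts your own Step~3. The example you give, $\lr{12\bb\cc}\mapsto(\textrm{frozen})\cdot\lr{12\bb\cc}$, is false. Substituting $\bb\mapsto\tfrac{F_7}{F_\bb}7-\tfrac{F_1}{F_\bb}1+\tfrac{F_2}{F_\bb}2-\tfrac{F_3}{F_\bb}3$ introduces columns $7$ and $3$, neither of which is already present, so there is no collapse; the image is $X_2/F_\cc$ with $X_2=\lr{127\bb}F_7-\lr{123\bb}F_3$, a genuine quartic not proportional to $\lr{12\bb\cc}$. The same failure occurs for $\lr{13\bb\cc}$. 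Consequently your claim that the collapse argument ``handles all mutable variables of $\Sigma$ in rows $2$ and $3$'' is wrong for the leftmost entries of those rows. You seem to realize this in the very next sentence, where you correctly list $\lr{12\bb\cc}$ and $\lr{13\bb\cc}$ among the variables needing genuine computation in Step~3---so the error is an internal inconsistency rather than a fatal flaw. Once this is cleaned up (the easy cases are everything \emph{except} the three leftmost-column mutables), Step~3 is exactly what the paper does, and your targets $\Psi_{ch}(\lr{123\cc})=X_1/F_\cc$, $\Psi_{ch}(\lr{12\bb\cc})=X_2/F_\cc$, $\Psi_{ch}(\lr{13\bb\cc})=X_3\cdot F_7/(F_3F_\cc)$ are correct.
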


\begin{proof}
We apply the map $\Psi_{ch}$ to the variables in $\Sigma$, and substitute the vectors $2,3,\bb$, and $\cc$ using the formulas in \cref{def:chain_promotion}.

The variables $\lr{123 i}$, with $i>\cc=12$ in the first row are fixed by $\Psi_{ch}$ as
$$\Psi_{ch}\big(\lr{123i}\big) \;=\; \left\langle1\left(2 - \tfrac{F_1}{F_2}1\right)\left(3 - \tfrac{F_1}{F_3}1+\tfrac{F_2}{F_3}2\right) i \right\rangle=\lr{123i}.$$
Analogously, the variables $\lr{12\cc\dd},\lr{1\bb\cc\dd}$, and $\lr{\bb\cc\dd\ee}$ and all the variables to their right in the respective rows are fixed by $\Psi_{sp}$. 
The variables $\lr{123\bb}$ and $\lr{3\bb\cc\dd}$ map under $\Psi_{sp}$ to:
\begin{align*}
&\Psi_{ch}\big(\lr{123\bb}\big) \;=\; \left\langle1\left(2 - \tfrac{F_1}{F_2}1\right)\left(3 - \tfrac{F_1}{F_3}1+\tfrac{F_2}{F_3}2\right) \left(\tfrac{F_7}{F_\bb}7-\tfrac{F_1}{F_\bb}1+\tfrac{F_2}{F_\bb}2-\tfrac{F_3}{F_\bb}3\right) \right\rangle \;=\; \frac{F_7}{F_\bb} \lr{1237} \\
&\Psi_{ch}\big(\lr{3\bb\cc\dd}\big) \;=\; \left\langle \left(\tfrac{F_7}{F_3}7-\tfrac{F_\bb}{F_3}\bb+\tfrac{F_\cc}{F_3}\cc-\tfrac{F_\dd}{F_3}\dd\right) \left(\bb-\tfrac{F_\cc}{F_\bb}\cc+\tfrac{F_\dd}{F_\bb}\dd\right)  \left(\cc-\tfrac{F_\dd}{F_\cc}\dd\right) \dd\right\rangle \;=\; \frac{F_7}{F_3} \lr{7\bb\cc\dd}
\end{align*}
The images of the variables $\lr{123C}, \lr{12BC}, \lr{13BC}$ and $\lr{23BC}$ under $\Psi_{ch}$ involve quartics cluster variables and are:
\begin{align*}
\Psi_{ch}\big(\lr{123\cc}\big) \;&=\; \left\langle1\left(2 - \tfrac{F_1}{F_2}1\right)\left(3 - \tfrac{F_1}{F_3}1+\tfrac{F_2}{F_3}2\right) \left(\tfrac{F_\bb}{F_C}\bb-\tfrac{F_7}{F_C}7+\tfrac{F_1}{F_C}1-\tfrac{F_2}{F_C}2+\tfrac{F_3}{F_C}3\right) \right\rangle \;=\; \\
 \;&=\ \left\langle 123 \left(\tfrac{F_\bb}{F_C}\bb-\tfrac{F_7}{F_C}7\right) \right\rangle=\frac{\lr{123\bb} F_\bb-\lr{1237}F_7}{F_C}=\frac{X_1}{F_C} \\
\Psi_{ch}\big(\lr{12\bb\cc}\big) \;&=\; \left\langle 1\left(2 - \tfrac{F_1}{F_2}1\right) \left(\tfrac{F_7}{F_\bb}7 - 1\tfrac{F_1}{F_\bb} + \tfrac{F_2}{F_\bb}2 -\tfrac{F_3}{F_\bb}3\right)  \left(\tfrac{F_\bb}{F_C}\bb-\tfrac{F_7}{F_C}7 + 1\tfrac{F_1}{F_C} - \tfrac{F_2}{F_C}2 +\tfrac{F_3}{F_C}3\right) \right\rangle \;=\; \\
\;&=\; \left\langle 12 \left(\tfrac{F_7}{F_\bb}7 -\tfrac{F_3}{F_\bb}3\right)  \left(\tfrac{F_\bb}{F_C}\bb\right)\right\rangle=\frac{\lr{127\bb} F_7-\lr{123\bb}F_3}{F_C}=\frac{X_2}{F_C}\\
\Psi_{ch}\big(\lr{13\bb\cc}\big) \;&=\; \left\langle 1\left(3 - \tfrac{F_1}{F_3}1 + \tfrac{F_2}{F_3}2\right) \left(\tfrac{F_7}{F_\bb}7 - 1\tfrac{F_1}{F_\bb} + \tfrac{F_2}{F_\bb}2 -\tfrac{F_3}{F_\bb}3\right)  \left(\tfrac{F_\bb}{F_C}\bb-\tfrac{F_7}{F_C}7 + \dots\right) \right\rangle \;=\; \\
\;&=\; \left\langle 1 \left(3+\tfrac{F_2}{F_3}2\right)  \left(\tfrac{F_7}{F_\bb}7\right) \left(\tfrac{F_\bb}{F_C}\bb\right)\right\rangle=\frac{\lr{137\bb} F_3+\lr{127\bb}F_2}{F_3F_C}F_7=X_3\frac{F_7}{F_3 F_C}\\
\Psi_{ch}\big(\lr{23\bb\cc}\big) \;&=\; \left\langle \left(2 - \tfrac{F_1}{F_2}1\right)\left(3 - \tfrac{F_1}{F_3}1 + \tfrac{F_2}{F_3}2\right) \left(\tfrac{F_7}{F_\bb}7 - 1\tfrac{F_1}{F_\bb} + \tfrac{F_2}{F_\bb}2 -\tfrac{F_3}{F_\bb}3\right)  \left(\tfrac{F_\bb}{F_C}\bb-\tfrac{F_7}{F_C}7 + \dots \right) \right\rangle \;=\; \\
\;&=\; \left\langle \left(2 - \tfrac{F_1}{F_2}1\right) 3 \left(\tfrac{F_7}{F_\bb}7\right) \left(\tfrac{F_\bb}{F_C}\bb\right)\right\rangle=\frac{\lr{237\bb} F_2-\lr{137\bb}F_1}{F_2 F_C}F_7=X_4\frac{F_7}{F_2 F_C}
\end{align*}

\end{proof}

\begin{lemma}
\label{lem:chain-tree-promoted-ratios}
Applying $\Psi_{ch}$ to the exchange ratio of a mutable variable in $\Sigma$ gives the exchange ratio of the corresponding mutable variable in $\overline{\Sigma}$.
\end{lemma}
\begin{proof}
 We verify that the exchange ratios of mutable variables in~$\Sigma$ map to the exchange ratio of their proportional mutable images. For the three mutable variables in the leftmost column, applying \cref{def:exchage-ratios} and using the computations from the proof of the previous lemma, we obtain:
\begin{align*}
&\Psi_{ch}\left(\hat{y}_{\Sigma}\big(\lr{123\cc}\big)\right) \;=\; \Psi_{ch} \left(\frac{\lr{123\bb}\lr{12\cc\dd}}{\lr{12\bb\cc}\lr{123\dd}}\right) \;=\; \frac{\tfrac{F_7}{F_\bb}\lr{1237}\lr{12\cc\dd}}{\tfrac{1}{F_\cc}X_2 \lr{123\dd}} \;=\; \hat{y}_{\overline{\Sigma}}\big(X_1\big) \\
&\Psi_{ch}\left(\hat{y}_{\Sigma}\big(\lr{12\bb\cc}\big)\right) \;=\; \Psi_{ch} \left(\frac{\lr{123\cc}\lr{1\bb\cc\dd}}{\lr{12\cc\dd}\lr{13\bb\cc}}\right) \;=\; \frac{X_1 \tfrac{1}{F_\cc}\lr{1\bb\cc\dd}}{\lr{12\cc\dd}X_3 \tfrac{F_7}{F_3 F_\cc}} \;=\; \hat{y}_{\overline{\Sigma}}\big(X_2\big) \\
&\Psi_{ch}\left(\hat{y}_{\Sigma}\big(\lr{13\bb\cc}\big)\right) \;=\; \Psi_{ch} \left(\frac{\lr{12\bb\cc}\lr{3\bb\cc\dd}}{\lr{1\bb\cc\dd}\lr{23\bb\cc}}\right) \;=\; \frac{X_2 \tfrac{1}{F_\cc} \lr{7\bb\cc\dd} \tfrac{F_7}{F_3}}{\lr{1\bb\cc\dd}X_4 \tfrac{F_7}{F_2 F_\cc}} \;=\; \hat{y}_{\overline{\Sigma}}\big(X_3\big)
\end{align*}
For the three mutable variables in the second column from the left:
\begin{align*}
&\Psi_{ch}\left(\hat{y}_{\Sigma}\big(\lr{123\dd}\big)\right) \;=\; \Psi_{ch} \left(\frac{\lr{123\cc}\lr{12\dd\ee}}{\lr{12\cc\dd}\lr{123\ee}}\right) \;=\; \frac{\tfrac{1}{F_\cc}X_1\lr{12\dd\ee}}{\lr{12\cc\dd} \lr{123\ee}} \;=\; \hat{y}_{\overline{\Sigma}}\big(\lr{123\dd}\big) \\
&\Psi_{ch}\left(\hat{y}_{\Sigma}\big(\lr{12\cc\dd}\big)\right) \;=\; \Psi_{ch} \left(\frac{\lr{12\bb\cc}\lr{123\dd}\lr{1\cc\dd\ee}}{\lr{123\cc}\lr{12\dd\ee} \lr{1\bb\cc\dd}}\right) \;=\; \frac{X_2 \tfrac{1}{F_\cc}\lr{123\dd} \lr{1\cc\dd\ee}}{X_1 \tfrac{1}{F_\cc} \lr{12\dd\ee} \lr{1\bb\cc\dd}} \;=\; \hat{y}_{\overline{\Sigma}}\big(\lr{12\cc\dd}\big) \\
&\Psi_{ch}\left(\hat{y}_{\Sigma}\big(\lr{1\bb\cc\dd}\big)\right) \;=\; \Psi_{ch} \left(\frac{\lr{12\cc\dd}\lr{13\bb\cc}\lr{\bb\cc\dd\ee}}{\lr{12\bb\cc}\lr{1\cc\dd\ee} \lr{3\bb\cc\dd}}\right) =\frac{\lr{12\cc\dd} X_3 \tfrac{F_7}{F_3 F_\cc} \lr{\bb\cc\dd\ee}}{X_2 \tfrac{1}{F_\cc} \lr{1\cc\dd\ee} \lr{7\bb\cc\dd} \tfrac{F_7}{F_3}} = \hat{y}_{\overline{\Sigma}}\big(\lr{1\bb\cc\dd}\big)
\end{align*}
Finally, the verification for the remaining mutable variables is omitted, because these variables and their six neighbors are all fixed by~$\Psi_{ch}$.
\end{proof}

\begin{lemma}\label{lem:chain-tree-promotion-mutations} The seed $\overline{\Sigma}$ is obtained from a seed for $\Gr_{4,n}$ by freezing some variables and deleting arrows between frozen vertices.
\end{lemma}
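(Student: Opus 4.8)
The plan is to follow the template of \cref{lem:spurion-promotion-mutations}: exhibit an explicit sequence of mutations carrying a standard rectangles seed for $\Gr_{4,n}$ to a seed $\overline{\overline{\Sigma}}$ for $\Gr_{4,n}$ from which $\overline{\Sigma}$ is obtained by freezing the variables drawn in dashed boxes in \cref{not:chain-sigma-bar} (and erasing the arrows between frozen vertices). Since the chain tree here has $k=3$, the chain polynomial $\lr{(123 \shuf 456)7(89\aa \shuf \bb\cc\dd)}$ involves $13$ indices and there are $21$ dashed variables to create, so the mutation sequence is correspondingly longer than in the spurion case, but no new phenomena arise.

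First I would fix a rectangles seed for $\Gr_{4,n}$ with rows labeled $1,2,3$ and columns labeled $4,\dots,n$, exactly as in the proofs of \cref{thm:upper} and \cref{lem:spurion-promotion-mutations}. All but finitely many mutable Pl\"ucker coordinates of this seed already coincide with mutable entries of $\overline{\Sigma}$ and are left untouched; the entire mutation sequence takes place among the Pl\"ucker coordinates supported on the indices $\{1,2,3,4,\dots,\dd\}$ occurring in the chain polynomial, i.e. in the left columns of the grid.

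Next I would carry out the mutations, organized into three stages. Stage one produces the ``outer'' cubics $F_\bb,F_\cc,F_\dd$ and $F_1,F_2,F_3$ attached to the two trivalent blocks $\{1,2,3\}$ and $\{\bb,\cc,\dd\}$; here one can essentially copy the spurion computation applied to each sub-block. Stage two brings in the central index $7$: a handful of further mutations produce the remaining cubics $F_4,F_5,F_6,F_8,F_9,F_\aa$, the three auxiliary cubics $\lr{123 \shuf 456 \shuf 78}$, $\lr{123 \shuf 456 \shuf 7\bb}$, $\lr{123 \shuf 56 \shuf 789}$, and the Pl\"ucker coordinates $\lr{7\bb\cc\dd}$ and $\lr{8\bb\cc\dd}$. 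Stage three is a final small set of mutations creating the four quartics $X_1,\dots,X_4$, each of the form $\lr{123\bb}F_\bb$ minus a multiple of a product of a Pl\"ucker coordinate and a cubic. Every individual mutation is checked exactly as in items (a)--(k) of \cref{lem:spurion-promotion-mutations}: write the exchange relation in the current seed, then recognize its numerator, via Pl\"ucker relations together with the Grassmann--Cayley identities of \cref{lem:GC-easy-reln} and the intersection formulas of \cref{ex:intersections}, as the claimed chain polynomial. After the last mutation one reads off the quiver directly, mutation being a local operation, and confirms that it agrees with the one drawn in \cref{not:chain-sigma-bar} once the $21$ dashed variables are frozen and the arrows among frozen vertices are deleted.

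The main obstacle will be combinatorial bookkeeping rather than anything conceptual: one must order the mutations so that at each step the mutated vertex has exactly the neighbours needed for its exchange relation to yield the intended chain polynomial, and must track the quiver carefully so that no spurious arrows survive between the mutable $3\times(n-\dd)$ grid and the frozen block. Given the number of steps, the genuine risk is an arithmetic slip in one of the roughly $21$ exchange-relation computations, so I would present the full list explicitly — variable before mutation, the exchange relation used, and the closed form afterward — and verify each identity against \cref{lem:GC-easy-reln} and \cref{ex:intersections}.
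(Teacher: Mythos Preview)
Your approach is correct and essentially identical to the paper's: start from the rectangles seed for $\Gr_{4,n}$, perform an explicit finite mutation sequence to reach a seed $\overline{\overline{\Sigma}}$, then freeze. The paper simply lists the sequence---fifty mutations, addressed by grid position $x_{rc}$ with rows $1,\dots,4$ and columns $5,\dots,n$ (note your row/column labeling is off by one)---and declares the verification ``tedious but straightforward,'' without the stagewise organization or per-step exchange-relation checks you propose.
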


\begin{proof}
Similar to the proof of \cref{lem:spurion-promotion-mutations}, this is shown by an explicit sequence of mutations, starting from the same standard rectangles seed $\Tilde{\Sigma}$ for $\Gr_{4,n}$, and ending with a seed $\overline{\overline{\Sigma}}$, which gives rise to $\overline{\Sigma}$ by freezing. We only specify the sequence of mutated variables, which is sufficient to determine the required seed. As in \cref{sec:upper} above, we denote the rows of the rectangles seed $\tilde{\Sigma}$ of $\Gr_{4,n}$ by $1,\dots,4$, the columns by $5,\dots,n$, and the variable in row $r$ and column $c$ by~$x_{rc}$. For example $x_{28} = \lr{1278}$. The variable obtained from $x$ by a mutation is denoted $\bar{x}$. Apply a sequence of $50$ mutations to the seed~$\tilde{\Sigma}$, at the following $50$ variables:
\begin{align*}
& x_{3\cc}, x_{3\bb}, x_{3\aa}, x_{39}, x_{38}, x_{37}, x_{36}, x_{2\cc}, x_{2\bb}, x_{2\aa}, x_{29}, x_{28}, x_{27}, x_{26}, \bar x_{39}, \bar x_{38}, \bar x_{3\aa}, \bar{\bar{x}}_{39}, \bar x_{28}, \bar x_{29}, \\& x_{18}, x_{19}, x_{1\aa}, \bar x_{19}, \bar x_{2\aa}, \bar x_{18}, \bar{\bar{x}}_{19}, \bar x_{3\cc}, \bar x_{2\cc}, \bar x_{2\bb}, x_{1\cc}, \bar{\bar{x}}_{2\cc}, x_{1\bb}, \bar x_{1\cc}, \bar{\bar{\bar{x}}}_{39}, \bar{\bar{x}}_{38}, \bar{\bar{x}}_{29}, \bar{\bar{\bar{x}}}_{38}, \bar x_{27}, x_{15}, \\& \bar x_{26}, x_{25}, \bar x_{36}, x_{35}, \bar x_{25}, \bar x_{1\bb}, \bar{\bar{x}}_{3\cc}, \bar x_{3\bb}, \bar x_{37}, \bar{\bar{x}}_{25}
\end{align*}
A~tedious but straightforward verification shows that the resulting seed is an unfreezing of $\overline{\Sigma}$ as required. 
\end{proof}

\begin{proof}[Proof of \cref{thm:chain-tree-prom}]
\cref{lem:chain-tree-promotion-mutations} verifies that $\overline{\Sigma}$ is a freezing of a seed for $\Gr_{4,n}$.
We verify that $\Psi_{ch} : \mathcal{A}(\Sigma) \to \mathcal{A}(\overline{\Sigma})$ is a quasi-cluster homomorphism as in \cref{def:quasi}. Our calculations in \cref{lem:chain-promoted-vars} show that the image $\Psi_{ch}(x)$ of every mutable cluster variable $x$ in $\Sigma$ is proportional to a mutable variable $\bar{x}$ in $\overline{\Sigma}$. This verifies condition (1): $\Psi_{ch}(x) \propto \bar{x}$. Our calculations in \cref{lem:chain-tree-promoted-ratios} verify condition (2), that $\Psi_{ch}$ maps the exchange ratio of $x$ to the exchange ratio of~$\bar{x}$, as required.
\end{proof}

\subsection{Forest promotion}
Recall \cref{def:forest_promotion} of the map $\Psi_{fo}$.

\begin{theorem}
\label{thm:forest-prom}
Forest promotion $\Psi_{fo}$ is a quasi-cluster homomorphism
of cluster algebras from 
$\mathcal{A}(\Sigma)$ to 
$\mathcal{A}(\overline{\Sigma})$, where $\Sigma$ is a seed for ${\Gr}_{3,N'}$ and $\overline{\Sigma}$ is the seed for ${\Gr}_{3,n}$ with some variables frozen, shown in \cref{notation:sigma-forest}.
\end{theorem}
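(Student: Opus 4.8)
The proof will follow the same four-step template used in the proofs of \cref{thm:upper}, \cref{thm:spurion-upper}, and \cref{thm:chain-tree-prom}. First I would fix the two seeds: for $\Sigma$ a rectangles-type seed for $\Gr_{3,N'}$, and for $\overline{\Sigma}$ a freezing of a rectangles seed for $\Gr_{3,n}$, arranged so that the Pl\"ucker coordinates $\lr{134}$ and $\lr{acd}$ (the denominators appearing in \cref{def:forest_promotion}) and the relevant chain polynomials $\lr{12\shuf 34}$, $\lr{ab\shuf cd}$ together with a few of their cyclic relatives all occur among the frozen variables of $\overline{\Sigma}$, while the mutable variables of $\Sigma$ and $\overline{\Sigma}$ sit in grids of the same shape, giving a natural bijection $x \leftrightarrow \bar x$. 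The precise seed is the one recorded in \cref{notation:sigma-forest}.

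The key structural observation, which I would use to organize every computation, is that the two substitutions $2 \mapsto \tfrac{12\shuf 34}{\lr{134}}$ and $b \mapsto \tfrac{ab\shuf cd}{\lr{acd}}$ involve the pairwise disjoint index blocks $\{1,2,3,4\}$ and $\{a,b,c,d\}$ (recall $5 \le a<b<c<d \le n$). Hence the two substitutions commute, and each is, up to a cyclic relabeling and/or reflection of the boundary circle, exactly the $m=3$ unary star substitution of \cref{pro-upper:0}. Applying $\Psi_{fo}$ to each mutable variable of $\Sigma$ and expanding the shuffles with the Grassmann-Cayley identities of \cref{ex:chain} (equivalently, using the Pl\"ucker-relation form of the substitution as in \cref{rem:helpful}), exactly as in the star and spurion proofs, rewrites $\Psi_{fo}(x)$ as a cluster variable $\bar x$ of $\Gr_{3,n}$ (a Pl\"ucker coordinate or a chain polynomial) times a Laurent monomial in the variables frozen in $\overline{\Sigma}$; this gives $\Psi_{fo}(x)\propto\bar x$, i.e. condition (1) of \cref{def:quasi}. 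With the proportionalities in hand, the frozen factors cancel in each exchange ratio $\hat y_{\Sigma}(x)$, and a routine row-by-row check, identical in spirit to \cref{lem:spurion-promoted-ratios}, yields $\Psi_{fo}(\hat y_{\Sigma}(x)) = \hat y_{\overline{\Sigma}}(\bar x)$, which is condition (2).

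It then remains to verify that $\overline{\Sigma}$ is genuinely obtained from a seed for $\Gr_{3,n}$ by freezing some variables, which I would do as in \cref{lem:spurion-promotion-mutations} and \cref{lem:chain-tree-promotion-mutations}: exhibit an explicit mutation sequence from the standard rectangles seed for $\Gr_{3,n}$ to an unfreezing $\overline{\overline{\Sigma}}$ of $\overline{\Sigma}$. Since the two ``star pieces'' occupy disjoint arcs of the boundary, this sequence can be taken to be the concatenation of two short, essentially independent subsequences, one producing the $\lr{12\shuf 34}$-type variables near $\{1,2,3,4\}$ and one producing the $\lr{ab\shuf cd}$-type variables near $\{a,b,c,d\}$, each of the same flavor as the single mutations (a)--(b) in the spurion proof. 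As a consistency check and alternative route, one can observe that the disjointness of the two blocks makes $\Psi_{fo}$ a composition of two relabeled (and reflected) copies of the $m=3$ unary star promotion map, each a quasi-cluster homomorphism by \cref{thm:upper}; since quasi-cluster homomorphisms compose and cyclic shift/reflection preserve the Grassmannian cluster structure up to sign and frozen monomials, this already implies the theorem and predicts the shape of $\overline{\Sigma}$. The only real obstacle is the finite bookkeeping: choosing $\Sigma$ so that the two substitutions act in a controlled way on every mutable variable (including those, like the spurion's $\lr{127i}$, that meet both blocks), and writing down the concrete mutation sequence in the final step; neither presents any difficulty beyond what already appears in \cref{sec:proofscluster}.
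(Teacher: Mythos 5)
Your proposal follows essentially the same route as the paper: the paper's proof is exactly the direct verification you describe, via two lemmas checking $\Psi_{fo}(x)\propto\bar x$ and the matching of exchange ratios against explicit seeds, with $\overline{\Sigma}$ obtained from the rectangles seed for $\Gr_{3,n}$ by three mutations (at $\lr{1bc}$, $\lr{12b}$, $\lr{12c}$) and freezing $\lr{124},\lr{134},\lr{abd},\lr{acd}$. Your side observation that $\Psi_{fo}$ factors as a composition of two relabeled $m=3$ star promotions is a legitimate alternative (supported by the operad composition of \cref{prop:sub_operad} and the propagation of quasi-cluster-ness under mutation via \cref{prop:similar}), but the paper does not take that route and instead does the finite bookkeeping directly.
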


\begin{notation}
\label{notation:sigma-forest}
Let $\Sigma$ be the following rectangles seed for ${\Gr}_{3,N'}$ where $N' = \{1,\dots,n\} \setminus \{3,c\}$. Here and below, we denote $e:=d+1$, $f:=d+2$, $a':=a-1$, $a'':=a-2$, and similarly $n':=n-1$, $n'':=n-2$. 
\vspace{0.5cm}
\begin{center}
\begin{tikzpicture}[scale=1.2, every node/.style={minimum size=0.5cm}, every path/.style={->, thick}, node distance=0.5cm]
\def\cols{7}
\def\rows{4}
\def\ax{1.3}
\node (n00) at (\ax*1,0) {\framebox{$\langle124\rangle$}};
\node (n11) at (\ax*1,-1) {$\langle125\rangle$};
\node (n12) at (\ax*2,-1) {$\langle126\rangle$};
\node (n13) at (\ax*3,-1) {$\cdots$};
\node (n14) at (\ax*4,-1) {$\langle12a\rangle$};
\node (n15) at (\ax*5,-1) {$\langle12b\rangle$};
\node (n16) at (\ax*6,-1) {$\langle12d\rangle$};
\node (n17) at (\ax*7,-1) {$\langle12e\rangle$};
\node (n18) at (\ax*8,-1) {$\cdots$};
\node[anchor=west] (n19) at (\ax*8.6,-1) {\framebox{$\langle12n\rangle$}};
\node (n21) at (\ax*1,-2) {$\langle145\rangle$};
\node (n22) at (\ax*2,-2) {$\langle156\rangle$};
\node (n23) at (\ax*3,-2) {$\cdots$};
\node (n24) at (\ax*4,-2) {$\langle1a'a\rangle$};
\node (n25) at (\ax*5,-2) {$\langle1ab\rangle$};
\node (n26) at (\ax*6,-2) {$\langle1bd\rangle$};
\node (n27) at (\ax*7,-2) {$\langle1de\rangle$};
\node (n28) at (\ax*8,-2) {$\cdots$};
\node[anchor=west] (n29) at (\ax*8.6,-2) {\framebox{$\langle1n'n\rangle$}};
\node (n31) at (\ax*1,-3) {\framebox{$\langle245\rangle$}};
\node (n32) at (\ax*2,-3) {\framebox{$\langle456\rangle$}};
\node (n33) at (\ax*3,-3) {$\cdots$};
\node (n34) at (\ax*4,-3) {\framebox{$\langle a''a'a\rangle$}};
\node (n35) at (\ax*5,-3) {\framebox{$\langle a'ab\rangle$}};
\node (n36) at (\ax*6,-3) {\framebox{$\langle abd \rangle$}};
\node (n37) at (\ax*7,-3) {\framebox{$\langle bde \rangle$}};
\node (n38) at (\ax*8,-3) {$\cdots$};
\node[anchor=west] (n39) at (\ax*8.6,-3) {\framebox{$\langle n''n'n\rangle$}};
\node[gray,circle,draw] (sigma) at (\ax*9,0) {$\Sigma$};
\foreach \i in {1,2} {
\foreach \j [evaluate=\j as \k using int(\j+1)] in {1,...,8} { 
\draw[->] (n\i\j) -- (n\i\k); }}
\foreach \i [evaluate=\i as \k using int(\i+1)] in {1,...,2} {
\foreach \j in {1,...,8} {
\draw[->] (n\i\j) -- (n\k\j);}}
\foreach \i [evaluate=\i as \k using int(\i+1)] in {1,...,2} {
\foreach \j [evaluate=\j as \l using int(\j+1)] in {1,...,8} {
\draw[->] (n\k\l) -- (n\i\j);}}
\draw[<-] (n11) -- (n00);
\end{tikzpicture}
\end{center}
Consider also the rectangles seed $\tilde{\Sigma}$ for $\Gr_{3,n}$:
\vspace{0.1cm}
\begin{center}
\begin{tikzpicture}[scale=1.2, every node/.style={minimum size=0.5cm}, every path/.style={->, thick}, node distance=0.5cm]
\def\cols{7}
\def\rows{4}
\def\ax{1.3}
\node (n00) at (\ax*1,0) {\framebox{$\langle123\rangle$}};
\node (n11) at (\ax*1,-1) {$\langle124\rangle$};
\node (n12) at (\ax*2,-1) {$\langle125\rangle$};
\node (n13) at (\ax*3,-1) {$\cdots$};
\node (n14) at (\ax*4,-1) {$\langle12a\rangle$};
\node (n15) at (\ax*5,-1) {$\langle12b\rangle$};
\node (n16) at (\ax*6,-1) {$\langle12c\rangle$};
\node (n17) at (\ax*7,-1) {$\langle12d\rangle$};
\node (n18) at (\ax*8,-1) {$\langle12e\rangle$};
\node (n19) at (\ax*9,-1) {$\cdots$};
\node[anchor=west] (n110) at (\ax*9.6,-1) {\framebox{$\langle12n\rangle$}};
\node (n21) at (\ax*1,-2) {$\langle134\rangle$};
\node (n22) at (\ax*2,-2) {$\langle145\rangle$};
\node (n23) at (\ax*3,-2) {$\cdots$};
\node (n24) at (\ax*4,-2) {$\langle1a'a\rangle$};
\node (n25) at (\ax*5,-2) {$\langle1ab\rangle$};
\node (n26) at (\ax*6,-2) {$\langle1bc\rangle$};
\node (n27) at (\ax*7,-2) {$\langle1cd\rangle$};
\node (n28) at (\ax*8,-2) {$\langle1de\rangle$};
\node (n29) at (\ax*9,-2) {$\cdots$};
\node[anchor=west] (n210) at (\ax*9.6,-2) {\framebox{$\langle1n'n\rangle$}};
\node (n31) at (\ax*1,-3) {\framebox{$\langle234\rangle$}};
\node (n32) at (\ax*2,-3) {\framebox{$\langle345\rangle$}};
\node (n33) at (\ax*3,-3) {$\cdots$};
\node (n34) at (\ax*4,-3) {\framebox{$\langle a''a'a\rangle$}};
\node (n35) at (\ax*5,-3) {\framebox{$\langle a'ab\rangle$}};
\node (n36) at (\ax*6,-3) {\framebox{$\langle abc \rangle$}};
\node (n37) at (\ax*7,-3) {\framebox{$\langle bcd \rangle$}};
\node (n38) at (\ax*8,-3) {\framebox{$\langle cde \rangle$}};
\node (n39) at (\ax*9,-3) {$\cdots$};
\node[anchor=west] (n310) at (\ax*9.55,-3) {\framebox{$\langle n''n'n\rangle$}};
\node[gray,circle,draw] (sigma) at (\ax*9.5,0) {$\tilde\Sigma$};
\foreach \i in {1,2} {
\foreach \j [evaluate=\j as \k using int(\j+1)] in {1,...,9} { 
\draw[->] (n\i\j) -- (n\i\k); }}
\foreach \i [evaluate=\i as \k using int(\i+1)] in {1,...,2} {
\foreach \j in {1,...,9} {
\draw[->] (n\i\j) -- (n\k\j);}}
\foreach \i [evaluate=\i as \k using int(\i+1)] in {1,...,2} {
\foreach \j [evaluate=\j as \l using int(\j+1)] in {1,...,9} {
\draw[->] (n\k\l) -- (n\i\j);}}
\draw[<-] (n11) -- (n00);
\end{tikzpicture}
\end{center}
By applying mutations to $\tilde{\Sigma}$ at $\lr{1bc}$, $\lr{12b}$, and $\lr{12c}$, we obtain the following seed, and we then freeze four variables: $F_2=\lr{134}$, $F_3=\lr{124}$, $F_b=\lr{acd}$, and $F_c=\lr{abd}$.
\vspace{0.3cm}
\begin{center}
\begin{tikzpicture}[scale=1.2, every node/.style={minimum size=0.5cm}, every path/.style={->, thick}, node distance=0.5cm]
\def\cols{7}
\def\rows{4}
\def\ax{1.3}
\node (n00) at (\ax*1,0) {\framebox{$\langle123\rangle$}};
\node (n11) at (\ax*1,-1) {\dbox{$\langle124\rangle$}};
\node (n12) at (\ax*2,-1) {$\langle125\rangle$};
\node (n13) at (\ax*3,-1) {$\cdots$};
\node (n14) at (\ax*4,-1) {$\langle12a\rangle$};
\node (n15) at (\ax*5.5,-1) {$\langle 12 \shuf ab \shuf cd \rangle$};
\node (n16) at (\ax*5,0) {\dbox{$\langle acd\rangle$}};
\node (n17) at (\ax*6,0) {\dbox{$\langle abd\rangle$}};
\node (n18) at (\ax*8,-1) {$\langle12e\rangle$};
\node (n19) at (\ax*9,-1) {$\cdots$};
\node[anchor=west] (n110) at (\ax*9.6,-1) {\framebox{$\langle12n\rangle$}};
\node (n21) at (\ax*1,-2) {\dbox{$\langle134\rangle$}};
\node (n22) at (\ax*2,-2) {$\langle145\rangle$};
\node (n23) at (\ax*3,-2) {$\cdots$};
\node (n24) at (\ax*4,-2) {$\langle1a'a\rangle$};
\node (n25) at (\ax*5,-2) {$\langle1ab\rangle$};
\node (n26) at (\ax*7,-1) {$\lr{12d}$};
\node (n27) at (\ax*7,-2) {$\langle1cd\rangle$};
\node (n28) at (\ax*8,-2) {$\langle1de\rangle$};
\node (n29) at (\ax*9,-2) {$\cdots$};
\node[anchor=west] (n210) at (\ax*9.6,-2) {\framebox{$\langle1n'n\rangle$}};
\node (n31) at (\ax*1,-3) {\framebox{$\langle234\rangle$}};
\node (n32) at (\ax*2,-3) {\framebox{$\langle345\rangle$}};
\node (n33) at (\ax*3,-3) {$\cdots$};
\node (n34) at (\ax*4,-3) {\framebox{$\langle a''a'a\rangle$}};
\node (n35) at (\ax*5,-3) {\framebox{$\langle a'ab\rangle$}};
\node (n36) at (\ax*7.2,0) {\framebox{$\langle abc \rangle$}};
\node (n37) at (\ax*3.8,0) {\framebox{$\langle bcd \rangle$}};
\node (n38) at (\ax*8,-3) {\framebox{$\langle cde \rangle$}};
\node (n39) at (\ax*9,-3) {$\cdots$};
\node[anchor=west] (n310) at (\ax*9.55,-3) {\framebox{$\langle n''n'n\rangle$}};
\node[gray,circle,draw] (sigma) at (\ax*9.5,0) {$\overline{\Sigma}$};
\foreach \i in {1,2} {
\foreach \j [evaluate=\j as \k using int(\j+1)] in {1,2,3,4,8,9} { 
\draw[->] (n\i\j) -- (n\i\k); }}
\foreach \i [evaluate=\i as \k using int(\i+1)] in {1,...,2} {
\foreach \j in {1,2,3,4,5,8,9} {
\draw[->] (n\i\j) -- (n\k\j);}}
\foreach \i [evaluate=\i as \k using int(\i+1)] in {1,...,2} {
\foreach \j [evaluate=\j as \l using int(\j+1)] in {1,2,3,4,8,9} {
\draw[->] (n\k\l) -- (n\i\j);}}
\draw[<-] (n11) -- (n00);
\draw[->] (n15) -- (n26);
\draw[->] (n26) -- (n18);
\draw[->] (n25) -- (n27);
\draw[->] (n27) -- (n28);
\draw[->] (n28) -- (n26);
\draw[->] (n26) -- (n27);
\draw[->] (n27) -- (n15);
\draw[->] (n26) -- (n17);
\draw[->] (n17) -- (n15);
\draw[->] (n15) -- (n16);
\draw[->] (n16) -- (n14);
\draw[->] (n38) -- (n27);
\draw[->] (n38) -- (n27);
\draw[->] (n16) -- (n37);
\draw[->] (n36) -- (n17);
\end{tikzpicture}
\end{center}
\end{notation}

\begin{lemma}
\label{lem:forest-promoted-vars}
The images under $\Psi_{fo}$ of the mutable variables of $\Sigma$ are proportional to the mutable variables of $\overline{\Sigma}$.
\end{lemma}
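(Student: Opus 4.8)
The plan is to follow the pattern of the spurion and chain-tree cases (\cref{lem:spurion-promoted-vars,lem:chain-promoted-vars}): apply $\Psi_{fo}$ to each mutable cluster variable of $\Sigma$ and check that the output is a Laurent monomial in the frozen variables $\{F_2,F_3,F_b,F_c\}=\{\lr{134},\lr{124},\lr{acd},\lr{abd}\}$ times the corresponding mutable variable of $\overline{\Sigma}$. The first step is to rewrite the substitutions of \cref{def:forest_promotion} in linear form: by the $m=3$ case of \cref{ex:chain} one has $12\shuf 34=\lr{134}\,2-\lr{234}\,1$ and $ab\shuf cd=\lr{acd}\,b-\lr{bcd}\,a$, so that $\Psi_{fo}$ is the substitution
\[2\;\mapsto\;2-\frac{\lr{234}}{\lr{134}}\,1,\qquad b\;\mapsto\;b-\frac{\lr{bcd}}{\lr{acd}}\,a,\]
and $j\mapsto j$ for all other $j\in N'$.

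Every mutable variable of $\Sigma$ is a Pl\"ucker coordinate $\lr{1jk}$ containing the index $1$, and I would split into cases according to whether the indices $2$ and $b$ occur. A coordinate $\lr{1jk}$ containing neither $2$ nor $b$ is fixed by $\Psi_{fo}$. If it contains $2$ it has the form $\lr{12j}$ and $\Psi_{fo}(\lr{12j})=\lr{12j}-\tfrac{\lr{234}}{\lr{134}}\lr{11j}=\lr{12j}$ whenever $j\neq b$; likewise a coordinate such as $\lr{1ab}$ that contains $b$ but already contains $a$ is fixed, since the correction term carries the factor $\lr{1aa}=0$. Hence the only mutable variables of $\Sigma$ that are not fixed by $\Psi_{fo}$ are $\lr{12b}$ and $\lr{1bd}$, and for these I would compute directly
\[\Psi_{fo}(\lr{12b})\;=\;\lr{12b}-\frac{\lr{bcd}}{\lr{acd}}\lr{12a}\;=\;\frac{\lr{12\shuf ab\shuf cd}}{\lr{acd}}\;=\;\frac{1}{F_b}\,\lr{12\shuf ab\shuf cd},\]
\[\Psi_{fo}(\lr{1bd})\;=\;\lr{1bd}-\frac{\lr{bcd}}{\lr{acd}}\lr{1ad}\;=\;\frac{\lr{abd}\,\lr{1cd}}{\lr{acd}}\;=\;\frac{F_c}{F_b}\,\lr{1cd},\]
where the second equality on the first line is the expansion of the chain polynomial (\cref{ex:chain}) and the middle equality on the second line is the three-term Pl\"ucker relation $\lr{acd}\lr{1bd}=\lr{1ad}\lr{bcd}+\lr{1cd}\lr{abd}$. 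Comparing with \cref{notation:sigma-forest}, the quantities $\lr{12\shuf ab\shuf cd}$ and $\lr{1cd}$ are precisely the two mutable variables of $\overline{\Sigma}$ that do not already appear among the mutable variables of $\Sigma$, so the correspondence $\lr{12b}\leftrightarrow\lr{12\shuf ab\shuf cd}$, $\lr{1bd}\leftrightarrow\lr{1cd}$, together with the identity on all remaining (shared) mutable variables, is the desired bijection, realized by $\Psi_{fo}$ with frozen factors $1$, $F_b^{-1}$, and $F_cF_b^{-1}$.

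Since $\Psi_{fo}$ fixes all but two of the mutable variables, this case is in fact lighter than the spurion and chain-tree ones, and I do not anticipate a genuine obstacle. The only care needed is bookkeeping: keeping signs straight in the Grassmann-Cayley and Pl\"ucker identities above, matching the two-row grids of $\Sigma$ and $\overline{\Sigma}$ through the relabelling of the columns near $a,b,c,d$ produced by the three mutations that build $\overline{\Sigma}$ from $\tilde{\Sigma}$, and confirming that no image of a mutable variable lands outside the mutable part of $\overline{\Sigma}$ --- which is automatic once one knows that $\overline{\Sigma}$ is obtained from a seed for $\Gr_{3,n}$ by freezing exactly $F_2,F_3,F_b,F_c$ (the content of the companion mutation lemma). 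The exchange-ratio verification that completes the proof of the quasi-cluster homomorphism property is then carried out separately, exactly as for spurion and chain-tree promotion.
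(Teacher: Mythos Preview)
Your proposal is correct and follows essentially the same approach as the paper. One minor difference: for $\Psi_{fo}(\lr{1bd})$ the paper uses the alternative expansion $b\mapsto \tfrac{F_c}{F_b}c-\tfrac{F_d}{F_b}d$ (so the $d$-term drops out immediately), bypassing the Pl\"ucker relation you invoke; the paper also records the images of the frozen variables $\lr{bde}$ and $\lr{245}$, which you correctly omit for this lemma but which are needed for the subsequent exchange-ratio verification.
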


\begin{proof}
We apply the map $\Psi_{fo}$ to the variables in $\Sigma$, and substitute the vectors $2$ and $b$, using the formulas in \cref{def:forest_promotion}. The variables $\lr{12 i}$ and $\lr{ab j}$, with $i \not = b$ and $j \not = 2$ are fixed by~$\Psi_{fo}$ since
$$\Psi_{fo}\big(\lr{12i}\big) \;=\; \left\langle1\left(2 - \tfrac{F_1}{F_2}1\right) i \right\rangle\;=\;\lr{12i}, \;\;\text{ and }\;\; \Psi_{fo}\big(\lr{abj}\big) \;=\; \left\langle a\left(b - \tfrac{F_a}{F_b}a\right) j \right\rangle\;=\;\lr{abj}.$$
Here we have denoted by $F_i$ the Pl\"ucker with labels $\{1,2,3,4\} \setminus \{i\}$ and by $F_x$ the Pl\"ucker with labels $\{a,b,c,d\} \setminus \{x\}$.
The only variables that change under $\Psi_{fo}$ are the following ones:
\begin{align*}
&\Psi_{fo}\big(\lr{12b}\big) \;=\; \left\langle1\left(2 - \tfrac{F_1}{F_2}1\right)\left(b - \tfrac{F_a}{F_b}a\right) \right\rangle \;=\; \frac{\lr{12b}F_b-\lr{12a}F_a}{F_b} \;=\; \frac{\langle 12 \shuf ab \shuf cd \rangle}{F_b} \\
&\Psi_{fo}\big(\lr{1bd}\big) \;=\; \left\langle 1 \left(\tfrac{F_c}{F_b}c-\tfrac{F_d}{F_b}d\right) d\right\rangle \;=\; \frac{F_c}{F_b} \lr{1cd} \\ &\Psi_{fo}\big(\lr{bde}\big) \;=\; \left\langle \left(\tfrac{F_c}{F_b}c-\tfrac{F_d}{F_b}d\right)de\right\rangle \;=\; \frac{F_c}{F_b} \lr{cde}\\
&\Psi_{fo}\big(\lr{245}\big) \;=\; \left\langle \left(\tfrac{F_3}{F_2}3-\tfrac{F_4}{F_2}4\right) 45\right\rangle \;=\; \frac{F_3}{F_2} \lr{345}
\end{align*}
Thus, we obtain frozen variables $F_2,F_3,F_b,F_c$, the quadratic $\langle 12 \shuf ab \shuf cd\rangle$ and the new Pl\"uckers $\lr{1cd},\lr{cde}$ and $\lr{345}$ in~$\overline{\Sigma}$, as required. Note that the mutable variables in $\Sigma$ are in bijection with mutable factors in their images in~$\overline{\Sigma}$.
\end{proof}

\begin{lemma}
\label{lem:forest-promoted-ratios}
The exchange ratios of the mutable variables in $\Sigma$ map under $\Psi_{fo}$ to the exchange ratios of the corresponding mutable variables in $\overline{\Sigma}$.
\end{lemma}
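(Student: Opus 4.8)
The plan is to verify condition (2) of \cref{def:quasi} directly, exploiting that \cref{lem:forest-promoted-vars} already pins down exactly how each mutable variable of $\Sigma$ behaves under $\Psi_{fo}$ and so leaves only a handful of genuinely nontrivial exchange-ratio computations. Following the remark after \cref{def:quasi}, I would first fix the natural bijection between the mutable variables of $\Sigma$ and those of $\overline{\Sigma}$: the two mutable subquivers sit in grids of the same shape (with the mutated cell $\langle 12 \shuf ab \shuf cd\rangle$ of $\overline{\Sigma}$ corresponding to $\lr{12b}$), and the explicitly displayed quivers in \cref{notation:sigma-forest} show that this bijection is an isomorphism of the induced mutable subquivers; combined with $\Psi_{fo}(x)\propto \bar x$ from \cref{lem:forest-promoted-vars}, it then suffices to match exchange ratios vertex by vertex.

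Second, I would dispose of all but finitely many cases at once. By \cref{lem:forest-promoted-vars}, $\Psi_{fo}$ fixes every mutable variable of $\Sigma$ except $\lr{12b}$, $\lr{1bd}$, $\lr{bde}$, $\lr{245}$, and fixes every frozen variable except that it introduces the four new frozen variables $F_2=\lr{134}$, $F_3=\lr{124}$, $F_b=\lr{acd}$, $F_c=\lr{abd}$. Hence if $x$ is a mutable variable of $\Sigma$ all of whose quiver-neighbors --- mutable and frozen alike --- are fixed by $\Psi_{fo}$, then $\hat{y}_\Sigma(x)$ is a Laurent monomial in those neighbors and is therefore itself fixed; since the corresponding variable $\bar x$ of $\overline{\Sigma}$ has the same neighbors with the same arrow multiplicities (by the quiver isomorphism above), $\Psi_{fo}(\hat{y}_\Sigma(x)) = \hat{y}_{\overline{\Sigma}}(\bar x)$. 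This covers every mutable variable of $\Sigma$ outside the finitely many columns (in the $N'$-labeling, those near $a$, $b$, $d$) whose neighbor set meets $\{\lr{12b},\lr{1bd},\lr{bde},\lr{245}\}$ or one of the new frozen variables.

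Third, I would treat the remaining finitely many mutable variables by direct computation, in the same style as \cref{lem:forest-promoted-vars}: for each such $x$ write $\hat{y}_\Sigma(x)$ explicitly as a ratio of products of neighboring cluster variables read off from the quiver of $\Sigma$, apply $\Psi_{fo}$ term by term using the substitutions $2\mapsto 2-\tfrac{F_1}{F_2}1$ and $b\mapsto b-\tfrac{F_a}{F_b}a$ together with the images computed in \cref{lem:forest-promoted-vars}, and check that the factors $F_2,F_3,F_b,F_c$ introduced by the substitution all cancel in the ratio, leaving exactly $\hat{y}_{\overline{\Sigma}}(\bar x)$ as computed from the quiver of $\overline{\Sigma}$ in \cref{notation:sigma-forest}. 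The cancellation is forced: around each affected mutable vertex the arrows to and from the four special vertices $\langle 12\shuf ab\shuf cd\rangle,\lr{1cd},\lr{cde},\lr{345}$ are arranged so that the monomial degrees of $F_2,F_3,F_b,F_c$ in numerator and denominator agree.

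I expect the main obstacle to be the bookkeeping in the third step, and specifically getting the quiver of $\overline{\Sigma}$ right around the mutated-and-frozen region: one must be confident that mutating $\tilde{\Sigma}$ at $\lr{1bc},\lr{12b},\lr{12c}$ and then freezing $F_2,F_3,F_b,F_c$ produces precisely the arrows shown in \cref{notation:sigma-forest}, since it is those arrows that determine $\hat{y}_{\overline{\Sigma}}$ and hence whether the frozen factors cancel as required. Once the quiver is pinned down, the remaining verifications are routine Grassmann--Cayley/Pl\"ucker manipulations of exactly the flavor already carried out in \cref{lem:forest-promoted-vars}.
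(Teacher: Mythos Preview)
Your proposal is correct and follows essentially the same approach as the paper: reduce to finitely many mutable variables whose neighbor set is affected by $\Psi_{fo}$, dispatch the rest because they and their neighbors are fixed, and verify the remaining cases by direct computation using the images from \cref{lem:forest-promoted-vars}. The paper carries out exactly these explicit checks (for $\lr{12a},\lr{12b},\lr{12d}$ in row~1 and $\lr{145},\lr{1ab},\lr{1bd},\lr{1de}$ in row~2) and omits the others for the reason you give.
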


\begin{proof}
 We verify that the exchange ratios of mutable variables in~$\Sigma$ map to the exchange ratio of their proportional mutable images. Applying \cref{def:exchage-ratios} and using the computations from the proof of the previous lemma, we obtain for the following mutable variables of the first row:
\begin{align*}
\Psi_{fo}\left(\hat{y}_{\Sigma}\big(\lr{12a}\big)\right) \;&=\; \Psi_{fo} \left(\frac{\lr{12a'}\lr{1ab}}{\lr{12b}\lr{1a'a}}\right) \;=\; \frac{\lr{12a'} \lr{1ab}}{\tfrac{1}{F_b}\langle 12 \shuf ab \shuf cd \rangle \lr{1a'a}} \;=\; \hat{y}_{\overline{\Sigma}}\big(\lr{12a}\big) \\
\Psi_{fo}\left(\hat{y}_{\Sigma}\big(\lr{12b}\big)\right) \;&=\; \Psi_{fo} \left(\frac{\lr{12a}\lr{1bd}}{\lr{12d}\lr{1ab}}\right) \;=\; \frac{\lr{12a} \tfrac{F_c}{F_b}\lr{1cd}}{\lr{12d} \lr{1ab}} \;=\; \hat{y}_{\overline{\Sigma}}\big(\langle 12 \shuf ab \shuf cd \rangle\big) \\
\Psi_{fo}\left(\hat{y}_{\Sigma}\big(\lr{12d}\big)\right) \;&=\; \Psi_{fo} \left(\frac{\lr{12b}\lr{1de}}{\lr{1bd}\lr{12e}}\right) \;=\; \frac{\tfrac{1}{F_b}\langle 12 \shuf ab \shuf cd \rangle \lr{1de}}{\tfrac{F_c}{F_b}\lr{1cd}\lr{12e}} \;=\; \hat{y}_{\overline{\Sigma}}\big(\lr{12d}\big)
\end{align*}
For the following mutable variables of the second row we obtain:
\begin{align*}
\Psi_{fo}\left(\hat{y}_{\Sigma}\big(\lr{145}\big)\right) \;&=\; \Psi_{fo} \left(\frac{\lr{125}\lr{456}}{\lr{156}\lr{245}}\right) \;=\; \frac{\lr{125}\lr{456}}{\lr{156}\tfrac{F_3}{F_2}\lr{345}} \;=\; \hat{y}_{\overline{\Sigma}}\big(\lr{145}\big) \\
\Psi_{fo}\left(\hat{y}_{\Sigma}\big(\lr{1ab}\big)\right) \;&=\; \Psi_{fo} \left(\frac{\lr{12b}\lr{1a'a}\lr{abd}}{\lr{1bd}\lr{12a}\lr{a'ab}}\right) \;=\; \frac{\tfrac{1}{F_b}\langle 12 \shuf ab \shuf cd\rangle \lr{1a'a}\lr{abd}}{\tfrac{F_c}{F_b}\lr{1cd}\lr{12a}\lr{a'ab}} \;=\; 
\hat{y}_{\overline{\Sigma}}\big(\lr{1ab}\big) \\
\Psi_{fo}\left(\hat{y}_{\Sigma}\big(\lr{1bd}\big)\right) \;&=\; \Psi_{fo} \left(\frac{\lr{12d}\lr{bde}\lr{1ab}}{\lr{12b}\lr{1de}\lr{abd}}\right) \;=\; \frac{\lr{12d}\tfrac{F_c}{F_b}\lr{cde}\lr{1ab}}{\tfrac{1}{F_b}\langle 12 \shuf ab \shuf cd\rangle \lr{1de}\lr{abd}} \;=\; 
\hat{y}_{\overline{\Sigma}}\big(\lr{1cd}\big)\\
\Psi_{fo}\left(\hat{y}_{\Sigma}\big(\lr{1de}\big)\right) \;&=\; \Psi_{fo} \left(\frac{\lr{12e}\lr{1bd} \lr{def}}{\lr{12d}\lr{1ef} \lr{bde}}\right) \;=\; \frac{\lr{12e}\tfrac{F_c}{F_b}\lr{1cd} \lr{def}}{\lr{12d}\lr{1ef} \tfrac{F_c}{F_b}\lr{cde}} \;=\; \hat{y}_{\overline{\Sigma}}\big(\lr{12e}\big) 
\end{align*}
Finally, the verification for the remaining mutable variables is omitted, because these variables and their neighbors are all fixed by~$\Psi_{fo}$.
\end{proof}

\begin{proof}[Proof of \cref{thm:forest-prom}] \cref{notation:sigma-forest} shows that $\overline{\Sigma}$ is a seed for $\Gr_{4,n}$ with certain variables frozen. The properties of quasi-cluster homomorphism follow from Lemmas~\ref{lem:forest-promoted-vars} and~\ref{lem:forest-promoted-ratios}.
\end{proof}

\section{The (colored) operad of plabic tangles}\label{sec:categorical_pov}
In this section we describe the operad structure which underlies the relation between plabic tangles and promotion maps.

\subsection{The plabic operad}

\begin{definition}\label{def:plabic_operad} For $n \in \Z_{\ge 0}$ and $ \bd=(d_1, \dots, d_l) \in \Z_{\ge 0}^l$, we denote by $\mathcal{P}(n;\bd)$ the set of plabic tangles $(G, \bD=(D^{(i)})_{i \in [l]})$ where $G$ ranges over all possible cores with $n$ boundary vertices, and $\bD$ ranges over all possible blobs where $|D^{(i)}|=d_i$. We consider $\mathcal{P}(n;\bd)$ up to  local moves which do not touch the boundary vertices of the blobs.

The \emph{(colored) operad of plabic tangles} (or simply the \emph{plabic operad}) $\mathcal{P}$ consists of the following:
\begin{enumerate}
\item  the collection $(\mathcal{P}(n;\bd))_{n, \bd}$, where we refer to the values of $n$ and $d_i$ as the \emph{colors} of $\mathcal{P}$; 

\item a distinguished element $1=1_n$ in 
$\mathcal{P}(n;n)$ called the \emph{unit}, which consists of $n$ non-intersecting segments connecting the boundary vertices of the inner and outer disks;

\item \label{item:comp} for all positive integers
$n, \bd=(d_1,\dots,d_l),\bd'=( d'_1,\dots, d'_{l'})$ and $1\leq i \leq l$, a composition map 
\begin{align*}
\circ_i: 
&\mathcal{P}(n;\bd)\times \mathcal{P}(d_i;\bd')\to
\mathcal{P}(n;d_1,\ldots,d_{i-1},d'_1,\ldots,d'_{l'},d_{i+1},\dots, d_l),\\
&(G,\bD),(G',\bE)  \mapsto (G'',(D^{(1)},\ldots,{D^{(i-1)}},E^{(1)},\ldots,E^{(l')},{D^{(i+1)}},\ldots,D^{(l)}))
\end{align*}
obtained by inserting $(G',\bE)$ into the disk $D^{(i)}$ and gluing the attaching segments of $D^{(i)}$ to boundary vertices of $G'$;
\item \label{item:perm} for all positive integers $n, (d_1,\dots,d_l)$ and permutation $\sigma\in S_l$, a map
\[\sigma^*:\mathcal{P}(n;d_1,\ldots,d_l)\to  \mathcal{P}(n;d_{\sigma(1)},\ldots,d_{\sigma(l)})\] given by renumbering the inner disks.
\end{enumerate}
\end{definition}

\begin{figure}
\includegraphics[width=0.5\textwidth]{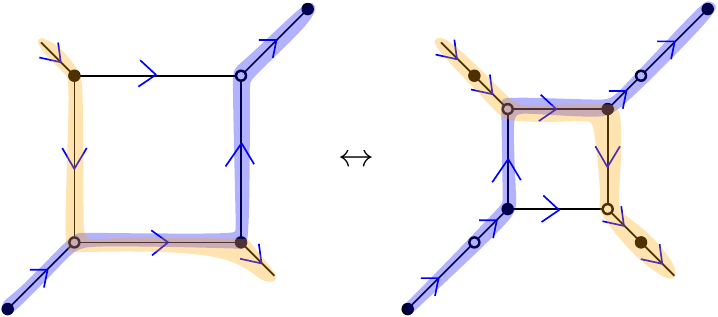}
\caption{We allow square moves on plabic graphs with a brushing only if the perfect orientation is a rotation of the orientation here and the paths either do not interact with the square or go between diagonally opposite corners. }
\label{fig:brushed-square-moves}
\end{figure}

\begin{remark}\label{rem:brushed}
Similarly, we can define the \emph{(colored) operad of brushed plabic tangles} $\widehat{P}=(\widehat{\mathcal{P}}(n;\bd)_{n,\bd})$ as the operad whose underlying set is the set of brushed plabic tangles. We consider brushed tangles up to expand-contract moves and \emph{brushed} square moves (shown in \cref{fig:brushed-square-moves}) that do not touch the boundary vertices of the blobs. The unit $\widehat{1}_n$ is the tangle $1_n$ with brushing given by the trivial paths each consisting of one boundary vertex.
Composition is given by inserting the brushed tangle $(G',\bE)$ into the blob $D=D^{(i)}$ of the brushed tangle $(G,\bD)$, where the brushing is obtained as follows.

Let $\OO=\OO^D$ be the  reverse perfect orientation of $G$ associated to the blob $D$, let $E$ 
be a blob of $(G',\bE)$, and let $\OO'$ be the reverse perfect orientation of $G'$ associated to $E$. Let $I \subset D$ 
be the set of sinks of $\OO'$.  
The orientation $\OO^E$ of the composed brushing is obtained by orienting $G'$ according to $\OO'$, reversing the orientation of all edges in the paths $\{P_u\}_{u \in I}$ in $\OO$, and orienting all other edges of $G$ according to $\OO$. 
For $u \in E$, the path $P_u$ of the composed brushing is the concatenation of the path $P'_u$ in $\OO'$ with the path $P_{v}$ in $\OO$ which ends at the starting point of $P'_u$. For any other blob $D' \in \bD$, we do not change the paths or the orientation. The symmetric group action is the natural lift of the action on the underlying tangle.
\end{remark}

Being an operad means that the following coherence axioms hold; the proof is straightforward.

\begin{observation}\label{obs:being_operad}
Let $(G,\bD)\in\mathcal{P}(N,\bd)$ and $(G',\mathbf{E})\in\mathcal{P}(d_i,\bd'),$ with $\bd=(d_1,\ldots,d_\ell)$ and $\bd'=(d'_1,\ldots,d'_{\ell'})$. Then we have the following.
\begin{itemize}
\item \emph{identity:} $(G,\bD)\circ_i 1_{d_i}=(G,\bD).$
\item \emph{commutativity}:
For $j\neq i\leq \ell$ and $(G'',\mathbf{H})\in\mathcal{P}(d_j,\bd'')$ 
\[[(G,\bD) \circ_i(G',\mathbf{E})]\circ_j(G'',\mathbf{H})=
[(G,\bD)\circ_j(G'',\mathbf{H})]\circ_i(G',\mathbf{E}).\]
\item \emph{associativity}: 
For $i\leq \ell,j\leq \ell',$  and $(G'',\mathbf{H})\in\mathcal{P}(d'_j,\bd'')$ 
\[[(G,\bD)\circ_i(G',\mathbf{E})]\circ_j(G'',\mathbf{H})
=(G,\bD)\circ_i[(G',\mathbf{E})\circ_j(G'',\mathbf{H})].\]
\item \emph{group action:} The maps $\sigma^\star$ for $\sigma\in S_\ell$ act on $\bigsqcup_{\bd=(d_1,\ldots,d_\ell)}\mathcal{P}(N,\bd).$
\item \emph{equivariance of compositions}: For $i\leq \ell$ and $\sigma\in S_\ell,$ 
\[
(G,(D^{(\sigma(1))},\ldots,D^{(\sigma(\ell))})\circ_{\sigma(i)}(G',\bE)
= 
\hat{\sigma}^*((G,\bD) \circ_i(G',\bE)),
\] where $\hat\sigma$ is the permutation on $([\ell]\setminus\{i\})\sqcup[\ell']$, identified with the total order obtained from $\{\sigma(1),\dots,\sigma(\ell)\}$ by 
removing $i$ and inserting the total order on $\{1,2,\dots,\ell'\}$ in its place.
An analogous statement holds for $\sigma\in S_{\ell'}$ acting on $\bE$. 
\end{itemize}

These properties also hold for the operad of brushed plabic tangles.
\end{observation}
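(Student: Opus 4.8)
The statement to prove is \cref{obs:being_operad}, the assertion that $\mathcal{P}$ and $\widehat{\mathcal{P}}$ satisfy the operad coherence axioms. The excerpt itself says ``the proof is straightforward,'' so the task is to organize the verification cleanly rather than uncover anything deep. I present the plan below.

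\medskip

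\noindent\textbf{Proof plan for \cref{obs:being_operad}.}

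The plan is to check each of the five bulleted axioms directly from the combinatorial descriptions of the unit, composition, and symmetric group action given in \cref{def:plabic_operad}, and then to explain why each verification lifts to the brushed operad $\widehat{\mathcal{P}}$ using the composition-of-brushings recipe from \cref{rem:brushed}. For the plain operad $\mathcal{P}$, the identity axiom is immediate: composing $(G,\bD)$ at the $i$th blob with the unit $1_{d_i}$ — which is $d_i$ non-intersecting segments joining the boundary of the inner disk to the boundary of a trivial outer disk — and then gluing attaching segments simply reconnects the original attaching segments of $D^{(i)}$ to the same black vertices of $G$, up to contracting the bivalent pieces created by the gluing; after the allowed local moves this is $(G,\bD)$ again. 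For commutativity, when $i\neq j$ the insertions into $D^{(i)}$ and $D^{(j)}$ happen in disjoint regions of the disk (disjoint faces of $G$), so the two plane graphs produced by performing them in either order are literally the same embedded graph, hence equal in $\mathcal{P}(n;\cdots)$. Associativity is similar: inserting $(G'',\mathbf{H})$ into a blob $D'_j$ of $(G',\mathbf{E})$ and then inserting the result into $D^{(i)}$ of $(G,\bD)$ produces the same nested planar picture as first inserting $(G',\mathbf{E})$ into $D^{(i)}$ and then inserting $(G'',\mathbf{H})$ into the now-relabelled blob; the attaching-segment gluings are associative because each attaching segment is used exactly once and the gluing is a local operation on a neighborhood of a boundary vertex. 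The group-action and equivariance axioms are bookkeeping about the labelling of blobs: $\sigma^*$ only renumbers inner disks, so $(\sigma\tau)^* = \tau^*\sigma^*$ and the definition of $\hat\sigma$ (inserting the block $\{1,\dots,\ell'\}$ in place of $i$) is exactly arranged so that relabelling-then-composing agrees with composing-then-relabelling.

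For the brushed operad $\widehat{\mathcal{P}}$, the underlying-tangle statements are inherited from the above, so what remains is to check that the extra data — the reverse perfect orientations $\OO^{D}$ and the vertex-disjoint path systems $\{P_u\}$ — also satisfy the axioms, and that the brushed square moves of \cref{fig:brushed-square-moves} do not interfere. For the identity axiom, composing with $\widehat{1}_{d_i}$ (trivial one-vertex paths, and the orientation that just restricts from $\OO^{D^{(i)}}$) leaves the orientation on $G$ unchanged — no path in the unit is nontrivial, so no edges of $G$ get reversed — and each composed path $P_u$ is the concatenation of a trivial path with $P_u$, i.e.\ $P_u$ itself; hence the brushing is unchanged. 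For commutativity with $i\neq j$: the edges of $G$ reversed when inserting at $D^{(i)}$ are those lying on the paths $\{P_u\}_{u\in D^{(i)}}$, which by vertex-disjointness are disjoint from the paths $\{P_v\}_{v\in D^{(j)}}$, so the two orientation modifications commute; the path concatenations for blobs coming out of $D^{(i)}$ versus $D^{(j)}$ involve disjoint pieces of $G$, so they commute as well. Associativity is the one place that needs genuine (though still routine) care: one must check that the composed orientation $\OO^{E}$ and composed paths associate, i.e.\ that reversing along $\{P_u\}$ in $\OO$ and then reversing along the lifted paths in the intermediate orientation yields the same final orientation as the one-step description, and that the triple concatenation $P''_u \cdot P'_{\bullet}\cdot P_{\bullet}$ of paths is unambiguous. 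This follows because concatenation of paths is associative and because the set of edges of $G$ that end up reversed is determined set-theoretically (it is the set of edges on the full concatenated path from a boundary vertex of $G$ to the relevant black vertex), independently of the order of composition. Finally the symmetric-group action on brushed tangles is declared in \cref{rem:brushed} to be ``the natural lift'' of the action on underlying tangles, so the group-action and equivariance axioms reduce to the ones already verified for $\mathcal{P}$.

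\medskip

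\noindent\textbf{Main obstacle.} The only step with any content is the associativity check for the brushed operad: one must confirm that the ``reverse the edges on the relevant paths'' recipe and the path-concatenation recipe of \cref{rem:brushed} are compatible with nested insertion, i.e.\ that the final brushed orientation does not depend on whether one first forms $(G,\bD)\circ_i(G',\mathbf{E})$ or $(G',\mathbf{E})\circ_j(G'',\mathbf{H})$. I expect this to go through by observing that (i) path concatenation is associative, and (ii) an edge of the composed graph is oriented black-to-white in the final orientation if and only if it is oriented that way along the unique maximal concatenated path containing it, a condition manifestly independent of the order of composition; everything else is diagram-chasing that I would relegate to a sentence or two rather than writing out in full.
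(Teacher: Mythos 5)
The paper itself offers no proof of \cref{obs:being_operad} beyond the remark that ``the proof is straightforward,'' so your direct axiom-by-axiom verification is exactly the intended argument, and for the unbrushed operad $\mathcal{P}$ everything you write is correct: the unit, commutativity, associativity, and equivariance checks all reduce to the facts that insertions at distinct blobs occur in disjoint faces, that gluing attaching segments is a local operation performed once per segment, and that $\sigma^*$ is pure relabelling. Your identification of brushed associativity as the only step with content, and your resolution of it via associativity of path concatenation plus the order-independence of the set of reversed edges, is also right.

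One step of your brushed commutativity argument, however, rests on a false premise. You justify the commutation of the two orientation modifications by saying the paths $\{P_u\}_{u\in D^{(i)}}$ are ``by vertex-disjointness'' disjoint from the paths $\{P_v\}_{v\in D^{(j)}}$. \cref{def:brushed} only requires vertex-disjointness \emph{within} each blob's path collection; the collections attached to two different blobs $D^{(i)}$ and $D^{(j)}$ live in two different reverse perfect orientations $\OO^{D^{(i)}}$ and $\OO^{D^{(j)}}$ and may well share vertices and edges. The correct (and simpler) reason commutativity holds is that no commutation of modifications on a common object is ever needed: the brushing data of the composed tangle is assigned per blob, and by \cref{rem:brushed} the data for blobs descending from $D^{(i)}$ is computed solely from $\OO^{D^{(i)}}$, its paths, and the inserted tangle $(G',\mathbf{E})$, while the data for blobs descending from $D^{(j)}$ is computed solely from $\OO^{D^{(j)}}$, its paths, and $(G'',\mathbf{H})$; these two computations read and write disjoint pieces of the structure regardless of whether the underlying paths intersect. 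With that substitution your plan goes through.
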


A \emph{suboperad} is a subset of an operad which contains the units and is closed under composition and permutation maps $\sigma^*$.

\begin{proposition}\label{prop:sub_operad}
The collection of dominant ($m$-generically) solvable plabic tangles forms a suboperad $\Pds_m$ of $\mathcal{P}.$ In particular the composition of dominant solvable tangles is dominant solvable. 
The same holds for the suboperad $\widehat{\Pds_m}$ of brushed dominant solvable tangles. 
\end{proposition}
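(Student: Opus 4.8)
The claim is that the three operad structure pieces---unit, composition $\circ_i$, and symmetric group action $\sigma^*$---all preserve the property of being a dominant solvable plabic tangle, and likewise for brushed tangles. The plan is to verify the three closure conditions separately, using the machinery already developed, with \cref{prop:dominant_solvable} (dominant $\iff$ admits a brushing) as the central tool that makes the brushed case and the unbrushed case interchangeable. For the \textbf{unit}: $1_n \in \mathcal{P}(n;n)$ has core the trivial graph with $n$ non-intersecting segments; this core is trivially $m$-generically solvable (the unique VRC with boundary $\bz$ just reads off $\bz$), each blob has size $n \geq m$ provided $n\geq m$, and it is rank-$m$ regular and dominant since geometric promotion is the identity $\Gr_{m,n} \dashrightarrow \Gr_{m,n}$. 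On the brushed side, $\widehat{1}_n$ carries the trivial brushing (paths of length zero) and the corresponding $\mcb^{\bsig}$-pinning with all $\bsig = 1$ gives identity promotion as well. This step is essentially immediate.

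For \textbf{composition}, suppose $(G,\bD) \in \Pds_m(n;\bd)$ and $(G',\bE) \in \Pds_m(d_i; \bd')$, and form $(G'',\cdots) = (G,\bD) \circ_i (G',\bE)$ by inserting $G'$ into the blob $D^{(i)}$ and contracting attaching segments. The first thing to check is that $G''$ is $m$-generically solvable: this is exactly \cref{cor:gluing}, which states that gluing an $m$-solvable graph into each blob of a dominant $m$-solvable tangle yields an $m$-solvable graph (here we glue only into the one blob $D^{(i)}$, and leave the others as blobs of $G''$). Next, one must check that $(G'',\bD'')$ is itself dominant solvable as a \emph{tangle}, i.e. rank-$m$ regular with each $\pi^{D}\circ\gProm$ having dense image, for $D$ ranging over the blobs $D^{(1)},\dots,D^{(i-1)}, E^{(1)},\dots,E^{(\ell')}, D^{(i+1)},\dots,D^{(\ell)}$. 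I would deduce this by composing the promotion maps: geometric promotion by $(G'',\bD'')$ factors, on a dense open set, as the composite of geometric promotion by $(G,\bD)$ followed by geometric promotion by $(G',\bE)$ applied in the $i$-th coordinate (this is the conceptual content of why the operad is relevant to promotions). Since both factors are dominant onto each blob-coordinate and the blob $D^{(i)}$-coordinate of the first map has dense image inside $\Conf^{\circ}_{m,D^{(i)}}$---which is where the second map is defined---the composite is dominant onto each of the new blob-coordinates; and rank-$m$ regularity is inherited because the generic fiber of the first map lands in the rank-$m$ regular locus of the second. By \cref{prop:dominant_solvable} this is equivalent to $(G'',\bD'')$ admitting a brushing, which also settles the brushed statement once we check that the composed brushing described in \cref{rem:brushed} is well-defined---that the orientation $\OO^E$ obtained by reversing edges along the paths $\{P_u\}_{u\in I}$ in $\OO$ is still an acyclic reverse perfect orientation of $G''$, and that the concatenated paths $P_u \cdot P_v$ are still vertex-disjoint. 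These are local combinatorial checks at the interface between $G$ and $G'$.

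For the \textbf{symmetric group action} $\sigma^*$: this only relabels the inner disks, changing neither $G$ nor the segments nor the brushing data beyond reindexing, so solvability, rank-$m$ regularity, and dominance (which are symmetric in the blobs) are obviously preserved; the brushed case is identical since the action is the ``natural lift.'' Finally I would remark that the suboperad axioms proper (closure under $\circ_i$, $\sigma^*$, containing units) are exactly what we have verified, and that associativity/commutativity/equivariance are inherited from \cref{obs:being_operad} for the ambient operad $\mathcal{P}$ (resp. $\widehat{\mathcal{P}}$), so nothing further is needed. \textbf{The main obstacle} I anticipate is making the factorization of promotion maps under $\circ_i$ precise enough to conclude dominance of the composite---in particular, controlling that the dense image of $\pi^{D^{(i)}}\circ\gProm$ for $(G,\bD)$ meets the (dense, but possibly delicate) domain of definition and rank-$m$ regularity locus of geometric promotion by $(G',\bE)$. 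This is morally the same intersection-of-dense-opens argument as in the proof of \cref{cor:gluing}, and I would model the write-up on that proof; the brushing well-definedness in \cref{rem:brushed} is the other place requiring care, but it is purely combinatorial.
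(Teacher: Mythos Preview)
Your proposal is correct and covers the same three checks (unit, composition, symmetric group action) as the paper. The paper dispatches the unit and symmetric group action in one sentence each, just as you do.

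The one genuine difference is in how dominance of the composed tangle is established. You propose to prove dominance \emph{directly} by factoring geometric promotion of $(G'',\bD'')$ through the two constituent promotions and arguing that the composite is dominant onto each blob coordinate; you then invoke \cref{prop:dominant_solvable} to conclude that $(G'',\bD'')$ admits a brushing, and separately verify that the explicit composed brushing of \cref{rem:brushed} is well-defined. The paper reverses this logic and thereby sidesteps the factorization argument entirely: since $(G,\bD)$ and $(G',\bE)$ are dominant solvable, by \cref{prop:dominant_solvable} each admits a brushing; \cref{rem:brushed} then describes a brushing on the composition; since the composed core is solvable (by the proof of \cref{cor:gluing}) and admits a brushing, \cref{prop:dominant_solvable} gives dominance for free. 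In other words, the paper uses the equivalence dominant $\Leftrightarrow$ brushable in the \emph{reverse} direction from you, and this completely dissolves what you flagged as ``the main obstacle''---the delicate intersection-of-dense-opens argument for the factored promotion map is never needed. Your route works, but the paper's is shorter and avoids the analytic subtleties you anticipated.
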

\begin{proof}
It is straightforward that $1_n$ is solvable and dominant, and it is also immediate that the action of the symmetric group restricts to an action on $\Pds_m.$ We are left to show that compositions of dominant solvable tangles are dominant  solvable tangles.

Consider the composed tangle ${(G, \bD)\circ_i (G',\bE)}$ where both constituent tangles are dominant solvable. The proof of \cref{cor:gluing} shows that the core of the composed tangle is solvable. The fact that the composed tangle is dominant follows from \cref{prop:dominant_solvable} and
\cref{rem:brushed}.

The proof for brushed tangles is a straightforward extension of the above argument.
\end{proof}
\subsection{Representations of plabic operads}
In this section we explain that the promotion maps from  \cref{sec:promotion} form \emph{representations} of the operad $\Pds_m$ of
dominant solvable tangles. 
\begin{definition}\label{def:in1_tangles_algebra_general}
An \emph{algebra $\mathcal{R}$ over an operad of plabic tangles} consists of: 
\begin{itemize}
\item a suboperad $\mathcal{P}'$ of $\mathcal{P}$;
\item a collection of $\CC$-algebras $\mathcal{R}(n)$ for every color which may appear in $\mathcal{P}'$, where 
$\mathcal{R}(0)=\CC$; 
\item  for every $(G,\bD)\in\mathcal{P}'$, a \emph{promotion map}
\begin{equation}\label{eq:promotion_tangle_alg_general}
\mathcal{R}(G,\bD): \bigotimes_{D \in \bD}\mathcal{R}(|D|)
    \to \mathcal{R}(n)\end{equation}
The spaces $\mathcal{R}(|D|)$ for $D\in\bD$ are the \emph{input spaces} or \emph{input components} of $\mathcal{R}(G,\bD),$ while $\mathcal{R}(n)$ is the output space.
\end{itemize}
This data is required to satisfy:
\begin{itemize}
\item \emph{normalization}: when $\ell=0$ the domain of the map \eqref{eq:promotion_tangle_alg_general} is $\CC$, and the map is the natural embedding $\mathbb{C}\to\mathcal{R}(n)$ which sends $1\mapsto 1$.
\item \emph{identity}: $\mathcal{R}(1_n)$ is the identity map $\mathcal{R}(n)\to\mathcal{R}(n).$
\item \emph{composition}:
$\mathcal{R}\left((G, \bD)\circ_i (G',\bE)\right)=\mathcal{R}(G,\bD)\circ_i\mathcal{R}(G',\bE)$, where the $\circ_i$ on the right corresponds to substituting the output of $\mathcal{R}(G',\bE)$ into the $i$th component of the domain of $\mathcal{R}(G,\bD)$.
\item \emph{symmetric group action}: for $\sigma\in S_{|\bD|}$, we have
$\sigma\cdot\mathcal{R}(G,\bD)=\mathcal{R}(\sigma^*(G,\bD))$, 
where $\sigma\cdot$ permutes the components of the domain $\bigotimes_{D \in \bD}\mathcal{R}(|D|)$ of $\mathcal{R}(G,\bD)$ according to the permutation $\sigma.$ \end{itemize}
 \end{definition}
 
 One can similarly define an algebra $\widehat{\mathcal{R}}$ over a suboperad of brushed plabic tangles.
 An important example is the following.
\begin{definition}\label{def:in1_tangles_algebra}
The \emph{algebra $\Ads_m$ of dominant 
($m$-generically) solvable plabic tangles}  
is the collection $\mathcal{R}(n)=\mathbb{C}(\Conf^\circ_{m,n}),$ for $n\geq m$, together with the promotion maps 
\eqref{eq:promotion_tangle_conf} 
\begin{equation*}\label{eq:promotion_tangle_alg_conf}\gProm=\gProm_{(G,\bD)}:\bigotimes_{D \in \bD}\CC(\Conf^\circ_{m, D})
    \to \CC(\Conf^\circ_{m, n})\end{equation*}
associated to each dominant ($m$-generically) solvable plabic tangle $(G,\bD).$

The \emph{algebra $\Abds$ of dominant 
($m$-generically) solvable brushed plabic tangles}  
is the collection $\widehat{\mathcal{R}(n)}=\mathbb{C}(\Gr_{m,n}),$ together with a promotion map \eqref{eq:promotion_tangle_alg} associated to each brushed dominant solvable 
plabic tangle $(G,\bD).$ 
\end{definition}
 
\begin{observation}\label{obs:being_operad_rep}
The algebra $\Ads_m$ 
is an algebra over the suboperad  $\Pds_m$. The algebra $\Abds$ 
is an algebra over the suboperad  $\widehat{\Pds_m}$. 
The only non-trivial property to verify is the behaviour under composition, which follows from \cref{prop:sub_operad}.
\end{observation}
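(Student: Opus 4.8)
\textbf{Plan for the proof of \cref{obs:being_operad_rep}.}

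The plan is to verify, one by one, the axioms in \cref{def:in1_tangles_algebra_general} for the pair $(\Pds_m, \Ads_m)$ and for the pair $(\widehat{\Pds_m}, \Abds)$, since the content of the statement is precisely that these data satisfy those axioms. First I would dispatch the easy bookkeeping: \emph{normalization} holds because a tangle in $\mathcal{P}(n;\,)$ with no blobs has empty tensor product $\CC$ as its domain, and the map $\gProm$ of \cref{def0:promotion} on an empty product of configuration spaces is by construction the structure map $\CC \to \CC(\Conf^\circ_{m,n})$; \emph{identity} holds because the unit $1_n$ has core a collection of $n$ non-intersecting segments, whose (trivially) unique VRC sends $z_i$ to itself, so $\gProm_{1_n}$ is the identity on $\CC(\Conf^\circ_{m,n})$; and the \emph{symmetric group action} axiom holds because $\sigma^*$ merely renumbers the blobs, and permuting the blobs permutes the output configuration spaces of $\gProm$ in exactly the matching way, by the definition of $\gProm$ as assigning to each blob its tuple of boundary lines read clockwise from its $\star$.

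The substantive point is the \emph{composition} axiom $\mathcal{R}\big((G,\bD)\circ_i(G',\bE)\big) = \mathcal{R}(G,\bD)\circ_i\mathcal{R}(G',\bE)$. Here I would first invoke \cref{prop:sub_operad}, which guarantees that the composed tangle $(G,\bD)\circ_i(G',\bE)$ is again dominant solvable, so that the left-hand promotion map is defined. Then the argument is essentially the one already carried out inside the proof of \cref{cor:gluing}: for generic $\bz \in \Conf^\circ_{m,n}$, solvability of $G$ produces a unique VRC on $G$ with boundary $\bz$; reading off the boundary lines of the blob $D^{(i)}$ gives (by dominance of $(G,\bD)$, for $\bz$ in a suitable open dense set) a generic element of $\Conf^\circ_{m, D^{(i)}}$, to which solvability of $G'$ then associates a unique VRC on $G'$; and the VRC on the composed core $G''$ obtained by concatenating these two local VRCs along the attaching segments of $D^{(i)}$ is, by uniqueness, the VRC used to compute the left-hand side. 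Tracing the boundary lines of the remaining blobs $E^{(1)},\dots,E^{(l')}, D^{(j)}\ (j\neq i)$ through this concatenated VRC shows that the resulting map on coordinate rings factors exactly as $\gProm_{(G,\bD)}$ composed with $\gProm_{(G',\bE)}$ plugged into the $i$th tensor factor, which is the claim. The \emph{commutativity} of $\circ_i$ and $\circ_j$ for $j \neq i$ on the level of promotion maps then follows formally from the corresponding commutativity property of the operad $\mathcal{P}$ (\cref{obs:being_operad}) together with the composition axiom just verified.

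For the brushed version $(\widehat{\Pds_m}, \Abds)$ the same scheme applies, but now one must check that the distinguished $\mcb^{\bsig}$-pinning is compatible with operadic composition, i.e.\ that the promotion map attached to the composed brushed tangle defined via the composed brushing of \cref{rem:brushed} agrees with the composite of the two individual promotion maps on Grassmannians. Concretely, one checks that if $P_u = P'_u \cdot P_v$ is the concatenated path of the composed brushing (with $P'_u$ in $\OO'$ on $G'$ and $P_v$ in $\OO$ on $G$), then $\wt(P_u) = \wt'(P'_u)\cdot \wt(P_v)$ up to the reversal-induced signs recorded in \cref{rem:brushed}, so that the two gauge transformations compose correctly and the substituted vectors $v_{b_u}(\bz)$ match. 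I expect \textbf{this last compatibility of pinnings/brushings under composition to be the main obstacle}: the easy configuration-space statement only requires uniqueness of VRCs, but the brushed statement requires bookkeeping of the reverse perfect orientations $\OO^D, \OO^E$, the reversals of the paths $\{P_u\}_{u\in I}$, and the signs $\bsig$ through the concatenation, and one must confirm that the weight of a concatenated path is the product of the weights (with consistent signs) so that the composite rational map on Grassmannians is literally equal, not merely projectively equal, to the promotion map of the composed brushed tangle. Once this multiplicativity of path weights is in hand, the remaining axioms transfer verbatim from the configuration-space case.
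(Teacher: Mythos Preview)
Your proposal is correct and follows the paper's approach: the paper treats this observation as essentially self-justifying, writing only that ``the only non-trivial property to verify is the behaviour under composition, which follows from \cref{prop:sub_operad}'' with no further proof. Your plan unpacks exactly what this means---the easy axioms are dispatched as you describe, and the composition axiom reduces to the uniqueness-of-VRC argument already present in the proof of \cref{cor:gluing}.

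Your concern about the brushed case is well-placed and in fact goes beyond what the paper addresses. The paper is content to declare the composition property a consequence of \cref{prop:sub_operad}, but \cref{prop:sub_operad} only shows that the composed brushed tangle is again dominant solvable; it does not verify that the $\mcb^{\bsig}$-pinning of the composed brushing produces a promotion map equal (on the nose, not just up to frozen factors) to the composite of the two constituent promotion maps. Your identification of the multiplicativity of path weights under concatenation as the key bookkeeping step is exactly right, and the paper leaves this verification to the reader.
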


We now sketch how to extend the above definitions to the operad of rank-$m$ regular tangles (which we do not require to be dominant or solvable).
It will be more convenient to focus on geometric promotion rather than algebraic promotion.

\begin{definition}
For a rank$-m$ regular  plabic tangle $(G,\bD)$ with
$\bD=(D^{(1)},\dots,D^{(\ell)})$
we define \emph{boundary $m$-VRCs} for $(G,\bD)$ as collections of elements 
$$(\bz,\bw^{(1)},\ldots,\bw^{(\ell)})=(z_1,\ldots,z_n;w^{(1)}_1,\ldots,w^{(\ell)}_{|D^{(\ell)}|})\in\Conf^\circ_{m, n} \times \Conf^\circ_{m, D^{(1)}} \times \dots \times \Conf^\circ_{m,D^{(\ell)}}$$
for which  there exists an $m$-VRC $[\bv, \bR] \in \mVRC_G$ which restricts to 
$(\bz,\bw^{(1)},\ldots,\bw^{(\ell)})$ on the boundaries of the inner and outer disks.

We similarly define \emph{boundary $m$-VRCs}  for a brushed tangle as collections
$$(\bz,\bw^{(1)},\ldots,\bw^{(\ell)})
\in\Gr_{m, n} \times \Gr_{m, D^{(1)}} \times \dots \times \Gr_{m,D^{(\ell)}}$$
where we use the brushing to lift the points to the Grassmannian.
\end{definition}

\begin{definition}
We associate to the tangle $(G,\bD)=(G, (D^{(1)},\dots,D^{(\ell)}))$ the space 
\[F_{(G,\bD)} \hookrightarrow \Conf^\circ_{m, n} \times \Conf^\circ_{m, D^{(1)}} \times \dots \times \Conf^\circ_{m,D^{(\ell)}},\] of all boundary $m$-VRCs for $(G,\bD).$ 
We similarly define ${F}_{\widehat{(G,\bD)}}$ for a brushed tangle by replacing everywhere $\Conf^\circ$ by $\Gr.$
\end{definition}
Note that $F_{1_n}$ is just the diagonal in $\Conf^\circ_{m,n}\times\Conf^\circ_{m,n}.$

When the tangle is generically solvable, $F_{(G,\bD)} $ is the graph of the 
geometric promotion map from \eqref{eq:promotion_tangle_geo}.
However, $F_{(G,\bD)}$ makes sense more generally, e.g. when we have
intersection number greater than $1$.  In \cref{sec:4mass} we will give an example of a promotion map corresponding to the \emph{$4$-mass box}, which has  intersection number $2$. Such a promotion can be thought of as a multivalued function or a function from some Galois cover of the Grassmannian.

In this more general setting we can define an analogue of the algebraic promotion map from \cref{eq:promotion_tangle_alg}, where the domain 
is the coordinate ring of $F_{(G,\bD)}$, which
can usually be written in the form 
\[R_{(G,\bD)}:=\left(\CC(\Conf^\circ_{m, D^{(1)}}) \otimes_{\CC} \dots \otimes_{\CC} 
    \CC(\Conf^\circ_{m, D^{(\ell)}})
    \otimes_{\CC} \CC(\Conf^\circ_{m, n})\right)/\mathcal{I}_{(G,\bD)}\]where $\mathcal{I}_{(G,\bD)}$ is the ideal defining (the closure of) $F_{(G,\bD)}.$

In this language, the composition of tangles corresponds geometrically to a projection of the fibered product.
Let the \emph{fibered product} $F_{(G,\bD)}\times_{\Conf^\circ_{m,D^{(i)}}}F_{(G',\bE)}$ denote the space of pairs of boundary $m$-VRCs of the form 
\[\left((\bz;\bw^{(1)},\ldots, \bw^{(i)},\ldots,\bw^{(\ell)}), (\bw^{(i)}; \bbu^{(1)},\dots, \bbu^{(r)})\right)\in F_{(G,\bD)}\times F_{(G',\bE)}.\]
Let 
$\mathrm{Pr}_{\Conf^\circ_{m,n}\times\prod_{j=1,j\neq i}^\ell\Conf^\circ_{m,D^{(j)}}\times\prod_{h=1}^\ell\Conf^\circ_{m,E^{(h)}}}$ be the projection map which maps this element to
\[(\bz; \bw^{(1)},\ldots, \widehat{\bw^{(i)}},\bbu^{(1)}, \ldots \bbu^{(r)},\ldots,\bw^{(\ell)}).\]
The following observation generalizes the composability part of \cref{prop:sub_operad}, and \cref{obs:being_operad_rep}.
\begin{observation}\label{obs:composition_fiber_prod} We have
\begin{equation*}
F_{(G,\bD)\circ_i (G',\bE)} = \mathrm{Pr}_{\Conf^\circ_{m,n}\times\prod_{j=1,j\neq i}^\ell\Conf^\circ_{m,D^{(j)}}\times\prod_{h=1}^\ell\Conf^\circ_{m,E^{(h)}}}\left(F_{(G,\bD)}\times_{\Conf^\circ_{m,D^{(i)}}}F_{(G',\bE)}\right).
\end{equation*}
(The statement for brushed tangles is analogous, but with Grassmannians replacing  configuration spaces everywhere.)
\end{observation}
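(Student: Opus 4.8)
\textbf{Proof plan for \cref{obs:composition_fiber_prod}.}

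The statement is essentially an unwinding of the definitions of boundary $m$-VRCs and of operad composition of plabic tangles, so the plan is to establish the two inclusions between the set on the left and the image of the fibered product on the right. First I would fix notation: write $(G'',\bD'')=(G,\bD)\circ_i(G',\bE)$ with $\bD'' = (D^{(1)},\ldots,D^{(i-1)},E^{(1)},\ldots,E^{(r)},D^{(i+1)},\ldots,D^{(\ell)})$, recalling from \cref{def:plabic_operad}(\ref{item:comp}) that $G''$ is obtained by inserting $G'$ into the blob $D^{(i)}$, gluing the attaching segments of $D^{(i)}$ to the boundary vertices of $G'$, and then deleting $D^{(i)}$ and contracting the attaching segments.

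For the inclusion ``$\subseteq$'', take a boundary $m$-VRC $(\bz;\bw^{(1)},\ldots,\widehat{\bw^{(i)}},\bbu^{(1)},\ldots,\bbu^{(r)},\ldots,\bw^{(\ell)})$ for $(G'',\bD'')$. By definition it is the restriction to the boundaries of the inner and outer disks of some $[\bv,\bR]\in\mVRC_{G''}$. Since $G''$ is built from $G$ and $G'$ glued along the vertices that become $D^{(i)}$, the VRC $[\bv,\bR]$ restricts to a VRC $[\bv_1,\bR_1]\in\mVRC_G$ on the $G$-part and a VRC $[\bv_2,\bR_2]\in\mVRC_{G'}$ on the $G'$-part (here one must check that the white-vertex relations of $G''$ decompose exactly into those of $G$ and those of $G'$, since no white vertex straddles the gluing locus after contracting the attaching segments; this uses that the attaching segments connect blob boundary vertices to black vertices of the core). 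Let $\bw^{(i)}$ be the tuple of lines/vectors that $[\bv,\bR]$ assigns to the vertices of $D^{(i)}$; then $(\bz;\bw^{(1)},\ldots,\bw^{(\ell)})$ is a boundary $m$-VRC for $(G,\bD)$ via $[\bv_1,\bR_1]$, and $(\bw^{(i)};\bbu^{(1)},\ldots,\bbu^{(r)})$ is a boundary $m$-VRC for $(G',\bE)$ via $[\bv_2,\bR_2]$. These two elements agree on the common component $\Conf^\circ_{m,D^{(i)}}$, so the pair lies in the fibered product, and its image under the projection is the original element. The reverse inclusion ``$\supseteq$'' runs the same argument backwards: given a pair in the fibered product, the two VRCs $[\bv_1,\bR_1]$ on $G$ and $[\bv_2,\bR_2]$ on $G'$ agree on the gluing vertices $D^{(i)}$, so (after a common $GL_m$-normalization and matching the edge weights, which one is free to do since the VRCs are only defined up to gauge and $GL_m$) they glue to a VRC $[\bv,\bR]\in\mVRC_{G''}$ whose boundary restriction is the projected tuple; one must check that the boundary-spanning condition (1) of \cref{def:VRC} is inherited, which holds because the outer boundary of $G''$ is the outer boundary of $G$.

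The main obstacle is the careful bookkeeping at the gluing locus: verifying that a VRC on $G''$ is literally the same data as a compatible pair of VRCs on $G$ and $G'$. One has to be precise about (a) how the attaching segments of $D^{(i)}$ are absorbed when they are contracted---each such segment identifies a boundary vertex of $G'$ with a black vertex $b_u$ of $G$, so the vector $v_{b_u}$ in the $G$-VRC must equal the boundary vector $w^{(i)}_u$ in the $G'$-VRC, which is exactly the fibered-product condition---and (b) the gauge/$GL_m$ ambiguity, since $\mVRC$ is a quotient; one resolves this by noting that both sides of the asserted equality are subsets of the \emph{same} product of configuration spaces, where the ambiguity has already been quotiented out, so the set-theoretic equality is insensitive to the choice of representatives. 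The analogous statement for brushed tangles is then obtained by replacing $\Conf^\circ$ with $\Gr$ throughout and using the composed brushing of \cref{rem:brushed} to lift the configuration-space points to the Grassmannian consistently across the gluing; the lift is compatible precisely because the composed paths $P_u$ are concatenations of a path in $\OO'$ with a path in $\OO$, so the normalizations multiply correctly.
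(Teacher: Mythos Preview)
The paper states this as an observation without proof, treating it as an immediate consequence of unwinding the definitions of boundary $m$-VRCs and of composition in the plabic operad. Your proposal supplies exactly that argument---the two inclusions via restricting a VRC on $G''$ to $G$ and $G'$, respectively gluing compatible VRCs along the $D^{(i)}$ interface---and this is the intended reasoning.

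One small point you might sharpen: in the $\subseteq$ direction, when you extract $\bw^{(i)}$ from the VRC on $G''$, you need $\bw^{(i)}\in\Conf^\circ_{m,D^{(i)}}$ (in particular, the vectors at the gluing vertices must span $\CC^m$) both for the pair to land in the fibered product and for the restriction to $G'$ to satisfy condition~(1) of \cref{def:VRC}. This is not automatic from the data of a boundary $m$-VRC on the composed tangle; the paper is equally informal here and is implicitly working on the open locus guaranteed by rank-$m$ regularity (\cref{def:rank_reg_and_dom}), so the equality should be read modulo that genericity assumption.
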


The symmetric group $S_\ell$ acts on the $\ell$ input spaces in the fibered product by permuting them. It holds that for all $\bd=(d_1,\ldots,d_\ell)$, $n$, $(G,\bD)\in\mathcal{P}(n,\bd),$ and $\sigma\in S_\ell$, 
\[F_{\sigma^\star(G,\bD)}=\sigma\cdot F_{(G,\bD)},
\]where on the right hand side, 
 $\sigma$ acts by permuting the corresponding elements of the product. 
This leads to the following definition. 

\begin{definition}\label{def:general_operad_algebra}
Let $\mathcal{P}'$ be a suboperad of the plabic operad $\mathcal{P}$ or the brushed plabic operad $\widehat{\mathcal{P}}$.
A \emph{generalized topological representation} of $\mathcal{P}'$ is a collection of topological spaces $\mathcal{F}(n)$ for $n\geq 0$, as well as 
a topological space $\mathcal{F}{(G,\bD)} \subseteq \mathcal{F}(n)\times\bigtimes_{i=1}^\ell\mathcal{F}(|D^{(i)}|)$ for every $(G,\bD)=(G,(D^{(1)},\dots,D^{(\ell)}))\in\mathcal{P}',$\footnote{When $\ell=0$, we have  $\mathcal{F}(G)\subseteq\mathcal{F}(n).$} satisfying the following properties: 
\begin{itemize}
\item \emph{identity}: $\mathcal{F}(1_n)$ is the diagonal of $\mathcal{F}(n)\times\mathcal{F}(n)$;
\item \emph{composition}:
$$\mathcal{F}\Big((G,\bD)\circ_i (G',\bE)\Big) =\mathrm{Pr}_{\mathcal{F}(n)\times\prod_{j=1,j\neq i}^\ell\mathcal{F}(|D^{(j)}|)\times\prod_{h=1}^\ell\mathcal{F}(|E^{(h)}|)}\mathcal{F}{(G,\bD)}\times_{\mathcal{F}(|D^{(i)}|)}\mathcal{F}{(G',\bE)}.$$
\item \emph{symmetric group action}: for $\sigma\in S_\ell$
\[\sigma\cdot\mathcal{F}(G,\bD)=\mathcal{F}(\sigma^*(G,\bD)),\]
where $\sigma\cdot$ permutes the components $\mathcal{F}(|D^{(i)}|)$
and affects the projection accordingly. 
\end{itemize}

The topological representation obtained by setting $\mathcal{F}(n)=\Conf^\circ_{m,n}$ and $\mathcal{F}{(G,\bD)}={F}_{(G,\bD)}$ is called the 
$\Amp_m$-topological 
representation of the plabic operad. The topological representation obtained by setting ${\mathcal{F}}(n)=\Gr_{m,n}$ and ${\mathcal{F}}{\widehat{(G,\bD)}}={F}_{\widehat{(G,\bD)}}$ is called the \emph{$\Amp_m$-topological representation of the brushed plabic operad}. 
\end{definition}
We can obtain an \emph{algebraic representation} from the $\Amp_m-$topological representation of the brushed plabic operad one by moving to coordinate rings.

For fixed $m$, an interesting suboperad is the suboperad of (brushed) dominant tangles whose core $G$ has rank $k$ and dimension $km$.  One can show that the intersection number is multiplicative under composition. In \cref{sec:4mass} we discuss the induced map in the special case of the $4$-mass box.

The following conjecture generalizes the first item of \cref{conj:cluster1} to higher intersection numbers.
\begin{conjecture}\label{conj:poshigher}
Let $(G, \bD)$ be a plabic tangle of dimension at most $km$ which admits a brushing. Then there exists a brushing $\mcb$ and a choice of signs $\bsig$ such that the following holds for the associated space $F_{\widehat{(G,\bD)}}$:
\[F_{\widehat{(G,\bD)}}\cap\Gr^{\ge 0}_{m, n} \times \Gr_{m, D^{(1)}} \times \dots \times \Gr_{m,D^{(\ell)}}\subseteq \Gr^{\ge 0}_{m, n} \times \Gr^{\ge 0}_{m, D^{(1)}} \times \dots \times \Gr^{\ge 0}_{m,D^{(\ell)}}\]
\end{conjecture}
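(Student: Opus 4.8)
We outline a possible approach to \cref{conj:poshigher}, which should extend the mechanism behind \cref{conj:cluster1}(1).

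The first step is to reduce to a single blob and to a matrix identity. Since $F_{\widehat{(G,\bD)}}$ is a subvariety of $\Gr_{m,n}\times\prod_i\Gr_{m,D^{(i)}}$ whose projection to the $i$-th factor pair is contained in $F_{\widehat{(G,(D^{(i)}))}}$ (the tangle obtained by retaining only the $i$-th blob), it suffices to prove: if $(\bz,\bw)\in F_{\widehat{(G,\bD)}}$ with $\bz\in\Gr^{\geq 0}_{m,n}$, then $\bw\in\Gr^{\geq 0}_{m,D}$, where $D=D^{(1)}$. Unwinding the definitions of $F_{\widehat{(G,\bD)}}$ and of the $\mcb^{\bsig}$-pinning, there is an $m$-VRC $(\bv,\bR)$ on $G$ restricting to $\bz$ on the outer boundary, and $\bw$ is represented by the $m\times|D|$ matrix $M$ whose column indexed by $u\in D$ (read clockwise from the $\star$) is $\sigma_{b_u} v_{b_u}/\wt'(P_u)$. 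Choosing the gauge-fixed representative of $(\bv,\bR)$ provided by \cref{lem:gauge_fix_NI_paths} applied to the brushing paths $\{P_u\}$, we may arrange $r_e=1$ along each $P_u$ (so $\wt'(P_u)=1$ up to sign), and then \cref{lem:v_b-from-paths} writes each $v_{b_u}$ as a signed sum over paths of $\OO:=\OO^D$ from a source $i\in I$ to $b_u$ of products of the $r_e$. In matrix form $M=\bz_I\,\widetilde B$, where $\bz_I$ is the submatrix of $\bz$ on the source columns $I$ and $\widetilde B$ is the $|I|\times|D|$ matrix of these signed path sums.

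The plan is next to show that the Kasteleyn signs can be chosen (equivalently, signs can be absorbed into $\bsig$ and into $\OO$) so that the signed edge weights $\sigma_e r_e(\bz)$ are all positive whenever $\bz\in\Gr_{m,n}^{>0}$; by continuity and closedness of the totally nonnegative Grassmannian, and since the hypothesis $\dim\Pi_G\le km$ forces the relevant VRCs to form a finite family (cf.\ \cref{prop:solvable-only-one-m-possible} and the incidence-variety count of \cref{lem:too-low-dim-maps-to-pos-codim}), it is enough to check this for generic totally positive $\bz$. Given positivity of the signed edge weights, $\widetilde B$ is a row/column submatrix of a path matrix of a reduced plabic graph with positive edge weights — here one enlarges $G$ by attaching, for each $u\in D$, an edge from $b_u$ to a new boundary vertex routed through the face containing the blob, turning the $b_u$ into boundary sinks in the correct cyclic order — hence $\widetilde B$ is totally nonnegative by Postnikov's theorem, and Cauchy--Binet applied to $M=\bz_I\widetilde B$ (all maximal minors of $\bz$ nonnegative, all minors of $\widetilde B$ nonnegative) yields that $M$ is totally nonnegative, i.e.\ $\bw\in\Gr^{\geq 0}_{m,D}$. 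To produce the sign choices, I would use the dictionary of \cref{prop:VRC-vs-Cz=0} and \cref{thm:bdry-meas-vs-bdry-restriction}: the $m$-VRC $(\bv,\bR)$ with boundary $\bz$ corresponds to $C=(\alpha[\bv,\bR])^{\perp}\in T_G$ with $C\subset\bz^{\perp}$, and the gauge class of $\bR$ equals $\mathbb{B}_G^{-1}(C)$ up to the fixed Kasteleyn/alternating-sign twist; so the required positivity is equivalent to $C$ lying in the positroid cell $S_G$. This in turn says that for totally positive $\bz$ every preimage in $\Pi_G$ selected by the brushing is totally nonnegative: for $\bz$ in the interior of the twistor-image of a genuine amplituhedron tile (cf.\ \cref{def:intnumber}) this is automatic, since $\comp_{Z,G}$ is then injective on $S_G$ and the intersection-number/dimension bound forces the $\Pi_G$-preimage to coincide with the $S_G$-preimage, and the general statement follows by taking Zariski closures, which is built into the definition of $F_{\widehat{(G,\bD)}}$.

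The hard part will be making this last step uniform over all brushable tangles of dimension $\le km$: one must identify which branch(es) of the (multivalued) promotion a given brushing picks out and show they are exactly the totally nonnegative ones, and one must handle blobs lying in an internal face not incident to the outer boundary, where the enlargement above produces a graph in an annulus rather than a disk and ordinary total positivity of path matrices is replaced by the more delicate cylindric positivity. This is tied to the open question of a sign-flip / positive-geometry description of these higher-intersection-number cells. A complementary route, avoiding both difficulties for the cases one is most likely to need, is operadic: by \cref{prop:sub_operad} brushable (dominant) tangles are closed under composition, and a direct check shows that the spaces $F_{\widehat{(G,\bD)}}$ behave functorially under $\circ_i$; so it would suffice to verify the positivity statement for a generating family of elementary brushed tangles — e.g.\ the star, spurion, chain-tree and forest tangles of \cref{sec:promotion-examples}, together with their cyclic shifts and reflections — and then propagate it along compositions, exactly as the quasi-cluster property is propagated. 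The $4$-mass box computation (\cref{sec:4mass}) shows that such explicit checks remain feasible even when the intersection number exceeds $1$.
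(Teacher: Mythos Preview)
The statement you are attempting to prove is a \emph{conjecture} in the paper, and the paper does not supply a proof; it is explicitly left open, with only the $4$-mass box case (\cref{th:4mb_pos}) verified by direct computation. So there is no paper proof to compare against, and your proposal is best read as a strategy outline --- which you yourself acknowledge is incomplete.

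There is, however, a central gap you do not flag. The crux of your argument is the claim that for totally positive $\bz$ the element $C\in T_G$ with $C\subset\bz^\perp$ lies in the positroid cell $S_G$, so that the (Kasteleyn-adjusted) edge weights $\sigma_e r_e(\bz)$ are positive. Your justification appeals to injectivity of $\comp_{Z,G}$ on $S_G$ and to a ``tile'' picture, but this is precisely where the higher-intersection-number case bites: when $\mint(G)=d>1$ there are $d$ distinct points $C\in T_G$ with $C\subset\bz^\perp$, and nothing you have written rules out some of them lying in $T_G\setminus S_G$. Indeed, \cref{rem:tile-vs-pretile} warns that such cells are typically not tiles, so tile-injectivity is unavailable; and the $4$-mass box proof in \cref{sec:4mass} establishes positivity of \emph{both} branches only by explicit manipulation of the expressions $A,B,C,\Delta$, not by any general mechanism. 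In short, the step ``$C\in S_G$ whenever $\bz\in\Gr_{m,n}^{>0}$'' is essentially a restatement of the conjecture, not a consequence of what precedes it. Your Cauchy--Binet reduction is fine \emph{conditional on} positivity of the edge weights, but that positivity is exactly what remains to be shown.

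The operadic reduction you sketch at the end is a more realistic route to partial results, but note that it only reaches tangles generated by whatever elementary pieces you can check by hand; it does not address the conjecture for an arbitrary brushable tangle of dimension at most $km$.
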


Similar operadic frameworks can be associated to the 
\emph{momentum amplituhedron} \cite{Damgaard:2019ztj} and the \emph{ABJM amplituhedra} \cite{He:2023rou}.  In fact the \emph{arc moves} of \cite{perlstein2025bcfw} are a special case of promotions in the ABJM amplituhedron setting.

\begin{remark} The plabic operad has additional structures, such as boundary maps which are obtained by erasing an edge of the core $G$ to obtain a new core $G' \subset G$. These boundary maps are compatible with representations of the operad: the space $F_{(G',\bD)}$ is a subspace of the closure $\overline{F_{(G,\bD)}}$ of $F_{(G,\bD)}$ in 
the Hausdorff topology.
We leave the study of this object  to a future work.
\end{remark}

\subsection{Modules}
We now briefly remark on modules associated to operads of plabic tangles. 
\begin{definition}
Let $\mathcal{R}$ be an algebra over a suboperad $\mathcal{P}'$ plabic tangles. A \emph{module} over $\mathcal{R}$ is an assignment of an $\mathcal{R}(n)$-module 
 $\mathcal{M}(n)$ for every color of $\mathcal{P}'$, as well as maps
 \[\mathcal{M}(G,\bD):\bigotimes_{D \in \bD}\mathcal{M}(|D|)
    \to \mathcal{M}(n).\]
    These maps satisfy analogous normalization, identity, composition and symmetric group action conditions  as in \cref{def:in1_tangles_algebra_general}, and in addition, 
    for all $f_i\in\mathcal{R}(|D^{(i)}|)$ and $m_i\in\mathcal{M}(|D^{(i)}|),$ with $i=1,\ldots,\ell,$
    \begin{equation}\label{eq:module_operad}
\mathcal{M}(G,\bD)(f_1m_1\otimes\cdots\otimes f_\ell m_\ell)
    = \mathcal{R}(G,\bD)(f_1\otimes\cdots\otimes f_\ell) \ \mathcal{M}(G,\bD)(m_1\otimes \cdots\otimes m_\ell),
    \end{equation}
 compatible with \eqref{eq:promotion_tangle_alg_general}. We can similarly define a brushed version.
\end{definition}

The next definition is motivated by the \emph{canonical form} of amplituhedron cells \cite{Arkani-Hamed:2012zlh,lam2015totally}.

\begin{definition}
Let $\mathcal{M}(n)$ to be the module of meromorphic forms on $\Gr_{m,n}.$
The \emph{canonical form} of a plabic graph is the meromorphic form $\bigwedge_{i=1}^d \frac{dx_i}{x_i},$ where $(x_i)_{i=1}^d$ is any coordinate system associated to a plabic graph, such as the edge weights (modulo gauge). Let $\widehat{(G,\bD)}$ be a brushed $m$-generically solvable plabic tangle, where $G$ is of type $(k,n)$.
We define 
        \begin{align*}
    \mathcal{M}(\widehat{G,\bD}): \mathcal{M}(|D^{(1)}|) \otimes \dots \otimes 
   &\mathcal{M}(|D^{(\ell)}|)
    \to \mathcal{M}(n)\\
     \text{ by }&(\omega_1\otimes\cdots\otimes\omega_\ell)\mapsto \Omega_G\wedge\gProm^*\omega_1\wedge\cdots\wedge\gProm^*\omega_\ell,\end{align*}
    where $\Omega_G$ is the push-forward to $\Gr_{m,n}$ of the canonical form of $G$ via the map $\comp_{Z,G}$ from \eqref{eq:Upsilon}  which associates a point of $\Pi_G \subset \Gr_{k,n}$ to a point of $\Gr_{m,n}$, and $\gProm^*=\gProm^*_{\widehat{(G,\bD)}}$ is the pull-back of the forms by the geometric promotion defined in \cref{def:brushed}.

    \end{definition}

    If $\ell=0$ then $\mathcal{M}(\widehat{G,\emptyset})$ is a meromorphic form on $\Gr_{m,n}$ which agrees with the canonical form of the positroid cell $S_G$ \cite{Arkani-Hamed:2012zlh,lam2015totally}. This definition can be extended beyond the dominant solvable case using the $\Amp_m$-topological
    representation of the brushed plabic operad.

    \begin{example}
    Consider the BCFW tangle described in \cref{def:bcfw_promotion}. The map it induces is
    \begin{align*}\omega_L\otimes\omega_R\mapsto\frac{d\lr{bcdn}}{d\lr{abcd}}\wedge\frac{d\lr{acdn}}{d\lr{abcd}}\wedge\frac{d\lr{abdn}}{d\lr{abcd}}\wedge\frac{d\lr{abcn}}{d\lr{abcd}}\wedge\gProm^*_{BCFW}\omega_L\wedge\gProm^*_{BCFW}\omega_R,\end{align*}
    where $\gProm^*_{BCFW}$ is the pullback along the map $\gProm_{BCFW}$ associated to the map $\aProm_{BCFW}$ of \cref{def:bcfw_promotion}. This map is well known from the study of the BCFW recursion \cite{Bai:2014cna}, where if $\omega_L,\omega_R$ are the canonical forms of the $\tZ$-images of the left and right blobs, then the left hand side of the above equation is the canonical form of the $\tZ$-image of the composed cell.
\end{example}

The above definitions extend  to e.g. the momentum and ABJM amplituhedron.

\section{Beyond intersection number one and cluster algebras}\label{sec:4mass}
While so far we have considered mainly the case of intersection number $1$ positroid varieties, one can also define promotions for more general plabic tangles, as was mentioned in \cref{sec:categorical_pov}. In this section we  provide an example of a plabic tangle whose core is the $4$-mass box (see \cref{fig:4mass_upper_spurion}) which indexes an intersection number $2$ positroid variety. While the promotion will not yield a quasi-cluster homomorphism between the coordinate rings of the corresponding Grassmannians, we will prove that this promotion has remarkable positivity properties (\cref{th:4mb_pos}).

\subsection{$4$-Mass Box Promotion}

We describe unary promotion by a $4$-mass box plabic graph as in \cref{fig:4mass_upper_spurion}. The core is a $4-$mass box cell with inner disks. Since the cell has $m$-intersection number $2$, where $m=4$, there are generically $2$ solutions for the vector configuration of the core.

\begin{remark}\label{rk:schubert_problem}
Note that vector-relation configurations for the $4$-mass box plabic graph, with $m=4$, encode the following classical Schubert problem: 
\emph{how many lines in $\PPP^3$ meet four general lines?}  Here e.g. the vectors at $1$ and $2$ span a line, as do the vectors at $3$ and $4$, the vectors at $5$ and $6$, and the vectors at $7$ and $8$. 
\end{remark}

In the following proposition, we describe how these $2$ solutions give rise to two promotion maps.

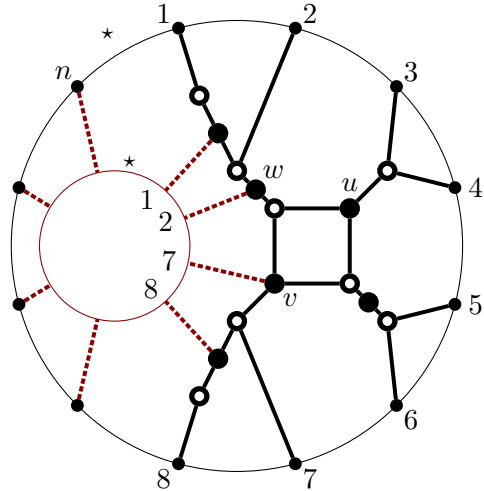
\begin{figure}[htbp]
\centering
\begin{tikzpicture}[scale=1,  
every node/.style={line width=2pt, circle, minimum size=2mm, draw, inner sep=0pt},
boundary/.style={line width=2pt, circle, minimum size=1mm, draw, inner sep=0pt, fill=black},
core/.style={line width=1.5pt},
blob/.style={line width=1.5pt,dash pattern=on 2pt off 1pt},
]
\node[boundary, label=above left:1] (b1) at (105:3) {};
\node[boundary, label=above right:2] (b2) at (75:3) {};
\node[boundary, label=above right:3] (b3) at (45:3) {};
\node[boundary, label=right:4] (b4) at (15:3) {};
\node[boundary, label=right:5] (b5) at (-15:3) {};
\node[boundary, label=below right:6] (b6) at (-45:3) {};
\node[boundary, label=below right:7] (b7) at (-75:3) {};
\node[boundary, label=below left:8] (b8) at (-105:3) {};
\node[boundary] (b10) at (-135:3) {};
\node[boundary] (b11) at (-165:3) {};
\node[boundary] (b12) at (165:3) {};
\node[boundary, label=above left:$n$] (bn) at (135:3) {};
\node[draw=none,label=above left:$\star$] (star) at (120:3) {};
\node (blob) at (180:1.625) {};
\node[fill=white] (ne) at (0.5,0.5) {};
\node[fill=black, label=above right:$w$] (ne2) at (0.25,0.75) {};
\node[fill=white] (ne1) at (0,1) {};
\node[fill=black] (ne3) at (-0.25,1.5) {};
\node[fill=white] (ne4) at (-0.5,2) {};
\node[fill=black, label=above:$u$] (nw) at (1.5,0.5) {};
\node[fill=white] (nw1) at (2,1) {};
\node[fill=black, label=below right:$v$] (se) at (0.5,-0.5) {};
\node[fill=white] (se1) at (0,-1) {};
\node[fill=black] (se3) at (-0.25,-1.5) {};
\node[fill=white] (se4) at (-0.5,-2) {};
\node[fill=white] (sw) at (1.5,-0.5) {};
\node[fill=black] (sw2) at (1.75,-0.75) {};
\node[fill=white] (sw1) at (2,-1) {};
\draw[core] (b3)--(nw1)--(b4) (nw1)--(nw)--(ne)--(se)--(sw)--(nw) (sw)--(sw2)--(sw1) (b5)--(sw1)--(b6) (ne)--(ne2)--(ne1) (b2)--(ne1)--(ne3)--(ne4)--(b1) (se)--(se1) (b7)--(se1)--(se3)--(se4)--(b8);
\draw[darkred,blob] (ne3)--(blob) (ne2)--(blob) (se)--(blob) (se3)--(blob) (b10)--(blob) (b11)--(blob) (b12)--(blob) (bn)--(blob);
\draw[darkred,fill=white] (blob) circle[radius=1];
\draw[black] (0,0) circle[radius=3];
\node[draw=none] at ($(blob) + (55:0.75)$) {$1$};
\node[draw=none] at ($(blob) + (25:0.75)$) {$2$};
\node[draw=none] at ($(blob) + (-15:0.75)$) {$7$};
\node[draw=none] at ($(blob) + (-50:0.75)$) {$8$};
\node[draw=none] (star2) at ($(blob) + (80:1.125)$) {$\star$};
\end{tikzpicture}
    \caption{A brushed plabic tangle whose core is the 4-mass box plabic graph, which has $4$-intersection number 2. As usual, the brushing is indicated by the labels of the blob's vertices.}
    \label{fig:4mass_upper_spurion}
\end{figure}

\begin{proposition}\label{prop:4mass-promotion}
The plabic tangle from \cref{fig:4mass_upper_spurion} gives rise to two corresponding promotion maps $\Psi_{\pm}:\C(\widehat\Gr_{4,N'})\to \C(\widehat\Gr_{4,n})[\sqrt{\Delta}],$
where $N'=\{1,2,7,8,\ldots,n\}$ and
\begin{equation}\label{eq:4mb_promotion}
    z_2 \mapsto \frac{12 \shuf 56 X_{\pm}}{\langle 1 56 X_{\pm}\rangle  }, \quad z_7 \mapsto X_{\pm} := z_7 + \alpha_{\pm} z_8,
\end{equation}
\begin{equation}\label{eq:alpha}
    \alpha_{\pm}= \frac{-B \pm \sqrt{\Delta}}{2 A}, \quad \Delta:=B^2-4AC
\end{equation}
\begin{equation}\label{eq:ABC}
    A=Y_{88}, \, \, B= Y_{78}+Y_{87}, \, \, C= Y_{77},
\end{equation}
where $Y_{ij}:=\langle 12i \shuf 43 \shuf 56j\rangle$ are cluster variables for $\Gr_{4,n}$ when $i,j \in \{7,\ldots,n\}$.
\end{proposition}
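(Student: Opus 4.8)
The plan is to solve the $4$-term vector--relation equations on the core of the tangle in \cref{fig:4mass_upper_spurion} explicitly, using that for generic boundary data there are exactly $\mint(G)=2$ solutions up to gauge (by \cref{cor:int-num=num-VRC}, or classically because $\sigma_1^4=2$ in $\Gr(2,4)$), and then to read off the two resulting substitutions.

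\textbf{Step 1: reduce to a Schubert problem.} Tracing the white-vertex relations of the core inward from the boundary, the two bivalent white vertices adjacent to the blob vertices labelled $1$ and $8$ force the vectors there to be proportional to $z_1,z_8$, and the $\mcb^{\bsig}$-pinning makes them equal $z_1,z_8$; the trivalent white vertices on the four legs then force $v_u\in\lr{34}$, $v_w\in\lr{12}$, $v_{sw_2}\in\lr{56}$ and $v_v\in\lr{78}$, where $u,v,w$ are the internal black vertices so labelled in \cref{fig:4mass_upper_spurion} and $sw_2$ is the black vertex on the $\lr{56}$-leg. Finally the two white vertices of the central $4$-cycle force $v_w,v_{sw_2}\in L:=\spn(v_u,v_v)$, so that $L$ is a $2$-plane meeting each of the four lines $\lr{12},\lr{34},\lr{56},\lr{78}$ of \cref{rk:schubert_problem}. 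Conversely each such $L$ produces a VRC, so the two solutions of this Schubert problem index $\mzVRC_G$, matching $\mint(G)=2$.

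\textbf{Step 2: solve and extract the substitutions.} Write $v_v=L\cap\lr{78}$ as $X=z_7+\alpha z_8$ (the coefficient of $z_7$ can be normalized to $1$, which is the normalization the brushing imposes). That $L$ meets $\lr{34}$ and $\lr{56}$ forces $L=(X\wedge z_3\wedge z_4)\shuf(X\wedge z_5\wedge z_6)$ by \cref{lem:basic-prop-of-shuf}, and that $L$ also meets $\lr{12}$ is the vanishing of the scalar $z_1\wedge z_2\wedge L$. Expanding in $\alpha$ gives a quadratic $A\alpha^2+B\alpha+C$ with
\[
A=(z_1\wedge z_2)\shuf\bigl((z_8\wedge z_3\wedge z_4)\shuf(z_8\wedge z_5\wedge z_6)\bigr),\qquad
C=(z_1\wedge z_2)\shuf\bigl((z_7\wedge z_3\wedge z_4)\shuf(z_7\wedge z_5\wedge z_6)\bigr),
\]
\[
B=(z_1\wedge z_2)\shuf\bigl((z_7\wedge z_3\wedge z_4)\shuf(z_8\wedge z_5\wedge z_6)+(z_8\wedge z_3\wedge z_4)\shuf(z_7\wedge z_5\wedge z_6)\bigr).
\]
Using associativity and commutativity of $\shuf$ together with \cref{def:chain}, I would show $(z_1\wedge z_2)\shuf\bigl((z_i\wedge z_3\wedge z_4)\shuf(z_j\wedge z_5\wedge z_6)\bigr)=\pm\lr{12i\shuf 43\shuf 56j}=\pm Y_{ij}$, which gives \eqref{eq:ABC} and hence $\alpha_\pm$ as in \eqref{eq:alpha}. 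Then the blob vertex $v$ (labelled $7$) carries $X_\pm$; and since $L\subseteq\spn(z_5,z_6,X_\pm)$, the blob vertex $w$ (labelled $2$) carries a vector spanning $\lr{12}\cap\spn(z_5,z_6,X_\pm)$, i.e.\ $(z_1\wedge z_2)\shuf(z_5\wedge z_6\wedge X_\pm)$, and the brushing rescales this by the path weight $\lr{1\,56\,X_\pm}$ so that the coefficient of $z_2$ is $1$; this is exactly \eqref{eq:4mb_promotion}. The remaining blob vertices ($1,8,9,\dots,n$) are fixed, producing $\Psi_\pm:\C(\widehat{\Gr}_{4,N'})\to\C(\widehat{\Gr}_{4,n})[\sqrt{\Delta}]$.

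\textbf{Step 3: the cluster-variable claim, and the main difficulty.} For distinct $i,j\in\{7,\dots,n\}$, $Y_{ij}=\lr{12i\shuf 43\shuf 56j}$ is a chain polynomial $\lr{A\shuf B\shuf C}$ with $|A|=|C|=3$ and $|B|=2$; after reordering the three sets via the antisymmetry of $\shuf$ and applying a cyclic shift it falls under \cref{cor:irreducible-quadratics}, so it is, up to sign and a Laurent monomial in frozens, a cluster variable for $\Gr_{4,n}$, hence irreducible by \cite[Theorem 1.3]{GLS}. The main obstacle throughout is the Grassmann--Cayley bookkeeping: establishing $(z_1\wedge z_2)\shuf\bigl((z_i\wedge z_3\wedge z_4)\shuf(z_j\wedge z_5\wedge z_6)\bigr)=\pm Y_{ij}$ with the correct global sign, and pinning down the brushing/pinning normalizations so that the vectors at $v$ and $w$ come out precisely as in \eqref{eq:4mb_promotion}; the conceptual steps -- the reduction to the Schubert problem and the count of two solutions -- are forced by the combinatorics of the core and by \cref{cor:int-num=num-VRC}.
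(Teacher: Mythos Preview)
Your proposal is correct and follows essentially the same route as the paper. The paper parametrizes both intersection points at once, writing $u=z_3+\beta z_4$ and $v=z_7+\alpha z_8$, imposes the two incidence conditions $\lr{12uv}=0$ and $\lr{56uv}=0$, and eliminates $\beta$ from the resulting pair of bilinear equations to obtain the quadratic $A\alpha^2+B\alpha+C=0$; it then recovers $w$ by wedging the relation $\gamma z_1+z_2+\delta u+\epsilon v=0$ with $56v$. Your variant keeps only $\alpha$, writes $L=(X34)\shuf(X56)$ directly in Grassmann--Cayley terms, and imposes the single condition $12\wedge L=0$. The two computations are equivalent: the paper's elimination of $\beta$ is the determinantal shadow of your shuffle. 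One small point worth recording is that in your expansion the cross-terms $\lr{7856}\lr{1234}$ appearing in $12\wedge(734\shuf 856)$ and $12\wedge(834\shuf 756)$ cancel, which is what makes your $B$ coincide (up to the global sign) with $Y_{78}+Y_{87}$; this is the ``bookkeeping'' you flag.

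On Step~3: the paper's proof of the proposition does not actually argue the cluster-variable assertion for the $Y_{ij}$; it only derives the substitution formulas. Your appeal to \cref{cor:irreducible-quadratics} plus a cyclic shift works when the three index blocks $\{1,2,i\},\{3,4\},\{5,6,j\}$ are cyclically separated (which forces $j<i$), but does not cover the diagonal cases $Y_{77},Y_{88}$ where the blocks are not disjoint, so that part of the statement would need a separate check.
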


\begin{remark}\label{rmk:branching_loc}
Geometrically, referring to the terminology of \cref{sec:categorical_pov}, if we denote the brushed tangle inducing this promotion by $\widehat{(G,D)},$ then ${F}_{\widehat{(G,D)}}$ can be identified with the graph of a map from the Galois double cover of $\Gr_{4,n}$ defined by extending $\CC(\widehat{\Gr_{4,n}})$ by $\sqrt{\Delta},$ to $\Gr_{4,N'},$ and we may view the promotion maps representing that graph.
 Interestingly, this map from the Galois cover can be presented as different global branches, $\aProm_\pm.$ We conjecture that this phenomenon of different well-defined global branches on the positive Grassmannian, holds for all cells of intersection number greater than one, which is equivalent to saying that certain expressions which describe branching loci of the promotion map from the Galois cover are positive on the positive Grassmannian. 
\end{remark}

\begin{proof}[Proof of \cref{prop:4mass-promotion}]
The white vertex attached to $7,8$ enforces, up to gauge, a linear relation $v=z_7+\alpha z_8$, with $\alpha$ a coefficient to be determined. Analogously, we have $u=z_3+\beta z_4$. The vector $w$ associated to the black vertex between the two white vertices close to $1,2$ has to belong to $(12)$ and $(uv)$. These two subspaces intersect trivially if they are generic, therefore they need to satisfy: $\langle 12uv \rangle=0$. Analogously, considering the vector $w$ on the black vertex between the two white vertices close to $5,6$, we need $\langle 56uv \rangle=0$. The two conditions give the following constraints on $\alpha, \beta$:
\begin{equation}
    \langle ij37 \rangle +\alpha \langle ij38 \rangle + \beta \langle ij47 \rangle+ \alpha \beta \langle ij48 \rangle =0, \quad \text{ for }\{i,j\} \in \{\{1,2\},\{5,6\}\}.
\end{equation}
If we solve  the equation where $\{i,j\}=\{1,2\}$ for $\beta$ and substitute it in the other equation we get the following quadratic equation for $\alpha$:
\begin{equation}
    A \alpha^2 +B \alpha +C=0,
\end{equation}
where $A,B,C$ are as in \cref{eq:ABC}. Hence there are $2$ solutions for $v$, which we denote as $X_{\pm}=z_7+\alpha_{\pm}z_8$, with $\alpha_{\pm}$ as in \cref{eq:alpha}. 

Since $\langle 12uv \rangle=0$, the vectors  satisfy a linear relation of the kind: $\gamma z_1 + z_2 +\delta u + \epsilon v=0$, where the coefficients are non-zero. Moreover, a vector $w$ in the intersection of $(12)$ and $(uv)$ is $w=\gamma z_1 + z_2=-\delta u - \epsilon v$. By wedging the linear relation with $5 \wedge 6 \wedge v$ we get:
\begin{equation}
    0=\gamma \langle 1 56 v \rangle + \langle 2 56 v \rangle +\delta \langle u 56 v \rangle+\epsilon \langle v 56 v \rangle=\gamma \langle 1 56 v \rangle + \langle 2 56 v \rangle,
\end{equation}
because $\langle 56 uv \rangle=0$ and $v \wedge v=0$. Therefore, we can solve for $\gamma$ and we can express $w$ as:
\begin{equation}
    w=-\frac{\langle 2 56 v \rangle}{\langle 1 56 v \rangle}z_1+z_2=\frac{12 \shuf 56v}{\langle 1 56v \rangle}.
\end{equation}
\end{proof}
\begin{proposition}[\cite{Arkani-Hamed:2019rds}]\label{prop:delta_pos}
   The function $\Delta$ in \cref{eq:alpha} is positive on $\Gr^{>0}_{4,n}$.
\end{proposition}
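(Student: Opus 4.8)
The claim is that $\Delta = B^2 - 4AC$ is positive on $\Gr^{>0}_{4,n}$, where $A = Y_{88}$, $B = Y_{78}+Y_{87}$, $C = Y_{77}$, and $Y_{ij} = \lr{12i \shuf 43 \shuf 56j}$. The plan is to exhibit $\Delta$ as a manifestly positive expression by relating it to the discriminant of a genuine geometric configuration, namely the Schubert problem of \cref{rk:schubert_problem}. First I would set up coordinates: working on $\Gr^{>0}_{4,n}$, fix generic positive columns and consider the lines $\ell_{12} = \spn(z_1,z_2)$, $\ell_{34}=\spn(z_3,z_4)$, $\ell_{56}=\spn(z_5,z_6)$, and the pencil of candidate vectors $X_\alpha = z_7 + \alpha z_8$ spanning (together with $u=z_3+\beta z_4$) a line meeting all four lines. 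The quadratic $A\alpha^2 + B\alpha + C = 0$ of \cref{prop:4mass-promotion} is precisely the condition that the line through $\ell_{12}\cap(\text{span}(u,X_\alpha))$ and $\ell_{56}\cap(\text{span}(u,X_\alpha))$ closes up; so $\Delta$ is the discriminant of the Schubert problem "how many lines in $\PPP^3$ meet four given lines". Over $\CC$ this has two solutions; the content of positivity is that \emph{when the four lines come from a positive configuration, both solutions are real}, i.e. $\Delta > 0$.

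The key step is therefore to express $\Delta$ directly in terms of brackets and show it is a sum of products of Plücker coordinates with positive signs — possibly after rewriting in the Grassmann--Cayley algebra using the identities of \cref{ex:intersections,lem:basic-prop-of-shuf}. Concretely, I would expand $B^2 - 4AC$ with $Y_{ij}=\lr{12i\shuf 43\shuf 56j}$ and use the chain-polynomial machinery of \cref{def:chain} and \cref{ex:chain} to collect terms. The shuffle $\lr{12i \shuf 43 \shuf 56j}$ is a quadratic in Plückers (sets of sizes $3,2,3$ summing to $2\cdot 4 = 8$), so $\Delta$ is a quartic in Plücker coordinates. The goal is to massage $\Delta$ into the form $\bigl(\text{something}\bigr)^2 \cdot(\text{positive}) + (\text{positive sum of products of Plückers})$, or more plausibly into a single sum $\sum_{\text{terms}} (\pm 1)\lr{I_1}\lr{I_2}\lr{I_3}\lr{I_4}$ where, using \cref{def:positroid}, every surviving monomial has a net $+$ sign on $\Gr^{>0}_{4,n}$. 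Since \cref{prop:delta_pos} is attributed to \cite{Arkani-Hamed:2019rds}, I expect the cleanest route is to reproduce the identity from that reference: $\Delta$ equals (up to an overall positive Plücker factor) the square of a bracket-difference plus a manifestly positive remainder, reflecting the fact that the two "mutual positions" of $\ell_{12}$ and $\ell_{56}$ relative to the regulus through $\ell_{34}$ and $X_\alpha$ are both admissible.

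The main obstacle will be the bracket bookkeeping: $\Delta$ is a quartic with many monomials, and showing that after all cancellations the result is a positive combination requires either (i) a clever factorization/completing-the-square identity, or (ii) an argument that the quadratic $A\alpha^2+B\alpha+C$ always has two real roots because, geometrically, as $\alpha$ ranges over $\R$ the relevant closing-up function changes sign — i.e. an intermediate value argument on the real line. Route (ii) may actually be the shorter path: one shows that $A = Y_{88} = \lr{128\shuf 43\shuf 568} > 0$ on $\Gr^{>0}_{4,n}$ (it is a chain polynomial, positive by \cref{cor:irreducible-quadratics}-type reasoning combined with the fact that it is a cluster variable, hence has a sign-definite expression), and then that the quadratic $q(\alpha) = A\alpha^2+B\alpha+C$ takes a value of the opposite sign to $A$ at some real $\alpha$ — for instance at $\alpha = -C/B$ or at an $\alpha$ forcing $X_\alpha$ into one of the bounding lines — which forces $\Delta > 0$. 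I would carry out both in parallel and keep whichever produces the shorter self-contained argument; I expect (ii), the "two real roots by an intermediate value / sign-change argument using positivity of the $Y_{ij}$ brackets", to be the one that avoids the brute-force quartic expansion. Either way, the final write-up reduces to: (1) record that $A,B,C$ and the relevant auxiliary brackets are cluster variables, hence positive on $\Gr^{>0}_{4,n}$ (citing \cref{prop:4mass-promotion} and the cluster-variable identifications there); (2) produce the sign-change point or the completing-the-square identity; (3) conclude $\Delta = B^2 - 4AC > 0$.
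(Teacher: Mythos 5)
First, note that the paper does not actually prove \cref{prop:delta_pos}: it is imported from \cite{Arkani-Hamed:2019rds}, so there is no internal proof to compare against. Your submission is a plan with two candidate routes rather than a proof, and as it stands it has genuine gaps in both.

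Route (i) — writing $\Delta$ as a positive sum of Plücker monomials, or as a square plus a manifestly positive remainder — is almost certainly unavailable. The remark immediately following \cref{prop:delta_pos} states that $\Delta$ is \emph{nontrivially} positive in the sense of \cite{Arkani-Hamed:2019rds}, in explicit contrast with the Laurent-positive expressions used in the rest of \cref{sec:4mass}; in cross-ratio variables $\Delta$ is of the shape $(1-u-v)^2-4uv$, which is \emph{not} positive for arbitrary positive $u,v$, so any proof must use inequalities special to $\Gr^{>0}_{4,n}$ beyond positivity of individual brackets. Your step (2) ("produce the completing-the-square identity") is exactly the hard content of the cited result, and you do not produce it. Route (ii) contains a concrete error: with $q(\alpha)=A\alpha^2+B\alpha+C$ one has $q(-C/B)=AC^2/B^2$, which has the \emph{same} sign as $A$, so your proposed test point does not exhibit a sign change. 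Worse, since $A=Y_{88}$ and $C=Y_{77}$ are both positive (they are cluster variables per \cref{prop:4mass-promotion}), any real roots have positive product, so an intermediate-value argument would require locating a \emph{negative} value of $q$ at some specific real $\alpha$, and no such point is identified; finding one is again essentially equivalent to the statement being proved. Your step (1) (positivity of $A$, $B$, $C$) is fine but nowhere near sufficient: positivity of the coefficients of a quadratic says nothing about the sign of its discriminant. To repair this you would either have to reproduce the actual argument of \cite{Arkani-Hamed:2019rds} (e.g.\ an explicit positive parametrization computation), or give a genuine real-geometry argument that the Schubert problem of \cref{rk:schubert_problem} has two real solutions for every totally positive configuration — neither of which is carried out here.
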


\begin{remark}
The positivity of $\Delta$ in \cref{prop:delta_pos} is related to the existence of real solutions to the Schubert problem in \cref{rk:schubert_problem}, which have been the subject of recent study, see e.g. \cite{karprealSchubert}.
\end{remark}

\begin{remark}
$\Delta$ is \emph{nontrivially positive}, according to the definition in \cite{Arkani-Hamed:2019rds}. Whereas, all the following proofs will rely on sums or products of functions which are  \emph{Laurent positive}.
\end{remark}

\begin{lemma}\label{lem:positivity_expr}
Let $i \in \{9,\ldots,n\}$. Then $2A\lr{127i}-B\lr{128i}$ is positive on $\Gr^{>0}_{4,n}$, where $A$ and $B$ are the functions in \cref{eq:ABC}.
\end{lemma}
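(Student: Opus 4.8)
\textbf{Proof plan for \cref{lem:positivity_expr}.}

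The plan is to express the combination $2A\lr{127i}-B\lr{128i}$ directly in terms of Pl\"ucker coordinates and chain polynomials, and then to recognize it as a manifestly Laurent-positive combination of products of cluster variables (or, failing that, of quantities already known to be positive on $\Gr_{4,n}^{>0}$, such as chain polynomials with a block of size $m-1=3$, which are cluster variables by \cref{cor:irreducible-quadratics} together with suitable cyclic shifts). Recall that $A=Y_{88}=\lr{128\shuf 43\shuf 568}$, $B=Y_{78}+Y_{87}=\lr{127\shuf 43\shuf 568}+\lr{128\shuf 43\shuf 567}$, and that all the $Y_{ij}$ are cluster variables. First I would rewrite $Y_{ij}=\lr{12i\shuf 43\shuf 56j}$ using the chain-polynomial expansions in \cref{ex:intersections} (with the appropriate relabeling $a_1a_2a_3\mapsto 12i$, $b_1b_2b_3\mapsto 43$ padded out, $c_1c_2c_3\mapsto 56j$), so that each $Y_{ij}$ becomes an explicit quadratic (in fact a bracketed product) in Pl\"ucker coordinates; then the desired expression becomes a cubic in Pl\"uckers, and the goal is to factor it.

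The key algebraic step is a Pl\"ucker/Grassmann--Cayley identity. Geometrically, $A\alpha^2+B\alpha+C=0$ is the equation cutting out the two solutions $X_\pm=z_7+\alpha_\pm z_8$ of the Schubert problem from \cref{rk:schubert_problem}; the quantity $2A\alpha+B=\pm\sqrt\Delta$ measures how far $\alpha$ is from the vertex of the parabola, and $2A\lr{127i}-B\lr{128i}$ is, up to the factor $\lr{128i}$, essentially $A\cdot(2\lr{127i}/\lr{128i}) - B$, i.e. $A$ times a ``doubled ratio'' minus $B$. Concretely I would look for an identity of the shape
\[
2A\lr{127i}-B\lr{128i} \;=\; \lr{128i}\,Y_{77} \;-\; \bigl(\text{something manifestly positive}\bigr) \;+\;\cdots,
\]
or more optimistically a clean factorization $2A\lr{127i}-B\lr{128i}=\lr{127i}\,Y_{78}+\lr{128i}\,Y_{87} - (\text{lower order})$; the right framework is to expand $A$ and $B$ using the Grassmann--Cayley relation \cref{lem:GC-easy-reln} applied to the five vectors $1,2,i$ (resp.\ $7,8$) against $43$ and $567$/$568$, which produces exactly the three-term syzygies relating $\lr{12i\shuf 43\shuf 567}$, $\lr{12i\shuf 43\shuf 568}$, and the relevant $4\times 4$ Pl\"uckers. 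After substituting these in, I expect the expression to collapse to a sum of two terms, each a product of a Pl\"ucker coordinate and a chain polynomial of the type $\lr{A\shuf B\shuf C}$ with $|C|=3$, all of which are positive on $\Gr_{4,n}^{>0}$ (the Pl\"uckers are positive by definition, and the chain polynomials are positive by \cref{cor:irreducible-quadratics} and the cyclic-shift remark following it, since their index sets—lying among $\{1,2,\dots,8,i\}$ with $i\ge 9$—are cyclically ordered appropriately).

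The main obstacle will be getting the signs right in this expansion: chain polynomials are only well-defined up to sign, the Grassmann--Cayley identity \cref{lem:GC-easy-reln} carries sign factors $(-1)^{d_i(d_i+\dots+d_p)}$, and the cyclic-shift statement introduces a sign $(-1)^{r(m-r)}$; a small sign error would turn a ``positive'' claim into a ``negative'' one. I would handle this by fixing once and for all a concrete matrix representative of a point of $\Gr_{4,n}^{>0}$ (e.g.\ a Vandermonde-type totally positive matrix) and numerically evaluating both $2A\lr{127i}-B\lr{128i}$ and the candidate positive factorization at a sample point to pin down the overall sign, and only then writing out the symbolic identity. A secondary, more bookkeeping-heavy point is verifying that every chain polynomial appearing really does have its size-$3$ block in the ``$C$'' slot with the indices in cyclic order relative to $1$; since all indices involved are in $\{1,\dots,8\}\cup\{i\}$ with $i\ge 9$, no index lies to the left of $1$, so the cyclic-shift sign $(-1)^{r(m-r)}$ is trivial ($r=0$), which should make this step clean once the identity is in hand. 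If a single clean factorization proves elusive, the fallback is to write $2A\lr{127i}-B\lr{128i}$ as a positive-coefficient $\Z_{\ge0}$-linear combination of products of cluster variables lying in a common cluster, invoking the standard fact that such combinations are positive on $\Gr_{4,n}^{>0}$.
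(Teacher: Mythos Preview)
Your plan is essentially the paper's: expand $A$ and $B$ as explicit quadratics in Pl\"uckers, regroup the resulting cubic using three-term Pl\"ucker identities, and exhibit the answer as a sum of two positive pieces. The paper carries this out and obtains
\[
2A\lr{127i}-B\lr{128i}\;=\;\lr{1278}\,\langle 12i\shuf 43\shuf 568\rangle\;+\;\langle (128\shuf 34)\,6\,5\,(78\shuf 12i)\rangle,
\]
so one refinement to your expectation: only the first summand is literally (Pl\"ucker)$\times$(chain polynomial with a size-$3$ block); the second is a compound Grassmann--Cayley bracket that does not fit the $\lr{A\shuf B\shuf C}$ template, and hence \cref{cor:irreducible-quadratics} will not certify it directly. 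Note also that even for the first summand the index sets $\{1,2,i\},\{4,3\},\{5,6,8\}$ are not blockwise ordered (since $i\ge 9$), so your appeal to \cref{cor:irreducible-quadratics} plus cyclic shift needs a bit more care than the ``$r=0$'' remark suggests. The paper simply asserts both summands are positive on $\Gr_{4,n}^{>0}$ without further argument; your proposed numerical sign-check together with the fallback Laurent-positive expansion in a fixed cluster is the right way to close this gap.
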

\begin{proof}
Let us write $2A\lr{127i}-B\lr{128i}$ as
\begin{align*}
& \big(\lr{1283}\lr{4568}-\lr{1284}\lr{3568}\big)\lr{127i} + \big(\lr{1283}\lr{4568}-\lr{1284}\lr{3568}\big)\lr{127i}-\\
& \big(\lr{1273}\lr{4568}-\lr{1274}\lr{3568}+\lr{1283}\lr{4567}-\lr{1284}\lr{3567}\big)\lr{128i}=\\
& \lr{4568}\big(\lr{1283}\lr{127i}-\lr{1273}\lr{128i}\big)+\lr{1283}\big( \lr{4568}\lr{127i}-\lr{4567}\lr{128i}\big)+\\
& \lr{3568}\big(\lr{1274}\lr{128i}-\lr{1284}\lr{127i}\big)+\lr{1284}\big(\lr{3567}\lr{128i}-\lr{3568}\lr{127i}\big)=\\
&  -\lr{4568}\lr{1278}\lr{123i}+\lr{1283}\langle456 \shuf 78 \shuf 12i \rangle+\lr{3568}\lr{1278}\lr{124i}+\lr{1284}\langle356 \shuf 87 \shuf 12i\rangle=\\
&  \lr{1278}\langle 12i \shuf 43 \shuf 568\rangle+
  \langle (128\shuf 34)65(78\shuf12i) \rangle
\end{align*}
The claim follows from the fact that $\langle 12i\shuf 43 \shuf 568\rangle$ and $\langle (128\shuf 34)65(78\shuf12i) \rangle$ are positive on $\Gr^{>0}_{4,n}$.
\end{proof}

\begin{proposition}\label{conj:4mb_positivity}
Let $X_{\pm}$ be as in \cref{eq:4mb_promotion}. Then on the positive Grassmannian $\Gr^{>0}_{4,n}$, the following holds:
\begin{itemize}
\item[i)] $\lr{12X_{\pm}i}$ is positive, for all $i \in \{9,\ldots,n\}$;
\item[ii)] $\langle 789 \shuf 21 \shuf 56X_{s} \rangle$ and $\lr{156X_s}$ are both positive (negative), if $s=+$ ($s=-$). 
\end{itemize}
\end{proposition}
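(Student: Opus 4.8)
## Proof proposal for \cref{conj:4mb_positivity}

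The plan is to analyze $X_{\pm} = z_7 + \alpha_{\pm} z_8$ directly, using the quadratic $A\alpha^2 + B\alpha + C = 0$ from \cref{prop:4mass-promotion}. First I would recall that $A = Y_{88}$, $B = Y_{78}+Y_{87}$, $C = Y_{77}$ are all Pl\"ucker-positive combinations (since the $Y_{ij}$ are cluster variables, positive on $\Gr^{>0}_{4,n}$), that $\Delta = B^2 - 4AC > 0$ on $\Gr^{>0}_{4,n}$ by \cref{prop:delta_pos}, and that Vieta's formulas give $\alpha_+ \alpha_- = C/A > 0$ and $\alpha_+ + \alpha_- = -B/A < 0$, so both roots $\alpha_\pm$ are \emph{negative} on $\Gr^{>0}_{4,n}$, with $\alpha_- < \alpha_+ < 0$. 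This sign information is the organizing principle for everything that follows.

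For part (i): I want to show $\lr{12X_\pm i} = \lr{127i} + \alpha_\pm \lr{128i}$ is positive. Clearing the denominator $2A > 0$ from $\alpha_\pm = (-B \pm \sqrt{\Delta})/(2A)$, this amounts to showing
\[
2A\lr{127i} + (-B \pm \sqrt{\Delta})\lr{128i} \;=\; \bigl(2A\lr{127i} - B\lr{128i}\bigr) \pm \sqrt{\Delta}\,\lr{128i}
\]
is positive. The term $2A\lr{127i} - B\lr{128i}$ is positive by \cref{lem:positivity_expr}, and $\lr{128i} > 0$, $\sqrt{\Delta} > 0$ on $\Gr^{>0}_{4,n}$, which immediately handles the $+$ branch. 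For the $-$ branch I must show $2A\lr{127i} - B\lr{128i} > \sqrt{\Delta}\,\lr{128i}$; squaring (both sides positive) reduces this to $(2A\lr{127i} - B\lr{128i})^2 > \Delta \lr{128i}^2 = (B^2 - 4AC)\lr{128i}^2$, i.e. $4A^2\lr{127i}^2 - 4AB\lr{127i}\lr{128i} + 4AC\lr{128i}^2 > 0$, i.e. $A\bigl(A\lr{127i}^2 - B\lr{127i}\lr{128i} + C\lr{128i}^2\bigr) > 0$. Since $A > 0$, it suffices to show $A t^2 - Bt + C > 0$ where $t = \lr{127i}/\lr{128i}$; but the roots of $A t^2 + Bt + C$ are $\alpha_\pm < 0$, so $A t^2 - Bt + C = A(t - (-\alpha_+))(t - (-\alpha_-))$ has roots $-\alpha_\pm > 0$, and positivity holds iff $t < -\alpha_+$ or $t > -\alpha_-$. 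This is the one genuinely substantive point: I expect to need a separate Pl\"ucker-positivity argument (analogous to \cref{lem:positivity_expr}) establishing, say, that $\lr{127i}/\lr{128i}$ lies outside the interval $(-\alpha_+, -\alpha_-)$ — most plausibly by showing $A\lr{127i}^2 - B\lr{127i}\lr{128i} + C\lr{128i}^2$ admits a manifestly Laurent-positive expansion in Pl\"ucker coordinates, which I would obtain by a Grassmann–Cayley manipulation in the spirit of the proof of \cref{lem:positivity_expr}.

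For part (ii): I would express $\langle 789 \shuf 21 \shuf 56 X_s\rangle$ and $\lr{156 X_s} = \lr{1567} + \alpha_s \lr{1568}$ by clearing the denominator $2A$, obtaining in each case an expression of the form $(P) \pm \sqrt{\Delta}\,(Q)$ with $P$ a Pl\"ucker polynomial and $Q$ a positive Pl\"ucker monomial; then the sign of the $+$ branch follows once $P > 0$ is established (again via a Grassmann–Cayley identity producing a Laurent-positive form), while the $-$ branch sign follows by the same squaring trick, reducing to a quadratic-in-$\alpha$ positivity statement whose resolution uses $\alpha_\pm < 0$. The sign assertion ``positive if $s = +$, negative if $s = -$'' is exactly what one expects from $\lr{1567} + \alpha_s \lr{1568}$ when $\alpha_-$ is sufficiently negative to overwhelm the positive $\lr{1567}$ term, i.e. $-\alpha_- > \lr{1567}/\lr{1568} > -\alpha_+$; so here I anticipate needing the \emph{reverse} inequality from part (i) — that the relevant Pl\"ucker ratio lies \emph{inside} $(-\alpha_+, -\alpha_-)$ — which is the dual obstacle and should follow from a companion Grassmann–Cayley computation showing $A\lr{1567}^2 - B\lr{1567}\lr{1568} + C\lr{1568}^2 < 0$ on $\Gr^{>0}_{4,n}$, equivalently that this quadratic form factors with a sign-definite Laurent-positive complement. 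The main obstacle throughout is thus the same: identifying the correct Grassmann–Cayley rewriting that exhibits each auxiliary quadratic-in-Pl\"uckers expression as (plus or minus) a sum of products of positive Pl\"ucker coordinates and chain polynomials, after which all the branch-sign bookkeeping is routine.
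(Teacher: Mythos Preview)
Your proposal is correct and follows essentially the same route as the paper: clear the denominator $2A$, invoke \cref{lem:positivity_expr}, square to reduce to the sign of the quadratic form $A p^2 - B pq + C q^2$ in the relevant pair of Pl\"ucker coordinates, and then settle that sign by a Grassmann--Cayley rewriting. The paper carries out exactly the identities you anticipate: for part~(i) it shows
\[
A\lr{127i}^2 - B\lr{127i}\lr{128i} + C\lr{128i}^2 \;=\; \lr{1278}\,\langle(12i\shuf 34)65(78\shuf 12i)\rangle \;>\;0,
\]
and for part~(ii) the companion computations give $A\lr{1567}^2 - B\lr{1567}\lr{1568} + C\lr{1568}^2 = -\lr{5678}\,\langle(156\shuf 34)21(78\shuf 156)\rangle < 0$, and similarly for $\langle 789\shuf 21\shuf 56X_s\rangle$.

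One small correction to your plan for part~(ii): you do \emph{not} need (and should not try) to establish $P = 2A\lr{1567} - B\lr{1568} > 0$ separately for the $+$ branch. The paper handles both branches at once: the two desired inequalities rearrange to $\sqrt{\Delta}\,\lr{1568} > y$ and $\sqrt{\Delta}\,\lr{1568} > -y$ with $y = B\lr{1568} - 2A\lr{1567}$, and both follow from $\sqrt{\Delta}\,\lr{1568} > |y|$, i.e.\ from $\Delta\lr{1568}^2 > y^2$, which is exactly the quadratic inequality you later identify. Your Vieta framing (that $t = \lr{1567}/\lr{1568}$ lies inside $(-\alpha_+,-\alpha_-)$) is a cleaner way to say the same thing and makes clear why no separate sign check on $P$ is needed.
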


\begin{proof}
\textbf{Proof of i).}
\begin{align*}
\lr{12X_{\pm}i}=\lr{127i}+\alpha_\pm \lr{128i}=\lr{127i}+\frac{-B \pm \sqrt{\Delta}}{2A} \lr{128i}=\frac{1}{2A} \left(2A\lr{127i}+(-B \pm \sqrt{\Delta}) \lr{128i}\right)
\end{align*}
since $A>0$ and $\lr{12X_{+}i}>\lr{12X_{-}i}$, it is enough to show that $2A \lr{12X_{-}i}>0 $:
\begin{align*}
2A\lr{127i}-(B+\sqrt{\Delta}) \lr{128i}>0 \Leftrightarrow 2A\lr{127i}-B\lr{128i}>\sqrt{\Delta}\lr{128i}
\end{align*}
By \cref{lem:positivity_expr}  $2A\lr{127i}-B\lr{128i}>0$, we can square both sides and we obtain:
\begin{align*}
& 4A^2\lr{127i}^2+B^2\lr{128i}^2-4AB\lr{127i}\lr{128i}>(B^2-4AC)\lr{128i}^2 \Leftrightarrow \\
& A\lr{127i}^2-B \lr{127i}\lr{128i}+C\lr{128i}^2>0 \\
& A\lr{127i}^2-B \lr{127i}\lr{128i}+C\lr{128i}^2=\big(\lr{1283}\lr{4568}-\lr{1284}\lr{3568}\big)\lr{127i}^2- \\
& \big(\lr{1273}\lr{4568}-\lr{1274}\lr{3568}+\lr{1283}\lr{4567}-\lr{1284}\lr{3567}\big)\lr{127i}\lr{128i}+\\
& \big(\lr{1273}\lr{4567}-\lr{1274}\lr{3567} \big)\lr{128i}^2=\\
& \lr{3568}\lr{127i}\big(\lr{1274}\lr{128i}-\lr{1284}\lr{127i}\big)+\lr{3567}\lr{128i} \big(\lr{1284}\lr{127i}-\lr{1274}\lr{128i} \big)+\\
& \lr{1283}\lr{127i} \big( \lr{4568}\lr{127i}-\lr{4567}\lr{128i}\big)+\lr{1273}\lr{128i} \big( \lr{4567}\lr{128i}-\lr{4568}\lr{127i}\big)=\\
& \lr{1278}\lr{124i}\big(\lr{3568}\lr{127i}-\lr{3567}\lr{128i}\big)+\langle 456 \shuf 87 \shuf 12i\rangle \big(\lr{1283}\lr{127i}- \lr{1273}\lr{128i}\big)=\\
& \lr{1278}\lr{124i} \langle 356\shuf 78 \shuf 12i\rangle-\langle 456 \shuf 87 \shuf 12i\rangle \lr{1278}\lr{123i}=\\
&  \lr{1278} \langle (12i\shuf 34)65(78\shuf12i) \rangle
\end{align*}
Since $\langle (12i\shuf 34)65(78\shuf12i) \rangle$ is positive on $\Gr^{>0}_{4,n}$, this completes the proof. 

\textbf{Proof of ii)}
We now prove that $\lr{156X_s}$ is positive if $s=+$ and negative if $s=-$. The two cases, once multiplied by $2A>0$ and rearranged read:
\begin{align*}
\sqrt{\Delta} \lr{1568}>y \quad \mbox{and} \quad \sqrt{\Delta} \lr{1568}<-y, \qquad y:=B \lr{1568}-2A \lr{1567}.
\end{align*}
Therefore we can square both sides to obtain: 
\begin{align*}
\Delta \lr{1568}^2>y^2 \Leftrightarrow A \lr{1567}^2-B\lr{1567}\lr{1568}+C\lr{1568}^2<0.
\end{align*}
By using steps similar to the ones in the proof of i) we can show that 
\begin{align*}
A \lr{1567}^2-B\lr{1567}\lr{1568}+C\lr{1568}^2=-\lr{5678} 
\lr{(156\shuf34)21(78\shuf156)}
\end{align*}
where $\langle (156\shuf34)21(78\shuf156)\rangle$ can be checked to be positive on $\Gr^{>0}_{4,n}$.

We now conclude the proof by showing that $\langle 789 \shuf 21 \shuf 56 X_s \rangle$ is positive for $s=+$ and negative for $s=-$. Let us define $Y_i:=\langle 789 \shuf 21 \shuf 56i \rangle$, then $\langle 789 \shuf 21 \shuf 56 X_s \rangle=Y_7+\alpha_s Y_8$. Following the same reasoning for the proof above for $\langle 156 X_s \rangle$, we arrive that the statement to prove is equivalent to:
\begin{align*}
A Y_7^2-BY_7Y_8+CY_8^2<0.
\end{align*}
By using similar steps as in the previous proof and identities of the type
\begin{align*}
\lr{127i}Y_8-\lr{128i}Y_7=\lr{1278}\langle 12i \shuf 65 \shuf 789 \rangle, \quad \mbox{and} \quad  \lr{567i}Y_8-\lr{568i}Y_7=\lr{5678}Y_i,
\end{align*}
one can show that
\begin{align*}
A Y_7^2-BY_7Y_8+CY_8^2=-\lr{1278}\lr{5678} 
\lr{(789 \shuf 12) 56 \shuf 34 \shuf 12 (56 \shuf 789)}
\end{align*}
where the quartic polynomial $\lr{(789 \shuf 12) 56 \shuf 34 \shuf 12 (56 \shuf 789)}$ is positive on $\Gr^{>0}_{4,n}$.
\end{proof}

\begin{lemma}\label{lem:4bm_pos}
Consider the rectangle seed $\Sigma$ for $\Gr_{4,N'}$ in \cref{notation:sigma-spurion}. Let $x$ be a cluster variable in $\Sigma$, then its image $\Psi_{\pm}(x)$ under the $4$-mass box promotion is a positive function on~$\Gr^{>0}_{4,n}$.
\end{lemma}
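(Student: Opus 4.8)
The strategy is to explicitly promote a spanning set of cluster variables for $\Gr_{4,N'}$ using the formulas \eqref{eq:4mb_promotion}, and to check positivity case-by-case, reducing everything to the positivity statements already established in \cref{prop:delta_pos}, \cref{lem:positivity_expr}, and \cref{conj:4mb_positivity}. Since $\Sigma$ from \cref{notation:sigma-spurion} is a rectangles seed for $\Gr_{4,N'}$ with $N' = \{1,2,7,8,\ldots,n\}$, its cluster variables are Pl\"ucker coordinates $\lr{ijkl}$ with $i<j<k<l$ in $N'$. So I first sort these Pl\"uckers according to which of the substituted indices (namely $2$ and $7$, since $8,\ldots,n$ are not moved and $1$ is fixed) appear.

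\textbf{Step 1: the Pl\"uckers not involving $2$ or $7$.} If a Pl\"ucker $\lr{ijkl}$ with indices in $N'$ involves neither $2$ nor $7$, then by \eqref{eq:4mb_promotion} the only substitutions that could act are $z_2 \mapsto \cdots$ and $z_7 \mapsto X_\pm$, neither of which is triggered; hence $\Psi_\pm(\lr{ijkl}) = \lr{ijkl}$, which is a positive Pl\"ucker coordinate on $\Gr^{>0}_{4,n}$ since all its indices lie in $[n]$. A subtlety: indices $1$ and $8$ appear in $X_\pm = z_7 + \alpha_\pm z_8$ only through $z_7$, so a Pl\"ucker containing $8$ but not $7$ is genuinely fixed. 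This disposes of a large block of the seed.

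\textbf{Step 2: the Pl\"uckers involving $7$ but not $2$.} Here $\Psi_\pm(\lr{1jkl})$ or $\lr{7jkl}$-type coordinates become $\lr{\ldots X_\pm \ldots}$ by linearity of the wedge. Expanding $X_\pm = z_7 + \alpha_\pm z_8$, one gets $\lr{\cdots 7 \cdots} + \alpha_\pm \lr{\cdots 8 \cdots}$. The generic such variable in the seed is $\lr{127i}$ for $i \geq 9$ (but that involves $2$ as well, so belongs to Step 3); the ones involving $7$ without $2$ are $\lr{17kl}$-forms like $\lr{178i}$, $\lr{1789}$, etc., and $\lr{7jkl}$ like $\lr{789i}$. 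When $8$ is also present, $\lr{\cdots 7 \cdots 8 \cdots}$ gives $\lr{\cdots 7 \cdots 8\cdots} + \alpha_\pm \lr{\cdots 8 \cdots 8 \cdots} = \lr{\cdots 7 \cdots 8 \cdots}$ since the $z_8 \wedge z_8$ term vanishes; these are fixed and positive. The only genuinely nontrivial case in this step is $\lr{12 X_\pm i}$ with $i \geq 9$, which is handled precisely by part i) of \cref{conj:4mb_positivity} — but $\lr{12X_\pm i}$ involves $2$, so strictly it lives in Step 3. In fact, once one sorts carefully, every variable of the seed involving $7$ also involves $1$ and (in the rectangles layout) often $2$, so Step 2 collapses mostly into Step 3.

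\textbf{Step 3: the Pl\"uckers involving $2$.} This is the main obstacle. Substituting $z_2 \mapsto \dfrac{12 \shuf 56 X_\pm}{\lr{1\,56\,X_\pm}}$, and also $z_7 \mapsto X_\pm$ when $7$ is present, produces chain-polynomial numerators divided by powers of $\lr{156 X_\pm}$. The key algebraic identities needed are: $1 \wedge (12 \shuf 56 X_\pm) = \lr{156 X_\pm}\, 1 \wedge 2$ up to the frozen factor (so that $\lr{12ij}$-images simplify), and the analogous expansions computed in the proof of \cref{prop:4mass-promotion}. Concretely, for the row-one mutable variables $\lr{127i}$, expanding gives $\Psi_\pm(\lr{127i}) \propto \lr{156 X_\pm}^{-1}\lr{1\,(12\shuf 56 X_\pm)\,X_\pm\,i}$, and one must show this is positive. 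By part ii) of \cref{conj:4mb_positivity}, $\lr{156 X_\pm}$ has sign $s = \pm$; so it suffices to show the numerator chain polynomial has the same sign $s$. I would expand the numerator into a combination of $\lr{12 X_\pm i}$ (positive by part i), $\lr{789 \shuf 21 \shuf 56 X_\pm}$ (sign $s$ by part ii), and positive Pl\"uckers/chain polynomials with indices in $[n]$, so that the overall sign is $s \cdot (\text{positive}) = s$, matching the denominator and yielding a positive quotient. The frozen variables $\lr{1278}$, $\lr{2789}$ and their images from \cref{lem:spurion-promoted-vars}-style computations feed in here; in particular $\Psi_\pm(\lr{1278})$ and $\Psi_\pm(\lr{2789})$ need their own sign analysis, again reducible to parts i) and ii) plus positivity of chain polynomials over $[n]$ via \cref{cor:irreducible-quadratics} and the fact that chain polynomials of the appropriate index-structure are positive on $\Gr^{>0}_{4,n}$.

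\textbf{Step 4: assembling.} Every cluster variable of $\Sigma$ is now shown to map to either (a) a positive Pl\"ucker coordinate over $[n]$, (b) a positive Pl\"ucker times a Laurent monomial in provably-positive chain polynomials/Pl\"uckers (the "frozen factors"), or (c) a quotient $N/\lr{156X_\pm}$ where $N$ and $\lr{156 X_\pm}$ share the same sign $s$. In all cases $\Psi_\pm(x) > 0$ on $\Gr^{>0}_{4,n}$. The one genuinely hard part is the sign bookkeeping in Step 3: one must verify that the numerator chain polynomials really do decompose with the claimed signs, which I expect to require the Grassmann--Cayley identities in \cref{lem:GC-easy-reln} together with patient expansion along the lines of the proof of \cref{lem:positivity_expr} and \cref{conj:4mb_positivity}. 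I would organize this as a short lemma for each row of the seed, mirroring the structure of \cref{sec:spurion_proof}, and note that the remaining (non-leftmost-column) mutable variables are fixed by $\Psi_\pm$ exactly as in \cref{lem:spurion-promoted-vars}, so no further work is needed for them.
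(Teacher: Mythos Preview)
Your approach is correct in outline and would succeed, but you are making Step~3 much harder than it needs to be. The paper's proof rests on one observation you state but do not systematically exploit: since $12\shuf 56X_\pm = \lr{156X_\pm}\,2 - \lr{256X_\pm}\,1$, the substitution for $z_2$ can be rewritten as
\[
z_2 \;\mapsto\; z_2 - \frac{\lr{256X_\pm}}{\lr{156X_\pm}}\,z_1.
\]
Once you write it this way, every Pl\"ucker in the seed containing the index $1$ alongside $2$ has the correction term vanish (because $z_1\wedge z_1=0$), so $\Psi_\pm(\lr{12\cdots})=\lr{12\cdots}$ with $7$ replaced by $X_\pm$. In particular $\Psi_\pm(\lr{127i})=\lr{12X_\pm i}$ on the nose, with no denominator and no sign bookkeeping at all; part~i) of \cref{conj:4mb_positivity} finishes this case directly. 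Likewise $\Psi_\pm(\lr{1278})=\lr{1278}$ and all row-two variables $\lr{12ij}$ are literally fixed.

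This leaves exactly one Pl\"ucker in the seed that contains $2$ but not $1$, namely the frozen $\lr{2789}$. Here the $z_1$-correction survives, and the paper computes $\Psi_\pm(\lr{2789}) = \lr{789\shuf 21\shuf 56X_\pm}/\lr{156X_\pm}$; part~ii) of \cref{conj:4mb_positivity} gives numerator and denominator the same sign, so the quotient is positive. That is the entire content of Step~3: one four-line computation rather than the row-by-row sign analysis and Grassmann--Cayley expansion you anticipate. The ``genuinely hard part'' you flag does not exist once the substitution is written in the form above.
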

\begin{proof}
Let us denote $F_2:=\lr{156X_\pm}$ and $F_1:=\lr{256X_\pm}$. Then:
\begin{align*}
\Psi_{\pm}\big(\lr{1278}\big) \;&=\; \left\langle1\left(2 - \tfrac{F_1}{F_2}1\right)\left(7+\alpha_{\pm} 8\right) 8\right\rangle \;=\; \lr{1278}\\
\Psi_{\pm}\big(\lr{127i}\big) \;&=\; \left\langle 1 2\left(7+\alpha_{\pm} 8\right)i \right\rangle \;=\; \lr{12X_\pm i}, \quad i \in \{9,\ldots,n\}  \\
\Psi_{\pm}\big(\lr{78ij}\big) \;&=\; \left\langle \left(7+\alpha_{\pm} 8\right)8ij\right\rangle \;=\; \lr{78ij}, \quad 2 \not \in \{i,j\} \\
\Psi_{\pm}\big(\lr{2789}\big) \;&=\; \left\langle\left(2 - \tfrac{F_1}{F_2}1\right)\left(7+\alpha_{\pm} 8\right) 89\right\rangle \;=\; \frac{F_2\lr{2789}-F_1\lr{1789}}{F_2}=\frac{\langle 789 \shuf 21 \shuf 56X_\pm\rangle}{\lr{156X_\pm}}
\end{align*}
The rest of the variables in $\Sigma$ are fixed by $\Psi_\pm$. It then follows from \cref{conj:4mb_positivity} that $\Psi_\pm(x)$ is positive on $\Gr^{>0}_{4,n}$ for every variable $x$ in $\Sigma$.
\end{proof}

\begin{theorem}[$4$-Mass Box Cluster Positivity]\label{th:4mb_pos}
The $4$-mass box promotion map $\Psi_{\pm}$ preserves positivity of cluster variables for $\Gr_{4,N'}$, i.e. if $x$ is a cluster variable for $\Gr_{4,N'}$, then $\Psi_{\pm}(x)$ is a positive function on $\Gr^{>0}_{4,n}$.
\end{theorem}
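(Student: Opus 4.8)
\textbf{Proof proposal for \cref{th:4mb_pos}.}

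The plan is to reduce the general statement to the special case already handled in \cref{lem:4bm_pos}, namely that the image under $\Psi_\pm$ of every cluster variable in the particular rectangle seed $\Sigma$ for $\Gr_{4,N'}$ from \cref{notation:sigma-spurion} is positive on $\Gr^{>0}_{4,n}$. The key point is that positivity of $\Psi_\pm$ on \emph{all} cluster variables (not just those in one seed) is a ``soft'' consequence of positivity on the cluster variables of one seed, together with the Laurent phenomenon and the fact that $\Psi_\pm$ is an algebra homomorphism.

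First I would recall that by \cref{prop:4mass-promotion}, $\Psi_\pm$ is an algebra homomorphism $\CC(\widehat\Gr_{4,N'}) \to \CC(\widehat\Gr_{4,n})[\sqrt\Delta]$, and by \cref{prop:delta_pos}, $\sqrt\Delta$ takes a (well-defined, real, positive) value at each point of $\Gr^{>0}_{4,n}$, so ``positive on $\Gr^{>0}_{4,n}$'' is a meaningful notion for elements of the image. Next, I would invoke the Laurent phenomenon for the cluster algebra of $\Gr_{4,N'}$: any cluster variable $x$ of $\Gr_{4,N'}$ can be written as a Laurent polynomial \emph{with positive coefficients} in the cluster and frozen variables of the seed $\Sigma$ (this is the positivity of the Laurent phenomenon, which holds for cluster algebras from surfaces and, more generally, is known for Grassmannian cluster algebras). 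Applying the homomorphism $\Psi_\pm$ term by term, $\Psi_\pm(x)$ becomes the \emph{same} positive-coefficient Laurent polynomial evaluated at the images $\Psi_\pm(x_i)$ of the cluster and frozen variables of $\Sigma$. By \cref{lem:4bm_pos}, each $\Psi_\pm(x_i)$ is positive on $\Gr^{>0}_{4,n}$; I should also note that the images of the frozen variables of $\Sigma$ are positive, since \cref{lem:4bm_pos}'s proof shows each is either fixed (hence a positive Plücker coordinate) or proportional to a positive function. A positive-coefficient Laurent polynomial in quantities that are all positive on $\Gr^{>0}_{4,n}$ is itself positive on $\Gr^{>0}_{4,n}$, giving the claim.

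The main obstacle, and the place where care is needed, is the subtlety introduced by the two branches $\Psi_+$ and $\Psi_-$ and the square root: one must check that the argument above is compatible with a \emph{fixed} choice of branch. Concretely, the issue is that $\Psi_\pm(x)$ is only positive for the correct sign-pairing (see \cref{conj:4mb_positivity}(ii), where $\langle 789 \shuf 21 \shuf 56 X_s\rangle$ and $\lr{156 X_s}$ have \emph{matching} signs depending on $s$, so that quotients like $\Psi_\pm(\lr{2789})$ come out positive). Since $\Psi_\pm$ is a genuine algebra homomorphism for each fixed choice of sign, there is no inconsistency: one simply runs the Laurent-expansion argument with $s$ fixed throughout, and \cref{lem:4bm_pos} supplies positivity of all the building blocks for that same $s$. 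A secondary point to address is whether $\Psi_\pm$ is defined on all of $\CC(\widehat\Gr_{4,N'})$ or only on a localization; since cluster variables of $\Gr_{4,N'}$ are regular functions and $\Psi_\pm$ is given by an explicit substitution of vectors \eqref{eq:4mb_promotion}, $\Psi_\pm(x)$ is a well-defined rational function on $\Gr_{4,n}$ whose only possible poles lie along the vanishing loci of the denominators $\lr{156 X_\pm}$ and $\langle 1 56 X_\pm\rangle$, and these do not vanish on $\Gr^{>0}_{4,n}$ by \cref{conj:4mb_positivity}(ii); hence $\Psi_\pm(x)$ is a well-defined real number at each point of $\Gr^{>0}_{4,n}$ and the positivity statement makes sense. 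Assembling these observations completes the proof.
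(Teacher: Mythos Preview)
Your proposal is correct and follows essentially the same approach as the paper: invoke Laurent positivity to write any cluster variable as a positive-coefficient Laurent polynomial in the variables of the seed $\Sigma$, apply the homomorphism $\Psi_\pm$, and then use \cref{lem:4bm_pos} to conclude each factor is positive on $\Gr^{>0}_{4,n}$. Your additional remarks about fixing a branch and well-definedness on $\Gr^{>0}_{4,n}$ are reasonable sanity checks that the paper leaves implicit.
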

\begin{proof}
If $x$ is a cluster variable for $\Gr_{4,N'}$, then by Laurent positivity of cluster algebras, $x$ can be expressed as a Laurent polynomial of the cluster variables in the initial seed $\Sigma$ with positive coefficients. By \cref{lem:4bm_pos}, the images under $\Psi_\pm$ of all such  cluster variables in the initial seed $\Sigma$ will be positive functions on $\Gr^{>0}_{4,n}$, hence $\Psi_\pm(x)$ will also be a positive function on $\Gr^{>0}_{4,n}$. 
\end{proof}

\begin{remark} Positivity is non-trivial already for Pl\"ucker coordinates. If $x=\lr{127n}$, then 
$$ \Psi_{\pm}(x)=\lr{127n}+\alpha_{\pm}\lr{128n},$$
is not a sum of positive terms, as $\lr{127n}$ and $\lr{128n}$ are positive and $\alpha_{\pm}$ is negative on the positive Grassmannian.
\end{remark}

We obtain the following as a corollary of \cref{th:4mb_pos}.
\begin{corollary}
Let us consider the geometric promotion $\gProm_\pm: \Gr_{4,n}\dashedrightarrow \Gr_{4,N'}$, of which $\aProm_\pm$ is the pullback. Then $\gProm_\pm(\Gr^{> 0}_{4,n}) \subseteq \Gr^{> 0}_{4,N'}$.
\end{corollary}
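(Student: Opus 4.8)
The plan is to deduce this corollary directly from \cref{th:4mb_pos} together with the general relationship between totally positive points of the Grassmannian and positivity of cluster variables. First I would recall that, by the standard characterization of total positivity via cluster algebras, a point $V \in \Gr_{4,N'}(\R)$ lies in $\Gr^{>0}_{4,N'}$ if and only if every cluster variable of $\CC[\widehat{\Gr}_{4,N'}]$ (equivalently, the elements of one fixed cluster, e.g.\ the rectangles seed $\Sigma$ of \cref{notation:sigma-spurion}) evaluates to a positive number at $V$. In fact it suffices to check the cluster variables in a single seed: if all of them are positive at $V$, then by Laurent positivity every cluster variable is positive at $V$, and since in particular all Pl\"ucker coordinates are positive, $V \in \Gr^{>0}_{4,N'}$.

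Next I would unwind the definition of $\gProm_\pm$. By \cref{prop:4mass-promotion}, $\aProm_\pm$ is the pullback $\gProm_\pm^*$, and $\gProm_\pm$ is the (branch of the) geometric promotion map which assembles the boundary vectors $v_{b_u}(\bz)$ of the blob into an element of $\Gr_{4,N'}$; concretely, it performs the substitution \eqref{eq:4mb_promotion} on a matrix representative. Thus for a Pl\"ucker (or any) coordinate $x$ on $\Gr_{4,N'}$ and a point $\bz \in \Gr_{4,n}$ at which $\gProm_\pm$ is defined, we have the evaluation identity
\[
x\bigl(\gProm_\pm(\bz)\bigr) = \bigl(\aProm_\pm(x)\bigr)(\bz).
\]
The same identity holds for any cluster variable $x$, since $\aProm_\pm$ is an algebra homomorphism and cluster variables are (Laurent) polynomials in Pl\"uckers.

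Now take $\bz \in \Gr^{>0}_{4,n}$ in the domain of $\gProm_\pm$. For each cluster variable $x$ in the seed $\Sigma$ for $\Gr_{4,N'}$, \cref{th:4mb_pos} asserts that $\aProm_\pm(x)$ is a positive function on $\Gr^{>0}_{4,n}$, hence $\bigl(\aProm_\pm(x)\bigr)(\bz) > 0$, i.e.\ $x\bigl(\gProm_\pm(\bz)\bigr) > 0$. Since this holds for all cluster variables in a fixed seed, the point $\gProm_\pm(\bz)$ is totally positive, so $\gProm_\pm(\bz) \in \Gr^{>0}_{4,N'}$. As $\bz$ was an arbitrary point of $\Gr^{>0}_{4,n}$ in the domain, this proves $\gProm_\pm(\Gr^{>0}_{4,n}) \subseteq \Gr^{>0}_{4,N'}$.

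The only genuinely delicate point — and the one I would be most careful about — is the interplay between the \emph{open dense} domain of definition of the rational map $\gProm_\pm$ and the requirement that the conclusion be about all of $\Gr^{>0}_{4,n}$. One must check that $\gProm_\pm$ is defined on a dense subset of $\Gr^{>0}_{4,n}$ (which follows since the denominators $\lr{156 X_\pm}$ etc.\ are, by \cref{conj:4mb_positivity}, nonvanishing on $\Gr^{>0}_{4,n}$, so in fact $\gProm_\pm$ is defined on \emph{all} of $\Gr^{>0}_{4,n}$), and that the vectors produced by the substitution \eqref{eq:4mb_promotion} indeed span $\CC^4$ there, so that the image genuinely lands in $\Gr_{4,N'}$ rather than a degenerate locus; this last point is again guaranteed by \cref{conj:4mb_positivity}(i), which shows the relevant maximal minors are strictly positive. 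Modulo this bookkeeping the corollary is immediate from \cref{th:4mb_pos} and the cluster-theoretic characterization of total positivity.
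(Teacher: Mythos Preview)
Your proposal is correct and is precisely the intended argument: the paper does not spell out a proof but merely states the result as an immediate corollary of \cref{th:4mb_pos}, and you have correctly unpacked that deduction via the pullback identity and the cluster-theoretic characterization of total positivity. Your additional care about the domain of definition (using \cref{conj:4mb_positivity} to see that the denominators are nonvanishing on $\Gr^{>0}_{4,n}$) is a nice touch that the paper leaves implicit.
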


In general, we make the following conjecture for positivity properties of higher intersection number tangles.

\begin{conjecture}
Recall \cref{conj:poshigher} and \cref{rmk:branching_loc}. Suppose $(G, \bD)$ is a tangle which admits a brushing, and suppose that $\Pi_G$ has dimension $km$ and $\mint(G)>0$. Then we can find a brushing and signs for which \cref{conj:poshigher} holds.

Moreover, $F_{\widehat{(G,\bD)}}\cap\Gr^{>0}_{m, n} \times \Gr^{>0}_{m, D^{(1)}} \times \dots \times \Gr^{>0}_{m,D^{(\ell)}}$ is the disjoint union of a fixed number graphs of functions from $\Gr_{m,n}^{>0}$ to $\prod_{D \in \bD}\Gr^{>0}_{m, D}$. 
\end{conjecture}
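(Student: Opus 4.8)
The plan is to deduce the statement from three ingredients: a global description of $F_{\widehat{(G,\bD)}}$ as a branched cover of $\Gr_{m,n}$, a triviality argument for this cover over the totally positive part, and a (largely conjectural) positivity input that identifies the number of real sheets and shows each is positivity-preserving. For the first ingredient: by \cref{cor:int-num=num-VRC} and \cref{prop:VRC-vs-Cz=0}, for generic $\bz\in\Gr_{m,n}$ the set $\mzVRC_G$ has exactly $\mint(G)=d$ elements, and, via the brushing $\mcb^{\bsig}$ (which lifts boundary VRCs to the Grassmannian as in \cref{def:brushed}), these form the fibre over $\bz$ of the first projection $\pi\colon F_{\widehat{(G,\bD)}}\to\Gr_{m,n}$. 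As in the proof of \cref{prop:int-num-1-ratl-coeffs}, one identifies $F_{\widehat{(G,\bD)}}$ birationally with the incidence variety $\{(\bz,V)\in\Gr_{m,n}\times\Pi_{G^{op}}\colon\bz\subset V\}$; since the latter projects $d$-to-$1$, $\pi$ is dominant of degree $d$, so there is a proper closed \emph{branch locus} $\mathcal{D}\subsetneq\Gr_{m,n}$ over whose complement $\pi$ is an unramified degree-$d$ covering. On that complement, and away from the vanishing loci of the pinning denominators, \cref{lem:v_b-from-paths} writes each branch of the blob boundary vectors $v_{b_u}(\bz)$ as a $\CC$-linear combination of the columns of $\bz$ with coefficients algebraic over $\CC(\Gr_{m,n})$, which equips $\pi$ with exactly the structure needed below.

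For the second ingredient, one uses the hypothesis through the companion conjecture in \cref{rmk:branching_loc}: choose a brushing $\mcb$ and signs $\bsig$ so that $\mathcal{D}$, the exceptional locus of $\gProm$, and the pinning denominators all avoid $\Gr^{>0}_{m,n}$. Granting this, the restriction of $\pi$ to real points over $\Gr^{>0}_{m,n}$ is a finite covering map (a real fibre point deforms to a real fibre point by the implicit function theorem, and none is lost in the absence of ramification) of some degree $d'\le d$, and $d'$ is a single well-defined number since $\Gr^{>0}_{m,n}$ is connected. As $\Gr^{>0}_{m,n}$ is homeomorphic to an open ball (Postnikov; Galashin--Karp--Lam), it is simply connected, so the covering is trivial: the real positive locus of $F_{\widehat{(G,\bD)}}$ is a disjoint union of $d'$ copies of $\Gr^{>0}_{m,n}$, each the graph of a semialgebraic function $\Gr^{>0}_{m,n}\to\prod_{D\in\bD}\Gr_{m,D}$. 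The sharper part of the conjecture in \cref{rmk:branching_loc} — that the discriminant-like polynomials cutting out $\mathcal{D}$ are themselves positive on $\Gr^{>0}_{m,n}$, exactly as $\Delta=B^2-4AC$ is in the $4$-mass box case by \cref{prop:delta_pos} — should moreover force all $d$ complex solutions to be real there, i.e.\ $d'=\mint(G)$, which is the asserted fixed number.

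For the third ingredient one must show the image of each of the $d'$ graphs lies in $\prod_{D\in\bD}\Gr^{>0}_{m,D}$; this is precisely part (1), i.e.\ \cref{conj:poshigher}. The approach replays the $4$-mass box computation (\cref{conj:4mb_positivity}, \cref{th:4mb_pos}): using the Grassmann--Cayley algebra, write each $v_{b_u}(\bz)$ along a branch as a combination of boundary vectors whose coefficients are ratios of chain- and tensor-invariant-type polynomials and of the (now positive, by the previous step) branching functions, then prove each Pl\"ucker coordinate of the image is positive on $\Gr^{>0}_{m,n}$, combining positivity of chain polynomials (cf.\ \cref{cor:irreducible-quadratics}) with the sign data and Laurent-positivity manipulations. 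For the subfamily obtained by operadic composition (\cref{obs:composition_fiber_prod}, \cref{prop:sub_operad}) of the $4$-mass box core with tangles for which \cref{conj:cluster1}(1) is already known — star, spurion, chain-tree, forest — this third step, hence the whole statement, follows unconditionally: geometric promotions compose, each factor preserves positivity, and the $4$-mass box base case is supplied by \cref{th:4mb_pos}, \cref{prop:delta_pos} and \cref{conj:4mb_positivity}.

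The main obstacle is the positivity of the branch locus: showing that the $G$-dependent discriminant-like polynomials generalising $\Delta$ are positive on the positive Grassmannian. This is a \emph{nontrivial} positivity phenomenon in the sense of Arkani-Hamed--Lam — not visible term by term — and is presently known only for the $4$-mass box. Absent a uniform mechanism, each new core $G$ appears to demand an ad hoc Grassmann--Cayley identity re-expressing both its discriminant and the Pl\"ucker coordinates of its promoted configurations as manifestly positive combinations; producing such identities systematically is exactly the ``new algebraic structure beyond cluster algebras'' the paper points toward.
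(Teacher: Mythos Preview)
The statement you are addressing is a \emph{conjecture} in the paper, not a theorem; the paper provides no proof. So there is nothing to compare your proposal against on the paper's side.

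That said, your write-up is not a proof either, and you seem aware of this. What you have written is a plausible research outline: (i) view $F_{\widehat{(G,\bD)}}$ as a degree-$d$ branched cover of $\Gr_{m,n}$ via the incidence-variety picture; (ii) use simple connectivity of $\Gr^{>0}_{m,n}$ to trivialize the cover once you know the branch locus misses the positive part; (iii) verify sheetwise positivity of the blob Pl\"ucker coordinates. This is a reasonable strategy, and you correctly isolate the central obstruction at the end: step (ii) hinges entirely on the branch-locus positivity asserted in \cref{rmk:branching_loc}, and step (iii) is literally \cref{conj:poshigher} itself. Both are open in general. Your argument therefore reduces the stated conjecture to two other conjectures in the same paper plus a covering-space lemma, which is progress of a structural kind but not a proof.

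One technical point to tighten if you pursue this: your identification of $F_{\widehat{(G,\bD)}}$ with the incidence variety $\{(\bz,V):\bz\subset V\}$ is only birational to the projection onto the $\bz$-coordinate; the actual fibre of $F_{\widehat{(G,\bD)}}$ over $\bz$ records not $V$ but the blob boundary vectors extracted via the brushing, and you need rank-$m$ regularity (\cref{def:rank_reg_and_dom}) to know these genuinely land in $\prod_D\Gr_{m,D}$. You should also be careful that the ``exceptional locus of $\gProm$'' and the ``pinning denominators'' you invoke are not defined in the paper for $\mint(G)>1$; there is no single $\gProm$ in that setting, only the multivalued correspondence $F_{\widehat{(G,\bD)}}$.
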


\begin{remark}[Positivity Phenomenon in Scattering Amplitudes]
\label{rk:physics_scattering_2}
Scattering amplitudes in planar $\mathcal{N}=4$ SYM are conjectured to be regular on $\Gr^{>0}_{4,n}$. In other words, they have singularities on hypersurfaces $f(\textbf{z})=0$, where $f(\textbf{z})>0$ when $\textbf{z} \in \Gr^{>0}_{4,n}$. We call this the `positivity phenomenon' in planar $\mathcal{N}=4$ SYM. When $f(\textbf{z})$ is a rational function, $f(\textbf{z})$ is conjectured to be a cluster variable for $\Gr_{4,n}$, hence 
it is positive on $\Gr^{>0}_{4,n}$.
In our framework, we expect $f(\textbf{z})$ to be the image of a cluster promotion map, and hence a cluster variable and positive on the positive Grassmannian. 
This was proved when $f(\textbf{z})$ is a singularity of a Yangian invariant $\mathcal{Y}_G$ associated to a BCFW cell $S_G$ \cite{even2023cluster} in relation to \emph{cluster adjacency conjecture}.

When $f(\textbf{z})$ is an algebraic function, there is no general mechanism that would explain positivity. In special cases, there are some results in the frameworks of \emph{limiting rays} and \emph{cluster algebraic functions} \cite{CanakciSchiffler, Drummond:2019cxm, Arkani-Hamed:2019rds}. We expect that algebraic singularities of Yangian invariants $\mathcal{Y}_G$, with $\mbox{IN}_4(G)>1$, are \emph{also} images of promotion maps. In the case of the four mass box promotion, \cref{th:4mb_pos} shows that the map preserves positivity. We expect this to hold in general for promotion maps coming from a plabic graph $G$ with $\mbox{IN}_m(G)>1$ (\cref{conj:poshigher}). Hence promotion maps should give a new mechanism to generate  algebraic functions that are positive on $\Gr^{\geq 0}_{m,n}$ and explain positivity of algebraic singularities of Yangian invariants. Noting that all algebraic singularities of scattering amplitudes in planar $\mathcal{N}=4$ SYM that have been found so far are singularities of Yangian invariants, understanding promotion maps should be a stepping-stone towards proving the positivity phenomenon. 
    
\end{remark}

\begin{remark}
While the promotion in \cref{prop:4mass-promotion} is not a quasi-cluster homomorphism between Grassmannian cluster algebras, it does take relations between cluster variables, e.g. the 3-term exchange relations, into highly non trivial relations between functions involving $\sqrt\Delta.$ 
This hints at a more general algebraic structure governing the singularities of scattering amplitudes, beyond cluster algebras, where algebraic promotions would serve as morphisms of the associated category. 
\end{remark}

\appendix
\section{Background on plabic graphs}\label{app:plabic}

In \cite{postnikov}, 
Postnikov 
classified the cells of the positive Grassmannian, 
using equivalence classes of \emph{reduced plabic graphs}, 
and \emph{decorated (or affine) permutations}. 
Here we review some of this technology, following \cite{FW7},
\cite{postnikov}, and \cite{PSWv1}.

\begin{definition}[Plabic graphs]
   \label{def:plabic}
A {\it planar bicolored graph} (or ``plabic graph'')
is a planar graph $G$ properly embedded into a closed disk, such that 
		each internal vertex is colored black or white;
		each internal vertex is connected by 
		a path to some boundary vertex;
		there are  vertices lying on the 
		boundary of the disk labeled $1,\dots, n$
		for some positive $n$;
and each of the boundary vertices is incident to a single 
		edge.
See Figure \ref{G25} for an example.
\end{definition}

We note that segments of the boundary of the disk between boundary vertices of $G$ are not edges of $G$. The faces of $G$ are the connected components of the complement of $G$ in the disk. We use the notation $E(G)$ for the edges of $G$ and $F(G)$ for the faces.

\begin{figure}[h]
\centering
\includegraphics[height=1in]{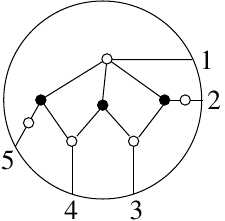}
\caption{A plabic graph. }
\label{G25}
\end{figure}

If $G$ has an internal leaf which is incident to a boundary 
vertex, 
we call this a  
\emph{lollipop}.  
\emph{We will require that our plabic graphs are \emph{leafless}, i.e. that 
they have no internal leaves
except for lollipops.}

There is a natural set of local transformations (moves) of plabic graphs:

(M1) \emph{Square move} (or \emph{urban renewal}).  If a plabic graph has a square formed by
four trivalent vertices whose colors alternate,
then we can switch the
colors of these four vertices.

(M2) \emph{Contracting/expanding a vertex}.
Two adjacent internal vertices of the same color can be merged or unmerged.

(M3) \emph{Middle vertex insertion/removal}.
We can remove/add degree $2$ vertices.

See \cref{fig:M1} for depictions of these three moves.

\begin{figure}[h]
\centering
\includegraphics[height=.5in]{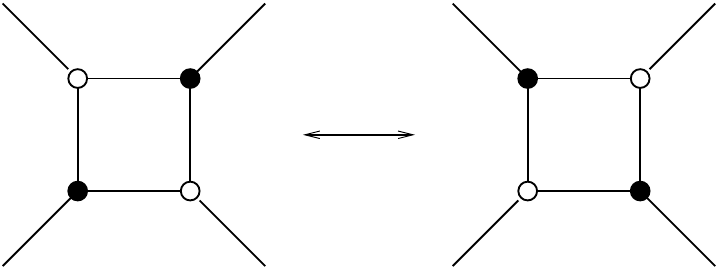}
\hspace{.3in}
\raisebox{6pt}{\includegraphics[height=.3in]{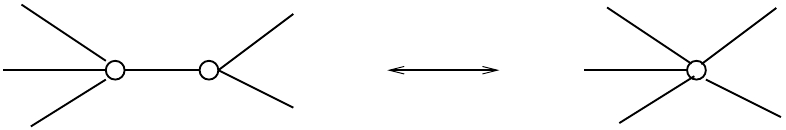}}
\hspace{.3in}
\raisebox{16pt}{\includegraphics[height=.07in]{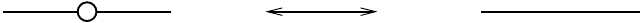}}
\caption{
	Local moves (M1), (M2), (M3) on plabic graphs. }
\label{fig:M1}
\end{figure}
\begin{definition}\label{def:move}
Two plabic graphs are  \emph{move-equivalent} if they can be obtained
from each other by moves (M1)-(M3).  The \emph{move-equivalence class}
of a given plabic graph $G$ is the set of all plabic graphs which are move-equivalent
to $G$.
A plabic graph 
is called \emph{reduced} if there is no graph in its move-equivalence
	in which there is a \emph{bubble}, that is, 
	two adjacent vertices $u$ and $v$ 
	which are connected by 
		more than one edge.  
\end{definition}

\begin{remark}\label{rem:bipartite}
It is sometimes convenient to regard the boundary vertices of a plabic graph as being black. Note that 
given any plabic graph $G$, with boundary vertices regarded as black vertices, we can always apply moves (M1)-(M3) to find a move-equivalent graph $G'$ which is 
bipartite.  
\end{remark}

\begin{definition}[Perfect orientation]\label{def:orientation}
A {\it perfect orientation\/} (respectively, a \emph{reverse perfect orientation}) $\OO$ of a plabic graph $G$ is a
choice of orientation of each edge such that each
internal black (respectively, white) vertex $u$ is incident to exactly one edge
directed away from $u$; and each internal white (respectively, black) vertex $v$ is incident
to exactly one edge directed towards $v$.
A plabic graph 
is called {\it perfectly orientable\/} if it has a perfect orientation.
Let $G_\OO$ denote the directed graph associated with a perfect orientation $\OO$ of $G$. The {\it source set\/} $I_\OO \subset [n]$ of a perfect orientation $\OO$ is the set of boundary vertices $i$ for which $i$
	is a source of the directed graph $G_\OO$. A \emph{flow} $F$ from $I_\OO$ to $J$ is a collection of vertex-disjoint directed paths in $G_\OO$ whose starting points are $I_\OO$ and whose ending points are $J$.
\end{definition}

    If $G$ has $n$ boundary vertices, and $k:=|I_\OO|$ is the size of a (equivalently, any) perfect orientation, then we say that $G$ \emph{has type $(k,n)$.}

\begin{proposition}\cite[Proposition 11.7, Lemma 11.10]{postnikov}\label{prop:positroid}
Let $G$ be a plabic graph. Then there is a matroid 
$\pos_G$ called a \emph{positroid} whose bases
are precisely the subsets
$$\{I \ \vert \ I = I_{\OO} \text{ for some perfect 
orientation }\OO\text{ of }G\}.$$

\end{proposition}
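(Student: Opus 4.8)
The plan is to show that the proposed family $\mathcal{B}_G := \{I_\OO \mid \OO \text{ a perfect orientation of } G\}$ is the set of bases of a matroid by realizing it as the \emph{Plücker support} (the set of nonvanishing maximal minors) of a single point of $\Gr_{k,n}$; such a support is always the set of bases of a realizable matroid, so it suffices to carry out this identification. The first, routine step is to record that all source sets $I_\OO$ have the same size $k$, so that "bases of common size" even makes sense. This is a degree count: equating the total outdegree and total indegree of $G_\OO$, and using that in a perfect orientation each internal black vertex of degree $d$ contributes $(\text{outdeg},\text{indeg}) = (1,d-1)$, each internal white vertex of degree $d$ contributes $(d-1,1)$, and each boundary vertex contributes $(1,0)$ if a source and $(0,1)$ if a sink, one obtains
\[
2\,|I_\OO| \;=\; n + \sum_{b \in \Bi}(\deg b - 2) \;-\; \sum_{w \in W}(\deg w - 2),
\]
which is independent of $\OO$ (and recovers the formulas in \cref{lem:facts} when $G$ is a tree).

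The technical heart is the following lemma: fix one perfect orientation $\OO_0$ with source set $I_0$; then for $J \subseteq [n]$ with $|J| = k$, there is a flow (a vertex-disjoint family of directed paths in $G_{\OO_0}$) from $I_0$ to $J$ if and only if $J = I_{\OO'}$ for some perfect orientation $\OO'$. For the "only if" direction, given such a flow $F$ I would reverse the orientation of every edge of $F$: vertex-disjointness forces each internal vertex to lie on at most one path of $F$, and reversing a path through a vertex turns exactly one in-edge into an out-edge and one out-edge into an in-edge, so the defining degree conditions of \cref{def:orientation} are preserved; at the boundary, sources of $F$ become sinks and sinks of $F$ become sources, so the source set changes from $I_0$ to $J$. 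For the "if" direction, I would compare $\OO_0$ and $\OO'$ edge by edge and let $D$ be the set of edges on which they disagree, oriented by $\OO_0$. At each internal vertex the only edge that can change is the "special" one (the unique out-edge at a black vertex, the unique in-edge at a white vertex), and from this one checks that $D$ has both indegree and outdegree at most $1$ at every internal vertex; hence $D$ is a disjoint union of directed paths and directed cycles, whose path endpoints are precisely the boundary vertices of $I_0 \triangle J$, with those in $I_0 \setminus J$ being starts and those in $J \setminus I_0$ being ends. Discarding the cycles and adjoining trivial paths at $I_0 \cap J$ yields a flow from $I_0$ to $J$ in $G_{\OO_0}$. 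I expect this comparison-of-orientations bookkeeping to be the main obstacle: it is elementary but requires careful case analysis by vertex color and at the boundary.

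To conclude, I would take $\OO_0$ acyclic (which exists by \cref{acycliclemma}) and assign generic weights to the edges, so that the path matrix $A(G,\OO_0)$ of \cref{def:pathmatrix} has polynomial entries and represents a well-defined point $V \in \Gr_{k,n}$. By the flow expansion of its maximal minors (\cref{prop:bdry-meas-pluckers}), the Plücker coordinate $\lr{J}_V$ is, up to sign, the generating polynomial of flows from $I_0$ to $J$; since distinct flows use distinct edge sets they contribute distinct monomials and cannot cancel, so $\lr{J}_V$ vanishes identically — hence vanishes at generic weights — exactly when there is no flow $I_0 \to J$. Combined with the lemma, the Plücker support of $V$ is precisely $\mathcal{B}_G$, exhibiting $\mathcal{B}_G$ as the set of bases of the column matroid of $V$; we set $\pos_G$ to be this matroid. (As an alternative finish one can bypass the path matrix entirely and invoke that, in any digraph, the sets linkable from a fixed source set are the bases of a gammoid, then apply the lemma; I would present the path-matrix version since that machinery is already set up here.)
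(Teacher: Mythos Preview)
The paper does not include its own proof of this proposition; it is stated in the appendix with a citation to Postnikov \cite[Proposition~11.7, Lemma~11.10]{postnikov} and nothing more. So there is no ``paper's proof'' to compare against, and your write-up is essentially the standard argument that underlies Postnikov's result.

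Your proof is correct. Two minor comments. First, the sentence ``the only edge that can change is the special one'' is imprecise: when the special edge (the unique out-edge at a black vertex, resp.\ unique in-edge at a white vertex) changes between $\OO_0$ and $\OO'$, \emph{two} edges at that vertex change, the old special edge and the new one. But your conclusion --- that the disagreement set $D$, oriented by $\OO_0$, has in- and out-degree at most $1$ at every internal vertex --- is exactly right, and the decomposition into vertex-disjoint directed paths from $I_0\setminus J$ to $J\setminus I_0$ (plus possibly cycles) follows. Second, your path-matrix finish invokes \cref{acycliclemma}, which is stated only for \emph{reduced} $G$, whereas the proposition is for arbitrary (perfectly orientable) plabic graphs. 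The gammoid alternative you mention at the end avoids this: for any digraph and any $k$-set $I_0$, the $k$-subsets $J$ of a fixed ground set that are linked from $I_0$ by vertex-disjoint paths are the bases of a matroid (a strict gammoid), and your lemma identifies these $J$ with the source sets of perfect orientations. That version requires no acyclicity and gives the proposition in full generality.
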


\begin{lemma} \cite[Lemma 3.2]{PSWv1} \label{acycliclemma}
Each reduced plabic graph $G$ on $[n]$ has an acyclic perfect orientation. Moreover,
for each total order $<_i$ on $[n]$ defined by 
$i <_i i+1 <_i \dots <_i n <_i 1 <_i \dots <_i i-1$,
$G$ has an acyclic perfect orientation $\OO$ whose source set $I_{\OO}$
is the lexicographically minimal basis with respect to $<_i$.
\end{lemma}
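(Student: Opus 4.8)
The plan is to reduce to the standard cyclic order, realize the lexicographically minimal basis as the source set of \emph{some} perfect orientation via \cref{prop:positroid}, and then repair that orientation to be acyclic without disturbing the source set.

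First I would exploit cyclic symmetry. Relabelling the boundary vertices of $G$ by the rotation $j\mapsto j-i+1\pmod n$ yields a reduced plabic graph $G^{(i)}$ whose positroid $\pos_{G^{(i)}}$ is the corresponding rotation of $\pos_G$ and which carries the order $<_i$ to $1<2<\dots<n$; a perfect orientation of $G^{(i)}$ pulls back to a perfect orientation of $G$ with the rotated source set, and acyclicity is preserved under this relabelling. So it suffices to treat $i=1$. By \cref{prop:positroid} the source sets of perfect orientations of $G$ are exactly the bases of $\pos_G$. Let $I=\{i_1<\dots<i_k\}$ be the lexicographically minimal basis with respect to $1<2<\dots<n$ (equivalently, the first entry of the Grassmann necklace of $\pos_G$). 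Since $I$ is a basis, there is a perfect orientation $\OO$ of $G$ with source set $I$.

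Next I would make $\OO$ acyclic. Because each boundary vertex is incident to a single edge (\cref{def:plabic}), no directed cycle of $\OO$ meets the boundary, so every directed cycle lies in the interior; reversing all edges along such a cycle preserves the degree conditions of \cref{def:orientation} at every vertex and touches no boundary edge, hence gives another perfect orientation with the same source set $I$. It remains to show that iterating such reversals terminates in an acyclic orientation — since there are only finitely many perfect orientations, the only danger is cycling back to a previous one. I would control this by always reversing an innermost directed cycle and setting up a monotone quantity (e.g.\ a height/potential function on perfect orientations with fixed source set, in the style of the dimer-model height function after passing to a bipartite model of $G$ via moves (M2)); reducedness of $G$ is what guarantees such a potential exists — a non-reduced graph may contain a bubble whose two parallel edges form an alternating cycle that cannot be removed without changing which boundary vertices are sources. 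An alternative route I would develop in parallel is to transport an explicit acyclic perfect orientation from the Le-diagram plabic graph of $\pos_G$ (which manifestly has one, with source set $I$) along the local moves of \cref{def:move} relating it to $G$, using the canonical bijections these moves induce on perfect orientations and checking that the square move (M1) of \cref{fig:M1} carries acyclic orientations to acyclic orientations.

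I expect the main obstacle to be exactly this repair step: proving that innermost-cycle reversal terminates, equivalently that the square move respects acyclicity under the canonical orientation correspondence. This is precisely the place where the hypothesis that $G$ is reduced is needed, and I anticipate that the bulk of the argument will be a finite case analysis around a square (or the construction and verification of the height function). Once an acyclic perfect orientation $\OO$ of $G$ with source set $I$ is obtained, undoing the rotation from the first step produces, for every $i$, an acyclic perfect orientation of $G$ whose source set is the lexicographically minimal basis of $\pos_G$ with respect to $<_i$, which is the assertion.
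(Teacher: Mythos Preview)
The paper does not prove this lemma: it is stated with a citation to \cite[Lemma 3.2]{PSWv1} and used as a black box (e.g.\ in the proof of \cref{lem:v_b-from-paths} and \cref{prop:VRC-vs-Cz=0}). There is therefore no in-paper proof to compare your proposal against.

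That said, your outline is a reasonable reconstruction of how one might prove the cited result, and you have correctly identified where the work lies. The reduction to $i=1$ via cyclic relabelling is fine, and invoking \cref{prop:positroid} to obtain \emph{some} perfect orientation with source set the lex-minimal basis is correct. The genuine content is your ``repair step,'' and your two proposed routes are the standard ones. The cycle-reversal-with-potential approach can be made to work, but the square-move route is cleaner and closer to what \cite{PSWv1} actually does: one exhibits an explicit acyclic perfect orientation on the Le-diagram plabic graph (where all edges flow in one coordinate direction, so acyclicity is manifest and the source set is the Grassmann-necklace entry $I$), and then checks that the local moves (M1)--(M3) carry acyclic perfect orientations to acyclic perfect orientations. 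Your worry that the square move might break acyclicity is the right place to focus; the verification is a short case analysis on how directed paths enter and leave the square, and reducedness is used to guarantee that the Le-diagram representative exists and is connected to $G$ by moves. Your height-function alternative would also succeed but is more machinery than is needed here.
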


Given a perfect orientation $\OO$ of a plabic graph $G$ of type $(k,n)$,
one can describe explicitly an associated cell $S_G$ of $\Grk$; we therefore say that 
$G$ and $S_G$ have \emph{rank $k$}.  It turns out that the cell depends only 
on the move-equivalence class of $G$, not on $G$ or the choice of perfect orientation. 
 The description of the cell is particularly simple 
when $\OO$ is acyclic.

\begin{definition}[Path matrix and boundary measurement map]\label{def:pathmatrix}
Let $G$ be a
plabic graph of type $(k,n)$
with an acyclic perfect orientation $\OO$ of $G$.  
Let us associate a variable $x_e$ with each edge $e\in E(G)$ of $G$.
 For $i\in I_\OO$ and $j\notin I_\OO$,
define the {\it boundary measurement\/} $M_{ij}$ as the following expression:
$$
	M_{ij}:=\sum_{P} \left( \prod_{e\in P} x_e / \prod_{e'\in P} x_{e'} \right),
$$
where the sum is over all directed paths in $G_\OO$ that start 
and end at the boundary vertices $i$ and $j$,
the first product 
is over all edges $e$ in $P$ which are oriented white-to-black, and the second product is over all edges $e'$ in $P$ which are oriented black-to-white.
The \emph{path matrix} $A = A(G,\OO) = (a_{ij})$
is the unique $k \times n$ matrix with rows indexed in increasing order by
$I_{\OO}$ such that 
\begin{itemize}
\item The $k \times k$ submatrix of $A$ in the column set $I_\OO$ is the identity matrix.
\item For any 
	$i\in I_\OO$ and $j\notin I_\OO$, the entry $a_{ij}$ equals 
    $\pm M_{ij}$.
\item All Pl\"ucker coordinates of $A$ are subtraction-free expressions in the variables $x_e$.
\end{itemize}
\end{definition}

\begin{proposition}\label{prop:bdry-meas-pluckers}
Let $G, \OO$ be as above. The path matrix $A(G, \OO)$ has Pl\"ucker coordinates 
\[\lr{J}_A= \sum_{F} \left(\prod_{e \in F} x_e/ \prod_{e' \in F} x_{e'} \right) \]
where the sum is over flows $F$ from the source set $I_\OO$ to $J$, the first product 
is over all edges $e$ in $F$ which are oriented white-to-black, and the second product is over all edges $e'$ in $F$ which are oriented black-to-white. 
\end{proposition}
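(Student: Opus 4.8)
The plan is to prove this by the Lindström--Gessel--Viennot (LGV) lemma in the planar setting, together with the uniqueness of the path matrix recorded in \cref{def:pathmatrix}. Give each edge $e$ of $G_\OO$ the weight $\wt(e)=x_e$ if $e$ is oriented white-to-black and $\wt(e)=x_e^{-1}$ if $e$ is oriented black-to-white, and set $\wt(P):=\prod_{e\in P}\wt(e)$ for a directed path $P$ (trivial length-zero paths allowed, with weight $1$) and $\wt(F):=\prod_{e\in F}\wt(e)$ for a flow $F$; then $M_{ij}=\sum_{P\colon i\to j}\wt(P)$. Let $\tilde A$ be the $k\times n$ matrix with rows indexed in increasing order by $I_\OO=\{i_1<\dots<i_k\}$ and entries $\tilde a_{ij}=(-1)^{s(i,j)}M_{ij}$, where $s(i,j)$ is the number of elements of $I_\OO$ strictly between $i$ and $j$ in the linear order on $[n]$ underlying $\OO$ (with $s(i,i)=0$). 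Since $\OO$ is acyclic and every element of $I_\OO$ is a boundary source with no incoming edge, the only directed path ending at a vertex of $I_\OO$ is trivial; hence $M_{ij}=\delta_{ij}$ for $i,j\in I_\OO$, so $\tilde A$ has the identity matrix in its columns $I_\OO$. Therefore, once we prove $\lr{J}_{\tilde A}=\sum_F\wt(F)$ (sum over flows $F$ from $I_\OO$ to $J$) for all $J\in\binom{[n]}{k}$, the right-hand side is manifestly subtraction-free, $\tilde A$ meets all three defining conditions of the path matrix in \cref{def:pathmatrix}, and the uniqueness asserted there forces $\tilde A=A(G,\OO)$. So it suffices to establish the flow formula for the explicit matrix $\tilde A$.

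I would do this by expanding the maximal minor and running the standard involution. Writing $J=\{j_1<\dots<j_k\}$,
\[
\lr{J}_{\tilde A}\;=\;\sum_{\pi\in S_k}\sign(\pi)\prod_{t=1}^{k}\tilde a_{i_t,\,j_{\pi(t)}}\;=\;\sum_{(P_1,\dots,P_k)}\sign(\pi)\Big(\prod_{t=1}^{k}(-1)^{s(i_t,\,j_{\pi(t)})}\Big)\prod_{t=1}^{k}\wt(P_t),
\]
the last sum over all tuples $(P_1,\dots,P_k)$ of directed paths with $P_t$ running from $i_t$ to some element of $J$, the $k$ endpoints exhausting $J$, and $\pi=\pi_{(P_t)}$ the bijection so determined. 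Split these tuples into the vertex-disjoint ones (exactly the flows $F$, with permutation $\pi_F$) and those sharing a vertex. For the latter I would use the tail-swapping involution: at the first vertex (in a fixed total order, refined by position along the paths) lying on two of the $P_t$, say $P_a$ and $P_b$ with $a<b$, interchange the portions of $P_a$ and $P_b$ after that vertex. Because $\OO$ is acyclic, every directed path of $G_\OO$ is self-avoiding, so this is a genuine involution on the non-vertex-disjoint tuples; it preserves the multiset of edges used, hence $\prod_t\wt(P_t)$, and replaces $\pi$ by $\pi\circ(a\,b)$. A sign check — flipping $\sign(\pi)$ and tracking the change of $\prod_t(-1)^{s(i_t,j_{\pi(t)})}$ — shows that it flips the total coefficient, so these tuples cancel in pairs, leaving $\lr{J}_{\tilde A}=\sum_F\sign(\pi_F)\big(\prod_t(-1)^{s(i_t,\,j_{\pi_F(t)})}\big)\wt(F)$.

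Finally I must show that for every flow $F$ the coefficient $\sign(\pi_F)\prod_t(-1)^{s(i_t,\,j_{\pi_F(t)})}$ equals $+1$. A flow $F$ realizes its bijection $I_\OO\to J$ by $k$ pairwise vertex-disjoint arcs properly embedded in the disk with endpoints on the boundary circle, so the resulting matching of $I_\OO$ with $J$ is non-crossing, and the coefficient depends on $F$ only through this matching. I would prove that every such matching has coefficient $+1$ by induction on $k$: a non-crossing matching has an \emph{innermost} matched pair $\{i_t,j_{\pi(t)}\}$ with no other used endpoint on one of the two boundary arcs between them — so $s(i_t,j_{\pi(t)})=0$ — and deleting it yields a non-crossing matching of $I_\OO\setminus\{i_t\}$ with $J\setminus\{j_{\pi(t)}\}$ whose permutation has the same sign as $\pi_F$ up to a $+1$ factor; the inductive hypothesis finishes it, the base case being immediate. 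Assembling the three steps gives $\lr{J}_{\tilde A}=\sum_F\wt(F)$, and the uniqueness reduction of the first paragraph completes the proof.

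The main obstacle is the sign bookkeeping in the last two steps: verifying both that the tail-swapping involution flips the full coefficient $\sign(\pi)\prod_t(-1)^{s(i_t,j_{\pi(t)})}$, and that the specific convention $\tilde a_{ij}=(-1)^{s(i,j)}M_{ij}$ is precisely the one for which every flow contributes with sign $+1$; these reduce to an elementary but careful computation about non-crossing matchings on a circle, handling in particular the degenerate arcs forced at $I_\OO\cap J$, where the path must be trivial. Everything else — the expansion of the minor, the self-avoidance of directed paths, and the cancellation of non-vertex-disjoint tuples — is the classical planar LGV argument, for which acyclicity of $\OO$ is exactly the hypothesis that makes the involution well-defined.
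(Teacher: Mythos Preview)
The paper states this proposition without proof in its background appendix; it is a standard result going back to Postnikov and Talaska on boundary measurements of planar networks. Your LGV-based argument is exactly the classical proof of this fact, and it is correct in outline: expand the minor, cancel vertex-intersecting path systems via tail-swapping (acyclicity guarantees the involution is well-defined), and then verify that the surviving vertex-disjoint families---the flows---each contribute with sign $+1$ because planarity forces the induced bijection $I_\OO\to J$ to be noncrossing.

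The only place to be careful, as you correctly flag, is the sign bookkeeping. Two small points: first, your definition of $s(i,j)$ as counting sources strictly between $i$ and $j$ ``in the linear order on $[n]$ underlying $\OO$'' should be pinned down---the acyclic orientation from \cref{acycliclemma} is tied to a cyclic shift $<_i$ of the standard order, and the interval must be taken in that shifted order for the signs to work uniformly. Second, in the inductive step for the noncrossing matching, when you delete an innermost pair $\{i_t,j_{\pi(t)}\}$ with $s(i_t,j_{\pi(t)})=0$, you should also check that removing row $t$ and column $\pi(t)$ from the permutation matrix changes $\sign(\pi)$ by exactly $(-1)^{t+\pi(t)}$, and that this is compensated by the shift in the remaining $s$-values; this is routine but is where the argument actually lives. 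With those two clarifications your proof is complete and matches the standard literature.
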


We can perform certain operations, called \emph{gauge transformations}, 
which preserve the boundary measurements $M_{ij}$.  The following lemma is 
easy to verify.

\begin{lemma}[Gauge transformations]\label{lem:gauge} Let $G, \OO$ be as in \cref{def:pathmatrix} and assign to each edge of $G$ a variable $x_e$. For each internal vertex $v$, choose a number $t_v \in \CC^*$, and for each edge $e=\{u,v\}$, let $x_e'= x_e t_u t_v$. Then the boundary measurements $M_{ij}$ obtained from edge weights $x_e$ are the same as those obtained from edge weights $x_e'$.
\end{lemma}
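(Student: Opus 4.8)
Looking at this problem, I need to prove Lemma (Gauge transformations): that boundary measurements $M_{ij}$ are invariant when each edge weight $x_e = x_{\{u,v\}}$ is rescaled to $x_e' = x_e t_u t_v$ for internal vertices $u,v$ with $t_v \in \CC^*$.

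\textbf{The plan.} The proof is a direct computation on each individual directed path $P$ contributing to $M_{ij}$, showing that the ratio $\prod_{e \in P} x_e / \prod_{e'\in P} x_{e'}$ appearing in \cref{def:pathmatrix} is unchanged under the rescaling. Since $M_{ij}$ is a sum of such ratios over all directed paths from $i$ to $j$, term-by-term invariance gives the result. The key bookkeeping point is that an internal vertex $v$ on a path $P$ is incident to exactly two edges of $P$ (one incoming, one outgoing), and these two edges always contribute \emph{oppositely}: in a perfect orientation, consecutive edges along a path alternate in color-type (white-to-black versus black-to-white) precisely because at each internal vertex one edge is "into white / out of black" style and the continuation is the opposite style. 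Concretely, at a black internal vertex $v$ the unique edge oriented away from $v$ is the one leaving along $P$, while the edge of $P$ arriving at $v$ is oriented toward $v$ (white-to-black); at a white internal vertex $v$ the edge of $P$ leaving $v$ is oriented away (white-to-black) and the one arriving is oriented toward $v$. Either way, exactly one of the two path-edges at $v$ lies in the numerator product and the other in the denominator product.

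\textbf{Key steps in order.} First I would fix a directed path $P$ from boundary vertex $i$ to boundary vertex $j$ in $G_{\OO}$ and list its internal vertices $v_1, \dots, v_r$ (the endpoints $i,j$ are boundary vertices, which are not rescaled since $t_v$ is chosen only for internal $v$). Second, I would observe that each internal $v_s$ appears in exactly one edge of $P$ in the numerator product and exactly one edge of $P$ in the denominator product, by the color/orientation alternation argument above — this is the only step requiring a small case check (black internal vertex vs.\ white internal vertex). Third, I would compute: the new weight of $P$ is
\[
\frac{\prod_{e \in P,\ \text{w-to-b}} x_e'}{\prod_{e' \in P,\ \text{b-to-w}} x_{e'}'}
= \frac{\prod_{e \in P,\ \text{w-to-b}} x_e \prod_{v \in e} t_v}{\prod_{e' \in P,\ \text{b-to-w}} x_{e'} \prod_{v \in e'} t_v}.
\]
Collecting the $t_v$ factors over all edges of $P$: each internal vertex $v_s$ contributes one factor $t_{v_s}$ in the numerator (from its numerator-edge) and one factor $t_{v_s}$ in the denominator (from its denominator-edge), so these cancel; boundary endpoints contribute no $t$ factor at all. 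Hence the new weight of $P$ equals the old weight of $P$. Fourth, summing over all directed paths $P$ gives $M_{ij}' = M_{ij}$, and I would note the analogous statement follows for reverse perfect orientations by symmetry.

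\textbf{Main obstacle.} There is no serious obstacle — this is a routine verification — but the one point that needs care is the claim in step two that the two path-edges at each internal vertex always split between numerator and denominator. One must be slightly careful about the degenerate possibility that an edge of $P$ is a loop or that $P$ passes through $v$ more than once; since $P$ is a directed path (not a walk) and $G_{\OO}$ comes from a genuine perfect orientation, each internal vertex on $P$ is visited exactly once and contributes exactly two distinct path-edges, so the cancellation $t_v/t_v = 1$ is exact. I would also remark that the case where $u$ or $v$ is a boundary vertex is handled by the convention that no $t$ is assigned there, so such edges simply keep their original weight (and boundary vertices have degree one anyway, so they never sit in the interior of a path). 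With these observations the lemma follows immediately, which is why it is stated as "easy to verify."
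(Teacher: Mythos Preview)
Your proof is correct and is precisely the routine verification the paper intends when it states the lemma is ``easy to verify'' and gives no argument. One minor simplification: the fact that the two path-edges at each internal vertex split between numerator and denominator follows directly from bipartiteness (consecutive vertices along any path alternate color, so consecutive edges alternate between white-to-black and black-to-white), so you need not invoke the perfect-orientation degree constraints in step two.
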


\begin{definition}\label{def:bdry-meas-map} Let $G, \OO$ be as in \cref{def:pathmatrix}.
    The map $$\mathbb{B}_{G}: (\CC^*)^{E(G)}/gauge \to \Gr_{k,n}$$ sending 
$\{x_e\} \mapsto A$ is the \emph{boundary measurement map}.
\end{definition}

We note that the boundary measurement map can also be defined in terms of a plabic graph and a perfect orientation containing cycles; however, the definition is a little more complicated. One can also equivalently define the boundary measurement map in terms of matchings, see \cite[Corollary 4.7]{PSWv1}. The matching formulation shows that while the path matrix $A(G, \OO)$ depends on the choice of perfect orientation $\OO$, the resulting point in the Grassmannian does not. This justifies the absence of $\OO$ from the notation $\mathbb{B}_G$ for the boundary measurement map.

\begin{theorem}[{\cite[Theorem 7.1]{MullerSpeyerTwist}}]
    Let $G$ be a reduced plabic graph. The boundary measurement map $\mathbb{B}_G$ is an isomorphism onto its image $T_G$. Moreover, $T_G$ is a Zariski-open and dense subset of $\Pi_G$. 
\end{theorem}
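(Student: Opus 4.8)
The statement is \cite[Theorem 7.1]{MullerSpeyerTwist}, and I sketch the argument. The plan is to use the \emph{twist automorphism} $\tau$ of $\Gr_{k,n}$ from \cite{MullerSpeyerTwist} to linearize the boundary measurement map into a monomial map of tori, and then read off injectivity, the dimension of $T_G$ (hence density), and openness as formal consequences. Two reductions come first. By the matching description of $\mathbb{B}_G$ (\cite[Corollary 4.7]{PSWv1}), the point $\mathbb{B}_G(\{x_e\})$ depends only on $G$, not on the auxiliary acyclic perfect orientation $\OO$, so we fix one. Since every Pl\"ucker coordinate of the path matrix $A(G,\OO)$ is a subtraction-free Laurent expression in the $x_e$ which vanishes identically exactly when the corresponding $k$-subset is not in $\pos_G$ (\cref{prop:bdry-meas-pluckers}), we get $T_G\subseteq\Pi_G$ for free. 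It then remains to prove: (i) $\mathbb{B}_G$ is injective; (ii) $\dim T_G=\dim\Pi_G$; and (iii) $T_G$ is Zariski open in $\Pi_G$ with $\mathbb{B}_G$ an isomorphism onto it.

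For the dimension count, the source $(\CC^*)^{E(G)}/\mathrm{gauge}$ of $\mathbb{B}_G$ has dimension $|E(G)|-|V_{\mathrm{int}}(G)|$, because the gauge action of $(\CC^*)^{|V_{\mathrm{int}}(G)|}$ is free: the requirement in \cref{def:plabic} that every internal vertex is connected to the boundary lets one propagate $t_v=1$ inward from a boundary leg. An Euler-characteristic computation for the planar graph $G$ together with the $n$ boundary arcs, drawn in the disk, gives $|V_{\mathrm{int}}(G)|+n-\big(|E(G)|+n\big)+\#F(G)=1$, hence $|E(G)|-|V_{\mathrm{int}}(G)|=\#F(G)-1$; for a reduced plabic graph this equals $\dim S_G=\dim\Pi_G$. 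So once we know $\mathbb{B}_G$ is generically finite onto its image (a consequence of injectivity), $\dim T_G=\#F(G)-1=\dim\Pi_G$, and since $T_G$ is the continuous image of an irreducible variety inside the irreducible variety $\Pi_G$ of the same dimension, $T_G$ is Zariski dense in $\Pi_G$.

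The heart of the proof is the twist. One introduces the (left) twist $\tau\colon\Gr_{k,n}\dashrightarrow\Gr_{k,n}$, a birational map built columnwise from a matrix representative and its cyclic shift, which restricts to a biregular automorphism of the locus where all frozen Pl\"ucker coordinates are nonzero. The central computation --- and the step I expect to be the main obstacle --- is that for every face Pl\"ucker coordinate $\lr{J}$ (equivalently every $J\in\pos_G$), the function $\lr{J}\big(\tau(\mathbb{B}_G(\{x_e\}))\big)$ is a \emph{single} Laurent monomial $\prod_e x_e^{c_{J,e}}$ in the edge weights rather than the sum over flows of \cref{prop:bdry-meas-pluckers}: the twist ``straightens out'' the flows computing $\lr{J}$ so that exactly one survives, up to an overall monomial. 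Equivalently, in face coordinates the composite $\tau\circ\mathbb{B}_G$ is the monomial map of tori induced by the lattice homomorphism $\phi$ sending each face of $G$ to the signed sum of its bounding edges; this is exactly the content of Muller--Speyer's argument, which I would invoke. The key structural point --- and where reducedness of $G$ is used essentially, since it fails otherwise --- is that $\phi$ is an isomorphism onto the relevant sublattice, i.e. the face Pl\"ucker coordinates of a reduced $G$ separate the gauge classes of edge weights; verifying this, together with the flow-to-monomial identity and the precise exponent bookkeeping, is the bulk of the work.

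Granting Step~3, the composite $\tau\circ\mathbb{B}_G$ is an isomorphism from $(\CC^*)^{E(G)}/\mathrm{gauge}$ onto an explicit torus $U_0$ sitting inside a standard Pl\"ucker chart of $\Gr_{k,n}$. In particular $\mathbb{B}_G$ is injective (claim (i)), with image $T_G=\tau^{-1}(U_0)$. Since $\tau$ is biregular on that chart, intersecting with $\Pi_G$ identifies $T_G$ with $\Pi_G$ intersected with a Pl\"ucker chart, so $T_G$ is open in $\Pi_G$ (claim (iii), openness), and the inverse $\mathbb{B}_G^{-1}=(\text{invert }\phi)\circ\tau$ is a composite of regular maps on $T_G$, so $\mathbb{B}_G$ is an isomorphism onto $T_G$. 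Density is the dimension count of Step~2. Together these give the theorem. As noted, the only genuinely hard input is the combinatorial linearization of $\mathbb{B}_G$ by the twist (and the isomorphism property of the exponent lattice map for reduced $G$); everything else is a dimension count and formal manipulation of tori.
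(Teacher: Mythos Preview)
The paper does not give its own proof of this statement: it is quoted verbatim as \cite[Theorem 7.1]{MullerSpeyerTwist} in the background appendix and used as a black box. Your sketch is precisely the approach of the cited Muller--Speyer paper (linearize $\mathbb{B}_G$ via the twist so that face Pl\"ucker coordinates become monomials in edge weights, then read off injectivity, openness, and the dimension count), so there is nothing to compare.
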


We call the image $T_G$ of $\mathbb{B}_G$ the \emph{boundary measurement torus}. Its dimension is $\dim T_G = |F(G)|-1$, where $F(G)$ is the set of faces of $G$.

It will sometimes be useful to ``use up" the gauge transformations by making some edge weights equal to 1.

\begin{definition} \label{def:gauge-fix}
A \emph{gauge-fix} of a plabic graph $G$ is a partition $E(G)=E_1 \sqcup E_{\neq 1}$ so that, if one uses gauge transformations to set the weight of all edges in $E_1$ to 1, boundary measurement becomes an isomorphism $\mathbb{B}_G:(\CC^*)^{E_{\neq 1}} \to T_G.$
\end{definition}

We have the following characterization of gauge-fixes.

\begin{proposition}[{\cite[Lemma 13.1]{lam2015totally}}] \label{prop:gauge-fix-characterization} A partition $E(G)=E_1 \sqcup E_{\neq 1}$ is a gauge-fix if and only if $E_1$ is a spanning forest of $G$ where each connected component contains exactly one boundary vertex.
\end{proposition}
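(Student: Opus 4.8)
The plan is to prove \cref{prop:gauge-fix-characterization}, which characterizes gauge-fixes of a plabic graph $G$ as partitions $E(G) = E_1 \sqcup E_{\neq 1}$ where $E_1$ is a spanning forest of $G$ in which each connected component contains exactly one boundary vertex. By \cref{def:gauge-fix}, such a partition is a gauge-fix precisely when setting the weights of all edges in $E_1$ to $1$ via gauge transformations turns the boundary measurement map into an isomorphism $\mathbb{B}_G \colon (\CC^*)^{E_{\neq 1}} \to T_G$. Since $\mathbb{B}_G$ is already an isomorphism $(\CC^*)^{E(G)}/\text{gauge} \to T_G$ by \cite[Theorem 7.1]{MullerSpeyerTwist}, and the gauge group is $(\CC^*)^{|W|+|\Bi|}$ acting freely (modulo the global scaling that acts trivially) — actually acting as $(\CC^*)^{\#\text{internal vertices}}$ — the crux is a linear-algebraic/combinatorial statement: the restriction of the quotient map $(\CC^*)^{E(G)} \to (\CC^*)^{E(G)}/\text{gauge}$ to the coordinate subtorus $\{x_e = 1 : e \in E_1\} \cong (\CC^*)^{E_{\neq 1}}$ is an isomorphism if and only if $E_1$ is such a spanning forest.

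First I would set up the combinatorics of gauge transformations as a group action. Assigning $t_v \in \CC^*$ to each internal vertex $v$ sends $x_e \mapsto x_e t_u t_v$ for $e = \{u, v\}$, with boundary vertices carrying the trivial coefficient $t_v = 1$ (one could also allow boundary gauges, but for the boundary-measurement torus the boundary vertices are fixed). Taking logarithms, this is the linear map $\ZZ^{\text{internal vertices}} \to \ZZ^{E(G)}$ given by the (unsigned) incidence-type matrix $N$ of $G$ restricted to columns indexed by internal vertices, and the gauge orbits are cosets of its image. The subtorus $\{x_e = 1 : e \in E_1\}$ maps isomorphically onto the quotient $(\CC^*)^{E(G)}/\text{gauge}$ exactly when, over $\CC$ (equivalently over $\QQ$, equivalently at the level of free abelian groups with the further check of no torsion), we have $\CC^{E(G)} = \CC^{E_{\neq 1}} \oplus \text{image}(N)$, i.e. $E_1$ indexes a set of coordinates complementary to $\text{image}(N)$. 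This is a rank condition: we need the columns of $N$ indexed by internal vertices together with the standard basis vectors $\{\epsilon_e : e \in E_{\neq 1}\}$ to form a basis of $\CC^{E(G)}$, equivalently the square submatrix of $N$ with rows $E_1$ and columns the internal vertices must be invertible over $\CC$ (and its cokernel must be torsion-free for the statement to hold integrally, i.e. for genuine $\CC^*$-tori).

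Next I would translate this invertibility into the graph-theoretic statement. The key observation is that $N$ restricted to rows $E_1$ and columns (internal vertices) is, up to reindexing, the incidence-type matrix of the subgraph $G_1 := (V(G), E_1)$ with the boundary vertices deleted from the index set. A standard matrix-tree / incidence-matrix argument applies: if $E_1$ is a spanning forest each of whose components contains exactly one boundary vertex, then $|E_1| = |V(G)| - (\#\text{components}) = |V(G)| - (\#\text{boundary vertices}) = \#\text{internal vertices}$, so the matrix is square; and one shows it is invertible by induction on leaves of the forest (a leaf that is an internal vertex contributes a column with a single nonzero entry, allowing cofactor expansion; a leaf that is a boundary vertex is not a column, and removing it reduces to a smaller forest with the same property after deleting its unique incident edge). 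Conversely, if $E_1$ contains a cycle, the corresponding rows of $N$ satisfy a $\pm$-dependency (here one must be a little careful because $N$ is unsigned — the relevant sign pattern comes from the gauge action being $t_u t_v$, so one works with $N$ over $\ZZ/2$ or notes that the gauge map's image is spanned by $\epsilon_e + $ (for each edge, sum over its two internal endpoints), and a cycle in $E_1$ gives an $\FF_2$-dependency), contradicting independence; and if some component of $E_1$ contains two boundary vertices or none, a dimension count shows the submatrix is not square, hence not invertible.

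The main obstacle I expect is handling the sign/parity subtlety in the incidence matrix $N$: because the gauge action multiplies edge weights by $t_u t_v$ rather than $t_u / t_v$, the natural matrix is the unsigned incidence matrix, whose kernel and image behave differently from the signed one. Over $\FF_2$ the unsigned incidence matrix of a graph has rank $|V| - (\#\text{bipartite components})$, which introduces a bipartiteness condition that does not appear in the statement. The resolution is that we are working with multiplicative $\CC^*$-tori, not $\FF_2$-vector spaces: over $\CC$ (or $\QQ$), the unsigned incidence matrix of a tree is invertible after deleting one vertex-row regardless of bipartiteness (its determinant is $\pm 2^{?}$ or $\pm 1$ depending on the tree, but always nonzero), so the correct framework is the rank computation over $\CC$, and I would be careful to phrase the argument there — showing the square submatrix has nonzero determinant over $\CC$ by the leaf-induction above — rather than over $\FF_2$. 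An alternative, cleaner route that sidesteps this entirely is to cite or reprove the fact from \cite{lam2015totally} directly by exhibiting, for a spanning forest $E_1$ with one boundary vertex per component, an explicit sequence of gauge transformations (processing vertices from the leaves of the forest inward toward its boundary roots) that uniquely sets each $x_e = 1$ for $e \in E_1$, making the bijection onto $(\CC^*)^{E_{\neq 1}}$ transparent; this constructive argument is the one I would ultimately present, using the incidence-matrix rank computation only as the conceptual backbone.
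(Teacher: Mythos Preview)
The paper does not prove this proposition at all: it is stated with a citation to \cite[Lemma 13.1]{lam2015totally} and no proof is given. So there is nothing to compare your argument against in this paper.

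That said, your proposal is essentially sound and your constructive leaf-induction is exactly the standard argument. Your worry about the unsigned incidence matrix and bipartiteness is a red herring that you can drop entirely: since $E_1$ is a forest, each component is a tree and hence bipartite, and more to the point the leaf-induction directly shows the relevant square matrix has determinant $\pm 1$ over $\ZZ$ (an internal leaf contributes a column with a single $1$, so cofactor expansion reduces to a smaller tree with the same one-boundary-vertex-per-component property). This gives an honest isomorphism of lattices, hence of $\CC^*$-tori, with no torsion issues. The converse is also clean: if $E_1$ has a cycle or a component with the wrong number of boundary vertices, the cardinality $|E_1|$ already fails to equal the number of internal vertices, or else the gauge map to $(\CC^*)^{E_1}$ has nontrivial kernel (for a component with no boundary vertex, the alternating product of $t_v$'s over the bipartition acts trivially on all edges in that component), so the map cannot be an isomorphism.
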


We will need the following lemma relating boundary measurement tori for $G$ and $G^{op}$, where $G^{op}$ is the graph obtained from $G$ by switching the color of every vertex.

\begin{lemma}\label{lem:bdry-meas-G-vs-Gop}
Let $G$ be a plabic graph and let $N$ be the $n \times n$ diagonal matrix with diagonal $1, -1, \dots, (-1)^{n-1}$. Choose $\mathbf{x} \in (\CC^*)^{E(G)}/ gauge$ and set $V:= \mathbb{B}_G (\mathbf{x}) $ and $W := \mathbb{B}_{G^{op}} (\mathbf{x})$. Then $W=(VN)^{\perp} = V^{\perp}N$.
\end{lemma}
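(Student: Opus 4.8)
The statement to prove is \cref{lem:bdry-meas-G-vs-Gop}: for a plabic graph $G$ with $G^{op}$ obtained by swapping all vertex colors, a point $\mathbf{x} \in (\CC^*)^{E(G)}/\text{gauge}$ produces $V = \mathbb{B}_G(\mathbf{x})$ and $W = \mathbb{B}_{G^{op}}(\mathbf{x})$ satisfying $W = (VN)^\perp = V^\perp N$, where $N = \mathrm{diag}(1,-1,\dots,(-1)^{n-1})$. The plan is to exploit the combinatorial symmetry between $G$ and $G^{op}$: a perfect orientation of $G$ becomes, upon reversing every edge, a perfect orientation of $G^{op}$ (the roles of black and white vertices swap, and the source-set/sink-set roles swap as well). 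So if $\OO$ is an acyclic perfect orientation of $G$ with source set $I_\OO = I$, then reversing all arrows gives an acyclic perfect orientation $\OO^{op}$ of $G^{op}$ with source set $[n] \setminus I$, and directed paths in $G_\OO$ from $i$ to $j$ correspond bijectively to directed paths in $G^{op}_{\OO^{op}}$ from $j$ to $i$, with the white-to-black and black-to-white edges interchanged.

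First I would set up notation carefully and reduce to a single acyclic perfect orientation $\OO$ of $G$, using that the resulting Grassmannian point is independent of $\OO$ (as noted after \cref{def:bdry-meas-map}). Then I would compute $V = \mathbb{B}_G(\mathbf{x})$ via its path matrix $A(G,\OO)$, which has the identity in columns $I$, and $W = \mathbb{B}_{G^{op}}(\mathbf{x})$ via the path matrix $A(G^{op}, \OO^{op})$, which has the identity in columns $[n]\setminus I$. The key computational step is to show that the boundary measurement $M_{ij}$ for $G$ (sum over paths $i \to j$ of $\prod x_e / \prod x_{e'}$ over white-to-black vs. black-to-white edges) equals the boundary measurement $M^{op}_{ji}$ for $G^{op}$ — this follows because reversing all edges swaps which paths are directed which way, and simultaneously swaps the colors of the endpoints of every edge, so a white-to-black edge in $\OO$ becomes a black-to-white edge in $\OO^{op}$ (after the color swap). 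Hence the numerator/denominator products are literally exchanged but since we also reverse the path, they come out equal. This is the standard "duality of plabic graphs" computation and should be essentially bookkeeping once the sign conventions in the path matrix are pinned down.

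Next I would handle the signs, which is where the matrix $N$ enters. The signs in the path matrix $A(G,\OO)$ (the entry $a_{ij} = \pm M_{ij}$) are chosen so that all Pl\"ucker coordinates are subtraction-free. A clean way to pin down the relation is to use the fact, already recalled in the proof of \cref{prop:VRC-vs-Cz=0}, that $V \mapsto (VN)^\perp$ is a bijection $T_G \to T_{G^{op}}$: indeed $V \mapsto V^\perp$ is the involution of \cref{lem:perp-takes-Pi-G-to-G-op} taking $\Pi_G$ to $\Pi_{G^{op}}$, and $V \mapsto VN$ is the sign-twisting involution on $T_{G^{op}}$ (changing $r_e$ on edges adjacent to even boundary vertices, in VRC language, or equivalently scaling even-indexed columns by $-1$). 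So it suffices to show the composite bijection $T_G \to T_{G^{op}}$ is compatible with the edge-weight parametrizations, i.e. sends $\mathbb{B}_G(\mathbf{x})$ to $\mathbb{B}_{G^{op}}(\mathbf{x})$ for the \emph{same} $\mathbf{x}$. This I would verify at the level of Pl\"ucker coordinates: by \cref{prop:bdry-meas-pluckers}, $\lr{J}_{\mathbb{B}_G(\mathbf{x})}$ is a flow-sum, and flows from $I$ to $J$ in $G_\OO$ correspond (by reversing all edges) to flows from $[n]\setminus I$ to $[n] \setminus J$ in $G^{op}_{\OO^{op}}$, with the monomial weights preserved (white-to-black and black-to-white products swapped but equal under reversal). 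Meanwhile $\lr{J}_V = \pm \lr{J^c}_{V^\perp}$ by the identity used in \cref{lem:perp-takes-Pi-G-to-G-op}, and the sign $\pm$ together with the $N$-twist is exactly accounted for by the sign discrepancy $(-1)^{\sum_{j \in J^c} (j-1)}$ type factor that arises from the $N$-twist on even columns. Matching these two sign computations is the heart of the argument.

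\textbf{Main obstacle.} The genuine difficulty is the sign reconciliation: making precise that the sign convention baked into the path matrix $A(G,\OO)$ (subtraction-free Pl\"uckers) transforms under edge-reversal into the sign convention for $A(G^{op},\OO^{op})$ precisely up to the factor $N$, rather than some other diagonal sign matrix. I expect to navigate this by choosing the gauge-fix and orientation compatibly on $G$ and $G^{op}$, reducing to checking the identity on a spanning set of Pl\"ucker coordinates (e.g. the "staircase" or lexicographically-extreme minors where only one flow contributes), where the sign can be read off directly, and then invoking that a regular function on the irreducible variety $T_G$ is determined by its values — or more simply, that two points of $\Gr_{k,n}$ with proportional Pl\"ucker vectors are equal. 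An alternative, perhaps cleaner route that avoids signs almost entirely: work with the Grassmann-Cayley / VRC picture, using \cref{thm:bdry-meas-vs-bdry-restriction} which identifies non-degenerate VRCs on $G$ with $T_{G^{op}}$ via Kasteleyn signs $(\sigma_e r_e)$, and directly chase the orthogonality $W = V^\perp N$ through that correspondence; I would likely present whichever of these two is shortest.
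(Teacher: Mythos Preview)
Your approach is essentially the same as the paper's: pick an acyclic perfect orientation $\OO$ of $G$, reverse all edges to get a perfect orientation of $G^{op}$ with source set $I_\OO^c$, and observe that flows from $I_\OO$ to $J$ in $\OO$ biject with flows from $I_\OO^c$ to $J^c$ in the reversed orientation with identical weights, so $\lr{J}_V = \lr{J^c}_W$. The only difference is that where you flag the sign reconciliation as the main obstacle and propose to work it out by hand or via the VRC picture, the paper simply invokes \cite[Lemma 1.11(2)]{Karp} to conclude directly that $\lr{J}_V = \lr{J^c}_W$ for all $J$ forces $W = V^\perp N$; you could shorten your argument considerably by doing the same.
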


\begin{proof}
Consider an acyclic perfect orientation $\OO$ for $G$ with source set $I_\OO$. We use this orientation to compute $\mathbb{B}_G$. For $\mathbb{B}_{G^{op}}$, we use the perfect orientation $\OO'$ obtained by reversing the orientation of each edge. It has sources $I_\OO^c$.

Let $V = \mathbb{B}_G(\{x_e\})$. Then by \cref{prop:bdry-meas-pluckers}, $\lr{J}_V$ is the generating function for flows in $\OO$ from $I_\OO$ to $J$. On the other hand, let $W=\mathbb{B}_{G^{op}}(\{x_e\})$. The Pl\"ucker coordinate $\lr{J^c}_W$ is the generating function for flows in $\OO'$ from $I_\OO^c$ to $J^c$, and is equal to $\lr{J}_V$. Now \cite[Lemma 1.11(2)]{Karp} implies that $W=V^{\perp} N$. The equality $V^\perp N = (VN)^\perp$ is straightforward.
\end{proof}

\begin{theorem}\cite{postnikov}\label{thm:positroidcell}
Let $G$ and $\OO$ be as in \cref{def:pathmatrix}.
Then for any positive real values of the edge variables $x_e$,
the realizable matroid associated to the path matrix $A(G,\OO)$ is the 
{positroid}
$\mathcal{M}_G$ from 
\cref{prop:positroid}.
Let $S_G \subset \Grk$ denote
the set of all $k$-planes in $\R^n$ spanned by the
path matrices $A = A(G,\OO)$, as each edge variable $x_e$ ranges over $\R_{>0}$.
Then $S_G$ is homeomorphic to an open ball, called a \emph{positroid cell}.
If $G$ is reduced, then $S_G$ has dimension 
$r(G)-1$, where $r(G)$ is the number of regions of $G$.
We have that $\Grk$ is a disjoint union of positroid cells.
\end{theorem}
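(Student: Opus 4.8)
The plan is to prove the four assertions of \cref{thm:positroidcell} in sequence, the matroid identification and the topological statement being the substantive parts. Fix a reduced plabic graph $G$ of type $(k,n)$ and an acyclic perfect orientation $\OO$; the key computational input is \cref{prop:bdry-meas-pluckers}, which writes each Pl\"ucker coordinate of the path matrix $A(G,\OO)$ as a subtraction-free generating function $\lr{J}_A = \sum_F \big(\prod_{e \in F} x_e / \prod_{e' \in F} x_{e'}\big)$ over flows $F$ from the source set $I_\OO$ to $J$.

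First I would identify the matroid. Since every summand of $\lr{J}_A$ is a Laurent monomial with positive coefficient, for positive edge weights $\lr{J}_A > 0$ exactly when some flow $I_\OO \to J$ exists, and $\lr{J}_A = 0$ otherwise; hence the realizable matroid of $A(G,\OO)$ has as its bases precisely the sets $J$ admitting such a flow. It remains to match this family with $\{I_{\OO'} : \OO' \text{ a perfect orientation of } G\}$, the bases of $\pos_G$ from \cref{prop:positroid}. Given a flow $F : I_\OO \to J$, reversing every edge on a path of $F$ produces (because $F$ is a vertex-disjoint union of directed boundary-to-boundary paths) a new perfect orientation with source set $J$; conversely, the edges on which $\OO$ and a given perfect orientation $\OO'$ disagree form a subgraph balanced at every internal vertex, hence decomposing into $\OO$-directed paths and cycles, and using acyclicity of $\OO$ (\cref{acycliclemma}) together with the matching description of $\mathbb{B}_G$ recalled near \cref{def:bdry-meas-map} one extracts a flow $I_\OO \to I_{\OO'}$. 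This shows $A(G,\OO)$ realizes $\pos_G$; since the family of source sets does not mention $\OO$, it also shows $S_G$ does not depend on the chosen orientation.

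Next I would handle topology and dimension. The cell $S_G$ is the image of the open orthant $(\R_{>0})^{E(G)}$ under $\mathbb{B}_G$, which by \cref{lem:gauge} factors through the quotient by positive gauge transformations. Choosing a gauge-fix $E(G) = E_1 \sqcup E_{\neq 1}$ as in \cref{prop:gauge-fix-characterization} (a spanning forest with one boundary vertex per component) trivializes the gauge action and identifies the quotient with $(\R_{>0})^{E_{\neq 1}}$, and an Euler-characteristic count gives $|E_{\neq 1}| = r(G) - 1$. Since $\mathbb{B}_G$ is an isomorphism onto its image $T_G$ by the theorem of Muller--Speyer recalled near \cref{def:bdry-meas-map}, its restriction to the positive part is a homeomorphism onto $S_G$; hence $S_G \cong (\R_{>0})^{r(G)-1}$, an open ball of the stated dimension, injectivity being exactly what keeps the dimension from dropping.

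Finally, $\Grk = \bigsqcup_G S_G$. Disjointness follows from the matroid computation: $S_G$ is precisely the locus of $V \in \Grk$ with $\lr{I}_V > 0 \iff I \in \pos_G$, so the stratification of $\Grk$ by Pl\"ucker vanishing pattern refines into the $S_G$, and move-equivalent $G$ give the same cell while inequivalent ones give disjoint cells. The remaining point---that every $V \in \Grk$ lies in some $S_G$---is the deepest part and where I expect the main obstacle: one must build, for each totally nonnegative $V$, a reduced plabic graph and positive edge weights realizing it. I would carry this out by the standard bridge (or Le-diagram) construction: repeatedly remove a positive ``bridge'', i.e.\ an elementary column operation implemented by adding an edge, until $V$ is reduced to a point manifestly realized by a lollipop graph, then reassemble, checking at each step that total nonnegativity is preserved and that the procedure terminates after $r(G)-1$ steps. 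Verifying that this is well defined on move-equivalence classes and exhausts $\Grk$ without overlap is the technical heart of Postnikov's theorem.
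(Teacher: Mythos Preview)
The paper does not prove this theorem: it is stated in the appendix as background, with the citation \cite{postnikov} and no accompanying proof environment. So there is no ``paper's own proof'' to compare against; your proposal is a sketch of Postnikov's original argument rather than of anything in this paper.

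That said, your outline is a reasonable summary of how the result is established in \cite{postnikov}. The matroid identification via the flow formula and the dimension count via gauge-fixing are standard and correct. One small caution: you invoke the Muller--Speyer isomorphism to deduce that $\mathbb{B}_G$ is injective on the positive part, but that result is logically downstream of Postnikov's theorem (and in any case proved much later); Postnikov's original argument for injectivity goes through the bridge decomposition / Le-diagram parametrization you mention at the end, so if you were writing a self-contained proof you would want to avoid that circularity.
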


If $G$ is a perfectly orientable graph which fails to be reduced,
then the dimension of $S_G$ will be less than $r(G)$.

\begin{theorem}\cite[Theorem 18.5]{postnikov}\label{thm:closure}
If $S_G$ 
is a cell associated to plabic graph $G$,
then 
every cell in the closure of $S_G$ comes from a plabic graph 
obtained from $G$ by deleting some edges, and 
conversely,
if we delete some edges of $G$, obtaining a perfectly orientable graph $H$,
it corresponds 
 to a cell $S_H$ in the 
closure of $S_G$.
\end{theorem}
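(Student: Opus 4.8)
The plan is as follows. By \cref{def:move} and \cref{thm:positroidcell} the cell $S_G$ depends only on the move-equivalence class of $G$, so we may assume $G$ is reduced, and whenever we delete edges we pass silently to a reduced representative of the resulting graph without changing its cell. Recall also from \cref{thm:positroidcell} that $\Grk$ is a \emph{closed} subset of $\Gr_{k,n}(\R)$ and is the disjoint union of the cells $S_{G'}$; hence $\overline{S_G}\subseteq\Grk$ and every point of $\overline{S_G}$ lies in exactly one cell $S_{G'}$. The proof then splits into the two inclusions $\overline{S_G}\supseteq\bigcup_H S_H$ and $\overline{S_G}\subseteq\bigcup_H S_H$, the union being over perfectly orientable graphs $H$ obtained from $G$ by deleting edges.

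First I would prove the inclusion $\supseteq$. Fix such an $H=G\setminus D$ with a perfect orientation $\OO_H$; one checks that $\OO_H$ extends to a perfect orientation $\OO_G$ of $G$ by orienting each deleted edge from its white endpoint to its black endpoint, and that this does not change the source set, hence not $k$. Using the boundary measurement parametrization $\mathbb{B}_G$ (\cref{def:bdry-meas-map}), assign arbitrary positive weights to the edges of $H$ and let the weights of the edges in $D$ tend to $0$. By the flow (equivalently, matching) formula of \cref{prop:bdry-meas-pluckers}, after clearing a common monomial by a projective rescaling the Pl\"ucker coordinates are polynomials with nonnegative coefficients in the edge weights; in the limit the monomials indexed by flows using an edge of $D$ vanish, and what remains is exactly the flow generating function of $H$ with orientation $\OO_H$. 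Thus the limit point is $\mathbb{B}_H$ of the chosen weights, so it lies in $S_H$; since $\mathbb{B}_G$ is continuous the limit also lies in $\overline{S_G}$, and letting the $H$-weights vary gives $S_H\subseteq\overline{S_G}$.

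The converse inclusion is the substantial part, and I expect the bulk of the work to lie there. Let $p\in\overline{S_G}$ and let $S_{G'}$ be the cell containing it. For any $I\notin\pos_G$ the Pl\"ucker coordinate $\lr{I}$ vanishes identically on $S_G$ (\cref{def:positroid}), hence on $\overline{S_G}$, so $\lr{I}(p)=0$ and $I\notin\pos_{G'}$; therefore every basis of $\pos_{G'}$ is a basis of $\pos_G$, i.e.\ $\pos_{G'}$ lies weakly below $\pos_G$ in the containment order on positroids. It then remains to prove the combinatorial statement that a positroid lying weakly below $\pos_G$ is precisely the positroid of a graph obtained from (a representative of) $G$ by deleting edges. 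I would establish this by translating to decorated permutations and showing that the edge-deletion relation on reduced plabic graphs generates the same partial order as the covering relations of the circular (affine) Bruhat order, then inducting on $\dim S_G=|F(G)|-1$: one shows the codimension-one cells of $\overline{S_G}\setminus S_G$ are exactly the cells $S_{G\setminus e}$ for single edges $e$ whose removal leaves a perfectly orientable graph, and iterates, checking compatibility with the moves (M1)--(M3) throughout. Combining the two inclusions gives $\overline{S_G}=\bigcup_H S_H$, which is the assertion of the theorem. The hardest point is this last order-theoretic identification, together with the bookkeeping (lollipops, disconnected or non-reduced intermediate graphs, orientation conventions) needed to keep the degeneration argument and the induction honest.
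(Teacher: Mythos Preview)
The paper does not give its own proof of this statement: \cref{thm:closure} is quoted in the appendix purely as background, with a citation to \cite[Theorem~18.5]{postnikov} and no accompanying argument. So there is nothing in the paper to compare your proposal against.

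That said, your outline is a reasonable sketch of how Postnikov's proof goes. The ``easy'' inclusion $S_H\subseteq\overline{S_G}$ via degenerating edge weights to~$0$ is essentially correct; the observation that a perfect orientation of $H$ extends to one of $G$ by orienting each restored edge white-to-black (so that it contributes only to numerators in the flow formula and hence drops out in the limit) is the right mechanism, though one must be slightly careful when the deleted edges are not between vertices of opposite color or when deletion creates isolated vertices. For the converse, you have correctly located the real content: showing that the containment order on positroids $\pos_{G'}\subseteq\pos_G$ coincides with the order generated by single edge deletions. Postnikov does this by first identifying the closure order with the circular Bruhat order on decorated permutations, and then matching that order with the plabic-graph edge-deletion order. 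Your induction-on-dimension plan is in the same spirit, but as written it is only a plan; the actual verification that codimension-one cells of $\overline{S_G}$ are exactly the $S_{G\setminus e}$ requires the Bruhat-order machinery you allude to, and is not something one can simply ``check''.
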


 \bibliographystyle{alpha}
        \bibliography{ClusterTilesPromotion}

\end{document}